\DeclareMathOperator{\var}{\mathrm{Var}}
\numberwithin{equation}{section}
\newtheorem{theorem}{Theorem}[section]
\newtheorem{lemma}[theorem]{Lemma}
\newtheorem{proposition}[theorem]{Proposition}
\newtheorem{cor}[theorem]{Corollary}
\newtheorem{rem}[theorem]{Remark}
\newtheorem{definition}[theorem]{Definition}
\newtheorem{theoreme}[theorem]{Théorème}
\newtheorem{rema}[theorem]{Remarque}
\newcommand{\ind}{\mathbf{1}}
\newcommand{\E}{\mathbb{E}}
\newcommand{\R}{\mathbb{R}}
\newcommand{\Z}{\mathbb{Z}}
\newcommand{\N}{\mathbb{N}}
\renewcommand{\tilde}{\widetilde}
\renewcommand{\hat}{\widehat}
\newcommand{\cN}{{\ensuremath{\mathcal N}} }
\newcommand{\cL}{{\ensuremath{\mathcal L}} }
\newcommand{\cT}{{\ensuremath{\mathcal T}} }
\newcommand{\cD}{{\ensuremath{\mathcal D}} }
\newcommand{\cI}{{\ensuremath{\mathcal I}} }
\newcommand{\cG}{{\ensuremath{\mathcal G}} }
\newcommand{\bP}{{\ensuremath{\mathbf P}} }
\newcommand{\bE}{{\ensuremath{\mathbf E}} }
\newcommand{\bL}{{\ensuremath{\mathbf L}} }
\DeclareMathSymbol{\leqslant}{\mathalpha}{AMSa}{"36} 
\DeclareMathSymbol{\geqslant}{\mathalpha}{AMSa}{"3E} 
\DeclareMathSymbol{\eset}{\mathalpha}{AMSb}{"3F}     
\renewcommand{\leq}{\;\leqslant\;}                   
\renewcommand{\geq}{\;\geqslant\;}                   
\newcommand{\dd}{\,\text{\rm d}}             
\newcommand{\med}[1]{\left\langle #1\right\rangle}
\newcommand{\sumtwo}[2]{\sum_{\substack{#1 \\ #2}}} 
\newcommand{\bbE}{{\ensuremath{\mathbb E}} }
\newcommand{\bbL}{{\ensuremath{\mathbb L}} }
\newcommand{\bbN}{{\ensuremath{\mathbb N}} }
\newcommand{\bbP}{{\ensuremath{\mathbb P}} }
\newcommand{\bbR}{{\ensuremath{\mathbb R}} }
\newcommand{\bbZ}{{\ensuremath{\mathbb Z}} }
\newcommand{\ga}{\alpha}
\newcommand{\gb}{\beta}
\newcommand{\gga}{\gamma}            
\newcommand{\gd}{\delta}
\newcommand{\gep}{\varepsilon}       
\newcommand{\gr}{\rho}
\newcommand{\gG}{\Gamma}
\newcommand{\gD}{\Delta}
\newcommand{\go}{\omega}
\newcommand{\gO}{\Omega}
\newcommand{\gl}{\lambda}
\newcommand{\gs}{\sigma}
\def\captionfont@{\footnotesize}
\def\captionheadfont@{\scshape}
\long\def\@makecaption#1#2{%
  \vspace{2mm}
  \setbox\@tempboxa\vbox{\color@setgroup
    \advance\hsize-6pc\noindent
    \captionfont@\captionheadfont@#1\@xp\@ifnotempty\@xp
        {\@cdr#2\@nil}{.\captionfont@\upshape\enspace#2}%
    \unskip\kern-6pc\par
    \global\setbox\@ne\lastbox\color@endgroup}%
  \ifhbox\@ne 
    \setbox\@ne\hbox{\unhbox\@ne\unskip\unskip\unpenalty\unkern}%
  \fi
  \ifdim\wd\@tempboxa=\z@ 
    \setbox\@ne\hbox to\columnwidth{\hss\kern-6pc\box\@ne\hss}%
  \else 
    \setbox\@ne\vbox{\unvbox\@tempboxa\parskip\z@skip
        \noindent\unhbox\@ne\advance\hsize-6pc\par}%
\fi
  \ifnum\@tempcnta<64 
    \addvspace\abovecaptionskip
    \moveright 3pc\box\@ne
  \else 
    \moveright 3pc\box\@ne
    \nobreak
    \vskip\belowcaptionskip
  \fi
\relax
}
\def\writefig#1 #2 #3 {\rlap{\kern #1 truecm
\raise #2 truecm \hbox{#3}}}
\newcommand{\tf}{\textsc{f}}
\newcommand{\M}{\textsc{M}}
\newcommand{\tr}{\text{Trace}}
\newcommand{\Rav}[1]{\langle R _{\substack{#1}}\rangle}
\newcommand{\ui}{\underline{i}}
\newcommand{\uj}{\underline{j}}
\newcommand{\um}{\underline{m}}
\newcommand{\sort}{\mathtt{s}}
\newcommand{\mbeta}{{\mathtt m}_\gb}
\newcommand{\const}{{\mathtt c}}
\newcommand{\consta}{{\mathtt a}}
\newcommand{\constb}{{{\mathtt b}}}
\author{Hubert Lacoin}
\title{Désordre et phénomènes de localisation}
\begin{document}

\begin{center}
{\LARGE
{\bf \textbf{U}\sc{\textbf{niversité}} \textbf{P}\sc{\textbf{aris}} \textbf{D}\sc{\textbf{iderot}} - \textbf{P}\sc{\textbf{aris}} \textbf{7} \\
\textbf{U.F.R.\ }\sc{\textbf{de}} \textbf{M}\sc{\textbf{ath\'ematiques}}}\\
}

\vspace{1cm}

\begin{LARGE}
\bf{ \textbf{T}\sc{\textbf{h\`ese}}  \sc{\textbf{de}} \textbf{D}\sc{\textbf{octorat}} }\\

\vspace{0.5cm}

\end{LARGE}



{ Sp\'ecialit\'e: M\sc{ath\'ematiques} A\sc{ppliqu\'ees}}\\
\vspace{0.5cm}
{ Sous la direction de} { {\bf Giambattista GIACOMIN}}\\

\vspace{4cm}
\hbox{\raisebox{1em}{\vrule depth 0pt height 0.5pt width 15.2cm}}

{\Large {\bf \vspace{0.1cm} \textbf{D}\sc{\textbf{ésordre}} \sc{\textbf{et}} \textbf{P}\sc{\textbf{hénomènes}} \sc{\textbf{de}} 
\textbf{L}\sc{\textbf{ocalisation}} }}\\

\hbox{\raisebox{-0.8em}{\vrule depth 0pt height 0.5pt width
15.2cm}}

\vspace{1cm}
{{\bf Hubert LACOIN} }\\
\vspace{2cm}
\end{center}

\noindent{\large Soutenue publiquement le 19  octobre  2009, devant le jury compos\'e de :}

\vspace{0.5cm}

\begin{table}[htbp!]
\begin{center}
\begin{tabular}{lllll}
{\bf M.}   &{\bf Erwin} &{\bf BOLTHAUSEN}    & Université de Zürich            & rapporteur\\
{\bf M.}     &{\bf Philippe} &{\bf CARMONA}    &  Université de Nantes  & rapporteur\\
{\bf M.}     &{\bf Francis} &{\bf COMETS}       &   Université Paris-Diderot                    & \\
{\bf M.}     &{\bf Bernard } &{\bf DERRIDA}        &  \'Ecole Normale Supérieure   & \\
{\bf M.}     &{\bf Giambattista} &{\bf GIACOMIN}       &  Université Paris Diderot                      & \\
{\bf M.}     &{\bf  Yueyun} &{\bf HU}        &  Université Paris-Nord       & \\
{\bf M.}   &{\bf Fabio} &{\bf TONINELLI}    & E.N.S.\ Lyon           & \\
{\bf M.}     &{\bf Wendelin} &{\bf WERNER}  & Université Paris-Sud                       & \\
\end{tabular}
\end{center}
\end{table}

\thispagestyle{empty}

\vspace*{\fill}

\newpage
\null
\thispagestyle{empty}


\selectlanguage{french}
\newpage

\thispagestyle{empty}


\vspace{5cm}


\textit{Remerciements}
\bigskip

Je tiens à remercier en premier lieu mon directeur Giambattista Giacomin. Il m'a beaucoup apporté lors de ces deux années de thèse par son encadrement scientifique, son soutien logistique sans faille et sa participation à mon introduction dans la communauté scientifique. 
J'espère que notre collaboration se poursuivra dans les années à venir.
\\

Erwin Bolthausen et Philippe Carmona ont pris le temps d'examiner cette thèse. Je leur sais gré de m'avoir témoigné autant de bienveillance, c'est un grand honneur d'être soumis au jugement de ces chercheurs éminents. Je remercie également, pour avoir accepté de faire partie du Jury, Francis Comets, que j'ai eu la joie de cotoyer au L.P.M.A.\ et qui m'a fait profiter de son expertise scientifique; Bernard Derrida, les  conversations que nous avons eues m'ont beaucoup aidé et fait découvrir la richesse des échanges interdisciplinaires ; Yueyun Hu, dont j'admire les travaux ;
Fabio Toninelli avec qui j'ai eu le plaisir de travailler, la qualité de cette thèse lui doit beaucoup; 
 et Wendelin Werner, qui m'a guidé lorsque j'ai dû choisir un directeur pour mon stage de M2 puis pour ma thèse, je mesure aujourd'hui la valeur de ses conseils.
\\

Je remercie tous les gens qui m'ont permis de voyager lors de ces deux années: Laurence Vincent à L'E.N.S., Valérie Juve et Isabelle Mariage à Chevaleret qui m'ont guidé dans les méandres administratifs; Balint Toth, Eniko Sepsi, Philippe Gilles et Pierre Nolin, pour l'organisation d'un échange franco-hongrois; Olivier Glass et Kesavan qui ont eu la bonne idée de m'envoyer enseigner en Inde; Andrea Montanari qui m'a permis de justifier une escale californienne; Fabio Toninelli (encore lui), pour ses invitations répétées à l'E.N.S.\ Lyon; et Peter Mörters, grâce à qui j'ai pu découvrir le monde de la recherche lors de mon stage de M2, et qui m'a aimablement invité à Bath.
\\

Je suis très reconnaissant à Michelle Wasse, qui m'a permis de résoudre, toujours très rapidement et aimablement, toutes les tracasseries administratives que l'on peut rencontrer lors d'études doctorales.
\\

Je remercie  Gregorio Moreno pour m'avoir initié aux joies de la collaboration entre thésards; les autres collègues du 5C09: Christophe, François, Julien, Karim, Luca, Mohammed pour la bonne ambiance  au bureau; Paul et Denis pour la bonne ambiance à la maison; et Alice, parce que la syntaxe et la correction orthographique de cette thèse (pour la partie en français) lui doivent beaucoup.
\\

Enfin, je remercie ma famille: mes parents et mes  frères et soeurs, Claire, Laure, Thibaut et Guillaume, et tous ceux qui m'ont encouragé lors de la préparation de cette thèse.

\tableofcontents

\chapter*{Introduction}
\chaptermark{Introduction}

\section{Phénomènes de localisations, motivations physiques et modélisations}

\subsection{Motivation physiques}
Cette thèse est consacrée à l'étude mathématique des phénomènes de localisation et délocalisation pour différents modèles de polymères en environnement aléatoire. L'étude des modèles de polymères est un domaine de la physique mathématique qui a connu un développement particulièrement important ces dernières années (sur le sujet, on peut consulter l'article de survol \cite{cf:CSY_rev} et les monographies \cite{cf:LNMB, cf:Book, cf:denH}).
Les modèles étudiés dans cette thèse peuvent être utilisés pour décrire une grande variété de phénomènes physiques: interaction d'un polymère avec une interface entre solvants, transition de dénaturation thermique de l'ADN, accrochage et décrochage d'un polymère et d'un substrat solide, comportement trajectoriel d'une chaîne polymère dans une solution hétérogène... 
Ces phénomènes 
ont en commun l'existence d'une {\sl transition de phase}. \`A haute température, l'agitation thermique est forte ,l'entropie domine, les interactions chimiques ou physiques avec l'environnement peuvent être négligées et apparaît un phénomène de {\sl délocalisation}, c'est-à -dire que que le polymère se déploie librement, sans subir de contrainte de la part de l'environnement. \`A basse température, l'énergie d'interaction domine l'agitation thermique, et la trajectoire du polymère est très fortement conditionnée par l'environnement, nous dirons {\sl localisée}. Il existe une température dite critique qui sépare ces deux régions et qui marque une transition de la phase localisée vers la phase délocalisée. 
L'étude approfondie des modèles présentés dans cette thèse permet d'obtenir des informations: 
\begin{itemize}
\item [(1)] sur le comportement trajectoriel des polymères dans la phase localisée, dans la phase délocalisée, à la transition de phase;
\item [(2)] sur la valeur de la température critique. \end{itemize}
De plus, l'expertise développée dans le domaine des polymères en environnement aléatoire apporte une meilleure compréhension générale des systèmes désordonnés, branche importante de la mécanique statistique et de la physique mathématique.


\subsection{Modélisation mathématique}
On modélise le déploiement spatial de la chaîne polymère en adoptant un formalisme de Boltzmann-Gibbs.
\medskip

Dans un cadre discret, la chaîne polymère est modélisée par un chemin de taille finie (égale à un entier $N$) dans un ensemble de chemins donnés
$(S_n)_{n\in[0,N]}\in\gG_N$ (par exemple, $\gG_N$ peut être l'ensemble des chemins de taille $N$ dans $\Z^d$). \`A chaque trajectoire $S$ on associe une énergie modélisée par l'Hamiltonien $H_{N,\go}(S)$ (où $\go$ désigne l'environnement aléatoire) qui sera donné par la somme des énergies collectées sur les différents sites visités. La trajectoire d'une chaîne polymère de taille $N$, pour une température $T$, sera donnée par la mesure de probabilité $\mu_{N,\go,\gb}$ sur $\gG_N$ définie par
\begin{equation}
 \mu_{N,\go,\gb}(S)=\frac{\exp(\gb H_{N,\go}(S))}{|\gG_N| Z_{N,\go,\gb}},
\end{equation}
où
\begin{equation}
 Z_{N,\go,\gb}:=\frac{1}{|\gG_N|}\sum_{S\in \gG_N} \exp(\gb H_{N,\go}(S)),
\end{equation}
et $\gb$ est égal à $1/k_B T$ où $k_B$ désigne la constante de Boltzman.

\medskip
Ce formalisme peut engendrer une grande variété de modèles. Cette thèse se consacre uniquement aux cas où les trajectoires sont dirigées. Cela implique, en particulier, que les polymères étudiés ne possèdent pas d'auto--intersections, ce qui simplifie l'étude mathématique du modèle. Nous présentons maintenant deux exemples concrets de modèles de polymères basés sur la marche aléatoire simple dans $\Z^d$, desquels dérivent tous les modèles étudiés dans cette thèse:
\begin{itemize}
 \item {\bf Le modèle d'accrochage désordonné} (modèle avec désordre sur une ligne). Le polymère est modélisé par une marche aux plus proches voisins dans $\Z^d$ et reçoit des contributions énergétiques aléatoires lorsque la marche passe par l'origine. Selon le point de vue, on peut soit considérer  que le polymère se déploit dans $\Z^d$, et est constitué de maillons hétérogènes qui interagissent avec un potentiel placé à l'origine, soit que le polymère est une trajectoire dirigée $d+1$ dimensionnelle interagissant avec une ligne d'accrochage hétérogène. \\

Soient $S$ une marche aléatoire simple dans $\Z^d$ issue de l'origine, et $(\go_{n})_{n\in \N}$  la réalisation typique d'une suite de variables aléatoire centrées indépendantes de variance unitaire. \'Etant donné deux paramètres $\gb>0$ et $h\in \R$ et $N$ un entier pair, on pose (avec le formalisme précédent)
\begin{equation}
 H_{N,\go,\gb,h}=\sum_{n=1}^N [\go_n+(h/\gb)]\ind_{\{S_n=0\}}, 
\end{equation}
 et on définit donc la mesure de polymère $\mu_{\go,\gb,N}(S)$ par sa dérivé de Radon-Nicodym par rapport à la loi de la marche aléatoire simple $\bP$ (on notera $\bE$ l'espérance associée)
\begin{equation}\label{pinningdef}
 \frac{ \dd \mu_{N,\go,\gb,h}}{\dd \bP}(S):= \frac{\exp\left(\sum_{n=1}^N \gb [\go_n+h]\ind_{\{S_n=0\}}\right)\ind_{\{S_N=0\}}}{Z_{N,\go,\gb,h}},
\end{equation}
où
\begin{equation}
 Z_{N,\go,\gb,h}:= \bE\left[ \exp\left(\sum_{n=1}^N [\gb\go_n+h]\ind_{\{S_n=0\}}\ind_{\{S_N=0\}}\right)\right].
\end{equation}
On peut considérer, de manière équivalente, que le polymère se déploie en fait dans $\Z^{1+d}$ et est modélisé par le graphe de $S$ ($(n,S_n)_{n\in[0,N]}$) et que la marche reçoit des contributions énergétiques lorsqu'elle touche la {\sl ligne d'accrochage} sur laquelle la première coordonnée s'annule.

\medskip

\item {\bf Le polymère dirigé en milieu aléatoire} (modèle avec désordre partout). Le polymère est modélisé par une marche dirigée $((n,S_n)_{n\in \N}$  dans $\Z^{1+d}$, où $S$ est une marche au plus proche voisin dans $\Z^d$) et reçoit des contributions énergétiques aléatoire sur chaque site qu'il visite, ce qui peut être interprêté comme l'énergie d'interaction avec une solution hétérogène.\\

Soient $S$ une marche aléatoire simple dans $\Z^d$, et $(\go_{n,z})_{n\in \N, z\in \Z^d}$ la réalisation typique d'un champ de variables aléatoires indépendantes de variance unitaire. On définit l'Hamiltonien
\begin{equation}
 H_{N,\go}(S):= \sum_{n=1}^N \go_{n,S_n},
\end{equation}
 et la mesure de polymère $\mu_{N,\go,\gb}(S)$ pour $\gb>0$ (la température inverse) par sa dérivé de Radon-Nicodym par rapport à la loi de la marche aléatoire simple $\bP$
\begin{equation}
 \frac{\dd \mu_{N,\go,\gb}}{\dd \bP}:=\frac{\exp\left(\gb \sum_{n=1}^N \go_{n,S_n}\right)}{Z_{N,\go,\gb}},
\end{equation}
où
\begin{equation}
 Z_{N,\go,\gb}:= \bE\left[\exp\left(\gb \sum_{n=1}^N \go_{n,S_n}\right)\right].
\end{equation}
\end{itemize}
\medskip
Dans ces deux cas, on s'intéresse au comportement asymptotique des mesures de polymère quand $N$ tend vers l'infini.
Pour le modèle d'accrochage, on remarque que le modèle sans désordre ou {\sl homogène} (i.e.\ avec $\gb=0$) est non--trivial. Les questions étudiées sont: pour le modèle d'acrochage,
\begin{itemize}
 \item \`A quelles conditions sur les paramètres $h$ et $\gb$ la trajectoire $S$ du polymère reste-t-elle accrochée sur la ligne $\{0\}\times \Z^d$ (en un sens que l'on définira précisément par la suite)?
 \item Quel rôle le désordre joue-t-il dans ce phénomène d'accrochage? C'est-à-dire comment comparer qualitativement et quantitativement le modèle désordonné et le modèle homogène?
\end{itemize}
et pour le polymère dirigé en milieu aléatoire,
\begin{itemize}
\item \`A quelle condition sur le paramètre $\gb$, le comportement trajectoriel du polymère à grande échelle est-il ou n'est-il pas modifié par la présence de désordre ?
\item Lorsque le désordre a une influence sur les trajectoires, quelles caractéristiques ent sont changées? 
\end{itemize}
Les deux principales parties de la thèse seront dédiées à l'étude de chaque modèle.

\medskip

Avant d'être l'objet d'une étude mathématique approfondie, les modèles présentés ont été étudiés par les physiciens théoriciens. De nombreux résultats ont donc été prouvés ou conjecturés, la littérature physique constitue  une grande source d'inspiration et de motivation pour l'étude mathématique de ces modèles.

\medskip
Une des méthodes utilisées en mécanique statistique pour dériver des heuristiques est la méthode de (semi-){\sl groupe de renormalisation}. Cette méthode consiste à réécheloner le système en lui otant des degrés de liberté et en transformant l'Hamiltonien: cette procédure est répétée et le rééchelonage doit être calibré pour obtenir, dans la limite, un modèle effectif, invariant par l'action de la transformation de renormalisation. Cela engendre en général des calculs d'une très grande complexité. Les physiciens font ensuite des approximations simplifiant l'Hamiltonien obtenu après transformation pour pouvoir mener les calculs à leur terme. Cela permet souvent de faire des conjectures très fiables, mais qu'il est difficile de transformer en preuves rigoureuses.\\

Il existe cependant des modèles, introduits par les physiciens, pour lesquels la méthode donne des simplifications considérables: les modèles dits {\sl hiérarchiques}, construits sur des réseaux en diamants. Grâce à l'invariance d'échelle des réseaux en diamants, la méthode de groupe de renormalisation peut être appliquée de manière rigoureuse, sans avoir à opérer d'approximations dans les calculs.
Les modèles hiérarchiques ont été introduits pour étudier les modèles d'Ising/Potts \cite{cf:DG}, de percolation au dernier passage \cite{cf:R_al}, de polymère dirigés en milieu aléatoire \cite{cf:CD,cf:DGr}, ou les modèles d'accrochage \cite{cf:DHV}.

Contrairement aux modèles en champ moyen (polymères sur les arbres, modèles sur le graphe complet \dots), les modèles hiérarchiques conservent l'essence des modèles définis sur $\Z^d$. C'est pourquoi une fraction importante de cette thèse sera dédiée à l'étude des modèles hiérarchiques aussi bien pour les modèles d'accrochage, que pour les polymères dirigés en mileu aléatoire.


\section{Modèles d'accrochage}

\subsection{Modèle homogène}
Tous les modèles d'accrochage que nous allons étudier ont déjà un interêt dans leur version {\sl homogène}. \`A de nombreuses reprises, le modèle d'accrochage homogène sera d'ailleurs utilisé comme outil technique, aussi bien pour les  modèles d'accrochage désordonnés (par exemple pour des arguments de comparaisons) que pour  polymères dirigés en milieu aléatoire (arguments de second moment, méthode des répliques). Dans leur version non-hiérarchique, ils possèdent la propriété remarquable d'être {\sl exactement résoluble} (voir \cite{cf:Fisher}).
Nous proposons ici une étude sommaire du modèle homogène (pour plus de détails voir les premiers chapitres de \cite{cf:Book} dont cette introduction s'inspire).
Rappellons la définition du modèle en dimension $1$:
\medskip

 Considérons $S=(S_n)_{n\in \N}$ la marche aléatoire sur $\Z$ (sous la loi de probabilité $\bP$) définie par $S_n=\sum_{i=1}^n X_i$ où les $X_i$ sont des variables aléatoires indépendantes identiquement distribuées (i.i.d.) satisfaisant
\begin{equation}
 \bP(X_1=\pm 1):= 1/2.
\end{equation}
Pour $h\in \R$, on modifie la mesure $\bP$ en attribuant un bonus ou un malus d'énergie à une trajectoire $S$ à chaque fois qu'elle passe par zéro. On obtient ainsi la famille de {\sl mesures de polymère} $\bP_{N,h}$,  $N\in 2\N$ (on note $\bE_{N,h}$ l'espérance associée),  définie par
\begin{equation}\label{ouchouch}
 \frac{\dd \bP_{N,h}}{\dd \bP}:= \frac{1}{Z_{N,h}}\exp(h L_N)\ind_{\{S_N=0\}},
\end{equation}
où $L_N:=\sum_{n=1}^N \ind_{\{S_n=0\}}$ désigne le temps local en zéro et
\begin{equation}\label{ouchouch2}
 Z_{N,h}:=\bE\left[\exp(h L_N)\ind_{\{S_N=0\}}\right].
\end{equation}
La contrainte $S_N=0$ est une simple condition au bord, qui peut être modifiée sans changer les propriétés essentielles du système. Pour l'instant, elle oblige à considérer $N$ pair.
\medskip

On cherche à savoir si la marche aléatoire $(S_n)_{n\in[0,N]}$ sous la mesure $\bP_{N,h}$ reste accrochée à la ligne $S=0$ sous l'influence de la force d'accrochage $h$. On étudie donc la fraction de contact moyenne 
$f_N(h):=\frac{1}{N}\bE_{N,h}\left[L_N\right]$, ou plus précisement son comportement lorsque $N$ tend vers l'infini.
On définit à cet effet la quantité
\begin{equation}
\tf(h):=\lim_{N\to\infty} \frac{1}{N}\log Z_{N,h},
\end{equation}
que nommée {\sl énergie libre}. En tant que limite de fonctions convexes et croissantes de $h$, l'énergie libre est convexe et croissante.
La fraction de contact moyenne est liée à cette quantité par la relation
\begin{equation}
 \frac{\dd}{\dd h} \frac{1}{N}\log Z_{N,h}:=\frac{1}{N} \bE_{N,h}\left[L_N\right].
\end{equation}
Cette égalité passe à la limite par convexité. On a donc
\begin{equation}
 \frac{\dd}{\dd h}\tf(h)=\lim_{N\to\infty} f_N(h),
\end{equation}
lorsque le membre de gauche existe.
\medskip
L'existence de la limite $\tf(h)$ est due au caractère sur-additif de $\log Z_{N,h}$. De plus, l'inégalité
\begin{equation}
 Z_{N,h}\ge \exp(h)\bP(S_N=0 \text{ et } S_n\ne 0 , \forall n\in [1,N-1])\sim_{N\to\infty} \frac{\exp(h)}{\sqrt{\pi/2}N^{3/2}}
\end{equation}
assure la positivité de $\tf(h)$ pour tout $h$. On peut vérifier aussi que l'énergie libre est nulle lorsque $h$ est négatif ou nul.
Cela donne de nombreux renseignements sur la courbe de l'énergie libre (voir figure \ref{energielibre}).

\medskip
Pour une meilleure analyse du modèle il est préférable de l'étudier dans un cadre plus général. La mesure de polymère transforme la marche aléatoire simple en changeant la loi des temps de retour en zéro, mais ne modifie pas la loi des excursions hors de l'origine lorsque leur taille a été fixée. Pour cette raison, on se met à considérer seulement les temps de retour sur la ligne d'accrochage. Dans le cas de la marche aléatoire, l'espacement des temps de retour constitue une suite de variables aléatoires à valeurs entières, i.i.d., à queue de distribution polynomiale. C'est pourquoi on modélisera notre processus général de retours en zéro par une suite de temps aléatoires $(\tau_n)_{n\ge 0}$ dans $\N\cup\{\infty\}$ dont les accroissements sont gouvernés par une loi $\bP$ vérifiant:
\begin{itemize}
\item $\tau_0=0$ presque surement.
\item $(\tau_n-\tau_{n-1})_{n\ge 1}$ constitue une suite de variables aléatoires i.i.d.\ à valeur dans $\N\cup\{\infty\}$.
\item Il existe un réel $\alpha>0$ tel que 
\begin{equation}
K(n):=\bP(\tau_1=n)\sim_{n\to \infty} \frac{cste.}{n^{1+\alpha}}. 
\end{equation}
\end{itemize}
(dans le développement, on se placera dans le cadre un peu plus large où $K(n)$ est une fonction à variation régulière (voir \cite{cf:RegVar})).
Un tel processus s'appelle un processus de renouvellement.

En divisant les temps de retour de la marche aléatoire simple par deux, on se retrouve dans le cadre ci-dessus avec $\alpha=1/2$.
Le cas de la marche alétoire en dimension $d$, $d\ge 3$ correspond à $\alpha=d/2-1$, le cas $d=2$ corresponds à $\alpha=0$ avec une correction logarithmique. On considèrera  (de manière impropre) $\tau$ comme un sous ensemble de $\N\cup\{0\}$.
\medskip

Les définitions de \eqref{ouchouch} and  \eqref{ouchouch2} s'adaptent parfaitement en posant $L_N:=|\tau\cap [1,N]|$ (où $|\ . \ |$ représente la cardinalité d'un ensemble), en remplaçant $\ind_{\{S_N=0\}}$ par $\ind_{\{N\in \tau\}}$ et en considérant $\bP_{N,h}$ comme une modification de la mesure $\bP$ qui gouverne $\tau$.

Ce nouveau formalisme, bien qu'apparament plus complexe que le précédent, permet un traitement mathématique simple du modèle et 
le replace dans un contexte plus large.
En effet, on peut prouver sans trop d'efforts la formule explicite suivante pour l'énergie libre.
\begin{proposition}\label{th:libenerh}
 L'énergie libre est la solution de l'équation (d'inconnue $x$)
\begin{equation}\label{teteteopopop}
 \sum_{n=1}^{\infty} \exp(-n x)K(n)=e^{-h},
\end{equation}
si elle existe et zéro sinon (si $x<0$ le membre de gauche diverge, la solution de \eqref{teteteopopop} est donc toujours positive).
En particulier, 
\begin{equation}
 h_c=-\log \bP(\tau_1<\infty),
\end{equation}
et
\begin{equation}\label{critex}
\begin{split}
 \tf(h)&\sim_{h\to h_c^+} cste. (h-h_c)^{\max(1,\alpha^{-1})} \quad \text{ lorsque } \alpha\ne 1.\\
 \tf(h)&=h-\log K(1)+o(1)\quad \text{ quand $h$ tend vers l'infini.}
\end{split}
\end{equation}
\end{proposition}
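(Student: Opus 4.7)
The plan is to use the renewal structure of the model to reduce the computation of $Z_{N,h}$ to a convolution, then extract $\tf(h)$ from a generating function in the spirit of Tauberian analysis. The main obstacle is not the existence of the limit (which is already known by super-additivity) but rather identifying the limit explicitly and then extracting the critical asymptotics from the behavior of the Laplace transform of $K$ near the origin.

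First I would decompose on the number of renewals contained in $[1,N]$. Since $\{N\in\tau\}$ forces the trajectory to be a finite concatenation of i.i.d.\ increments of law $K$, one obtains
\begin{equation}\label{eq:reneweldecomp}
Z_{N,h}=\sum_{k=1}^{N} e^{kh}\sumtwo{n_1,\dots,n_k\geq 1}{n_1+\cdots+n_k=N}\prod_{i=1}^{k}K(n_i).
\end{equation}
Introducing the Laplace transform $\hat K(x):=\sum_{n\geq 1}e^{-nx}K(n)$ (well-defined for $x\geq 0$, with $\hat K(0)=\bP(\tau_1<\infty)$, and strictly decreasing to $0$ on $[0,\infty)$), summing \eqref{eq:reneweldecomp} against $e^{-Nx}$ gives a geometric series,
\begin{equation}
\sum_{N\geq 1}e^{-Nx}Z_{N,h}=\sum_{k\geq 1}\bigl(e^{h}\hat K(x)\bigr)^{k}=\frac{e^{h}\hat K(x)}{1-e^{h}\hat K(x)},
\end{equation}
valid as long as $e^{h}\hat K(x)<1$.

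The next step is to identify $\tf(h)$ with the abscissa of convergence of this series. On one hand, finiteness for a given $x$ immediately yields $\tf(h)\leq x$. On the other hand, if $x<\tf(h)$ the series is divergent; combined with the fact that $e^{h}\hat K(x)>1$ on $(-\infty,x^{*})$ where $x^{*}$ solves $\hat K(x^{*})e^{h}=1$, one deduces that $\tf(h)=x^{*}$ when $e^{-h}\leq \hat K(0)$, i.e.\ when $h\geq -\log\bP(\tau_1<\infty)=:h_c$. For $h<h_c$ one has $e^{-h}>\hat K(x)$ for every $x\geq 0$, so the series converges down to $x=0$, giving $\tf(h)\leq 0$; the matching lower bound $\tf(h)\geq 0$ follows from $Z_{N,h}\geq e^{h}K(N)$ and the polynomial tail of $K$. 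This settles \eqref{teteteopopop} and the formula for $h_c$.

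Finally, for the critical behavior I would expand $\hat K(0)-\hat K(x)=\sum_{n\geq 1}(1-e^{-nx})K(n)$ near $x=0$ using the regular variation $K(n)\sim \mathrm{cst}/n^{1+\alpha}$: a standard Tauberian argument (or direct estimate cutting the sum at $n\simeq 1/x$) gives $\hat K(0)-\hat K(x)\sim c\, x^{\alpha}$ when $\alpha\in(0,1)$, and $\hat K(0)-\hat K(x)\sim \bE[\tau_1]\,x$ when $\alpha>1$ (the case $\alpha=1$ producing a logarithmic correction that must be excluded from the statement). Substituting $x=\tf(h)$ into the defining relation $\hat K(\tf(h))=e^{-h}$ and using $e^{-h_c}-e^{-h}\sim e^{-h_c}(h-h_c)$ yields $\tf(h)\sim \mathrm{cst}\,(h-h_c)^{\max(1,1/\alpha)}$. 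For $h\to\infty$ one has $\tf(h)\to\infty$ and the Laplace transform is dominated by its first term, $\hat K(x)=K(1)e^{-x}(1+o(1))$, which inverted gives $\tf(h)=h-\log K(1)+o(1)$, proving \eqref{critex}. The only delicate point is the justification of the Tauberian equivalent for $\hat K(0)-\hat K(x)$ in the slowly-varying framework of \cite{cf:RegVar}, which should be stated as a separate lemma.
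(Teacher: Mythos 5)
Your proof is correct, and your first and last steps — the renewal decomposition of $Z_{N,h}$, and the Abelian estimate of $\hat K(0)-\hat K(x)\sim c\,x^{\alpha}$ (resp.\ $\bE[\tau_1]\,x$) near $x=0$ — are exactly the paper's, which invokes \cite[Théorème 1.7.1, Corollaire 8.7.1]{cf:RegVar} for the latter. Where you depart is in how $\tf(h)$ is extracted from the renewal decomposition. The paper does not pass through the Dirichlet series $\sum_N e^{-Nx}Z_{N,h}$; instead it tilts the inter-arrival law to $\tilde K(n):=e^{h}e^{-x_0 n}K(n)$ — a genuine probability precisely because $x_0$ solves \eqref{teteteopopop} — rewrites $Z_{N,h}=e^{Nx_0}\,\bP(N\in\tilde\tau)$ for the associated renewal $\tilde\tau$, and invokes the discrete renewal theorem (finite mean since $x_0>0$) to conclude $Z_{N,h}\sim e^{Nx_0}/\bE[\tilde\tau_1]$; in the delocalized case it uses the sub-probability $\tilde K(n)=e^h K(n)$ to get the squeeze $e^h K(N)\le Z_{N,h}\le \bP(N\in\tilde\tau)\le 1$, much as you do. Your abscissa-of-convergence argument is sound — you correctly lean on the already-established existence of the limit $\tf(h)$ to identify it with the abscissa — and it has the merit of not needing the renewal theorem in the localized phase. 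What it does not deliver is the $O(1)$ correction: the paper's route shows $\log Z_{N,h}-N\tf(h)\to -\log\bE[\tilde\tau_1]$ for free, whereas yours only yields $\tfrac1N\log Z_{N,h}\to\tf(h)$. For the purposes of the stated proposition both are fully adequate.
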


Pour le cas $\alpha=1$, il faut ajouter une correction logarithmique à la première ligne de \eqref{critex}.
\medskip
\begin{proof}
 On réécrit la fonction de partition comme la somme des contributions de toutes les trajectoires accrochées en $N$,
\begin{equation}
 Z_N=\sum_{n\le N}\sum_{l_1+l_2+\dots+l_n=N} \exp(nh) \prod_{i=1}^n K(l_i).
\end{equation}
Soit $x_0$ la solution de l'équation \eqref{teteteopopop}, on pose $\tilde K(n)=\exp(h)\exp(-x_0 n)K(n)$. Une simple réécriture donne alors
\begin{equation}
 Z_N=\exp(Nx_0)\sum_{n\le N}\sum_{l_1+l_2+\dots+l_n=N} \prod_{i=1}^n \tilde K(l_i).
\end{equation}
Soit $\tilde \tau$ le processus de renouvellement défini par $\bP(\tilde \tau_1=n)=\tilde K(n)$. On a
\begin{equation}
 Z_N=\exp(Nx_0)\bP(N\in \tilde \tau).
\end{equation}
Ce renouvellement est récurrent, on a même $\bE[\tilde \tau_1]<\infty$ si $x_0>0$,  on peut donc appliquer le théorème du renouvellement (pour l'énoncé et une preuve, voir \cite[Chapitre I, Théorème 2.2]{cf:As}). On obtient
\begin{equation}
 Z_N\sim_{N\to\infty}\frac{1} {\bE[\tau_1]} \exp(Nx_0).
\end{equation}
Dans le cas où \eqref{teteteopopop} n'a pas de solution, ou dans le cas $x_0=0$, on peut poser $\tilde K(n)=\exp(h)K(n)$. Comme $\sum_{n\in \N} \tilde K(n)\le 1$, on peut définir le renouvellement $ \tilde \tau$ de la même manière. On a alors
\begin{equation}
 \exp(h)K(N)\le Z_N\le \bP(N\in \tilde \tau)\le 1.
\end{equation}
Ce qui donne le résultat.
\medskip

Lorsque $h$ tend vers l'infini, $\tf(h)$ tend vers l'infini. On a donc
\begin{equation}
 K(1)e^{-\tf(h)}(1+o(1))=e^{-h}
\end{equation}
Cela donne l'approximation au deuxième ordre de l'énergie libre pour $h$ tendant vers l'infini.
\medskip

Nous prouvons maintenant le résultat d'équivalence de l'énergie libre au voisinage du point critique (on se contentera du cas $\alpha\in(0,1)$). On se place dans le cas où $h_c=0$ (sinon une multiplication par $\exp(h_c)$ nous ramène à ce cas). On utilise un théorème Abélien qui se trouve dans \cite[Théorème 1.7.1, et Corrolaire 8.7.1]{cf:RegVar} qui assure que, lorsque $K(n)\sim C n^{-(1+\alpha)}$,
\begin{equation}
 1-\sum_{n=1}^{\infty} \exp(-x n)K(n)\sim C x^{\alpha}\gG(1-\alpha).
\end{equation}
Il est trivial de voir que $\tf(h)$ tend vers $0$ quand $h$ tend vers $h_c$, on en déduit 
\begin{equation}
 \tf(h)\stackrel{h\to 0_+}{\sim}\left(\frac{h}{C\gG(1-\alpha)}\right)^{\frac{1}{\alpha}}.
\end{equation}
Le cas $\alpha>1$ est plus facile, voir \cite[Theorem 2.1]{cf:Book} pour une preuve.\\
\end{proof}

La valeur critique $h_c$ sépare les phases {\sl localisée} et {\sl délocalisée} du modèle. Au voisinage de $h_c$, l'énergie libre a un comportement polynomial. On appelle l'exposant $\max(1,\alpha^{-1})$ {\sl exposant critique}, il contient beaucoup d'informations sur le système. Le résultat montre que quite à remplacer $K(n)$ par $\exp(h_c)K(n)$, on peut toujours considérer que $h_c(0)=0$, i.e.\ que le renouvellement $\tau$ est réccurent. C'est ce que nous ferons dans la suite de cette introduction.

\begin{figure}[hlt]
\begin{center}
\leavevmode
\epsfxsize =14 cm
\psfragscanon
\psfrag{f(h)}{\small $\tf(h)$}
\psfrag{h}{\small  $h$}
\psfrag{O}{\small O}
\psfrag{hc}{\small $h_c$}
\psfrag{f(h)sim.}{\small $\tf(h)\sim cste.(h-h_c)^{\frac{1}{\alpha}}$.}
\psfrag{f(h)sim..}{\small $\tf(h)=h-\log K(1)+o(1)$.}
\psfrag{loc}{\small \bf phase localisée}
\psfrag{deloc}{\small \bf phase délocalisée}
\epsfbox{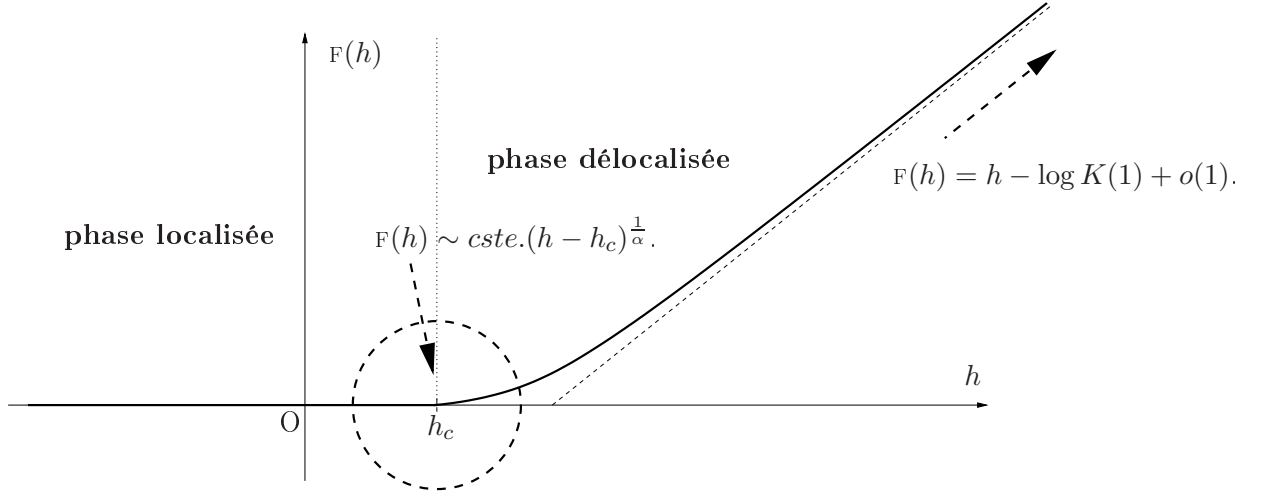}
\end{center}
\caption{L'allure de la courbe de l'énergie libre $\tf(h)$ en fonction $h$ pour $\alpha\in(0,1)$, tel qu'elle découle de la Proposition \ref{th:libenerh} et des observations faites sur la croissance et la convexité. }
\label{energielibre}
\end{figure}

\subsection{Modèle inhomogène}

Le formalisme avec processus de renouvellement peut facilement  être adapté dans le cas désordonné.
Soient un processus de renouvellement $\tau$ vérifiant les hypothèses de la section précédente pour un certain réel $\alpha>0$, de loi $\bP$, et $(\go_{n})_{n\in \N}$ la réalisation d'une suite de variables aléatoires i.i.d.\ centrées (de loi $\bbP$) de variance unitaire et vérifiant 
\begin{equation}\label{finitemomo}
 \gl(\gb):=\log \bbE\left[\exp(\gb\go_1)\right]<\infty, \quad \forall\ \gb>0.
\end{equation}
 On pose $\gd_n:=\ind_{n\in \tau}$, pour $n\in \N$ et on définit la mesure de polymère $\bP_{N,\go,\gb,h}$ comme modification de la loi de $\tau$ 
\begin{equation}
\frac{\dd \bP_{N,\go,\gb,h}}{\dd \bP}(\tau):=\frac{1}{Z_{N,\go,\gb,h}}\exp\left(\sum_{n=1}^N [\gb\go_n+h]\gd_n\right)\gd_N.
\end{equation}
avec
\begin{equation}
Z_{N,\go,\gb,h}:=\bE\left[\exp(\sum_{n=1}^N [\gb\go_n+h]\gd_n)\gd_N\right].
\end{equation}
Si l'on nomme $\theta$ l'opération de shift sur l'environnement, i.e. $\theta((\go_n)_{n_\in\N})=(\go_{n+1})_{n\in\N}$, alors on a pour tout $N, N'\in \N$
\begin{equation}
 Z_{N+N',\go,\gb,h}\ge \bE\left[\exp\left(\sum_{n=1}^{N+N'} [\gb\go_n+h]\gd_n\right)\gd_{N+N'}\gd_{N}\right]= Z_{N,\go,\gb,h}Z_{N',\theta^N \go,\gb,h},
\end{equation}
donc 
\begin{equation}
 \log  Z_{N+N',\go,\gb,h} \ge \log Z_{N,\go,\gb,h}+ \log Z_{N',\theta^N \go,\gb,h}.
\end{equation}

Cette propriété et le théorème ergodique sur-addiditif de Kingman (voir \cite{cf:King}) permettent de définir, comme pour le cas homogène, l'énergie libre:
\begin{proposition}
La limite
\begin{equation}
\tf(\gb,h):=\lim_{N\to\infty}\frac{1}{N}\log Z_{N,\go,\gb,h}.
\end{equation}
existe pour presque toute réalisation de l'environnement $\go$, et est égale à\\
$\lim_{N\to \infty} N^{-1} \bbE \left[\log Z_{N,\go,\gb,h}\right]$ . C'est une fonction convexe croissante de $h$, elle est nulle pour $h$ suffisament petit.
\end{proposition}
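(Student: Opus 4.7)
Le plan est d'appliquer le théorème ergodique sur-additif de Kingman \cite{cf:King} à la suite $X_N(\go):=\log Z_{N,\go,\gb,h}$, en utilisant l'inégalité de sur-additivité établie juste avant l'énoncé. Le décalage $\theta$ sur l'espace des environnements i.i.d.\ préserve la mesure et est ergodique, il reste donc à vérifier les hypothèses d'intégrabilité. Pour la partie positive, une majoration grossière $Z_N\leq \exp\bigl(|\gb|\sum_{n=1}^N|\go_n|+|h|N\bigr)$ combinée à $|\go_n|\leq e^{\go_n}+e^{-\go_n}$ et à \eqref{finitemomo} donne $\bbE[X_N^+]\leq cN$ pour une constante $c=c(\gb,h)$, uniformément en $N$. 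Pour une minoration, en restreignant la somme définissant $Z_{N,\go,\gb,h}$ à la trajectoire à un seul renouvellement de longueur $N$ on obtient $Z_N\geq K(N)\exp(h+\gb\go_N)$, donc $X_N\geq \log K(N)+h+\gb\go_N$, dont la partie négative est intégrable. Le théorème de Kingman donne alors $N^{-1}X_N\to\sup_{N\geq 1} N^{-1}\bbE[X_N]$ presque sûrement et dans $L^1$; l'ergodicité rend cette limite presque sûrement constante, et la convergence $L^1$ l'identifie à $\lim_{N\to\infty} N^{-1}\bbE[\log Z_{N,\go,\gb,h}]$.

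La convexité et la monotonie en $h$ sont immédiates au niveau fini. Pour $\go$ et $\gb$ fixés, la fonction $h\mapsto\log Z_{N,\go,\gb,h}$ est le logarithme d'une combinaison linéaire positive d'exponentielles $\exp(hL_N(\tau))$, donc convexe et croissante en $h$; ces deux propriétés sont préservées par le passage à la limite presque sûre.

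Pour la région d'annulation, l'estimation à un seul saut ci-dessus combinée à $\log K(N)=O(\log N)$ et à la loi des grands nombres pour $\go_N/N$ donne $\tf(\gb,h)\geq 0$ pour tout $h$. Pour la majoration correspondante, l'inégalité de Jensen sur l'environnement (avec Fubini, grâce à l'indépendance de $\tau$ et $\go$) fournit
\[
\frac{1}{N}\bbE[\log Z_{N,\go,\gb,h}]\leq \frac{1}{N}\log \bE\bigl[\exp((\gl(\gb)+h)L_N)\gd_N\bigr],
\]
dont le membre de droite converge vers l'énergie libre homogène $\tf(h+\gl(\gb))$ de la Proposition \ref{th:libenerh}. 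Celle-ci s'annule dès que $h+\gl(\gb)\leq h_c$, donc $\tf(\gb,h)=0$ pour tout $h\leq h_c-\gl(\gb)$, ce qui démontre la dernière assertion. Le principal obstacle technique est la vérification des conditions d'intégrabilité dans le théorème de Kingman; elle repose essentiellement sur la décroissance polynomiale (donc sous-exponentielle) de $K(N)$, qui rend la minoration à un seul saut négligeable à l'échelle $N$. Le reste est une conséquence formelle des arguments de convexité et de Jensen classiques.
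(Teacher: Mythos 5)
Votre preuve est correcte et suit exactement la voie indiquée par le texte (sur-additivité de $\log Z_N$ + théorème de Kingman), dont le papier se contente d'esquisser l'argument sans en donner les détails. Les vérifications d'intégrabilité (majoration de $X_N^+$ par $|\gb|\sum_n|\go_n|+|h|N$, minoration de $X_N$ via le saut unique de longueur $N$) sont exactement ce qu'il faut pour pouvoir invoquer Kingman, et les arguments de convexité, de monotonie et de Jensen pour la région d'annulation sont standard et bien conduits. Rien à redire.
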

\medskip 
On pose,
\begin{equation}
 h_c(\gb):=\inf\{h\ : \tf(\gb,h)=0\}.
\end{equation}
\medskip

Comparons maintenant l'énergie libre du système désordonné à celle du système homogène. 
L'inégalité de Jensen donne
\begin{equation}
 \bbE \left[\log Z_{N,\go,\gb,h}\right]\le \log \bbE \left[ Z_{N,\go,\gb,h}\right]=\log \bE\left[\exp\left((\gl(\gb)+h)L_N\right)\gd_N\right],
\end{equation}
la dernière égalité découlant du théorème de Fubini. En divisant par $N$ et en passant à la limite il apparaît que
\begin{equation}
\tf(\gb,h)\le \tf(0,h+\gl(\gb)).
\end{equation}
La quantité $\bbE \left[ Z_{N,\go,\gb,h}\right]$ est la fonction de partition du système où le désordre a été moyenné, On parle de modèle {\sl recuit} ou {\sl annealed}. Le fait que le modèle annealed correspond au modèle homogène est une particularité de notre modèle.
Le système vraiment désordonné, où l'on regarde la mesure de polymère pour une réalisation figée  du désordre, est appelé modèle à {\sl désordre gelé} ou {\sl quenched}.
\medskip

Un autre inégalité de convexité donne $\tf(\gb,h)\ge \tf(0,h+\gl(\gb))$. 
Ces deux inégalités impliquent (dans le cas ou le renouvellement $\tau$ est réccurent),
\begin{equation}
-\gl(\gb)\le h_c(\gb)\le 0.
\end{equation}
Il a été montré sous des conditions assez générales (voir \cite{cf:AS}) que l'inégalité de droite n'est jamais vraie. En revanche, savoir si l'inégalité de gauche est satisfaite ou pas donne une information sur l'influence du désordre (voir figure \ref{energielibre2}):
\begin{itemize}
 \item Si $h_c(\gb)= -\gl(\gb)$ cela signifie (au moins heuristiquement) que le désordre n'a pas d'influence sur le comportement du système, i.e.\ que les modèles {\sl quenched} et {\sl annealed} coïncident. Dans ce cas que le désordre est dit {\sl non-pertinent}.
 \item Si $h_c(\gb)> -\gl(\gb)$, cela signifie que le désordre change le comportement du système, il est dit {\sl pertinent}.
\end{itemize}
Le critère de déplacement du point critique n'est pas le seul pour évaluer la pertinence du désordre. Pour les physiciens théoriciens il importe aussi (en fait, surtout) de savoir si l'exposant critique du modèle {\sl annealed} est conservé.\\
\medskip

\begin{figure}[hlt]
\begin{center}
\leavevmode
\epsfxsize =14 cm
\psfragscanon
\psfrag{glgb}{\tiny $-\gl(\gb)$}
\psfrag{h}{\tiny  $h$}
\psfrag{f}{\tiny $\tf$}
\psfrag{hc}{\tiny $h_c(\gb)$}
\psfrag{hcbetasim}{\tiny $h_c(\gb)=-\gl(\gb)$}
\psfrag{deuxlig} {\tiny et $\tf(\gb,h)\sim \tf(0,h+\gl(\gb))$}
\psfrag{O}{\tiny O}
\psfrag{fbetah}{\tiny $\tf(0,h+\gl(\gb))$}
\psfrag{fh}{\tiny $\tf(0,h)$}
\psfrag{irrelevant}{\small désordre non-pertinent}
\psfrag{relevant}{\small désordre pertinent}
\epsfbox{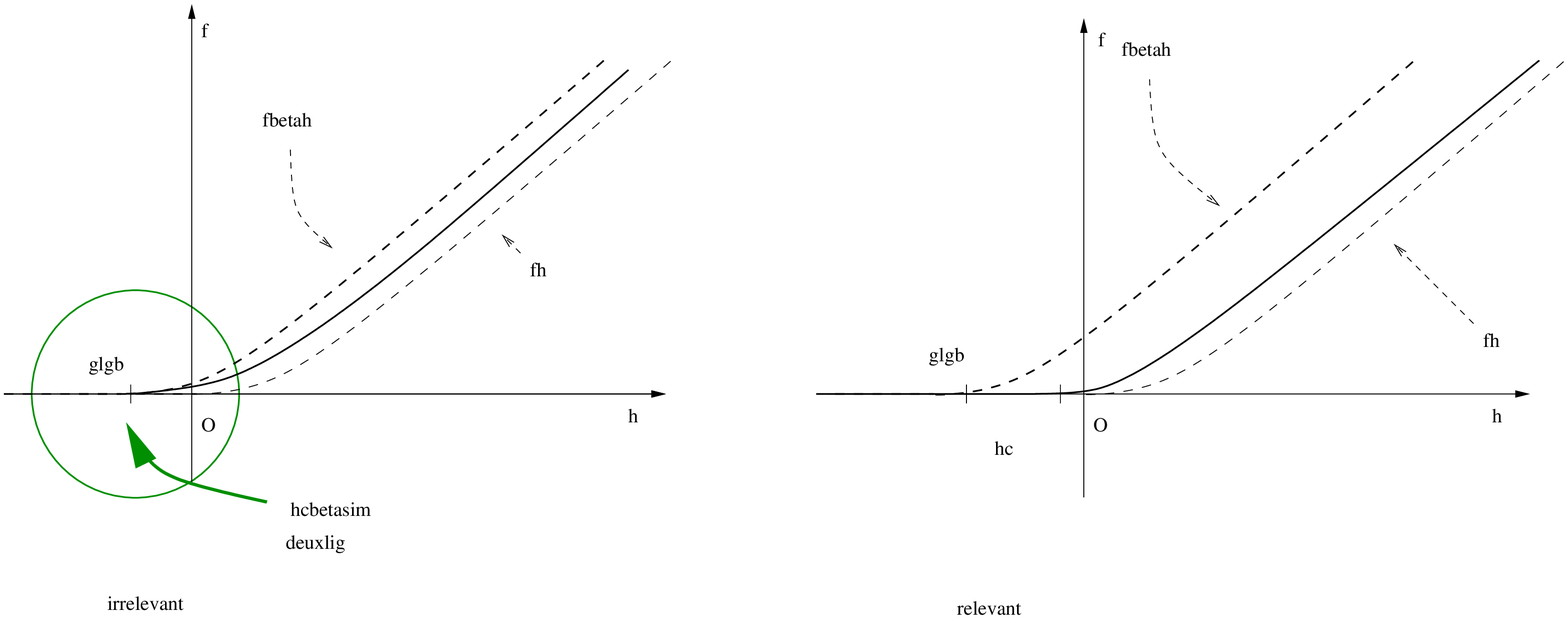}
\end{center}
\caption{Présentation de deux cas typiques de désordre, pertinent et non-pertinent: les courbes en trait plein représentent $\tf(\gb,h)$, les courbes en trait pointillé les deux bornes obtenues par convexité $\tf(0,h)$ et $\tf(0,\gl(\gb)$.}
\label{energielibre2}
\end{figure}

Le {\sl critère de Harris} (d'après le physicien A.B.\ Harris), donne une prédiction très générale concernant la pertinence du désordre à haute température \cite{cf:Harris}. La pertinence du désordre dans un modèle désordonné ne dépend que de l'exposant critique du système {\sl annealed}. Si cet exposant est strictement supérieur à $2$ le désordre doit être non-pertinent à haute température, alors que dans le cas où l'exposant est strictement inférieur à $2$ le désordre est pertinent à toute température. Dans le cadre du modèle d'accrochage, ces deux cas correspondent respectivement à $\alpha<1/2$ et $\alpha>1/2$ (cf.\ Proposition \ref{th:libenerh}).
Cette prédiction se trouve confirmé dans le cas des modèoles d'accrochage désordonnés par les physiciens \cite{cf:DHV, cf:FLNO}. En revanche le critère de Harris ne donne pas d'information sur le cas marginal où l'exposant critique est égal à $2$ (i.e. $\alpha=1/2$), et dans ce cas, il est conjecturé que la pertinence du désordre dépend du modèle considéré. Dans le cas des modèles d'accrochage, il n'y a pas de consensus parmis les physiciens quant à la pertinence du désordre, avec deux affirmations contradictoires dans \cite{cf:DHV} (pertinence) et \cite{cf:FLNO} (non-pertinence), chacune trouvant ensuite des partisans dans la communauté scientifique (\cite{cf:SC, cf:TangChate, cf:BM} soutenant la première affirmation, \cite{ cf:GS1, cf:GS2, cf:GN} la seconde).

\medskip

On peut donner une explication heuristique du critère de Harris dans le cadre des polymères. 
En fait, savoir si les énergies libres {\sl quenched} et {\sl annealed} ont le même comportement revient à savoir (très approximativement) si $Z_{N,\go,\gb,h}$ et son espérance ont le même comportement lorsque $N$ tend vers l'infini, ce qui arrive typiquement lorsque $\bbE\left[Z_{N,\go,\gb,h}^2\right]$ reste bornée quand $N$ tends vers l'infini. En fait, dès que $h>-\gl(\gb)$, $\bbE\left[Z_{N,\go,\gb,h}^2\right]$ diverge pour tout $\gb>0$; mais
comme on s'interesse à ce qui se passe au voisinage du point critique {\sl annealed}, on se contente d'étudier le second moment de $Z_{N,\go,\gb,h}$ pour $h=-\gl(\gb)$ (le point critique annealed). Le théorème de Fubini donne

\begin{equation}\begin{split}
\bbP\left[Z_N^2\right]&=\bP^{\otimes 2}\bbP\left[\exp\left(\sum_{n=0}^N (\gb\go_n-\gl(\gb))(\gd_n^{(1)}+\gd_n^{(2)})\right) \right]\\
                      &=\bP^{\otimes 2} \left[\exp\left(\sum_{n=0}^N \gd_n^{(1)}\gd_n^{(2)}(\gl(2\gb)-2\gl(\gb))\right)\right],
\end{split}\end{equation}
où $\bP^{\otimes 2}$ est la loi produit qui régit deux renouvellement indépendants $\tau^{(1)}$ et $\tau^{(2)}$ de même loi que $\tau$ et où $\gd^{(i)}_n=\ind_{n\in\tau^{(i)}}$.
Ce qu'on obtient est la fonction de partition d'un modèle d'accrochage homogène de paramètre $\gl(\gb)=\gl(2\gb)-2\gl(\gb)$, associé au processus de renouvellement $\tau^{(1)}\cap \tau^{(2)}$ (on peut vérifier que les sauts de $\tau^{(1)}\cap \tau^{(2)}$ sont i.i.d., il s'agit donc bien d'un processus de renouvellement). Savoir si le second moment diverge pour $\gb$ arbitrairement petit, revient donc (cf. Proposition \ref{th:libenerh}) à savoir si le renouvellement $\tau^{(1)}\cap \tau^{(2)}$ est récurrent. Sachant que $\bP(n\in \tau)\sim cste.n^{\alpha-1}$ (voir par exemple \cite{cf:Doney}), on peut vérifier que
\begin{equation}
 \sum_{n=1}^\infty \bP^{\otimes 2}(n\in\tau^{(1)}\cap \tau^{(2)})= \sum_{n=1}^{\infty}\bP(n\in\tau)^2=\infty  \Leftrightarrow   \alpha\ge 1/2.
\end{equation}
Cet argument de second moment peut être utilisé pour démontrer rigoureusement des bornes inférieure sur l'énergie libre (voir \cite{cf:Ken}).

Nous présentons maintenant les différents résultats mathématiques récements obtenus concernant la pertinence du désordre pour les modèles d'accrochage désordonnés:
\medskip

Un résultat de G. Giacomin et F. Toninelli \cite{cf:GT_cmp}, montre que sous certaines conditions sur la loi de l'environnement (gaussien convient, mais leur résultat est en fait plus général), la présence du désordre {\sl lisse} la courbe de l'énergie libre.
\medskip

\begin{theoreme}\label{th:smooothing}
Lorsque l'environnement est gaussien, pour tout $\gb>0$ il existe une constante $c(\gb)$ telle que pour tout
$\alpha\in [1,\infty)$ et pour tout $h\in \bbR$
\begin{equation}
 \tf(\gb,h)\le \alpha c(\gb)(h-h_c(\gb))_+^2.
\end{equation}
\end{theoreme}
\medskip

Ce résultat assure que l'exposant critique du système désordonné (s'il existe) est supérieur ou égal à deux, quel que soit la valeur de $\alpha$.
En particulier, cela prouve que l'exposant critique est modifié par le désordre si $\alpha>1/2$.
\medskip

Un résultat complémentaire concernant la {\sl non-pertinence} a été ensuite démontré par K. Alexander \cite{cf:Ken} (une preuve alternative a ensuite été proposée par F.\ Toninelli \cite{cf:T_cmp}).
\medskip

\begin{theoreme}
Lorsque l'environnement est gaussien, et que $\alpha<1/2$, il existe $\gb_0$ tel que, pour tout $\gb\in(0,\gb_0)$, on a
\begin{equation}\begin{split}
  h_c(\gb)=-\gl(\gb)=-\gb^2/2.\\
 \tf(\gb,h-\gb^2/2)\sim_{h\to 0+}\tf(0,h).
\end{split}\end{equation}
\end{theoreme}
\medskip
Donc pour $\alpha<1/2$, lorsque la température est suffisament élevée (i.e.\ lorsque $\gb$ est suffisament petit), il n'y a ni déplacement du point critique ni modification de l'exposant critique. F.\ Toninelli a  démontré que lorsque le désordre est constitué de variables non--bornées les points critiques {\sl quenched} et {\sl annealed} diffèrent à basse température  pour toute valeur de $\alpha>0$ \cite{cf:T_fractmom}.
\medskip

Plus récemment, il a  été démontré que pour $\alpha>1/2$ il y a un déplacement du point critique à toute température, et que l'on peut estimer quantitativement ce déplacement
\medskip

\begin{theoreme}
Pour $\alpha\in(1/2,1)$ ou $\alpha>1$, $h_c(\gb)<h_c(0)+\gl(\gb)$ pour toute valeur de $\gb$. De plus, pour tout $\gep>0$, il existe une constante $c$ (dépendante de la loi du renouvellement et de la loi de l'environnement) telle que pour tout $\gb<1$
\begin{equation}
 -c\gb^{\min\left(2,\frac{2\alpha}{2\alpha-1}\right)} \le h_c(\gb)+\gl(\gb)\le-\frac{1}{c} \gb^{\min\left(2,\frac{2\alpha}{2\alpha-1} \right)}.
\end{equation}
 \end{theoreme}
\medskip

La borne inférieure a été prouvée dans \cite{cf:Ken} en utilisant des méthodes identiques à celles utilisées pour le cas $\alpha<1/2$. Le déplacement du point critique a été démontré par l'auteur, en collaboration avec B. Derrida, G. Giacomin et 
F. Toninelli, avec une borne supérieure sur  $h_c(\gb)+\gl(\gb)$ légèrement moins précise (voir Chapitre \ref{CHAPDGLT}). Le résultat a donné lieu à un article publié dans la revue {\sl Communication in Mathematical Physics} \cite{cf:DGLT}.
La borne a depuis été améliorée par K.\ Alexander et N.\ Zygouras \cite{cf:AZ} pour coincider (en ordre de grandeur) avec la borne inférieure, en utilisant une méthode différente. La méthode utilisée dans \cite{cf:DGLT} peut elle aussi être adaptée pour obtenir la borne supérieure optimale.\\
\medskip

Enfin, la pertinence du désordre dans le cas marginal $\alpha=1/2$ a pu être démontrée, avec des bornes sur le déplacement du point critique,

\begin{theoreme}
Lorsque $\alpha=1/2$, $h_c(\gb)>-\gl(\gb)$ pour toute valeur de $\gb$. De plus pour tout $\gep>0$ il existe des constantes $c$ et $\gb_0$
(dépendantes de la loi du renouvellement et de la loi de l'environnement de $\gep$) telles que
\begin{equation}
-\exp(-1/c\gb^2) \le h_c(\gb)+\gl(\gb)\le -\exp(-c/\gb^{2+\gep}).
\end{equation}
\end{theoreme}

La borne inférieure est prouvée dans l'article de K.\ Alexander \cite{cf:Ken}. Le déplacement du point critique a été démontré par l'auteur, en collaboration avec G.\ Giacomin et F.\ Toninelli dans le cas gaussien avec une borne supérieure sur le déplacement du point critique égale à $\exp(-c/\gb^4)$. Ce travail a donné lieu à un article à paraître dans la revue {\sl Communication on Pure and Applied Mathematics} \cite{cf:GLT_marg}. Le résultat a ensuite été généralisé à tout type d'environnement, avec une amélioration de la borne, par les mêmes auteurs (i.e.\ le résultat mentionné ci-dessus), l'article correspondant est en cours d'examination pour publication \cite{cf:kbodies}. Ces travaux constituent les chapitres \ref{MARGREL} et \ref{DISRELCPS} de cette thèse.

\medskip
Les résultats présentés ci-dessus étaient déjà conjecturés dans la littérature physique en particulier dans un article de B.\ Derrida, V.\ Hakim et J.\ Vannimenus \cite{cf:DHV}, avec des heuristiques de preuves utilisant des techniques de groupes de renormalisation.
Cet article présente également un modèle d'accrochage sur des réseaux en diamants.
Pour ce modèle, ces idées de groupe de renormalisations peuvent être appliquées directement de manière rigoureuse. Pour cette raison nous avons d'abord étudié les questions considérées ci-dessus dans ce modèle hiérarchique.

\subsection{Modèle hiérarchique}\label{sectionhierar}

Le modèle d'accrochage hiérarchique correspond à l'accrochage d'une marche aléatoire dirigée sur une famille croissante de réseaux possédant une structure auto--similaire.
Plus précisément, ayant fixé $b,\ s\ge 2$ deux entiers, on définit la suite de réseaux $D_n$ comme suit (voir figure \ref{fig:diamond2}):
\begin{itemize}
 \item $D_0$ est une arête simple reliant deux points $A$ et $B$.
 \item $D_{n+1}$ est une réplique de $D_n$ où chaque arête est remplacée par $s$ arêtes en série et $b$ en parallèle. 
\end{itemize}
Sur ces réseaux, on fixe un chemin reliant $A$ et $B$ pour jouer le rôle de la ligne d'accrochage (tous les chemins étant équivalents le choix n'a pas d'importance) que l'on nomme $\sigma$. On considère un environnement aléatoire $(\go_e)_{e\in \sigma}$ fixé sur les arêtes de $\sigma$ composées de variables aléatoires i.i.d.\ centrées, de variance unitaire et satisfaisant \eqref{finitemomo} (la loi associée est $\bbP$). On considère $\gG_n$ l'ensemble des chemins dirigés de $D_n$ et $\bP_n$ la mesure uniforme sur $\gG_n$, et on définit la mesure de polymère sur $\gG_n$ associée aux paramètres $h\in \R$, $\gb>0$ par
\begin{equation}
 \frac{\dd \bP_{n,\go,\gb,h}}{\dd \bP_n}(\gamma):=\frac{1}{R_{n,\go,\gb,h}}\exp\left(\sum_{e\in \gamma}(\gb\go_{e}+h)\ind_{e\in \sigma}\right).
\end{equation}
où
\begin{equation}
R_{n,\go,\gb,h}:=\bP_n \left[\exp\left(\sum_{e\in \gamma}(\gb\go_{e}+h)\ind_{e\in \sigma}\right)\right]=R_n.
\end{equation}
La propriété remarquable de ce modèle est que la fonction de partition $R_n$ vérifie la relation de réccurence suivante
\begin{equation}\label{machinrecu}\begin{split}
 R_0&\stackrel{(\mathcal L)}{=}\exp(\gb \go+h),\\
 R_{n+1}&\stackrel{(\mathcal L)}{=}\frac{R^{(1)}_n\dots R^{(s)}_n+(b-1)}{b},
\end{split}\end{equation}
où les égalités sont en loi, et où $R^{(1)}_n, \dots, R_n^{(s)}$ sont des variables aléatoires i.i.d.\ de même loi que $R_n$, et $\go$ une variable aléatoire de même loi que les $\go_e$.
\medskip

Les équations \eqref{machinrecu} peuvent être définies pour n'importe quelle valeur de $b\ne 0$. Pour $b>1$, $R_n$ peut toujours être interprêtée comme la fonction de partition d'un modèle d'accrochage, et nous l'utiliserons comme définition pour $R_n$.\\

Un modèle voisin de celui présenté ci-dessus est celui où le désordre est situé non-plus sur les arêtes de $\sigma$, mais sur les sites (exceptés $A$ et $B$, mais on remarque que cette convention ne change pas la mesure de polymère, mais seulement la définition de $R_n$). Dans cas là, la récurrence permettant de construire la fonction de partition est légèrement modifiée et l'on a

\begin{equation}
\label{machinrecu2}\begin{split}
 R_0&\stackrel{(\mathcal L)}{=}1,\\
 R_{n+1}&\stackrel{(\mathcal L)}{=}\frac{R^{(1)}_n\dots R^{(s)}_nA^{(1)}\dots A^{(s-1)}+(b-1)}{b},
\end{split}
\end{equation}
où les égalités sont en loi,  où $R^{(1)}_n, \dots, R_n^{(s)}$ sont des variables aléatoires i.i.d.\ de même loi que $R_n$, et $A^{(1)},\dots, A^{(s-1)}$ des variables aléatoires i.i.d.\ indépendantes des $R_n^{(i)}$ qui ont pour loi la loi de $\exp(\gb\go+h)$.
Là encore on peut définir le modèle pour toute valeur de $b> 1$, $b\in \R$.

\begin{figure}[hlt]
\begin{center}
\leavevmode
\epsfxsize =14 cm
\psfragscanon
\psfrag{l0}{\small  D$0$}
\psfrag{l1}{\small  D$1$}
\psfrag{l2}{\small  D$2$}
\psfrag{d0}{$d_0$}
\psfrag{d1}{$d_1$}
\psfrag{d2}{$d_2$}
\psfrag{d3}{$d_3$}
\psfrag{d4}{$d_4$}
\psfrag{u1}{$u_1$}
\psfrag{u2}{$u_2$}
\psfrag{u3}{$u_3$}
\psfrag{u4}{$u_4$}
\psfrag{traject}{\small trajectoire $a$}
\psfrag{traject2}{\small trajectoire $b$}
\epsfbox{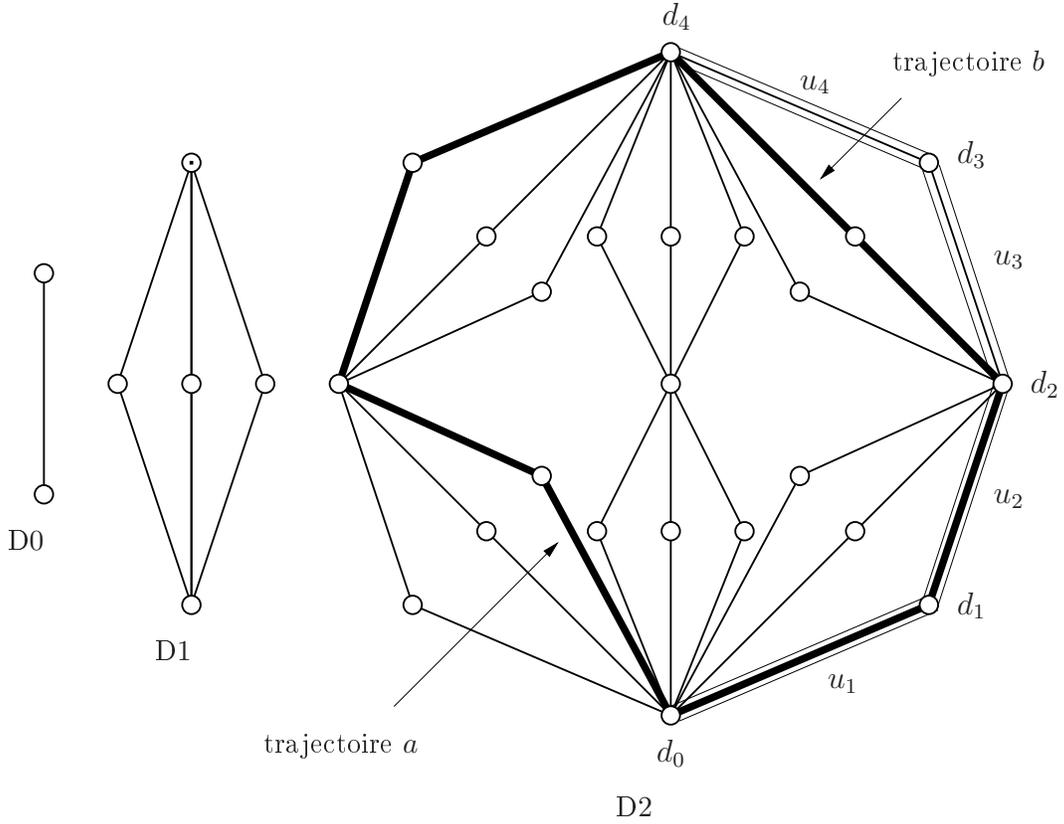}
\end{center}
\caption{Les trois premiers niveaux du réseau hiérarchique pour $s=2$, $b=3$. Sur le troisième schéma, on a représenté la ligne d'accrochage, composée des arêtes ($u_1,\ u_2,\ u_3,\ u_4$). Selon le modèle considéré, le désordre peut être situé sur les arêtes ($u_1,\ u_2,\ u_3,\ u_4$) ou sur les sites $d_1, d_2, d_3$.}
\label{fig:diamond2}
\end{figure}

Pour ces modèles on définit l'énergie libre comme précédemment
\begin{proposition}
 Dans les deux modèles (désordre par site et par arête), la limite
\begin{equation}
 \tf(\gb,h):=\lim_{n\to \infty}\frac{1}{s^n}\log R_n,
\end{equation}
existe presque surement et est égale à
\begin{equation}
 \lim_{n\to \infty}\frac{1}{s^n}\bbE\left[\log R_n \right].
\end{equation}
\end{proposition}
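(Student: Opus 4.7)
Je me concentre sur le modèle avec désordre par arête ; le cas avec désordre par site est analogue, les variables supplémentaires $A^{(i)}$ contribuant à un terme déterministe sans modifier la structure de l'argument. L'idée consiste à établir d'abord la convergence de $u_n := \bbE[\log R_n]/s^n$ par encadrement, puis à en déduire la convergence presque sûre par une inégalité de concentration.

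De la récurrence \eqref{machinrecu} et de la positivité, on tire $R_{n+1} \geq R_n^{(1)} \cdots R_n^{(s)}/b$, d'où en passant au logarithme puis à l'espérance,
\begin{equation*}
\bbE[\log R_{n+1}] \geq s\, \bbE[\log R_n] - \log b,
\end{equation*}
soit $u_{n+1} \geq u_n - (\log b)/s^{n+1}$. En itérant, pour tout $m \geq n$, $u_m \geq u_n - (\log b)/(s^n(s-1))$. Pour la majoration, l'inégalité de Jensen donne $\bbE[\log R_n] \leq \log \bbE[R_n]$ ; par indépendance des répliques dans \eqref{machinrecu}, $\bar R_n := \bbE[R_n]$ vérifie la récurrence déterministe $\bar R_{n+1} = (\bar R_n^s + b - 1)/b$, dont une analyse élémentaire (par comparaison avec $x \mapsto x^s/b$ pour $\bar R_n > 1$) montre que $(\log \bar R_n)/s^n$ reste bornée. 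La suite $(u_n)$ est donc bornée et vérifie $\liminf_m u_m \geq \limsup_n u_n$, d'où la convergence vers une limite finie, notée $\tf(\gb,h)$.

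Pour la convergence presque sûre, on regarde $\log R_n$ comme fonction des $s^n$ variables $(\go_e)_{e \in \sigma}$. Un calcul direct montre que $\partial \log R_n / \partial \go_e = \gb\, \bP_{n,\go,\gb,h}(\{e \in \gamma\}) \in [0, \gb]$, donc $\log R_n$ est $\gb$-lipschitzienne en chaque coordonnée. Sous l'hypothèse $\gl(\gb) < \infty$, une inégalité de concentration (par exemple Azuma--Hoeffding appliquée aux incréments de martingale engendrés progressivement par les $(\go_e)$, ou concentration gaussienne dans le cas gaussien) fournit, pour tout $\gep > 0$,
\begin{equation*}
\bbP\bigl(|\log R_n - \bbE[\log R_n]| > \gep\, s^n\bigr) \leq 2 \exp(-c\, \gep^2\, s^n).
\end{equation*}
Cette borne étant sommable en $n$, le lemme de Borel--Cantelli entraîne $u_n - \bbE[u_n] \to 0$ presque sûrement, et donc $u_n \to \tf(\gb,h)$ p.s.

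Le principal obstacle technique se trouve à la dernière étape lorsque le désordre n'est pas borné : l'hypothèse $\gl(\gb) < \infty$ ne permet pas d'appliquer directement Azuma--Hoeffding. On peut soit tronquer le désordre à un seuil $M_n$ croissant adéquatement et contrôler séparément la contribution de la queue via l'intégrabilité exponentielle, soit invoquer une inégalité de concentration plus fine (à la Talagrand ou à la Ledoux) exploitant la régularité de $\log R_n$ en $\go$.
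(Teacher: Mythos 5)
Ta preuve de la convergence de $u_n := \bbE[\log R_n]/s^n$ suit essentiellement la même idée que le texte: la récurrence \eqref{machinrecu} donne $R_{n+1}\geq \prod_{i} R_n^{(i)}/b$, d'où une quasi-monotonie de $u_n$, et l'inégalité de Jensen combinée à la récurrence annealed fournit la borne supérieure; on conclut par encadrement. Le texte (voir la preuve du Théorème~\ref{th:F}) est un peu plus précis: il exhibe deux suites décalées, $s^{-n}\bbE[\log(R_n/\mathrm{cste})]$ et $s^{-n}\bbE[\log(\mathrm{cste}\cdot R_n)]$, l'une croissante et l'autre décroissante et convergeant vers la même limite, ce qui donne en prime des bornes explicites et uniformes sur la vitesse de convergence (équation~\eqref{eq:encadre}), exploitées ensuite dans toute la thèse.

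Pour la convergence presque sûre en revanche, tu empruntes une route réellement différente de celle du texte. Le texte itère l'inégalité $R_n/B \geq \prod_i (R_k^{(i)}/B)$ pour $k$ fixé et $n\to\infty$: les variables $R_k^{(i)}$ étant i.i.d., la loi forte des grands nombres donne $\liminf_n s^{-n}\log R_n \geq s^{-k}\bbE[\log(R_k/B)]$ presque sûrement, puis on fait $k\to\infty$; et symétriquement pour le $\limsup$ à l'aide de l'autre inégalité. Cet argument ne requiert que $\bbE|\log R_k|<\infty$, ce qui est immédiat sous l'hypothèse \eqref{finitemomo} puisque $R_k$ est minorée déterministement par une constante strictement positive. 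Ton approche par concentration (Lipschitz en chaque $\go_e$, puis Azuma ou concentration gaussienne, puis Borel--Cantelli) est plus quantitative et se généralise bien, mais — comme tu le reconnais toi-même — elle comporte une lacune réelle pour le désordre non borné et non gaussien: la borne $\gb$ sur les dérivées partielles ne borne pas à elle seule les incréments de la martingale de Doob, et les contournements que tu esquisses (troncature à un seuil croissant, ou inégalités de type Talagrand/Ledoux) exigent un travail substantiel que tu ne mènes pas. L'argument par loi forte des grands nombres du texte court-circuite entièrement cette difficulté et reste plus élémentaire. Si tu tiens à l'approche par concentration pour des besoins quantitatifs ultérieurs, les résultats de Lesigne et Volný (cités dans la thèse) donnent le cadre adéquat sous l'hypothèse de moments exponentiels, mais ils ne sont pas nécessaires pour établir la seule existence du limite.
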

\medskip
Les inégalitées de convexité précédentes sont toujours valables et on a 
\begin{equation}\begin{split} \label{inqualit}
 \tf(0,h)  &\le  \tf(\gb,h)\le \tf(0,h+\gl(\gb)),\\
h_c(0)-\gl(\gb) &\le  h_c(\gb) \le h_c(0). \end{split}
\end{equation}

On se pose la même question que pour le modèle non-hiérarchique: Quand le désordre est il pertinent? Pour pouvoir appliquer le critère de Harris, il faut étudier tout d'abord le système homogène. Pour simplifer, les calculs faits dans cette partie concerneront uniquement le modèle avec désordre par arête. Etudions la récurence déterministe qui définit la fonction de partition lorsque $\gb=0$:
\begin{equation}\label{recdet}
\begin{split}
r_0&=\exp(h)\\
r_n&=\frac{r_n^s-(b-1)}{b}. 
\end{split}
\end{equation}
\`A un système de rang $n$ (correspondant au réseau $D_n$) de paramètre $h$ correspond un système de rang $(n-1)$ de paramètre $T(h)$, où $T$ est la transformation correspondant au groupe de renormalisation. Cette transformation est explicite:
\begin{equation}
T: h\mapsto \log \left(\frac{\exp(sh)+b-1}{b}\right).
\end{equation}
Cela signifie que l'on peut renormaliser le système en remplaçant chaque diamant élémentaire par une arête, et $h$ par $T(h)$ sans changer la fonction de partition.
La localisation ou délocalisation du système peut être déterminée en étudiant vers quel point fixe (de la transformation $T$) $T^n(h)$ converge. Si $T^n(h)$ tend vers l'infini lorsque $n$ tend vers l'infini, c'est que la force d'accrochage grandit en renormalisant et qu'on est donc dans la phase localisée. Sinon si $T^n(h)$ tend vers le point fixe stable (fini) de la transformation $T$, c'est que l'on se trouve  dans la phase localisée. La séparation des deux phases correspond au point fixe instable de $T$. 

La transformation étant très simple, on peut déterminer explicitement son unique point fixe instable de la transformation $T$. Dans le cas où $b<s$, on a $h_c=0$. De plus, la définition de l'énergie libre implique
\begin{equation}
 \tf(h)=\tf(T(h))/s.
\end{equation}
Au voisinage de zéro cela donne
\begin{equation}
 \tf(h):=\tf((s/b)h+o(h))/s
\end{equation}
Donc s'il existe un exposant critique pour l'énergie libre (i.e.\ $\alpha$ tel que $\tf(h)\approx h^{1/\alpha}$), alors il doit vérifier
\begin{equation}
 1=\frac{(s/b)^{1/\alpha}}{s}.
\end{equation}
En poussant plus loin ce raisonnement, on peut déterminer le comportement critique du système homogène:

\begin{proposition}
Pour $b\in(1,s)$ on a $h_c(0)=0$ pour le modèle homogène et il existe une constante $c$ (dépendant de $b$ et $s$) telle que pour tout $h\in[0,1]$ 
\begin{equation}\label{anneaboun}
 \frac{1}{c}h^{\frac{1}{\ga}}\le \tf(0,h)\le c h^{\frac{1}{\alpha}},
\end{equation}
où
\begin{equation}
 \alpha:=\frac{\log s -\log b}{\log s}.
\end{equation}
Dans le cas $s=2$, $b>2$, $h_c(0)=\log b-1$ (uniquement pour le désordre par arête) et \eqref{anneaboun} est vérifié avec
\begin{equation}
 \alpha:=\frac{\log 2(b-1)-\log b}{\log 2}.
\end{equation}
\end{proposition}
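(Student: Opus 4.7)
The plan is to exploit the functional equation satisfied by the free energy together with the exact form of the renormalization map $T(h) = \log\left(\frac{e^{sh} + b - 1}{b}\right)$, which links the partition function at scale $n$ to the one at scale $n-1$. First I would observe that iterating the recursion \eqref{recdet} gives $r_n(h) = r_{n-1}(T(h))$, hence $r_n(h) = r_0(T^n(h)) = \exp(T^n(h))$, and dividing by $s^n$ and taking the limit yields the crucial identity
\begin{equation}\label{planfunc}
\tf(0,h) = \frac{1}{s}\,\tf(0,T(h)).
\end{equation}
This identity already implies by iteration that $\tf(0,h) = s^{-n}\tf(0,T^n(h))$ for every $n$, which reduces the asymptotic study of the free energy near $h_c$ to the study of the orbits of $T$.

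Next I would carry out the fixed-point analysis of $T$. Solving $T(h)=h$ reduces to a polynomial equation in $e^h$; for $b\in(1,s)$ one finds only $h=0$ as fixed point with $T'(0) = s/b > 1$, while for $s=2$, $b>2$ the equation factors as $(e^h-1)(e^h-(b-1))=0$, giving a stable fixed point at $0$ and an unstable one at $\log(b-1)$ with $T'(\log(b-1)) = 2(b-1)/b > 1$. In each case, denoting by $h_c$ the unstable fixed point and $\lambda := T'(h_c)$, one checks that $T(h) > h$ for $h > h_c$ and $T(h) \leq h$ for $h \leq h_c$ in the relevant range, so that $T^n(h) \to \infty$ when $h > h_c$ and $T^n(h)$ stays bounded otherwise. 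Then \eqref{planfunc} immediately yields $\tf(0,h)=0$ for $h\le h_c$ (since $r_n$ remains bounded along a bounded orbit), and by an elementary lower bound on the recursion in the regime $r_n\to\infty$ (where $r_{n+1}\sim r_n^s/b$, so $\log r_n/s^n$ converges to a positive limit), one gets $\tf(0,h)>0$ for $h>h_c$.

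To obtain the sharp critical exponent, I would exploit the linearization of $T$ near $h_c$: there exist constants $C_1,C_2>0$ and a neighborhood $[h_c,h_c+\eta]$ on which
\begin{equation}
\lambda(h-h_c) - C_1(h-h_c)^2 \leq T(h)-h_c \leq \lambda(h-h_c) + C_2(h-h_c)^2.
\end{equation}
Iterating these inequalities, for $h$ close enough to $h_c$ one shows that $T^n(h)-h_c$ remains comparable (up to multiplicative constants) to $\lambda^n(h-h_c)$ as long as the orbit stays in $[h_c,h_c+\eta]$. Choose $n=n(h)$ as the first index for which $T^n(h)-h_c\geq \eta/2$; then $n\asymp \log\bigl(1/(h-h_c)\bigr)/\log\lambda$, while $\tf(0,T^n(h))$ is bounded above and below by fixed positive constants (since it evaluates $\tf$ on a compact set away from $h_c$ where $\tf$ is continuous and strictly positive). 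Combined with \eqref{planfunc} this gives
\begin{equation}
\tf(0,h) = \frac{\tf(0,T^n(h))}{s^n} \asymp s^{-n} \asymp (h-h_c)^{\log s/\log\lambda} = (h-h_c)^{1/\alpha},
\end{equation}
since a direct check yields $\log\lambda/\log s = \alpha$ in both cases of the proposition.

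The main technical hurdle will be the rigorous control of the linearization in step three: the one-step quadratic-error bounds propagate under iteration to a product of factors of the form $1\pm C(h-h_c)\lambda^k$, and one has to verify that this product remains bounded by a universal constant for all $k\le n(h)$, so that matching upper and lower bounds on $T^n(h)-h_c$ are preserved. This is standard Koenigs-type linearization for a hyperbolic fixed point and can be handled by a telescoping/logarithmic argument, but it is the one place where uniformity in $h\to h_c^+$ has to be checked carefully; everything else is either algebraic (the fixed-point computation) or a direct consequence of monotonicity and convexity.
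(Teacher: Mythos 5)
Your plan is correct and is essentially the paper's proof: both arguments rest on the functional equation $\tf(0,h)=\tf(0,T(h))/s$ and on linearizing the renormalization map at its unstable fixed point, extracting the exponent $\log s/\log T'(h_c)=1/\alpha$. The only difference is one of direction: the paper iterates the \emph{inverse} map from a fixed reference level where the free energy equals $1$, proving the sharp asymptotic $a_n\sim G_a\,\lambda^{-n}$ for the backward orbit, whereas you iterate forward with an exit time; the telescoping control of the linearization errors is the same either way.
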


\begin{rema}\rm 
 Contrairement au cas homogène, on ne peut pas prouver $\tf(0,h)\sim cste. h^{\frac{1}{\alpha}}$. Il est conjecturé que cette équivalence n'est pas vraie, et que la constante doit être remplacée par une fonction $\log$ périodique ($f$ telle que $f(\exp (x))$ est périodique), avec des oscillations de très faible amplitude, celles-ci étant dues à la péridiodicité des réseaux (voir \cite{cf:DIL}).
\end{rema}

Le modèle avec désordre est plus difficile à étudier, car la transformation du groupe de renormalisation est aléatoire.
Cependant l'existence d'un exposant critique pour le modèle homogène permet d'utiliser le critère de Harris pour émettre une hypothèse quant à la pertinence du désordre.
Nous présentons les résultats obtenus pour les différents modèles hiérarchiques (désordre par site et désordre par arête), qui assurent la validité du critère de Harris.
D'abord dans le cas où le désordre est non-pertinent à haute température:

\medskip

\begin{theoreme}
 Dans tous les cas où $\alpha<1/2$ (i.e. $b\in(\sqrt s, s)$, et $s=2$, $b\in (2, 2+\sqrt{2})$) et le désordre est non pertinent à haute température dans le sens où il existe $\gb_0$ tel que pour tout $\gb<\gb_0$, $h_c(\gb)=h_c(0)+\gl(\gb)$ et 
\begin{equation}
 \tf(\gb,h)\sim_{h\to h_c(\gb)_+} \tf(0,h+\gl(\gb)).
\end{equation}
\end{theoreme}
\medskip
\noindent Et dans le cas où le désordre est pertinent pour tout valeur de $\gb$. 
\medskip
\begin{theoreme}\label{IIIIII}
 Dans tous les cas où $\alpha>1/2$, (i.e. $b\in (1,\sqrt{s})$, et $s=2$, $b\in (2+\sqrt{2},\infty)$ le désordre est pertinent dans le sens où
$h_c(\gb)>h_c(0)-\gl(\gb)$.
De plus, on peut estimer la différence entre les deux quantités. Il existe une constante $c$ (dépendante de $b$ et $s$) telle que
\begin{equation}
 -c\gb^\frac{2\alpha}{2\alpha-1}<h_c(\gb)-(h_c(0)-\gl(\gb))<-\frac{1}{c} \gb^{\frac{2\alpha}{2\alpha-1}}.
\end{equation}
\end{theoreme}
\medskip

Ces résultats sont analogues à ceux du modèle non-hiérarchique. La borne supérieure du Theorème \ref{IIIIII} a été démontrée avant le résultat correspondant pour le cadre non-hiérarchique, et c'est l'étude du modèle hiérarchique qui a donné l'intuition de la preuve.
\medskip

Ces deux théorèmes ainsi que le comportement de l'énergie libre du système homogène ont été prouvés (pour le modèle par arête dans  le cas $s=2$, mais les autres cas sont similaires) en collaboration avec G.\ Giacomin et F.\ Toninelli. Ces résultats ont donné lieu à un article à paraître dans la revue {\sl Probability Theory Related Fields} \cite{cf:GLT}, qui constitue le chapitre \ref{HPMQM} de cette thèse.
\medskip

Nous présentons maintenant les résultats obtenus pour le cas $\alpha=1/2$ pour lequel le critère de Harris ne donne pas de prédiction.
Pour le modèle avec désordre par site:

\begin{theoreme}
 Dans le modèle avec désordre par site, lorsque $b=\sqrt{s}$, on a $h_c(\gb)> h_c(0)-\gl(\gb)$ pour toute valeur de $\gb$. De plus on peut trouver des constantes $c$ et $\gb_0$ telles que
\begin{equation}
 -\exp\left(-\frac{1}{c\gb}\right)\le h_c(\gb)-(h_c(0)-\gl(\gb))\le -\exp\left(-\frac{c}{\gb^2}\right), \quad \forall \gb<\gb_0.
\end{equation}
\end{theoreme}

Ce résultat a été démontré en raffinant de la méthode utilisée dans \cite{cf:GLT}. La preuve utilise explicitement le caractère inhomogène de la fonction de Green pour ce modèle et n'est pas adaptable au cas du désordre par arête. Il a donné lieu à un article à paraître dans la revue {\sl Probability Theory Related Fields} \cite{cf:Hubert}, qui constitue le chapitre \ref{HPMDMR} de cette thèse.

\medskip

Enfin, dans l'article \cite{cf:GLT_marg}, nous avons  confirmé le caractère pertinent du désordre pour le cas marginal du modèle avec désordre par arête introduit dans \cite{cf:DHV}, dans le cas particulier du désordre gaussien.

\begin{theoreme}
Dans le modèle avec désordre par arête, pour les cas $s=2$, $b\in \{\sqrt{2},2+\sqrt{2}\}$, lorsque le désordre est gaussien, on a $h_c(\gb)< h_c(0)+\gl(\gb)$ pour toute valeur de $\gb$. De plus, on peut trouver une constante $c$ telle que pour tout $\gb\in[0,1]$

\begin{equation}
  -\exp\left(-\frac{1}{c\gb^2}\right)\le h_c(\gb)-h_c(0)-\gl(\gb)\le -\exp\left(-\frac{c}{\gb^4}\right).
\end{equation}
\end{theoreme}

Le resultat présenté se limite au cas gaussien avec $s=2$, mais des techniques ont depuis été développées pour améliorer les bornes et traiter le cas général (voir \cite{cf:kbodies}).
Pour le cas marginal, les bornes trouvées pour $h_c(\gb)$ dans les deux modèles diffèrent. Cela appuie l'hypothèse selon laquelle le comportement du système dans ce cas dépend du modèle considéré, alors qu'il est universel quand $\alpha\ne 1/2$ (cf.\ critère de Harris).
Enfin un résultat de {\sl lissage} de la courbe de l'énergie libre, similaire à la proposition \ref{th:smooothing} a été démontré en collaboration avec F. Toninelli \cite{cf:LT}. Les résultats présent dans cette thèse, ont été obtenus en combinant des méthodes de changement de mesure et d'estimation de moments non-entiers de la fonction de partition.

\section{Polymères dirigés en milieu aléatoire.}

\subsection{Généralités}
Ce modèle de polymère dirigé en environnement aléatoire a été introduit (en dimension $1+1$) par Henley et Huse pour rendre compte des effets de rugosité dans le modèle d'Ising $2$-dimensionnel perturbé par des impuretés aléatoires. Ce modèle et ses déclinaisons (modèles dans un cadre continu ou semi-continu, percolation dirigée au dernier passage) ont donné lieu à de nombreuses études mathématiques ces vingt dernières années, en particulier pour les phénomènes de localisation. Parmi les travaux sur le sujet on peut citer \cite{cf:B, cf:IS, cf:P, cf:M, cf:CH_ptrf, cf:CSY, cf:CY, cf:CY_cmp, cf:CV, cf:V, cf:BTV} et \cite{cf:CSY_rev} pour un article de survol. Nous présentons dans cette introduction le modèle dans sa définition la plus simple, et les principaux résultats et conjectures.
\medskip

Rappelons la définition du modèle en dimension $d$. Soit $(S_{n})_{n\in\N}$ la marche aléatoire simple dans $\Z^d$ de loi $\bP$, i.e.\ $S_0=0$ et la suite $(S_{n}-S_{n-1})_{n\in\N}$ est une suite i.i.d.\ de variables aléatoires à valeur dans $\Z^d$ et
\begin{equation}
\bP(S_1=x)=\begin{cases} \frac{1}{2d} \quad &\text{si } |x|=1.\\
          0 \quad &\text{sinon}.  
           \end{cases}
\end{equation}
\'Etant donnée la réalisation $(\go_{n,x})_{n\in \N, z\in \Z^d}$ d'un champs de variables aléatoires i.i.d.\ (de loi $\bbP$, on notera $\bbE$ l'espérance) centrées de variance unitaire, vérifiants \eqref{finitemomo}, on définit la mesure de polymère $\mu_{N,\go,\gb}$ comme la loi d'une marche aléatoire dont la dérivée de Radon-Nicodym par rapport à $\bP$ est

\begin{equation}
\frac{\dd \mu_{N,\go,\gb}}{\dd \bP}(S):=\frac{1}{Z_{N,\go,\gb}}\exp\left(\gb \sum_{n=1}^N \go_{n,S_n}\right),
\end{equation}
où
\begin{equation}
 Z_{N,\go,\gb}:=\bE\left[\exp\left(\gb \sum_{n=1}^N \go_{n,S_n}\right)\right].
\end{equation}

Le but est, comme pour les modèles d'accrochage, de déterminer certaines propriétés asymptotiques de la suite de mesure $\mu_{N,\go,\gb}$ lorsque $N$ tend vers l'infini. On peut intuiter que l'influence du désordre va croître avec $\gb$ (ou, de manière équivalente, décroître avec la température). En effet,
plus $\gb$ est grand, plus l'écart d'énergie entre les trajectoires va rendre la mesure $\mu_{N,\go,\gb}$ inhomogène.
Les principales questions à se poser sont:
\begin{itemize}
 \item \`A haute température, existe-t-il un régime où l'influence du désordre disparait asymptotiquement et où, après réechelonnage, la marche sous $\mu_{N,\go,\gb}$ converge vers un mouvement brownien?
 \item Quelles sont les propriétés typiques de la mesure $\mu_{N,\go,\gb}$ lorsque le désordre influe sur les trajectoires (distance typique à l'origine après $N$ pas, existence de couloirs préférenciels pour les trajectoires)?
 \item \`A quelle température s'effectue la transition entre les deux régimes, et quelle sont les propriétés du régime critique, de la transition de phase?
\end{itemize}

Comme dans le cas du modèle d'accrochage, la résolution de ces questions repose sur l'étude du facteur renormalisateur $Z_{N,\go,\gb}$ (la {\sl fonction de partition}). Plus précisément, on va s'intéresser à la quantité
\begin{equation}
 W_N:=\frac{Z_{N,\go,\gb}}{\bbE \left[ Z_{N,\go,\gb}\right]}=\exp(-N\gl(\gb))Z_{N,\go,\gb}.
\end{equation}
C'est E.\ Bolthausen qui a observé le premier que $W_N$, munie de la filtration $(\mathcal F_N)_{N\in\N}=\left( \sigma\{\go_{n,z},n\le N\}\right)$, est une martingale positive, et donc converge vers une limite $W_{\infty}$. De plus un argument standard de loi du zéro-un permet de montrer que
\begin{equation}
\bbP\left(W_\infty=0\right)\in\{0,1\}.
\end{equation}
Chacun des deux cas, $W_\infty>0$ p.s.\ et $W_\infty$ dégénérée, correspond à un régime de désordre différent que l'on nommera respectivement {\sl fort désordre} et {\sl faible désordre}. Pour expliquer cette terminologie nous citons des résultats illustrant l'influence du désordre sur les propriétés trajectorielles.

\medskip

Une série d'articles (\cite{cf:IS, cf:B, cf:AZZ, cf:SZ, cf:CY}) a permis d'arriver à la conclusion suivante: si la limite $W_{\infty}$ est non-dégénérée, alors le désordre n'a pas d'influence sur le comportement asymptotique des trajectoires. Le résultat ci-dessous ayant été prouvé par F.\ Comets et N.\ Yoshida \cite{cf:CY}.
\begin{theoreme}\label{invprinc}
Dans le régime de désordre faible, la trajectoire du polymère converge vers un mouvement brownien, dans le sens où la loi de 
\begin{equation}
S^{(N)}:=\left( S_{\lceil Nt \rceil}/\sqrt{N}\right)_{t\in[0,1]},
 \end{equation}
converge vers celle d'un mouvement brownien standard $(B_t)_{t\in[0,1]}$.
\end{theoreme}

De plus, il y a consensus pour dire dans le régime de désordre fort, le polymère est non-diffusif.
L'une des manières de caractériser le fort désordre est l'intersection de deux répliques (trajectoires indépendantes tirées selon la mesure 
$ \mu_{n-1}^{\otimes 2}(S_n^{(1)}=S_n^{(2)})$).
Carmona et Hu dans le cas Gaussien \cite{cf:CH_ptrf}, puis Comets, Shiga et Yoshida \cite{cf:CSY} dans le cas d'un désordre arbitraire ont prouvé le résultat suivant:

\begin{theoreme}
Dans le régime de  fort désordre, il existe une constante $c$ (qui dépend de $\gb$ de $d$ et de la loi de l'environnement) , telle que 
\begin{equation}
-c^{-1} \log W_N \le \sum_{n=1}^N \mu_{n-1}^{\otimes 2}\left(S_n^{(1)}=S_n^{(2)}\right)\le -c \log W_N.
\end{equation}
\end{theoreme}

Au vu du résultat précédent, il est raisonnable de regarder plus spécifiquement le cas où $W_N$ tend exponentiellement vers $0$. Dans ce cas, le résultat précédent suggère que que deux chemins choisi selon la mesure $\mu_N$ ont une fraction de recouvrement asymptotique positive (la proportion des pas que deux répliques $S^{(1)}$ et $S^{(2)}$ passent ensemble).
Pour cela on s'intéresse, à l'énergie libre du système

\begin{proposition}
La limite
\begin{equation}
 p(\gb):=\lim_{N\to \infty} \frac{1}{N}\log W_N,
\end{equation}
existe et est constante pour presque toute réalisation de l'environnement $\go$.
Elle est aussi égale à
\begin{equation}
 \lim_{N\to \infty} \frac{1}{N}\bbE \left[\log W_N\right]=: \lim_{N\to \infty}  p_N(\gb).
\end{equation}
C'est une fonction négative et décroissante de $\gb$.
\end{proposition}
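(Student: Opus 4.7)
Mon plan de démonstration suit le schéma classique en trois étapes : super-additivité et lemme de Fekete pour l'existence de $p(\gb)$, calcul direct pour les propriétés de signe et de monotonie, concentration et loi du zéro-un pour la convergence presque sûre.

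Pour la super-additivité, je décompose la fonction de partition à l'instant $N$ en conditionnant sur la valeur de $S_N$. En notant $W_M^x(\go')$ la fonction de partition renormalisée d'une marche partant de $x$ dans l'environnement $\go'$, et $\theta^N$ le shift temporel de $N$ pas, on a
\begin{equation*}
 W_{N+M}(\go) = W_N(\go)\sum_{x\in\Z^d} \mu_{N,\go,\gb}(S_N=x)\, W_M^x(\theta^N\go).
\end{equation*}
L'inégalité de Jensen appliquée au logarithme avec la probabilité $\mu_{N,\go,\gb}(S_N=\cdot)$ donne
\begin{equation*}
 \log W_{N+M}(\go) \geq \log W_N(\go) + \sum_x \mu_{N,\go,\gb}(S_N=x)\log W_M^x(\theta^N\go).
\end{equation*}
Le point crucial est l'indépendance entre le facteur $\mu_{N,\go,\gb}(S_N=x)$, qui ne dépend que de $(\go_{n,y})_{n\leq N}$, et $W_M^x(\theta^N\go)$, qui ne dépend que de $(\go_{n,y})_{n>N}$ ; combinée à l'invariance par translation spatiale de $\bbP$, qui entraîne $\bbE[\log W_M^x(\theta^N\go)] = \bbE[\log W_M]$, elle donne, après prise d'espérance globale, $a_{N+M} \geq a_N + a_M$ où $a_N := \bbE[\log W_N]$.

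Pour appliquer le lemme de Fekete j'ai besoin d'une minoration de $a_N/N$. En ne retenant dans $Z_N$ que la contribution d'une unique trajectoire déterministe (par exemple une oscillation entre $0$ et un voisin fixé, de probabilité $(2d)^{-N}$), on obtient $\bbE[\log Z_N]\geq -N\log(2d)$, d'où $a_N\geq -N(\log(2d)+\gl(\gb))$. Ainsi $p(\gb)=\sup_N a_N/N$ existe et est fini ; l'inégalité de Jensen appliquée à la martingale positive $W_N$, de moyenne $1$, donne $a_N\leq 0$, d'où $p(\gb)\leq 0$. Pour la décroissance en $\gb$, le cas gaussien se traite par calcul direct : l'intégration par parties gaussienne appliquée à $\partial_\gb \bbE[\log Z_N(\gb)]$, combinée à $\gl'(\gb)=\gb$, conduit à
\begin{equation*}
 \partial_\gb \bbE[\log W_N(\gb)] = -\gb\sum_{n=1}^N \bbE\!\left[\mu_{N,\go,\gb}^{\otimes 2}(S_n^{(1)}=S_n^{(2)})\right] \leq 0,
\end{equation*}
où $S^{(1)},S^{(2)}$ sont deux répliques indépendantes ; le cas général s'obtient par un argument analogue (covariance ou échange de deux répliques).

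L'étape qui me semble la plus délicate est le passage de la convergence $L^1$ à la convergence presque sûre vers la même limite : elle nécessite une inégalité de concentration pour $\log Z_N$ autour de sa moyenne. Dans le cas d'un désordre borné, la fonction $\go\mapsto\log Z_N(\go)$ possède des différences bornées et l'inégalité d'Azuma-Hoeffding appliquée à la martingale $(\bbE[\log Z_N\mid \cF_k])_k$ des espérances conditionnelles successives fournit une concentration sous-gaussienne d'écart-type d'ordre $\sqrt{N}$, permettant de conclure via Borel-Cantelli. Dans le cas gaussien on invoque l'inégalité de Sobolev logarithmique pour les fonctions Lipschitziennes ; dans le cas général, sous l'hypothèse $\gl(\gb)<\infty$, on procède par troncature suivie d'un contrôle des queues. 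Enfin, la quantité $\limsup_N N^{-1}\log W_N$ étant invariante par modification d'un nombre fini de coordonnées du champ $\go$, elle est mesurable par rapport à la tribu asymptotique, et la loi du zéro-un de Kolmogorov assure qu'elle est presque sûrement constante, nécessairement égale à $p(\gb)$.
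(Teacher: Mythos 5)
Your proposal is essentially correct and follows the standard route taken in the literature that the paper itself relies on: the thesis states this proposition without an in-line proof, pointing the reader to Comets--Shiga--Yoshida \cite{cf:CSY} for the a.s.\ and $L^1$ convergence and to Comets--Yoshida \cite{cf:CY} for the monotonicity, and only spells out the Gaussian integration-by-parts identity for the derivative in the proposition that follows. Your super-additivity decomposition via conditioning on $S_N$, the Jensen step, the lower bound on $a_N$ by a single trajectory, the annealed upper bound $a_N\le 0$, and the concentration plus zero-one-law route to a.s.\ constancy all match what those references do, and your Gaussian IBP computation is correct (in fact your sign is right, while the displayed derivative formula in the introduction has a sign slip: the right-hand side should be $-\gb\lim_N N^{-1}\bbE[\sum_n\mu_N^{\otimes 2}(S_n^{(1)}=S_n^{(2)})]$, consistently with the $-\frac1{2N}$ that appears mid-computation and with the claimed monotonicity).

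The one genuine gap is the monotonicity in $\gb$ for non-Gaussian disorder. Gaussian integration by parts does not transfer, and your phrase ``le cas général s'obtient par un argument analogue (covariance ou échange de deux répliques)'' is not an argument. The tool that works in full generality --- and the one the paper itself uses for the analogous statement in the pinning setting, Proposition~\ref{th:monotonicity} --- is the Harris--FKG inequality. Concretely: write
\begin{equation*}
\partial_\gb\,\bbE\bigl[\log W_N\bigr]\,=\,\bE\!\left[\,\hat\bbE_S\!\left[W_N^{-1}\,\sum_{n=1}^N\bigl(\go_{n,S_n}-\mbeta\bigr)\right]\right],\qquad \mbeta:=\frac{\M'(\gb)}{\M(\gb)},
\end{equation*}
where $\hat\bbP_S(\dd\go):=\exp\bigl(\sum_n[\gb\go_{n,S_n}-\gl(\gb)]\bigr)\,\bbP(\dd\go)$ is a product measure for each fixed walk $S$. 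Since $W_N^{-1}$ is coordinatewise non-increasing in $\go$, $\sum_n(\go_{n,S_n}-\mbeta)$ is non-decreasing, and $\hat\bbE_S\bigl[\sum_n(\go_{n,S_n}-\mbeta)\bigr]=0$, Harris--FKG gives $\hat\bbE_S[W_N^{-1}\sum_n(\go_{n,S_n}-\mbeta)]\le 0$, hence $a_N$ is non-increasing in $\gb$ for every $N$ and so is $p(\gb)$. Replacing your hand-wave with this step closes the proof at the stated level of generality.
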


Dans le cas où l'énergie libre est strictement négative, on dit que l'on est dans le régime de {\sl très fort désordre}. 
Une seconde justification (dans le cas gaussien) pour cette terminologie est la relation suivant liant l'énergie libre à la fraction de recouvrement de deux marches de loi $\mu_N$. Elle provient de P.\ Carmona et Y.\ Hu \cite{cf:CH_ptrf} et utilise l'intégration par partie Gaussienne, méthode utilisée dans plusieurs chapitres de cette thèse.
\medskip
\begin{proposition}
Aux points où $p(\gb)$ est dérivable on a
 \begin{equation}
  \frac{\dd }{\dd \gb}p(\gb)=\frac{1}{\gb}\lim_{N\to \infty} \frac{1}{N}\bbE\left[\sum_{n=1}^N\mu_N^{\otimes 2}( S_{n}^{(1)}=S_n^{(2)})\right].
 \end{equation}

\end{proposition}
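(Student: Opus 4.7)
Je propose d'établir l'identité à $N$ fini par intégration par parties gaussienne, puis de passer à la limite $N \to \infty$ par un argument de convexité classique.

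À $N$ fini, dérivons
\[
\frac{\dd}{\dd \gb} \log Z_{N,\go,\gb} \,=\, \mu_{N,\go,\gb}[H] \,=\, \sum_{n=1}^N \sum_{x \in \Z^d} \go_{n,x}\, \mu_N(S_n = x),
\]
où $H(S) := \sum_{n=1}^N \go_{n,S_n}$. Les $\go_{n,x}$ étant i.i.d.\ $\mathcal N(0,1)$, la formule d'intégration par parties gaussienne $\bbE[\go_{n,x}\, F(\go)] = \bbE[\partial_{\go_{n,x}} F(\go)]$ s'applique à $F = \mu_N(S_n = x)$. En utilisant que $\partial_{\go_{n,x}} H(S) = \ind_{\{S_n = x\}}$ et que cette indicatrice est égale à son carré, un calcul direct sur le quotient $\mu_N(S_n=x) = \bE[\ind_{\{S_n=x\}}\, e^{\gb H}]/Z_N$ donne
\[
\frac{\partial \mu_N(S_n = x)}{\partial \go_{n,x}} \,=\, \gb\, \mu_N(S_n = x)\bigl(1 - \mu_N(S_n = x)\bigr).
\]
En sommant sur $x \in \Z^d$, et comme $\sum_x \mu_N(S_n=x) = 1$ et $\sum_x \mu_N(S_n=x)^2 = \mu_N^{\otimes 2}(S_n^{(1)} = S_n^{(2)})$, on obtient
\[
\sum_x \bbE[\go_{n,x}\, \mu_N(S_n = x)] \,=\, \gb\,\bigl(1 - \bbE[\mu_N^{\otimes 2}(S_n^{(1)} = S_n^{(2)})]\bigr).
\]
Sommant sur $n$, divisant par $N$ et soustrayant $\gl'(\gb) = \gb$, on exprime $\frac{\dd}{\dd\gb}\bigl(N^{-1} \bbE[\log W_N]\bigr)$ en fonction de la fraction moyenne de recouvrement $\frac{1}{N}\sum_n \bbE[\mu_N^{\otimes 2}(S_n^{(1)} = S_n^{(2)})]$, qui est la structure combinatoire annoncée par l'énoncé.

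Pour passer à la limite, on observe que $\gb \mapsto \log Z_{N,\go,\gb}$ est convexe (sa dérivée seconde est la variance de $H$ sous $\mu_{N,\go,\gb}$, donc est positive), donc $\gb \mapsto N^{-1}\bbE[\log Z_N]$ l'est aussi. Comme $\gl(\gb) = \gb^2/2$ est $C^\infty$ et que $N^{-1}\bbE[\log Z_N] \to p(\gb) + \gl(\gb)$ ponctuellement, un théorème classique sur les suites de fonctions convexes affirme que les dérivées convergent en tout point où la limite est différentiable. Ainsi $\frac{\dd}{\dd\gb}(N^{-1}\bbE[\log W_N]) \to p'(\gb)$ aux points de différentiabilité, ce qui conclut. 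L'obstacle principal est précisément cette interversion dérivée--limite : on ne peut pas l'invoquer directement sur $p_N$ (qui est différence de deux fonctions convexes), mais uniquement sur la quantité convexe $N^{-1}\bbE[\log Z_N]$, avant de soustraire la partie lisse $\gl$. L'hypothèse de différentiabilité de $p$ au point considéré — qui n'est violée que sur un ensemble dénombrable par convexité de $p$ — est exactement ce qui rend ce passage licite.
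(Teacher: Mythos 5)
Your proof follows essentially the same route as the paper's: Gaussian integration by parts yields $\partial_{\go_{n,x}}\mu_N(S_n=x)=\gb\,\mu_N(S_n=x)\bigl(1-\mu_N(S_n=x)\bigr)$, summing over $x$ produces the two-replica overlap, and the passage $N\to\infty$ is handled by convexity of $N^{-1}\bbE\log Z_N$ — the paper performs the identical computation (merely parametrizing by $t=\gb^2$) and likewise invokes convexity of $p_N(\gb)+\gl(\gb)$. Note that your derivation, like the paper's own, in fact gives $p'(\gb)=-\gb\,\lim_N N^{-1}\sum_n\bbE\bigl[\mu_N^{\otimes 2}(S_n^{(1)}=S_n^{(2)})\bigr]$, so the factor $1/\gb$ and the missing minus sign in the displayed proposition are misprints.
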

\begin{proof}
On laisse au lecteur le soin de vérifier la propriété suivante: si $f$ est une fonction dérivable telle que $\lim_{|x|\to\infty} f(x)\exp(-x^2/2)=0$ et $\go$ une variable aléatoire Gaussienne, on a:
\begin{equation}
 \bbE\left[\go f(\go)\right]=\bbE\left[f'(\go)\right].
\end{equation}
On étudie la dérivée de la fonction $p_N(\sqrt{t})$

\begin{equation}\begin{split}
 \frac{\dd }{\dd t}p(\sqrt{t})&=\frac{1}{2N\sqrt{t}}\sum_{n=1}^N\sum_{z\in\Z^d}\bbE\left[\go_{n,z}\frac{\bE\left[\ind_{\{S_n=z\}}\exp\left(\sqrt{t} H_N(\go)\right)\right]}{\bE\left[\exp\left(\sqrt{t} H_N(\go)\right)\right]}\right]-1\\
&=-\frac{1}{2N}\sum_{n=1}^N\sum_{z\in\Z^d}\bbE\left[\frac{\bE\left[\ind_{\{S_n=z\}}\exp\left(\sqrt{t} H_N(\go)\right)\right]^2}{\bE\left[\exp\left(\sqrt{t} H_N(\go)\right)\right]^2}\right]\\
&=\frac{1}{2N}\bbE\left[\mu_N^{\otimes 2}\left[\sum_{n=1}^N \ind_{\{S_n^{(1)}=S_n^{(2)}\}}\right]\right].
\end{split}
\end{equation}
La formule d'intégration par partie gaussienne est utilisée pour chaque terme de la somme, pour le passage de la première à la seconde ligne.
La fonction $p_N(\gb)+\gb^2$ est convexe pour tout $N$, donc $p_N'(\gb)$ converge vers $p'(\gb)$ lorsque la dérivée existe.
\end{proof}

Intuitivement, l'influence du désordre devrait croître avec la température. Des résultats allant dans ce sens ont été démontrés par F.\ Comets et N.\ Yoshida dans \cite{cf:CY} et mettent en avant un phénomène de transition de phase. Il avait déja été démontré qu'en dimension $d\le 2$, il y a toujours fort désordre \cite{cf:CH_ptrf, cf:CSY}, et qu'il existait un régime de faible désordre en dimension $d\ge 3$ \cite{cf:IS, cf:B}. Nous rassemblons ces informations dans la proposition suivante:
\begin{theoreme}
En toute dimension, il existe deux constantes (dépendant de $d$) $\gb_c$ et $\bar \gb_c$ ($\gb_c\le \bar \gb_c$) dans $[0,\infty]$ telles que:
\begin{itemize}
 \item On est dans le régime de fort désordre pour tout $\gb> \gb_c$ et de faible désordre pour tout $\gb<\gb_c$.
 \item On est dans le régime de très fort désordre si et seulement si $\gb>\bar \gb_c$.
\end{itemize}
De plus
\begin{itemize}
 \item $\gb_c=0$ si et seulement si $d=1,2$. 
 \item $\bar \gb_c<\infty$ si le désordre est non borné.
\end{itemize}
\end{theoreme}
Ces informations sur le comportement de l'énergie libre sont illustrées par la figure \ref{phases}.
\medskip

D'un point de vue physique, il semblerait naturel que les deux définitions du désordre fort coïncident en dehors du point critique et que $\gb_c=\bar\gb_c$. Cela a été confirmé mathématiquement pour $d=1$ dans un travail de F.\ Comets et V.\ Vargas, qui ont montré que $\bar \gb_c=0$ dans ce cas \cite{cf:CV}.
\medskip

\begin{figure}[hlt]
\begin{center}
\leavevmode
\epsfxsize =14 cm
\psfragscanon
\psfrag{beta}{\tiny $\gb$}
\psfrag{p(beta)}{\tiny $p(\gb)$}
\psfrag{O}{\tiny O}
\psfrag{des}{\tiny \bf désordre}
\psfrag{fort}{\tiny  \bf fort}
\psfrag{tfort}{\tiny  \bf très fort}
\psfrag{faible}{\tiny \bf faible}
\psfrag{bc}{\tiny $\gb_c$}
\psfrag{bcc}{\tiny $\bar \gb_c$}
\epsfbox{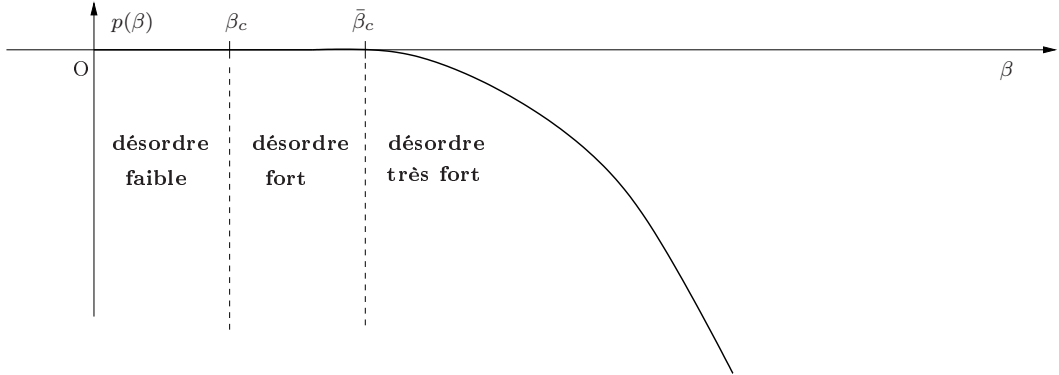}
\end{center}
\caption{Courbe représentative de $p(\gb)$ en fonction de $\gb$ et représentations des différentes phases du désordre. Il est conjecturé que $\bar \gb_c=\gb_c$. On ne sait pas dans quel phase on se trouve pour $\gb=\bar \gb_c$, et $\gb=\gb_c$. }\label{phases}

\end{figure}

Une autre thématique récurrente pour les polymères dirigés en milieu aléatoire (surtout en dimension $1$) est l'occurrence d'un phénomène de {\sl surdiffusivité}.
 On a vu que sous l'hypothèse de faible désordre (Théorème \ref{invprinc}), la distance typique de l'extrémité du polymère ($S_N$) à l'origine est d'ordre $N^{1/2}$, comme pour la marche aléatoire simple.
\medskip

Il est conjecturé que, dans la phase de fort désordre, ce comportement change et qu'il existe un exposant  $\xi>1/2$ (appelé exposant de volume) tel que, sous la mesure de polymère,  $\max_{n\in [0,N]} S_N \approx N^{\xi}$. Il est aussi conjecturé que dans la phase de désordre fort, l'exposant $\xi$ est universel et ne dépend pas de $\gb$ mais seulement de la dimension et est égal à l'exposant de volume du modèle de percolation orienté au premier passage associé (voir \cite{cf:Piza}). Cet exposant $\xi$ serait relié à l'{\sl exposant de fluctuation} $\chi$, défini par
\begin{equation}
 \var_{\bbP}\left[\log Z_N\right]\approx N^{2\chi},
\end{equation}
($\var_{\bbP}$ désigne la variance sous la loi $\bbP$).
La relation liant les exposants, obtenue par des méthode de rééchelonnage, est
\begin{equation}
 2\xi-1=\chi.
\end{equation}
En dimension $1$, il existe en plus une relation d'hyper-rééchelonnage $\xi=2\chi$ qui permet de conjecturer 
\begin{equation}\label{supersuper}
 \xi=2/3. 
\end{equation}
(voir Huse et Henley \cite{cf:HH}).
En dimension supérieure, il n'y a pas vraiment de consensus parmi les physiciens pour prédire la valeur de ces exposants.
\medskip

Mathématiquement, l'existence même de $\xi$ et $\chi$ ne peut pas être prouvée en général, et ces conjectures semblent très difficiles à confirmer (ou infirmer). 
Pour un certain modèle de percolation dirigée au dernier passage (cela correspond à notre modèle de polymère à température zéro), Johansson \cite{cf:J} a démontré l'égalité \eqref{supersuper} en utilisant des résultats de grandes matrices aléatoires. \`A cause des techniques utilisées, il semble très difficile d'étendre le résultat au cas où $\gb<\infty$ et à un type de désordre plus général. C'est (à notre connaissance) le seul cas non-diffusif où l'existence des exposants $\xi$ et $\chi$ et leurs valeurs a pu être démontrée rigoureusement. Dans la suite, on parlera (abusivement) de borne sur les exposants $\xi$ et $\chi$ pour parler des bornes obtenues sur $\var_{\bbP} [\log Z_n]$ ou $\max_{n\in[0,N]} S_N$.
\medskip

L'étude des exposants $\xi$ et $\chi$ a été menée d'abord dans d'autres modèles, proches des polymères dirigés en mileu aléatoire.
C.\ Newman et M.S.T.\ Piza ont prouvé (dans \cite{cf:NP}), sous certaines hypothèses,  que pour le modèle standard (non-dirigé) de percolation au premier passage
on a $\chi\ge 1/8$ et $\xi\le 3/4$ en toute dimension, puis en collaboration avec C.\ Licea \cite{cf:LNP} que $\xi\ge 3/5$ en dimension $1$ et que $\xi\ge 1/2$ en toute dimension. 
\medskip

Pour des raisons techniques essentiellement liées à l'utilisation d'outils de calcul stochastique, les résultats obtenus pour les polymères dirigés l'ont été dans des modèles partiellement ou totalement continus, où la marche aléatoire simple dans $\Z$ est remplacé par une marche aléatoire à accroissement gaussien (à valeur dans $\R$) ou un mouvement Brownien. M.\ Petermann a prouvé dans sa thèse (\cite{cf:P}, non publiée) que, sous certaines hypothèses, $\xi\ge \frac{3}{5}$. O.\ Méjane \cite{cf:M} a prouvé ensuite pour le même modèle un résultat donnant une borne supérieure pour $\xi$ indépendente de la dimension $\xi\le \frac{3}{4}$.
Enfin, pour un modèle de polymère brownien dans un environnement brownien, S.\ Bezerra, S.\ Tindel et F.\ Viens \cite{cf:BTV} ont redémontré, sous des hypothèses plus faibles, le résultat de Petermann.

\medskip

Les différentes thématiques abordées dans cette partie de la thèse sont:
\begin{itemize}
 \item l'étude l'énergie libre à haute-température en dimension $1$ et $2$ pour le modèle sur $\Z^d$.
 \item l'étude d'un modèle hiérarchique de polymère dirigé en milieu aléatoire, où l'application des méthodes de groupe de renormalisation est plus facile. On s'intéresse plus particulièrement aux conditions sous lesquelles il y a fort désordre, et aux propriétés de localisation dans le régime de fort désordre.
 \item l'étude de polymère brownien dans un environnement brownien avec corrélations spatiales à longue distance. On étudie comment la présence de telles corrélations  peut impliquer le fort désordre à toute température dans les grandes dimensions et des propriétés de surdiffusivités.
\end{itemize}

\subsection{Fort désordre en dimension $1$ et $2$}
L'objectif de l'étude des polymères dirigés en dimension $1$ et $2$ a pour but d'améliorer les résultats existants concernant le désordre fort sous plusieurs aspects. En effet, on a remarque que:

\begin{itemize}
 \item En dimension $1$, le résultat de Comets et Vargas \cite{cf:CV}, qui prouve le fort désordre à toute température ne donne pas de renseignements précis sur la valeur de $p(\gb)$.
 \item En dimension $2$, il a été prouvé \cite{cf:CH_al,cf:CSY} qu'il y avait fort désordre à toute température, sans pouvoir aller plus loin en prouvant le très fort désordre.
\end{itemize}
Nous étudions donc ces questions en essayant d'adapter les méthodes utilisées dans le cadre des modèles d'accrochage pour donner des bornes sur l'énergie libre.

\medskip

Parlons de l'heuristique amenant les résultats:
le second moment de la fonction de partition renormalisée $W_N$ est donné par un calcul simple
\begin{equation}\begin{split}
 \bbP[W_N^2]&= \bP^{\otimes 2}\bbP\left[\exp\left(\sum_{n=1}^N\gb[\go_{n,S_n^{(1)}}+\go_{n,S_n^{(2)}}]-2 N \gl(\gb)\right)\right]\\
            &= \bP^{\otimes 2} \bbP\left[\exp\left(\sum_{n=1}^N [\gl(2\gb)-2\gl(\gb)]\ind_{\{S_n^{(1)}=S_n^{(2)}\}}\right)\right].
\end{split}
\end{equation}
L'ensemble $\left\{n\ :\ S_n^{(1)}=S_n^{(2)}\right\}$ étant un processus de renouvellement, le second moment de $W_N$ est en fait la fonction de partition d'un modèle d'accrochage homogène. Lorsque $d\ge3$, le processus de renouvellement est transitoire (il n'y a presque surement qu'un nombre fini de point) et pour cette raison $W_N$ est borné dans $\bbL_2(\bbP)$ pour $\gb$ faible. Pour $d=1,\ 2$, le second moment explose avec $N$, et l'étude du modèle d'accrochage permet de controler la vitesse d'explosion. Une utilisation fine des méthodes classiques de second moment couplée à un argument de percolation nous permet d'obtenir une borne inférieure pour l'énergie libre.

\medskip

Les méthodes de changements de mesure couplée à l'estimations de moment non-entier développées dans le cadre des modèles d'accrochages (voir \cite{cf:GLT, cf:DGLT, cf:GLT_marg, cf:kbodies} ) permettent d'établir des bornes supérieures précises. Ces résultats ont donné lieu à un article à paraître dans la revue {\sl Communication in Mathematical Physics}  qui constitue le chapitre \ref{DirPOL12} de cette thèse.
Le premier résultats complête le résultat de Comets et Vargas \cite{cf:CV}.

\begin{theoreme}
 Lorsque $d=1$, il existe une constante $c$ telle que pour tout $\gb\le 1$ on ait
\begin{equation}
-c(1+|\log(\gb)|^2)\gb^4\le p(\gb)\le -c^{-1}\gb^4. 
\end{equation}
Dans le cas d'un environnement gaussien, on peut améliorer la borne inférieure pour obtenir
\begin{equation}
 -c\gb^4\le p(\gb)\le -c^{-1}\gb^4.
\end{equation}
 
\end{theoreme}

\medskip
Notre second résultat montre qu'il y a fort désordre à toute température en dimension $2$.

\begin{theoreme}
Lorsque $d=2$, il existe des constantes $c$ et $\gb_0$ telles que pour tout $\gb\le \gb_0$,
\begin{equation}
  -\exp\left(-\frac{1}{c\gb^2}\right)\le p(\gb)\le -\exp\left(-\frac{c}{\gb^4}\right).
\end{equation}
En particulier
\begin{equation}
 \bar \gb_c=0.
\end{equation}

\end{theoreme}

Ce résultat était conjecturé par les mathématiciens depuis la démonstration du fort désordre à toute température \cite{cf:CH_ptrf, cf:CSY}.
En physique, on peut, en revanche, il a été prédit, en partie sur la base de simulations numériques, qu'il existait une transition de phase pour $\gb>0$ en dimension $2$ (voir \cite{cf:DGO}). La difficulté à prédire le bon résultat à partir de simulations peut s'expliquer par la très faible valeur (en valeur absolue) de $p(\gb)$ au voisinage de $0$.

\medskip

Ces résultats doivent beaucoup à l'intuition developpée par l'étude du modèle de polymère hiérarchique correspondant et aux nouvelles techniques developpées pour les modèles d'accrochage.

\subsection{Modèle hiérarchique}
Comme pour le modèle d'accrochage, il existe un équivalent hiérachique du modèle polymère.
L'étude de ce modèle a été menée en collaboration avec G.\ Moreno. Elle a donné lieu à un article en cours d'examination pour publication \cite{cf:LM}.
\medskip

Définissons un modèle de polymère dirigé en milieu aléatoire sur la famille de réseaux $(D_n)_{n\in \N}$ décrite au début de la section \ref{sectionhierar}, dépendant de deux entiers $b$ et $s$ supérieurs à $2$. De manière naturelle, on peut voir le réseau $D_n$ comme l'ensemble des sites le composant, considérer que $D_n\subset D_{n+1}$ et poser $D=\bigcup_{n\in \N}D_n$. On considère le modèle de polymère dirigé $D_n$ suivant: soient $\gG_n$ l'ensemble des chemins auto-évitants (pour les sites) reliant $A$ à $B$ dans $D_n$, vu comme une suite de sites ($g\in\gG_n=(g_0=A,g_1,\dots,g_n=B)$ et $(\go_x)_{x\in D\setminus\{A,B\}}$ la réalisation d'une famille de variables aléatoires i.i.d. (sous la loi $\bbP$), vérifiant les conditions usuelles (variance unitaire, espérance nulle, moments exponentiels finis \eqref{finitemomo}). On écrit la mesure de polymère $\mu_{n,\go,\gb}$ (associé à la variable aléatoire $\gga$, pour $\gb>0$) comme une modification de la mesure uniforme sur $\gG_n$ 

\begin{equation}
 \mu_{n,\go,\gb}(\gga=g):= \frac{1}{|\gG_n|W_n}\exp\left(\sum_{i=1}^{s^n-1} [\gb\go_{g_i}-\gl(\gb)]\right)
\end{equation}
où $W_n$ est la fonction de partition renormalisée, égale à

\begin{equation}
 W_n:= \frac{1}{|\gG_n|}\sum_{g\in \gG_n} \exp\left(\sum_{i=1}^{s^n-1} [\gb\go_{g_i}-\gl(\gb)]\right).
\end{equation}

L'une des clefs de l'étude de ce modèle, est de remarquer que, comme pour le modèle d'accrochage hiérarchique, la fonction de partition obéit à une relation de récurrence en loi. Plus précisément on a:

\begin{equation}
 \begin{split}
W_0&=1\\
W_{n+1}&\stackrel{(\mathcal L)}{=}\frac{\sum_{i=1}^b W_n^{(i,1) }\dots W_n^{(i,s)}A^{(i,1)}\dots A_{(i,s-1)}}{b}.  
 \end{split}
\end{equation}
où la seconde inégalité a lieu en loi, les variables $W_n^{(i,j)}$ sont des copies inépendantes de $W_n$ et les variables $A^{(i,j)}$ sont des variables i.i.d\ indépendantes du reste, de même loi que $\exp(\gb \go_x-\gl(\gb))$.

\medskip
Comme pour le modèle non-hiérarchique, on définit l'énergie libre du système comme suit, son existence découle de certains résultats de concentration pour les martingales (voir \cite{cf:LV}).

\begin{proposition}
La limite 
\begin{equation}
 p(\gb):=\lim_{n\to\infty} \frac{1}{s^n}\log W_n
\end{equation}
 existe presque surement et est constante. C'est une fonction décroissante de $ \gb$.
\end{proposition}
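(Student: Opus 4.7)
Le plan est de séparer la preuve en trois parties: convergence de la suite $a_n := \bbE[\log W_n]/s^n$, passage à la convergence presque sûre et constance de la limite via une inégalité de concentration, puis monotonie en $\gb$.

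Pour la convergence de $a_n$, je me servirais de la récurrence en loi satisfaite par $W_{n+1}$ en minorant la somme par un seul de ses $b$ termes, ce qui fournit, après passage au logarithme et à l'espérance,
\begin{equation}
\bbE[\log W_{n+1}] \;\ge\; s\,\bbE[\log W_n] + C(\gb),
\end{equation}
avec $C(\gb) := -\log b + (s-1)\,\bbE[\log A] = -\log b - (s-1)\lambda(\gb) \le 0$ (Jensen donne $\bbE[\log A]\le\log\bbE[A] = 0$). En divisant par $s^{n+1}$, on en déduit que la suite $a_n + C(\gb)/(s^n(s-1))$ est croissante en $n$. L'inégalité de Jensen donne par ailleurs $a_n \le s^{-n}\log \bbE[W_n] = 0$, donc $(a_n)$ converge vers une limite finie $p(\gb)\le 0$.

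Pour passer à la convergence presque sûre, j'appliquerais une inégalité de concentration pour martingales de type \cite{cf:LV}. La variable $\log W_n$ est mesurable par rapport à un nombre fini de variables i.i.d.\ (les $\omega_x$ pour $x$ intérieur à $D_n$). En décomposant $\log W_n - \bbE[\log W_n]$ le long d'une filtration adaptée à la structure hiérarchique (par exemple celle obtenue en révélant le désordre niveau par niveau), l'objectif est d'établir une estimation du type
\begin{equation}
\bbP\bigl(\bigl|\log W_n - \bbE[\log W_n]\bigr| \ge \gep\, s^n\bigr) \;\le\; \exp(-c\,\gep^2\,s^n),
\end{equation}
qui, sommée en $n$, donne la convergence presque sûre par Borel-Cantelli. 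La constance de la limite en découle immédiatement. La décroissance en $\gb$ s'obtient ensuite comme dans le cadre non-hiérarchique (cf.\ \cite{cf:CSY, cf:CY}): la dérivée de $\bbE[\log W_n(\gb)]$ s'exprime, par intégration par parties gaussienne (ou un argument d'interpolation dans le cas général), comme $-\gb$ multiplié par l'espérance du recouvrement entre deux répliques indépendantes du polymère, quantité positive.

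Le point le plus délicat sera l'obtention de l'inégalité de concentration à la bonne échelle. Une application naïve d'Azuma-Hoeffding, fondée uniquement sur la borne triviale de l'influence de chaque $\omega_x$ sur $\log W_n$, ne suffit pas: il faut exploiter le fait que l'influence d'un $\omega_x$ situé à profondeur $k$ dans la hiérarchie sur $\log W_n$ décroît géométriquement avec $n-k$. C'est précisément le contenu typique des estimations de \cite{cf:LV} pour les cascades multiplicatives, et c'est la structure récursive et produit de $W_n$ qui rend ces estimations accessibles.
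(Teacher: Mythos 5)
Your plan matches the paper's in all essential respects: the near-monotonicity of $\bbE[\log W_n]/s^n$ obtained from a one-term lower bound in the distributional recursion, the Jensen upper bound $\bbE[\log W_n]\le 0$, a Lesigne--Volny type martingale deviation inequality combined with Borel--Cantelli for the almost sure and constant limit, and the FKG/interpolation (or Gaussian integration by parts) argument for the decrease in $\gb$. The algebraic details of the monotonicity step and of $C(\gb)$ are correct.

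One caveat on the concentration step, which you yourself flag as the delicate one. The paper invokes an adaptation of Comets--Shiga--Yoshida, Proposition~2.5, which filters the disorder by the \emph{time index} $t\in\{1,\dots,s^n-1\}$ of the walk rather than by hierarchical generation; this produces $s^n-1$ martingale increments with moments bounded uniformly in $n$, and Lesigne--Volny then yields the rate $\exp\left(-\gep^{2/3}s^{n/3}/4\right)$, not the Gaussian-like $\exp(-c\gep^2 s^n)$ you write. Your proposed level-by-level filtration gives only $n$ increments, and the heuristic that the influence of a generation-$k$ variable on $\log W_n$ decays geometrically in $n-k$ is not obviously right: the worst-case influence of a single $\omega_x$ is $O(\gb)$ regardless of level, the annealed influence is the Green function $b^{-k}$ (a function of $k$, not $n-k$), and revealing an entire generation at once can move $\log W_n$ by order $s^k$. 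Both the weaker rate and the time-slice filtration still sum over $n$, so the almost sure statement holds, but the level-filtration route as sketched would need a genuinely different argument to produce a usable deviation bound.
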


Notre but a été de trouver les conditions nécessaires sur $b$ et $s$ pour qu'il y ait désordre à toute température.
Pour $\Z^d$, l'existence d'une phase de faible désordre est équivalente à $d\ge 3$, i.e.\ à la transitivité de la marche aléatoire. On peut  s'attendre à ce qu'il y ait une analogie. On étudie l'espérance du recouvrement de deux chemins sous la loi uniforme sur $\gG_n$ (qu'on appelle $\bP_n$).
Soit
\begin{equation}
 F_n:=\bP_n^{\otimes 2} \left[|\gamma^{(1)}\cap\gamma^{(2)}|\right],
\end{equation}
où $|\ . \ |$ désigne le nombre de sites et où les chemins sont considérés comme des ensembles de sites.
On peut constater que lorsque $n$ tend vers l'infini
\begin{equation}
F_n \begin{cases} =O(1) \quad &\text{ si }\ b>s,\\
 \sim cste.n \quad &\text{ si } b=s,\\
 \sim cste.(s/b)^n \quad &\text{ si } b<s. \end{cases}
\end{equation}
Le premier cas est analogue à $d>2$, le second (où $F_n$ croît comme le logarithme de la longueur du système, qui vaut $s^n$) est similaire au cas $d=2$, et le troisième (où $F_n$ croit comme une puissance de la longueur du système) correspond au cas $d<2$, et à plus forte raison à $d=1$ dans le cas $b=\sqrt{s}$ pour lequel $F_n$ croit comme $\sqrt{s^n}$.
\medskip

Les résultats obtenus lors de notre étude montrent que cette analogie n'est pas vaine.
Tout d'abord sur l'existence d'une transition de phase
\medskip
\begin{proposition}
L'énergie libre possède les propriétés suivantes:
\begin{itemize}
 \item [(i)] Lorsque $b\le s$, $p(\gb)<0$ pour tout $\gb>0$.
 \item [(ii)] Lorsque $b>s$ il existe $\gb_c\in(0,\infty]$ tel que $p(\gb)>0$ est équivalent à $\gb>\gb_c$, $\gb_c<\infty$ si le désordre n'est pas borné.
 \item [(iii)] On a les bornes suivantes pour $\gb_c$ dans le cas gaussien:
\begin{equation}
    \sqrt{\frac{2(b-s)\log b}{(b-1)(s-1)}}    <\gb_c\le \sqrt{\frac{s}{s-1}\log \frac{b}{s}-\log \frac{b-1}{s-1}}. 
\end{equation}
 \end{itemize}
\end{proposition}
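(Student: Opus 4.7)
The plan is to analyze the distributional recursion $W_{n+1}\stackrel{(\mathcal L)}{=} b^{-1}\sum_{i=1}^b \tilde W_n^{(i)}$ with $\tilde W_n = \prod_{k=1}^s W_n^{(k)}\prod_{k=1}^{s-1} A^{(k)}$ by two complementary moment estimates: a second moment pinning down the regime where $p(\gb)=0$ (a lower bound on the threshold in (iii)), and a fractional moment producing the regime where $p(\gb)<0$ (the upper bound on the threshold in (iii), finiteness of $\gb_c$ in (ii), and ultimately part (i)). For (ii)(a), I would first observe that $\gb\mapsto p(\gb)$ is non-increasing and continuous (from an FKG/coupling comparison on the environment and the Jensen bound $\bbE[\log W_n]\le 0$), together with $p(0)=0$, so $\gb_c:=\inf\{\gb\ge 0:p(\gb)<0\}$ is well-defined in $[0,\infty]$.

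For the fractional moment step, for $\theta\in(0,1)$ the subadditivity $(\sum_i x_i)^\theta \le \sum_i x_i^\theta$ applied to the recursion yields $u_{n+1}\le C(\theta,\gb)\,u_n^s$ for $u_n:=\bbE[W_n^\theta]$ and $C(\theta,\gb):=b^{1-\theta}(\bbE[A^\theta])^{s-1}$. Iterating from $u_0=1$ gives $u_n\le C^{(s^n-1)/(s-1)}$, so whenever some $\theta\in(0,1)$ makes $C(\theta,\gb)<1$ one concludes $p(\gb)\le \log C(\theta,\gb)/((s-1)\theta)<0$. In the Gaussian case $\bbE[A^\theta]=e^{\theta(\theta-1)\gb^2/2}$, and optimizing $\theta$ in $C(\theta,\gb)<1$ produces one of the two explicit thresholds of (iii), together with finiteness of $\gb_c$ when the disorder is unbounded.

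The second moment $v_n:=\bbE[W_n^2]$ obeys, by the independence in the recursion,
\[
v_{n+1} = \frac{v_n^s\,e^{(s-1)\gl_2(\gb)} + (b-1)}{b},\qquad \gl_2(\gb) := \gl(2\gb)-2\gl(\gb).
\]
Denoting the right-hand side by $f(v)$: for $b>s$ and $\gl_2(\gb)$ small enough, $f$ has a stable fixed point attracting $v_0=1$, so $v_n$ stays bounded, $W_n$ is uniformly integrable, $W_\infty>0$ a.s.\ and $p(\gb)=0$. The sharp sufficient condition comes from the tangency equations $f(v_*)=v_*$ and $f'(v_*)=1$; these solve explicitly and give $\gl_2(\gb)\le \frac{s}{s-1}\log(b/s)-\log((b-1)/(s-1))$, which on substituting $\gl_2(\gb)=\gb^2$ in the Gaussian case produces the corresponding bound on $\gb_c$ in (iii). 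A cruder but simpler linearization $f'(1)<1$ around the fixed point $v=1$, giving $s\,e^{(s-1)\gl_2}<b$, yields the intermediate bound that can then be tightened to the other expression appearing in (iii) via an explicit estimate on the iterate count.

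For part (i), both methods as stated are insufficient for every $\gb>0$: the second moment fails because $f$ has no finite fixed point when $b\le s$ and $\gb>0$, while the bare fractional moment only gives $p(\gb)<0$ above a positive threshold in $\gb$. To reach every $\gb>0$, I would run the fractional moment recursion starting at some finite depth $n_0\ge 1$ rather than from $n=0$: viewing the depth-$(n_0+n)$ system as a depth-$n$ polymer whose input variables are i.i.d.\ copies of (a renormalized) $W_{n_0}$, it suffices to achieve $\bbE[W_{n_0}^\theta]^{s-1}C(\theta,\gb)<1$ at a single finite $n_0$. The needed quantitative smallness of $\bbE[W_{n_0}^\theta]$ would be supplied by a change-of-measure argument acting on the environment inside one depth-$n_0$ diamond, driven quantitatively by the divergence of the overlap $F_n$ (as $(s/b)^n$ when $b<s$ and linearly in $n$ when $b=s$). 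The main obstacle is the marginal case $b=s$: $F_n$ grows only linearly in $n$, so the change of measure must be tuned delicately — in the spirit of the hierarchical marginal pinning arguments of \cite{cf:Hubert,cf:GLT_marg} — to beat the cost of the tilt against the small gain in the overlap.
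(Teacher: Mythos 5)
Your framework — monotonicity and Jensen for (ii)(a), second-moment tangency for the $L^2$ / weak-disorder bound, fractional moment plus change of measure for the strong-disorder side — matches the paper's approach, and your tangency computation of the fixed point $v^*=s(b-1)/(b(s-1))$ leading to the threshold $\gga(\gb)\le \frac{s}{s-1}\log(b/s)-\log\frac{b-1}{s-1}$ is exactly right. However, there is a concrete gap in how you claim to obtain the second explicit bound in (iii). Optimizing the bare fractional moment criterion $C(\theta,\gb)=b^{1-\theta}(\bbE A^\theta)^{s-1}<1$ over $\theta$ and letting $\theta\to 1$ yields, in the Gaussian case, only
\[
\gb^2 > \frac{2\log b}{s-1},
\]
which is strictly weaker than $\gb^2>\frac{2(b-s)\log b}{(b-1)(s-1)}$ (the factor $\frac{b-s}{b-1}<1$ is missing, and in particular the bare bound stays bounded away from zero as $b\downarrow s$, while the sharper one vanishes). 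The sharper bound is not a bare-moment consequence: the paper obtains it via the same inhomogeneous exponential tilt of the environment that you propose to use only in part (i), namely shifting the variables on generation $i$ by $\gd_i\propto (1-\theta)\gb/b^i$, computing the cost--gain balance at each generation, optimizing, and letting $\theta\to 1$; the geometric sum $\sum_i (s/b)^i=\frac{s}{b-s}$ is what produces the $\frac{b-s}{b-1}$ factor. So the change-of-measure argument is needed already in (iii), not only in (i). Relatedly, part (i) is left as a sketch: your finite-volume criterion $\bbE[W_{n_0}^\theta]^{s-1}C(\theta,\gb)<1$ is correct, and your intuition (homogeneous shift for $b<s$ driven by $(s/b)^n$ overlap growth, generation-dependent shift $\gd_{i,n}\propto n^{-1/2}s^{-i}$ for $b=s$ driven by linear overlap growth, as in the site-disorder hierarchical pinning model) is precisely what the paper does, but you stop short of verifying that the change-of-measure cost under H\"older is $O(1)$ while the expected value of $W_{n_0}$ under the tilted law is small; this is the substance of the proof of (i).
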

\medskip

\begin{rema}\rm
 La valeur exacte de $\gb_c$ ne semble pas correspondre avec l'une de ces bornes.
Pour plusieurs autres modèles de polymère dirigé (voir \cite{cf:BPP} pour les polymères sur l'arbre, \cite{cf:Birk, cf:BS, cf:CC} pour les polymères dirigés dans $\Z^d$) il a été montré que la borne supérieure, obtenue par des méthodes de second moment, n'était pas optimale.
\end{rema}
\medskip

De plus, nous avons obtenu quand il y a fort désordre à toute température, une approximation précise de l'énergie libre à haute température.
\medskip
\begin{theoreme}
Lorsque $b<s$, il existe une constante $c$ telle que pour tout $\gb\le 1$ on ait
\begin{equation}
 -c\gb^{\frac{2}{\alpha}}\le p(\gb)\le -c^{-1}\gb^{\frac{2}{\alpha}}
\end{equation}
 où
\begin{equation}
 \alpha=\frac{\log s -\log b}{\log s}.
\end{equation}
\end{theoreme}
\medskip
\begin{theoreme}
Lorsque $b=s$, il existe des constantes $c$ et $\gb_0$ telles que pour tout $\gb\le \gb_0$ on ait
\begin{equation}
     -\exp\left(-\frac{1}{c\gb}\right) \le p(\gb)\le -\exp\left(-\frac{c}{\gb^2}\right).
\end{equation}
\end{theoreme}
\medskip

La possibilité de faire varier les paramètres permet de mettre en valeur une relation entre l'exposant de croissance de $F_n$, et l'exposant $\alpha$ qui gouverne l'énergie libre.
\medskip

Enfin, la géométrie particulière des réseaux en diamant a permis d'obtenir un résultat fort de localisation.
Ce résultat découle de l'obtention d'une borne inférieure sur l'exposant  de fluctuation. 
Des raisons techniques contraignent à se restreindre au cas gaussien.

\begin{theoreme}
On se place dans le cas d'un environnement gaussien.

Lorsque $b<s$, il existe une constante $c$ (dépendant de $s,\ b$ et $\gb$) telle que pour tout $n$
\begin{equation}
 \var_{\bbP} Z_n\ge c(s/b)^n.
\end{equation}
Lorsque $s=b$, il existe une constante $c$ (dépendant de $s,\ b$ et $\gb$) telle que pour tout $n$
\begin{equation}
  \var_{\bbP} Z_n\ge (s/b)^n\ge c\sqrt{n}.
\end{equation}
\end{theoreme}

Pour un chemin $g\in \gG_m$, $m\ge n$, on définit $g|_n\in \gG_n$ sa restriction à $D_m$.
Le résultat suivant montre que, si l'on regarde à une échelle fixée, asymptotiquement, la mesure se concentre sur un seul chemin.

\begin{proposition}
 Lorsque $b\le s$ et pour tout $n\in \N$ fixé on a
\begin{equation}
 \lim_{m\to \infty} \sup_{g\in \gG_n} \mu_m(\gga|_n=g)=1,
\end{equation}
où la convergence a lieu en probabilité.
\end{proposition}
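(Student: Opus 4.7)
La stratégie consiste à exploiter la structure auto-similaire de $D_m$ pour écrire la mesure $\mu_m$ restreinte à $\gG_n$ comme une loi sur un ensemble fini de poids aléatoires, puis à montrer que dans le régime $b \le s$ l'un de ces poids absorbe asymptotiquement toute la masse.

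\textbf{Étape 1 (factorisation).} En voyant $D_m$ comme $D_n$ où chaque arête $e$ a été remplacée par une copie indépendante $D_{m-n}^{(e)}$ de $D_{m-n}$, tout chemin $\tilde\gga \in \gG_m$ se décompose de manière unique en un chemin $g \in \gG_n$ et, pour chaque arête $e \in g$, un sous-chemin dans $D_{m-n}^{(e)}$. En séparant la contribution du désordre entre les sites internes de $g$ dans $D_n$ et ceux des copies $D_{m-n}^{(e)}$, puis en divisant par $\bbE[Z_m]$, on aboutit à
\begin{equation*}
\mu_m(\gga|_n = g) \;=\; \frac{M_g^{(m)}}{\sum_{g' \in \gG_n} M_{g'}^{(m)}}, \qquad M_g^{(m)} := \prod_{e \in g} W^{(e)}_{m-n} \prod_{v \in g^\circ} \eta_v,
\end{equation*}
où $g^\circ = \{g_1,\dots,g_{s^n-1}\}$ est l'ensemble des sites internes de $g$ dans $D_n$, les $W^{(e)}_{m-n}$ sont $(bs)^n$ copies i.i.d.\ de $W_{m-n}$ indexées par les arêtes de $D_n$, et les $\eta_v = e^{\gb \go_v - \gl(\gb)}$ sont i.i.d.\ sur les sites internes de $D_n$, indépendantes des $W^{(e)}_{m-n}$.

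\textbf{Étape 2 (réduction à une divergence par paires).} Pour $g \ne g'$ dans $\gG_n$, en minorant $\sum_h M_h^{(m)} \ge M_g^{(m)} + M_{g'}^{(m)}$ on obtient
\begin{equation*}
\mu_m(\gga|_n = g)\, \mu_m(\gga|_n = g') \;\le\; \frac{r}{(1+r)^2}, \qquad r := M_g^{(m)}/M_{g'}^{(m)},
\end{equation*}
quantité qui tend vers $0$ dès que $|\log r|$ tend vers $\infty$. Comme $\gG_n$ est fini, il suffira de montrer que pour chaque paire de chemins distincts $g, g' \in \gG_n$ et chaque $K > 0$,
\begin{equation*}
\bbP\bigl(|\log M_g^{(m)} - \log M_{g'}^{(m)}| \le K\bigr) \;\xrightarrow[m \to \infty]{}\; 0. \qquad (\star)
\end{equation*}
En effet $(\star)$ force $\sum_{g\ne g'} \mu_m(\gga|_n=g)\mu_m(\gga|_n=g') \to 0$ en probabilité, donc $\sum_g \mu_m(\gga|_n=g)^2 \to 1$, et a fortiori $\sup_g \mu_m(\gga|_n=g) \to 1$ puisque $\sum_g \mu_m(\gga|_n=g)^2 \le \sup_g \mu_m(\gga|_n=g)$.

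\textbf{Étape 3 (preuve de $(\star)$, principal obstacle).} Après simplification des facteurs communs à $M_g^{(m)}$ et $M_{g'}^{(m)}$, la différence $\log M_g^{(m)} - \log M_{g'}^{(m)}$ apparaît comme une somme finie de $\log W^{(e)}_{m-n}$ indépendants et signés (avec au moins une copie non compensée de chaque côté, puisque $g \ne g'$ implique $|g \setminus g'| \ge 1$) augmentée de termes $\log \eta_v$ indépendants. Ainsi $(\star)$ se ramène à la propriété de diffusion suivante, pour deux copies indépendantes $W, W'$ de $W_m$:
\begin{equation*}
\bbP\bigl(|\log W - \log W'| \le K\bigr) \;\xrightarrow[m \to \infty]{}\; 0, \quad \forall K>0. \qquad (\star\star)
\end{equation*}
C'est le point le plus délicat: $(\star\star)$ ne découle pas formellement ni de $W_m \to 0$ p.s., ni de $\var(\log W_m) \to \infty$ (minoration assurée par le théorème de fluctuations précédent). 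On projette de l'établir par une récurrence sur $m$ utilisant l'itération
\begin{equation*}
W_{m+1} \;\stackrel{(\cL)}{=}\; \frac{1}{b}\sum_{i=1}^{b}\prod_{j=1}^{s} W_m^{(i,j)} \prod_{j=1}^{s-1} A^{(i,j)},
\end{equation*}
et le fait que, dans le régime $b \le s$, la dispersion de $\log W_m$ croît strictement d'un niveau au suivant, avec un gain quantitatif contrôlé par $\var(\log W_m) \ge c(s/b)^m$ si $b<s$, et $\ge c\sqrt m$ si $b=s$. Ce gain empêche la loi de $\log W_m - \log W'_m$ de se concentrer sur une fenêtre bornée à la limite. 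La combinaison des trois étapes fournit alors la proposition.
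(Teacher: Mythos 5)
Your overall architecture coincides with the paper's: decompose $\mu_m(\gga|_n=g)$ via the self-similar structure, reduce the concentration of the measure on a single path $g\in\gG_n$ to the divergence in probability of $\bigl|\log M^{(m)}_g - \log M^{(m)}_{g'}\bigr|$ for each pair $g\ne g'$, then exploit the independence of the per-edge factors. Your Step 2 device (bounding $\mu_m(g)\mu_m(g')$ by $r/(1+r)^2$ and summing over squares) is slightly different in presentation from the paper's reduction to $\lim_m Q\bigl(\mu_m(g)/\mu_m(g')\in[K^{-1},K]\bigr)=0$, but it is essentially the same.

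The gap is in Step 3, and it is the crux. You correctly isolate $(\star\star)$ as the needed ingredient, and you correctly observe that $(\star\star)$ does \emph{not} follow from $\var(\log W_m)\to\infty$ alone; but then you leave $(\star\star)$ unproved, sketching a ``récurrence sur $m$'' you do not carry out. This is not a small omission: the anti-concentration of $\log W_m$ is precisely the technical content of the proposition. What you seem to have overlooked is that the very fluctuation propositions you cite (the ones giving $\var_Q(\log Z_n)\ge c(s/b)^n/\gb^2$ when $b<s$, resp.\ $\ge cn/\gb^2$ when $b=s$) contain a second, strictly stronger statement, namely the uniform anti-concentration bound
\begin{equation}
Q\bigl\{\log Z_n \in [a, a+\gep(s/b)^{n/2}]\bigr\}\le \frac{8\gep}{\gb}\quad\text{for all }a\in\R,\ \gep>0,
\end{equation}
(and its $b=s$ analogue with $\gep\sqrt n$), established by a change-of-measure shift argument. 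Taking $\gep=2K(b/s)^{n/2}\to 0$ immediately gives $\sup_a Q\bigl(\log Z_n\in[a,a+2K]\bigr)\to 0$, which is exactly what your $(\star\star)$ asks for after conditioning on one of the two copies. The paper's proof exploits this even more directly: since $g\ne g'$, one can write $\log Z_m^{(g)}-\log Z_m^{(g')}=\log Z^{(0)}_{m-n}+X$ with $Z^{(0)}_{m-n}\stackrel{\cL}{=}Z_{m-n}$ independent of $X$, condition on $X$, and apply the anti-concentration estimate to the single remaining factor --- no comparison of two independent copies is needed. Your proposal would become complete and essentially identical to the paper's once you invoke the anti-concentration half of the fluctuation proposition instead of only its variance half; the recursive argument you gesture at is unnecessary.
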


\subsection{Surdiffusivité et fort désordre, un cas de polymère continu}

Un dernier modèle évoqué dans cette thèse est celui des polymères dirigés en environnement brownien.
Son étude constitue le chapitre \ref{polBB} de cette thèse. Ce modèle à été introduit par C.\ Rovira et S.\ Tindel \cite{cf:RT} et ses propriétés de superdiffusivité ont  été étudiées ensuite dans un article de S.\ Bezzera, S.\ Tindel et F.\ Viens \cite{cf:BTV}. On peut citer un autre modèle de polymères brownien (dans un environnement Poissonien), introduit par F.\ Comets et N.\ Yoshida \cite{cf:CY_cmp}.
Se placer dans un cadre continu donne certains avantages:
\begin{itemize}
 \item cela permet d'utiliser les outils du calcul stochastique qui se révèlent très utiles pour étudier la superdiffusivité;
 \item c'est un cadre naturel pour introduire des corrélations dans l'environnement et étudier l'effet de la mémoire spatiale de l'environnement.
\end{itemize}

Contrairement aux autres modèles étudiés dans cette thèse, on n'étudie pas une modification de la mesure de la marche aléatoire simple, mais une modification de la mesure de Wiener $\bP$ qui décrit la loi d'une trajectoire brownienne. L'environnement est déterminé par la réalisation d'un champ brownien centré $(\go(t,x))_{t\in \R_+, x\in \R^d}$ (de loi $\bbP$, on note $\bbE$ l'espérance associée)
sur $\R_+\times \R^d$ déterminée par sa fonction de covariance
\begin{equation}
 \bbE\left[\go(t,x)\go(t',x')\right]:=(t\wedge t') Q(x-x'),
\end{equation}
où $Q$ est une fonction positive de $x\in \R^d$ qui tend vers $0$ à l'infini et telle que $Q(0)=1$. On suppose de plus qu'il existe un réel $\theta>0$ tel que
\begin{equation}
 Q(x)\asymp_{x\to \infty} \|x\|^{-\theta}.
\end{equation}
L'énergie d'un chemin ($B=(B_s)_{s\in[0,y]}$) est définie par l'intégrale des accroissements du champ $\go$ le long du chemin
\begin{equation}
 H_{t,\go}(B):=\int_{0}^t \go(\dd s, B_s),
\end{equation}
intégrale à laquelle on peut donner une signification rigoureuse (voir \cite{cf:RT, cf:BTV}).
\medskip

On définit la mesure de polymère $ \mu_{t,\go,\gb}$ pour le système de taille $t$ et l'énergie libre $p(\gb)$ de la même manière que pour le modèle discret:
\begin{equation}\begin{split}
 \frac{\dd \mu_{t,\go,\gb}}{\dd \bP}(B)&:= \frac{1}{Z_t}\exp\left(\gb H_{t,\go}(B)\right),\\
Z_t&:=\bE\left[\exp\left(\gb H_{t,\go}(B)\right)\right],
\end{split}
\end{equation}
et
\begin{equation}
 p(\gb):= \lim_{t\to\infty}\frac{1}{t}\log Z_t-\gb^2/2=\lim_{t\to\infty}\frac{1}{t}\log W_t.
\end{equation}
où $W_t:=Z_t/ \bbE\left[Z_t\right]$.
L'existence de l'énergie libre est prouvé dans \cite{cf:RT}.
On peut, de même, définir les notions de désordre, faible, fort, très fort que dans le cas discret.
Il est raisonnable de penser que: lorsque la corrélation spatiale de l'environnement (quantifiée par la fonction $Q$) décroît suffisamment vite à l'infini, ce modèle doit avoir un comportement identique à celui du modèle discret, i.e. $p(\gb)<0$ pour tout $\gb$ en dimension  $1$ et $2$, et $p(\gb)=0$ en dimension $3$ et supérieure ;  lorsque la corrélation est plus forte, le modèle change de comportement.
\medskip

Nous avons décidé d'étudier ce phénomène sous deux aspects:
\begin{itemize}
 \item l'évaluation de $p(\gb)$ à haute température;
 \item le phénomènes de surdiffusivité.
\end{itemize}

Pour l'étude de l'énergie libre, il a fallu adapter les méthodes développées dans \cite{cf:Lac} à ce nouveau problème, et étudier un modèle d'accrochage homogène dans un potentiel polynomial, qui apparaît naturellement lors de l'utilisation de méthodes de second moment. Ce modèle d'accrochage brownien possède en lui même un certain intérêt (voir \cite{cf:CKMV}) mais nous n'en parlerons pas dans cette introduction.
Nous avons obtenu des résultats qui permettent de déterminer s'il y a très fort désordre à toute température dans quasiment tout les cas, couplés à des résultats précis sur le comportement de l'énergie libre:

\medskip

\begin{theoreme}
Pour $d=1$ et $\theta<1$ ou $d\ge 2$ et $\theta<2$, il y a fort désordre à toute température et il
existe une constante $c$ (qui dépend de $d$ et $Q$) telle que
\begin{equation}
 -c\gb^{\frac{4}{2-\theta}}\le p(\gb)\le c^{-1}\gb^{\frac{4}{2-\theta}}.
\end{equation}
Lorsque $d=1$, $\theta>1$, il y a fort désordre à toute température et il
existe une constante $c$ (qui dépends de  $Q$) telle que
\begin{equation}
-c \gb^4 \le p(\gb)\le c^{-1} \gb^4.
\end{equation}
De plus, lorsque $d\ge 3$, $\theta>2$, il y a faible désordre pour $\gb$ suffisamment faible.
\end{theoreme}

\medskip

Concernant la surdiffusivité, nous avons significativement amélioré le résultat de Bezerra, Tidel et Viens \cite{cf:BTV}, avec une preuve beaucoup plus courte et intuitive, qui peut se généraliser dans des cas où le désordre n'est pas gaussien, (cas du désordre poissonnien abordé dans \cite{cf:CY_cmp}, par exemple). En particulier, nous avons pu montrer que sous certaines conditions il y a surdiffusivité en toute dimension avec la borne suivante:

\medskip

\begin{theoreme}
 Pour $d=1$, $\theta<1$, $d\ge 2$, $\theta<2$, on a
\begin{equation}
 \lim_{\gep\to 0}\liminf_{N\to \infty}\bbE \left[\mu_{t,\go\gb} \left(\sup_{s\in [0, t]}|B_s|\ge \gep t^{\frac{3}{4+\theta}}\right)\right]=1.
\end{equation}
\end{theoreme}
 
\medskip

Informellement donc, ce résultat implique une inégalité sur l'exposant de fluctuation, $\xi>3/(4+\theta)$.
Dans le cas $d=1$, $\theta>1$ on retrouve le même résultat que \cite{cf:BTV} ou \cite{cf:P}.

Dans cette introduction nous donnons une heurisque de preuve pour la dimension $1$ qui explique comment est obtenue la borne $\xi>3/(4+\theta)$:\\

\'Etudions sous $\mu_t$ le poids des trajectoires $(B_{t})_{t\in[1,N]}$ qui restent coincées dans une boite de largeur $t^{\alpha}$ centrée en zéro lors de la seconde partie du parcours, et comparons le au poids des trajectoires qui passent la seconde partie du parcours dans la boîte $B2$ (voir figure \ref{superdiff}, trajectoires $A$ et $B$).
Le coût entropique pour qu'une trajectoire atteigne la boîte $B2$ et passe le seconde partie du parcours dans cette boîte (trajectoire $B$ sur la figure \ref{superdiff}) est égal à $\log \bP[S \text{ reste dans } B_2 ]\sim -N^{2\alpha-1}$ (résultat classique de grande déviation).
\medskip

Considérons la variable aléatoire somme de tous les acroissements de $\go$ dans la boite $B_2$,
\begin{equation}
\gO \int_{B_2} \go(\dd s, x) \dd x.
\end{equation}
On peut vérifier que dans le cas $\theta>1$ la variable aléatoire $\gO$ à une variance $\approx t^{\alpha(2-\theta)+1}$.
Donc la moyenne empirique dans la boîte $B2$  de l'accroissement par unité de temps  est une gaussienne ($\gO/|B_2|$) dont l'écart-type est approximativement $t^{\frac{-1-\alpha\theta}{2}}$. 
\medskip

Multiplié par le temps passé dans la boîte ($t/2$), en supposant que les environnements sont à peu près indépendants dans les boites $B1$ et $B2$, cela indique que la différence énergétique typique entre les deux chemins représentés sur la figure \ref{superdiff} est d'ordre $t^{\frac{-1-\alpha\theta}{2}}$.
\medskip

Il est favorable sous la mesure $\mu_N$ de sortir de la boite $B1$ dès que le gain énergétique surpasse la perte entropique, i.e.\ dès que
\begin{equation}
t^{(1-\theta\alpha)/2}>>t^{2\alpha-1},
\end{equation}
soit dès que $\alpha<3/(4-\theta)$.
\medskip

La nouveauté par rapport aux travaux de Petermann ou Bezzerra \textit{et al.} est changer l'approche, en utilisant des techniques de changement de mesure au lieu de regarder un problème d'inversion de matrice de covariance. Cela raccourcit considérablement la preuve et la rend plus intuitive. Cela permet également de généraliser le résultat à des modèles où l'environnement n'est pas Gaussien (par exemple au modèle à environnement poissonien introduit dans 
\cite{cf:CY_cmp}).

\medskip


\begin{figure}[hlt]
\begin{center}
\leavevmode
\epsfxsize =14 cm
\psfragscanon
\psfrag{trajA}{trajectoire A}
\psfrag{trajB}{trajectoire B}
\psfrag{B1}{Boîte B1}
\psfrag{B2}{Boîte B2}
\psfrag{0}{$0$}
\psfrag{N/2}{$N/2$}
\psfrag{N}{$N$}
\epsfbox{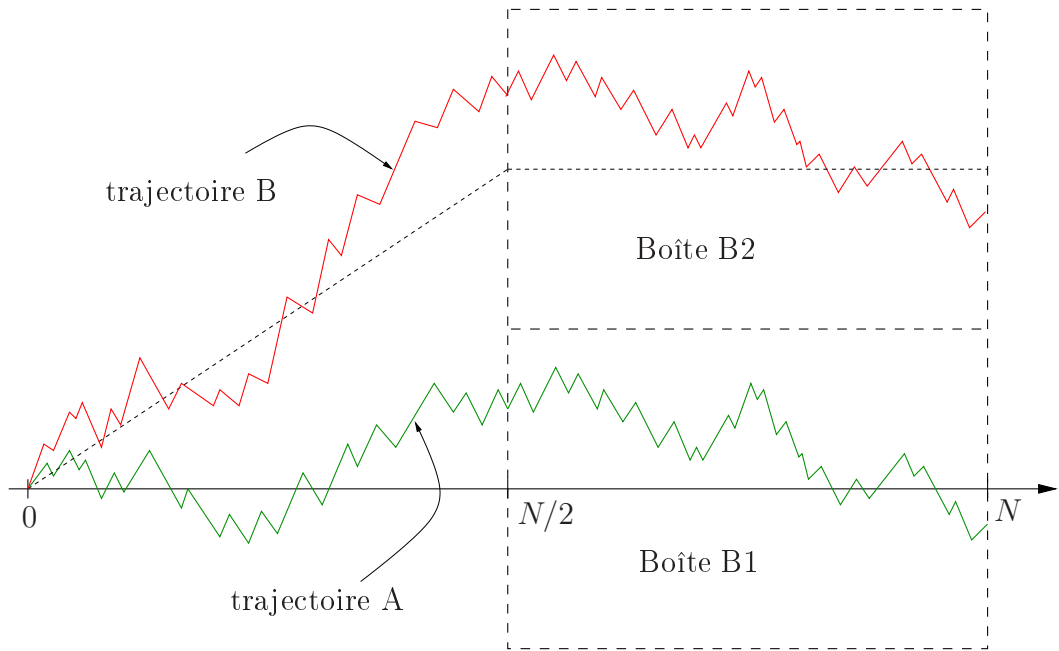}
\end{center}
\caption{Cette figure illustre l'heuristique de la preuve pour la superdiffusivité. Les trajectoires $A$ et $B$ sont deux trajectoires typiques des deux événements mentionnés.}
\label{superdiff}
\end{figure}

\section*{Elargissement}

Les méthodes utilisées pour prouver les résultats présentés de cette thèse ont pu être exploitées pour l'étude d'autres modèles de polymères.
\begin{itemize}
 \item Pour le modèle de copolymère, qui décrit le comportement d'un polymère hétérogène avec une interface de solvant (voir par exemple \cite{cf:BdH, cf:Sinai}, les méthodes développées pour les modèles présentés dans cette thèse ont été utilisées pour prouver que l'énergie libre quenched était bornée par l'énergie libre annealed à toute température pour ce modèle (voir \cite{cf:BGLT}) le résultat a été ensuite amélioré par Toninelli, pour montrer que la pente de l'énergie libre à l'origine était strictement inférieure à $1$ \cite{cf:T_cg}.
 \item Birkner et Sun \cite{cf:BS} ont proposé un modèle d'accrochage homogène alternatif, où la ligne d'accrochage n'est plus la ligne où la première coordonnée s'annule mais le graphe de la réalisation (fixée) d'une marche aléatoire dans $\Z^d$. Dans ce cas, il a été montré en adaptant les techniques developpés dans cette thèse, que les points critique quenched et annealed coincident pour $d=1, 2$, et diffèrent pour $d\ge4$. Le cas $d=3$, qui correspond au cas $\alpha=1/2$ de notre modèle d'accrochage désordonné, ne semble pas pouvoir être résolu en adaptant telle qu'elle les méthodes utilisées ici pour le cas marginal.
\end{itemize}

\selectlanguage{english}

\part{Modèles d'accrochage désordonnés}

\chapter[Hiercarchical pinning model]{Hierarchical pinning model, quadratic maps and quenched disorder}\label{HPMQM}

\section{Introduction}
\label{sec:intro}

\subsection{The model}
Consider the dynamical system 
defined by 
the initial condition $R_0^{(i)}>0$, $i\in \N :=\{1,2, \ldots\}$
and  the array of recurrence equations
\begin{eqnarray}
\label{eq:R}
  R_{n+1}^{(i)}\, =\, \frac{R_n^{(2i-1)}R_n^{(2i)}+(B-1)}B,  \ \ \  \, i \in \N,
\end{eqnarray}
for $n=0,1,\ldots$ and a given $B>2$. 
Of course if $R_0^{(i)}=r_0$ for every $i$, then the problem
 reduces to studying the quadratic recurrence equation
\begin{equation}
\label{eq:r}
r_{n+1}= \frac{r_n^2 + (B-1)}B,
\end{equation}
a particular case of a very classical problem, the {\sl logistic map},
as it is clear from the fact 
that $z_n:=1/2- r_n/(2(B-1))$ satisfies
the recursion 
\begin{equation}
\label{eq:logistic}
z_{n+1}=\frac{2(B-1)}B\,z_n(1-z_n).
\end{equation}
We are instead interested in non-constant initial data and, more
precisely, in initial data that are typical realizations of a sequence
of independent identically distributed (IID) random variables. In its
random version, the model was first considered in \cite{cf:DHV} (see
\S~\ref{sec:qmpm} and \S~\ref{sec:pinning} below for motivations in terms of
pinning/wetting models and for an informal discussion of what 
the
interesting questions are 
and what is expected to be true). We will
consider rather general distributions, but we will assume that all the
moments of $R^{(i)}_0$ are finite. As it will be clear later,
for our purposes it is actually useful to write
\begin{equation}
\label{eq:R0}
  R_0^{(i)} \, =\, \exp(\gb \go_i -\log \M (\gb)+h),
\end{equation}
with $\gb \ge 0$, $h \in \R$, 
 $\{\go_i\}_{i\in\N}$ a sequence of exponentially integrable IID
centered random variables normalized to  $\bbE\,\go_1^2=1$ and 
for every $\gb$
\begin{equation}
  \label{eq:M}
\M (\gb)\, := \, \bbE \exp(\gb \go_1)\, < \, \infty.
\end{equation}
The law of $\{\go_i\}_{i\in\N}$ is denoted by $\bbP$ and 
we will often alternatively denote the average $\bbE(\cdot)$
by brackets $\langle\cdot\rangle$.

Note that, for every $n$, $\{R_n^{(i)}\}_{i\in\N}$ are IID random
variables and therefore this dynamical system is naturally
re-interpreted as the evolution of the probability law $\cL _n$ (the
law of $R_n^{(1)}$): given $\cL_n$, the law $\cL_{n+1}$ is obtained by
constructing two IID variables distributed according to $\cL_n$ and
applying
\begin{equation}
\label{eq:basic}
R_{n+1}\, =\, \frac{R_n^{(1)}R_n^{(2)}+(B-1)}{B}.
\end{equation}

Of course, the iteration \eqref{eq:basic} is well defined for every
$B\ne0$. In particular, as detailed in Appendix
\ref{sec:Bsmallerthan2}, the case $B\in(1,2)$ can be mapped exactly
into the case $B>2$ we explicitly consider here, while for $B<1$ one
loses the direct statistical mechanics interpretation of the model
discussed in Section \ref{sec:pinning}.

\subsection{Quadratic maps and pinning models}
\label{sec:qmpm}

The model we are considering may be viewed as a hierarchical version
of a class of statistical mechanics models that goes under the name of
(disordered) {\sl pinning} or {\sl wetting } models \cite{cf:Fisher,cf:Book}, that 
are going to be described in some detail in  \S~\ref{sec:pinning}.  It has been
introduced in \cite[Section 4.2]{cf:DHV}, where the partition function
$R_n=R_n^{(1)}$ is defined for $B=2,3, \ldots$ as
\begin{equation}
\label{eq:DHV}
R_n\, =\, \bE_n^B\left[\exp\left(\sum_{i=1}^{2^n} 
\left(\gb\go_i-\log \M (\gb)+h\right) \ind_{\left\{(S_{i-1},S_i)=(d_{i-1},d_i)\right\}}
  \right)\right],
\end{equation}
with $\{S_i\}_{i=0, \ldots , 2^n}$ a simple random walk (of law
$\bP_n^B$) on a hierarchical {\sl diamond}  lattice with growth parameter $B$
and $d_0, \ldots, d_{2^n}$ are the labels for the vertices of a particular path
that has been singled out and dubbed {\sl defect line}.
The construction of diamond lattices and a graphical description of the model are
 detailed in Figure~\ref{fig:diamond}
and its caption.

\begin{figure}[!h]
\begin{center}
\leavevmode
\epsfxsize =14 cm
\psfragscanon
\psfrag{l0}{\small level $0$}
\psfrag{l1}{\small level $1$}
\psfrag{l2}{\small level $2$}
\psfrag{d0}{$d_0$}
\psfrag{d1}{$d_1$}
\psfrag{d2}{$d_2$}
\psfrag{d3}{$d_3$}
\psfrag{d4}{$d_4$}
\psfrag{u1}{$u_1$}
\psfrag{u2}{$u_2$}
\psfrag{u3}{$u_3$}
\psfrag{u4}{$u_4$}
\psfrag{traject}{\small trajectory $a$}
\psfrag{traject2}{\small trajectory $b$}
\epsfbox{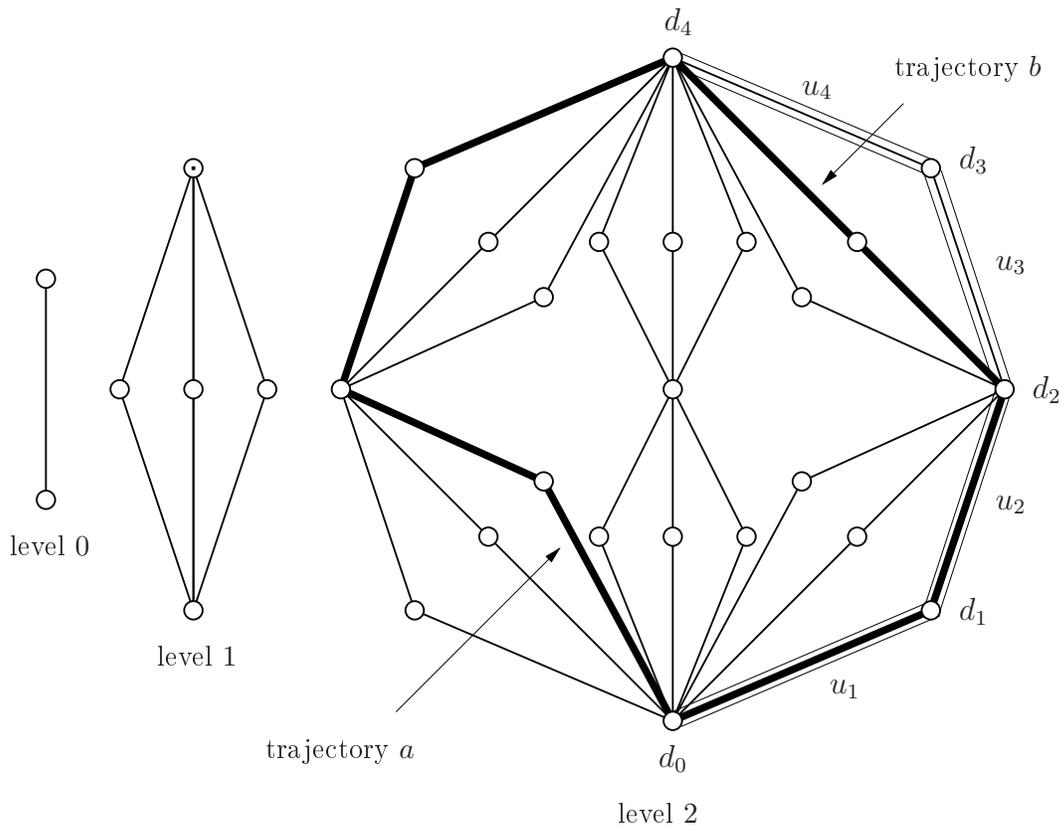}
\end{center}
\caption{Given $B=2,3, \ldots$ ($B=3$ in the drawing)
we build a diamond lattice by iterative steps
(left to right):
  at each step one replaces every bond by $B$
branches consisting of two bonds each. A trajectory of our process 
in a diamond lattice at {\sl level} $n$ is a path connecting the two {\sl poles}
$d_0$ and $d_{2^n}$: two trajectories, $a$ and $b$, are singled out by thick lines. 
Note that at level $n$, each trajectory is made of
$2^n$ bonds and there are $N_n$ trajectories, $N_0:=1$
and $N_{n+1}= B N_n^2$. A simple random walk at level $n$ is  the uniform measure
over the $N_n$ trajectories. 
A special trajectory, with vertices labeled 
$d_0, d_1, \ldots, d_4$, is  chosen (and marked by a triple line:
the right-most trajectory in the
drawing, but any other trajectory would lead to an equivalent model), we may call it defect line or wall boundary,
and rewards $u_j:= \gb \go_j - \log \M(\gb) +h$ (negative or positive) are assigned to the bonds of this trajectory.
The energy of a trajectory depends on how many and which bonds it shares
with the defect line: trajectory $a$ carries no energy, while trajectory $b$ carries energy $u_1+u_2$. 
The pinning model is then built by rewarding or penalizing 
the trajectories according to their energy  in the standard statistical mechanics fashion
and the partition function of such a model is therefore given by
$R_n$ in  \eqref{eq:DHV}. It is rather elementary, and fully detailed in   
 \cite{cf:DHV},
 to extract from \eqref{eq:DHV} the recursion
\eqref{eq:basic}. But the recursion itself is  well defined for
arbitrary real value 
$B\ne0$ and one may forget the definition of the
hierarchical lattice, as we do here. The definition of 
$\bP^B_n$ can also be easily generalized to $B >1$, see Appendix~
\ref{sec:Bsmallerthan2}.}
\label{fig:diamond}
\end{figure}

The phenomenon that one is trying to capture is the {\sl
  (de)localization at (or away from) the defect line}, that is one
would like to understand whether the rewards (that could be negative,
hence penalizations) {\sl force} the trajectories to stick close to
the defect line, or the trajectories {\sl avoid} the defect line. A
priori it is not clear that there is necessarily a sharp distinction
between these two qualitative behaviors, but it turns out that it is
the case and which of the two scenarios prevail may be read from the
asymptotic behavior of $R_n$.  The Laplace asymptotics carries already
a substantial amount of information, so we define the {\sl quenched
  free energy}
\begin{equation}
  \label{eq:F}
\tf (\beta,h) \, := \, \lim_{n \to \infty}\frac1{2^n} \log R^{(1)}_n,
\end{equation}
where the limit is in the almost sure sense: the existence of such a
limit and the fact that it is non-random may be found in Theorem~\ref{th:F}.
Note in fact that $\partial_h \tf(\gb, h)$ coincides with
the $N \to \infty$ limit of 
$\bE^B_{n, \go} [ 2^{-n } \sum_{i}  \ind_{\left\{(S_{i-1},S_i)=(d_{i-1},d_i)\right\}}]$,
where $\bP^B_{n, \go}$ is the probability measure associated to the
partition function $R_n$, when $\partial_h \tf(\gb, h)$ exists
(that is for all $h$ except at most a countable number of points,
by convexity of $\tf (\gb, \cdot)$, see below). 
Therefore $\partial_h \tf(\gb, h)$ measures the density of contacts
between the walk and the defect line and below we will
see that $\partial_h \tf(\gb, h)$ is zero up to a critical value $h_c(\gb)$,
and positive for $h>h_c(\gb)$: this is a clear signature of a {\sl localization transition}.

\subsection{A first look at the role of disorder}
Of course if $\gb=0$ the {\sl disorder} $\go$ plays no role
and the model reduces to
  the one-dimensional map \eqref{eq:r}
 (in our language $\gb>0$
corresponds to the model in which  disorder is present).  This map
has two fixed points: $1$, which is stable, and $B-1$, which is
unstable.  More precisely, if $r_0<B-1$ then $r_n$ converges
monotonically (and exponentially fast) to $1$. If $r_0>B-1$, $r_n$
increases to infinity in a super-exponential fashion, namely
$2^{-n}\log r_n$ converges to a positive number which is of course
function of $r_0$.  The question is whether, and how,  introducing
disorder in the initial condition ($\gb>0$) modifies this behavior.

There is also an alternative way to link \eqref{eq:R} and \eqref{eq:r}. 
In fact, by  taking the average we obtain
\begin{equation}
\label{eq:Rav}
\Rav{n+1}\, =\, \frac{\Rav{n}^2 + (B-1)}{B},
\end{equation}
where we have dropped the superscript in $\langle R_n^{(i)}\rangle$.
Therefore the behavior of the sequence $\{ \Rav{n}\}_n$ is (rather)
explicit, in particular such a sequence tends (monotonically) to $1$
if $\Rav{0}<B-1$, while $\Rav{n}=B-1$ for $n=1,2, \ldots$ if
$\Rav{0}=B-1$.  This is already a strong piece of information on
$R_n^{(1)}$ (the sequence $\{ \cL_n\}_n$ is tight). Less informative
is instead the fact that $\Rav{n}$ diverges if $\Rav{0}>B-1$, even if
we know precisely the speed of divergence: in fact the sequence of random
variables can still be tight!  Of course such an issue may be tackled by looking at higher moments, but while
$\med{R_n}$ satisfies a closed recursion, the same is not true for
higher moments, in the sense that the recursions they satisfy depend
on the behavior of the lower-order moments.  For instance, if we set
$\gD_n := \text{var}\left( R_n \right)$, we have
\begin{equation}
\label{eq:Delta}
\gD_{n+1}\, =\, \frac{\gD_n \left( 2 \Rav{n}^2 + \gD_n\right)}{B^2}.
\end{equation}
In principle such an approach can be pushed further, but most important for 
understanding the behavior of the system is capturing the asymptotic behavior
of $\log R_n^{(i)}$, {\sl i.e.} \eqref{eq:F}.

\subsection{Quenched and annealed free energies}
Our first result says, in particular, that the quenched free energy
\eqref{eq:F} is well defined:

\medskip

\begin{theorem}
\label{th:F}
The limit in \eqref{eq:F} exists $\bbP(\dd \go)$-almost surely and in
$\mathbb L^1(\dd \mathbb P)$, it is almost-surely constant and it is
non-negative. 
The function $(\gb,h) \mapsto \tf(\gb , h+\log \M (\gb))$ is convex and 
$\tf(\beta,\cdot)$
   is non-decreasing (and convex). These properties are inherited from 
 $\tf_N(\cdot,\cdot)$, defined by
\begin{equation}
\tf_N(\gb,h)\,=\, \frac{1}{2^N}\med{\log R_N}.
\end{equation}
Moreover $\tf_N (\gb, h)$ converges to $\tf(\gb, h)$ with exponential speed, more precisely for
all $N\ge 1$
\begin{align}
  \tf_N(\gb,h)-2^{-N}\log B \le \tf(\gb,h)
  \le \tf_N(\gb,h)+2^{-N}\log \left(\frac{B^2+B-1}{B(B-1)}\right).
\label{eq:encadre}
\end{align}
\end{theorem}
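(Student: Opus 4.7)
The plan is to extract the limit from a deterministic two-sided comparison built on the recursion~\eqref{eq:basic}, upgrade convergence in mean to almost-sure convergence via a concentration argument, and then read off non-negativity, monotonicity and convexity from soft arguments on the recursion.

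First I would derive matching envelopes on one step of the recursion. The lower bound $R_{n+1}\ge R_n^{(1)}R_n^{(2)}/B$ is trivial; for the upper bound I first remark that \eqref{eq:basic} forces $R_n\ge (B-1)/B$ for every $n\ge 1$, hence for such $n$
\[
R_{n+1}\,=\,\frac{R_n^{(1)}R_n^{(2)}}{B}\Bigl(1+\frac{B-1}{R_n^{(1)}R_n^{(2)}}\Bigr)\,\le\,R_n^{(1)}R_n^{(2)}\cdot\frac{B^{2}+B-1}{B(B-1)}.
\]
Taking the logarithm and averaging, using that $R_n^{(1)}$ and $R_n^{(2)}$ are independent copies of $R_n$, produces
\[
2\,\langle\log R_n\rangle - \log B \;\le\;\langle\log R_{n+1}\rangle\;\le\; 2\,\langle\log R_n\rangle + \log\frac{B^{2}+B-1}{B(B-1)}.
\]
Dividing by $2^{n+1}$ shows that $\tf_N(\gb,h)-2^{-N}\log B$ is non-decreasing in $N$ while $\tf_N(\gb,h)+2^{-N}\log\bigl(\frac{B^{2}+B-1}{B(B-1)}\bigr)$ is non-increasing. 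Their gap vanishes, so they share a common limit $\tf(\gb,h)$, and the two bracketing inequalities give exactly \eqref{eq:encadre}; in particular $2^{-N}\log R_N$ converges to $\tf(\gb,h)$ in $L^{1}$.

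To upgrade to $\bbP$-a.s.\ convergence I would first verify by induction that $\log R_n$ is a $\gb$-Lipschitz function of each of the $2^n$ disorder variables $\go_k$ that enter it. Indeed, if $\go_k$ appears only in the branch $R_n^{(1)}$, the chain rule applied to \eqref{eq:basic} gives
\[
\bigl|\partial_{\go_k}\log R_{n+1}\bigr|\,=\,\bigl|\partial_{\go_k}\log R_n^{(1)}\bigr|\cdot\frac{R_n^{(1)}R_n^{(2)}}{R_n^{(1)}R_n^{(2)}+(B-1)}\,\le\,\bigl|\partial_{\go_k}\log R_n^{(1)}\bigr|,
\]
and the base case $|\partial_{\go_k}\log R_0|=\gb$ propagates. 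A standard concentration inequality for Lipschitz functions of independent exponentially integrable variables (Gaussian concentration in the Gaussian case; Bobkov--Ledoux, or the bounded-differences inequality after truncation, in general) then yields $\bbP\bigl(|\log R_n-\langle\log R_n\rangle|\ge 2^n\gep\bigr)\le C e^{-c\,2^n\gep^{2}}$, which is summable in $n$ for every $\gep>0$. Borel--Cantelli, combined with the $L^{1}$ convergence of $\tf_N$ from the previous step, yields the a.s.\ (and $L^{1}$) convergence of $2^{-n}\log R_n$ to the deterministic constant $\tf(\gb,h)$.

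Non-negativity is immediate: the deterministic lower bound $R_n\ge (B-1)/B$ for $n\ge 1$ gives $\tf_N(\gb,h)\ge 2^{-N}\log((B-1)/B)\to 0$, whence $\tf(\gb,h)\ge 0$. For the convexity statement I would exploit the fact that after the substitution $h\mapsto h+\log\M(\gb)$ the initial condition becomes $R_0=\exp(\gb\go_1+h)$, which is log-linear in $(\gb,h)$. An easy induction on \eqref{eq:basic} then shows that $\log R_n$ is jointly convex in $(\gb,h)$ for every fixed $\go$: indeed, $R_n^{(1)}R_n^{(2)}$ is log-convex because the product of log-convex functions is log-convex, $R_n^{(1)}R_n^{(2)}+(B-1)$ is log-convex because the sum of log-convex functions is log-convex, and division by $B$ preserves log-convexity. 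Taking expectation and then the $n\to\infty$ limit preserves convexity, yielding the convexity of $(\gb,h)\mapsto\tf(\gb,h+\log\M(\gb))$. Monotonicity of $\tf(\gb,\cdot)$ follows from an analogous induction showing $\partial_h R_n\ge 0$, and its convexity from $\partial^2_h\log R_n\ge 0$, a special case of the joint convexity above at fixed $\gb$. The main technical hurdle sits in the concentration step: the Lipschitz bound is universal, but turning it into a tail estimate summable in $n$ requires care when $\go$ is neither bounded nor Gaussian.
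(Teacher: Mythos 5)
Your derivation of \eqref{eq:encadre} is essentially identical to the paper's: both start from $R_{n+1}\ge R_n^{(1)}R_n^{(2)}/B$ and $K_B R_{n+1}\le (K_B R_n^{(1)})(K_B R_n^{(2)})$ with $K_B=(B^2+B-1)/(B(B-1))$, and read off the monotone bracketing of $\tf_N$. Where you genuinely diverge is in the remaining two pieces. For almost-sure convergence, the paper sandwiches the quantity using the iterated inequality $2^{-n}\log(R_n/B)\ge 2^{-k}\bigl(2^{k-n}\sum_{i=1}^{2^{n-k}}\log(R_k^{(i)}/B)\bigr)$ and applies the strong law of large numbers at fixed $k$, then lets $k\to\infty$; this needs nothing beyond the finite exponential moments already assumed. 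You instead prove a $\gb$-Lipschitz bound by the chain rule (your bound $|\partial_{\go_k}\log R_{n+1}|\le |\partial_{\go_k}\log R_n^{(1)}|$ is correct) and invoke concentration plus Borel--Cantelli. This works cleanly in the Gaussian or bounded case, and your candid caveat about the general exponentially integrable case is warranted: there one gets sub-exponential (Bernstein-type) rather than sub-Gaussian tails, which are still summable but demand a different inequality than McDiarmid or Gaussian concentration, so this route is technically heavier than the paper's and yields quantitative rates the theorem does not ask for. For convexity, the paper passes through the Boltzmann-factor representation \eqref{eq:EBgen}, reading convexity of $\log R_N$ off the exponential form; you instead run a log-convexity induction on the recursion (product and sum of log-convex functions are log-convex, the constant $B-1$ is log-convex, division by $B$ is harmless), with the base case $R_0=\exp(\gb\go_1+h)$ log-affine after the shift $h\mapsto h+\log\M(\gb)$, then average and pass to the limit. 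This is a valid and arguably more self-contained argument, though the paper wants \eqref{eq:EBgen} anyway for later use (Section~\ref{sec:deloc} and Remark~\ref{rem:5}). Non-negativity and monotonicity in $h$ you obtain the same way the paper does.
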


\medskip

Let us also point out that $\tf(\beta,h)\ge 0 $ is immediate in view of the
fact that $R_n^{(i)}\ge (B-1)/B$ for $n\ge 1$, {\sl cf.} \eqref{eq:R}.
The lower bound $\tf(\beta,h)\ge 0 $ implies that we can split the
parameter space (or {\sl phase diagram}) of the system according to
$\tf(\gb,h)=0$ and $\tf (\gb, h)>0$ and this clearly corresponds to
sharply different asymptotic behaviors of $R_n$.  In conformity with
related literature, see \S~\ref{sec:pinning}, we define localized
and delocalized phases as $\cL:=\{(\gb, h):\, \tf(\gb,h)>0\}$ and
$\cD:=\{ (\gb, h):\, \tf(\gb,h)=0\}$ respectively.  It is therefore
natural to define, for given $\beta\ge0$, the {\sl critical} value
$h_c(\beta)$ as
\begin{equation}
h_c(\beta)\, =\, \sup\{h\in\R:\tf(\beta,h)=0\},
\end{equation}
and
Theorem \ref{th:F} says in particular that 
\begin{equation}
h_c(\gb)\, =\, \inf\{h\in\R:\tf(\gb,h)>0\},
\end{equation}
and that $\tf(\gb,\cdot)$ is (strictly) increasing on $(h_c(\beta),\infty)$.
Note that, thanks to the properties we just mentioned, the contact fraction, defined in the end of \S~\ref{sec:qmpm},  is zero 
$h<h_c(\gb)$ and  it is instead positive if
$h>h_c(\gb)$ (define the contact fraction by taking the inferior limit
for the values of $h$ at which  $\tf (\gb, \cdot)$ is not differentiable).

\smallskip

Another important observation on Theorem \ref{th:F} is that 
it yields also the existence of $\lim_{n \to \infty} 2^{-n} \log \langle R_n\rangle$
and this limit is simply $\tf(0,h)$, in  fact
$\tf_n(0, h)= 2^{-n }\log \Rav{n}$ for every $n$.
In statistical mechanics language $\Rav{n}$ is an {\sl annealed} quantity and
$\lim_{n \to \infty} 2^{-n} \log \langle R_n\rangle$
is the {\sl annealed free energy}: by Jensen
inequality it follows that $\tf(\beta,h)\le\tf(0,h)$
and $h_c(\beta)\ge h_c(0)$.
It is also a consequence of Jensen inequality (see Remark~\ref{rem:5}) 
the fact that $\tf(\gb, h+ \log \M( \gb))\ge \tf (0,  h)$,
so that $h_c(\beta)\le h_c(0)+ \log \M (\gb) $.
Summing up:
\begin{equation}
\label{eq:bb}
h_c(0) \, \le \, 
h_c(\beta)\, \le\, h_c(0)+ \log \M (\gb) .
\end{equation}
Therefore, by the convexity properties of $\tf(\cdot, \cdot)$ (Theorem~\ref{th:F})
and by \eqref{eq:bb}, we see that $h_c(\cdot) - \log \M(\cdot)$ is concave
and may diverge only at infinity, so that $h_c(\cdot)$ is a continuous function.

The following result on the {\sl annealed system}, {\sl i.e.} just the non-disordered system,
is going to play an important role:
 
\medskip

\begin{theorem}
  \label{th:pure} {\it (Annealed system estimates)}.  The function
  $h\mapsto \tf(0, h)$ is real analytic 
  except at $h=h_c:=h_c(0)$.  Moreover $h_c=\log
  (B-1)$ and there exists $c=c(B)>0$ such that for all
  $h\in(h_c,h_c+1)$
\begin{eqnarray}
\label{eq:alpha0}
  c(B)^{-1}(h-h_c)^{1/\alpha}\, \le\, \tf(0,h)\, \le
  \,  c(B)(h-h_c)^{1/\alpha},
\end{eqnarray}
where 
\begin{equation}
\label{eq:alpha}
  \alpha\, :=\, \frac{\log(2(B-1)/B)}{\log 2}.
\end{equation}
\end{theorem}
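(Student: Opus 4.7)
My plan is to base everything on the explicit deterministic recursion: setting $\gb=0$ in \eqref{eq:R0} we have $r_0 = e^h$ and $r_{n+1} = (r_n^2 + B-1)/B$, and Theorem~\ref{th:F} guarantees $\tf(0,h)=\lim_n 2^{-n}\log r_n$. The one-dimensional map $r \mapsto (r^2+B-1)/B$ has two fixed points, the stable $r=1$ and the unstable $r=B-1$, and a direct computation of $(r^2+B-1)/B - r = (r-1)(r-(B-1))/B$ shows that the basin of attraction of $1$ is $[0,B-1)$, while for $r_0 > B-1$ one has $r_n \uparrow \infty$. Near $r=1$, $T'(1)=2/B<1$, so $r_n\to 1$ at geometric rate, hence $2^{-n}\log r_n\to 0$ and $\tf(0,h)=0$ for $h<\log(B-1)$; at $h=\log(B-1)$ the sequence is constant and still $\tf=0$. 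This identifies $h_c=\log(B-1)$.

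For analyticity, I would exploit the exact scaling relation $\tf(0,h) = \tf(0,T(h))/2$ where $T(h):=\log\bigl((e^{2h}+B-1)/B\bigr)$ is real-analytic with $T(h)=2h-\log B+O(e^{-2h})$ as $h\to\infty$. For $h$ large enough I would look for a solution of the form $\tf(0,h)=h-\log B+g(h)$, which leads to the fixed-point identity
\begin{equation*}
  g(h)\,=\,\tfrac12\log\bigl(1+(B-1)e^{-2h}\bigr)+\tfrac12 g(T(h)),
\end{equation*}
solved by the uniformly convergent series $g(h)=\sum_{k\ge0}2^{-k-1}\log(1+(B-1)e^{-2T^k(h)})$, which defines a real-analytic function on a right neighborhood of $+\infty$. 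For an arbitrary $h>h_c$ and any compact $K\subset(h_c,\infty)$, the iterates $T^N(K)$ escape to $+\infty$, so for $N$ large the identity $\tf(0,h)=2^{-N}\tf(0,T^N(h))$ expresses $\tf(0,\cdot)$ as a composition of real-analytic functions on $K$, giving real-analyticity on $(h_c,\infty)$.

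For the critical behavior I would work with the shifted variable $u_n := r_n - (B-1)$, which satisfies the clean recursion $u_{n+1} = u_n\,(r_n+B-1)/B$, obtained by factoring $r_n^2-(B-1)^2$. Starting from $h=h_c+\eta$ with $\eta\in(0,1)$, one has $u_0=(B-1)(e^\eta-1)\asymp \eta$. As long as $r_n\le 2(B-1)$, the multiplier $(r_n+B-1)/B$ is comparable to $\lambda:=2(B-1)/B>1$ up to a factor $1+O(u_n)$, so a standard Gronwall-type comparison gives $u_n \asymp \eta\lambda^n$ until $u_n$ reaches a fixed threshold; the escape time $n_0(\eta)$ therefore satisfies $n_0(\eta)=\log(1/\eta)/\log\lambda+O(1)$. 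Once $r_{n_0}$ is, say, $\ge 2(B-1)$, a one-line computation shows $r_n^2/B \le r_{n+1} \le 2r_n^2/B$, so $\log r_{n+1}=2\log r_n+O(1)$; dividing by $2^{n+1}$ and iterating proves that $2^{-n}\log r_n$ converges to its limit $\tf(0,h)$ at speed $O(2^{-n})$ and that this limit is comparable to $2^{-n_0(\eta)}\log r_{n_0(\eta)}\asymp 2^{-n_0(\eta)}$. Since $2^{-n_0(\eta)} = \eta^{\log 2/\log\lambda}\cdot e^{O(1)} = \eta^{1/\alpha}\cdot e^{O(1)}$ with $\alpha$ as in \eqref{eq:alpha}, both bounds in \eqref{eq:alpha0} follow.

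The main obstacle I anticipate is making the escape analysis quantitative with two-sided matching constants: one has to control the feedback from $u_n$ into the multiplier $(r_n+B-1)/B$ uniformly in $\eta\in(0,1)$, ensuring that the error in $u_n\asymp \eta\lambda^n$ is multiplicative rather than additive, so that $n_0(\eta)=\log(1/\eta)/\log\lambda+O(1)$ holds with an $O(1)$ independent of $\eta$. The rest—comparison to the asymptotic $T(h)=2h-\log B+o(1)$, and convergence of the power series for $g$—is routine once one has the right bookkeeping.
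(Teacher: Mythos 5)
Your proof is correct, but it takes a genuinely different route from the paper's in two places. For real analyticity on $(h_c,\infty)$ the paper simply cites Costin--Kruskal; you give a self-contained argument via the renormalization identity $\tf(0,h)=\tfrac12\tf(0,T(h))$ and the convergent series for $g$, which is more work but avoids the external reference. One step you should make explicit is why the series solution coincides with $\tf(0,\cdot)$ and not some other solution of the functional equation: since $g(h)\to 0$ and $\tf(0,h)-(h-\log B)\to 0$ as $h\to\infty$, the difference between your candidate $F=h-\log B+g$ and $\tf(0,\cdot)$ satisfies $F(h)-\tf(0,h)=2^{-N}\bigl[F(T^N h)-\tf(0,T^N h)\bigr]\to 0$, so they agree. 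For the critical estimate \eqref{eq:alpha0}, the paper runs the dynamics backward: it picks a reference level $a$ with $\hat\tf(0,a)=1$, iterates the inverse map $a_{n+1}=f(a_n)$ toward the unstable fixed point $0$, and uses Lemma~\ref{th:ptlem} (which gives $a_n\sim G_a\,2^{-\alpha n}$) to convert the resulting $\hat\tf(0,a_n)=2^{-n}$ into a power law, finishing by interpolation via monotonicity. You run forward from $h_c+\eta$ and estimate the escape time $n_0(\eta)$ via the Gronwall bootstrap on $u_n=r_n-(B-1)$; this is the exact forward dual of Lemma~\ref{th:ptlem}, and the multiplicative-error control you flag at the end is precisely what that lemma achieves for the backward orbit. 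The paper's version makes the two-sided matching constants come out automatically from the asymptotics of $a_n$ and is a bit slicker; your forward escape-time analysis takes more bookkeeping, but it is in fact closer in spirit to the arguments the paper later uses for the disordered model (compare Lemma~\ref{th:linear} and Lemmas~\ref{th:lem1-rel}--\ref{th:lem2-rel}), so it is a perfectly defensible choice.
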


\medskip

Bounds on the annealed free energy can be extracted directly from
\eqref{eq:encadre}, namely
that for every $n\ge 1$
\begin{equation}
\label{eq:annbounds}
\frac{B(B-1)}{B^2+B-1}\exp\left(2^n \tf(0,h)\right) \, \le \, \Rav{n} \,\le\,
B \exp\left(2^n \tf(0,h)\right).
\end{equation}
Moreover let us note from now that $\alpha\in(0,1)$ and that
$1/\alpha>2$ if and only if $B<B_c:=2+\sqrt 2$, and $1/\alpha=2$ for
$B=B_c$.  It follows that $\tf(0, h)=o((h-h_c)^2)$ for $B<B_c$ ($\ga <1/2$),
while  this is not true for $B>B_c$ ($\ga>1/2$). 
\medskip

\begin{rem}\rm
\label{rem:oscillations}
  For  models defined on hierarchical lattices, in
  general
  one does not expect 
   the (singular part of the) free energy to have a pure
  power-law behavior close to the critical point $h_c$, but rather to
  behave like $H(\log (h-h_c))(h-h_c)^\nu$, with $\nu$ the critical
  exponent and $H(\cdot)$ a periodic function, see in particular \cite{cf:DIL}.  
    Note that, unless
  $H(\cdot)$ is trivial ({\sl i.e.} constant), the oscillations it produces become more and
  more rapid for $h\searrow h_c$. We have observed numerically such oscillations
  in our case and therefore we expect that
  estimate \eqref{eq:alpha0} cannot be improved at a qualitative level as $h$ approaches
  $h_c$
  (the problem of estimating sharply the size of the oscillations appears to be a
   non-trivial one, but this is not particularly important for our analysis).
\end{rem}

\subsection{Results for the disordered system}
\label{sec:results}

\smallskip

The first result we present gives  information
on the phase diagram: we use the definition
\begin{equation}
  \label{eq:D}
  \gD\,  = \, \gD(\gb)\, :=\, (B-1)^2\left(\frac{M(2\gb)}{M(\gb)^2}-1\right)\, \left(\ge\, 0\right),
\end{equation}
so that $  \text{Var}(R_0)\stackrel{h=h_c}= \gD$. The quantity $\gD$
should be though of  as the size of the disorder at a given $\gb$. 

\medskip

\begin{theorem}
\label{th:eps_c}
Recall that the critical value for the annealed system is $h_c= \log(B-1)$.
We have the following estimates on the quenched critical line:
\begin{enumerate}
\item
Choose $B \in (2, B_c)$. If  $\gD (\gb) \le  B^2 -2(B-1)^2$ then
$h_c(\gb)=h_c$. 
\item Choose $B>B_c$. Then $h_c(\gb)>h_c$ for every $\gb>0$.
  Moreover for $\gb$ small (say, $\gb \le 1$) one can find
  $c\in(0,1)$ such that
\begin{equation}
\label{eq:eps_c}
c \, \gb^{2\ga/(2\ga -1)} \, \le \, 
h_c(\gb ) -h_c \, \le \, c^{-1}  \gb^{2\ga/(2\ga -1)}.
\end{equation} 
\item If $B=B_c$ then one can find $C>0$ 
such that, for $\gb \le 1$,
\begin{equation}
\label{eq:shiftBc}
0 \le h_c(\gb)-h_c \le \exp(-C/\gb^2).  
\end{equation}
\end{enumerate}
Moreover if $\go_1$ is such that $\bbP(\go_1 >t)>0$ for every $t>0$,
then for every $B>2$ we have $h_c(\gb)-h_c>0$ for $\gb$ sufficiently
large, in fact $\lim_{\gb \to \infty} h_c(\gb )=\infty$.
\end{theorem}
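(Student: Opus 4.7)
These four claims correspond to the regimes $\alpha<1/2$, $\alpha>1/2$, $\alpha=1/2$ and large $\gb$ with unbounded disorder. I would treat each by a tailored iteration on the exact recursion \eqref{eq:R}.

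\emph{Proof of (1) -- second moment for $B\in(2,B_c)$.} The plan is to iterate the variance recursion \eqref{eq:Delta}. At $h=h_c=\log(B-1)$ the annealed chain satisfies $\langle R_n\rangle=B-1$ for all $n$, so \eqref{eq:Delta} becomes $\gD_{n+1}=\gD_n(2(B-1)^2+\gD_n)/B^2$; its non-negative fixed points are $0$ and $\gD^\star:=B^2-2(B-1)^2$, and $\gD^\star>0$ precisely for $B<B_c$. If $\gD(\gb)\le\gD^\star$ then $\gD_n$ stays bounded uniformly in $n$, so $R_n/(B-1)$ is bounded in $L^2(\bbP)$. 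A Paley--Zygmund estimate combined with the exponential convergence \eqref{eq:encadre}, applied after a small positive perturbation of $h$ (so that $\langle R_n\rangle$ grows to infinity while the $L^2$ bound is preserved by the same iteration), yields $\tf(\gb,h)=\tf(0,h)$ for every $h>h_c$ and hence $h_c(\gb)=h_c$ via \eqref{eq:bb}.

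\emph{Proof of (2) and (3).} The crucial identity is $2(B-1)^2/B^2=2^{2\alpha-1}$: this is the multiplier, at the unstable annealed fixed point $B-1$, of the squared coefficient of variation $v_n:=\gD_n/\langle R_n\rangle^2$. For $\varepsilon:=h-h_c>0$, Theorem~\ref{th:pure} gives an annealed correlation scale $n_\varepsilon$ with $2^{n_\varepsilon}\asymp\varepsilon^{-1/\alpha}$, and the annealed chain stays $\asymp B-1$ during those iterations; during this phase $v_n$ grows by $2^{2\alpha-1}$ per step, reaching $v_{n_\varepsilon}\asymp\gb^2\varepsilon^{-(2\alpha-1)/\alpha}$. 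For the \emph{upper bound} on $h_c(\gb)-h_c$ in (2), take $\varepsilon\ge c\gb^{2\alpha/(2\alpha-1)}$ with $c$ large enough that $v_{n_\varepsilon}\lesssim 1$; Paley--Zygmund at scale $n_\varepsilon$ followed by iteration on larger scales then yields $\tf(\gb,h)>0$. For the \emph{lower bound} in (2) and for (3) I would use the fractional moment method: for $\theta\in(0,1)$, subadditivity of $x\mapsto x^\theta$ applied to \eqref{eq:R} gives
\begin{equation*}
\bbE R_{n+1}^\theta\,\le\,B^{-\theta}\bigl(\bbE R_n^\theta\bigr)^2+B^{-\theta}(B-1)^\theta,
\end{equation*}
and the scalar map $y\mapsto B^{-\theta}(y^2+(B-1)^\theta)$ has a small stable fixed point $y_-^\star(\theta)$ provided $B^{2\theta}>4(B-1)^\theta$. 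To force $\bbE R_m^\theta\le y_-^\star(\theta)$ at a well-chosen scale $m$, perform a change of measure tilting the disorder variables in a block of size $2^m$ by a drift $\delta$; the Cauchy--Schwarz entropic cost $\sim 2^m\delta^2$ is balanced against the energetic gain on the typical contribution to $R_m$. Optimizing $m$ and $\delta$ near $m=n_\varepsilon$ yields $\tf(\gb,h)=0$ for $\varepsilon\le c\gb^{2\alpha/(2\alpha-1)}$, and the same strategy in the marginal case $\alpha=1/2$, where the balance becomes logarithmic, gives the shift bound $\exp(-C/\gb^2)$.

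\emph{Proof of (4).} I would apply the same fractional moment inequality but drive the conclusion from the base case. For $\theta$ close to $1$ the discriminant $B^{2\theta}-4(B-1)^\theta$ is positive (it equals $(B-2)^2>0$ at $\theta=1$), so $y_-^\star(\theta)>0$. Now
\begin{equation*}
\bbE R_0^\theta\,=\,e^{\theta h}\,\frac{\M(\theta\gb)}{\M(\gb)^\theta},
\end{equation*}
and for $\go_1$ unbounded above one has $\log\M(\gb)/\gb\to\mathrm{ess}\,\mathrm{sup}\,\go_1=+\infty$; strict convexity of $\log\M$ then forces $\theta\log\M(\gb)-\log\M(\theta\gb)\to+\infty$ as $\gb\to\infty$, so $\bbE R_0^\theta\to 0$ for every fixed $h$. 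Hence for each $h$ there is $\gb_0$ with $\bbE R_0^\theta\le y_-^\star(\theta)$ for $\gb\ge\gb_0$, and the iterated fractional moment stays bounded, giving $h\le h_c(\gb)$; thus $h_c(\gb)\to\infty$. The main obstacle will be the change of measure behind the lower bound of (2) and (3): matching the entropic cost of the tilt against the gain in the fractional moment requires a delicate choice of scale and drift, and in the marginal case the tilt must act on many levels of the hierarchy simultaneously.
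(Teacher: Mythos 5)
Your strategy for parts (1), (2), and (4) tracks the paper's closely: Paley--Zygmund at the annealed correlation scale for the upper bound, a fractional-moment iteration plus a uniform tilt of the disorder for the lower bound, with the dynamical structure of the annealed map (the fixed point $B-1$ and the variance multiplier $2(B-1)^2/B^2=2^{2\alpha-1}$) dictating the scales. The fractional-moment recursion you propose, on $y_n=\bbE R_n^\theta$ with
\begin{equation*}
y_{n+1}\le B^{-\theta}\bigl(y_n^2+(B-1)^\theta\bigr),
\end{equation*}
is a genuine variant of the paper's recursion on $A_n=\bbE\bigl(\,[R_n-1]^+\bigr)^\gamma$. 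Both can be made to work; the paper's version has $0$ as a fixed point (so the threshold $A_{n_0}<B^\gamma-2$ is easy to hit once $R_{n_0}$ is concentrated near $1$), whereas your version has its stable fixed point $y_-^\star(\theta)\ge 1$, so the threshold $\bbE R_{n_0}^\theta<y_-^\star(\theta)\approx 1+(1-\theta)\kappa_B$ forces a more careful balance in the H\"older step. In part (4) your direct computation $\bbE R_0^\theta=e^{\theta h}\M(\theta\gb)/\M(\gb)^\theta\to 0$ via strict convexity of $\log\M$ is the same mechanism as the paper's $A_0\to 0$.

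The genuine gap is in part (3). The statement there is an \emph{upper} bound, $h_c(\gb)-h_c\le\exp(-C/\gb^2)$, i.e.\ a localization claim: one must show $\tf(\gb,h)>0$ once $h-h_c$ exceeds $\exp(-C/\gb^2)$. This belongs to the second-moment side, not the fractional-moment side. It is exactly your Paley--Zygmund argument from (2), degenerated: at $B=B_c$ the variance multiplier equals $1$, so the normalized variance $v_n$ grows only through the quadratic correction and through the drift of the annealed chain away from $B-1$, pushing the number of usable levels to $n\sim 1/\gb^2$ and hence the threshold $\varepsilon$ to $\asymp 2^{-\alpha n}\asymp\exp(-C/\gb^2)$. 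You instead route (3) through the fractional-moment / change-of-measure argument, which produces $\tf(\gb,h)=0$, i.e.\ a \emph{lower} bound on $h_c(\gb)$ — the opposite inequality. Worse, a uniform tilt of size $\delta 2^{-n/2}$ does not close the argument at $B=B_c$: the energetic gain $\gb\,\delta\,2^{\alpha n-n/2}$ vanishes when $\alpha=1/2$, and in fact whether $h_c(\gb)>h_c$ at all in the marginal case is left open in this paper (it was settled later with site-dependent tilts and correlated auxiliary measures). So (3) should be proven by the second-moment route, and the fractional-moment route confined to the lower bound in (2) and to (4).
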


Concerning point (1) of the above theorem, let us mention for
completeness that if the disorder variables are not bounded, more
precisely if $\bbP(\go_1>t)$ is non-zero for every $t$, then for every
value of $B>2$ we can prove that $h_c(\beta)>h_c$ for $\beta$
sufficiently large (see Corollary \ref{cor:unbound} below).  \medskip

Of course  \eqref{eq:shiftBc} leaves open an evident question  for
$B=B_c$, that will be discussed  in \S~\ref{sec:pinning}.  We point out
that the constant $C$ is explicit (see Proposition~\ref{th:lwbdc}) but
it does not have any particular meaning.  It is possible to show that
$C$ can be chosen arbitrarly close to the constant given in
\cite{cf:DHV}, but here, for the sake of simplicity, we have decided
to prove a weaker result ({\sl i.e.}, with a smaller constant).  This
is not a crucial issue, since the upper bound on $h_c(\gb)$ is not
comforted by a suitable lower bound.

\smallskip

\begin{figure}[!h]
\begin{center}
\leavevmode
\epsfysize =5.7 cm
\psfragscanon
\psfrag{0}[c]{$0$}
\psfrag{D}[c]{$\gb$}
\psfrag{eps}[c]{$h$}
\psfrag{aa}[c]{ $\hat \gb$}
\psfrag{a}[c]{$\gb_c$}
\psfrag{Del}[c]{\large \cD}
\psfrag{Loc}[c]{\large \cL}
\psfrag{hc}[c]{$h_c$}
\psfrag{B>Bc}[c]{\large $B>B_c$}
\psfrag{B<Bc}[c]{\large $B<B_c$}
\epsfbox{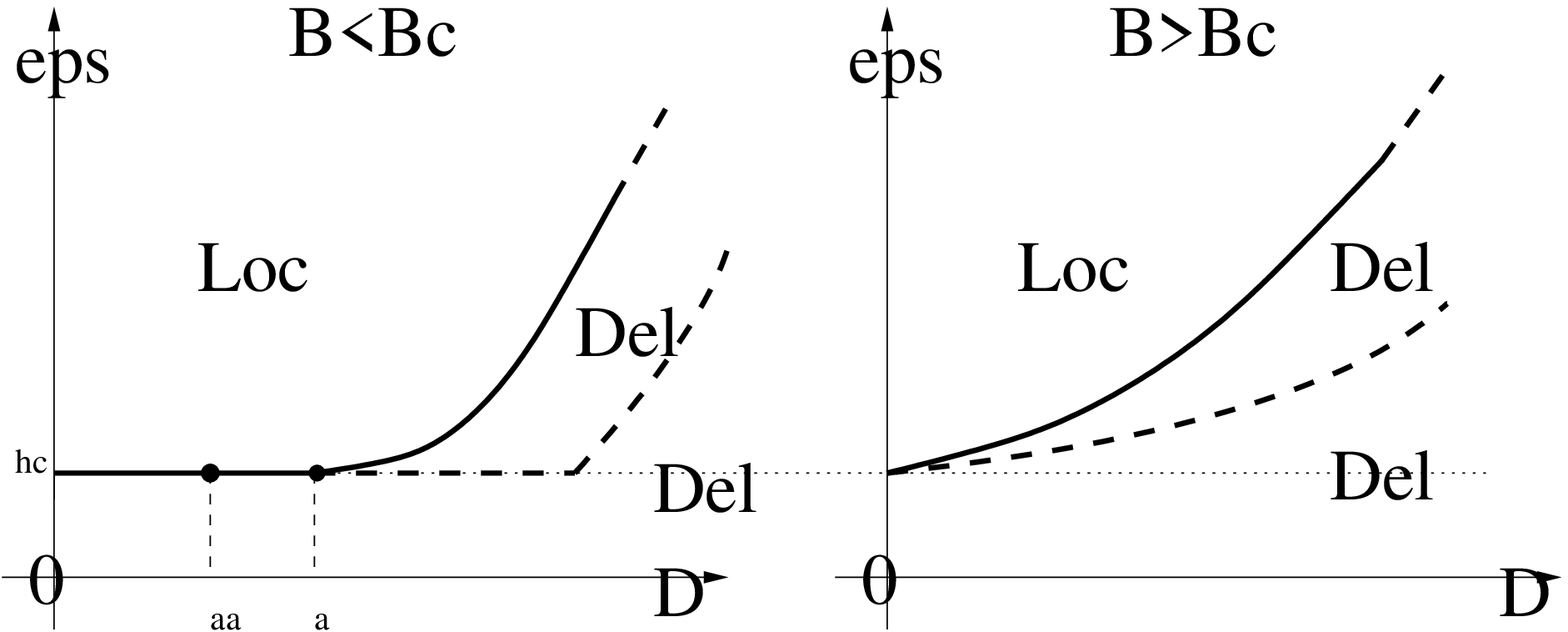}
\end{center}
\caption{\label{fig:hbeta} This is a sketch of the phase diagram 
 and a graphical view of
  Theorem~\ref{th:eps_c} and Theorem~\ref{th:paths}.  The thick line
  in both graphs is $h_c(\cdot)$. The dashed line is instead the lower bound
on $h_c(\cdot)$ which we obtain with our methods.
Below the dashed line we can establish the
  a.s. convergence of $R_n^{(i)}$ to $1$.  We have also used $\gb_c
  :=\sup\{\gb:\, h_c(\gb)=h_c\}$ and 
  $\widehat \gb := \sup\{\gb:\, \gD (\gb) < B^2-2(B-1)^2\}$. We do not prove  
  the (strict) inequality 
$\gb_c> \widehat \gb$.}
\end{figure}

The next result is about  the free energy.

\medskip

\begin{theorem}
\label{th:fe}
We have the following
\begin{enumerate}
\item Choose $B\in (2,B_c)$ and $\gb$ such that
$\gD (\gb) < B^2-2(B-1)^2$. Then for every
$\eta\in (0,1)$ one can find $\epsilon>0$ such that
\begin{equation}
\label{eq:lbfe}
 \tf ( \gb, h)\, \ge\, (1-\eta)  \tf(0,h),
\end{equation}
for $h\in (h_c, h_c+\epsilon)$.
\item Choose $B>B_c$. Then for every
$\eta\in (0,1)$ one can find $c>0$ and $\gb_0>0$ such that
\eqref{eq:lbfe} holds for $\gb< \gb_0$ and 
$h-h_c \in (c \gb^{2\ga /(2\ga -1)}, 1)$.  
\end{enumerate}
\end{theorem}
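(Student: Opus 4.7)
The plan is to reduce the problem to a second-moment estimate at a carefully chosen scale $N$ and then exploit the bound $\tf(\gb,h)\ge\tf_N(\gb,h)-2^{-N}\log B$ from \eqref{eq:encadre}. Since the same bound applied to the annealed system (where $R_N$ is deterministic) gives $2^{-N}\log\med{R_N}\to\tf(0,h)$ with an error of order $2^{-N}$, it suffices to find $N$ such that both $\bbE[\log R_N]\ge(1-O(\eta))\log\med{R_N}$ and $2^{-N}$ is much smaller than $\eta\,\tf(0,h)$.

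For the first inequality I would use a truncation argument. Setting $s_n:=\gD_n/\med{R_n}^2$ and $X_N:=R_N/\med{R_N}$, split $\bbE[\log X_N]$ according to whether $|X_N-1|\le 1/2$ or not; on the first event use the Taylor bound $\log(1+y)\ge y-y^2$ (valid for $|y|\le 1/2$), and on the complementary event use the a priori bound $R_N\ge(B-1)/B$ together with Chebyshev and Cauchy--Schwarz. This yields
\begin{equation}
\bbE[\log R_N]\,\ge\,(1-4s_N)\log\med{R_N}-C_B\, s_N,
\end{equation}
with $C_B$ depending only on $B$. Combined with \eqref{eq:encadre} applied to both $\tf_N(\gb,h)$ and $\tf_N(0,h)$, this gives
\begin{equation}
\tf(\gb,h)\,\ge\,(1-4s_N)\,\tf(0,h)-C'_B\,2^{-N},
\end{equation}
so the task is reduced to exhibiting $N$ with $s_N$ small (relative to $\eta$) and $2^{-N}$ small relative to $\eta\,\tf(0,h)$.

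To control $s_N$ I would analyse the recursion
\begin{equation}
s_{n+1}\,=\,s_n(2+s_n)\left(\frac{\med{R_n}^2}{\med{R_n}^2+B-1}\right)^2,
\end{equation}
which follows from \eqref{eq:Delta} and the annealed recursion. At $h=h_c$, $\med{R_n}\equiv B-1$ and this reduces to an autonomous map with non-trivial fixed point $s^{*}=(B^2-2(B-1)^2)/(B-1)^2$, positive iff $B<B_c$. In case~(1), the hypothesis $\gD(\gb)<B^2-2(B-1)^2$ is exactly $s_0<s^{*}$, whence $s_n\searrow 0$; by continuity in $h$, for $h$ sufficiently close to $h_c$ the trajectory shadows the critical one throughout the first $N_c\sim\log_2(1/\tf(0,h))$ iterations, so that $s_N$ is arbitrarily small at an $N$ of order $N_c$. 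In case~(2), the linearisation at $s=0$ has multiplier $2(B-1)^2/B^2=2^{2\ga-1}>1$ and $s_n\sim\gb^2\, 2^{n(2\ga-1)}$ in the critical window; evaluating at $N=N_c$ with $2^{N_c}\sim(h-h_c)^{-1/\ga}$ gives $s_{N_c}\sim\gb^2(h-h_c)^{-(2\ga-1)/\ga}$, which is small precisely when $h-h_c\gg\gb^{2\ga/(2\ga-1)}$. This accounts for the scaling of the statement and explains why the constant $c$ must be chosen large (depending on $\eta$).

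The main technical obstacle I anticipate is the joint analysis of the non-autonomous recursion for $(\med{R_n},s_n)$ when $h>h_c$: one must quantify how closely the actual trajectory shadows the $h=h_c$ one up to and slightly past $N_c$, and at the same time balance the required smallness of $s_N$ against the largeness of $N$ needed to make the $O(2^{-N})$ error negligible relative to $\eta\,\tf(0,h)$.
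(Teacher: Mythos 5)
Your proposal is correct and takes essentially the same route as the paper's proof (Sections \ref{sec:irrel} and \ref{sec:LBrel}): a second-moment method controlling the normalized variance $Q_n=\gD_n/\med{R_n}^2$ along the annealed correlation length, combined with the finite-volume sandwich \eqref{eq:encadre}. The only cosmetic difference is that you obtain $\bbE\log R_N\ge(1-O(Q_N))\log\med{R_N}$ via a Taylor/truncation bound while the paper uses a slightly simpler Chebyshev decomposition, and the quantitative shadowing estimates you flag as the remaining obstacle are exactly what Lemmas \ref{th:linear}, \ref{th:lem1-rel} and \ref{th:lem2-rel} supply.
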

\medskip

While the relevance of the analysis of the free energy will
be discussed in depth in the next subsection, it is natural to address 
the following issue: in a {\sl sharp} sense, how does the random 
array $R_n^{(1)}$ behave as $n$ tends to infinity?
We recall that the non-disordered system displays only three possible
asymptotic behaviors: $r_n\to 1$, $r_n=B-1$ for all $n$ and 
$r_n\nearrow \infty$ in a super-exponentially fast fashion.

What can be extracted directly from the free energy is quite
satisfactory if the free energy is positive: $R^{(1)}_n$ diverges at a
super-exponential speed that is determined to leading order.  However,
the information readily available from the fact that the free energy
is zero is rather poor; this can be considerably improved, starting
with the fact that, by the lower bound in \eqref{eq:encadre}, if the
free energy is zero then $\sup_n \langle \log R_n \rangle \le \log B$,
which implies the tightness of the sequence.

\medskip

\begin{theorem}
  \label{th:paths} If $\tf (\gb, h)=0$ then the sequence $\{R_n\}_n$
  is tight. Moreover 
if $h<h_c(\beta)$ then
\begin{equation}
\label{eq:paths}
\lim_{n \to \infty} R_n^{(1)}\, =\,1 \mbox{\; in probability}.
\end{equation}
\end{theorem}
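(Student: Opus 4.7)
\emph{Tightness.} Iterating \eqref{eq:R} with $R_0^{(i)}>0$ gives the deterministic lower bound $R_N\ge (B-1)/B$ for $N\ge 1$, hence $\log R_N\ge \log((B-1)/B)$. Inserting $\tf(\gb,h)=0$ into the upper inequality of \eqref{eq:encadre} yields the uniform bound $\bbE[\log R_N]=2^N\tf_N(\gb,h)\le \log B$. Therefore $\log R_N-\log((B-1)/B)$ is a nonnegative random variable with expectation bounded by $\log(B^2/(B-1))$ uniformly in $N$, and Markov's inequality gives tightness of $\{R_N\}_{N\ge 1}$.

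\emph{Convergence to $1$ in probability when $h<h_c(\gb)$.} By tightness it suffices to identify every subsequential weak limit $Y^*$ of the laws $\cL_n$ with $\delta_1$. Along a subsequence $R_{n_k}\Rightarrow Y^*$, the independence of the two copies in \eqref{eq:basic} and continuity of $(x,y)\mapsto (xy+B-1)/B$ give $R_{n_k+1}\Rightarrow (Y_1^* Y_2^*+B-1)/B$ with $Y_1^*,Y_2^*$ i.i.d.\ copies of $Y^*$; a diagonal extraction over $j\ge 0$, using tightness of each $R_{n_k+j}$, then produces a $Y^*$ satisfying the distributional fixed-point equation
\begin{equation*}
Y^*\,\stackrel{d}{=}\,\frac{Y_1^*Y_2^*+B-1}{B},
\end{equation*}
with $Y^*\ge (B-1)/B$ a.s.\ and $\bbE[\log Y^*]\le \log B$ by Fatou. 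Matching first and second moments in this equation, the only almost-sure constant solutions are $Y^*\equiv 1$ (linearization factor $2/B<1$, locally attracting) and $Y^*\equiv B-1$ (factor $2(B-1)/B>1$, locally repelling).

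\emph{Closing the argument via $h<h_c(\gb)$.} The strict inequality $h<h_c(\gb)$ enters through convexity: pick $h_1<h<h_2<h_c(\gb)$ with $\tf(\gb,h_i)=0$, so that $\tf_n(\gb,h_i)\to 0$ and $\tf_n(\gb,h)\to 0$ via \eqref{eq:encadre}. Sandwiching the slope of the convex function $\tf_n(\gb,\cdot)$ between the two difference quotients $(\tf_n(\gb,h)-\tf_n(\gb,h_1))/(h-h_1)$ and $(\tf_n(\gb,h_2)-\tf_n(\gb,h))/(h_2-h)$, both of which vanish, forces $\partial_h\tf_n(\gb,h)=2^{-n}\bbE[\partial_h\log R_n]\to 0$. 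The plan is to combine this vanishing ``contact density'' with the monotone coupling $R_n(h')\ge R_n(h)$ (valid for $h\le h'$) and a strict Jensen step in the distributional fixed-point equation (strict because any non-degenerate $Y^*$ makes the recursion strictly concave in distribution) to eliminate every fixed point except $\delta_1$. The expected main obstacle is precisely this last step: excluding the non-constant solutions produced by moment matching (for instance the one with $\bbE[Y^*]=1$ and $\mathrm{Var}(Y^*)=B^2-2$), which requires pairing the contact-density estimate with the variance recursion \eqref{eq:Delta} constrained by the uniform $L^1$ bound on $\log R_n$ to pinch any mass away from $1$, thereby concluding $R_n\to 1$ in probability.
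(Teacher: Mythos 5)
The tightness half is fine and coincides with the paper's argument (uniform lower bound $R_n\ge (B-1)/B$ plus $\bbE[\log R_N]\le\log B$ from \eqref{eq:encadre} and $\tf=0$). The convergence-to-$1$ half, however, takes a genuinely different route from the paper and has a real gap at its core.

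First, the fixed-point step does not go through as stated. If you extract a subsequence $(n_k)$ along which $R_{n_k+j}\Rightarrow Y^{*,(j)}$ for every $j\ge 0$, continuity of the map $T(\mu)$ in \eqref{eq:basic} gives $Y^{*,(j+1)}\stackrel{d}{=}T(Y^{*,(j)})$ for each $j$ — an orbit, not a fixed point. Nothing forces $Y^{*,(0)}=Y^{*,(1)}$, and declaring that "a diagonal extraction ... produces a $Y^*$ satisfying the distributional fixed-point equation" is exactly the point that needs proof. Without it the rest of the structure collapses. Second, even granting a fixed point, the argument that eliminates non-constant solutions is not present: you correctly observe that the moment equations admit a solution with $\bbE[Y^*]=1$ and $\mathrm{Var}(Y^*)=B^2-2$, and you then describe "the expected main obstacle" and a plan to handle it. That plan (vanishing contact density plus the variance recursion plus monotone coupling plus a "strict Jensen" step) is an outline of an attack, not a proof. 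Smoothing-type recursions like \eqref{eq:basic} can support rich families of non-degenerate fixed points, so ruling them out is genuinely the hard part, and this is precisely what your write-up defers.

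The paper does something much more direct and avoids the fixed-point machinery entirely. It uses the explicit representation \eqref{eq:EBgen} of $R_n$ as a mixture over contact sets $\mathcal I\subset\{1,\dots,2^n\}$ with non-negative weights $p(n,\mathcal I)$, and splits $R_n=T_1+T_2+T_3$ according to $|\mathcal I|=0$, $1\le|\mathcal I|\le C$, and $|\mathcal I|>C$. The empty-set term is $p(n,\emptyset)\to 1$ (this is just \eqref{eq:r} started from $0$). The large-$|\mathcal I|$ tail $T_3$ is compared to the partition function $\bar R_n$ at $h_c(\gb)$: the energetic penalty $\exp(-C(h_c-h))$ per contact uses the strict gap $h<h_c(\gb)$, while $\bar R_n$ is tight because $\tf(\gb,h_c(\gb))=0$. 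The middle term $T_2$ has first moment $\le e^{Ch}(1-p(n,\emptyset))\to 0$. Combined with the deterministic lower bound $R_n\ge p(n,\emptyset)\to 1$, this gives convergence in probability with no need to identify fixed points at all. If you want to salvage your approach you would need both a genuine argument that subsequential limits are fixed points (which likely requires monotonicity in $n$ that the disordered recursion does not provide) and a uniqueness/selection theorem for the fixed points — both substantial results that the paper's decomposition simply sidesteps.
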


\medskip

Let us mention that we also establish almost sure convergence of $R_n$
toward $1$ when we are able to find $\gamma\in (0,1)$ and $n\in \N$
such that $\bbE \left[ ([R_n-1]^+)^\gamma\right]$ is smaller than an
explicit constant (see Section~\ref{sec:rel}, in particular
Remark~\ref{rem:as1}). It is interesting to compare such results
with the estimates on the size of the partition
function $Z_{N,\go}$ of non-hierarchical pinning/wetting models, which
are proven in \cite[end of Sec.~3.1]{cf:T_fractmom} in the delocalized
phase, again via estimation of fractional moments of $Z_{N,\go}$
(which plays the role of our $R_n$).

\medskip 

What one should expect at criticality is rather unclear to us (see
however \cite{cf:GM_h} for a number of predictions and numerical
results on hierarchical pinning and
also \cite{cf:CEGM-JSP,cf:CEGM-CMP} for
some theoretical considerations on a different class of hierarchical models).

\subsection{Pinning models: the role of
disorder}
\label{sec:pinning}

Hierarchical models on diamond lattices, homogeneous or disordered
\cite{cf:Bleher,cf:BO,cf:CEGM-JSP,cf:CEGM-CMP,cf:DG}, are a powerful
tool in the study of the critical behavior of statistical mechanics
models, especially because real-space renormalization group
transformations {\sl \`a la } Migdal-Kadanoff are exact in this case.
In most of the cases, hierarchical models are introduced in association
with a more realistic non-hierarchical one. It should however be
pointed out that hierarchical models on diamond lattices are not rough
simplifications of non-hierarchical ones. They are in fact meant to
retain the essential features of the associated non-hierarchical
models (notably: the critical properties!). 
In particular, it would be definitely misleading to think of the
hierarchical model as a mean field approximation of the real one.

Non-hierarchical pinning models have an extended
literature ({\sl e.g.}  \cite{cf:Fisher,cf:Book}).  They may be
defined like in \eqref{eq:DHV}, with $S$ a symmetric random walk with
increment steps in $\{-1, 0, +1\}$, energetically rewarded or
penalized when the bond $(S_{n-1},S_n)$ lies on the horizontal axis
(that is $d_j=0$ for every $j$ in  \eqref{eq:DHV}),
but they can be restated in much greater generality by considering
arbitrary homogeneous Markov chains that visit a given site (say, the
origin) with positive probability and that are then rewarded or
penalized when passing by this site.  In their non-disordered version
\cite{cf:Fisher}, this general class of models has the remarkable
property of being {\sl exactly solvable}, while displaying a phase
transition --  a localization-delocalization transition -- and
the order of such a transition depends on a parameter of the model
(the tail decay exponent of the distribution of the first return of
the Markov chain to the origin: we call $\ga$ such an exponent and it
is the analog of the quantity $\ga$ in our hierarchical context, {\sl
  cf.} \eqref{eq:alpha}; one should however note that for
non-hierarchical models values $\ga\ge1$ can also be considered, in
contrast with the model we are studying here). As a matter of fact,
transitions of all order, from first order to infinite order, can be
observed in such models.  They therefore constitute an ideal set-up in
which to address the natural question: how does the disorder affect
the transition?

Such an issue has often been considered in the physical literature and
a criterion, proposed by A.~B.~Harris in a somewhat different context,
adapted to pinning models \cite{cf:FLNO,cf:DHV},  yields that the
disorder is irrelevant if $\gb$ is small and $\ga <1/2$, meaning by
this that quenched and annealed critical  points coincide and
the critical behavior of the free energy is the same for annealed and
quenched system (note that the annealed system is a homogeneous
pinning system, and therefore exactly solvable).  The disorder instead
becomes relevant when $\ga>1/2$, with a shift in the critical point
(quenched is different from annealed) and different critical behaviors
(possibly expecting a smoother transition, but the Harris criterion
does not really address such an issue). In the marginal case,
$\alpha=1/2$, disorder could be {\sl marginally} relevant or {\sl
  marginally} irrelevant, but this is an open issue in the physical
literature, see \cite{cf:FLNO,cf:DHV} and \cite{cf:Book} for further
literature.

\smallskip

Much progress has been made very recently in the mathematical literature 
on non-hierarchical pinning models, in particular:
\begin{enumerate}
\item The irrelevant disorder regime is under control
  \cite{cf:Ken,cf:T_cmp} and even more detailed results on the
  closeness between quenched and annealed models can be established
  \cite{cf:GT_irrel}.
\item 
Concerning 
 the  relevant disorder regime, in \cite{cf:GT_cmp} it has been shown that the quenched
  free energy is smoother than the annealed free energy if $\ga>1/2$.
The non-coincidence of quenched and annealed critical points for 
{\sl large} disorder (and for every $\ga$) has been  proven in 
\cite{cf:T_fractmom} via an estimation of non-integer moments of the
partition function. The idea of considering non-integer moments (this time,
of $R_n-1$)  plays an important role also in the present paper.
\item A number of results on the behavior of the paths of the model
  have been proven addressing the question of what can be said about
  the trajectories of the system once we know that the free energy is
  zero (or positive) \cite{cf:GTdeloc,cf:GT_alea}.  One can in fact
  prove that if $\tf(\gb, h)>0$ then the process sticks close to the
  origin (in a strong  sense) and it is therefore in a localized
  ($\cL$) regime.  When  $\tf(\gb, h)=0$, and leaving aside the
  critical case, one expects that the process {\sl essentially never}
  visits the origin, and we say that we are in a delocalized regime
  ($\cD$).  We refer to \cite{cf:Book} for further discussion and
  literature on this point.
\end{enumerate}
\medskip

In this work we rigorously establish the full Harris criterion picture
for the hierarchical version of the model. In particular we wish to
emphasize that we do show that there is a shift in the critical point
of the system {\sl for arbitrarily small disorder} if $\ga>1/2$ and we
locate such a point in a window that has a precise scaling behavior,
{\sl cf.} \eqref{eq:eps_c} (a behavior which coincides with that
predicted in \cite{cf:DHV}).

As a side remark, one can also generalize the smoothing inequality
proven in \cite{cf:GT_cmp} to the hierarchical context and show that
for every $B>2$ there exists $c(B)<\infty$ such that, if $\go_1\sim
\cN (0,1)$, for every $\beta>0$ and $\delta>0$ one has
\begin{equation}
\label{eq:cB}
  \tf(\beta,h_c(\beta)+\delta)\, \le\,  {\delta^2}c(B)/{\beta^2},
\end{equation}
which implies that annealed and quenched free energy critical 
behaviors are different for $\ga >1/2$, {\sl cf.}  \eqref{eq:alpha0} (as
in \cite{cf:GT_cmp}, such inequality can be generalized well beyond
Gaussian $\go_1$, but we are not able to establish it only assuming
the finiteness of the exponential moments of $\go_1$). 
The proof of \eqref{eq:cB} is detailed in \cite{cf:LT}

\medskip

Various intriguing issues remain open:
\begin{enumerate}
\item Is there a shift in the critical point at small disorder if
  $B=B_c$ (that is $\ga=1/2$)? We stress that in \cite{cf:DHV}
is predicted that $h_c(\gb)-h_c(0)\simeq \exp(-\log 2/\gb^2)$ for 
$\beta$ small.
\item Can one go beyond \eqref{eq:cB}? That is, can one find sharp estimates
  on the critical behavior when the disorder is relevant?
\item With reference to 
the caption of
  Figure~\ref{fig:hbeta}, can one prove $\gb_c > \widehat \gb$ (for 
  $B<B_c$)? 
\item Does the law of $R_n$ converge to a non-trivial limit for $n\to\infty$,
when $h=h_c(\gb)$?
 \end{enumerate}
\medskip

Of course, all these issues are open also in the non-hierarchical context
and, even if not every question becomes easier for the hierarchical model,
it may be the right context in which to attack them first.

\subsection{Some recurrent notation and organization of the subsequent sections}

Aside for standard notation like $\lceil x\rceil := \min\{n\in \Z: \,
n\ge x\}$  and $\lfloor x\rfloor:= \lceil x\rceil -1$, or
$[\cdot]^+:=\max(0,\cdot)$, we will repeatedly use $\gD_n$ for the
variance of $R_n^{(1)}$, see \eqref{eq:Delta}, and $Q_n := \gD_n /
\Rav{n}^2$ so that from \eqref{eq:Rav} and \eqref{eq:Delta}, one sees
that
\begin{equation}
\label{eq:Q}
Q_{n+1}\, =\,
2 \left( \frac{B-1}B\right)^2
\left(\frac{\Rav{n}^4}{\Rav{n+1}^2 (B-1)^2}\right) 
\left( Q_n + \frac 12 Q_n^2\right)
\end{equation}
and we observe that
\begin{equation}
  \label{eq:Q0}
  Q_0\, =\,\left(\frac{M(2\gb)}{M(\gb)^2}-1\right)\stackrel{\beta\searrow0}
\sim\beta^2.
\end{equation}
Note that  $2(B-1)^2/B^2$ is smaller than $1$ if and
only if $B<B_c$ and 
\begin{equation}
\label{eq:Qbound}
\left(\frac{\Rav{n}^4}{\Rav{n+1}^2 (B-1)^2}\right) \, \le \, \left( \frac B{B-1}\right)^2.
\end{equation}

We will also frequently use  $P_n:=\langle R_n\rangle-(B-1)$, which satisfies
\begin{equation}
\label{eq:P}
P_{n+1}\, =\,
2\frac{(B-1)}B P_n+ \frac 1B P_n^2,
\end{equation}
and of course $P_0=\gep$ in our notations.
With some effort, one can explicitly verify that for every $n$ 
\begin{equation}
\label{eq:QboundP}
\left(\frac{\Rav{n}^4}{\Rav{n+1}^2 (B-1)^2}\right) \, \le \, 1+ \frac{4P_n}{B(B-1)}.
\end{equation}

Finally, there is some notational convenience at times in making the change of variables 
\begin{equation}
  \label{eq:gep}
  \gep\, :=\, \Rav{0}-(B-1)\, =\,  e^h-(B-1)\ge -(B-1),
\end{equation}
and 
\begin{equation}
\label{eq:hatF}
\hat
\tf(\gb,\gep)\,:=\,\tf(\beta,h(\gep)),
\end{equation}
and when we write $ h(\gep)$ we refer to the invertible map 
defined by \eqref{eq:gep}.

\medskip

The work is organized as follows. Part (1) of Theorem \ref{th:eps_c}
and of Theorem \ref{th:fe} are proven in Section \ref{sec:irrel}. In
Section \ref{sec:LBrel} we prove part (2) of Theorem \ref{th:fe} and,
as a consequence, part (2) of Theorem \ref{th:eps_c}, except the lower
bound in \eqref{eq:eps_c}. Part (3) of Theorem \ref{th:eps_c} is
proven in Section \ref{sec:LBmarg} and the lower bound of
\eqref{eq:eps_c} in Section \ref{sec:rel} (after a brief sketch of our
method). The proof of Theorem \ref{th:paths} is given in Section
\ref{sec:deloc}. Finally, the proofs of Theorems \ref{th:F} and
\ref{th:pure} are based on more standard techniques and can be found
in Appendix \ref{sec:stand}.

\section{Free energy lower bounds: $B<B_c= 2+\sqrt{2}$}

\label{sec:irrel}

We want to give a proof of part (1) of Theorem \ref{th:fe}, which in particular
implies part (1) of Theorem \ref{th:eps_c}. 

The strategy goes roughly as follows: since $h>h_c$ is close to $h_c$,
that is $\gep (=P_0)>0$ is close to $0$, $P_n$ keeps close to zero for
many values of $n$ and $P_{n+1} \approx (2(B-1)/B)P_n$ (recall that
$2(B-1)/B>1$ for $B>2$).  This is going to be true up to $n$ much
smaller than $\log (1/\gep) / \log (2(B-1)/B)$.  At the same time for
the normalized variance $Q_n$ we have the approximated recursion
$Q_{n+1} \approx 2((B-1)/B)^2 (Q_n+(1/2) Q_n^2)$, which one derives
from \eqref{eq:Q} by using $P_n\approx 0$. Since $2((B-1)/B)^2 <1$ is
equivalent to $B<B_c$, we easily see that (if $Q_0$ is not too large)
$Q_n$ shrinks at an exponential rate. This {\sl scenario} actually
breaks down when $P_n$ is no longer small, but at that stage $Q_n$ is
already extremely small (such a value of $n$ is precisely defined and
called $n_0$ below). From that point onward $Q_n$ starts growing
exponentially and eventually it diverges, but after $(1+\gamma) n_0$
steps, for some $\gamma>0$, $Q_n$ is still small while $P_n$ is large,
so that a second moment argument, combined with \eqref{eq:encadre}
which yields a control on $\tf( \gb, h)$ via $\tf_n(\gb, h)$, allows
to conclude.




\medskip

Before starting the proof we give an upper bound on the size 
of  $Q_n(= \gD_n/ \Rav{n}^2)$ in the regime in which the recursion for
$\Rav{n}$ can be linearized (for what follows, recall  
\eqref{eq:Q},
\eqref{eq:Q0} and \eqref{eq:Qbound}).

\smallskip

\begin{lemma}
\label{th:linear}
Let $B\in(2,B_c)$ and $\gb$ such that $\gD (\gb)<B^2-2(B-1)^2$.  There exist
$c:=c(B,\Delta)>0$, $c_1:=c_1(B,\Delta)>0$ and
$\gd_0:=\delta_0(B,\Delta)>0$ with
\begin{eqnarray}
\label{eq:d0c1}
  2(1+\delta_0)\left(\frac{B-1}B\right)^2<1
\end{eqnarray}
such that for every $\gep$ satisfying $0< (B-1) \gep < 
( (B^2-2(B-1)^2)/\gD)^{1/2}-1
$ (recall the definition \eqref{eq:gep} of $\gep$)
and
\begin{equation}
\label{eq:x0-1}
n \, \le \,n_0:=  \left\lfloor \log\left(c\,\gd_0 /\gep \right) / 
\log \left( \frac{2(B-1)}B\right)\right\rfloor,
\end{equation}
one has
\begin{equation}
\label{eq:linear}
Q_n \, \le \, c_1\,Q_0 \left( 2(1+\gd_0)\left(\frac{B-1}B\right)^2 \right)^n .
\end{equation} 
\end{lemma}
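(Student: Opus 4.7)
The plan is to exploit the hypothesis $\gD<B^2-2(B-1)^2$, which---since $Q_0=\gD/(B-1)^2$---is equivalent to $2((B-1)/B)^2(1+Q_0/2)<1$; this is precisely the condition under which the autonomous recursion $Q_{n+1}=2((B-1)/B)^2 Q_n(1+Q_n/2)$ is contracting on $[0,Q_0]$. Smallness of $\gep$ serves only to guarantee that the distortion factor $\med{R_n}^4/(\med{R_{n+1}}^2(B-1)^2)$ in the exact recursion \eqref{eq:Q}, which by \eqref{eq:QboundP} is controlled by $P_n:=\med{R_n}-(B-1)$, remains negligible throughout the range $n\le n_0$.

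First I would prove by induction that $P_n\le 2\gep(2(B-1)/B)^n$ for all $n\le n_0$, provided $c$ is chosen small enough. Rewriting \eqref{eq:P} as $P_{n+1}/P_n=(2(B-1)/B)(1+P_n/(2(B-1)))$ and telescoping gives
\begin{equation*}
\log(P_n/\gep)-n\log(2(B-1)/B)\,=\,\sum_{k=0}^{n-1}\log\left(1+\frac{P_k}{2(B-1)}\right),
\end{equation*}
and under the inductive bound the right-hand side is at most a geometric sum of ratio $2(B-1)/B>1$, hence bounded (up to the factor $B/(B-2)$) by its largest term $P_{n-1}/(2(B-1))$, itself $\le c\gd_0/(B-1)$ by the definition of $n_0$. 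For $c$ small enough this total correction stays below $\log 2$, closing the induction and giving $P_n\le 2c\gd_0$ throughout.

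Next I would choose $\gd_0>Q_0/2$ close enough to $Q_0/2$ so that $\ga:=2(1+\gd_0)((B-1)/B)^2<1$, which the hypothesis guarantees to be possible and which yields \eqref{eq:d0c1}; the positive gap $\ga-2((B-1)/B)^2(1+Q_0/2)$ is the slack available to absorb the remaining error. Combining \eqref{eq:Q}, \eqref{eq:QboundP} and the bound on $P_n$ gives
\begin{equation*}
Q_{n+1}\,\le\,2\left(\frac{B-1}{B}\right)^2\left(1+\frac{8c\gd_0}{B(B-1)}\right)Q_n\left(1+\frac{Q_n}{2}\right),
\end{equation*}
and I would then pick $c$ small enough that $(1+8c\gd_0/(B(B-1)))(1+Q_0/2)\le 1+\gd_0$. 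A one-line induction with $c_1:=1$ then closes: if $Q_n\le Q_0\ga^n$ (so in particular $Q_n\le Q_0$), then $(1+Q_n/2)\le 1+Q_0/2$, and the display above yields $Q_{n+1}\le\ga Q_n$, proving \eqref{eq:linear}.

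The main obstacle is organizational rather than analytic: the three error contributions---the compounding correction in the linearization of $P_n$, the distortion factor in \eqref{eq:Q}, and the quadratic nonlinearity $Q_n^2/2$---must all be simultaneously absorbed into the single positive slack $\gd_0-Q_0/2$ made available by the hypothesis. Once the constants are chosen in the right order ($\gd_0$ first, $c$ next, finally $c_1=1$), the two inductions close mechanically and the argument is essentially an exercise in comparing a nonlinear recursion with its linearization on a finite horizon.
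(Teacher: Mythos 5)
Your proof is correct and follows the same two-stage strategy as the paper: first control the growth of $P_n$ via the linearized recursion (you telescope $\log P_n$ directly, the paper tracks $G_n:=(P_n/P_0)(B/(2(B-1)))^n$ — this is the same estimate), then feed the resulting bound $P_n\le 2c\gd_0$ into \eqref{eq:QboundP} to contract $Q_n$. Where you genuinely streamline things is in the choice of $\gd_0$: the paper takes $\gd_0$ \emph{small} so that \eqref{eq:d0c2} holds, deduces $Q_1<Q_0$, and asserts \eqref{eq:linear} ``for a suitable $c_1$'' without exhibiting the prefactor (one must note that $\sum_k Q_k<\infty$ to control the product of the $(1+Q_n/2)$ correction factors); by instead taking $\gd_0$ just above $Q_0/2$ — which is compatible with \eqref{eq:d0c1} precisely because the hypothesis $\gD<B^2-2(B-1)^2$ gives $2((B-1)/B)^2(1+Q_0/2)<1$ strictly — you arrange that $(1+Q_n/2)\le 1+Q_0/2<1+\gd_0$, so once $c$ is tuned the distortion and the nonlinearity are absorbed in one stroke, giving $Q_{n+1}\le\ga Q_n$ and hence $c_1=1$ with no further argument. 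The only slip is cosmetic: the geometric-sum factor should be $2(B-1)/(B-2)$, not $B/(B-2)$, which changes nothing.
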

\smallskip

Note that the condition 
on $\gep $ simply guarantees  $\gD_0 =
(1+\gep/(B-1))^2 \gD$
is smaller than
$ B^2-2(B-1)^2$.
\smallskip 
 
\noindent
{\it Proof of Lemma \ref{th:linear}.}
Recall that  $P_n=\langle R_n\rangle-(B-1)$ and that
it satisfies the recursion \eqref{eq:P}
(and that $P_0=\gep$).

 For $G_n := (P_n/P_0) (2(B-1)/B)^{-n}$ we have from \eqref{eq:P} and
\eqref{eq:gep}
\begin{equation}
G_{n+1} \, =\, G_n  + \frac {\gep}B \,\left( 2\frac{(B-1)}B\right)^{n-1} G_n^2,
\end{equation}
and $G_0=1$. If  $G_m\le 2$ for $m\le n$, then
\begin{equation}
\frac{G_{n+1}}{G_n} \, \le \, 1+ 2\frac {\gep}B \,\left( 2\frac{(B-1)}B\right)^{n-1},
\end{equation}
which entails

\begin{equation} 
\label{eq:recineqGn}
G_{n+1}  \, \le \, 
\exp\left( 2\frac {\gep}B \sum_{j=0}^{n}\left( 2\frac{(B-1)}B\right)^{j-1}
\right)
\, \le \, 
1+\gep C(B) \left(\frac{2(B-1)}B\right)^{n+1},
\end{equation}
for a suitable constant $C(B)<\infty$. 

As we have already remarked, our assumption on $\gep$ yields  $\gD_0 < B^2-2(B-1)^2$,
so  
\begin{equation}
  \label{eq:cQ0}
Q_0 \, <\,  \left(\frac B{B-1}\right)^2-2. 
\end{equation}
Choose $\delta_0>0$ sufficiently small so that \eqref{eq:d0c1} is
satisfied and moreover
\begin{eqnarray}
  \label{eq:d0c2}
  2\left(\frac{B-1}B\right)^2(1+\delta_0)(Q_0+\frac12Q_0^2)<Q_0
\end{eqnarray}
(the latter can be satisfied in view of \eqref{eq:cQ0}).  It is
immediate to deduce from \eqref{eq:recineqGn} that if $c$ in 
\eqref{eq:x0-1} is chosen sufficiently small (in particular, $c\le B(B-1)/8$), 
then $G_n\le 2$ for
$n\le n_0$ and, as an immediate consequence,
\begin{equation}
\label{eq:recineqPn}
0<P_{n} \, \le \, 2 \gep  \left(\frac{2(B-1)}B\right)^{n}\, \le \,
2c\delta_0\,\le\, 
\delta_0 \frac {B(B-1)}4,
\end{equation}
where the first inequality is immediate from \eqref{eq:P} and $P_0=\gep>0$.
Now we apply \eqref{eq:QboundP}
\begin{eqnarray}
  \label{eq:Qnnuova}
  Q_{n+1}\le 2\left(\frac{B-1}B\right)^2(1+\delta_0)(Q_n+\frac12Q_n^2).
\end{eqnarray}
Notice also that $Q_1<Q_0$ thanks to \eqref{eq:d0c2}. From this it is
easy to deduce that, as long as $n\le n_0$, $Q_n$ is decreasing and
satisfies \eqref{eq:linear} for a suitable $c_1$.  In particular, $c_1(B, \gD_0)$ can be chosen such that
$\lim_{\gD_0 \searrow 0} c_1(B, \gD_0)=1$.  \qed

\bigskip

\noindent
{\it Proof of Theorem \ref{th:fe}, part (1).}
We use the bound \eqref{eq:Qbound} to get
\begin{equation}
\label{eq:superlinear}
Q_{n+1} \, \le \, 2 \left( Q_n + \frac 12 Q_n^2 \right) \, \le \, 3Q_n,
\end{equation}
where the last inequality holds as long as $Q_n \le 1$.  
Then we apply Lemma~\ref{th:linear} (recall in particular $\gd_0$ and $n_0$ in there).
Combining
\eqref{eq:linear} and \eqref{eq:superlinear} we get
\begin{equation}
\label{eq:QnQn0}
Q_n \, \le \, Q_{n_0} 3^{n-n_0} \, \le\, 
c_1\,Q_0 \left( 2(1+\gd_0)\left(\frac{B-1}B\right)^2 \right)^{n_0} 3^{n-n_0},
\end{equation}
for every $n\ge n_0$ 
satisfying $Q_n \le 1$ (which implies $Q_n'\le 1$ for all $n'\le n$ as $Q_n$ is increasing). Of course this boils
down to requiring that the right-most term in \eqref{eq:QnQn0} does not
get larger than $1$. Since $n_0$ diverges as $\gep \searrow 0$, if we choose 
$\gamma >0$ such that $3^\gamma\, 2(1+\gd_0) (B-1)^2/B^2 <1$, then 
the right-most term in \eqref{eq:QnQn0} is bounded above for every
$n \le (1+ \gamma) n_0$ by a quantity $o_\gep(1)$ which 
vanishes for $\gep\to0$.
Summing all up:
\begin{equation}
Q_{\lfloor(1+\gamma)n_0\rfloor }\, =\, o_\gep(1).
\end{equation}
 Next,
note that 
\begin{equation}
\label{eq:decomp}
\langle \log R_{\lfloor(1+\gamma)n_0\rfloor}\rangle  \ge
   \log\left(\frac 12
  \med{R_{\lfloor(1+\gamma)n_0\rfloor}}\right)
\bbP\left(R_{\lfloor(1+\gamma)n_0\rfloor}\ge
  \frac 12 \med{R_{\lfloor(1+\gamma)n_0\rfloor}}\right)+\log
  \left(\frac{B-1}B\right),
\end{equation}
where we have used the fact that $R_n\ge
(B-1)/B$ for $n \ge 1$. Applying the Chebyshev inequality one has
\begin{eqnarray}
\label{eq:PZ}
 \bbP\left(R_{\lfloor(1+\gamma)n_0\rfloor}\ge (1/2)
\med{R_{\lfloor(1+\gamma)n_0\rfloor}}\right)&\ge&
1-4Q_{\lfloor(1+\gamma)n_0\rfloor}=1+o_\gep(1).
\end{eqnarray}
Therefore, from  \eqref{eq:annbounds}, \eqref{eq:decomp} and
\eqref{eq:PZ}  one has
\begin{eqnarray}
  \tf_{\lfloor(1+\gamma)n_0\rfloor}(\beta,h)\ge
(1+o_\gep(1)) \,\hat \tf(0,\gep)-
2^{-\lfloor(1+\gamma)n_0\rfloor}c(B)
\end{eqnarray}
for some $c(B)<\infty$ and, from \eqref{eq:encadre} (or, equivalently, \eqref{eq:nonincrease}),
\begin{eqnarray}
  \tf(\beta,h)\ge (1+o_\gep(1)) \,\hat\tf(0,\gep)-
2^{-\lfloor(1+\gamma)n_0\rfloor}c_1(B).
\end{eqnarray}
Since $\hat\tf(0,\gep)2^{\lfloor(1+\gamma)n_0\rfloor}$ diverges for
$\gep\to0$ if $\gamma>0$, as one may immediately check from
\eqref{eq:x0-1} and \eqref{eq:alpha0}, one directly extracts that for
every $\eta>0$ there exists $\gep_0>0$ such that
\begin{equation}
\label{eq:R1alpha}
\hat \tf(\Delta_0,\gep)=\tf(\beta,h)
\ge \, (1-\eta) \hat\tf(0,\gep)
\end{equation}
for $\gep \le \gep_0$, and we are done.
\qed 

\section{Free energy lower bounds: $B\ge B_c =2+\sqrt{2}$}

\label{sec:LBrel}

The arguments in this section are close in spirit to the ones of the
previous section. However, since $B>B_c$, the constant $2((B-1)/B)^2$ in
the linear term of the recursion equation \eqref{eq:Q} is larger than
one, so the normalized variance $Q_n$ grows from the very beginning.
Nonetheless, if $Q_0$ is small, it will keep small for a while. The
point is to show that, if $P_0$ is not too small (this concept is of
course related to the size of $Q_0$), when $Q_n$ becomes of order one
$P_n$ is sufficiently large. Therefore, once gain, a second moment
argument and \eqref{eq:encadre} yield the result we are after, that
is:

\medskip

\begin{proposition}
\label{th:LB_B>B_c}
Let $B>B_c$.  For every $\eta \in (0,1)$ there exist $c>0$
and $ \gb_0>0$ such that
\begin{equation}
 \label{eq:LBBc}
\tf \left( \beta ,h\right) \, \ge \, 
(1-\eta)  \tf \left( 0, h\right),
\end{equation}
for $\gb \le \gb_0$ and $c \gb^{2\ga/ (2\ga -1)}\le h-h_c(0)\le 1$.
This implies in particular that
$h_c(\gb) < h_c(0)+ c \gb^{2\ga/ (2\ga -1)}$, for every $\gb \le \gb_0$.
\end{proposition}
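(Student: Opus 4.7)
The plan is to adapt the strategy behind Lemma~\ref{th:linear} and the proof of Theorem~\ref{th:fe}(1) to the expanding regime $B>B_c$, where now $2(B-1)/B=2^\alpha$ \emph{and} $2((B-1)/B)^2=2^{2\alpha-1}$ are both strictly greater than $1$, since $\alpha>1/2$. Recall $P_n=\langle R_n\rangle-(B-1)$, so $P_0=\gep:=e^h-(B-1)$, which is comparable to $h-h_c(0)$ on the stated range (since $h_c(0)=\log(B-1)$ by Theorem~\ref{th:pure}), and $Q_n=\gD_n/\langle R_n\rangle^2$, with $Q_0\asymp\gb^2$ for $\gb$ small by \eqref{eq:Q0}. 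I would introduce the stopping scale
\begin{equation*}
n_0\, :=\, \Bigl\lfloor \tfrac{1}{2\ga-1}\log_2(\gd_0/Q_0)\Bigr\rfloor,
\end{equation*}
where $\gd_0=\gd_0(B,\eta)$ is a small constant chosen at the end of the argument so the linearization is valid up to $n_0$.

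The first step is to linearize both recursions up to step $n_0$. From \eqref{eq:P}, arguing exactly as with $G_n$ in the proof of Lemma~\ref{th:linear}, one shows that as long as $P_n$ stays below a small $B$-dependent constant one has $\tfrac12\gep\, 2^{\ga n}\le P_n\le 2\gep\, 2^{\ga n}$ for $n\le n_0$, provided $\gep$ itself is small enough. In particular $P_n$ remains uniformly small along the trajectory. Then \eqref{eq:Q} combined with \eqref{eq:QboundP} yields
\begin{equation*}
Q_{n+1}\,\le\, 2^{2\ga-1}\Bigl(1+\tfrac{4P_n}{B(B-1)}\Bigr)\Bigl(Q_n+\tfrac12 Q_n^2\Bigr),
\end{equation*}
and iterating this while $P_n$ is small and $Q_n\le 1$ gives $Q_n\le c_1\, Q_0\, 2^{(2\ga-1)n}$ for all $n\le n_0$, with $c_1=c_1(B)$ bounded. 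In particular $Q_{n_0}\le c_1\gd_0$, which can be made arbitrarily small by shrinking $\gd_0$.

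The second step is the Chebyshev/second moment argument at $n=n_0$, identical in spirit to \eqref{eq:decomp}--\eqref{eq:PZ}. Since $\bbP(R_{n_0}\ge\tfrac12\langle R_{n_0}\rangle)\ge 1-4c_1\gd_0$ and $R_{n_0}\ge(B-1)/B$, using \eqref{eq:annbounds} gives
\begin{equation*}
\tf_{n_0}(\gb,h)\,\ge\, (1-4c_1\gd_0)\,\tf(0,h)-C(B)\,2^{-n_0},
\end{equation*}
and \eqref{eq:encadre} then yields $\tf(\gb,h)\ge(1-4c_1\gd_0)\tf(0,h)-C'(B)\,2^{-n_0}$. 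Choosing $\gd_0$ so that $4c_1\gd_0\le\eta/2$ takes care of the multiplicative loss.

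The main (though not deep) obstacle is absorbing the additive error $C'(B)2^{-n_0}$ into $(\eta/2)\tf(0,h)$. By Theorem~\ref{th:pure}, $\tf(0,h)\asymp\gep^{1/\ga}$, while by construction $2^{-n_0}\asymp Q_0^{1/(2\ga-1)}\asymp\gb^{2/(2\ga-1)}$. The hypothesis $h-h_c(0)\ge c\gb^{2\ga/(2\ga-1)}$ gives $\gep^{1/\ga}\ge c^{1/\ga}\gb^{2/(2\ga-1)}$, so the ratio $2^{-n_0}/\tf(0,h)$ is controlled by a decreasing function of $c$. Taking $c=c(B,\eta)$ large enough drives this ratio below $\eta/(2C')$ uniformly in $\gb\le\gb_0$ and on the range $c\gb^{2\ga/(2\ga-1)}\le h-h_c(0)\le 1$, producing \eqref{eq:LBBc}. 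The statement about $h_c(\gb)$ is then immediate: since $\tf(0,h)>0$ on this range, \eqref{eq:LBBc} gives $\tf(\gb,h)>0$, so $h>h_c(\gb)$, whence $h_c(\gb)<h_c(0)+c\gb^{2\ga/(2\ga-1)}$.
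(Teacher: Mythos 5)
Your proposal has the right ingredients — linearize $P_n$ and $Q_n$, second moment, then absorb the additive $2^{-n}$ error into $\tf(0,h)$ by taking $c$ large — but the choice of a single stopping scale $n_0$ tuned only to $Q_0$ creates a contradiction in the constants that the paper avoids with a two-stage decomposition.

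Concretely, the issue is at the lower boundary $\gep \asymp c\,\gb^{2\ga/(2\ga-1)}$ of the range. You need two things simultaneously: $2^{-n_0}/\tf(0,h)$ small, which since $2^{-n_0}\asymp(Q_0/\gd_0)^{1/(2\ga-1)}\asymp\gb^{2/(2\ga-1)}\gd_0^{-1/(2\ga-1)}$ and $\tf(0,h)\gtrsim c^{1/\ga}\gb^{2/(2\ga-1)}$ forces $c^{1/\ga}\gd_0^{1/(2\ga-1)}$ to be large; and $P_n$ small up to $n_0$ so that the $Q_n$-linearization is valid (the $(1+4P_n/(B(B-1)))$ factor from \eqref{eq:QboundP} must stay near $1$). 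But $2^{\ga n_0}\asymp(\gd_0/Q_0)^{\ga/(2\ga-1)}$, so on the boundary $P_{n_0}\asymp\gep\,2^{\ga n_0}\asymp c\,\gd_0^{\ga/(2\ga-1)}=\bigl(c^{1/\ga}\gd_0^{1/(2\ga-1)}\bigr)^{\ga}$, which is therefore also large — you are asking the same combination $c^{1/\ga}\gd_0^{1/(2\ga-1)}$ to be simultaneously large and small. Once $P_n$ is of order one, the recursion for $Q_n$ is no longer governed by the factor $q=2^{2\ga-1}$ but by the worst-case $Q_{n+1}\le 3\,Q_n$ from \eqref{eq:superlinear}, so your claimed bound $Q_{n_0}\le c_1\gd_0$ is actually $Q_{n_0}\lesssim c_1\gd_0\,(3/q)^{n_0-m}$ where $m$ is where $P_m$ reaches order one, and this extra factor grows with $c$; it does not stay bounded for most $\ga\in(1/2,1)$.

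The paper's proof resolves this exactly by splitting the scale into two pieces. It linearizes only up to $N_2\asymp\log(1/\gep)/\log\bar q$ (the scale at which $P_n$ reaches order $\log 2$; tuned to $h$, not to $\gb$ alone), where Lemmas~\ref{th:lem1-rel} and~\ref{th:lem2-rel} guarantee $Q_{N_2}\le 4(Cc_1)^{-\log q/\log\bar q}$ — very small when $c_1$ is large. It then extends by $N_3\asymp\log(Cc_1)$ further steps using only the $P$-free bound $Q_{n+1}\le 3\,Q_n$, which is valid whenever $Q_n\le1$ regardless of how large $P_n$ has become. This buys both a small $Q_{N_2+N_3}$ (because the $3^{N_3}$ growth is more than compensated by the very small $Q_{N_2}$) and a large enough $2^{N_2+N_3}$ to dominate $1/\tf(0,h)$. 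You would need to incorporate this second "crude" phase after $N_2$ to close the argument; as written, your single stopping time does not work.
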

Of course this proves part (2) of Theorem \ref{th:fe} and the upper bound
in \eqref{eq:eps_c}.

\medskip

In this section $q:=2(B-1)^2/B^2$ and $\bar q := 2(B-1)/B$: note that
in full generality $ q < \bar q <2$ and $\bar q>1$, while $q>1$
because we assume $B>B_c$. One can easily check that
\begin{eqnarray}
  \label{eq:aa-1}
\frac{\ga}{2\ga-1}=\frac{\log\bar q}{\log q}.
\end{eqnarray}
Moreover in what follows
some expressions are in the form $\max A$, $A\subset \N \cup\{0\}$:
also when we do not state it explicitly, we do assume that $A$ is not
empty (in all cases this boils down to choosing $\gb$ sufficiently
small).

\smallskip

We start with an upper bound on the growth of 
$\Rav{n}= (B-1)+ P_n$ (recall \eqref{eq:P}) 
for $n$ {\sl not too large}.
\medskip

\begin{lemma}
\label{th:lem1-rel}
If $P_0=c_1 \gb^{2\ga/(2\ga -1)}$, $c_1>0$, then 
\begin{equation}
\label{eq:lem1-rel}
P_n \,\le\, 2c_1 \gb^{2\ga/(2\ga -1)} \bar q ^n\, \le \, 1
\end{equation}
for $n \le N_1 := \max\{n:\, C_1(B) c_1 \gb^{2\ga/(2\ga -1)} \bar q ^n
\le 1 \}$, where 
\begin{equation}
C_1(B) \,:= \,
2 \max\left(\frac1{(\bar q -1)B \log 2},1\right).
\end{equation}
\end{lemma}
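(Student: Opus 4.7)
The plan is to linearize the recursion for $P_n$, which from \eqref{eq:P} reads $P_{n+1}=\bar q P_n+\frac{1}{B}P_n^2$. Since the quadratic correction is small whenever $P_n$ is small, the idea is a straightforward bootstrap: prove by induction on $n\le N_1$ that $P_n\le 2P_0\bar q^n$, the factor $2$ absorbing the cumulative effect of the quadratic term.

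First I would introduce $G_n:=P_n/(P_0\bar q^n)$, so that $G_0=1$ and
\begin{equation}
G_{n+1}=G_n\left(1+\frac{P_0\bar q^{n-1}}{B}G_n\right).
\end{equation}
Assuming inductively that $G_m\le 2$ for all $m\le n$, I would use $\prod(1+x_m)\le \exp(\sum x_m)$ together with the geometric sum $\sum_{m=0}^{n}\bar q^{m-1}\le \bar q^{n}/(\bar q-1)$ to obtain
\begin{equation}
G_{n+1}\;\le\;\exp\!\left(\frac{2P_0\bar q^{n}}{B(\bar q-1)}\right).
\end{equation}

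The key step is then to check that the definition of $N_1$ forces the right-hand side to stay below $2$. By construction, for $n\le N_1$ one has $P_0\bar q^n\le 1/C_1(B)$; since $C_1(B)\ge 2/((\bar q-1)B\log 2)$, this gives $2P_0\bar q^n/(B(\bar q-1))\le \log 2$, so $G_{n+1}\le 2$ and the induction closes. Since moreover $C_1(B)\ge 2$, the same bound $P_0\bar q^n\le 1/C_1(B)\le 1/2$ yields $P_n\le 2P_0\bar q^n\le 1$, which is the second assertion in \eqref{eq:lem1-rel}. Substituting $P_0=c_1\gb^{2\alpha/(2\alpha-1)}$ gives the stated bound.

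I do not expect any serious obstacle: the proof is essentially the same linearization trick already carried out in the proof of Lemma~\ref{th:linear} (equations \eqref{eq:recineqGn}--\eqref{eq:recineqPn}), the only differences being that here $\bar q>1$ (since $B>B_c>2$) so the geometric series is driven by its last term rather than its first, and one must tune the constant $C_1(B)$ so that both conditions $G_{n+1}\le 2$ and $P_n\le 1$ are simultaneously met, which is exactly what the two terms in the $\max$ defining $C_1(B)$ ensure.
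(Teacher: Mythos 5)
Your proof is correct and follows essentially the same route as the paper's: both pass to the normalized quantity $G_n=P_n/(P_0\bar q^n)$ (the paper does so implicitly via the ratio $P_{n+1}/P_n$), bound the resulting product by $\exp$ of a geometric sum $\sum_m \bar q^{m-1}\le \bar q^n/(\bar q-1)$, and then observe that the two terms in the $\max$ defining $C_1(B)$ ensure respectively that the exponent stays $\le\log 2$ (so $G_{n+1}\le 2$) and that $P_n\le 1$. If anything, working directly with the stated definition of $N_1$ and verifying both constraints is a slight cosmetic simplification over the paper, which defines $N_1$ as a minimum of two auxiliary thresholds.
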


\medskip

The next result controls the growth of the variance of $R_n$ in the
regime when $\Rav{n}$ is close to $(B-1)$, {\sl i.e.} $P_n$ is small.
Let us set $N_2 := \max\{n:\, (2 c_1 /(\bar q -1))\gb ^{2\ga
  /(2\ga -1)} \bar q^n \le (\log 2)/2\}$. Observe that $N_2\le N_1$
and recall that $Q_0 \stackrel{\gb \searrow 0} \sim \gb^2$, cf.
\eqref{eq:Q0}.

\medskip
\begin{lemma}
\label{th:lem2-rel}
Under the same assumptions as in  Lemma~\ref{th:lem1-rel}, for
$Q_0 \le 2\gb^2$ and assuming $c_1 \ge 20^{\log \bar q / \log q}$
we have 
\begin{equation}
Q_n \, \le\, 2 Q_0 q^n,
\end{equation}
for $n \le N_2$.
\end{lemma}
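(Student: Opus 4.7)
\medskip

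\noindent\emph{Plan of proof.} My plan is to establish $Q_n\le 2Q_0 q^n$ by induction on $n$, for $n\le N_2$. The base case $n=0$ is trivial. For the inductive step, I combine the exact recursion \eqref{eq:Q} with the bound \eqref{eq:QboundP} to obtain the multiplicative inequality
\begin{equation*}
Q_{n+1} \, \le \, q\left(1+\frac{4P_n}{B(B-1)}\right)\left(1+\frac{Q_n}{2}\right)Q_n,
\end{equation*}
and iterating it from $k=0$ up to $k=n-1$ yields
\begin{equation*}
Q_n \, \le \, Q_0\, q^n \prod_{k=0}^{n-1}\left(1+\frac{4P_k}{B(B-1)}\right)\left(1+\frac{Q_k}{2}\right).
\end{equation*}
It then suffices to show the product is $\le 2$, and by $\log(1+x)\le x$ this reduces to bounding
\begin{equation*}
S_n \, := \, \sum_{k=0}^{n-1}\frac{4P_k}{B(B-1)}\, +\, \sum_{k=0}^{n-1}\frac{Q_k}{2}
\end{equation*}
by $\log 2$, which I do by showing each of the two sums is at most $(\log 2)/2$.

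For the $P$-sum I would invoke Lemma \ref{th:lem1-rel} (whose hypotheses are the same as those of this lemma, and whose conclusion applies since $N_2\le N_1$) to write $P_k\le 2c_1\beta^{2\alpha/(2\alpha-1)}\bar q^k$, and sum the geometric series to get a bound proportional to $c_1\beta^{2\alpha/(2\alpha-1)}\bar q^n/(\bar q-1)$. The definition of $N_2$ tells us exactly that this quantity is bounded by $(\log 2)/(2B(B-1))$ times a constant; since $B>B_c=2+\sqrt 2$ implies $B(B-1)\ge 4+3\sqrt 2>4$, the $P$-sum is comfortably below $(\log 2)/2$ for all $n\le N_2$.

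For the $Q$-sum I use the inductive hypothesis $Q_k\le 2Q_0 q^k\le 4\beta^2 q^k$ (here the assumption $Q_0\le 2\beta^2$ enters) and sum again geometrically, obtaining a bound of order $\beta^2 q^n/(q-1)$. The key step is to convert the constraint on $\bar q^n$ from the definition of $N_2$ into a constraint on $q^n$ using identity \eqref{eq:aa-1}, which gives $q^n=\bar q^{n(2\alpha-1)/\alpha}$. Raising the inequality $\bar q^n\le (\bar q-1)(\log 2)/(4c_1\beta^{2\alpha/(2\alpha-1)})$ to the power $(2\alpha-1)/\alpha$ causes the exponent on $\beta$ to collapse exactly to $-2$ (since $\tfrac{2\alpha}{2\alpha-1}\cdot\tfrac{2\alpha-1}{\alpha}=2$), so $\beta^2 q^n$ is bounded by a constant divided by $c_1^{(2\alpha-1)/\alpha}$. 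Choosing $c_1\ge 20^{\alpha/(2\alpha-1)}=20^{\log\bar q/\log q}$ makes $c_1^{(2\alpha-1)/\alpha}\ge 20$ and closes the bound.

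The main obstacle here is the bookkeeping of the exponents: the hypothesis $c_1\ge 20^{\log\bar q/\log q}$ is tailored so that the power $(2\alpha-1)/\alpha$ produced by the exchange between $q$ and $\bar q$ precisely cancels the factor $\beta^2$ in the $Q$-sum and leaves behind the numerical constant $20$ that dominates the remaining $B$-dependent factors $1/(q-1)$ and $\bigl((\bar q-1)\log 2/4\bigr)^{(2\alpha-1)/\alpha}$. Everything else is routine; the one place to be careful is that the bound $Q_n\le 2Q_0 q^n$ must be verified \emph{before} one uses it inside the product, which is automatic in an induction since at step $n+1$ only $Q_0,\dots,Q_n$ enter.
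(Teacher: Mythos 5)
Your proposal is correct and follows essentially the same route as the paper: iterate the multiplicative recursion $Q_{n+1}\le q\bigl(1+\tfrac{4P_n}{B(B-1)}\bigr)\bigl(1+\tfrac{Q_n}{2}\bigr)Q_n$, exponentiate, and bound the $\bar q$-geometric and $q$-geometric contributions separately by $(\log 2)/2$ using the definition of $N_2$ together with the exponent identity $\alpha/(2\alpha-1)=\log\bar q/\log q$. Your induction is a cosmetic reformulation of the paper's device of introducing $N_0'$ (the largest $n$ with $Q_n\le 2Q_0 q^n$) and working on $\{n\le\min(N_0',N_2)\}$ to control the nonlinearity, and your bookkeeping of the exponent cancellation leading to the $c_1^{(2\alpha-1)/\alpha}\ge 20$ condition matches what the paper uses.
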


\medskip

\noindent
{\it Proof of Proposition~\ref{th:LB_B>B_c}}.
Let us choose $c_1$ as in  Lemma~\ref{th:lem2-rel}. Let us observe also that,
thanks to \eqref{eq:aa-1},
 $N_2 = \lfloor \log (1/\gb^2) /\log q - \log (C c_1)/ \log \bar q\rfloor$ 
 for a suitable choice of the constant $C=C(B)$. 
Therefore   Lemma~\ref{th:lem2-rel} ensures that
\begin{equation}
Q_{N_2} \, \le\, 4 (C c_1)^{-\log q /\log \bar q}.
\end{equation}
From the definition of $Q_n$ we directly see that
$Q_{n+1 }\le 3 Q_n$ if $Q_n \le 1$, as in \eqref{eq:superlinear}. 
Therefore for any fixed $\gd\in(0,1/16)$
\begin{equation}
\label{eq:Qbound1}
Q_{N_2+n } \,\le \,3^n 4(C c_1)^{-\log q /\log \bar q} \, \le \, 4\gd,
\end{equation}
if 
\begin{equation}
\label{eq:Qbound2}
n \, \le \, N_3 \, :=\, \left \lfloor \frac{\log q \log (C c_1)}{\log
  \bar q \log 3} - \frac{\log(1/\gd)}{\log 3}\right\rfloor.
\end{equation}
Since $Q_{N_2+N_3} \le
4\gd$ (by definition of $N_3$), we have then
 \begin{equation}
   \bbP\left( R_{N_2+N_3} \, \le \, \frac 12 \Rav{N_2+N_3} \right) \, 
\le \, 16 \gd.
 \end{equation}
As a consequence, applying \eqref{eq:encadre} and \eqref{eq:annbounds}
with $N=N_2+N_3$ one finds
\begin{eqnarray}
  \label{eq:c3}
  \tf(\beta,h)\ge (1-16\delta)\tf(0,h)-2^{-(N_2+N_3)}c_3(B),
\end{eqnarray}
of course with $h$ such that $P_0=c_1\beta^{2\ga/(2\ga-1)}$, {\sl i.e.},
\begin{eqnarray}
  h=\log\left((B-1)+c_1\beta^{2\ga/(2\ga-1)}\right).
\end{eqnarray}
  The last
step consists in showing that the last term in the right-hand side of
\eqref{eq:c3} is negligible with respect to the first one.  A look at
\eqref{eq:Qbound2} shows that $N_3$ can be made arbitrarily large by
choosing $c_1$ large; moreover, by definition of $N_2$ we have
\begin{equation}
2^{N_2} c_1^{1/\ga}\gb ^{2/(2\ga -1)} \, \ge \, \frac 12 C^{-1/\ga},
\end{equation}
for $\gb $ sufficiently small.  From these two facts and from the
critical behavior of $\tf(0,\cdot)$ (cf. \eqref{eq:alpha0}) one
deduces that for any given $\delta$ one may take $c_1$ sufficiently
large so that
\begin{equation}
2^{-(N_2+N_3)}/\tf(0,h)\le \delta,
\end{equation}
provided that $h\le h_c(0)+1$.
For a given $\eta\in(0,1)$ this  proves \eqref{eq:LBBc} whenever 
$\beta$ is sufficiently small and $c \gb^{2\ga/(2\ga-1)}\le h-h_c(0)\le1$,
with $c$ sufficiently large (when $\eta$ is small) but independent of $\beta$.


\qed

\medskip

\noindent
{\it Proof of Lemma~\ref{th:lem1-rel}.}
Call $N_0$ the largest value of $n$ for which $P_n \le 2c_1 \gb^{2\ga/(2\ga -1)}
\bar q^n$ (for $c_1$ and $\gb$ such that $P_0 \le  1$).
Recalling \eqref{eq:P}, for $n \le N_0$ we have
\begin{equation}
\frac{P_{n+1}}{P_n}\, \le \, \bar q \left( 1+ \frac{2c_1}{B \bar q}
\gb ^{2\ga/(2\ga -1)} \bar q ^n\right),
\end{equation}
so that for $N\le N_0$, using the properties  of $\exp(\cdot)$ and
the elementary bound $\sum_{n=0}^{N-1} a^n\le a^N/(a-1)$ ($a>1$),
 we obtain
\begin{equation}
P_N \, \le \, P_0 \,\bar q^N\exp \left( \frac{2c_1}{ (\bar q-1)B}
\gb ^{2\ga/(2\ga -1)} \bar q ^N \right).
\end{equation}
The latter estimate yields a lower bound on $N_0$:
\begin{equation}
\label{eq:ifrhs}
N_0 \, \ge \, \, \max\left\{
n:\, \frac{2c_1}{ (\bar q-1)B}
\gb ^{2\ga/(2\ga -1)} \bar q ^n \le \log 2
\right\}.
\end{equation}
$N_1$ is found by choosing it as the minimum between the right-hand side 
in \eqref{eq:ifrhs} and the maximal value of $n$ for which 
the second inequality in \eqref{eq:lem1-rel} holds.
  \qed

\medskip

\noindent
{\it Proof of Lemma~\ref{th:lem2-rel}.}
Let us call $N_0^{\prime}$ the largest $n$ such that $Q_n \le 2Q_0 q^n$ 
($N_0^{\prime}$ is introduced to control the nonlinearity in \eqref{eq:Q}) 
and let us work with $n \le \min(N_0^{\prime}, N_2)$.
Since $N_2 \le N_1$, 
 ($N_1$ given in Lemma~\ref{th:lem1-rel}), the bound \eqref{eq:lem1-rel} holds and $P_n \le 1$.
Therefore, by using first \eqref{eq:Q} and \eqref{eq:QboundP}, and then   
 \eqref{eq:lem1-rel}, we have 
\begin{equation}
\label{eq:satQ}
\frac{Q_{n+1}}{Q_n} \, \le \, q (1+P_n) \left( 1+2\gb^2 q^n\right)\, \le \, 
q \left(1 + 2 c_1  \gb ^{2\ga /(2\ga -1)} \bar q^n + 
4\gb^2 q^n
\right), 
\end{equation}
which implies
\begin{equation}
\label{eq:satQ-2}
Q_n \, \le \, 
Q_0 q^n \exp \left( 
  \frac{2c_1}{\bar q-1} \gb ^{2\ga /(2\ga -1)}\bar q^n  + \frac{4 }{q-1} \gb^2 q^n
\right) 
\end{equation}
By definition of $N_2$ the first term in the exponent is at most $(\log 2)/2$.
Moreover $n \le N_2$ implies, via \eqref{eq:aa-1},
\begin{equation}
\label{eq:satQ-3}
n \, \le \, \frac{\log (1/\gb^2)}{\log q}- \frac{\log\left((4/\log 2) c_1
/ (\bar q -1)\right)}{ \log \bar q},
\end{equation}
and one directly sees that for such values of $n$ we have $\gb^2 q^n
\le \left(4 c_1 / (\bar q -1)\log 2\right)^{-\log q / \log \bar
q}$. Therefore also the second term in the exponent ({\sl cf.}
\eqref{eq:satQ-2}) can be made smaller than $(\log 2)/2$ by choosing
$c_1$ larger than a number that depends only on $B$, see the statement
for an explicit expression.

Summing all up, for $c_1$ chosen suitably large, $Q_n\le 2Q_0 q^n$ for
$n \le \min(N_0^{\prime}, N_2)$. But, by definition of $N_0^{\prime}$,
this just means $n \le N_2$ and the proof is complete.  \qed

\subsection{The $B=B_c$ case}

\label{sec:LBmarg}
\begin{proposition}\label{th:lwbdc}
Set $B=B_c$. There exists $\gb_0$ such that for all $\gb\le \gb_0$
\begin{equation}
h_c(\gb)-h_c(0)\, < \,  \exp\left(-\frac{(\log2)^2}{2 \gb^2}\right).
\end{equation}
\end{proposition}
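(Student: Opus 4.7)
The plan is to adapt the second-moment strategy used in the proof of Proposition~\ref{th:LB_B>B_c} to the marginal case $B=B_c$. Two features distinguish this setting: the linear factor $q:=2((B_c-1)/B_c)^2=1$ in \eqref{eq:Q}, so that the growth of the normalised variance $Q_n$ is driven entirely by the quadratic term $Q_n^2/2$; and the exponent $1/\alpha=2$ in Theorem~\ref{th:pure}, which gives the annealed lower bound $\tf(0,h)\ge c(B_c)^{-1}\gep^2/(B_c-1)^2$ saturating precisely on the scale where the other relevant quantities match. Throughout set $\bar q:=2(B_c-1)/B_c=\sqrt 2$, pick $N:=\lfloor(\log 2)/\gb^2\rfloor$, and take $\gep$ so that $\gep\bar q^N=\kappa$ for a constant $\kappa\in(0,1)$ to be tuned. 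With this choice $\log(1/\gep)=(\log 2)^2/(2\gb^2)+O(1)$ and $\gep^2\,2^N=\kappa^2$.

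First, as in Lemma~\ref{th:lem1-rel}, one linearises \eqref{eq:P} to obtain $P_n\le 2\gep\bar q^n$ for $n\le N$ (provided $\kappa$ is small enough), and thus $\sum_{n<N}\delta_n\le C'\kappa$ with $\delta_n:=4P_n/(B_c(B_c-1))$. For the variance, \eqref{eq:Q} combined with \eqref{eq:QboundP} gives $Q_{n+1}\le(1+\delta_n)(Q_n+Q_n^2/2)$; inverting and using $1/(2+Q_n)\le 1/2$, with $u_n:=1/Q_n$ one finds
\begin{equation*}
u_{n+1}\ge\frac{u_n-1/2}{1+\delta_n}\ge u_n-\tfrac12-\delta_n u_n.
\end{equation*}
Since $Q_n$ is non-decreasing at $B=B_c$ (a direct check using $\Rav n\ge B_c-1$ in \eqref{eq:Q}), one has $u_n\le u_0$ throughout, so telescoping together with $Q_0\le 2\gb^2$ yields
\begin{equation*}
u_N\ge u_0\!\left(1-\sum_{n<N}\delta_n\right)-\frac N2\ge\frac{1-\log 2-C'\kappa}{2\gb^2}.
\end{equation*}
Since $\log 2<1$, the right-hand side tends to $+\infty$ provided $\kappa<(1-\log 2)/C'$, so in particular $Q_N\le 1/8$ for $\gb$ small.

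With $Q_N\le 1/8$, Chebyshev gives $\bbP(R_N\ge\langle R_N\rangle/2)\ge 1/2$, and running the second-moment computation of Proposition~\ref{th:LB_B>B_c} together with \eqref{eq:annbounds} and \eqref{eq:encadre} yields $\tf(\gb,h(\gep))\ge\tf(0,h(\gep))/2-c_1(B_c)\,2^{-N}$. Theorem~\ref{th:pure} with $\alpha=1/2$ then shows that this is positive as soon as $\gep^2 2^N=\kappa^2$ exceeds a certain $B_c$-dependent constant, which is compatible with the earlier upper bound $\kappa<(1-\log 2)/C'$ by choosing $\kappa$ in a suitable fixed sub-interval. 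This gives $\tf(\gb,h(\gep))>0$, hence $h_c(\gb)<h(\gep)$; since $h(\gep)-h_c(0)\le\gep/(B_c-1)=\kappa(B_c-1)^{-1}\bar q^{-N}<\exp(-(\log 2)^2/(2\gb^2))$ for $\gb$ small enough, the stated bound follows.

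The main obstacle is the very tight balancing of three scales that all coincide at $N\sim(\log 2)/\gb^2$: the quadratic blow-up time $\sim 2/Q_0\sim 2/\gb^2$ of the $Q$-iteration, the time for $P_n$ to grow from $\gep$ to order one, and the annealed threshold $\gep^2 2^N\asymp 1$ imposed by the second-moment method. Every multiplicative correction in the $Q$-iteration — both the nonlinear $Q_n^2/2$ term and the $(1+\delta_n)$ factor induced by $P_n$ — must be tracked carefully, and any slack in these estimates directly degrades the constant in the exponent. This is presumably why the bound $(\log 2)^2/2$ obtained here is strictly worse than the sharp $\log 2$ predicted in \cite{cf:DHV}.
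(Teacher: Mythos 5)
Your analysis of the $Q$-iteration up to time $N=\lfloor(\log 2)/\gb^2\rfloor$ is sound and is in fact somewhat slicker than the paper's: inverting to $u_n=1/Q_n$, using that $Q_n$ is non-decreasing at $B=B_c$ so that $u_n\le u_0$, and telescoping gives $u_N\ge u_0(1-\sum\delta_n)-N/2$, hence $Q_N\lesssim \gb^2/(1-\log 2-C'\kappa)$. This reaches the same conclusion as the paper's multiplicative tracking (which bounds $\prod(1+P_k)$ by an explicit convergent product and $(1+(3/2)Q_0)^n$ by an exponential) and both establish that $Q$ remains of order $\gb^2$ at time $N\sim(\log 2)/\gb^2$.

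The genuine gap is in the concluding second-moment step. You stop at time $N$ with $\gep\bar q^N=\kappa$, so $P_N\le 2\kappa$; but the variance analysis already forced $\kappa<(1-\log 2)/C'$, a numerically small constant (about $0.13$), hence $P_N<1$ and $\langle R_N\rangle<B_c$. The localization criterion behind \eqref{eq:encadre} and \eqref{eq:nondecrease} is that $\tf(\gb,h)\ge 2^{-n}\langle\log(R_n/B)\rangle$, and by Jensen $\langle\log R_n\rangle\le\log\langle R_n\rangle$; so one cannot possibly conclude $\tf>0$ at a time where $\langle R_n\rangle<B$. Your claimed inequality $\tf(\gb,h)\ge\tf(0,h)/2-c_1(B_c)2^{-N}$ requires $\tf(0,h)\cdot 2^N=\Theta(\kappa^2)$ to exceed a fixed $B_c$-dependent constant; with $\kappa$ confined below $\approx 0.13$ and the annealed constant $c(B_c)\ge 1$ from Theorem~\ref{th:pure}, the two constraints on $\kappa$ are not compatible, so the "suitable fixed sub-interval" does not exist. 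The paper's proof resolves exactly this by \emph{not} stopping at $n_\gd\approx N$: it runs an additional $n_1=\log_3(1/(4\gb^2))-1$ steps, during which $Q$ is only controlled crudely by the factor-$3$ bound $Q_{n+1}\le 3Q_n$ (staying $\le 1$ because $Q_{n_\gd}\lesssim\gb^2$ leaves exactly that much room), while $P_n$ grows by at least $\sqrt{2}$ per step, so $P_{n_\gd+n_1}\ge\gd\, 2^{(n_1-1)/2}$ diverges as $\gb\to 0$. It is this divergence of $\langle R_n\rangle$ at bounded $Q_n$ that makes the Chebyshev argument conclusive, and it is obtained at no cost to the exponential bound on $h_c(\gb)$ because $\gep=P_0$ was fixed before iterating. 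To repair your proof you would need to add precisely such a second phase.
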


\medskip

\begin{rem} \rm The constant $(\log 2)^2/2$ that appears in the exponential is
  certainly not the best possible. In fact, one can get arbitrarily close 
   to the optimal constant $\log 2$ given in \cite{cf:DHV},
  but we made the choice to keep the proof as simple as possible.
\end{rem} 

\medskip

\noindent{\it Proof of Proposition \ref{th:lwbdc}.}
Choose $$h= e^{-(\log 2)^2/ (2\gb^2)} + \log(B_c-1),$$ so that $$P_0=
\exp(h)-(B_c-1) \stackrel{\gb \searrow 0}\sim (B_c-1) \exp(-(\log 2)^2/(2
\gb^2)).$$ Given $\gd>0$ small (for example, $\gd=1/70$), we let
$n_\gd$ be the integer uniquely identified (because of the strict
monotonicity of $\{P_n\}_n$) by
\begin{equation}
P_{n_\gd}\, < \, \gd \, \le \, P_{n_\gd +1}
\end{equation} 
(we assume that $P_0<\gd$, which just means that we take $\gb$ small
enough).  We observe that \eqref{eq:P} implies $P_{n+1}/P_n \ge
\sqrt{2}$ for every $n$, from which follows immediately that (say, for
$\gb$ sufficiently small)
\begin{eqnarray}
n_\gd\le  \left \lceil \frac{\log 2}{\beta^2}\right\rceil.
\end{eqnarray}
  We want to show first of
all that $Q_{n_\gd}$ is of the same order of magnitude as
$Q_0$, and therefore much smaller than $P_{n_\gd}$ (for $\gb$ small)
in view of $Q_0 \stackrel{\gb \searrow 0}\sim \gb^2$.

From \eqref{eq:Q}, recalling the definition of $P_n$ ({\sl cf.}
\eqref{eq:P}) and the bound \eqref{eq:QboundP}, we derive
\begin{equation}
 \label{eq:QQ}
Q_{n+1}\, =\,
\left(\frac{\Rav{n}^4}{\Rav{n+1}^2 (B-1)^2}\right) 
\left( Q_n + \frac 12 Q_n^2\right)  
\, \le\,Q_n\left (1+P_n\right)\left(1+\frac{Q_n}{2}\right).
\end{equation}
If we define $c(\gd)$ through
\begin{eqnarray}
  \label{eq:cd}
  c(\gd)=\prod_{k=0}^{\infty}\left(1+\gd 2^{-k/2}\right)\le 
\exp(\delta (2+\sqrt 2)) \, \le \, \frac{21}{20},
\end{eqnarray}
from \eqref{eq:QQ} we directly obtain that,
as long as $Q_n\le 3 Q_0$ and $n\le n_\gd$,
\begin{eqnarray}
  Q_n\le Q_0 \left(1+(3/2)Q_0\right)^n\,\prod_{k=0}^{n-1}(1+P_n)\le
 c(\gd)\,Q_0\, e^{(3/2)Q_0\,n}.
\end{eqnarray}
It is then immediate to check, using \eqref{eq:Q0}, that $Q_{n_\gd}\le 3 Q_0$ for $\gb $ small.

But, as already exploited in \eqref{eq:superlinear},
$Q_{n+1}/Q_n \le 3$ for every $n$ such that $Q_n \le 1$,
so that  $Q_{n_\gd +n } \le  4\gb^2 3^{n} \le 1$ for 
$n\le n_1:=  \log_3 (1/(4\gb^2))-1 $.
But for such values of $n$
\begin{equation}
P_{n_\gd+n} \, \ge \,  \gd 2^{(n-1)/2},
\end{equation}
so that we directly see that $P_{n_\gd+n_1}$ diverges as $\gb$
tends to zero, and therefore $\Rav{n_\gd+n_1}$, can be made large for
$\gb$ small, while $Q_{n_\gd+n_1}$, that is the ratio between the
variance of $R_{n_\gd+n_1}$ and $\Rav{n_\gd+n_1}^2$ is bounded by $1$.  By
exploiting $R_n \ge (B-1)/B$ for $n \ge 1$ and using Chebyshev
inequality it is now straightforward to see that $\langle \log
(R_{n_\gd+n_1}/B)\rangle >0$ and by \eqref{eq:encadre} (or,
equivalently, \eqref{eq:nonincrease}) we have $\tf (\gb, h)>0$.

\qed

\section{Free energy upper bounds beyond annealing}

\label{sec:rel}

In this section we introduce our main new idea, which we briefly
sketch here.  In order to show that the free energy vanishes for $h$
larger but 
close to $h_c(0)$, we take the system at the
$n$-th step of the iteration, for some $n=n(\gb)$ that scales suitably
with $\gb$ (in particular, $n(\gb)$ diverges for $\gb\to0$) and we
modify (via a tilting) the distribution $\bbP$ of the disorder.  If
$\alpha>1/2$, it turns out that one can perform such
 tilting so to guarantee on one hand that, under the new law, $R_{n(\gb)}$
is concentrated around $1$, and, on the other hand,  that the two laws are
very close (they have a mutual density close to $1$).  This in turn
implies that $R_{n(\gb)}$ is concentrated around $1$ also under the
original law $\bbP$, and the conclusion that $\tf(\gb,h)=0$ follows
then via the fact that if  some non-integer moment (of order
smaller than $1$) of $R_{n_0}-1$ is sufficiently small for some
integer $n_0$, then it remains so for every $n\ge n_0$ (cf.
Proposition \ref{th:fractmom}).

\subsection{Fractional moment bounds}
The following result says that if $R_{n_0}$ is sufficiently
concentrated around $1$ for some $n_0\ge0$, then it remains
concentrated for every $n>n_0$ and the free energy vanishes. In other
words, we establish a {\sl finite-volume condition for delocalization}.

\medskip

\begin{proposition}
  \label{th:fractmom} Let $B>2$ and $(\beta,h)$ be given. 
Assume that there exists $n_0\ge0$ and $(\log 2/\log B)<\gamma<1$ such that 
$\langle ([R_{n_0}-1]^+)^\gamma\rangle<B^\gamma-2$. Then, $\tf(\beta,h)=0$.
\end{proposition}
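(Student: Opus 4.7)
The plan is to control the fractional moment $a_n := \bbE\left[\left([R_n-1]^+\right)^\gamma\right]$ and show it is a bounded (in fact non-increasing) sequence under the hypothesis, and then to translate such a bound into a bound on $\bbE[\log R_n]$ that will beat the exponential factor $2^n$ in the definition of the free energy.

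First I would derive a closed recursion for $a_n$. Since \eqref{eq:R} gives $R_{n+1}-1 = (R_n^{(1)}R_n^{(2)}-1)/B$, and since for any $x,y\ge 0$ one has the elementary bound
\begin{equation*}
[xy-1]^+ \;\le\; [x-1]^+[y-1]^+ + [x-1]^+ + [y-1]^+
\end{equation*}
(check separately the cases $x,y\ge 1$; $x\ge 1>y$ and symmetric; $x,y<1$), combining this with the subadditivity $(u+v+w)^\gamma\le u^\gamma+v^\gamma+w^\gamma$ valid for $\gamma\in(0,1)$ and $u,v,w\ge 0$, then using independence of $R_n^{(1)}$ and $R_n^{(2)}$, I get
\begin{equation*}
a_{n+1} \;\le\; B^{-\gamma}\left(a_n^2 + 2a_n\right) \;=:\; f(a_n).
\end{equation*}

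Next, I analyze the map $f$. The positive fixed point of $f$ is $x^\star := B^\gamma - 2$, which is strictly positive precisely because of the hypothesis $\gamma > \log 2/\log B$. A trivial computation shows $f(x)<x$ for all $x\in(0,x^\star)$, so the hypothesis $a_{n_0}<B^\gamma-2$ forces $\{a_n\}_{n\ge n_0}$ to be non-increasing and to converge to $0$. In particular $\sup_n a_n<\infty$.

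To convert this into the vanishing of the free energy, I use the elementary inequality $\log R_n \le \gamma^{-1}\left([R_n-1]^+\right)^\gamma$, which follows from $(1+x)^\gamma\le 1+x^\gamma$ for $x\ge 0$, $\gamma\in(0,1)$, applied to $R_n^\gamma$ when $R_n\ge 1$ (and is trivial when $R_n<1$). Taking expectations yields $\bbE[\log R_n]\le \gamma^{-1}a_n$, uniformly bounded. Dividing by $2^N$ and letting $N\to\infty$ in the right inequality of \eqref{eq:encadre} gives $\tf(\gb,h)\le 0$, and since Theorem~\ref{th:F} already gives $\tf(\gb,h)\ge 0$, the conclusion follows.

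The argument is short and its only real subtlety is the choice of the right quantity to iterate: higher integer moments do not close on themselves (they require the lower ones, \textit{cf.} the discussion around \eqref{eq:Delta}), and $\bbE[\log R_n]$ is too non-multiplicative to iterate directly; the quantity $\bbE\left[([R_n-1]^+)^\gamma\right]$ with $\gamma<1$ is the right compromise because of the subadditivity of $x\mapsto x^\gamma$, which turns products of increments into sums after expectation. The constraint $\gamma>\log 2/\log B$ is then exactly what makes the fixed point of the resulting recursion positive, opening a window of starting values for which the iteration contracts.
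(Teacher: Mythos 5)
Your proof is correct and follows essentially the same route as the paper's: the same rewriting $R_{n+1}-1 = B^{-1}[(R_n^{(1)}-1)(R_n^{(2)}-1) + (R_n^{(1)}-1) + (R_n^{(2)}-1)]$, the same elementary inequality combined with $\gamma$-subadditivity to close the recursion $a_{n+1}\le B^{-\gamma}(a_n^2+2a_n)$, the same fixed-point observation tied to $\gamma>\log2/\log B$, and the same conversion to the free energy via \eqref{eq:encadre}. The only cosmetic difference is that the paper bounds $\langle\log R_n\rangle\le\gamma^{-1}\log(A_n+1)$ via Jensen, while you use the marginally weaker pointwise bound $\log R_n\le\gamma^{-1}([R_n-1]^+)^\gamma$; both suffice since either gives boundedness of $\langle\log R_n\rangle$, and the $2^{-N}$ prefactor then kills it.
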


{\sl Proof of Proposition \ref{th:fractmom}.} We rewrite \eqref{eq:basic} as
\begin{equation}
\label{eq:basic2}
R_{n+1}-1 \, =\, \frac1B \left[\left( R_n^{(1)}-1 \right)
  \left( R_n^{(2)}-1 \right) +\left( R_n^{(1)}-1 \right)+
  \left( R_n^{(2)}-1 \right)\right],
\end{equation}
and we use the inequalities $[rs+r+s]^+ \le [r]^+[s]^+ +[r]^+ +
[s]^+$, that holds for $r, s \ge -1$, and $(a+b)^\gamma\le
a^\gamma+b^\gamma$, that holds for $\gamma \in (0,1]$ and $a, b \ge
0$.  If we set $A_n := \langle \left([ R_n -1]^+\right)^\gamma
\rangle$ we have
\begin{equation}
\label{eq:A}
A_{n+1}\, \le \, \frac1{B^\gamma} \left[A_n^2 + 2A_n\right]
\end{equation}
and therefore $A_n\searrow 0$ for $n\to\infty$ under the assumptions of the
Proposition.  Deducing $\tf(\beta,h)=0$ (and actually more than
that) is then immediate:
\begin{eqnarray}
  \langle \log R_n\rangle=\frac1\gamma  \langle \log (R_n)^\gamma\rangle
\le \frac1\gamma\langle \log \left[([R_n-1]^+)^\gamma+1\right]\rangle\le
\frac1\gamma\log (A_n+1)\stackrel{n\to\infty}\searrow 0.
\end{eqnarray}
\qed
\medskip

Proposition \ref{th:fractmom} will be essential in Section
\ref{sec:rel} to prove that, for $B>B_c$, an arbitrarily small
amount of disorder shifts the critical point. Let us also point out
that it implies that, if $\go_1$ is an unbounded random variable, then for any $B>2$ and
$\beta$ sufficiently large quenched and annealed critical points
differ (the analogous result for non-hierarchical pinning models was
proven in \cite[Corollary 3.2]{cf:T_fractmom}):
\medskip

\begin{cor} \label{cor:unbound}
Assume that $\bbP(\go_1>t)>0$ for every $t>0$. Then, for
every $h\in\R$ and $B>2$ there exists $\bar \beta_0<\infty$ such that
$\tf(\beta,h)=0$ for $\beta\ge\bar\beta_0$.
\end{cor}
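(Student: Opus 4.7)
The plan is to apply Proposition \ref{th:fractmom} with $n_0=0$. Fix any $\gamma\in(\log 2/\log B,1)$, which is a non-empty interval because $B>2$; then $B^\gamma-2>0$, so it suffices to show that $\langle([R_0-1]^+)^\gamma\rangle<B^\gamma-2$ for all $\beta$ large enough.

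I would bound the fractional moment by discarding the subtraction of $1$ and restricting to the set $\{R_0\ge 1\}$:
\begin{equation}
\langle ([R_0-1]^+)^\gamma\rangle\, \le\, \bbE\!\left[R_0^\gamma\,\ind_{\{R_0\ge 1\}}\right].
\end{equation}
Applying Hölder's inequality with conjugate exponents $p=1/\gamma$ and $q=1/(1-\gamma)$, and recalling that $\bbE[R_0]=e^h$,
\begin{equation}
\bbE\!\left[R_0^\gamma\,\ind_{\{R_0\ge 1\}}\right]\, \le\, \bbE[R_0]^\gamma\,\bbP(R_0\ge 1)^{1-\gamma}\, =\, e^{\gamma h}\,\bbP(R_0\ge 1)^{1-\gamma}.
\end{equation}
It remains to establish that $\bbP(R_0\ge 1)\to 0$ as $\beta\to\infty$. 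Since $R_0\ge 1$ is equivalent to $\go_1\ge (\log\M(\beta)-h)/\beta$, this reduces to showing that the threshold $(\log\M(\beta)-h)/\beta$ tends to $+\infty$.

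The key observation is that $\log\M(\beta)/\beta$ is unbounded as $\beta\to\infty$ precisely because $\go_1$ is unbounded above: by hypothesis $\bbP(\go_1>K)>0$ for every $K>0$, hence $\M(\beta)\ge e^{\beta K}\,\bbP(\go_1>K)$, which gives
\begin{equation}
\liminf_{\beta\to\infty}\frac{\log\M(\beta)}{\beta}\, \ge\, K
\end{equation}
for every $K>0$. Consequently the threshold diverges, and since $\go_1<\infty$ almost surely, $\bbP(\go_1\ge (\log\M(\beta)-h)/\beta)\to 0$. Therefore we can pick $\bar\beta_0$ large enough so that $e^{\gamma h}\,\bbP(R_0\ge 1)^{1-\gamma}<B^\gamma-2$ for all $\beta\ge\bar\beta_0$, and Proposition \ref{th:fractmom} yields $\tf(\beta,h)=0$. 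The only step requiring a little care is the divergence of $\log\M(\beta)/\beta$, and the argument above based on restricting the expectation to large deviations of $\go_1$ handles it cleanly under the stated unboundedness assumption.
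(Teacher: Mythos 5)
Your proof is correct. You use the same overall skeleton as the paper — take $n_0=0$ and any $\gamma\in(\log 2/\log B,1)$, invoke Proposition~\ref{th:fractmom}, and rely on the fact that $\log\M(\beta)/\beta\to\infty$ as $\beta\to\infty$ under the unboundedness hypothesis — but the intermediate step by which $A_0\to 0$ is established is different. The paper argues via convergence-of-moments: it notes $R_0\to 0$ $\bbP$-a.s.\ (because the exponent $\beta\go_1-\log\M(\beta)+h\to-\infty$ a.s.), and separately that $\langle([R_0-1]^+)^{\gamma\cdot 1/\gamma}\rangle\le 1+\langle R_0\rangle = 1+e^h$ is bounded uniformly in $\beta$, so the family $\{([R_0-1]^+)^\gamma\}_\beta$ is bounded in $L^{1/\gamma}$ with $1/\gamma>1$, hence uniformly integrable, and $A_0\to 0$ follows. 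You instead go through H\"older to get the explicit quantitative bound $A_0\le e^{\gamma h}\,\bbP(R_0\ge 1)^{1-\gamma}$ and then show $\bbP(R_0\ge 1)\to 0$; this avoids any appeal to uniform integrability and has the added merit of making explicit the elementary estimate $\M(\beta)\ge e^{\beta K}\bbP(\go_1>K)$ behind $\log\M(\beta)/\beta\to\infty$, which the paper leaves as a parenthetical remark. Both arguments are sound; yours is slightly more self-contained, the paper's is a bit shorter.
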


\medskip
\noindent
{\it Proof of Corollary \ref{cor:unbound}}
Choose some $\gamma\in(\log 2/\log B,1)$.  One has $\lim_{\gb \to \infty}
  R_0=0$ $\bbP(\dd \go)$-a.s. (see \eqref{eq:R0} and note that
$\log M(\beta)/\beta\to\infty$ for $\beta\to+\infty$ under our assumption on
$\go_1$),
  while $\langle \left(([ R_0-1]^+)^\gamma \right)^{1/\gamma} \rangle
  \le 1+ \langle R_0 \rangle=1+\exp(h)$, so $\lim_{\gb \to \infty }A_0
  =0$.  \qed

 \medskip

 \begin{rem}\rm
 Note moreover that if $\exp(h)=B-1$ and we set $X= \exp(\gb \go_1 -\log
 \M (\gb))$ we have (without requiring $\go_1$ unbounded) that $\langle
 ([(B-1) X -1]^+)^\gamma\rangle \stackrel{B\to \infty} \sim B^\gamma
 \langle X^\gamma \rangle$. The right-hand side is smaller than
 $B^\gamma -2$ for $X$ non-degenerate and $B$ large, so that if we
 choose $\gd>0$ such that $\exp( \gd \gamma) \langle X^\gamma \rangle
 <1$ we have
 \begin{equation}
 \langle ([(B-1)\exp(\gd) X -1]^+)^\gamma\rangle \, < \, B^\gamma-2, 
 \end{equation}
 for $B$ sufficiently large. Therefore, by applying
 Proposition~\ref{th:fractmom}, we see that for every $\gb>0$ there
 exists $\gd>0$ such that $\tf\left(\gb, h_c(0)+\delta\right)=0$ for
 $B$ sufficiently large. This observation actually follows also from
 the much more refined Proposition \ref{th:rel} below, which by the way
 says precisely how large $B$ has to be taken: $B>B_c$.
 \end{rem}  

 \begin{rem}\rm
\label{rem:as1}
  It follows from inequality \eqref{eq:A} that, if the assumptions of 
Proposition \ref{th:fractmom} are verified, then $A_n$ actually
vanishes exponentially fast for $n\to\infty$. Therefore, for $\gep>0$ one
has
\begin{eqnarray}
\label{eq:BorCan}
  \bbP(R_n\ge 1+\gep)=\bbP([R_n-1]^+\ge\gep)\le \frac {A_n}{\gep^\gamma}
\end{eqnarray}
and from the Borel-Cantelli lemma follows the almost sure convergence
of $R_n$ to $1$ when we recall that $R_n^{(i)}\ge r_n$ with $r_0=0$
($r_n$ is the solution of the iteration scheme \eqref{eq:r} and
converges to $1$).  
 \end{rem}

\subsection{Upper bounds on the free energy for $B >B_c$}

Here we want to prove the lower bound in \eqref{eq:eps_c}, plus the
fact that $\gep_c(\Delta_0)>0$ whenever $\Delta_0>0$ and $B>B_c$. This
follows from

\begin{proposition}
  \label{th:rel}
Let $B>B_c$. For every $\beta>0$ one has $h_c(\beta)>\log(B-1)$. 
Moreover,
there exists a positive constant $c$ (possibly depending on $B$)
such that for every $0\le \beta\le 1$
\begin{eqnarray}
  \label{eq:rel}
h_c(\beta)-\log(B-1)\ge
c \beta^{2\alpha/(2\alpha-1)}.
\end{eqnarray}
\end{proposition}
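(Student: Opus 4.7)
The key reduction is Proposition~\ref{th:fractmom}: to prove $\tf(\gb,h)=0$ it is enough to produce some $n_0\in\N$ and some $\gamma\in(\log 2/\log B,1)$ with
\[
\bbE\bigl[([R_{n_0}-1]^+)^\gamma\bigr] \,<\, B^\gamma-2.
\]
Fix such a $\gamma$. Setting $\gep:=e^h-(B-1)>0$, the plan is to exhibit a suitable $n_0$ whenever $\gep\in(0,c\gb^{2\alpha/(2\alpha-1)})$, with $c$ a sufficiently small constant. This yields the quantitative bound \eqref{eq:rel} for $\gb\in(0,1]$, and the strict inequality $h_c(\gb)>\log(B-1)$ for every $\gb>0$ follows by combining the bound at $\gb=1$ with the concavity of $\gb\mapsto h_c(\gb)-\log M(\gb)$ (inherited from the joint convexity in Theorem~\ref{th:F}), since a concave function bounded above by its value at $0$ cannot return to that value after having strictly dropped below it.

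The central construction is a change of measure on the $N:=2^{n_0}$ disorder variables $\go_1,\ldots,\go_N$ on which $R_{n_0}$ depends. Let $\tilde\bbP$ be the exponentially tilted product law
\[
\frac{\dd\tilde\bbP}{\dd\bbP}(\go_1,\ldots,\go_N) \,=\, \prod_{i=1}^N \frac{e^{-t\go_i}}{M(-t)},\qquad t\,:=\,\lambda\gep/\gb,
\]
for a parameter $\lambda>1$ to be fixed. Expanding $\log M$ to second order at the origin (using $\bbE\go=0$, $\bbE\go^2=1$ and \eqref{eq:M}) gives $\log\tilde\bbE[R_0]=h-\gb t+O(t^2)$, so that under $\tilde\bbP$ the annealed dynamics starts strictly below the unstable fixed point $B-1$, while the price of the tilt is
\[
\tilde\bbE\!\left[\!\left(\frac{\dd\bbP}{\dd\tilde\bbP}\right)^{\!p}\right] \,=\, \bigl(M(-t)^{p-1}M((p-1)t)\bigr)^N \,\le\, \exp\bigl(C_1\,p(p-1)\,Nt^2\bigr),
\]
which remains bounded as long as $Nt^2=O(1)$. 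Hölder with conjugate exponents $p,q>1$ chosen so close to $(\infty,1)$ that $q\gamma<1$ then yields
\[
\bbE\bigl[([R_{n_0}-1]^+)^\gamma\bigr] \,\le\, e^{C_1(p-1)Nt^2/p}\cdot \tilde\bbE\bigl[([R_{n_0}-1]^+)^{q\gamma}\bigr]^{1/q}.
\]

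The second factor is controlled by importing the annealed analysis of Section~\ref{sec:LBrel} to $\tilde\bbP$: the tilted mean $\tilde\bbE[R_n]$ and the tilted normalized variance $\tilde Q_n$ still satisfy the recursions \eqref{eq:Rav} and \eqref{eq:Q}, with starting values $\tilde\gep\approx-(\lambda-1)(B-1)\gep<0$ and $\tilde Q_0\sim\gb^2$. Lemmas~\ref{th:lem1-rel} and \ref{th:lem2-rel}, read with a negative initial perturbation, show that $\tilde\bbE[R_n]$ moves away from $B-1$ at rate $\bar q$ during a first phase of length $n_1\sim\log(1/|\tilde\gep|)/\log\bar q$ and then converges to the stable fixed point $1$ at the linearized rate $2/B$; during that first phase $\tilde Q_n$ is multiplied by $q=\bar q^2/2$ per step, so $\tilde Q_{n_1}\sim\gb^2|\tilde\gep|^{1/\alpha-2}$, while near $1$ it contracts by the linearized factor $2/B^2<1$. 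Taking $n_0=n_1+O(\log(1/\delta))$ one ensures $[\tilde\bbE[R_{n_0}]-1]^+\le\delta$ and $\tilde Q_{n_0}\le\delta$; then $q\gamma<1$, Jensen and the Chebyshev-type bound
\[
\tilde\bbE\bigl[[R_{n_0}-1]^+\bigr]\,\le\,[\tilde\bbE[R_{n_0}]-1]^+ \,+\,\tilde\bbE[R_{n_0}]\sqrt{\tilde Q_{n_0}}
\]
make the right-hand Hölder factor arbitrarily small in $\delta$.

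It remains to match the three scale conditions $Nt^2=O(1)$, $n_0\simeq\log(1/\gep)/\log\bar q$ and $t=\lambda\gep/\gb$, and this is where the exponent $2\alpha/(2\alpha-1)$ emerges: using $\log\bar q=\alpha\log 2$ one has $N=\gep^{-1/\alpha}$, so $Nt^2\simeq\gep^{2-1/\alpha}/\gb^2$ is bounded precisely when $\gep\le c\gb^{2\alpha/(2\alpha-1)}$. Within this window one first chooses $\lambda$ large (so that the unstable-phase analysis ends well inside the contracting basin of $1$), then $\delta$ small, then $c$ small, so that both factors of the Hölder bound fall below $\sqrt{B^\gamma-2}$. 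The main technical difficulty, and what genuinely pins down the exponent $2\alpha/(2\alpha-1)$, is the delicate bookkeeping of $\tilde Q_n$ across the transition of the annealed map: $\tilde Q_n$ grows throughout phase one and one must check it does not overshoot before $\tilde\bbE[R_n]$ enters the neighborhood of $1$ where the variance starts contracting. A secondary issue is to carry the analysis uniformly over the class of disorder laws satisfying only \eqref{eq:M}, for which the exponential tilt and the Taylor expansion of $M$ replace the Cameron-Martin shift available in the Gaussian case.
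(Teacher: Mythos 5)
Your overall scaffolding — reduction to Proposition~\ref{th:fractmom}, an exponential tilt $\tilde\bbP$ of the disorder, and a H\"older split between the tilt cost and a tilted estimate on $R_{n_0}$ — is the same as the paper's. The genuine divergence is in how the tilted factor is controlled. You propose to track the tilted normalized variance $\tilde Q_n$ through the recursion of Section~\ref{sec:LBrel} (using Lemmas~\ref{th:lem1-rel}--\ref{th:lem2-rel} under $\tilde\bbP$) and then close via a Chebyshev-type bound on $\tilde\bbE[[R_{n_0}-1]^+]$. The paper instead uses \emph{only a first-moment} estimate under $\tilde\bbP$ together with the \emph{deterministic} lower bound $R_{n_0}\ge r_{n_0}\nearrow 1$ (where $r_n$ is the iterate with $r_0=0$); Markov's inequality then gives $\tilde\bbP(R_{n_0}\ge 1+r_3(\eta))\le r_3(\eta)$ with no variance control at all, and Lemma~\ref{th:lemP} transfers this back to $\bbP$. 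This is why the paper's change-of-measure proof for relevance is so compact: the one-sided deterministic bound makes a small tilted mean above $1$ already force concentration near $1$.

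Your variance route faces a quantitative tension that you acknowledge as ``the main technical difficulty'' but do not actually resolve, and it is exactly where the real work lives. At the end of the unstable phase (at time $\sim n_1$), the scalings you wrote yield $\tilde Q_{n_1}\sim \gb^2|\tilde\gep|^{1/\alpha-2}$, and with $\gep\sim c\gb^{2\alpha/(2\alpha-1)}$ this is a $\gb$-independent constant of order $c^{-(2\alpha-1)/\alpha}$, which is \emph{large} if $c$ is small. Contracting this down to $\delta$ at the linear rate $2/B^2$ costs $O(\log(\tilde Q_{n_1}/\delta)/\log(B^2/2))$ additional steps, and each additional step multiplies $N=2^{n_0}$ (hence $Nt^2$) by $2$. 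There is a race: $\tilde Q$ contracts by $2/B^2$ while $Nt^2$ grows by $2$ per step, and since $B>2$ the product $Nt^2\tilde Q$ decreases, so the race can be won — but only by a carefully calibrated choice in which $\delta$ is not taken arbitrarily small independently of $c$. Your sketch orders the choices ``$\lambda$ large, then $\delta$ small, then $c$ small,'' which does not obviously close the argument; what is needed is a coupled choice of $(n_0-n_1,\delta,c)$, and this bookkeeping is precisely what the paper circumvents by using Markov + the deterministic lower bound instead of second moments.

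Two smaller points. First, your expansion of the tilt cost
$\tilde\bbE[(\dd\bbP/\dd\tilde\bbP)^p]^{1/p}$ gives an exponent $C_1(p-1)Nt^2$, not $C_1(p-1)Nt^2/p$; minor, but it matters when $p$ is chosen close to $\infty$. Second, your concavity argument for the strict inequality $h_c(\gb)>\log(B-1)$ for \emph{all} $\gb>0$ is not correct as stated: concavity controls $\gb\mapsto h_c(\gb)-\log M(\gb)$, and a strict drop of that function below its value at $0$ does not by itself imply $h_c(\gb)>h_c(0)$ (indeed $h_c(\gb)-\log M(\gb)<\log(B-1)$ is compatible with $h_c(\gb)=\log(B-1)$). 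The paper handles this by observing that the whole quantitative argument is uniform in $\gb\le\gb_0$ for \emph{any} fixed $\gb_0$, so one simply chooses $\gb_0>\gb$; this is cleaner and actually correct.
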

Proposition \ref{th:rel} is proven in section \ref{sec:proofrel}, but
first we need to state a couple of technical facts.

\subsection{Auxiliary definitions and lemmas}
\label{sec:aux}
For $\gl \in \R$ and $N\in\N$ let $\bbP_{N, \gl}$ be defined by
\begin{equation}
\frac{
\dd \bbP_{N, \gl}}{\dd \bbP} (x_1,x_2, \ldots)\, =\,
\frac1 {\M(-\gl)^N} 
\exp\left(-\gl \sum_{i=1}^N x_i \right).
\end{equation} 

\smallskip

\begin{lemma}
\label{th:lemP}
There exists $1<C<\infty$ such that for $a\in(0,1)$, $\gd \in (0,
a/C)$ and $N\in\N$  we have
\begin{equation}
\label{eq:lemP}
\bbP_{N, \frac{\gd}{\sqrt{N}}}\left( \frac{\dd \bbP} {\dd \bbP_{N,
\frac{\gd}{\sqrt{N}}}} (\go)\, <\, \exp(-a) \right)\, \le \, C \left(
\frac{\gd}a\right)^2 .
\end{equation}
\end{lemma}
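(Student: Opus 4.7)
\textbf{Proof plan for Lemma \ref{th:lemP}.} The plan is to rewrite the event in terms of the sum $S_N := \sum_{i=1}^N x_i$, center it around its expectation under $\bbP_{N,\gl}$ (with $\gl := \gd/\sqrt{N}$), and then apply Chebyshev's inequality. The key observation is that $\gd < a/C$ with $C$ large makes $\gl$ small uniformly in $N \ge 1$, so we can Taylor-expand the various moment-generating-function expressions around $\gl = 0$, with constants governed solely by $\bbP$ (which has centered unit-variance marginals and finite exponential moments by \eqref{eq:M}).

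First I would compute
\begin{equation*}
\log\frac{\dd\bbP}{\dd\bbP_{N,\gl}}(\go)\, =\, N\log\M(-\gl)+\gl S_N,
\end{equation*}
so the event in \eqref{eq:lemP} becomes $\{\gl S_N < -a - N\log\M(-\gl)\}$. Next I would center $\gl S_N$ by subtracting its $\bbP_{N,\gl}$-mean, which is $\gl N\, \M'(-\gl)/\M(-\gl)$, rewriting the event as
\begin{equation*}
\left\{\gl S_N-\bbE_{N,\gl}[\gl S_N]\, <\, -a-N\log\M(-\gl)-\gl N\frac{\M'(-\gl)}{\M(-\gl)}\right\}.
\end{equation*}

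Now I would Taylor-expand around $\gl=0$: since $\bbE\,\go_1=0$ and $\bbE\,\go_1^2=1$, one has $\log\M(-\gl) = \gl^2/2 + O(\gl^3)$ and $\M'(-\gl)/\M(-\gl) = -\gl + O(\gl^2)$ for $|\gl|$ bounded by some $\gl_0>0$ depending only on $\bbP$, with a uniform $O(\cdot)$ constant thanks to the smoothness of $\M$ on a neighborhood of $0$. Plugging $\gl=\gd/\sqrt{N}$ gives
\begin{equation*}
N\log\M(-\gl)+\gl N\frac{\M'(-\gl)}{\M(-\gl)}\, =\, \frac{\gd^2}{2}-\gd^2+O(\gd^3/\sqrt{N})\, =\, -\frac{\gd^2}{2}+O(\gd^3),
\end{equation*}
so the right-hand side in the centered event is $-a+\gd^2/2+O(\gd^3)$. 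Choosing $C$ sufficiently large that $\gd<a/C$ forces $\gd^2/2+O(\gd^3) \le a/2$ (using $a\in(0,1)$, so $\gd<1/C$ too, ensuring $\gl<\gl_0$), giving a threshold bounded by $-a/2$.

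Similarly, $\mathrm{Var}_{N,\gl}(x_1) = \M''(-\gl)/\M(-\gl)-(\M'(-\gl)/\M(-\gl))^2 \to 1$ as $\gl\to0$, so for $C$ large enough this variance is at most $2$, and hence $\mathrm{Var}_{N,\gl}(\gl S_N)=\gl^2 N\,\mathrm{Var}_{N,\gl}(x_1)\le 2\gd^2$. Chebyshev's inequality then gives
\begin{equation*}
\bbP_{N,\gl}(\text{event})\, \le\, \frac{2\gd^2}{(a/2)^2}\, =\, 8\left(\frac{\gd}{a}\right)^2,
\end{equation*}
which is the desired bound (after enlarging $C$ if needed to absorb the constant $8$). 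The only delicate point is ensuring that the $O(\gd^3)$ and variance-correction terms are genuinely controlled by one fixed constant depending on $\bbP$ alone, uniformly in $N\ge 1$; this follows because $\gl=\gd/\sqrt{N}$ is bounded by $\gd<1/C$ for every $N$, placing us in a fixed neighborhood of $\gl=0$ on which $\log\M$ is analytic.
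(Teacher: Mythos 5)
Your argument is correct and follows essentially the same route as the paper: rewrite the density event in terms of the sum $S_N$, center it under $\bbP_{N,\gl}$, control the drift by a Taylor/uniform-derivative bound on $\log\M$ near zero (the paper phrases this via convexity plus a bound on $\M''$, rather than the raw $O(\gl^3)$ expansion, but these are equivalent), and finish with Chebyshev using the uniformly bounded tilted variance.
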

\smallskip

\noindent
{\it Proof of Lemma \ref{th:lemP}.}
We write
\begin{equation}
\label{eq:prtoest}
\bbP_{N, \frac{\gd}{\sqrt{N}}}\left( \frac{\dd \bbP} {\dd \bbP_{N,
\frac{\gd}{\sqrt{N}}}} (\go)\, <\, \exp(-a) \right)\, =\, \bbP_{N,
\frac\gd{\sqrt{N}}}\left( \gd \frac{\sum_{i=1}^{N} \go_i}{\sqrt{N}} +N \log
\M \left(-\frac{\gd }{\sqrt{N}}\right) \, < \, -a \right).
\end{equation}
Since all exponential moments of $\go_1$ are assumed to be finite, one has
\begin{eqnarray}
 0\ge \log \M(-\gl)-\gl\frac{\dd}{\dd \lambda}\left[\log \M(-\lambda)\right]\ge
 -\frac C2\gl^2
\end{eqnarray}
for some $1<C<\infty$ and $0\le\gl\le 1$ (the first inequality is due
 to convexity of $\gl\mapsto\log\M(-\gl)$). Note also that
$$
\bbE_{N, \gl}(\go_1)=
-\frac{\dd}{\dd \lambda}\left[\log \M(-\lambda)\right].
$$ 
Therefore, the right-hand side of \eqref{eq:prtoest}
is bounded above by
\begin{equation}
\bbP_{N, \gd/\sqrt{N}}\left(
 \frac{\sum_{i=1}^{N} \go_i}{\sqrt{N}} - \bbE_{N, \gd/\sqrt{N}}
\left[\frac{\sum_{i=1}^{N} \go_i}{\sqrt{N}}
\right] \, \, <- \frac a{2\gd}
\right) \, \le \,\frac{4\delta^2}{a^2}\bbE_{N,\delta/\sqrt N}\left(
\go_1-\bbE_{N,\delta/\sqrt N}(\go_1)\right)^2
\end{equation}
where we have used Chebyshev's inequality and the fact that, under the
assumptions we made, $(a/\delta)-(C/2)\delta >a/(2\delta)$.  The proof
of \eqref{eq:lemP} is then concluded by observing that the variance of
$\go_1$ under $\bbP_{N,\gl}$ is $\dd^2/\dd \gl^2 \log \M(-\gl)$, which
is bounded uniformly for $0\le\gl\le 1$.  \qed
\bigskip

We define the sequence  $\{a_n\}_{n=0, 1, \ldots}$ by setting 
$a_0=a>0$ and $a_{n+1}= f(a_n)$ with
\begin{equation}
\label{eq:rec-inv}
f(x)\, :=\, \sqrt{B x+(B-1)^2} -(B-1). 
\end{equation}
We define also the sequence 
$\{b_n\}_{n=0,1, \ldots}$ by setting $b_0=b \in (-(B-2), 0)$ 
and $b_{n+1}= f(b_n)$.
Note that $a_n= g(a_{n+1})$ and $b_n= g(b_{n+1})$
for $g(x)=(2(B-1)x+x^2)/B$.

\smallskip

\begin{lemma} 
\label{th:ptlem}
There exist two constants $G_a>0$ et $H_b>0$ such that   for
$n \to \infty$
\begin{equation}
\label{eq:ptlem}
a_n\sim G_a \left(\frac{B}{2(B-1)}\right)^n\, =\, G_a 2^{-\ga n} 
\  \ \text{ and }\ \  b_n\sim -H_b \left(\frac{B}{2(B-1)}\right)^n
\, =\, -H_b 2^{-\ga n}
.
\end{equation}
Moreover, $G_a\stackrel{a\to0}\sim a$ and $H_b\stackrel{b\to0}\sim |b|$.
\end{lemma}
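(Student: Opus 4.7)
The plan is to carry out a standard Koenigs-type linearization at the attracting fixed point $x=0$ of the map $f(x)=\sqrt{Bx+(B-1)^2}-(B-1)$. A direct computation gives $f'(0)=B/(2(B-1))=2^{-\alpha}$ with $\alpha$ as in \eqref{eq:alpha}, so setting $\lambda:=B/(2(B-1))<1$ (since $B>2$) Taylor expansion yields $f(x)=\lambda x+O(x^2)$ near $0$. The only other fixed point of $f$ on its domain is $2-B$, which is repelling because $f'(2-B)=B/2>1$.

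Next I would verify monotone convergence of both sequences to $0$. A direct computation shows that $f(x)-x$ has the sign of $-x(x+B-2)$ wherever $f$ is defined. Since $f$ is strictly increasing and $B>2$, it follows that $f((0,\infty))\subset(0,\infty)$ with $f(x)<x$ there, so $a_n\downarrow 0$ whenever $a_0=a>0$; and $f((2-B,0))\subset(2-B,0)$ with $f(x)>x$ there, so $b_n\uparrow 0$ whenever $b_0=b\in(2-B,0)$.

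The core step is to prove that $G_n:=a_n/\lambda^n$ converges to a positive limit $G_a$ (and analogously that $-b_n/\lambda^n\to H_b$). Using the Taylor expansion above,
\begin{equation*}
  \log G_{n+1}-\log G_n\,=\,\log\frac{f(a_n)}{\lambda a_n}\,=\,O(a_n),
\end{equation*}
uniformly for $a_n$ in a fixed neighborhood of $0$. Since $a_{n+1}/a_n\to\lambda\in(0,1)$ the sequence $a_n$ decays geometrically, so the telescoping series $\sum_n\log(G_{n+1}/G_n)$ converges absolutely. Hence $\log G_n$ has a finite limit, giving $G_n\to G_a\in(0,\infty)$, and the same reasoning applied to $-b_n$ yields $-b_n/\lambda^n\to H_b\in(0,\infty)$.

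For the asymptotics $G_a\sim a$ as $a\searrow 0$ and $H_b\sim|b|$ as $b\nearrow 0$, I would use the product representation
\begin{equation*}
  G_a\,=\,a\prod_{n=0}^{\infty}\frac{f(a_n)}{\lambda a_n}\,=\,a\prod_{n=0}^{\infty}\bigl(1+O(a_n)\bigr).
\end{equation*}
Iterating any linear upper bound for $f$ in a neighborhood of $0$ shows $a_n\le Ca\lambda^n$ for small $a$ and a constant $C$ independent of $a$, so $\sum_n a_n=O(a)$; therefore the infinite product equals $1+O(a)$ and $G_a=a(1+o(1))$ as $a\to 0$. The only mild obstacle is making the $O(\cdot)$ constant in $f(x)=\lambda x+O(x^2)$ uniform on a fixed neighborhood of $0$, which follows from an explicit Taylor remainder estimate; the argument for $b_n$ is identical after replacing $b$ by $|b|$.
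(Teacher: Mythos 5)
Your proposal is correct and shares the same core mechanism as the paper's proof: both track the ratio $a_n/\lambda^n$ (with $\lambda=B/(2(B-1))=s$) and establish convergence via a telescoping product. The difference is one of execution. The paper works directly with the explicit inverse map $g(x)=(2(B-1)x+x^2)/B$, which gives the exact identity
\begin{equation*}
\frac{a_n}{s^n}\,=\,\frac{a_{n-1}}{s^{n-1}}\cdot\frac{1}{1+a_n/(2(B-1))},
\end{equation*}
so no Taylor remainder is needed; it also uses concavity of $f$ to get $a_n\le a\,s^n$ with constant exactly $1$, which makes the product bound and the asymptotics $G_a\sim a$ immediate. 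Your Koenigs-type argument is more generic (it would apply to any $C^2$ contracting fixed point with $0<f'(0)<1$) at the cost of carrying around $O(\cdot)$'s. One small imprecision: you claim $a_n\le Ca\lambda^n$ from ``any linear upper bound'' near $0$, but a generic linear bound $f(x)\le\mu x$ valid on a neighborhood only gives $\mu>\lambda$ (and for the $b$-sequence one cannot have $|b_n|\le C|b|\lambda^n$ with a uniform $C$, since $|b_n|/\lambda^n$ increases to $H_b$). This doesn't break the argument — all you need is geometric decay with some rate $\mu<1$ uniform in small $|a|$ (resp. small $|b|$) to get $\sum a_n=O(a)$ — but you should either invoke concavity (which the paper does) to get the clean bound for $a_n$, or simply state the bound with $\mu^n$ in place of $\lambda^n$; and for $b_n$ the latter is mandatory, which is also what the paper does with its constant $c_1(b)<1$.
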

\smallskip

\noindent
{\it Proof of Lemma \ref{th:ptlem}.}
In order to lighten the proof we put $s:=B/(2(B-1))$ and we observe that
$0<s<1$ since $B>2$. The function
 $f(\cdot)$ is concave
  and  $f^\prime (0)=s$, so  we have
\begin{equation}
 \label{eq:pop}
a_n\le a\, s^n:
\end{equation}
$a_n$ vanishes exponentially fast.  Moreover, 
\begin{eqnarray}
\label{eq:monoton}
  \frac{a_n}{s^n}=\frac{a_{n-1}}{s^{n-1}}\frac1{1+a_n/(2(B-1))}\ge
\frac{a_{n-1}}{s^{n-1}}\frac1{1+a s^n/(2(B-1))}
\end{eqnarray}
so that for every $n>0$
\begin{eqnarray}
\label{eq:Ca2}
\frac{a_n}{s^n}\ge a\, \prod_{\ell=1}^\infty\frac1{1+a s^\ell/(2(B-1))}>0.
\end{eqnarray}
From \eqref{eq:monoton} we see that $a_n\,s^{-n}$ is monotone increasing in 
$n$, so that the first statement in \eqref{eq:ptlem} holds with
$G_a\in (0,a)$ from  \eqref{eq:pop} and \eqref{eq:Ca2}.
The fact that $G_a\sim a$ for $a\to0$ follows from the fact that
the product in  \eqref{eq:Ca2} converges to $1$ in this limit.
\smallskip

The second relation is proven in a similar way. Since $b_n<0$ for every
$n$, one has first of all
\begin{eqnarray}
\label{eq:Cb}
  \frac{b_n}{s^n}=\frac{b_{n-1}}{s^{n-1}}\frac1{1+b_n/(2(B-1))}<
\frac{b_{n-1}}{s^{n-1}}.
\end{eqnarray}
Moreover, since $\vert b_n \vert $ decreases to zero and $f(x)\ge
c_1(b) x$ for $b\le x\le0$ for some $c_1(b)<1$ if $b>-(B-2)$, one
sees that $|b_n|$ actually vanishes exponentially fast.  Therefore,
from \eqref{eq:Cb}
\begin{eqnarray}
    \frac{b_n}{s^n}\ge \frac{b_{n-1}}{s^{n-1}}\frac1{1-c_2(b)\,c_1(b)^n}
\ge b\prod_{\ell=1}^\infty\frac1{1-c_2(b)\,c_1(b)^\ell}.
\end{eqnarray} 
One has then the second statement of \eqref{eq:ptlem} with
$H_b\in(|b|,\infty)$.

\qed

\subsection{Proof of Proposition \ref{th:rel}}
\label{sec:proofrel}
In this proof $C_i$ denotes a constant depending only on $\gb_0$ and (possibly)
on $B$.
Recall that the exponent $\alpha$ defined in \eqref{eq:alpha}
satisfies $1/2<\ga<1$ for $B>B_c$. Fix $\beta_0>0$, let
$0<\beta<\beta_0$ and choose $h=h(\beta)$ such that
\begin{eqnarray}
  \med{R_0}=(B-1)+\eta\gb^{\frac{2\ga}{2\ga-1}},
\end{eqnarray}
where $\eta>0$ will be chosen sufficiently small and independent of
$\beta$ later.  Call $n_0:=n_0(\eta,\gb)$ the integer such that
\begin{eqnarray}
\label{eq:prima}
  \med{R_{n_0}}\le B\le \med{R_{n_0+1}},
\end{eqnarray}
{\sl i.e.}, $P_{n_0}\le 1\le P_{n_0+1}$.  
Note that
$n_0(\eta, \gb)$ becomes larger and larger
as $\gb\searrow 0$: this can be quantified since 
from  \eqref{eq:P} one sees that
$a_n:=P_{n_0-n}$ satisfies for $0\le n< n_0$ the iteration
$a_{n+1}=f(a_n)$ introduced in \S~\ref{sec:aux},
 and therefore it
follows from Lemma \ref{th:ptlem} that 
\begin{equation}
\label{eq:n0}
  \left|n_0(\eta,\gb)-\log\left(\eta^{-1}\gb^{-\frac{2\ga}{2\ga-1}}\right)
/(\alpha\log 2)\right|\, \le\,  C_1
\end{equation}
for every $0<\eta<1/C_1$ and $\beta\in[0,\beta_0]$.  With the 
notations of Section  \ref{sec:aux}, let $\tilde
\bbP:=\bbP_{2^{n_0},\delta 2^{-n_0/2}}$, where $\delta:=\delta(\eta)$ will
be chosen suitably small later.
Note that, with $\gl:=\delta 2^{-n_0/2}$, one has from \eqref{eq:n0}
\begin{eqnarray}
\label{eq:lpetit}
  \frac1{C_2}\delta\,\eta^{1/(2\alpha)}\gb^{1/(2\ga-1)} \le \gl\le
C_2\delta\,\eta^{1/(2\alpha)}  \gb^{1/(2\ga-1)}.
\end{eqnarray}
In particular, since $\alpha<1$,  if $\eta$ is small enough
then $\gl\le \beta$ uniformly for $\beta\le \beta_0$. Observe also that
\begin{eqnarray}
\label{eq:R0'}
  \tilde \bbE(R_0)=\med{R_0}\frac{\M(\gb-\gl)}{\M(\gb)\M(-\gl)},
\end{eqnarray}
and call $\phi(\cdot):=\log \M(\cdot)$. Since $\phi(\cdot)$ is strictly convex,
one has
\begin{eqnarray}
  \phi(\gb-\gl)-\phi(\gb)-\phi(-\gl)=-\int_{-\gl}^0 \dd x\int_0^\gb\dd y
\,\phi''(x+y)\in \left(- \frac{\gl\gb}{C_3},-C_3\gl\gb\right),
\end{eqnarray}
for some $C_3>0$, uniformly in $\beta\le \beta_0$ and $0\le \lambda\le
\beta$  and, thanks to \eqref{eq:lpetit}, if $\eta$ is chosen sufficiently
small,
\begin{eqnarray}
1-\frac{\beta\gl}{C_4} \le \frac{\M(\gb-\gl)}{\M(\gb)\M(-\gl)}\le 1- C_4\beta\gl.
\end{eqnarray}
Therefore, from  \eqref{eq:R0'} and \eqref{eq:lpetit} and choosing
\begin{eqnarray}
  \label{eq:deltall}
\eta^{1-1/(2\ga)}\ll \delta(\eta)\ll 1,
\end{eqnarray}
(which is possible with $\eta$ small since $\alpha>1/2$) one has
\begin{eqnarray}
-C_5^{-1}\,\delta(\eta)\,
    \eta^{1/(2\alpha)}\beta^{\frac{2\alpha}{2\alpha-1}}< \tilde
    \bbE(R_0)-(B-1)\le-C_5\delta(\eta)\,
    \eta^{1/(2\alpha)}\beta^{\frac{2\alpha}{2\alpha-1}},
\end{eqnarray}
always uniformly in $\beta\le \beta_0$.

Since $b_n:=\tilde\bbE (R_{n_0-n})-(B-1)$ satisfies the recursion
$b_{n+1}=f(b_n)$, from the second statement of  \eqref{eq:ptlem} if
follows that
\begin{equation}
  \tilde \bbE R_{n_1}\, \le\,  \frac {B}2,
\end{equation}
for some integer $n_1:=n_1(\eta,\beta)$ satisfying
\begin{equation}
  \label{eq:n1} 
n_1\, \le\, 
\log\left(
\delta(\eta)^{-1}\eta^{-1/(2\alpha)}\beta^{-2\ga/(2\ga-1)}
\right)
\Big /
\left(
\ga \log 2\right)
+C_6.
\end{equation}
It is immediate to see that $n_0(\eta,\beta)-n_1(\eta,\beta)$ gets
 large (uniformly in $\beta$) for $\eta$ small, if condition
\eqref{eq:deltall} is satisfied. Therefore, since the fixed point $1$
of the iteration for $\tilde \bbE R_{n}$
is attractive, one has that
\begin{eqnarray}
   \tilde \bbE R_{n_0}\le 1+r_1(\eta),
\end{eqnarray}
(here and in the following, $r_i(\eta)$ with $i\in\N$ denotes a positive
quantity which vanishes for $\eta\searrow0$, uniformly in $\gb\le \gb_0$.)
On the other hand, 
one has deterministically 
\begin{eqnarray}
\lim_{n\to\infty}  [1-R_n]^+=0,
\end{eqnarray}
as one sees immediately comparing the evolution of $R_n$ with that
obtained setting $R_0^{(i)}=0$ for every $i$. In particular,
$R_{n_0}\ge 1-r_2(\eta)$.  An
application of Markov's inequality gives
\begin{eqnarray}
  \tilde \bbP(R_{n_0}\ge 1+r_3(\eta))\le r_3(\eta).
\end{eqnarray}
It is immediate to prove that, given a random variable $X$ and two 
mutually absolutely continuous laws $\bbP$ and $\tilde \bbP$, one has
for every $x,y>0$
\begin{eqnarray}
  \bbP(X\le 1+x)\ge e^{-y}\left[\tilde\bbP(X\le 1+x)-\tilde \bbP\left(
\frac{\dd\bbP}{\dd\tilde\bbP}\le e^{-y}\right)\right].
\end{eqnarray}
Applying this to the case $X=R_{n_0}$ and using Lemma \ref{th:lemP} with 
$r_4(\eta)>C\delta(\eta)$ gives
\begin{eqnarray}
  \bbP(R_{n_0}\le 1+r_3(\eta))
\ge e^{-r_4(\eta)}\left[1-r_3(\eta)-C\left(\frac{\delta(\eta)}{r_4(\eta)}
\right)^2\right].
\end{eqnarray}
In particular,
choosing 
\begin{equation}
\delta(\eta)\ll r_4(\eta)\ll1,
\end{equation}
one has 
\begin{eqnarray}
\label{eq:seconda}
  \bbP(R_{n_0}\le 1+r_3(\eta))\ge 1-r_5(\eta),
\end{eqnarray}
and we emphasize that this inequality holds uniformly in $\gb\le
\gb_0$.

At this point  \eqref{eq:rel} is essentially proven: choose some
$\gamma\in(\log 2/\log B,1)$ and observe that
\begin{equation}
\begin{split}
 \med{\left([R_{n_0}-1]^+\right)^\gamma} \,&\le\, 
r_3(\eta)^\gamma+\left(\bbE[R_{n_0}-1]^+\right)^\gamma
\left(\bbP(R_{n_0}\ge 1+r_3(\eta))\right)^{1-\gamma}\\
\label{eq:uff}
&\le\, r_3(\eta)^\gamma+B^\gamma r_5(\eta)^{1-\gamma},
\end{split}
\end{equation}
where in the first inequality we have used H\"older inequality and in the
second one we have used  \eqref{eq:prima} and \eqref{eq:seconda}.  Finally, we
remark that the quantity in \eqref{eq:uff} can be made smaller than
$B^\gamma-2$ choosing $\eta$ small enough.  At this point, we can
apply Proposition \ref{th:fractmom} to deduce that $\tf(\beta,h)=0$
for $h=\log (B-1)+\eta \beta^{2\ga/(2\ga-1)}$ with $\eta$ small but
finite, which proves  \eqref{eq:rel}.

We complete the proof by observing 
 that $h_c(\beta)>\log (B-1)$ for every $\beta>0$ follows from
the arbitrariness of $\beta_0$.  \qed

\section{The delocalized phase}
\label{sec:deloc}
Here we prove Theorem \ref{th:paths} using the
representation \eqref{eq:EBgen}, given in Appendix \ref{sec:stand}, for $R_n$.
With reference to \eqref{eq:EBgen}, let us observe that
\begin{equation}
\label{eq:fewcontacts}
\lim_{n\to\infty} p(n,\emptyset)\,=\,1,
 \end{equation}
which is just a way to interpret 
\begin{equation}
 \lim_{n\to\infty}r_n\,=\,1
\end{equation}
when $r_0=0$, that follows directly from \eqref{eq:r}.



\medskip

Le $\gep$ be fixed and consider $h<h_c(\gb)$.  Let $\bar R_n$ be the
partition function which corresponds to $h_c(\gb)$ and $R_n$ the one
that corresponds to $h$.  We can find $K$ large enough such that
\begin{equation}\label{eq:barR}
 \bbP \left(\bar R_n \ge K \right)\le \gep/2 \quad \text{ for all } n\ge 1.
\end{equation}
This follows from the fact that 
$\bar R_n\ge (B-1)/B$, and from \eqref{eq:nondecrease}.
We define $C:=(\log(2K/\gep))/{(h_c-h)}$ and we write, using \eqref{eq:EBgen},
\begin{multline}
  R_n\, =\,p(n,\emptyset)+
  \sumtwo{\mathcal I \subset\{1,\dots,2^n\}}{1\le |\mathcal I|\le C}
p(n,\mathcal I) \exp\left(\sum_{i\in \mathcal I}(\gb\go_i-\log M(\gb)+h)\right)
\\
  +\sumtwo{\mathcal I \subset\{1,\dots,2^n\}}{|\mathcal I|
    >C}p(n,\mathcal I) \exp\left(\sum_{i\in \mathcal I}(\gb\go_i-\log
    M(\gb)+h)\right)\, =:\, T_1+T_2+T_3.
\end{multline}
$T_1$ is smaller than $1$ and 
\begin{equation}
T_3\, \le \, 
 \exp\left(-C(h_c-h)\right)\bar R_n,
\end{equation}
so that $T_3 \le \gep/2$ with probability greater than $(1-\gep/2)$
({\sl cf.} \eqref{eq:barR}) for all $n$.  As for $T_2$, its easy to
compute and bound its expectation
\begin{equation}
  \left\langle \sumtwo{\mathcal I \subset\{1,\dots,2^n\}}
    {1\le |\mathcal I|\le C}p(n,\mathcal I) \exp\left(\sum_{i\in \mathcal I}(\gb\go_i-\log M(\gb)+h)\right)\right\rangle \, \le\,  \exp(Ch)[1-p(n,\emptyset)],
\end{equation}
and \eqref{eq:fewcontacts} tells us that the right-hand side tends to
zero when $n$ goes to infinity.  In particular we can find $N$
(depending on $C$) such that for all $n\ge N$ we have
\begin{equation}
  \left\langle\sumtwo{\mathcal I \subset\{1,\dots,2^n\}}
{1\le |\mathcal I|\le C}p(n,\mathcal I) \exp\left(\sum_{i\in \mathcal I}(\gb\go_i-\log M(\gb)+h)\right)\right\rangle\, \le\,  \gep^2/4.
\end{equation}
Then for $n\ge N$ we have
$
 \bbP(T_2\ge \gep/2)\le \gep/2$.
Altogether we have
\begin{equation}
 \bbP(R_n\ge 1+\gep)\,\le\,  \gep,
\end{equation}
and since $R_n$ is bounded from below by $p(n,\emptyset)$ which tends to $1$,
the proof is complete.
\qed

\begin{subappendices}
\section{Existence of the free energy and annealed system estimates}

\label{sec:stand}

\subsection{Proof of Theorem \ref{th:F}}


Since the basic induction \eqref{eq:basic} gives $ R_n\ge (B-1)/B $
for every $n\ge 1$, one has
\begin{align}
\frac{R_{n+1}}{B}&\ge \frac{R_n^{(1)}}{B}\frac{R_n^{(2)}}{B} \label{eq:ge}
\end{align}
and
\begin{align}
R_{n+1} &\le \frac{R_n^{(1)}R_n^{(2)}}{B}+ \frac{B}{B-1}R_n^{(1)}R_n^{(2)}
\notag
\end{align}
so  that
\begin{align}
  (K_B R_{n+1}) \le (K_BR_n^{(1)})(K_BR_n^{(2)}) \quad \text{with}
  \quad K_B=\frac{B^2+B-1}{B(B-1)} \label{eq:le}.
\end{align}
Taking the logarithm of \eqref{eq:ge} and \eqref{eq:le}, we get that
\begin{equation}
\label{eq:nondecrease}
\left\{2^{-n}\E\big[\log(R_n/B)\big]\right\}_{n=1, 2, \ldots}
\ \text{
 is non-decreasing,} 
 \end{equation}
 while
 \begin{equation}
\label{eq:nonincrease}
\left\{2^{-n}\E\big[\log(K_B R_n)\big]\right\}_{n=1, 2, \ldots} \
\text{ is non-increasing},
\end{equation}
so that both sequences are converging to the same limit
\begin{equation}
\tf(\beta,h)=\lim_{n\rightarrow\infty}2^{-n}\med{\log R_n}
\end{equation}
and \eqref{eq:encadre} immediately follows.  It remains to be proven that
the limit of $2^{-n}\log R_n$ exists $\bbP(\dd \go)$--almost surely and  $\bbL^1(\dd \bbP)$ convergence.
Fixing some $k\ge1$ and iterating \eqref{eq:ge} one obtains for $n>k$
\begin{equation}
  2^{-n}\log (R_{n}/B)\ge 2^{-k}\big(2^{k-n}\sum_{i=1}^{2^{n-k}}\log
  (R_k^{(i)}/B)\big).
\end{equation}
Using the strong law of large numbers in the right-hand side, we get
\begin{equation}
  \liminf_{n\to\infty} 2^{-n} \log (R_n/B) \ge 2^{-k}\med{\log
  (R_k/B)} \quad \bbP(\dd \go)- \text{a.s.}.
\end{equation}
Hence taking the limit for $k\rightarrow\infty$ in the right-hand side
again we obtain
\begin{equation}
  \liminf_{n\to\infty} 2^{-n} \log R_n=\liminf_{n\to\infty} 2^{-n}
  \log (R_n/B) \ge \tf(\beta,h) \quad \bbP(\dd \go)- \text{a.s.}
\end{equation}
Doing the same computations with \eqref{eq:le} we obtain
\begin{equation}
  \limsup_{n\to\infty} 2^{-n} \log R_n=\limsup_{n\to\infty} 2^{-n} \log (K_B R_n)
  \le \tf(\beta,h) \quad \bbP(\dd \go)- \text{a.s.}.
\end{equation}
This ends the proof for the almost sure convergence. The proof of the
$\bbL^1(\dd \bbP)$ convergence is also fairly standard, and we leave it to
the reader.

The fact that $\tf(\beta,\cdot)$ is non-decreasing follows from the
fact that the same holds for $R_n(\beta,\cdot)$, and this is easily
proved by induction on $n$. Convexity of $(\gb, h) \mapsto \tf (\gb, h
+ \log \M(\gb))$ is immediate from \eqref{eq:DHV} (hence for $B=2,3,
\ldots$). But \eqref{eq:DHV} can be easily generalized to every $B>1$:
this follows by observing that from \eqref{eq:basic} and \eqref{eq:R0}
one has that
\begin{equation}
\label{eq:EBgen}
 R_n\, =\, \sum_{\mathcal{I} \subset\{1,\dots,2^n\}}p(n,\mathcal I) \exp\left(\sum_{i\in \mathcal I}(\gb\go_i-\log M(\gb)+h)\right).
\end{equation}
for suitable positive values $p(n, \mathcal I)$, which depend on $B$:
by setting $\gb=h=0$ we see that $ \sum_{\mathcal{I}}p(n,\mathcal
I)=1$ and hence $R_n$ can be cast in the form of the expectation of a
Boltzmann factor, like \eqref{eq:DHV}. This yields the desired
convexity.  \qed

\smallskip

\begin{rem}
\label{rem:5} 
\rm
Another consequence of \eqref{eq:EBgen} is that
 $\tf(\gb, h+ \log \M( \gb))\ge \tf (0,  h)$ \cite[Ch.~5, Prop.~5.1]{cf:Book}.
\end{rem}


\medskip

\subsection{Proof of Theorem \ref{th:pure}} 
When $\gb=0$  the iteration \eqref{eq:basic} reads
\begin{equation}
R_{n+1}=\frac{R_n^2+(B-1)}{B}.
\end{equation}
A quick study of the function $x\mapsto [x^2+(B-1)]/B$, gives that
$R_n\stackrel{n\rightarrow\infty}{\rightarrow}\infty$ if and only if
$R_0>(B-1)$. Initial conditions $R_0<B-1$ are attracted by the stable
fixed point $1$, while the fixed point $(B-1)$ is unstable.  The
inequality \eqref{eq:ge} guaranties that $\tf(0,h)>0$ when $R_N>B$ for
some $N$. This immediately shows that that $h_c(0)=\log(B-1)$.


Next we prove \eqref{eq:alpha0}, {\sl i.e.}, that there exists a constant $C$
such that 
\begin{equation}
\label{eq:modif}
\frac{1}{C}\gep^{1/\ga}\, \le\,  \hat{\tf}(0,\gep) \, \le\,  C \gep^{1/\ga} 
\end{equation}
for all $\gep\in(0,1)$.  To that purpose take $a:=a_0$ such that
$\hat{\tf}(0,a)=1$ 
(this is possible because of the  convexity  of
$\tf(\gb, \cdot+ \log \M(\gb))$ we obtain both continuity and 
$\lim_{a \to \infty}\hat{\tf}(0,a)=\infty$) 
and note that  the sequence $\{a_n\}_{n\ge0}$ defined
just before Lemma \ref{th:ptlem} 
is such that $2\,\hat{\tf}(0,a_{n+1})=\hat{\tf}(0,a_{n})$, so that
$\hat \tf(0,a_{n+1})=2^{-n}$.
Thanks to Lemma \ref{th:ptlem} we have that along this sequence
\begin{equation}
\hat{\tf}(0,a_n)\sim  2G_a^{-1/\ga} a_n^{1/\ga}. \label{eq:sim}
\end{equation}
Let $K_a$ be such that $a_n\le K_a a_{n+1}$ for all $n$, and $c_a$
such that $c_a^{-1}a_n^{1/\ga}\le\hat{\tf}(0,a_n)\le
c_a\,a_n^{1/\ga}$.  Then, for all $n$ and all
$\gep\in[a_{n+1},a_{n}]$, since $\hat{\tf}(0,\cdot)$ is increasing we
have
\begin{equation}
\begin{split}
  \hat{\tf}(0,\gep)&\ge \hat{\tf}(0,a_{n+1}) 
\ge c_a^{-1}a_{n+1}^{1/\ga}\ge c_a^{-1}K_a^{-1/\ga}\gep^{1/\ga},\\
  \hat{\tf}(0,\gep)&\le \hat{\tf}(0,a_{n})\le c_a a_n^{1/\ga}\le
  c_aK_a^{1/\ga}\gep^{1/\ga}.
  \end{split}
\end{equation}
Finally, the analyticity of $\tf(0, \cdot)$ on $(h_c, \infty)$  follows 
for example from \cite[Lemma~4.1]{cf:OvKr}.

\qed

\subsection{About   models with $B\le 2$}
\label{sec:Bsmallerthan2}

We have chosen to work with the model \eqref{eq:R}, with positive
initial data and $B>2$, because this is the case that is directly
related to pinning models and because in this framework we had the
precise aim of proving the physical conjectures formulated in
\cite{cf:DHV}. But of course the model is well defined for all $B\neq
0$ and in view of the direct link with the logistic map  $z \mapsto A
z(1-z)$, {\sl cf.}  \eqref{eq:logistic}, also the case $B\le 2$ appears to be
intriguing. Recall that $A=2(B-1)/B$ and note that $A \in (1,2)$ if
$B\in (2, \infty)$.  What we want to point out here is mainly that the
case of \eqref{eq:R} with positive initial data and $B \in (1,2)$,
{\sl i.e.} $A\in (0,1)$, is already contained in our analysis.  This
is simply the fact that there is a duality transformation relating
this new framework to the one we have considered. 
Namely, if we let $B\in(1,2)$ and we set $\hat R_n := R_n/(B-1)$, then
$\hat R_n$ satisfies \eqref{eq:R} with $B$ replaced by $\hat B:=
B/(B-1) >2$. Of course the fixed points of $x\mapsto (x^2 +\hat B -1)/
\hat B$ are again $1$ (stable) and $\hat B -1$ (unstable). This
transformation allows us to generalize immediately all the theorems we
have proven in the obvious way, in particular the marginal case
corresponds to $\hat B=\hat B_c:=2+\sqrt{2}$, {\sl i.e.}, $B= \sqrt{2}$
and in the irrelevant case the condition $\gD_0< B^2-2(B-1)^2$ now
reads $\gD_0< \hat B^2-2(\hat B-1)^2$.
  
 This discussion leaves open the cases $B=1$ and $B=2$ to
 which we cannot apply directly our theorems, but:
 \begin{enumerate}
 \item If $ B=1$ the model is exactly solvable and $R_n$ is equal to
   the product of $2^n$ positive IID random variables distributed like
   $R_0$, so $\tf (\gb, h)= h-\log \M (\gb)$. The model in this case
   is a bit anomalous, since the stable fixed point is $0$ and
   therefore the free energy can be negative and no phase transition
   is present (this appears to be the analogue of the non-hierarchical
   case with inter-arrival probabilities that decay exponentially fast
   \cite[Ch.~1, Sec.~9]{cf:Book}).
\item If $B=2$ then, with reference to \eqref{eq:r}, $r_n \nearrow
  \infty$ if $r_0>1$ and $r_n \nearrow 1$ if $r_0<1$. The basic
  results like Thorem~\ref{th:F} are quickly generalized to cover this
  case. Only slightly more involved is the generalization of the other
  results, notably Theorem~\ref{th:eps_c}$(1)$.  In fact we cannot
  apply directly our results because the iteration for $P_n$, that is
  $\Rav{n} -1$, reads $P_{n+1}=P_n +(P_n^2 /2)$ ({\sl cf.}
  \eqref{eq:P}) so that the growth of $P_n$, for $P_0>0$, is just due
  to the nonlinear term and it is therefore slow as long as $P_n$ is
  small.  However the technique still applies (note in particular that, by 
   \eqref{eq:Q} and \eqref{eq:QboundP}, the variance of $R_n$
  decreases exponentially if $\gD_0<2$ as long as $P_n$ is sufficiently
  small)  and along this line one shows that the disorder is
  irrelevant, at least as long as $\gD_0 <2$.
 \end{enumerate} 

\medskip

If we now let $B $ run from $1$ to infinity, we simply conclude that
the disorder is irrelevant if $B \in (\sqrt{2}, 2+\sqrt{2})$, 
and it is instead relevant in $B \in (1,
\sqrt{2}) \cup (2+\sqrt{2}, \infty)$.  In the case $B=1$ (and, by
duality, $B=\infty$) there is no phase transition.

\medskip

Finally, a word about the models with $B<1$.  Various cases should be
distinguished: going back to the logistic map, we easily see that
playing on the values of $B$ one can obtain values of $\vert A\vert $
larger than $2$ and the very rich behavior of the logistic map sets
in \cite{cf:Beardon}: non monotone convergence to the fixed point, oscillations in a
finite set of points, chaotic behavior, unbounded trajectories for any
initial value.  It appears that it is still  possible to generalize
our approach to deal with some of these cases, but this would lead us
far from our original aim.
Moreover, for $B<1$ the property of positivity of $R_n$, and
therefore its statistical mechanics interpretation as a partition
function, is lost.
\end{subappendices}

\bigskip

\noindent
{\bf Note added in proof.}
After this work was completed, a number of results have been proven by
developing further the idea set forth here, solving some of questions
raised at the end of Section~\ref{sec:pinning}. First of all we were
able (in collaboration with B. Derrida \cite{cf:DGLT}) to extend the
main idea of this work to the non hierarchical set-up and we have
shown that the quenched critical point (of the non-hierarchical model)
is shifted with respect to the annealed value for arbitrarily small
disorder, if $\ga>1/2$ (this result has been sharpened in
\cite{cf:AZ}, taking a different approach).  Then one of us
\cite{cf:Hubert} has been able to show the shift of the critical point
for arbitrarily small disorder for $\ga =1/2$ in a hierarchical with
site disorder (the case considered here is  bond disorder, {\sl
  cf.}  Figure~\ref{fig:hbeta}) by using a location-dependent shift of
the disorder variables in the change of measure argument (in our case,
the shift is the same for each variable). Finally, very recently
\cite{cf:GLT_marg} we have also been able to treat the case $\ga =1/2$
($B=B_c$), both for the hierarchical and non-hierarchical model, by
introducing long range correlations in the auxiliary measure 
$\tilde \bbP$.

\chapter[Hierarchical pinning with site disorder]{Hierarchical pinning model with site disorder: disorder is marginally relevant}\label{HPMDMR}
 \section{The model and the results}

\subsection{A quick survey and some motivations}

A lot of progress have been made recently in the understanding of pinning models (see \cite{cf:Book} for a  survey and particularly Chapter $1$ for a definition of the simplest random walk based model) in particular in comparing quenched and annealed critical point (see \cite{cf:Ken, cf:AZ,cf:DGLT, cf:T_cmp,  cf:T_fractmom}).
But these works do not settle the question of whether the quenched and annealed critical points
coincide or not for the simple random walk based model. Moreover, physicists predictions on such an issue do not agree (see for example \cite{cf:DHV, cf:FLNO}). The reason why the question is not settled 
 for the random walk based model lies in the exponent ($3/2$) of the 
 law of the first return to zero: for smaller (respectively larger) values of the exponent
 a heuristic argument ({\sl Harris criterion}, \cite{cf:DHV, cf:FLNO}), 
 made rigorous in \cite{cf:Ken, cf:AZ, cf:DGLT, cf:T_cmp}, tells 
 us that annealed and quenched critical points coincide at high
 temperature (respectively, they differ at all temperatures). The first scenario
 goes under the name of irrelevant disorder regime and the second as 
 relevant disorder regime. The irrelevant disorder regime is also characterized
 by the fact that quenched and annealed critical exponents coincide
 \cite{cf:Ken, cf:T_cmp}, while for the relevant disorder they are different
 \cite{cf:GT_cmp}. The terminology {\sl relevant/irrelevant} comes from
 the renormalization group arguments \cite{cf:DHV, cf:FLNO} leading to the Harris criterion
 and in such a context the {\sl undecided} case is called {\sl marginal}
 and, in general, it poses a very challenging problem even at a heuristic level 
 (see {\sl e.g.} \cite{cf:DG} and references therein).  

Much work has been done on the statistical mechanics on 
a particular class of hierarchical lattices, the {\sl diamond lattices}, 
because of the explicit form of the renormalization group
transformations on such lattices, while often retaining a clear link
with the corresponding non hierarchical lattices \cite{cf:DG}.
A hierarchical model for disordered pinning has been explicitly 
considered in  \cite{cf:DHV} and a rigorous analysis of this model 
has been taken up in  \cite{cf:GLT}, but such a rigorous analysis cannot confirm the
prediction in  \cite{cf:DHV}   that the disorder is relevant also in the marginal 
regime (more precisely, in \cite{cf:DHV} it is claimed that annealed and quenched
critical points differ at marginality).
It should however be pointed out that the model in \cite{cf:DHV,cf:GLT}
is a {\sl bond} disorder model and there is a natural companion to such a model,
that is the one in which the disorder is on the sites.  A priori there is no
particular reason to choose either of the two cases, but, if we take a closer look, the site disorder case is somewhat closer to the non hierarchical
case. The reason is that the Green function (see below) of the
bond model is constant through the lattice, while the Green function of the site model is not
 (this analogy can be pushed further, see Remark \ref{rem:Green}). 
In this paper we analyze the hierarchical pinning model with site disorder  
  and  we  establish disorder relevance in the marginal regime. It seems unlikely that our method can be adapted in a straightforward way to settle the question
for the bond model or for the random walk based model, but one can use it to improve
the result in \cite{cf:DGLT} (we will come back to this point in Remark \ref{remrev} below). This result is a confirmation (although the setup we consider here is slightly different) to the claim made in \cite{cf:DHV} that disorder is relevant at any temperature when the specific heat exponent vanishes.

\subsection{The model}

Let $(D_n)_{n\in\N}$ be the sequence of lattices defined as follow
\begin{itemize}
	\item $D_0$ is made of one single edge linking two points $A$ and $B$.
	\item $D_{n+1}$ is obtained for $D_n$ by replacing each edge by $b$ branches of $s$ edges (with $b$ and $s$ in $\left\{2,3,4,\dots,\right\}$).
\end{itemize}
On $D_n$ we fix one directed path $\sigma $ linking $A$ and $B$ ({\sl the wall}).

Given $\gb>0$, $h\in \R$ and $\{\go_i\}_{i\in\N}$ a sequence of i.i.d.\ random variables (with law $\bbP$ and expectation denoted by $\bbE$) with zero mean, unit variance, and satisfying
\begin{equation}
M(\gb):=\bbE\left[\exp(\gb\go_1)\right]<\infty\quad \text{ for every } \gb>0,
\end{equation} 
one defines the partition function of the system of rank $n$ by
\begin{equation}
\label{eq:part}
R_n\, =\, R_n(\gb,h)\, :=\, \bE_n\left[\exp(H_{n,\go,\gb,h}(S)\right], 
\end{equation}
 where  $\bP_n$ is the uniform probability on all directed path $S=(S_i)_{0 \le i\le s^n}$ on $D_n$ linking $A$ to $B$, and $\bE_n$ the related expectation, and 
 \begin{equation}
 H_{n,\go,\gb,h}(S)\, =\, \sum_{i=1}^{s^n-1} \left[\gb\go_i+h-\log M(\gb)\right]\ind_{\{S_i=\sigma_i\}} .
\end{equation}

\begin{figure}[h]
\begin{center}
\leavevmode
\epsfysize =6.5 cm
\psfragscanon
\psfrag{o1}[c]{{\small$\go_1$}}
\psfrag{o2}[c]{{\small$\go_2$}}
\psfrag{o3}[c]{{\small$\go_3$}}
\psfrag{a}[c]{\normalsize A}
\psfrag{b}[c]{B}
\psfrag{l0}[c]{$D_0$}
\psfrag{l1}[c]{$D_1$}
\psfrag{l2}[c]{$D_2$}
\psfrag{traject}[l]{$S$ {\tiny (a directed path on $D_2$)}}

\epsfbox{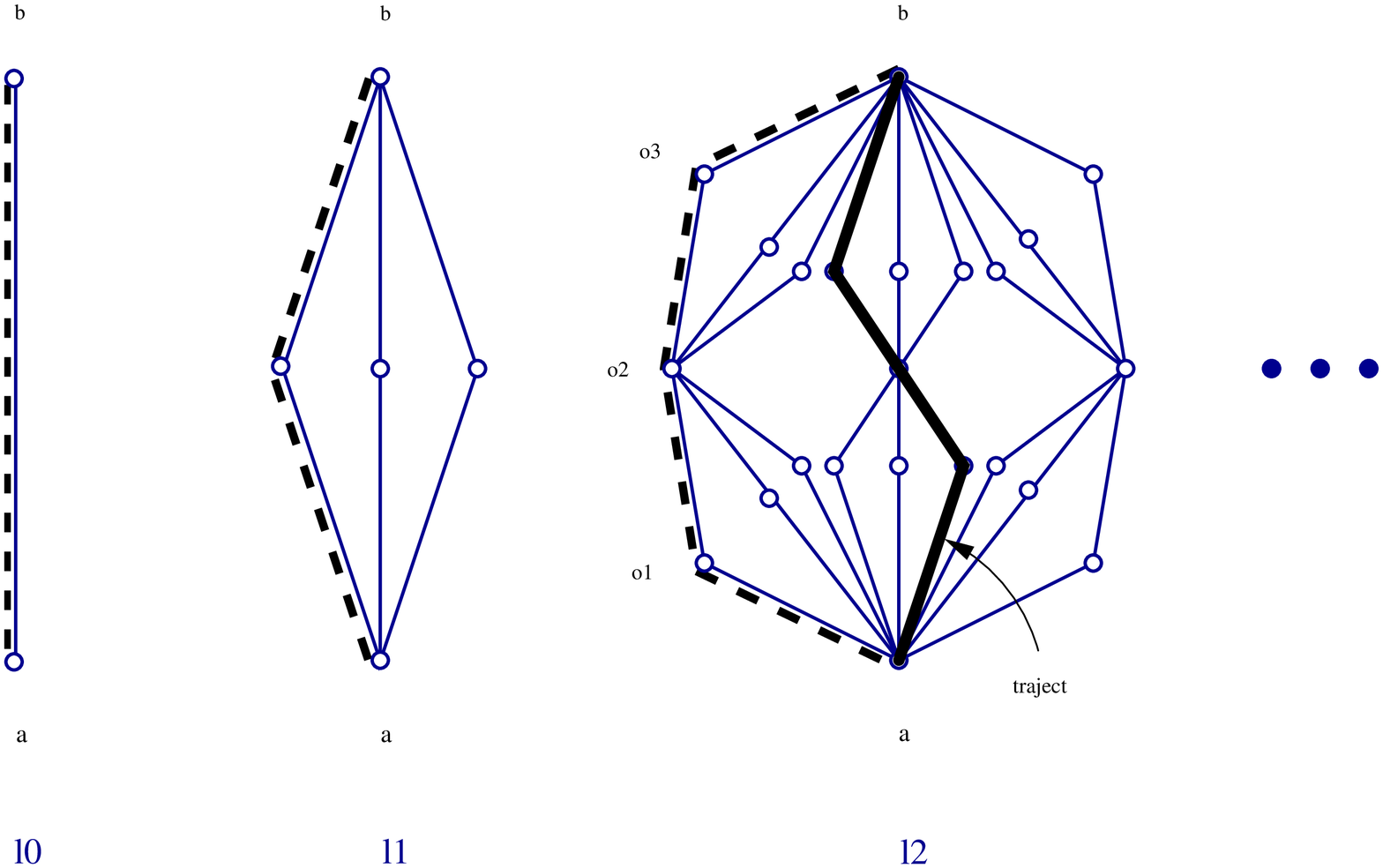}
\end{center}
\caption{\label{fig:Dn} We present here the recursive construction of the first three levels of the hierarchical lattice  $D_n$, for $b=3$, $s=2$. The geometric position of the disordered environment is specified for $D_2$. The law $\bP_n$ is the uniform law over all directed path and   the path $\sigma$
({\sl the wall}) is marked by a dashed line. In the bond disorder case \cite{cf:DHV,cf:GLT} the
hierarchy of lattices is the same, but, with reference to $D_2$, there would be four variables of disorder.}
\end{figure}

\medskip
\begin{rem}
\label{rem:Green}
\rm 
One can directly check that for this model, the site Green function of the directed random walk, that is the probability that a given site is visited by the path, is inhomogeneous (taking value $1$ for the graph extremities $A$ and $B$ and equal to $b^{-i}$ $1\le i\le n$ on the other sites of $D_n$, with $i$ corresponding to the level at which that site has appeared in the hierarchical construction of the lattice, see Figure~\ref{fig:Dn}), and this makes this model similar to the random walk based model
where the Green function decays with a power law with the length of the system
(in the hierarchical context the length of the system is $s^n$). This is not true for the bond model Green function (every bond is visited with probability $b^{-n}$). This inhomogeneity will play a crucial role in the proof, as it will allow us to improve the  method introduced in \cite{cf:GLT}. See Remark~\ref{rem:Green2} for more on this. 
\end{rem}
\medskip

The relatively cumbersome hierarchical construction actually 
boils down to a very simple
 recursion giving the law of the random variable $R_{n+1}$ in terms of the law of $R_n$, for every $n$.
For $s=2$ the recursion is particularly compact:  $R_0:=1$ and  
\begin{equation}
R_n=\frac{R_n^{(1)}A R_n^{(2)}+b-1}{b},
\end{equation}
where $R_n^{(1)}$, $R_n^{(2)}$ and $A$ are
independent random variables with $R_n^{(1)}\stackrel{\cL}=R_n^{(2)}$ and 
\begin{equation}
A \stackrel{\cL}= \exp(\gb\go_1-\log M(\gb)+h).
\end{equation}
For arbitrary $s$ the recursion gets slightly more involved:
\begin{equation}
R_{n+1}=\frac{\prod_{1\le j\le s}R_n^{(j)}\prod_{1\le j\le s-1}A_j+b-1}{b},\label{eq:rec}
\end{equation}
where, once again, all the variables appearing in the right-hand side 
are independent,  $(R_n^{(i)})_{i=1, \ldots, s}$ are also identically
distributed and  $A_j$ has the same law as $A$ for every $j$. 

The expression \eqref{eq:rec} is not only very important on a technical
level, but it
allows an important generalization of the model: there is no reason 
of choosing $b$ integer valued. We will therefore choose it real valued
($b>1$), while $s$ will always be an integer larger or equal to $2$.

The quenched disorder $\go$ therefore enters in each step 
of the recursion: the annealed (or pure) model is given by  $r_n=\bbE[R_n]$
and it solves the recursion
\begin{equation}
r_{n+1}=\frac{\exp((s-1)h)r_n^s+(b-1)}{b}, \label{recr}
\end{equation}
with $r_0=1$. It is important to stress that $r_n=R_n^{(1)}$ if $\gb=0$
so, with our choice of the parameters, the annealed case coincides
with the infinite temperature case.
The annealed \textsl{critical point} $h_c$ is the infimum in the set of $h$ such that $r_n$ tends to infinity.

The class of systems obtained by choosing $b\in(1,s)$ in \eqref{eq:rec} is of a particular interest. With this setup, the pure system undergoes a phase transition in which the free energy critical exponent can take any value in $(1, \infty)$.
We explain below that the case $b \in [s, \infty)$ can be tackled by our methods too,
but it is less interesting for the viewpoint of the question we are addressing (i.e.\ behavior at marginality).




\subsection{Definition and existence of the free energy}

We are interested in the study of the free energy of this pinning model. The following result ensures that it exists and states some useful technical estimates.

\medskip

\begin{proposition}
 The limit
\begin{equation}
 \lim_{n\to\infty} s^{-n}\log R_n = \tf(\gb,h),
\end{equation}
exists almost surely and it is non-random. Moreover the convergence holds also in $\bbL_1(\dd \bbP)$.
The function $\tf(\gb,\cdot)$ is non-negative, non-decreasing. 
Moreover 
there exists a constant $c$ (depending on $\gb$ and $h$, $b$ and $s$
) such that
\begin{equation}
 |s^{-n}\bbE \log R_n-\tf(\gb,h)|\le cs^{-n}.
\end{equation}
\end{proposition}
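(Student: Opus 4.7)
The plan is to set up a pair of two-sided inequalities on the recursion \eqref{eq:rec}, use them to establish convergence of the means $F_n := s^{-n}\bbE\log R_n$ with the quantitative rate claimed, and then upgrade to almost sure and $L^1$ convergence via the strong law of large numbers applied to iterates of these inequalities. Non-negativity and monotonicity in $h$ will be simple consequences. Concretely, first observe that $R_n\ge (b-1)/b$ for every $n\ge 0$ (trivial at $n=0$ since $b>1$, and by induction via \eqref{eq:rec}); combined with \eqref{eq:rec} this yields the deterministic sandwich
\begin{equation*}
\tfrac{1}{b}\prod_{j=1}^s R_n^{(j)}\prod_{j=1}^{s-1}A_j \;\le\; R_{n+1}\;\le\; \tfrac{1}{b}\prod_{j=1}^s R_n^{(j)}\prod_{j=1}^{s-1}A_j \;+\; \tfrac{b-1}{b}.
\end{equation*}
Taking logarithms, using the elementary inequality $\log(x+y)\le \log 2+\max(\log x,\log y)$ on the upper bound, and then taking expectation (with $\bbE\log A_j=h$ and $\bbE|\log A_j|<\infty$, both consequences of $\bbE\go_1^2=1$), one obtains $|\bbE\log R_{n+1}-s\bbE\log R_n-(s-1)h+\log b|\le C$ for a constant $C=C(\gb,h,b,s)$. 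Dividing by $s^{n+1}$ yields $|F_{n+1}-F_n|\le Cs^{-(n+1)}$, so $(F_n)$ is Cauchy with limit $\tf(\gb,h)$, and geometric summation gives $|F_n-\tf(\gb,h)|\le Cs^{-n}/(s-1)$, the announced error estimate.

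For almost sure convergence I iterate the lower-bound inequality $k$ times starting from level $n$ to obtain
\begin{equation*}
\log R_{n+k}\;\ge\;\sum_{i=1}^{s^k}\log R_n^{(i)}+\sum_{\alpha=1}^{s^k-1}\log A_\alpha -\tfrac{s^k-1}{s-1}\log b,
\end{equation*}
where the $R_n^{(i)}$ are IID copies of $R_n$, the $A_\alpha$ are IID copies of $A$, and the two families are independent. Dividing by $s^{n+k}$ and applying the strong law of large numbers to both sums as $k\to\infty$ at fixed $n$ yields
\begin{equation*}
\liminf_{m\to\infty}s^{-m}\log R_m\;\ge\;F_n+s^{-n}h-\tfrac{s^{-n}\log b}{s-1}\qquad\text{a.s.},
\end{equation*}
and letting $n\to\infty$ (using $F_n\to\tf(\gb,h)$) gives $\liminf\ge\tf(\gb,h)$ a.s. The complementary bound on $\limsup$ is obtained identically from the iterated upper bound, the additional IID error terms of finite mean contributing negligibly after normalization by $s^{n+k}$.

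For $L^1$ convergence, write $\log R_{n+1}=\sum_{j=1}^s\log R_n^{(j)}+\sum_{j=1}^{s-1}\log A_j-\log b+\xi_{n+1}$, where $0\le\xi_{n+1}\le\log 2+[\log(b-1)-\sum_j\log R_n^{(j)}-\sum_j\log A_j]^+$ has finite second moment (the upper bound is controlled by $|\sum_j\log A_j|$ plus a deterministic constant, and $\log A\in L^2$ under our moment hypotheses). Independence of the $R_n^{(j)}$ and of the $A_j$, combined with Cauchy--Schwarz for the cross-covariance $\operatorname{Cov}(\sum_j\log R_n^{(j)},\xi_{n+1})$ and Young's inequality $(\sqrt a + \sqrt b)^2\le(1+\epsilon)a+(1+1/\epsilon)b$ with $\epsilon=1/s$, yields $\var(\log R_{n+1})\le (s+1)\var(\log R_n)+C'$, whence $\var(\log R_n)=O((s+1)^n)$. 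Since $(s+1)/s^2<1$ for $s\ge 2$, Chebyshev gives $s^{-n}\bbE|\log R_n-\bbE\log R_n|\to 0$ exponentially fast, which combined with the mean convergence above yields $s^{-n}\log R_n\to\tf(\gb,h)$ in $L^1$. Non-negativity $\tf(\gb,h)\ge 0$ is immediate from $R_n\ge (b-1)/b$, and monotonicity of $h\mapsto\tf(\gb,h)$ is inherited from the trivial monotonicity of $R_n$ in $h$ (by induction via \eqref{eq:rec}). The main technical issue throughout is that the additive constant $(b-1)/b$ in \eqref{eq:rec} prevents a purely multiplicative control on $R_{n+1}$; it is ultimately absorbed using the deterministic lower bound $R_n\ge (b-1)/b$ together with the moment bounds on $\log A$, which are available even though we only assume $M(\gb)<\infty$ for $\gb>0$.
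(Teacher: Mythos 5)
Your argument is sound, and the route you take is actually necessary rather than a matter of taste. The paper gives no proof of this Proposition: it defers to Theorem~1.1 of \cite{cf:GLT}, where the bond-disorder model is handled by the super/sub-multiplicative bounds $R_{n+1}/B \ge (R_n^{(1)}/B)(R_n^{(2)}/B)$ and $K_B R_{n+1} \le (K_B R_n^{(1)})(K_B R_n^{(2)})$ (with $K_B$ an explicit constant), followed by the strong law of large numbers along the tree. In the site-disorder recursion \eqref{eq:rec} the sub-multiplicative upper bound breaks: the factors $A_j = \exp(\gb\go - \log M(\gb) + h)$ are not bounded away from zero, so no constant $K$ makes $R_{n+1} \le K\prod_j R_n^{(j)}\prod_j A_j$ hold almost surely. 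Your replacement --- keeping the recursion in its exact additive form $\log R_{n+1} = \sum_j\log R_n^{(j)} + \sum_j\log A_j - \log b + \xi_{n+1}$ and dominating $\xi_{n+1}$ by an integrable majorant built only from the $A$-variables (via $R_n \ge (b-1)/b$) --- is the correct modification, and all the downstream steps (geometric Cauchy rate for $F_n$, iterated bounds plus SLLN for a.s.\ convergence, variance growth $O((s+1)^n) = o(s^{2n})$ for $L^1$ convergence) carry through.

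Two small points. First, $\bbE\log A_j = h - \log M(\gb)$, not $h$ (this uses $\bbE\go_1 = 0$, not $\bbE\go_1^2 = 1$); the slip is harmless since the constant $c$ in the Proposition is allowed to depend on $\gb$ and $h$, but you should correct the displayed recursion accordingly. Second, when you iterate the upper bound you should explicitly replace each $\xi_m$ by the majorant $\log 2 + [(1-s)\log(b-1) + s\log b - \sum_j\log A_j^{(m)}]^+$ (obtained from $R_{m-1}\ge (b-1)/b$) before invoking the SLLN, since it is only after this substitution that the error terms at distinct generations become genuinely independent of the lower-generation $R$'s and of one another; your phrase ``IID error terms'' signals awareness of this, but the step merits being spelled out.
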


\medskip

This result is the exact equivalent of \cite[Theorem 1.1]{cf:GLT}. Though certain modifications are needed to take into account the difference between the two models, it is straightforward to adapt the method of proof.

We set
\begin{equation}
 h_c(\gb)\, =\inf\{h \text{ such that } \tf(\gb,h)>0)\}\ge 0.
\end{equation}
The value $h_c(\gb)$ is called the \textsl{critical point of the system}, basic properties of the free energy ensure that $\tf(\gb,h)>0$ if and only if $h>h_c(\gb)$. Of course
$h_c(\gb)$ is a non analyticity point of $\tf(\gb, \cdot)$. By {\sl critical behavior of the system} we will refer to how the
free energy vanishes as $h \searrow h_c(\gb)$.

It is not hard to check that with $b\in(1,s)$, we have $h_c(0)=0$.
Indeed if $h<0$, $r_n$ converges to $r_\infty\in(0,1)$ the stable fixed point of the map $x\mapsto (\exp((s-1)h)x^s+(b-1)r_0)/{b}$;
if $h=0$, $r_n=1$ for every $n$;
if $h>0$, $r_n$ diverges to infinity in such a way that  the free energy is  positive,
 therefore $h_c(0)=0$.

Moreover 
there is an immediate comparison between annealed and quenched systems:  by Jensen inequality we have that for any $\gb$ and $h$
\begin{equation}
\bbE \log R_n \, \le\,  \log r_n, 
\end{equation}
so that $\tf(\gb,h)\le \tf(0,h)$ for any $\gb>0$ and $h_c(\gb)\ge h_c(0)=0$.

\subsection{Some results on the free energy}

We state here all the results on properties of the critical system that have been proved for the bond-disorder model in
\cite{cf:GLT} and that are still true in our framework. 
Our first result concerns the shape of the free energy curve around zero in the pure system.

\medskip
\begin{theorem}\label{th:freeen}
For every $b\in (1,s)$ 
there exists a constant $c_b$ such that for all $0\le h\le 1$,
\begin{equation}
 \frac{1}{c_b}h^{1/\alpha}\le \tf(0,h)\le c_b h^{1/\alpha}, \label{eq:anneal}
\end{equation}
where $\alpha:=(\log s-\log b)/{\log s}$.
\end{theorem}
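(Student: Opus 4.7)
The plan is to mimic the proof of the annealed critical exponent in the bond-disorder case (Theorem 2.1.2 of \cite{cf:GLT}), suitably adapted to the site-disorder recursion $r_{n+1} = (e^{(s-1)h}r_n^s + b - 1)/b$ with $r_0 = 1$. First I would record the elementary finite-volume sandwich bounds
\begin{equation*}
  s^{-N}\bigl(\log r_N - \tfrac{\log b}{s-1}\bigr) \;\leq\; \tf(0,h) \;\leq\; s^{-N}\log r_N + C_{b,s}\, s^{-N},
\end{equation*}
which follow by telescoping the one-step inequalities $(s-1)h + s\log r_n - \log b \leq \log r_{n+1} \leq (s-1)h + s\log r_n + (b-1)/r_n^s$, the left one iterated in the form $\log r_{N+k} - \log b/(s-1) \geq s^{k}(\log r_N - \log b/(s-1))$. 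This reduces the problem to controlling the stopping level $N_h$ defined as the first $n$ with $r_n \geq 2$.

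Setting $u_n := r_n - 1 \geq 0$, so $u_0 = 0$ and $u_{n+1} = (e^{(s-1)h}(1+u_n)^s - 1)/b$, Bernoulli's inequality $(1+u_n)^s \geq 1+su_n$ combined with $e^{(s-1)h} \geq 1$ yields the linear lower bound $u_{n+1} \geq (s-1)h/b + \lambda u_n$ with $\lambda := s/b > 1$, hence $u_n \geq \frac{(s-1)h}{s-b}(\lambda^n - 1)$. This right-hand side exceeds $1$ once $n \geq \log(1/h)/\log\lambda + O(1)$, so $N_h \leq \log(1/h)/\log\lambda + O(1)$. Inserting this into the lower sandwich bound, with $\log r_{N_h} \geq \log 2 > \log b/(s-1)$ (valid since $b < s \leq 2^{s-1}$), gives $\tf(0, h) \geq c_b^{-1}\, h^{1/\alpha}$.

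For the matching upper bound I would iterate the companion inequality $u_{n+1} \leq A + \mu u_n(1+u_n)^{s-1}$, with $A := (e^{(s-1)h}-1)/b = O(h)$ and $\mu := \lambda e^{(s-1)h}$, proving inductively that $u_n \leq C h \mu^n$ while $u_n$ stays below a small threshold $\eta_h$ chosen depending on $h$ so that the nonlinear factor $(1+u_n)^{s-1}$ contributes only a bounded multiplicative correction that can be absorbed in $C$. This gives $N_h \geq \log(1/h)/\log\mu - O(1)$, and inserting it into the upper sandwich yields $\tf(0,h) \leq C\, h^{\log s/\log\mu}$. The crucial observation is that
\begin{equation*}
  \frac{\log s}{\log\mu} \;=\; \frac{\log s}{\log\lambda + (s-1)h} \;=\; \frac{1}{\alpha} + O(h),
\end{equation*}
so $h^{\log s/\log\mu - 1/\alpha} \leq \exp\bigl(O(h\log(1/h))\bigr)$ is bounded uniformly on $(0,1]$, converting the bound into $\tf(0,h) \leq c_b\, h^{1/\alpha}$.

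The main obstacle is precisely this upper bound on the iterates: the naive linearization of the site recursion produces an effective growth rate $\mu > \lambda$, and any direct induction $u_n \leq C h \lambda^n$ fails because the $(1+u_n)^{s-1}$ correction is strictly larger than $1$. The fix is to work with $\mu$ rather than $\lambda$ as the inductive growth rate and then exploit the elementary fact that $h\log(1/h)$ is bounded on $[0,1]$ to show that the resulting exponent discrepancy $\log s/\log\mu - 1/\alpha = O(h)$ disappears into the multiplicative constant; this is the site-disorder analogue of the self-similarity-based interpolation used in the bond case in \cite{cf:GLT}.
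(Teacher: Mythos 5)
Your strategy is genuinely different from the one the paper relies on. The bond-disorder chapter (to which the site-disorder chapter defers for $b\neq\sqrt{s}$) constructs a backward sequence $\{a_n\}$ with $\tf(0,a_n)=s^{-n}$ by inverting the renormalization map, obtains the sharp asymptotic $a_n\sim G_a s^{-\alpha n}$ via a multiplicative telescoping lemma, and interpolates using monotonicity. That route leans on the bond model's exact self-similarity, where the $h$-dependence lives entirely in the initial condition of an $h$-free map; the site recursion $r\mapsto(e^{(s-1)h}r^s+b-1)/b$ has $h$ baked into every level, so your forward-iteration-plus-stopping-time strategy is arguably the more natural adaptation. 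Your finite-volume sandwich and your lower bound are both correct, and the final $h^{O(h)}$-is-bounded observation is a valid way to kill the $O(h)$ discrepancy in the exponent.

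There is, however, a genuine gap in the upper bound, and the fix you name is not the right one. The inductive claim $u_n\leq Ch\mu^n$ does not close for \emph{any} fixed $C$: from $u_n\leq Ch\mu^n$ one gets $u_{n+1}\leq A+\mu Ch\mu^n(1+u_n)^{s-1}$, and since $(1+u_n)^{s-1}>1$ strictly for $u_n>0$, there is no constant $C$ that survives the step. Switching the inductive rate from $\lambda$ to $\mu$ does not help — the nonlinear factor sits on top of whatever rate you pick. A threshold $\eta_h$ does not repair this either: a \emph{fixed} threshold $\delta$ pollutes the exponent by a $\delta$-dependent constant (not $O(h)$) via $\bar\mu:=\mu(1+\delta)^{s-1}$, while a \emph{vanishing} threshold $\eta_h\to 0$ stops the controlled regime well short of $u_n\sim 1$, and the remaining $\sim\log(1/h)$ steps run at an uncontrolled rate that is not absorbable by boundedness of $h\log(1/h)$.

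What actually makes the upper bound work is a \emph{non-inductive} product bound, and the key input is one you already have. Use the sharper one-step inequality $u_{n+1}\leq A+\mu u_n+C_s u_n^2$ (valid for $u_n\leq 1$), set $v_n:=u_n/\mu^n$, and unroll to get
\begin{equation*}
v_n\;\leq\;\frac{A}{\mu-1}\prod_{j<n}\Bigl(1+\frac{C_s u_j}{\mu}\Bigr)\;\leq\;\frac{A}{\mu-1}\exp\Bigl(\frac{C_s}{\mu}\sum_{j<n}u_j\Bigr).
\end{equation*}
The sum $\sum_{j<n}u_j$ is controlled precisely by your lower bound $u_{j+1}\geq\lambda u_j$: it forces backward geometric decay $u_j\leq u_n\lambda^{j-n}$, whence $\sum_{j<n}u_j\leq u_n\,\lambda/(\lambda-1)\leq\delta\lambda/(\lambda-1)$ as long as $u_n\leq\delta$. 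The product is therefore bounded by a constant depending only on $\delta$, $b$, $s$, and one obtains $u_n\leq c(\delta)h\mu^n$ and hence $N_h\geq\log(1/h)/\log\mu-O(1)$ without any per-step induction. With that step repaired, your remaining computation — inserting $N_h$ into the upper sandwich and absorbing $\log s/\log\mu-1/\alpha=O(h)$ into the constant via boundedness of $h\log(1/h)$ — is correct and completes the proof.
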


\medskip

The second result describes the influence of the disorder for $b\neq\sqrt{s}$, i.e.\ whether or not the quenched annealed critical point coincide, and it gives an estimate for their difference. Recall that if quenched and annealed critical points differ, we say that the disorder is relevant (and irrelevant if they do not).

\medskip
\begin{theorem}\label{th:relirel}
 When $b\in(\sqrt{s},s)$, for $\gb\le \gb_0$ (depending on $b$ and $s$), we have
 $h_c(\gb)=h_c(0)=0$ and moreover, for any $\gep>0$ the exists $h_\gep$ such that for any $h\le h_\gep$
\begin{equation}
(1-\gep)\tf(0,h)\le \tf(\gb,h)\le \tf(0,h). \label{eq:simsim}
\end{equation}
When $b< \sqrt{s}$ there exists $c_{b,s}$ such that for every $\gb\le1$
\begin{equation}
 \frac{1}{c_{b,s}}\gb^{\frac{2\ga}{2\ga-1}}\le h_c(\gb)-h_c(0)\le c_{b,s}\gb^{\frac{2\ga}{2\ga-1}}.
\end{equation}
\end{theorem}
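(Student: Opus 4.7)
The plan is to adapt the techniques developed in \cite{cf:GLT} and \cite{cf:DGLT} for the bond-disorder case (which correspond to Theorems \ref{th:eps_c} and \ref{th:fe} of the present excerpt), modified to handle the slightly different recursion \eqref{eq:rec} produced by site disorder. The whole argument rests on a careful second-moment analysis of $R_n$ together with, in the relevant case, a fractional-moment / change-of-measure estimate. For $s=2$ the recursion gives
\begin{equation*}
\langle R_{n+1}\rangle\,=\,\frac{e^h\langle R_n\rangle^2+b-1}{b},\qquad
\mathrm{Var}(R_{n+1})\,=\,\frac{e^{2h}}{b^2}\!\left[(1+\mu(\gb))(\gD_n+\langle R_n\rangle^2)^2-\langle R_n\rangle^4\right],
\end{equation*}
with $\mu(\gb):=M(2\gb)/M(\gb)^2-1\sim\gb^2$; a similar but bulkier expression holds for general $s$. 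Setting $Q_n:=\gD_n/\langle R_n\rangle^2$, a short computation shows
\begin{equation*}
Q_{n+1}\,\le\, \frac{s}{b^2}\,\frac{\langle R_n\rangle^{2(s-1)}e^{(s-1)h}}{\langle R_{n+1}\rangle^2/\langle R_n\rangle^2}\Bigl(Q_n+\tfrac{s-1}{2}Q_n^2\Bigr)+\mathrm{err}(\gb,h,n),
\end{equation*}
where $\mathrm{err}$ collects the extra contributions coming from the auxiliary disorder variables $A_j$ and is of order $\gb^2$ times a polynomial in $\langle R_n\rangle$. Crucially, the linear coefficient $s/b^2$ is strictly smaller than $1$ iff $b>\sqrt{s}$.

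For the irrelevant-disorder statement ($b\in(\sqrt{s},s)$) I would first linearize the recursion for $P_n:=\langle R_n\rangle-1$ in the phase where $P_n$ is small (the analog of Lemma~\ref{th:linear}) and show that $P_n$ grows as $(s/b)^n$ times $h$. As long as this linearization is valid, $Q_n$ contracts as $(s/b^2)^n Q_0$, because $s/b^2<1$ and the error terms are negligible when $\gb$ is small and $h$ close to $h_c(0)=0$. One pushes the iteration to the time $n_0$ at which $P_n$ becomes of order $1$; at that moment $Q_{n_0}$ is still arbitrarily small, so Chebyshev gives $\bbP(R_{n_0}\ge\tfrac12\langle R_{n_0}\rangle)\ge 1-o(1)$. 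Combined with $R_n\ge (b-1)/b$ and the finite-volume bound analogous to \eqref{eq:encadre}, this produces $\tf(\gb,h)\ge(1-\eta)\tf(0,h)$ for $h$ small, whence \eqref{eq:simsim} (the matching upper bound is Jensen's inequality).

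For the relevant-disorder case ($b<\sqrt{s}$), both bounds on $h_c(\gb)$ are needed. The upper bound $h_c(\gb)\le c_{b,s}\gb^{2\alpha/(2\alpha-1)}$ is obtained by exactly the same second-moment scheme as above, except that now $q:=s/b^2>1$, so $Q_n$ grows exponentially; the point is that in the small-$P_n$ regime $P_n$ still grows at rate $\bar q:=s/b$, and the exponent $2\alpha/(2\alpha-1)=\log\bar q/\log q$ is chosen precisely so that $\langle R_n\rangle$ reaches order $b$ before $Q_n$ reaches $1$ (this is the quantitative relation in Proposition~\ref{th:LB_B>B_c}). The lower bound $h_c(\gb)\ge c_{b,s}^{-1}\gb^{2\alpha/(2\alpha-1)}$ is established via fractional moments: a bound $[r+s+rs]^+\le[r]^++[s]^++[r]^+[s]^+$ applied to the recursion gives, for $\gamma\in(\log s/\log b,1)$ and $A_n:=\bbE[([R_n-1]^+)^\gamma]$, an inequality of the form $A_{n+1}\le b^{-\gamma}[A_n^s+(\text{lower order})]$, yielding the finite-volume delocalization criterion that if $A_{n_0}$ is sufficiently small then $\tf(\gb,h)=0$ (the analog of Proposition~\ref{th:fractmom}). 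To verify this smallness at $h=\kappa\gb^{2\alpha/(2\alpha-1)}$ with $\kappa$ small, I would tilt the disorder via $\tilde\bbP=\bbP_{s^{n_0},\lambda}$ (in the notation of \S\ref{sec:aux}) with $\lambda\sim\gb^{1/(2\alpha-1)}\ll\gb$, where $n_0$ is the first time $\langle R_{n_0}\rangle$ exceeds $b$; the inequality $\alpha>1/2$ is used precisely to make the Radon-Nikodym cost $O(\lambda^2 s^{n_0})=O(1)$ while the effective shift of $h$ under $\tilde\bbP$ is of order $\gb\lambda\sim\gb^{2\alpha/(2\alpha-1)}$, large enough to drive the tilted system deeply inside the annealed delocalized region so that $\tilde R_{n_0}$ concentrates near $1$.

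The main technical obstacle will be controlling the error terms $\mathrm{err}(\gb,h,n)$ arising from the $A_j$ variables in \eqref{eq:rec}: unlike in the bond-disorder model of \cite{cf:GLT}, the variance recursion couples $\gD_n$ to $\langle R_n\rangle$ through the factor $\mu(\gb)\langle R_n\rangle^{2(s-1)}$, and during the small-$P_n$ linearization regime one must check that this coupling does not spoil the contraction rate $s/b^2$. Since $\mu(\gb)=O(\gb^2)$ and $\langle R_n\rangle$ remains bounded in the regime that matters, these terms are ultimately absorbed into constants depending on $b,s$, but the bookkeeping is the only genuinely new analytical input compared to the bond-disorder treatment.
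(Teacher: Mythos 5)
Your overall strategy — second-moment/linearisation for the irrelevant-disorder regime and for the upper bound on $h_c(\gb)$, fractional moment plus tilting for the lower bound on $h_c(\gb)$ — is precisely what the paper indicates: the chapter defers this theorem to "an adaptation of the arguments of \cite{cf:GLT}, made simpler than the marginal case $b=\sqrt s$ that is treated in full." Your second-moment computation is correct: writing $Q_n:=\gD_n/\langle R_n\rangle^2$ and expanding \eqref{eq:relv} at $r_n\approx 1$, $h\approx 0$, $\gb\approx 0$ gives $Q_{n+1}\approx\frac{s}{b^2}Q_n+\frac{(s-1)}{b^2}\gga(\gb)$, so the linear coefficient $s/b^2$ contracts iff $b>\sqrt s$, exactly as you say, and the time scale $n_0\sim\log(1/h)/\log(s/b)$ together with the variance growth rate $s/b^2$ produces the exponent $\alpha/(2\alpha-1)=\log(s/b)/\log(s/b^2)$ in the relevant case.

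Where your proposal diverges from the paper — and runs into concrete trouble — is in the fractional-moment side. You propose to carry over the bond-model quantity $A_n:=\bbE[([R_n-1]^+)^\gamma]$ and the identity $[rs+r+s]^+\le [r]^+[s]^++[r]^++[s]^+$. In the site-disorder recursion one has
\[
R_{n+1}-1=\frac{1}{b}\Bigl(\prod_{j=1}^sR_n^{(j)}\prod_{j=1}^{s-1}A_j-1\Bigr),
\]
and when you expand the product you pick up cross terms involving $(A_j-1)$, which is \emph{not} sign-controlled: $A_j$ fluctuates above and below $1$. Bounding $[\cdot]^+$ of the expansion therefore forces you into $\bbE\bigl[|R_n-1|^\gamma\bigr]$-type quantities (not $([R_n-1]^+)^\gamma$), and you then have to separately argue that the lower tail $[1-R_n]^+$ is small — extra bookkeeping that the bond-model argument simply does not have. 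The chapter's own device (Lemma \ref{th:fracmomlem}) avoids this entirely: it works directly with $u_n:=\bbE[R_n^\theta]$, applies $(\sum x_i)^\theta\le\sum x_i^\theta$ to the raw recursion of nonnegative terms to get $u_{n+1}\le(u_n^sa_\theta^{s-1}+(b-1)^\theta)/b^\theta$, and uses the unstable fixed point $x_\theta$ of $x\mapsto(x^s+(b-1)^\theta)/b^\theta$ as the finite-volume delocalisation criterion. That is the cleaner route you should take. Relatedly, your stated condition $\gamma\in(\log s/\log b,1)$ is vacuous: for $b<\sqrt s<s$ one has $\log s/\log b>1$, so the interval is empty. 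The correct requirement — as in Lemma \ref{th:fracmomlem} and the lemma immediately after it, where $\lim_{\theta\to1^-}x_\theta=1$ is proved — is simply that $\theta$ be sufficiently close to $1$; there is no closed-form threshold of the type $\log 2/\log B$ here, because the site-disorder map $g_\theta$ is of degree $s$ rather than $2$. The homogeneous tilt $\lambda\sim\gb^{1/(2\alpha-1)}$ you propose for the change of measure is fine for $b<\sqrt s$; the inhomogeneous (Green-function-weighted) shift of Section~3.2 is only needed at marginality $b=\sqrt s$.
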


As a matter of fact, it can be shown that when $b\ge s$, the annealed free energy grows slower than any power of $(h-h_c(0))$ when $h\to (h_c)_+$. In a sense, $\eqref{eq:anneal}$ holds with $\alpha=0$, and Harris Criterion predicts that $\eqref{eq:simsim}$ holds in that case. One should be able to prove this by using a second moment method approach. For a recent work concerning the case $\alpha=0$ in the non-hierarchical setup, see \cite{cf:AZ_new}.

\subsection{Main result: the marginal case $b=\sqrt{s}$}

The main novelty of this paper is the result we  present now: 
disorder is relevant for the marginal case $b=\sqrt{s}$.
To our knowledge this is the first example in which one can 
establish the character of the disorder in the marginal case.
Moreover, we believe that the method of proof
is sufficiently flexible to be adapted to other contexts.
 
 \medskip
 
\begin{theorem}\label{th:mainres}
When $b=\sqrt{s}$ there exists positive constants $c_1$, $c_2$ and $\gb_0$ (depending on $s$) such that for every $\gb\le \gb_0$ 

\begin{equation}\label{eq:tbd}
 \exp\left(-\frac{c_1}{\gb^2}\right) \le h_c(\gb)\le \exp\left(-\frac{c_2}{\gb}\right).
\end{equation}
 \end{theorem}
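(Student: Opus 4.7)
The strategy splits naturally into two independent parts, corresponding to the two inequalities in \eqref{eq:tbd}. The lower bound $h_c(\gb)\ge\exp(-c_1/\gb^2)$ is the ``easy'' direction: it can be obtained by a second moment argument in the spirit of \cite{cf:Ken}. Writing $W_n:=R_n/r_n$, one observes that $W_n$ is a positive martingale with respect to the natural filtration of the disorder, and directly from \eqref{eq:rec} one derives a closed recursion for $\bbE[W_n^2]$. For $h=0$ and $b=\sqrt{s}$ this recursion yields $\bbE[W_n^2]\le \exp(C\gb^2 n)$ for a suitable constant $C=C(s)$; extending this to $h$ of size $\exp(-c_1/\gb^2)$ with $c_1$ large will show that $W_n$ is bounded in $\bbL^2$ uniformly in $n$, so that $\liminf_n R_n\ge \liminf_n r_n$ in probability, and hence $\bbE[\log R_n]$ stays of order $1$: the free energy vanishes.

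The upper bound $h_c(\gb)\le \exp(-c_2/\gb)$ is the main new contribution and should be proved by a \emph{finite volume fractional moment criterion} combined with a \emph{site-dependent change of measure}, exactly the step where the inhomogeneity of the Green function (see Remark~\ref{rem:Green}) becomes essential. The first step is to establish an analogue of \cite[Proposition 4.1]{cf:GLT}: if there exist $n_0\in\N$ and $\gga\in(\log s/\log b, 1)$ (note that $b=\sqrt s$ forces $\gga$ strictly below $1$ and above $\log s/\log b$ in a non-empty range since $\log s/\log b =2$ would fail\ldots actually for $b=\sqrt s$ one needs $\gga$ such that the nonlinear iteration coming from \eqref{eq:rec} is a contraction) such that $\bbE\bigl[\bigl([R_{n_0}-1]^+\bigr)^{\gga}\bigr]$ is smaller than an explicit threshold, then $\tf(\gb,h)=0$. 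The proof uses \eqref{eq:rec}, the elementary inequality $(a+b)^{\gga}\le a^{\gga}+b^{\gga}$ and the fact that the resulting iteration for $A_n:=\bbE[([R_n-1]^+)^{\gga}]$ is contractive near zero.

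The heart of the proof is then to verify the fractional moment condition for $h=\exp(-c_2/\gb)$ with $c_2$ large. Given such $h$, one chooses $n_0$ so that $r_{n_0}$ is of order one (so that $n_0\sim c_2/(\gb\log s)$), and then the goal is to make $R_{n_0}$ concentrated close to $1$ under some tilted law $\tilde\bbP$. Here comes the crucial ingredient: because of the hierarchical construction, the site $\go_i$ sits at a well-defined ``level'' $\ell(i)\in\{1,\dots,n_0\}$, and is visited by a typical path with probability $b^{-\ell(i)}=s^{-\ell(i)/2}$. I would define $\tilde\bbP$ by tilting each $\go_i$ by a \emph{level-dependent} shift $-\gl_{\ell(i)}:=-\gd\, s^{-(n_0-\ell(i))/2}$, with a small $\gd$ independent of $\gb$. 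The total relative entropy between $\tilde\bbP$ and $\bbP$ on the disorder of $D_{n_0}$ is then of order $\sum_{\ell=1}^{n_0}\gl_\ell^2\, (\text{\# sites at level }\ell)$, and one checks that with the chosen scaling this sum remains of order $\gd^2$, uniformly in $n_0$. On the other hand, under $\tilde\bbP$ each site contributes with $h$ replaced by $h-\gl_{\ell(i)}\gb$, which along the recursion \eqref{eq:rec} propagates exactly in the right way to push $\tilde\bbE[R_{n_0}]$ below $1$ (this is where the site Green function being $b^{-\ell}$ is used in a sharp way, and it explains the improvement from $\exp(-c/\gb^4)$ in \cite{cf:GLT} to $\exp(-c/\gb)$ here). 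A Chebyshev inequality under $\tilde\bbP$ combined with a Radon--Nikodym change of measure (à la \cite[Lemma 5.1]{cf:GLT}) then bounds $\bbP\bigl(R_{n_0}\ge 1+\eta\bigr)$ by an arbitrarily small constant, and H\"older's inequality as in \cite[eq.~(5.23)]{cf:GLT} converts this into the fractional moment estimate.

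\medskip

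\noindent\emph{Main obstacles.} The delicate point is the joint choice of the scaling of $\gl_\ell$ with the level $\ell$ and of $n_0$ with $h$. A naive uniform tilt $\gl_\ell\equiv\gl$ (as in the bond case) would require $n_0\gl^2\lesssim 1$ while the effective shift on the partition function is of order $s^n\gl\gb$ contributions propagated through all levels; this gives at best $\gb^{-4}$ at the exponent, reproducing the bound of \cite{cf:GLT}. The geometric profile $\gl_\ell=\gd s^{-(n_0-\ell)/2}$ is designed so that each level contributes the same amount to the entropy cost while each level also contributes the same order to the bias of $\tilde\bbE[R_{n_0}]$: tracking this cancellation through the nonlinear recursion \eqref{eq:rec} (and not just through the linearized version $r_n\mapsto r_{n+1}\approx (s/b) r_n + (b-1)/b$) is where most of the technical work will be concentrated. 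A secondary difficulty is to prove the finite-volume criterion itself in a form that allows $\gga$ very close to $1$ (which is forced by $\log s/\log b=2$ for $b=\sqrt s$), so that the nonlinear term in the iteration for $A_n$ can indeed be absorbed.
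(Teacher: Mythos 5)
Your proposal correctly identifies the two ingredients the paper uses (a second-moment / variance-control argument, and a fractional-moment argument coupled with a site-dependent change of measure), but it attaches each method to the \emph{wrong} inequality. Recall that $\tf(\gb,\cdot)$ is non-negative and non-decreasing, so to prove an upper bound $h_c(\gb)\le H$ one must exhibit localization, $\tf(\gb,H)>0$, whereas to prove a lower bound $h_c(\gb)\ge H$ one must exhibit delocalization, $\tf(\gb,H)=0$. Controlling the variance of $R_n$ (plus Chebyshev and the deterministic bound $R_n\ge (b-1)/b$) yields $\bbE[\log R_n]>0$ for a well-chosen finite $n$, i.e.\ \emph{localization}; this is how the paper obtains the \emph{upper} bound $h_c(\gb)\le\exp(-c_2/\gb)$. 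Conversely, the fractional-moment criterion (showing $\bbE[R_n^\theta]$ small for some $n$ and $\theta<1$) yields $\tf=0$, i.e.\ \emph{delocalization}, and gives the \emph{lower} bound $h_c(\gb)\ge\exp(-c_1/\gb^2)$. You have swapped the two.

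Beyond the swap, your second-moment argument is not salvageable as written. For $b=\sqrt s$ the recursion for the relative variance $v_n=\gD_n/r_n^2$ has, at $h=0$, linearized slope $se^{(s-1)\gga(\gb)}/b^2=e^{(s-1)\gga(\gb)}>1$, so $v=0$ is an \emph{unstable} fixed point and $W_n=R_n/r_n$ is \emph{never} bounded in $\bbL^2$ for $\gb>0$, no matter how small $h$ is. The paper therefore only controls $v_n$ up to a finite time $n_1\sim\log(1/h)/\log(s/b)$: since $v_n$ grows roughly like $n\gb^2$, keeping $v_{n_1}$ small forces $n_1\lesssim 1/\gb$, hence $h\gtrsim\exp(-c/\gb)$ — this is exactly why that argument produces the exponent $1/\gb$ and not $1/\gb^2$. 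Also, your conclusion ``the free energy vanishes'' from $\bbL^2$-boundedness of $W_n$ with $h>0$ is backwards: $\bbL^2$-boundedness would give $R_n\approx W_\infty\,r_n$ with $r_n$ diverging super-exponentially, hence $\tf>0$, i.e.\ localization.

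Finally, the site-dependent shift you propose has the wrong exponent. With your convention (level $\ell$ means the site appears at generation $\ell$, is visited with probability $b^{-\ell}=s^{-\ell/2}$, and there are $(s-1)s^{\ell-1}$ such sites) the entropy cost of the tilt $\gl_\ell=\gd\,s^{-(n_0-\ell)/2}$ is
\begin{equation}
\sum_{\ell=1}^{n_0}(s-1)s^{\ell-1}\,\gd^2\,s^{\ell-n_0}\,\asymp\,\gd^2\,s^{n_0},
\end{equation}
which blows up with $n_0$, not ``order $\gd^2$ uniformly in $n_0$'' as you claim. The correct scaling, used in the paper, is $\gl_\ell\propto s^{-\ell/2}/\sqrt{n_0}$, i.e.\ proportional to the Green function at level $\ell$ (smaller shift on less-visited sites), with a $1/\sqrt{n_0}$ normalization; one then checks that every level contributes a constant amount both to the entropy ($\sum_\ell(s-1)s^{\ell-1}\cdot\eta^2 s^{-\ell}/n_0=O(\eta^2)$) and to the bias of $\tilde\bbE R_{n_0}$. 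With $n_0\sim 1/(\eta\gb)^2$ this yields $h\sim s^{-n_0}=\exp(-c/\gb^2)$ and hence the lower bound of the theorem. With your proposed $\gl_\ell$ the entropy is catastrophically large, and forcing it to be $O(1)$ (by taking $\gd\lesssim s^{-n_0/2}$) makes the bias of order $\gb$, too weak to push $\tilde\bbE R_{n_0}$ below $1$.
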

 
 \medskip

The two bounds are obtained by very different methods, and they will be proven in the two sections that follow. The upper bound for $h_c(\gb)$ is proven by controlling the variance for a finite volume, and using a finite volume estimate for the energy. This is essentially the same method as the one developed in \cite{cf:GLT} and it is analagous to what is done in \cite{cf:Ken} for the model based on a renewal process. The lower bound is obtained using fractional moments, and a change of measure on the environment, in fact a shift of the values taken by $\go$ (in the Gaussian case). The argument  uses strongly the specific property of the site disorder model, in fact the value of the shift is site dependent, in order to take advantage of  the fact that some sites are more likely to be visited than others.
\medskip
\begin{rem}\rm \label{rem6}
Contrary to non marginal cases, the two bounds we obtain for $h_c(\gb)$ for $b=\sqrt{s}$ do not match, showing that the understanding of the marginal regime is still incomplete. However:
\begin{itemize}
 \item[(1)] The fact that the lower bound given in Theorem \ref{th:mainres} corresponds to the upper bound found in \cite{cf:GLT} is purely accidental and it should not lead to misleading conclusions.
 \item[(2)] In \cite{cf:DHV}, it is predicted that for the bond model, the second moment method gives the right bound for $h_c(\gb)$. In view of this prediction, the upper bound should give the right order for $h_c(\gb)$, although we do not have any mathematics to support this prediction.
\end{itemize}
\end{rem}

\begin{rem}\label{rem:Green2}\rm 
We can now make Remark \ref{rem:Green} more precise. The case $b=\sqrt{s}$ on which we focus is really similar the pinning model defined with the simple random walk. 
Indeed, for integer $b$ (recall definition \eqref{eq:part}) The expected number of contacts with the interface $\sigma$ is
\begin{equation}
\bE_n\left(\sum_{i=1}^{s^n-1} \ind_{\{S_i=\sigma_i\}}\right)=\sum_{i=1}^n b^{-i}(s-1)s^{i-1}\,
=\, \frac{s-1}{s-b}(s/b)^n.
\end{equation}
When $b=\sqrt{s}$, it is proportional to $s^{n/2}$ which is the square root of the length of the system. This is also the case for the random walk in dimension $1$ where
$\bE\left(\sum_{i=1}^n \ind_{\{S_i=0\}}\right)$  behaves like $ n^{1/2}$ and we have thus a clear analogy between site hierarchical model and random walk model. On the other hand Remark \ref{rem6}(2) possibly suggests that $h_c(\gb)-h_c(0)$ behaves like $\exp(-c/\gb^2)$ both in the bond hierarchical and random walk model (and not like $\exp(-c/\gb)$). 
\end{rem}

\begin{rem}\rm   \label{remrev}
Since the first version of the present paper, some substantial progresses have been made in the understanding of pinning model at marginality.
Using a different method, in \cite{cf:GLT_marg}, Giacomin and Toninelli together with the author proved marginal relevance of disorder for both the hierarchical-lattice with bond disorder model and the random-walk based model. While the method we present here turns out to be less performing than the method in \cite{cf:GLT_marg}
on bond disorder and random walk based models, it should be pointed out
that it does improve on the results in \cite{cf:DGLT} (in the direction of the statements in \cite{cf:AZ}). Moreover
our inhomogeneous shifting procedure, with respect to the more complex change of measure in \cite{cf:GLT_marg},
has the advantage of being very flexible and easier to adapt to more general contexts.
On the other hand, it can be shown that adapting the procedure in \cite{cf:GLT_marg} to the
site disorder model would lead to replacing the exponent $2$ in the left-most side of \eqref{eq:tbd} with $4/3$,
but the proof is substantially heavier than the one that we present, for a result that is still comparable on a qualitative level
(the upper bound is not matched).
\end{rem}

\medskip

In the sequel we focus on the proof of the case $b=\sqrt{s}$, but the arguments can be adapted (and they get simpler) to prove also the inequalities of 
Theorem \ref{th:relirel}. We stress once again that the case $b\neq\sqrt{s}$ 
is  detailed in \cite{cf:GLT} 
for the bond model.

\section{The upper bound: control of the variance}
The main result of this section is:

\begin{proposition}\label{th:lvbds}
For any fixed $s$, and $b=\sqrt{s}$ one can find constants $c_s$ and $\gb_0$, such that for all $\gb\le \gb_0$,
\begin{equation}
h_c(\gb)\le \exp\left(-\frac{c_s}{\gb}\right).
\end{equation}
\end{proposition}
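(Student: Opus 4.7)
The plan is to show $\tf(\gb, h) > 0$ at $h = h(\gb) := \exp(-c_s/\gb)$, which by definition of $h_c$ immediately yields the claimed upper bound. The strategy is a second-moment / Chebyshev argument at a carefully tuned scale $n_* = n_*(\gb)$ that exploits the marginal (neutral) character of the variance recursion at $b = \sqrt{s}$. This is morally the same scheme as in Section~3 of the previous chapter (case $B>B_c$), but with a radically different scaling forced by marginality.

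First I would extract the recursions governing $m_n := \bbE R_n$ and $Q_n := \mathrm{Var}(R_n)/m_n^2$ from the distributional identity \eqref{eq:rec}, using independence of the $R_n^{(j)}$ and the $A_j$. Setting $u_n := e^{(s-1)h} m_n^s$ and $\gl(\gb) := \log(M(2\gb)/M(\gb)^2) \sim \gb^2$, a direct computation gives
\begin{equation*}
m_{n+1} \, =\, \frac{u_n + b - 1}{b}, \qquad Q_{n+1} \, =\, \frac{u_n^2}{(u_n+b-1)^2}\left[ e^{(s-1)\gl(\gb)}(1+Q_n)^s - 1 \right].
\end{equation*}
Linearizing at small $P_n := m_n - 1$, $Q_n$, $h$, $\gl$, with $b = \sqrt s$ one finds
\begin{equation*}
P_{n+1} \, \approx \, \sqrt{s}\, P_n + \frac{s-1}{\sqrt{s}} h, \qquad Q_{n+1} \, \approx \, Q_n + \frac{s-1}{s}\gl(\gb).
\end{equation*}
The crucial point is that the coefficient $s/b^2 = 1$ in front of $Q_n$ is neutral, so $Q_n$ grows only linearly (at rate $\sim \gb^2$) as long as $P_n$ remains small, whereas $P_n$ grows geometrically from the forcing term $h$.

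Next I would fix $n_* := \lfloor K/\gb \rfloor$ and $c_s$ (with $K > c_s/\log\sqrt{s}$) so that $h(\gb)\sqrt{s}^{n_*}$ is at least some prescribed threshold. Iterating the approximate $P_n$-recursion in the linear regime $P_n \le 1$ gives $P_n \lesssim h \sqrt{s}^n$, so $P_n$ exits this regime at some index $n_1 \le n_*$ with $n_* - n_1 = O(1)$. Up to $n_1$, the $Q_n$-recursion is well-approximated by its linearization and yields $Q_{n_1} \le C n_1 \gl(\gb) = O(\gb)$. For the last $O(1)$ steps between $n_1$ and $n_*$, where $P_n$ becomes of order one, I would use crude bounds on the multiplicative growth of $Q_n$ (the factor $u_n^2/(u_n+b-1)^2 \le 1$ and a one-step bound of the form $Q_{n+1} \le C_s Q_n + C_s \gl(\gb)$ valid uniformly when $P_n, Q_n$ are bounded) to conclude that $Q_{n_*} \le 1/16$ and $m_{n_*} \ge 4$ for $\gb$ sufficiently small. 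Chebyshev then gives $\bbP(R_{n_*} \ge m_{n_*}/2) \ge 1 - 4 Q_{n_*} \ge 3/4$, and combining with the deterministic lower bound $R_{n_*} \ge (b-1)/b$ (direct from \eqref{eq:rec}), we get $\bbE \log R_{n_*} \ge c(s) > 0$, a constant independent of $\gb$. Finally, invoking a finite-volume-to-free-energy comparison analogous to the lower bound in \eqref{eq:encadre} from the previous chapter — which one proves from the super-multiplicativity of $R_n$ — yields $\tf(\gb, h) \ge s^{-n_*}\bbE \log R_{n_*} - C(s) s^{-n_*} > 0$ once $\gb$ is small enough.

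The main obstacle is the careful tracking of $Q_n$ across the crossover between the linear regime ($P_n \le 1$, where the neutral linearization drives the argument) and the nonlinear regime ($P_n$ of order one, where the factor $u_n^2/(u_n+b-1)^2$ no longer reduces to $1/s$). Because $Q_n$ has grown only to $O(\gb)$ by the time $P_n$ reaches order one, we have a lot of margin, but one must still verify that a bounded number of nonlinear iterations cannot inflate $Q_n$ beyond $O(1)$; this is where the explicit monotonicity/comparison estimates of the recursion are needed. Compared with the analysis of the case $b > \sqrt{s}$ (i.e.\ relevant but non-marginal disorder) in \cite{cf:GLT}, here the linear contraction/expansion ratio for $Q_n$ has been replaced by a pure arithmetic drift, so the critical scale $n_*$ is set by the condition $n_* \gb^2 \ll 1$ rather than $n_* \log(s/b^2) = O(\log(1/\gb))$; this is precisely why the shift of $h_c(\gb)$ is stretched-exponential in $1/\gb$ rather than polynomial.
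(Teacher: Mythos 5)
Your proposal follows essentially the same route as the paper's proof: track the relative variance $v_n = \mathrm{Var}(R_n)/r_n^2$ under the recursion, observe that at $b=\sqrt{s}$ the linear coefficient is exactly one so $v_n$ drifts only additively (by $\sim \gb^2$ per step) while $p_n = r_n - 1$ grows geometrically like $h\,\sqrt{s}^{\,n}$, tune $n \sim |\log h|/\log\sqrt{s} \sim c_s/(\gb\log\sqrt{s})$, push $O(1)$ further steps to inflate $r_n$ while keeping $v_n$ small, apply Chebyshev together with the deterministic bound $R_n \ge (b-1)/b$, and close via the super-multiplicativity of $\bbE\log R_n$. The one place your sketch is slightly terse is in justifying $Q_{n_1} = O(\gb)$: the rigorous version requires the bootstrap "let $n_0$ be the first index with $v_{n_0} > \gb$ and show $n_0 > n_1$", plus the observation that the multiplicative correction $\prod_{i<n_1}(1+c\,p_i)$ is $O(1)$ because $p_i \lesssim \sqrt{s}^{\,i-n_1}$ decays geometrically going backwards from $n_1$ — both of which are exactly what Lemma~\ref{th:dddd} in the paper supplies, and both of which your sketch implicitly has the ingredients for.
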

Such a result is achieved by a second moment computation and for this we have to get some bounds on the first two moments of $R_n$.

\subsection{A lower bound on the growth of $r_n$}
We prove a technical result on the growth of $r_n$ when $h>0$.
For convenience we write $p_n:=(r_n-1)$. We have $p_0=0$ and \eqref{recr} becomes
\begin{equation}
p_{n+1}=\frac{1}{b}\left((1+p_n)^s\exp((s-1)h)-1\right). \label{recrr}
\end{equation}
Note that if $h>0$, $p_n>0$ for all $n \ge 1$, so that \eqref{recrr} implies
\begin{equation}
p_{n+1}\ge \frac{s}{b} p_n+\frac{h(s-1)}{b}\ge \frac{s}{b}p_n+\frac{h}{b}, \label{rect}
\end{equation}
and therefore 
\begin{equation}\label{pnge}
p_n\ge (s/b)^{n-1} (h/b).
\end{equation} 

 








\subsection{An upper bound for the growth the variance}
We prove now a technical result concerning the variance of $R_n$ which is crucial for the proof of Proposition \ref{th:lvbds}.
Before stating the result we introduce some notation and write the induction equation for the variance.
The variance $\Delta_n$ of the random variable $R_n$ is given by the following recursion

\begin{equation}
 \gD_{n+1}=\frac{1}{b^2}\left(\left(\Delta_n+r_n^{2}\right)^s\exp\left((s-1)(\gga(\gb)+2h)\right)-r_n^{2s}\exp(2(s-1)h)\right),
\end{equation}
where $\gga(\gb)=\log M(2\gb)-2\log M(\gb)$ (recall that $M(\gb)=\bbE[\exp(\gb\go_1)])$. Because $\go$ has unit variance, we have $\gga(\gb)\stackrel{\gb\searrow 0}{\sim} \gb^2$.\\
Let $v_n$ denote the relative variance $\gD_n/(r_n)^2$.
We have
\begin{equation} \label{eq:relv}
v_{n+1}=\frac{b^2r_n^{2s}\exp\left((s-1)2h\right)}{\left(r_n^s\exp\left((s-1)h\right)+(b-1)\right)^2}\frac{\exp((s-1)\gga(\gb))(v_n+1)^s-1}{b^2}.
\end{equation}
Let $n_1$ be the smallest integer such that $p_n\ge 1$. 

\begin{lemma}\label{th:dddd}
We can find constants $c_5$ and $\gb_0$ such that for all $\gb<\gb_0$,
for $h=\exp(-c_5/\gb)$,
\begin{equation}
v_{n_1}\le \gb
\end{equation}
\end{lemma}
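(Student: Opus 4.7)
\textbf{Proof strategy for Lemma~\ref{th:dddd}.}
The plan is to analyze the variance recursion \eqref{eq:relv} directly, exploiting the marginality identity $b^2=s$. Combining \eqref{pnge} with the choice $h=\exp(-c_5/\gb)$ gives immediately that $p_n$ eventually exceeds $1$ and moreover
\[
n_1\, \le\, 2+\frac{c_5}{\gb\log b},
\]
so the window on which $v_n$ must be controlled has length of order $1/\gb$. Throughout this window $p_n<1$ (by definition of $n_1$) and $h$ is tiny, hence $y_n:=(1+p_n)^s e^{(s-1)h}$ stays in a compact subset of $[1,\infty)$ that depends only on $s$.

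The key step will be to Taylor-expand the two ingredients in \eqref{eq:relv} around the ``neutral point'' $(y_n,v_n,\gga(\gb))=(1,0,0)$. Since the prefactor $y_n^2/(y_n+b-1)^2$ equals $1/b^2$ at $y_n=1$ and is smooth, and since $e^{(s-1)\gga(\gb)}(v_n+1)^s-1=sv_n+(s-1)\gga(\gb)+\text{higher order}$, combining the two expansions and crucially using $s/b^2=1$ will yield a recursive bound of the shape
\[
v_{n+1}\, \le\, v_n(1+a_n)+C\,\gga(\gb),\qquad a_n\, \le\, C'\bigl(p_n+h+v_n+\gga(\gb)\bigr),
\]
with constants depending only on $s$. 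The coefficient in front of $v_n$ in the linearization being exactly $1$ (and not $s/b^2>1$) is precisely the signature of marginality: it forces $v_n$ to grow only \emph{additively} in $\gga(\gb)\sim\gb^2$ rather than multiplicatively, which is what ultimately makes the result possible.

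The conclusion will come from a Gronwall-type bootstrap. Under the inductive hypothesis $v_k\le\gb$ for $k\le n<n_1$, iterating the above bound gives
\[
v_{n+1}\, \le\, C\,\gga(\gb)\,(n+1)\,\exp\Bigl(\sum_{j=0}^{n}a_j\Bigr).
\]
The sum $\sum_{j<n_1}p_j$ is bounded by a geometric series of ratio $1/b$, using $p_{j+1}\ge bp_j$ (a consequence of \eqref{rect}) together with $p_{n_1-1}<1$; the quantity $n_1h$ is stretched-exponentially small in $1/\gb$; $n_1\gga(\gb)\le Cc_5\gb/\log b$; and the inductive hypothesis yields $\sum_{j\le n}v_j\le n_1\gb\le c_5/\log b$. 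All of these are bounded by constants depending only on $s$ once $c_5$ is, say, at most $\log b$, so $\exp(\sum_j a_j)\le K(s)$ and therefore $v_{n_1}\le C''(s)c_5\gb$. Choosing $c_5$ small enough that $C''(s)c_5\le 1$ closes the induction and yields the claim. The main technical difficulty, and where some care will be needed, is to keep the constants uniform during the final iterations when $p_n$ approaches $1$ and the prefactor $y_n^2/(y_n+b-1)^2$ is no longer close to $1/b^2$: this factor remains bounded by a constant depending only on $s$, so it only contributes $O(1)$ to $\sum a_j$ and does not affect the Gronwall estimate.
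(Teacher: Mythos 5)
Your proposal is correct and follows essentially the same route as the paper's proof: Taylor-expand the variance recursion \eqref{eq:relv} around the fixed point using the marginality identity $s/b^2=1$, iterate to a product (Gronwall) bound, control $\sum p_j$ by a geometric series via $p_{j+1}\ge bp_j$ and $p_{n_1-1}<1$, and use $n_1\gga(\gb)\approx c_5\gb/\log b$ together with a small choice of $c_5$ to close the estimate. The paper organizes the final step as a contradiction (defining $n_0$ as the first index with $v_{n_0}\ge\gb$ and showing $n_0>n_1$) rather than your forward induction, but this is a cosmetic difference.
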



\begin{proof}
We make a Taylor expansion of \eqref{eq:relv} around $r_n=1$, $h=0$, $\gb=0$, $v_n=0$,

\begin{equation}
v_{n+1}=\left(1+O(h+p_n)\right)\left(\frac{(s-1)}{b^2}\gb^2+\frac{s}{b^2}v_n+O(v_n^2)+o(\gb^2)\right).
\end{equation}
From the previous line, we can find a constant $c_4$ such that if $0<p_n\le 1$, $h\le1$, $\gb\le1$ and $v_n\le 1$ we have (recall that $b=\sqrt{s}$)
\begin{equation}
v_{n+1}\le c_4\gb^2+v_n(1+c_4v_n)(1+c_4(h+p_n)).
\end{equation}
By induction, we get that as long if $v_{n-1}\le 1$ and $p_n\le 1$, we have
\begin{equation}
v_n\le n c_4\gb^2\prod_{i=0}^{n-1}(1+c_4v_i)(1+c_4(h+p_i)). 
\end{equation}
By \eqref{pnge} we have $(h+p_i)\le (1+b)p_i$ for all $i\ge 1$. Changing the constant $c_4$ if necessary we get the nicer formula
\begin{equation}
v_n\le n c_4\gb^2\prod_{i=1}^{n-1}(1+c_4v_i)(1+c_4p_i). \label{nnn}
\end{equation}
Let $n_0$ be the smallest integer such that $v_{n_0}\ge \gb$. We have to show that we cannot have $n_0\le n_1$.
If $n_0\le n_1$, \eqref{nnn} implies
\begin{equation}
v_{n_0}\le n_0 c_4 \gb^2 \prod_{i=1}^{n_0-1}(1+c_4v_i)(1+c_4 p_i). \label{eq:pff}
\end{equation}
As $p_{n+1}\ge (s/b)\, p_n$ for all $n\ge 0$ (cf.\ \eqref{rect}), and $p_{n_1-1}\le 1$, we have $p_{n_1-2}\le (b/s)$,\\
 $p_{n_1-3}\le (s/b)^{-2}$ and by induction for all $i\le n_1 -1$
\begin{equation}
 p_i\le (s/b)^{i-(n_1-1)}.
\end{equation}
Therefore
\begin{equation}
\prod_{i=1}^{n_0-1} \left(1+c_4 p_i\right)\le\prod_{i=1}^{n_1-1} \left(1+c_4 p_i\right)\le \prod_{i=1}^{n_1-1} \left(1+c_4 (s/b)^{i-(n_1-1)}\right)\le \prod_{k=0}^{\infty}\left(1+c_4(s/b)^{-k}\right).
\end{equation}
The last term is finite, and is clearly not dependent on $\gb$ or $h$.
Moreover, because $p_n\ge (s/b)^n (h/b)$, it is necessary that 
\begin{equation}
n_1-2\le \frac{\log (b/h)}{\log (s/b)}.
\end{equation}
Replacing $b$ and $h$ with $\sqrt{s}$ and $\exp\left(-\frac{c_5}{\gb}\right)$, we get
\begin{equation}
n_0\le n_1\le \frac{2c_5}{\gb\log s}+1\le \frac{3c_5}{\gb\log s}
\end{equation}
for $\gb$ small enough.
Replacing $n_0$ by this upper bound in \eqref{eq:pff} gives us that $n_0\le n_1$ implies
\begin{equation}
\gb\le v_{n_0}\le \gb \left[\frac{3c_5c_4}{\log s}(1+c_4\gb)^{\frac{3c_5}{\gb\log s}}\prod_{k=0}^{\infty}(1+c_4 (s/b)^{-k})\right].
\end{equation}
If $c_5$ is chosen small enough the right-hand side is smaller than the left--hand side.
\end{proof}

\subsection{Proof of Proposition \ref{th:lvbds}}

Let us choose $c_5$ as in Lemma \ref{th:dddd}, $\gb$ small enough, and $h=\exp(-c_5/\gb)$. We fix some small $\gep>0$.
From Lemma \ref{th:dddd}, we have $v_{n_1}\le \gb$ and $r_{n_1}\ge 2$. The idea of the proof is to consider some $n$ a bit larger that $n_1$ such that $r_n$ is big and $v_n$ is small, in order to get a good bound on $\bbE \log R_n$.

We use \eqref{eq:relv} to get a rough bound on the growth of $v_n$ when $n\ge n_1$,
\begin{equation}
v_{n+1}\le (1+v_n)^s\exp\left[\gga(\gb)(s-1)\right]-1.
\end{equation}
Hence, one can find a constant $c_6$ such that as long as $v_n\le 1$ and $\gb$ small enough, we have
\begin{equation}
v_{n+1}\le c_6(v_n+\gb^2).
\end{equation}
If we choose $c_6>1$, this implies that for any integer $k\ge 0$
\begin{equation}
v_{n_1+k}\le c_6^k(\gb+k\gb^2),
\end{equation}
provided the right--hand side is less than $1$.
\medskip

We fix $k$ large enough, and $\gb_0$ such that  $c_6^k(\gb+k\gb^2)\le \gep$ for all $\gb\le \gb_0$.
From \eqref{rect} and the definition of $n_1$, we have $r_{n_1+k}\ge 1+(s/b)^k$. Let $k$ be a fixed (large) integer, Chebycheff inequality implies that
\begin{equation}
\bbP\left(R_{n_1+k}\le (1/2) r_{n_1+k}\right)\le 4v_{n_1+k}\le 4\gep.
\end{equation}
We write $n_2=n_1+k$. Using the fact that $R_n\ge (b-1)/b$ we have
\begin{multline}
 \bbE\left[\log R_{n_2}\right]\ge  \left[\log r_{n_2}-\log 2\right]\bbP\left(R_{n_2}\ge (1/2) r_{n_2}\right)+\log\frac{b-1}{b}\bbP\left(R_{n_2}\le (1/2)r_{n_2}\right)\\
										\ge  (1-4\gep)\left[\log (1+(s/b)^k)-\log 2\right]-4\gep \log \frac{b}{b-1}.
\end{multline}
By choosing a suitable $k$, this can be made arbitrarily large.
Taking the $\log$ in \eqref{eq:rec} and forgetting the $(b-1)$ term gives
\begin{equation}
\bbE \log R_{n+1}\ge s \bbE\left[\log R_{n}\right]+(s-1)\left(h-\log M(\gb)+\bbE[\go_1]\right)-\log b.
\end{equation} 
Therefore, the sequence $s^{-n}\left[\bbE\left[\log R_n\right] -\frac{\log{b}}{s-1}+h-\log M(\gb)\right]$ is increasing (recall $\bbE[\go_1]=0$).
With our settings we have
\begin{equation}
\bbE\left[\log R_{n_2}\right]> \frac{\log{b}}{s-1}+\log M(\gb)-h,
\end{equation}
therefore 
\begin{equation}
\tf(\gb,h)=\lim_{n\rightarrow\infty}s^{-n}\left[\bbE\left[\log R_n\right] -\frac{\log{b}}{s-1}+h-\log M(\gb)\right]>0.
\end{equation}
\qed

\section[The lower bound]{The lower bound: Fractional moment and improved shifting method}

In this section, we prove the lower bound by improving the method of measure-shifting used in \cite{cf:GLT} and \cite{cf:DGLT}. Instead of considering an homogeneous shift on the environment, we chose to shift more the sites that are more likely to be visited.

\begin{proposition}\label{th:upbd}
When $b=\sqrt{s}$, there exists a constant $c_s$ such that for all $\gb\le 1$ we have
\begin{equation}
h_c\ge \exp(-c_s/\gb^2).
\end{equation}
\end{proposition}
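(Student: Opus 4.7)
The plan is to combine the fractional moment method with a level-dependent Gaussian shift of the environment, exploiting the fact (Remark~\ref{rem:Green}) that in the site model the Green function on $\sigma$ is inhomogeneous: a site at the $\ell$-th construction level is visited by $S$ with probability $b^{-\ell}$. I will fix $h=\exp(-c_s/\gb^2)$ with $c_s$ to be chosen large, some $\gamma\in(0,1)$ close to~$1$, and $n_0=\lfloor c/(\gb^2\log b)\rfloor$ for a constant $c$ related to $c_s$.

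Applying $(x+y)^\gamma\le x^\gamma+y^\gamma$ to the recursion \eqref{eq:rec} and taking expectations yields
\begin{equation}
A_{n+1}\,\le\,b^{-\gamma}\bigl(A_n^{\,s}\,\bbE[A^\gamma]^{s-1}+(b-1)^\gamma\bigr),\qquad A_n:=\bbE[R_n^\gamma].
\end{equation}
For our parameter regime this one-dimensional map admits an unstable fixed point $A_*\in(0,1)$ (close to $1$ for small $h,\gb$) with a stable fixed point strictly below it, so it suffices to show $A_{n_0}<A_*$ for some $n_0$: then $A_n$ stays bounded in $n$, and the elementary estimate $\log R_n\le(R_n^\gamma-1)/\gamma$ gives $\bbE[\log R_n]=O(1)$, whence $\tf(\gb,h)=0$ and $h_c(\gb)\ge h$.

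To push $A_{n_0}$ below $A_*$ I introduce a tilted law $\widetilde{\bbP}$ under which the mean of $\go_i$ is $-\gl_{\ell(i)}$, where $\ell(i)\in\{1,\dots,n_0\}$ is the hierarchical level of the $i$-th site on $\sigma$ and $\gl_\ell:=v\,b^{1-\ell}$ for a $v$ of order $\gb$ to be optimized. Thus macroscopic (low-$\ell$) sites receive a larger shift, in direct proportion to their visit probability. Hölder's inequality gives
\begin{equation}
\bbE[R_{n_0}^\gamma]\,\le\,\widetilde{\bbE}[R_{n_0}]^{\gamma}\,\bbE\!\left[\left(\frac{\dd\bbP}{\dd\widetilde{\bbP}}\right)^{\!\gamma/(1-\gamma)}\right]^{1-\gamma},
\end{equation}
and for Gaussian~$\go$ the Radon--Nikodym cost factorizes over sites and equals $\exp\!\bigl(\gamma(s-1) v^2 n_0/(2(1-\gamma))\bigr)$. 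Since $\sigma$ contains $(s-1)s^{\ell-1}$ sites at level~$\ell$, the \emph{marginal} identity $b^2=s$ makes $\sum_\ell(s-1)s^{\ell-1}\gl_\ell^2$ telescope to the merely linear quantity $(s-1)v^2 n_0$, so with $n_0\asymp 1/\gb^2$ and $v\asymp\gb$ this cost stays $O(1)$ as $\gb\to 0$. For the other factor, $\widetilde{\bbE}[R_{n_0}]$ is the annealed partition function of an inhomogeneous pinning model with effective level-$\ell$ pinning $\tilde h_\ell:=h-\gb\gl_\ell$; $n_0$ is chosen precisely so that $\gb\gl_{n_0}\asymp h$, ensuring $\tilde h_\ell\le 0$ for every $\ell\le n_0$. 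Iterating the inhomogeneous analogue of \eqref{recr} from the base case~$1$ and linearizing around the fixed point~$1$ will give $\widetilde{\bbE}[R_{n_0}]\le 1-\eta_0$ with a quantitative $\eta_0>0$ depending on $v$; optimization in $v$, together with a sufficiently large choice of $c_s$, will then force $\bbE[R_{n_0}^\gamma]<A_*$.

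The hard part will be the control of the inhomogeneous annealed recursion: it has to be iterated $\sim 1/\gb^2$ times while $\tilde h_\ell$ decreases in magnitude from $\asymp\gb^2$ at level~$1$ down to $\asymp h$ at level~$n_0$, and the linearization around~$1$ is marginal precisely in the regime where the method is expected to yield a useful bound, so nonlinear corrections will have to be handled near the final steps. The marginal identity $b=\sqrt{s}$ plays an essential role throughout: it is what makes both the linear coefficient $s/b=\sqrt{s}$ of the annealed recursion and the summation of Gaussian costs balance exactly, producing the stretched exponential rate $\exp(-c_s/\gb^2)$ for the lower bound on $h_c(\gb)$.
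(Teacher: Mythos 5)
Your proposal takes essentially the same route as the paper's proof: a fractional-moment finite-volume criterion, a H\"older change-of-measure with a level-dependent Gaussian shift proportional to the inhomogeneous site Green function $b^{-\ell}$ (whose Radon--Nikodym cost telescopes to $O(1)$ precisely because $b^2=s$), and an analysis of the resulting shifted annealed recursion over $\sim 1/\gb^2$ generations. The step you flag as the hard part is indeed where the paper's work lies — it controls $g_i=1-\tilde r_i$ by passing to the rescaled quantity $q_i=(s/b)^{n-i}g_i$, which satisfies a near-linear recursion whose multiplicative corrections $\prod_k(1-c_1 g_k)$ stay bounded — so your outline is sound and matches the paper's strategy.
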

\subsection{Fractional moment}

\begin{lemma}\label{th:fracmomlem}
Fix $\theta\in(0,1)$ and set
\begin{equation}
x_{\theta}=\max\left\{ x \ \Big| \ \frac{x^s+(b-1)^{\theta}}{b^{\theta}}\le x\right\}.
\end{equation}
(Note that $x_{\theta}$ is defined whenever $\theta$ is close enough to $1$. When it is defined we have $x_{\theta}<1$ as the inequality cannot be fulfilled for $x\ge 1$.)\\
 If $\bbE\left[\exp\left(\theta(\gb\go_1-\log M(\gb)+h)\right)\right]\le 1$, and if there exists $n$ such that 
 $\bbE[R_n^{\theta}]\le x_\theta$, then $\tf(\gb,h)=0$.
\end{lemma}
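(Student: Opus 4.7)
The approach I would take is the classical fractional moment sub-multiplicativity argument, adapted to the hierarchical recursion \eqref{eq:rec}. The key algebraic ingredient is the inequality $(a+c)^\theta \le a^\theta + c^\theta$, valid for $a,c \ge 0$ and $\theta \in (0,1]$ (sub-additivity of $x \mapsto x^\theta$). Applied to \eqref{eq:rec}, it yields
\begin{equation}
R_{n+1}^\theta \, \le \, \frac{1}{b^\theta}\left[\left(\prod_{j=1}^s R_n^{(j)} \prod_{j=1}^{s-1} A_j\right)^\theta + (b-1)^\theta\right].
\end{equation}
Setting $u_n := \bbE[R_n^\theta]$ and using the mutual independence of the variables $R_n^{(j)}$ and $A_j$, together with the assumption $\bbE[\exp(\theta(\gb\go_1-\log M(\gb)+h))] = \bbE[A^\theta] \le 1$, I would obtain the key scalar recursive inequality
\begin{equation}
u_{n+1} \, \le \, \frac{u_n^s \, \bbE[A^\theta]^{s-1} + (b-1)^\theta}{b^\theta} \, \le \, \frac{u_n^s + (b-1)^\theta}{b^\theta} \, =:\, \Phi(u_n).
\end{equation}

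The next step is to iterate this inequality starting from the assumed bound $u_n \le x_\theta$. Since $\Phi$ is increasing on $[0,\infty)$ and, by the very definition of $x_\theta$, satisfies $\Phi(x_\theta) \le x_\theta$, a trivial induction gives $u_{n+k} \le x_\theta$ for every $k \ge 0$. Now $x_\theta < 1$ (as noted in the statement, the defining inequality cannot hold for $x \ge 1$), so by Jensen's inequality
\begin{equation}
\bbE[\log R_{n+k}] \, = \, \frac{1}{\theta}\bbE[\log R_{n+k}^\theta] \, \le \, \frac{1}{\theta} \log u_{n+k} \, \le \, \frac{\log x_\theta}{\theta} \, < \, 0,
\end{equation}
uniformly in $k$. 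Dividing by $s^{n+k}$ and letting $k \to \infty$ shows that $\tf(\gb,h) = \lim_k s^{-(n+k)} \bbE[\log R_{n+k}] \le 0$, and combining with the already established non-negativity of the free energy gives $\tf(\gb,h) = 0$.

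There is essentially no serious obstacle here: the only subtlety worth double-checking is that $x_\theta$ is well defined under the implicit assumption of the lemma (that is, that $\theta$ is close enough to $1$ for the set $\{x : \Phi(x) \le x\}$ to be non-empty), a point already acknowledged in the statement. The real work of Proposition \ref{th:upbd} is not this lemma itself but rather the construction, via a site-inhomogeneous tilt of the environment, of an integer $n$ for which $\bbE[R_n^\theta] \le x_\theta$ can be verified at the desired value of $h$; the present lemma merely packages the elementary reduction from that finite-volume estimate to the vanishing of the free energy.
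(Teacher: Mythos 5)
Your proof is correct and follows the same route as the paper: sub-additivity of $x\mapsto x^\theta$ applied to the recursion \eqref{eq:rec}, independence to factorize the expectation, the hypothesis $a_\theta:=\bbE[A^\theta]\le 1$ to drop the $A$-factor, and monotonicity of the map $x\mapsto (x^s+(b-1)^\theta)/b^\theta$ together with $\Phi(x_\theta)\le x_\theta$ to propagate the bound $u_i\le x_\theta$, concluding by Jensen. The only cosmetic difference is that you bound each $\bbE[\log R_{n+k}]$ by the negative constant $\theta^{-1}\log x_\theta$ before dividing by $s^{n+k}$, whereas the paper simply observes that $u_n$ stays bounded so $s^{-n}\log u_n\to 0$; these are equivalent.
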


\begin{proof}

Let $0<\theta<1$ be fixed, an $u_n=\bbE[ R_n^{\theta}]$ denotes the fractional moment of $R_n$.
We write $a_{\theta}=\bbE[ A_1^{\theta}]=\bbE\left[\exp\left(\theta(\gb\go_1-\log M(\gb)+h)\right)\right]$.
Using the basic inequality $\left(\sum x_i\right)^{\theta}\le \sum x_i^{\theta}$ and averaging with respect to $\bbP$,
we get from \eqref{eq:rec}

\begin{equation}
 u_{i+1}\le \frac{u_i^s a_{\theta}^{s-1}+(b-1)^{\theta}}{b^{\theta}}.
\end{equation}
With our assumption, $a_{\theta}\le 1$, so that
\begin{equation}
 u_{i+1}\le \frac{u_i^s+(b-1)^{\theta}}{b^{\theta}}.
\end{equation}
The map 
\begin{equation}
g_\theta:\ x\mapsto \frac{x^s+(b-1)^{\theta}}{b^{\theta}}\quad \text{ for } x\ge 0 \label{eq:gg},
\end{equation}
 is non-decreasing so that if $u_n\le x_\theta$ for some $n$, then $u_i\le x_\theta$ for every $i\ge n$.
In this case the free energy is $0$ as
\begin{equation}
\tf(\gb,h)=\lim_{n\to\infty}s^{-n}\bbE \log R_n\le \liminf_{n\to\infty}\frac{1}{\theta s^n}\log u_n=0,
\end{equation}
where the last inequality is just Jensen inequality.
\end{proof}

We add a second result that guaranties that the previous lemma is useful.

\begin{lemma}
\begin{equation}
\lim_{\theta\rightarrow 1^{-}} x_{\theta}=1.
\end{equation}
\end{lemma}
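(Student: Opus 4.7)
The plan is to study the function $g_\theta$ defined in \eqref{eq:gg} through elementary real analysis, exploiting that at $\theta=1$ we have an explicit factorization with $x=1$ as a simple root of $g_1(x)-x$. The map $g_\theta$ is smooth, strictly increasing and strictly convex on $[0,\infty)$ (since $s\ge 2$), with $g_\theta(0)=((b-1)/b)^\theta>0$. Consequently $h_\theta(x):=g_\theta(x)-x$ is a strictly convex function with $h_\theta(0)>0$ and $h_\theta(x)\to+\infty$ as $x\to\infty$, so its zero set contains at most two points. When nonempty, $\{x\ge 0:g_\theta(x)\le x\}$ is therefore a closed interval whose right endpoint is precisely $x_\theta$.

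Next I would analyse the limit case $\theta=1$: one has $h_1(x)=(x^s-bx+b-1)/b$, which vanishes at $x=1$ with $h_1'(1)=(s-b)/b>0$ (recall $b=\sqrt{s}<s$ throughout this section). Hence $1$ is a simple zero of $h_1$, and by convexity together with $h_1(0)=(b-1)/b>0$ there is exactly one other, also simple, zero $y^\ast\in(0,1)$, and $x_1=1$.

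The conclusion then follows from a continuity/implicit-function argument. Since $(\theta,x)\mapsto h_\theta(x)$ is jointly smooth, and both zeros of $h_1$ are simple (i.e. $\partial_x h_1\neq 0$ at them), the implicit function theorem produces, for all $\theta$ in a neighbourhood of $1$, two smooth branches of zeros $y_\theta^-,y_\theta^+$ with $y_\theta^-\to y^\ast$ and $y_\theta^+\to 1$ as $\theta\to 1$. For such $\theta$ the set $\{h_\theta\le 0\}=[y_\theta^-,y_\theta^+]$ is nonempty, so $x_\theta=y_\theta^+\to 1$, as required.

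I do not expect any substantive obstacle: the only point deserving care is to rule out that $x_\theta$ might come from somewhere other than the branch emanating from $1$, but this is immediate from the convexity of $h_\theta$, which forces the set $\{h_\theta\le0\}$ to be an interval with at most two boundary points, both of which are accounted for by the two branches given by the implicit function theorem. Alternatively, if one prefers to avoid the implicit function theorem, the same conclusion can be obtained in a quantitative way by noting that for any small $\varepsilon>0$ one has $h_1(1-\varepsilon)=-(s-b)\varepsilon/b+O(\varepsilon^2)<0$, so by continuity of $\theta\mapsto h_\theta(1-\varepsilon)$ at $\theta=1$ the same strict inequality persists for $\theta$ close enough to $1$, giving $x_\theta\ge 1-\varepsilon$; and the subadditivity $(1+(b-1))^\theta\le 1+(b-1)^\theta$ valid for $\theta\in(0,1]$ yields $h_\theta(1)\ge 0$, so $x_\theta\le 1$, hence $x_\theta\to 1$.
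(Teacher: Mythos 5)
Your proof is correct, and the ``quantitative alternative'' you sketch in the final sentences is precisely the paper's argument: one checks that $g_1(1-\varepsilon)=\bigl((1-\varepsilon)^s+b-1\bigr)/b<1-\varepsilon$ for $\varepsilon$ small (this reduces, after expanding, to $s>b$, which holds since $b=\sqrt{s}$ and $s\ge2$), so by continuity of $\theta\mapsto g_\theta(1-\varepsilon)$ one has $g_\theta(1-\varepsilon)<1-\varepsilon$, hence $x_\theta\ge1-\varepsilon$, for $\theta$ close enough to $1$; this is then combined with the bound $x_\theta<1$ recorded parenthetically in the statement of the fractional-moment lemma. Your primary route via the implicit function theorem is a genuinely different, but heavier, way to reach the same conclusion: it establishes smooth dependence of both zero branches of $h_\theta$ on $\theta$, which is more than the lemma needs (only continuity of $g_\theta$ at the single point $1-\varepsilon$ is required), though it does have the merit of making the interval structure of $\{x:g_\theta(x)\le x\}$ and the simplicity of the zero of $h_1$ at $x=1$ fully explicit, facts the paper leaves implicit. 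One small point about your closing sentence: $h_\theta(1)\ge0$ from subadditivity does not by itself give $x_\theta\le1$---it only excludes $1$ from the interior of the interval $[y_\theta^-,y_\theta^+]$; you must pair it with the already-established $y_\theta^-<1-\varepsilon<1$ to locate $1$ to the right of that interval, or, as the paper does, observe directly that $g_\theta(x)>x$ for all $x\ge1$ (since $g_\theta(1)>1$ strictly for $\theta<1$ and $g_\theta'(x)\ge s/b^\theta>1$ on $[1,\infty)$). This is a presentational wrinkle, not a gap.
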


\begin{proof}
One just has to check that 
\begin{equation}
\lim_{\theta\rightarrow 1^{-}}g_{\theta}(1-\gep)=\frac{(1-\gep)^s+(b-1)}{b}<1-\gep,
\end{equation}
if $\gep$ is small enough.
\end{proof}

\subsection{Improved shifting method}

In this section, we prove Proposition \ref{th:upbd}, by estimating the fractional moment of $R_n$ by making a measure change on the environnement, making $\go$ lower. We simply use H\"older inequality to estimate the cost of the change of measure.
For simplicity we first write the proof for gaussian environment.

\begin{proof}[Proof of Proposition \ref{th:upbd}, Gaussian case]

Let $\gep>0$ be small (we will fix conditions on it later). We chose $\theta<1$ (close to 1) such that $x_{\theta}\ge 1-\gep$,
and some small $\eta>0$ whose value will depend on $\gep$. We consider (for $\gb\le 1$), the system of size $n=\frac{1}{\eta^2\gb^2}$ (without loss of generality, we can suppose it to be an integer) and $h=s^{-n}=\exp\left(-\frac{\ln s}{\eta^2\gb^2}\right)$. One can check that the condition $\bbE\left[\exp\left(\theta(\gb\go_1-\log M(\gb)+h)\right)\right]\le 1$ is fulfilled for $\gb$ small enough.

We define sets $V_i$ for $0\le i< n$ by
\begin{equation}
V_i=\left\{ j\in\{1,\dots,s^n-1\} \text{ such that } s^i \text{ divides } j \text{ and } s^{j+1} \text{does not divide } j \right\}  \label{eq:vi}
\end{equation}
Note that $|V_i|=(s-1)s^{n-1-i}$.

We slightly modify the measure $\bbP$ of the environment by shifting the value of $\go_j$ for $j\in V_i$, $i< n$ by $\frac{\eta s^{i/2}}{s^{n/2}\sqrt{n}}=\gd_i$ and we call $\tilde \bbP$ the modified measure. Notice that $h$ is small compared to any of the $\gd_i$.
The density of this measure is

\begin{equation}
 \frac{\dd \tilde\bbP}{\dd \bbP}(\go)=\exp\left(-\sum_{0\le i \le n-1}\sum_{j\in V_i}\left(\gd_i\go_j+\frac{\gd_i^2}{2}\right)\right).
\end{equation}
In order to estimate $u_n$ we use H\"older inequality
\begin{equation}
\bbE [ R_n^{\theta}] = \tilde\bbE\left[\frac{\dd\bbP}{\dd \tilde \bbP}R_n^{\theta} \right]\le\left( \tilde\bbE \left[\left(\frac{\dd\bbP}{\dd \tilde \bbP}\right)^{1/(1-\theta)}\right]\right)^{1-\theta} \left(\tilde \bbE\left[ R_n \right]\right)^{\theta}. \label{eq:hld}
\end{equation}
The first term is computed explicitly with the expression of the density and it is equal to
\begin{multline}
  \left( \tilde\bbE \left[\left(\frac{\dd\bbP}{\dd\tilde \bbP}\right)^ 
{1/(1-\theta)}\right]\right)^{1-\theta}\, =\,
\exp\left(\frac{\theta}{2 (1-\theta)}\sum_{i=0}^{n-1} |V_i|\gd_i^2 
\right)\, \\=
\exp\left(\frac{\theta}{2(1-\theta)}\sum_{i=0}^{n-1}(s-1)s^{n-i-1} 
\frac{\eta^2 s^i}{s^n n}\right)\, =\, \exp\left(\frac{\eta^2\theta  
(s-1)}{2(1-\theta)s}\right), \label{eq:deg}
  \end{multline}
  and we can choose $\eta$ to be such that the right-hand side is less than $1+\gep/2$.
 
Therefore in order to get an upper bound for $u_n$ we have to estimate $\tilde\bbE [R_n]$.
To do this, we write down the recursion giving $\tilde{r}_i=\tilde\bbE [R_i^{(1)}]$, for $i\le n-1$.
We have  $\tilde r_0=1$ and
\begin{equation}
 \tilde r_{i+1}\, =\, \frac{\tilde r_{i}^s\exp\left[(s-1)(-\gb\gd_{i}+h)\right]+(b-1)}{b}\label{eq:shiftrec}.
\end{equation}

Let us look at the evolution of $g_i=1-\tilde r_i\ge 0$ for $i\le n$.
We have
\begin{equation}\label{okok}\begin{split}
g_{i+1}&=\frac{1}{b}\left[1-(1-g_i)^{s}\exp\left((s-1)(-\gb\gd_{i}+h)\right)\right]\\
       &=\frac{1}{b}\left[1-(1-g_i)^{s}+(1-g_i)^{s}\left(1-\exp\left((s-1)(-\gb\gd_{i}+h)\right)\right)\right].
\end{split}\end{equation}
We can find $c_1$ such that $1-(1-g)^s\ge s(g- c_1 g^2)$ for all $0\le g\le 1$.
By choosing $\beta$ sufficiently small, the term in the exponential is small enough and $h$ is negligible compared to $\gb\gd_i$ so that when $g_i\le 2\gep$
\begin{equation} 
(1-g_i)^{s}\left(1-\exp\left((s-1)(-\gb\gd_{i}+h)\right)\right)\, \ge\, \frac{\gb\gd_i}{2}.
\end{equation}
Therefore, as long as $g_i\le 2\gep$, we have
\begin{equation} \label{eq:pii}
g_{i+1}\ge \frac{s}{b}(g_i-c_1g_i^2)+\frac{\gb\gd_i}{2b}.
\end{equation}

\begin{lemma}
If $\gep$ is chosen small enough (not depending on $\gb$), $g_{n}\ge 2\gep$.
\end{lemma}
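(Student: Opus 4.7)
My plan is to argue by contradiction: assume $g_n < 2\gep$ and show that the linearized lower bound \eqref{eq:pii}, together with the explicit form of the shifts $\gd_i$, forces $g_n \ge 1/(2s) - O(\gep)$, contradicting the assumption once $\gep$ is chosen small (depending only on $s$).

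The first step is a monotonicity reduction. The iteration map
$F_i(g) := (1-(1-g)^s\exp((s-1)(h-\gb\gd_i)))/b$
(which satisfies $g_{i+1}=F_i(g_i)$) is strictly increasing in $g$, and a Taylor expansion of $1-(1-2\gep)^s$ combined with $s/b=b=\sqrt{s}$ gives $F_i(2\gep)\ge 2b\gep(1-O(\gep))>2\gep$ for $\gep$ smaller than a threshold depending only on $s$. Consequently, if $g_{i_0}\ge 2\gep$ at any intermediate $i_0\le n$, the iteration propagates the inequality all the way to $g_n$, contradicting $g_n<2\gep$. Hence I may assume $g_i<2\gep$ for every $i\le n$, so the lower bound \eqref{eq:pii} is valid at every step.

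Next, I introduce the linearized comparison $\tilde g_{i+1}=b\tilde g_i+\gb\gd_i/(2b)$ with $\tilde g_0=0$. Using $b=\sqrt{s}$, $\gd_i=\eta s^{i/2}/(s^{n/2}\sqrt n)$ and the identity $\gb\eta\sqrt n=1$, a direct computation yields $\tilde g_i=(i/n)\,b^{i-n}/(2b^2)$, and in particular $\tilde g_n=1/(2s)$. A parallel upper bound recursion (from $g_{i+1}\le bg_i+(s-1)\gb\gd_i/b$, obtained using $1-(1-g)^s\le sg$ and $1-e^{-y}\le y$) yields $g_i\le 2(s-1)\tilde g_i$ for every $i$. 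The key estimate is on the error $e_i:=\tilde g_i-g_i$, which satisfies $e_0=0$ and $e_{i+1}\le be_i+bc_1 g_i^2$, so
\[
e_n \le bc_1\sum_{i=0}^{n-1}b^{n-1-i}g_i^2.
\]
I would split this sum at an index $i^*:=n-K$ to be optimized. For $i\le i^*$ the bound $g_i\le 2(s-1)\tilde g_i$ gives $g_i^2\le (s-1)^2 b^{2(i-n)}/s^2$, whose contribution is of order $b^{-K}$; for $i>i^*$ the bound $g_i\le 2\gep$ gives $g_i^2\le 4\gep^2$, whose contribution is of order $b^K\gep^2$. Balancing by $b^K\asymp 1/\gep$ produces $e_n\le C(s)\,\gep$ for an explicit constant $C(s)$ depending only on $s$, whence
\[
g_n \ge \tilde g_n - e_n \ge \frac{1}{2s}-C(s)\gep \,>\, 2\gep
\]
whenever $\gep<\gep_0(s):=1/(2s(2+C(s)))$, contradicting $g_n<2\gep$ and completing the proof.

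The main obstacle is the sharpness required in the last step: the global bound $g_i\le 2(s-1)\tilde g_i$ alone produces a correction of order $(s-1)^2/(s^2(b-1))$, which for small $s$ (e.g.\ $s=2$) already exceeds the leading term $1/(2s)$, whereas the uniform bound $g_i\le 2\gep$ alone leads to a catastrophic amplification by $b^n$ of the error across the $n\to\infty$ iterations. The splitting-and-optimization argument combines the two estimates in complementary regimes and is what allows the correction to vanish \emph{linearly} in $\gep$ uniformly in $n$.
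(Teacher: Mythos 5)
Your proof is correct but follows a genuinely different route from the paper's. Both arguments proceed by contradiction from $g_n\le 2\gep$ and both rely on the linearized lower recursion \eqref{eq:pii}, but the nonlinear correction is tamed differently. The paper iterates \emph{backward} from $g_n\le 2\gep$ to obtain the exponentially decaying envelope $g_k\le 2\gep\, s^{(k-n)/4}$; this single bound both validates \eqref{eq:pii} for all $k$ and makes the product $\prod_{k<n}(1-c_1 g_k)\ge 1/2$ converge uniformly in $n$, after which the rescaled variable $q_i=s^{(n-i)/2}g_i$ telescopes directly to $g_n\ge 1/(4b)$. You instead use forward monotonicity of the iteration map to get $g_i<2\gep$ for all $i$, derive an $\gep$-independent decaying envelope $g_i\le 2(s-1)\tilde g_i\asymp b^{i-n}$ from the linearized \emph{upper} recursion, and track the error $e_i=\tilde g_i-g_i$ against the explicit linearized solution $\tilde g_n=1/(2s)$. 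Your split-and-balance of $\sum_i b^{n-1-i}g_i^2$ is needed precisely because, as you observe, neither bound alone works; in the paper's proof the backward-propagated envelope carries the small factor $\gep$ and the exponential decay simultaneously, so a single infinite-product bound suffices, whereas you must combine two complementary bounds (decaying but $\gep$-independent near $i=0$, $O(\gep)$ but non-decaying near $i=n$). Two minor remarks. First, your balancing choice $b^K\asymp 1/\gep$ tacitly requires $K\le n$, i.e.\ $\gb$ small enough given $\gep$; this is harmless in context (the proof already takes $\gb\le\gb_0$ with $\gb_0$ chosen after $\gep$) but is an extra requirement absent from the paper's version, whose infinite-product bound applies for every $n$. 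Second, your computed value $\tilde g_n=1/(2s)$ is consistent with the additive drift in the $q_i$ recursion being $\gb\eta/(2b^2\sqrt n)$ rather than the $\gb\eta/(2b\sqrt n)$ displayed in the paper --- an apparent factor-of-$b$ typo that affects only the constant $1/(4b)$ vs.\ $1/(4b^2)$ and not the validity of either argument.
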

\begin{proof}
Suppose that $g_n\le 2\gep$. Equation \eqref{okok} implies that
\begin{equation}
 g_i\ge \frac{1}{b}[1-(1-g_{i-1})^s], \quad \forall i\le n.
\end{equation}
This is equivalent to
\begin{equation}
 g_{i-1}\le 1-(1-b g_i)^{1/s},
\end{equation}
so that if $g_i\le 2\gep$ and $\gep$ is small enough,
\begin{equation}
 g_{i-1}\le s^{-1/4}g_{i}.
\end{equation}
Using the above equation inductively from $i=n$ to $k+1$, one gets 

\begin{equation}\label{s1/4}
g_k\le 2\gep s^{(k-n)/4}, \quad \forall k\le n.
\end{equation}

We write $q_i=s^{(n-i)/2}g_i=\frac{s^{n-i}}{b^{n-i}}g_i$, note that $q_0=0$.
When $g_i\le 2\gep$, from \eqref{eq:pii} and the definition of $q_i$, we have
\begin{equation}
q_{i+1}\ge q_i(1-c_1 g_i)+\frac{\gb\eta}{2b\sqrt{n}}.
\end{equation}
Using the fact that $g_k\le 2\gep$ for all $k\le n$, and the above inequality for $1\le k\le n-1$ we get that
\begin{equation}
 q_n\ge \frac{n\gb\eta}{2b\sqrt{n}}\prod_{k=0}^{n-1} (1-c_1 g_k).
\end{equation}
Now to we use \eqref{s1/4} to get that
  \begin{equation}
  \prod_{k=0}^{n-1} \left(1-c_1 g_k\right)\ge \prod_{k=0}^{n-1} \left(1-c_1 2\gep s^{(k-n)/4}\right)\ge \prod_{i=1}^{\infty}\left(1-c_1 2\gep s^{-i/4}\right) \ge 1/2,
  \end{equation}
where the last inequality holds if $\gep$ is chosen small enough.
Hence
 
\begin{equation}
 g_n=q_n \ge \frac{n\gb\eta}{2b\sqrt{n}}\prod_{k=0}^{n-1} (1-c_1 g_k)\ge \frac{\gb\eta\sqrt{n}}{4b}=\frac{1}{4b},
\end{equation}
and from that we infer that $g_n\ge 2\gep$ if $\gep<1/(8b)$.
\end{proof}

We just proved that $\tilde r_n \le 1-2\gep$. Using \eqref{eq:hld}, and the bound we have on \eqref{eq:deg} we get
\begin{equation}
u_n\le (1-2\gep)^{\theta}(1+\gep/2)\le 1-\gep\le x_\theta.
\end{equation}
The last inequality is just comes from our choice for $\theta$, the second inequality is true if $\gep$ is small, and $\theta$ sufficiently close to one. The result follows from Lemma \ref{th:fracmomlem}
\end{proof}

\begin{proof}[Proof of Proposition \ref{th:upbd}, general non-gaussian case]

When the environment is\\ non--gaussian,
one can generalize the preceding proof by making a tilt on the measure instead of a shift. This means that the change of measure becomes
\begin{equation}
\frac{\dd \tilde \bbP}{\dd \bbP}(\go)=\exp\left(-\sum_{0\le i \le n-1}\sum_{j\in V_i}\left(\gd_i\go_j+\log M(-\gd_i)\right)\right).
\end{equation}

Therefore the term giving the cost of the change of measure (cf. \eqref{eq:hld}) is

\begin{equation} \begin{split}
 \left( \tilde\bbE \left[\left(\frac{\dd\bbP}{\dd\tilde \bbP}\right)^{1/(1-\theta)}\right]\right)^{1-\theta}&=
\exp\left((1-\theta)\sum_{i=0}^{n-1} |V_i|\left[\log M\left(\frac{\theta}{1-\theta}\gd_i\right)+\frac{\theta}{1-\theta}\log M(-\gd_i)\right]\right)\\
&\le \exp\left(\frac{\theta}{(1-\theta)}\sum_{i=0}^{n-1}(s-1)s^{n-i-1}\frac{\eta^2 s^i}{s^n n}\right)=\exp\left(\frac{\eta^2\theta (s-1)}{(1-\theta)s}\right).
 \end{split}\end{equation}

Where the inequality is obtained by using that $\log M(\gb)\sim \gb^2/2$, so that for $x$ small enough,
$\log M (x)\le x^2$.

The second point where we have to look is the estimate of $\tilde r_n$. In fact, the term $\exp((s-1)(-\gb\gd_i+h))$ should be replaced by
\begin{equation}
\exp\left[(s-1)\left(\log M(\gb-\gd_i)-\log M(\gb)-\log M(-\gd_i)+h\right)\right].
\end{equation}
Knowing the behavior of the convex function $\log M(\cdot)$ near zero, it is not difficult to see that this is less than $\exp(-c_6\gb\gd_i+(s-1)h)$ for some constant $c_6$, which is enough for the rest of the computations.
\end{proof}

\chapter[Disorder relevance for pinning models]{Fractional moment bounds  and disorder relevance for pinning models}\label{CHAPDGLT}
\section{Introduction}

Pinning/wetting models with quenched disorder describe the random
interaction between a directed polymer and a one-dimensional {\sl
defect line}. In absence of interaction, a typical polymer
configuration is given by $\{(n,S_n)\}_{n\ge0}$, where
$\{S_n\}_{n\ge0}$ is a Markov Chain on some state space $\Sigma$ (for
instance, $\Sigma=\Z^d$ for $(1+d)$-dimensional directed polymers),
and the initial condition $S_0$ is some fixed element of $\Sigma$
which by convention we call $0$. The defect line, on the other hand,
is just $\{(n,0)\}_{n\ge0}$. The polymer-line interaction is
introduced as follows: each time $S_n=0$ ({\sl i.e.}, the polymer
touches the line at step $n$) the polymer gets an energy
reward/penalty $\epsilon_n$, which can be either positive or
negative. In the situation we consider here, the $\epsilon_n$'s are
independent and identically distributed (IID) random variables, with
positive or negative mean $h$ and variance $\beta^2\ge0$.

Up to now, we have made no assumption on the Markov Chain. The
physically most interesting case is the one where the distribution
$K(\cdot)$ of the first return time, call it $\tau_1$, of $S_n$ to $0$
has a power-law tail: $K(n):=\bP(\tau_1=n)\approx n^{-\alpha-1}$, with
$\alpha\ge0$. This framework allows to cover various situations
motivated by (bio)-physics: for instance, $(1+1)$-dimensional wetting
models \cite{cf:FLNO,cf:DHV} ($\ga=1/2$; in this case $S_n\ge0$, and
the line represents an impenetrable wall), pinning of
$(1+d)$-dimensional directed polymers on a columnar defect ($\ga=1/2$
if $d=1$ and $\ga=d/2-1$ if $d\ge2$), and the Poland-Scheraga model of
DNA denaturation (here, $\ga\simeq 1.15$ \cite{cf:dna}).  This is a
very active field of research, and not only from the point of view of
mathematical physics, see {\sl. e.g.}
\cite{cf:coluzzi} and references therein.  We refer to \cite[Ch. 1]{cf:Book} and references
therein for further discussion.

The model undergoes a localization/delocalization phase transition:
for any given value $\gb$ of the disorder strength, if the average
pinning intensity $h$ exceeds some critical value $h_c(\gb)$ then the
polymer typically stays tightly close to the defect line and the free
energy is positive.  On the contrary, for $h< h_c(\gb)$ the free
energy vanishes and the polymer has only few contacts with the defect:
entropic effects prevail. The annealed model, obtained by averaging the
Boltzmann weight with respect to disorder, is exactly solvable, and
near its critical point $h_c^{ann}(\gb)$ one finds that the annealed
free energy vanishes like $(h-h_c^{ann}(\gb))^{\max(1,1/\ga)}$
\cite{cf:Fisher}. In particular, the annealed phase transition is
first order for $\ga>1$ and second order for $\ga<1$, and it gets
smoother and smoother as $\ga$ approaches $0$.

A very natural and intriguing question is whether and how randomness
affects critical properties. The scenario suggested by the so-called
{\sl Harris criterion} \cite{cf:Harris} is the following: disorder
should be irrelevant for $\ga<1/2$, meaning that quenched critical
point and critical exponents should coincide with the annealed ones if
$\beta$ is small enough, and relevant for $\ga>1/2$: they should
differ for every $\gb>0$.  In the marginal case $\ga=1/2$, the Harris
criterion gives no prediction and there is no general consensus on
what to expect: renormalization-group considerations led Forgacs {\sl
  et al.}  \cite{cf:FLNO} to predict that disorder is irrelevant (see also the recent 
   \cite{cf:GN}),
while Derrida {\sl et al.} \cite{cf:DHV} concluded for marginal
relevance: quenched and annealed critical points should differ for
every $\gb>0$, even if the difference is zero at every perturbative
order in $\gb$.

\medskip

The mathematical understanding of these questions witnessed
remarkable progress recently, and we summarize here the state of the
art (prior to the the present contribution). 
\begin{enumerate}
\item A lot is now known on the  {\sl irrelevant-disorder regime}. 
In particular, it was proven in \cite{cf:Ken} (see
  \cite{cf:T_cmp} for an alternative proof) that quenched and annealed
  critical points and critical exponents coincide for $\beta$ small
  enough. Moreover, in \cite{cf:GT_irrel} a small-disorder expansion
  of the free energy, worked out in \cite{cf:FLNO}, was rigorously
  justified.

\item In the {\sl strong-disorder regime}, for which the Harris
  criterion makes no prediction, a few results were obtained recently.
  In particular, in \cite{cf:T_fractmom} it was proven that for any given
  $\ga>0$ and, say, for Gaussian randomness, $h_c(\gb)\ne
  h_c^{ann}(\gb)$ for $\beta$ large enough, and the asymptotic
  behavior of $h_c(\gb)$ for $\gb\to\infty$ was computed. These
  results were obtained through upper bounds on fractional moments of
  the partition function.  Let us mention by the way that the
  fractional moment method allowed also to compute exactly
  \cite{cf:T_fractmom} the quenched critical point of a {\sl reduced
    wetting model} (which somehow has a built-in strong-disorder
  limit); the same result was obtained in \cite{cf:BCT} via a rigorous
  implementation of renormalization-group ideas. Fractional moment
  methods  have proven to be useful also for other classes of disordered models
\cite{cf:AENSS,cf:AM,cf:ASFH,cf:BPP,cf:ED}. 

\item The {\sl relevant-disorder regime} is only partly understood. In
\cite{cf:GT_cmp} it was proven that the free-energy critical exponent
differs from the quenched one whenever $\gb>0$ and 
$\alpha>1/2$. 
However, the arguments in \cite{cf:GT_cmp} do not imply the critical point shift.
Nonetheless, 
the critical point shift issue  has been recently solved for a {\sl hierarchical
version} of the model, introduced in \cite{cf:DHV}. The hierarchical
model also depends on the parameter $\alpha$, and in \cite{cf:GLT} it
was shown that $h_c(\gb)-h_c^{ann}(\gb)\approx \beta^{2\ga/(2\ga-1)}$
for $\gb$ small (upper and lower bounds of the same order are
proven).

\item In the {\sl marginal case $\ga=1/2$} it was proven in
  \cite{cf:Ken,cf:T_cmp} that the difference
  $h_c(\gb)-h_c^{ann}(\gb)$ vanishes faster than any power of $\gb$,
  for $\gb\to0$.  Before discussing lower bounds on this difference,
  one has to be more precise on the tail behavior of $K(n)$, the
  probability that the first return to zero of the Markov Chain
  $\{S_n\}_n$ occurs at $n$: if $K(n)=n^{-(1+1/2)}L(n)$ with
  $L(\cdot)$ slowly varying (say, a logarithm raised to a positive or negative power), then the two critical points coincide for $\beta$ small
  if $L(\cdot)$ diverges sufficiently fast at infinity so that
  \begin{equation}
    \label{eq:condL}
\sum_{n=1}^\infty\frac1{n L(n)^2}\, <\,\infty.    
  \end{equation}
  The case of
  the $(1+1)$-dimensional wetting model \cite{cf:DHV} corresponds
  however to the case where $L(\cdot)$ behaves like a constant at
  infinity, and the result just mentioned does not apply.

The case $\ga=1/2$ is open also for the hierarchical model mentioned
above.

\end{enumerate}

In the present work we prove that if $\ga\in(1/2,1)$ or $\ga>1$ then
quenched and annealed critical points differ for every $\beta>0$, and
$h_c(\gb)-h_c^{ann}(\gb)\approx \gb^{2\ga/(2\ga-1)}$ for
$\gb\searrow0$ ({\sl cf.} Theorem \ref{th:a121} for a more precise
statement). For the case $\ga=1$, see the {\sl note added in proof} at the end
of this paper. 
In the case $\ga=1/2$, while we do not
prove that $h_c(\gb)\ne h_c^{ann}(\gb)$ in all cases in which condition
\eqref{eq:condL} fails, we do prove such a result if the function
$L(\cdot)$ vanishes sufficiently fast at infinity.  Of course,
$h_c(\gb)- h_c^{ann}(\gb)$ turns out to be exponentially small for
$\gb\searrow0$.

\medskip

Starting from next section, we will forget the full Markov structure 
of the polymer, and retain only the fact that the set of points of contact
with the defect line, $\tau:=\{n\ge 0:S_n=0\}$, is a renewal process
under the law $\bP$ of the Markov Chain.

\section{Model and main results}

Let $\tau:=\{\tau_0,\tau_1,\ldots\}$ be a renewal sequence started
from $\tau_0=0$ and with inter-arrival law $K(\cdot)$, {\sl i.e.,}
$\{\tau_i-\tau_{i-1}\}_{i\in \N:=\{1, 2, \ldots\}}$ are IID integer-valued random
variables with law $\bP(\tau_1=n)=K(n)$ for every $n\in\N$.   We assume that
$\sum_{n\in\N}K(n)=1$ (the renewal is recurrent) and that there exists
$\ga>0$ such that
\begin{equation}
  \label{eq:K}
  K(n)= \frac{L(n)}{n^{1+\alpha}}
\end{equation}
with $L(\cdot)$ a function that varies slowly at infinity, {\sl i.e.}, $L: (0, \infty) \to (0, \infty)$ is 
 measurable  and
such that $L(rx)/L(x)\to1$ when $x\to\infty$, for every $r>0$.  We
refer to \cite{cf:RegVar} for an extended treatment of slowly varying
functions, recalling just that examples of $L(x)$ include $(\log (1+x))^b$,
any $b \in \R$, and any (positive, measurable) function admitting
a positive limit at infinity (in this case we say that $L(\cdot)$ is {\sl trivial}).
Dwelling a bit more on nomenclature, $x \mapsto x^\gr L(x)$
is a {\sl regularly varying function of exponent } $\gr$,
so $K(\cdot)$ is just the restriction to the natural numbers of
a regularly varying function of exponent $-(1+\ga)$.

\medskip

We let $\gb\ge0$, $h\in\R$ and
$\go:=\{\go_n\}_{n\ge1}$ be a sequence of IID centered random variables
with unit variance and finite exponential moments. The law of $\go$ is 
denoted by $\bbP$ and the corresponding expectation  by $\bbE$.

For $a,b\in\{0,1,\ldots\}$ with $a\le b$ we let $Z_{a,b,\omega}$ be the
partition function for the system on the interval
$\{a,a+1,\ldots,b\}$, with zero boundary conditions at both endpoints:
\begin{equation}
\label{eq:Z}
Z_{a,b,\omega}=\bE\left(\left.e^{\sum_{n=a+1}^b(\beta\go_n+h)
\ind_{\{n\in\tau\}}}\ind_{\{b\in\tau\}}\right|a\in\tau\right),
\end{equation}
where $\bE$ denotes expectation with respect to the law $\bP$ of the
renewal.  One may rewrite $Z_{a,b,\omega}$ more explicitly as
\begin{equation}
  \label{eq:Z+}
  Z_{a,b,\omega}=\sum_{\ell=1}^{b-a}\sum_{i_0=a<i_1<\ldots<i_\ell=b}
\prod_{j=1}^\ell K(i_j-i_{j-1})e^{h\ell+\beta\sum_{j=1}^\ell \omega_{i_j}},
\end{equation}
with the convention that $Z_{a,a,\omega}=1$. 
Notice that, when writing  $n \in \tau$, we are interpreting $\tau$ as a subset
of $\N \cup \{0\}$ rather than as a sequence of random variables.
 We will write for
simplicity $Z_{N,\go}$ for $Z_{0,N,\go}$ (and in that case the
conditioning on $0\in\tau$ in \eqref{eq:Z} is superfluous since
$\tau_0=0$).  In absence of disorder ($\gb=0$), it is convenient to
use the notation
\begin{equation}
\label{eq:Zpure}
  Z_N(h):=\bE\left(e^{h\sum_{n=1}^N\ind_{\{n\in\tau\}}}
\ind_{\{N\in\tau\}}\right)=\bE\left(e^{h|\tau\cap\{1,\ldots,N\}|}\ind_{\{N\in\tau\}}
\right), 
\end{equation}
for the partition function.

We mention that the recurrence assumption $\sum_{n\in\N}K(n)=1$
entails no loss of generality, since one can always reduce to this
situation via a redefinition of $h$ ({\sl cf.} \cite[Ch.~1]{cf:Book}).

\medskip
As usual the {\sl quenched free energy} is defined as
\begin{equation}
\label{eq:FFF}
  \tf(\beta,h)=\lim_{N\to\infty}\frac1N \log Z_{N,\go}.
\end{equation}
It is well known ({\sl cf.} for instance \cite[Ch. 4]{cf:Book}) that the limit
\eqref{eq:FFF} exists $\bbP(\dd\go)$-almost surely and in $\mathbb
L^1(\bbP)$, and that it is almost-surely independent of $\go$.
Another well-established fact is that $\tf(\beta,h)\ge0$, which
immediately follows from $Z_{N,\go}\ge K(N)\exp(\gb \go_N+h)$.  This
allows to define, for a given $\gb\ge0$, the critical point $h_c(\gb)$
as
\begin{equation}
  \label{eq:hc}
  h_c(\gb):=\sup\{h\in\R:\;\tf(\beta,h)=0\}.
\end{equation}
It is well known that $h>h_c(\gb)$ corresponds to the {\sl localized
  phase} where typically $\tau$ occupies a non-zero fraction of
$\{1,\ldots,N\}$ while, for $h<h_c(\gb)$, $\tau\cap\{1,\ldots,N\}$
contains with large probability at most $O(\log N)$ points
\cite{cf:GTdeloc}. We refer to \cite[Ch.s 7 and 8]{cf:Book} for further
literature and discussion on this point.

\medskip

In analogy with the quenched free energy, the {\sl annealed free
  energy} is defined by
\begin{equation}
\label{eq:Fann}
  \tf^{ann}(\gb,h)\, :=\, \lim_{N\to\infty}\frac1N\log \bbE Z_{N,\go}=
\tf(0,h+\log\M(\gb)),
\end{equation}
with 
\begin{equation}
  \label{eq:M23}
  \M(\gb):= \bbE(e^{\beta\go_1}).
\end{equation}
We see therefore that the annealed free energy is just the free energy
of the pure model ($\beta=0$) with a different value of $h$.  The pure
model is exactly solvable \cite{cf:Fisher}, and we collect here a few
facts we will need in the course of the paper.

\begin{theorem}\cite[Th. 2.1]{cf:Book} 
\label{th:pure23} For the pure model $h_c(0)=0$.
  Moreover, there exists a slowly varying function $\hat L(\cdot)$
  such that for $h>0$ one has
  \begin{equation}
    \label{eq:Fpure}
    \tf(0,h)=h^{1/\min(1,\ga)}\hat L(1/h).
  \end{equation}
In particular,
\begin{enumerate}
\item if $\bE(\tau_1)=\sum_{n\in\N}n\,K(n)<\infty$ (for instance, if
  $\ga>1$) then $\hat L(1/h)\stackrel{h\searrow0}\sim 1/\bE(\tau_1)$.
\item if $\ga\in(0,1)$, then $\hat L(1/h)=C_\ga h^{-1/\ga}R_\ga(h)$ where
$C_\ga$ is an explicit constant and $R_\ga(\cdot)$ is the 
function, unique up to asymptotic equivalence,  that satisfies
$R_\ga ( b^\ga L(1/b)) \stackrel{b \searrow 0}\sim b$.
\end{enumerate}
\end{theorem}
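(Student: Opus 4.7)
The plan is to exploit the exact solvability of the pure model via the standard renewal-theoretic tilting, essentially in the spirit of Proposition \ref{th:libenerh} proven earlier in this introduction. For $h>0$, I would first define $F=F(h)>0$ as the unique solution of
\begin{equation}
\sum_{n=1}^{\infty} e^{-Fn}K(n)\, =\, e^{-h},
\end{equation}
which exists and is unique because the left-hand side is continuous and strictly decreasing in $F$, equals $1$ at $F=0$ and tends to $0$ at infinity; the condition $h>0$ is precisely what forces $F(h)>0$. Setting $\tilde K(n):=e^{h-Fn}K(n)$ yields by construction a probability mass function on $\N$, and plugging this into the expansion \eqref{eq:Z+} (with $\gb=0$) one collects the factor $e^{F N}$ coming from $\sum_j (t_j-t_{j-1})=N$ to obtain
\begin{equation}
Z_N(h)\, =\, e^{F(h)N}\,\tilde\bP(N\in\tilde\tau),
\end{equation}
with $\tilde\tau$ the (recurrent, since $\sum_n \tilde K(n)=1$) renewal process of inter-arrival law $\tilde K$. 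Since $\tilde\bP(N\in\tilde\tau)\in[0,1]$ and is bounded below by $\tilde K(N)$ (which decays subexponentially), $N^{-1}\log Z_N(h)$ converges to $F(h)$, yielding $\tf(0,h)=F(h)$. For $h\le 0$ the equation admits no positive solution while $Z_N(h)\le 1$ (by $\sum_n K(n)=1$), giving $\tf(0,h)=0$; this immediately establishes $h_c(0)=0$.

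Next I would analyze $F(h)$ for $h\searrow 0$ by rewriting the defining equation as $\Psi(F(h))=1-e^{-h}\sim h$, where
\begin{equation}
\Psi(F)\, :=\, \sum_{n=1}^{\infty}(1-e^{-Fn})K(n).
\end{equation}
The asymptotic behavior of $\Psi$ near $0$ is determined by the tail \eqref{eq:K} through Karamata's Abelian theorem for Laplace transforms of regularly varying functions \cite[Thm.~1.7.1 and Cor.~8.1.7]{cf:RegVar}. In the case $\bE(\tau_1)=:m<\infty$ (automatic for $\ga>1$), dominated convergence gives $\Psi(F)\sim m\,F$ as $F\searrow 0$, so that $F(h)\sim h/m$, which is precisely claim (1) with $\hat L(1/h)\to 1/m$. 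In the case $\ga\in(0,1)$, Karamata yields $\Psi(F)\sim \Gamma(1-\ga)\,F^{\ga} L(1/F)$, so $F(h)$ satisfies $F(h)^\ga L(1/F(h))\sim h/\Gamma(1-\ga)$; inverting this relation through the de Bruijn conjugate construction \cite[§1.5]{cf:RegVar} produces $F(h)=h^{1/\ga}\hat L(1/h)$ with $\hat L$ expressible in terms of the inverse $R_\ga$ defined in the statement, up to the multiplicative constant $C_\ga$ absorbing the factor $\Gamma(1-\ga)^{-1/\ga}$. This gives claim (2). The intermediate case $\ga=1$ follows analogously: Karamata provides $\Psi(F)\sim F\,\tilde L(1/F)$ with $\tilde L$ a slowly varying function (equal to $m$ if $m<\infty$, diverging otherwise), leading again to $F(h)=h\,\hat L(1/h)$ with $\hat L$ slowly varying.

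The main obstacle is the inversion step in the case $\ga\in(0,1)$: passing from the implicit relation $F^\ga L(1/F)\asymp h$ to an explicit expression $F=h^{1/\ga}\hat L(1/h)$ with $\hat L$ slowly varying is not purely formal, and rests on the existence and properties of the de Bruijn conjugate, which is a non-trivial result of the general theory of regularly varying functions. A secondary annoyance is the boundary case $\ga=1$, where one must treat uniformly the possibilities $m<\infty$ and $m=\infty$ to obtain a single statement in the form \eqref{eq:Fpure}, since the slowly varying factor $\hat L$ genuinely depends on the fine behavior of $L$ at infinity.
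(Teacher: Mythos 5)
Your proof is correct and follows the standard renewal-theoretic approach — the same one sketched for Proposition \ref{th:libenerh} earlier in the introduction and the one used for \cite[Th.~2.1]{cf:Book}, which the paper cites rather than reproves: tilt $K(\cdot)$ by $e^{h-Fn}$ to extract the exponential rate, identify $\tf(0,h)$ with the root $F(h)$ of the implicit equation, and analyze the small-$h$ asymptotics via Karamata's Abelian theorem and the de Bruijn conjugate. The details you flag as delicate (the inversion for $\ga\in(0,1)$ and the boundary case $\ga=1$) are indeed handled by exactly the tools from \cite{cf:RegVar} that you invoke.
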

As a consequence of Theorem \ref{th:pure23} and \eqref{eq:Fann}, the
annealed critical point is simply given by
\begin{equation}
\label{eq:hann}
  h^{ann}_c(\beta):=\sup\{h:\tf^{ann}(\gb,h)=0\}=-\log \M(\beta).
\end{equation}

Via Jensen's inequality one has immediately that $\tf(\gb,h)\le
\tf^{ann}(\gb, h)$ and as a consequence $h_c(\gb)\ge h_c^{ann}(\gb)$,
and the point of the present paper is to understand when this last
inequality is strict.  In this respect, let us recall that the
following is known: if $\ga\in(0,1/2)$, then $h_c(\gb)=
h_c^{ann}(\gb)$ for $\beta$ small enough \cite{cf:Ken,cf:T_cmp}. Also
for $\ga =1/2$ it has been shown that $h_c(\gb)= h_c^{ann}(\gb)$ if
$L(\cdot) $ diverges sufficiently fast (see below).  Moreover,
assuming that $\bbP(\go_1>t)>0$ for every $t>0$, one has that for
every $\ga>0$ and $L(\cdot)$ there exists $\beta_0<\infty$ such that
$h_c(\gb)\ne h_c^{ann}(\gb)$ for $\gb>\beta_0$ \cite{cf:T_fractmom}:
quenched and annealed critical points differ for {\sl strong
  disorder}. The strategy we develop here addresses the complementary
situations: $\ga>1/2$ and {\sl small disorder} (and also the case
$\ga=1/2$ as we shall see below).

\bigskip

Our first result concerns the case $\ga>1$:
\medskip

\begin{theorem}
\label{th:a>1}
  Let $\alpha>1$. There exists $a>0$ such that for every $\beta\le1$
  \begin{equation}
    \label{eq:a>1}
h_c(\beta)-h_c^{ann}(\beta)\ge a \beta^2.
  \end{equation}
Moreover, $h_c(\gb)>h_c^{ann}(\beta)$ for every $\gb>0$.
\end{theorem}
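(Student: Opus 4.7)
The plan is to prove Theorem \ref{th:a>1} by a fractional moment bound on the partition function combined with a change-of-measure of the disorder, extending to the non-hierarchical setting the strategy introduced in Chapter \ref{HPMQM} for the hierarchical pinning model. Fix $h=h_c^{ann}(\gb)+a\gb^2=-\log \M(\gb)+a\gb^2$ with $a>0$ a small constant to be chosen at the end. The goal is to show that for some $\gamma\in(0,1)$ the sequence $\bbE[Z_{N,\go}^\gamma]$ is bounded uniformly in $N$: since $\tf(\gb,h)=\lim_N N^{-1}\bbE[\log Z_{N,\go}]\le \liminf_N (N\gamma)^{-1}\log \bbE[Z_{N,\go}^\gamma]$ by Jensen, such a bound forces $\tf(\gb,h)=0$, hence $h_c(\gb)\ge h$.

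First I would introduce a coarse-graining at scale $k=k(\gb)\sim 1/\gb^2$ and decompose the system into blocks $B_j=\{(j-1)k+1,\ldots,jk\}$, $j=1,\ldots,N/k$. Writing $Z_{N,\go}$ as a sum over the subset $I\subset\{1,\ldots,N/k\}$ of blocks which contain at least one contact, and using the elementary inequality $(\sum_I \hat Z_I)^\gamma\le \sum_I \hat Z_I^\gamma$ valid for $\gamma\in(0,1)$, the fractional moment splits into a sum of contributions indexed by $I$. For each $I$ I would apply H\"older's inequality together with a tilted measure $\tilde\bbP_I$ under which each $\go_n$ with $n\in B_j$, $j\in I$, is shifted by $-\gd$ with $\gd=\gd(\gb)\sim\gb$:
\begin{equation}
\bbE[\hat Z_I^\gamma]\le \bigl(\bbE[(\dd\bbP/\dd\tilde\bbP_I)^{\gamma/(1-\gamma)}]\bigr)^{1-\gamma}\bigl(\tilde\bbE_I[\hat Z_I]\bigr)^\gamma,
\end{equation}
where the cost factor is bounded by $\exp(c_1 k\gd^2 |I|)$ via Taylor expansion of $\log\M$ near zero. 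Under $\tilde\bbP_I$ the contributions of visited blocks to the annealed partition function correspond to a shifted chemical potential $h'=h+\log\M(\gb-\gd)-\log\M(\gb)-\log\M(-\gd)\approx a\gb^2-\gb\gd$ on the visited sites, so choosing $\gd=2a\gb$ yields $h'\approx -a\gb^2$.

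The annealed input for $\alpha>1$ is that the renewal is positive recurrent, so $\bP(n\in\tau)\to 1/\bE(\tau_1)$ and the pure partition function on a block of size $k$ with $h'<0$ behaves, by standard defective renewal theory, like $\exp(h' k/\bE(\tau_1))$ up to a bounded prefactor. Substituting $k\sim 1/\gb^2$ and $\gd\sim\gb$, the cost per visited block is $\exp(c_1 a^2)$ and the annealed gain per visited block is of order $\exp(-c_2 a)$: taking $a$ small one obtains a per-block contraction factor strictly less than $1$. Since the sum over $I$ admits a renewal-type representation (blocks with no contacts contribute $K$-weighted excursions), the geometric contraction closes into a summable bound on $\sum_I \bbE[\hat Z_I^\gamma]$ uniformly in $N$.

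The main obstacle will be handling the boundary conditions between blocks and making the coarse-grained sum over $I$ into an honest renewal sum: one must isolate the per-block change-of-measure cost from the combinatorial weight $K(\cdot)^\gamma$ of excursions between visited blocks, and verify that the weight $K(n)^\gamma$, which is only summable when $\gamma(1+\alpha)>1$, combines with the contractive per-block factor to yield a convergent series. This is where the choice of $k\sim 1/\gb^2$ is dictated: smaller $k$ makes the change-of-measure cost dominant, larger $k$ weakens the annealed gain. The bound $h_c(\gb)>h_c^{ann}(\gb)$ for every $\gb>0$ (not only $\gb\le 1$) should then follow by combining the quantitative estimate on $[0,1]$ with the fact that the same scheme, with $(k,\gd)$ allowed to depend on $\gb$, still produces a strict inequality for any fixed $\gb>0$ once $a$ is chosen sufficiently small as a function of $\gb$.
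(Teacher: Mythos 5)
Your coarse-graining plan is a legitimate alternative to the paper's argument, which is in fact simpler here: the paper decomposes $Z_{N,\go}$ according to the last renewal epoch $\le N-k$ and the first one $> N-k$, obtaining the one-step recursion $A_N\le\bbE[z_1^\gamma]\sum_{n\ge k}A_{N-n}\sum_{j<k}K(n-j)^\gamma A_j$ for $A_N=\bbE[Z_{N,\go}^\gamma]$, and reduces everything to the single finite-volume criterion $\rho\le 1$ of Proposition~\ref{th:deloc}, so there is no combinatorial sum over subsets $I$ to control, only $A_j$ for $j<k$ to estimate (by Jensen away from $j\approx k$, by tilting near $j\approx k$). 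The full block-by-block coarse-graining you propose is the method developed later in Chapters~\ref{MARGREL}--\ref{DISRELCPS}, and it does work for $\alpha>1$, but not with your choice of scales.

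Taking $k\sim 1/\gb^2$ and $\gd\sim a\gb$ gives a per-block H\"older cost $\sim e^{c_1 a^2}$ and an annealed gain $\sim e^{-c_2 a}$, so a net per-block factor $e^{c_1 a^2-c_2 a}$ that is indeed $<1$ for $a$ small but that \emph{tends to $1$} as $a\searrow 0$ and is bounded below by $e^{-c_2^2/(4c_1)}$ uniformly in $a$. This is not enough: the sum over $I$ carries its own per-step constant coming from bounding the excursion weights (the analogue of $C_2$ in Lemma~\ref{th:P_cI}), and what is needed is that the per-block factor fall below a fixed $K$-dependent threshold, not merely below $1$. With your scaling there is no parameter left to tune once $a$ is small, and moreover for $\gb$ near $1$ the block size $k\sim 1$ is no longer large, so the coarse-graining machinery is unavailable altogether. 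The correct choice, and the paper's, is the annealed correlation length $k\sim 1/(a\gb^2)$, which diverges as $a\searrow 0$ uniformly for $\gb\le 1$, paired with the fixed-H\"older-budget tilt $\gd\sim 1/\sqrt{k}\sim\sqrt{a}\,\gb$: the cost $\exp(\mathrm{const}\cdot k\gd^2)$ is then an $a$-independent constant while $h'k\sim -\mathrm{const}/\sqrt{a}\to-\infty$, so the per-block gain $\exp(-\mathrm{const}/\sqrt{a})$ falls below \emph{any} fixed threshold for $a$ small. Your remark that ``larger $k$ weakens the annealed gain'' is backwards once $\gd$ is rescaled with $k$; it is precisely the divergence of $k$ with $1/a$ that lets $a$ be chosen small and uniform in $\gb\le 1$.
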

\medskip

Since $h_c(\gb)\le h_c(0)$ 
and $h_c^{ann}(\gb)\stackrel{\gb\searrow0}\sim-\gb^2/2$, we conclude
that the inequality \eqref{eq:a>1} is, in a sense, of the optimal
order in $\gb$. 
Note that $h_c(\gb)\le h_c(0)$ is just a consequence of Jensen's inequality:
\begin{eqnarray}
Z_{N,\go}&= &Z_N(h)\frac{\bE \left(e^{\sum_{n=1}^N(\gb\go_n+h)\ind_{\{n\in\tau\}}}\ind_{\{N\in\tau\}}\right)}
{\bE\left(e^{h\sum_{n=1}^N\ind_{\{n\in\tau\}}}\ind_{\{N\in\tau\}}\right)}\\\nonumber
&\ge& Z_N(h)\exp\left[
\gb\sum_{n=1}^N\go_n \frac{\bE\left(\ind_{\{n\in\tau\}}e^{h|\tau\cap\{1,\ldots,N\}|}\ind_{\{N\in\tau\}}\right)}
{\bE\left(e^{h|\tau\cap\{1,\ldots,N\}|}\ind_{\{N\in\tau\}}\right)}
\right],
\end{eqnarray}
from which $\tf(\gb,h)\ge\tf(0,h)$ and therefore $h_c(\gb)\le h_c(0)$ immediately follows from $\bbE(\go_n)=0$.
This can be made sharper in the sense that from the
explicit bound in \cite[Th.~5.2(1)]{cf:Book}  one directly extract also
that $h_c(\gb) \le -b \gb^2$ for a suitable $b\in (0,1/2)$ and
every $\gb \le 1$, so that $-h_c(\gb)/\gb^2 \in (b, 1/2-a)$.
We recall also that the (strict) inequality $h_c(\gb)<  h_c(0)$ has been
established in great generality in \cite{cf:AS}.

\medskip

In the case $\ga\in(1/2,1)$ we have the following:
\medskip

\begin{theorem}
  \label{th:a121} Let $\alpha\in(1/2,1)$. For every $\gep>0$ there
exists $a(\gep)>0$ such that 
\begin{equation}
\label{eq:witheps}
  h_c(\gb)-h_c^{ann}(\gb)\ge a(\gep)\,\gb^{(2\alpha/(2\alpha-1))+\gep},
\end{equation}
for $\beta\le1$. Moreover, $h_c(\gb)>h_c^{ann}(\gb)$ for every 
$\gb>0$.
\end{theorem}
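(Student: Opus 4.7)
My plan is to combine the fractional moment method (already used in \cite{cf:T_fractmom} for large $\gb$, and central to the hierarchical analysis of \cite{cf:GLT}) with a tilted change of measure on the disorder, in a multi-scale, coarse-grained way. Fix $\gamma\in(0,1)$ (to be taken close to $1$, with $\gamma(1+\alpha)>2$ so that the relevant coarse-grained renewal kernel is summable), write $h=h_c^{ann}(\gb)+u=-\log M(\gb)+u$ with $u\geq 0$, and aim at showing that a suitable choice $u=u(\gb)\approx \gb^{2\ga/(2\ga-1)+\gep}$ still yields $\tf(\gb,h)=0$.

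\textbf{Coarse-graining.} Split $\{1,\ldots,N\}$ into consecutive blocks $B_k=\{(k-1)\ell+1,\ldots,k\ell\}$ of length $\ell$ (to be chosen as a function of $\gb$), and decompose $Z_{N,\go}$ by summing over the set of indices of the blocks that the renewal $\tau$ visits together with the first and last contact in each visited block. Using the sub-additivity $(\sum x_i)^\gamma\le\sum x_i^\gamma$ one obtains
\begin{equation*}
\mathbb{E}\!\left[Z_{N,\go}^{\gamma}\right]\,\le\,
\sum_{\substack{k_1<\ldots<k_r \\ a_j\in B_{k_j},\,b_j\in B_{k_j}}}
\prod_{j=1}^{r}K(a_j-b_{j-1})^{\gamma}\, \mathbb{E}\!\left[Z_{a_j,b_j,\go}^{\gamma}\right],
\end{equation*}
with $b_0:=0$. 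Summing the internal positions $a_j,b_j$ within each visited block, and using that $K(\cdot)$ is regularly varying with index $-(1+\ga)$, this inequality realizes the right-hand side as the Green function at~$N$ of a coarse-grained renewal on $\ell\N$ with inter-arrival kernel bounded by $C\,\ell^{1+\gamma\alpha}\bar K(k\ell)^{\gamma}\, B_\ell$, where $B_\ell:=\max_{a,b\in B_1}\mathbb{E}[Z_{a,b,\go}^\gamma]$. If one manages to make $B_\ell$ small enough that this kernel sums to less than $1$, the Green function stays bounded in $N$ and $\tf(\gb,h)=0$.

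\textbf{Single-block estimate by change of measure.} Let $\tilde{\bbP}$ be the tilted law on $(\go_n)_{n\in B_1}$ with density $\exp(-\delta\sum_{n\in B_1}\go_n-\ell\log M(-\delta))$. By H\"older,
\begin{equation*}
\mathbb{E}[Z_{\ell,\go}^{\gamma}]\,\le\,
\Big(\mathbb{E}\big[(\dd\bbP/\dd\tilde{\bbP})^{\gamma/(1-\gamma)}\big]\Big)^{1-\gamma}
\big(\tilde{\bbE}[Z_{\ell,\go}]\big)^{\gamma}\,\le\,
\exp\!\left(c_\gamma\,\ell\,\delta^{2}\right)\big(\tilde{\bbE}[Z_{\ell,\go}]\big)^{\gamma},
\end{equation*}
with $c_\gamma<\infty$ (thanks to exponential moments of $\go_1$). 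Under $\tilde{\bbP}$ the law of $Z_{\ell,\go}$ coincides with the law under $\bbP$ with $h$ replaced by $h+\log M(\gb-\delta)-\log M(\gb)-\log M(-\delta)$, and strict convexity of $\log M$ gives $\log M(\gb-\delta)-\log M(\gb)-\log M(-\delta)\le -c\,\gb\delta$ for $\gb,\delta\in[0,1]$. Hence $\tilde{\bbE}[Z_{\ell,\go}]\le Z_\ell(u-c\gb\delta)$, and using Theorem~\ref{th:pure23} together with standard renewal/Tauberian estimates for the pure partition function in the delocalized region $h<0$, one gets $Z_\ell(u-c\gb\delta)\le C\ell^{\ga-1}\hat L(\ell)$ provided $u\le (c/2)\gb\delta$.

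\textbf{Tuning and the case $\gb>0$.} Choose
\begin{equation*}
\delta\,=\,\gb^{1/(2\alpha-1)},\qquad \ell\,=\,\lceil \gb^{-2/(2\alpha-1)}\rceil,\qquad u\,=\,a(\gep)\,\gb^{2\alpha/(2\alpha-1)+\gep}.
\end{equation*}
Then $\ell\delta^{2}=O(1)$, so the cost factor is harmless, and $u\ll\gb\delta$, so the single-block bound yields $B_\ell\le C\,\ell^{\gamma(\ga-1)}\hat L(\ell)^{\gamma}$. Plugging this into the coarse-grained renewal sum from the first paragraph, the effective inter-arrival exponent is $\gamma(1+\ga)-\gamma(1-\ga)=2\gamma\ga>1$ provided $\gamma$ is chosen close enough to $1$; the total mass of the kernel is then arbitrarily small by taking $a(\gep)$ small and $\ell$ large (here the $\gep>0$ buffer absorbs the slowly varying factors $L,\hat L$). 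This proves \eqref{eq:witheps}. For a fixed arbitrary $\gb_0>0$, the same change-of-measure argument applies with $\delta$ of order $1$ and $\ell$ sufficiently large (no delicate scaling), yielding $h_c(\gb_0)>h_c^{ann}(\gb_0)$.

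\textbf{Main obstacle.} The delicate point is reconciling the scale choice in the last paragraph with the slowly varying corrections: the power $2\alpha/(2\alpha-1)$ is produced by matching the change-of-measure cost $\exp(\ell\delta^2)$ with the pure delocalization gain $\ell^{\gamma(\ga-1)}\hat L(\ell)^{\gamma}$, but there is no slack once $L,\hat L$ are non-trivial. This is precisely why the statement contains the $\gep$ loss, which we do not expect to be sharp; removing it would require either a sharper change of measure (possibly site-dependent, as in the hierarchical site-disorder model) or an iterative/renormalization procedure rather than a one-step coarse-graining.
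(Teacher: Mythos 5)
Your overall strategy---fractional moments with $\gamma<1$, a coarse-graining into blocks, a tilt of the disorder by $-\delta$ on each block, and the balance between the H\"older cost $\exp(c_\gamma\ell\delta^2)$ and the tilted single-block gain---is the right one and is conceptually the same as the paper's. The paper implements it through a one-step finite-volume criterion (decompose $Z_{N,\go}$ at the last/first renewal point around $N-k$ and derive a recursion $A_N\le\rho\max_j A_j$) rather than the full block-by-block coarse-graining you describe, and then splits the resulting sum over $j<k$ at $j\le k^{1-\gep^2}$ (Jensen for small $j$, change of measure for $j$ near $k$). Both routes are legitimate; yours is the version later used for the marginal $\alpha=1/2$ case.

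However, there is a genuine gap in your tuning. With $\ell=\lceil\gb^{-2/(2\alpha-1)}\rceil$, $\delta=\ell^{-1/2}$ and $u\ll\gb\delta$, the tilted parameter $h'=u-c\gb\delta$ satisfies $|h'|\sim\ell^{-\alpha}$, so the correlation length of the tilted annealed model is $|h'|^{-1/\alpha}\sim\ell$---\emph{exactly} your block length. For $r\lesssim\ell$ one is then still in the critical regime and $Z_r(h')\lesssim\bP(r\in\tau)\sim r^{\alpha-1}/L(r)$, which is the same bound you would have gotten from Jensen alone; the change of measure gives no gain. Summing the coarse-grained kernel with only this bound yields a total mass of order $\ell^{2-2\gamma}$ (up to slowly varying factors), which \emph{grows} with $\ell$ since $\gamma<1$, so the argument does not close (the $\ell^{1+\gamma\alpha}$ prefactor you write for the kernel appears to be a slip; the correct power after summing $a_j,b_j$ over a block of length $\ell$ is $\ell^{2+\gamma(\alpha-1)}$, larger by $\ell^{1-\gamma}$). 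The fix is to spread the $\gep$-loss into $\ell$ as well: take $\ell\sim 1/\tf(0,u)=u^{-1/\alpha}\sim \gb^{-2/(2\alpha-1)-\gep/\alpha}$ and $\delta=\ell^{-1/2}$, exactly as the paper does with $k=1/\tf(0,\Delta)$ and $\lambda=1/\sqrt{j}$. Then $\ell$ strictly exceeds the tilted correlation length by a power of $\gb$, the single-block estimate gains an extra power $j^{-(\gep/2)(\alpha-1/2)}$ (this is Lemma~\ref{th:lemmaA}), and that extra power is precisely what allows $\gamma<1$ to work. Finally, the remark that for a fixed $\gb_0>0$ one may take $\delta$ of order $1$ and $\ell$ large is incorrect as stated (the cost $\exp(c\ell\delta^2)$ then grows exponentially while the gain from $Z_\ell(h')$ is only polynomial); but this part of the conclusion already follows from the main estimate applied with $\gb_0$ in place of $1$, so it is a harmless slip rather than a gap.
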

\medskip

To appreciate this result, recall that in \cite{cf:Ken,cf:T_cmp} it
was proven that
\begin{equation}
\label{eq:fromKA-FT}
    h_c(\gb)-h_c^{ann}(\gb)\le \tilde L(1/\gb)\gb^{2\alpha/(2\alpha-1)},
\end{equation}
for some (rather explicit, {\sl cf.} in particular \cite{cf:Ken}) slowly
varying function $\tilde L(\cdot)$. Notably,  $\tilde L(\cdot)$ is
trivial if $L(\cdot)$ is.  The conclusion of Theorem~\ref{th:a121} can
actually be strengthened and we are able to replace the right-hand
side of \eqref{eq:witheps} with $\bar
L(1/\gb)\gb^{2\alpha/(2\alpha-1)}$ with $\bar L(\cdot)$ another slowly
varying function, but on one hand $\bar L(\cdot)$ does not match the
bound in \eqref{eq:fromKA-FT} and on the other hand it is rather clear
that it reflects more a limit of our technique than the actual
behavior of the model; therefore, we  decided to present the simpler
argument leading to the slightly weaker result \eqref{eq:witheps}.

\medskip
The case $\ga=1/2$ is the most delicate, and whether quenched and
annealed critical points coincide or not crucially depends on the
slowly varying function $L(\cdot)$. In \cite{cf:Ken,cf:T_cmp} it was
proven that, whenever
\begin{equation}
\label{eq:sommaconverge}
\sum_{n\ge1}\frac{1}{n\,L(n)^2}<\infty,  
\end{equation}
there exists $\beta_0>0$ such that $h_c(\beta)=h_c^{ann}(\gb)$ for $\gb\le
\gb_0$, and that when the same sum diverges then 
$h_c(\beta)-h_c^{ann}(\gb)$ is bounded {\sl above} by some function of $\gb$
which vanishes faster than any power for $\gb\searrow0$. For instance, if
$L(\cdot)$ is asymptotically constant then
\begin{equation}
  h_c(\beta)-h_c^{ann}(\gb)\le c_1\,e^{-c_2/\gb^2},
\end{equation}
for $\gb\le1$. While we are not able to prove that quenched and
annealed critical points differ as soon as condition
\eqref{eq:sommaconverge} fails  (in particular not when $L(\cdot)$
is asymptotically constant), our method can be pushed further to prove this if
$L(\cdot)$ vanishes sufficiently fast at infinity:

\medskip

\begin{theorem}
  \label{th:a12} Assume that for every $n\in\N$
  \begin{equation}
\label{eq:Lpiccola}
    K(n)\le c\frac{n^{-3/2}}{(\log n)^\eta},
  \end{equation}
for some $c>0$ and $\eta>1/2$. Then 
for every $0<\gep<\eta-1/2$ there exists $a(\gep)>0$ such that
\begin{equation}
  \label{eq:a12}
  h_c(\beta)-h_c^{ann}(\beta)\ge a(\gep) 
\exp\left(-\frac1{\gb^{\frac{1}{\eta-1/2-\gep}}}\right).
\end{equation}
Moreover, $h_c(\gb)>h_c^{ann}(\beta)$ for every $\gb>0$.
\end{theorem}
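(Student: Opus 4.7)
The proof would follow the fractional moment strategy already exploited in Theorems \ref{th:a>1} and \ref{th:a121}, combining a coarse-graining criterion, a change of measure on the disorder, and H\"older's inequality. First, fix $\gga \in (0,1)$. The starting point is a \emph{coarse-graining criterion}: if for some block size $N$ and some $h > h_c^{ann}(\gb)$ one can show
\begin{equation*}
\bbE[Z_{N,\go}^\gga] \, \le \, \gr\,\left(\frac{L(N)}{N^{1+\ga}}\right)^\gga
\end{equation*}
with $\gr$ sufficiently small, then by decomposing $Z_{kN,\go}$ over the possible contact configurations at the endpoints of the $k$ blocks of length $N$ and applying the inequality $(\sum_i a_i)^\gga \le \sum_i a_i^\gga$, one would deduce that $\tf(\gb,h)=0$, hence $h_c(\gb) \ge h$.

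The central task is then to estimate $\bbE[Z_{N,\go}^\gga]$. I would introduce an auxiliary law $\tilde \bbP$ on the disorder and apply H\"older:
\begin{equation*}
\bbE[Z_{N,\go}^\gga] \, \le \, \left(\tilde \bbE\left[\left(\frac{\dd \bbP}{\dd \tilde \bbP}\right)^{\gga/(1-\gga)}\right]\right)^{1-\gga}\, \left(\tilde \bbE[Z_{N,\go}]\right)^\gga.
\end{equation*}
The plan would be to take $\tilde \bbP$ as the law of $\go$ shifted by a small negative constant $-\gd$ on each site $n\le N$, and to choose $N$ of the order of the annealed correlation length $\xi(h) = 1/\tf^{ann}(\gb,h)$. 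The two factors must be balanced: the entropic cost $\exp(c N \gd^2)$ should remain bounded, while the tilted annealed partition function $\tilde \bbE[Z_{N,\go}]\approx Z_N(h-\gb\gd)$ should become small because the effective pinning is pushed below the annealed critical point.

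For $\ga = 1/2$, Theorem \ref{th:pure23} gives $\xi(h) \asymp u^{-2}/\hat L(1/u)^2$, with $u = h-h_c^{ann}(\gb)$. Under hypothesis \eqref{eq:Lpiccola}, the standard inversion relation between $L$ and $\hat L$ for slowly varying functions yields $\hat L(x) \asymp (\log x)^{2\eta-1}$. Choosing $\gd$ of order $u/\gb$ so that the mean shift on the energy matches the distance to criticality, one finds that the entropic cost is of order $\exp(C/(\gb^2 \hat L(1/u)))$, and this is bounded as soon as $(\log 1/u)^{2\eta-1} \gtrsim 1/\gb^2$; this translates precisely into the bound $u \lesssim \exp(-\gb^{-1/(\eta - 1/2)})$, which, after absorbing the various error terms coming from slowly varying asymptotics into the $\gep$ in the exponent, yields \eqref{eq:a12}.

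The main obstacle is the tight balance between entropic cost and energetic gain in the marginal regime: unlike the case $\ga>1/2$, where the gain dominates by a polynomial margin, here gain and cost are both logarithmic in $1/u$, and only the assumed decay of $L(\cdot)$ with $\eta>1/2$ breaks the degeneracy. A secondary difficulty, tightly linked to the above, is the sharp evaluation of $\tilde \bbE[Z_{N,\go}]$ for $N$ of the order of the correlation length and shifted parameter slightly subcritical; this requires precise renewal-theoretic estimates on the pure partition function $Z_N(h)$ near $h_c(0)=0$, relying on Theorem \ref{th:pure23} together with asymptotics of regularly varying functions. Finally, the statement $h_c(\gb)>h_c^{ann}(\gb)$ for \emph{every} $\gb>0$ (not only small $\gb$) would follow by combining the small-$\gb$ bound above with the large-$\gb$ result from \cite{cf:T_fractmom}.
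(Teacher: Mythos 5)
Your high-level plan (fractional moment via Proposition~\ref{th:deloc}, a tilting change of measure, H\"older, and a block size $k$ of the order of the annealed correlation length) is indeed the skeleton of the paper's proof. But there are two interlocking structural points that make the marginal case work, and both are absent from your proposal.

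First, you \emph{fix} $\gga\in(0,1)$ at the outset. In the $\ga=1/2$ case this cannot work: even with the best possible bound $A_j\lesssim j^{-\gga/2}$ (Jensen plus the annealed estimate), the coarse-graining sum $\sum_{j<k}A_j K(k-j)^\gga\sim\sum_{j<k}j^{-\gga/2}(k-j)^{1-(3/2)\gga}$ grows like $k^{2-2\gga}$, which diverges for every fixed $\gga<1$. The paper is therefore forced to take $\gga=\gga(k)=1-1/\log k$. Second, and consequently, your shift $\gd\approx u/\gb$ cannot be used as is. With $\gga=\gga(k)$, the H\"older factor $\gga/(1-\gga)\approx\log k$ blows up, and the entropic cost from Lemma~\ref{th:bernard} is $\exp(c(\gga/(1-\gga))\gl^2 j)$; to tame the $\log k$ one must damp the shift itself, and the paper takes the $j$-dependent tilt $\gl=(j\log j)^{-1/2}$, whose extra $(\log j)^{-1/2}$ is exactly what is needed to keep the cost $O(1)$ for $j\approx k$. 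The subsequent gain (Lemma~\ref{th:lemmaa12}) is only an extra $(\log j)^{-2\gep}$ factor in $A_j$, and the whole balance is therefore much tighter than ``cost bounded implies done.''

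Once these points are restored, your asymptotic bookkeeping also needs correcting: from $L(n)\lesssim(\log n)^{-\eta}$ and the inversion relation one gets $\hat L(1/h)\gtrsim(\log(1/h))^{2\eta}$, not $(\log(1/h))^{2\eta-1}$ (and $\xi(h)=1/\tf(0,h)\asymp u^{-2}/\hat L(1/u)$, with $\hat L$ to the first power). Your stated answer happens to match the theorem only because the missing power of $\log$ in $\hat L$ numerically compensates for the missing $\log k$ factor coming from $\gga(k)$ and $(\log j)^{-1/2}$. With the corrected $\hat L$ exponent, your naive balance would predict $u\lesssim\exp(-c\gb^{-1/\eta})$, which is \emph{stronger} than the theorem and is not what the method yields --- a sign that the heuristic, as written, cannot be implemented. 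Finally, you do not need \cite{cf:T_fractmom} for the ``every $\gb>0$'' claim; as in Theorems~\ref{th:a>1} and~\ref{th:a121}, the argument works uniformly in $\gb\le\gb_0$ for arbitrary $\gb_0$, so the conclusion follows from the arbitrariness of $\gb_0$.
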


\medskip

\subsection{Fractional moment method}
In order to introduce our basic idea and, effectively, start the proof, we need some additional notation.
We fix some $k\in\N$ and we set for $n\in\N$
\begin{equation}
  \label{eq:z}
  z_n:=e^{h+\beta\omega_n}.
\end{equation} 
Then, the following identity holds for $N\ge k$:
\begin{equation}
  \label{eq:rec2}
  Z_{N,\omega}\, =\, \sum_{n=k}^N Z_{N-n,\omega}\sum_{j=0}^{k-1}K(n-j)\,
  {z_{N-j}} Z_{N-j,N,\omega}.
\end{equation}
This is simply obtained by decomposing the partition function \eqref{eq:Z}
according to the value $N-n$ of the last point of $\tau$ which does
not exceed $N-k$ (whence the condition $0\le N-n\le N-k$ in the sum), and to the value $N-j$ of the first point of $\tau$
to the right of $N-k$ (so that $N-k<N-j\le N$).  It is important to notice that
$Z_{N-j,N,\omega}$
has the same law as $Z_{j,\go}$ and that the three random variables 
$Z_{N-n,\go}$, $z_{N-j}$ and 
$ Z_{N-j,N,\omega}$
 are independent, provided that $n\ge k$ and $j<k$.

\begin{figure}[h]
\begin{center}
\leavevmode
\epsfysize =2.3 cm
\epsfxsize =14.5 cm
\psfragscanon
\psfrag{0}[c]{$0$}
\psfrag{N}[c]{$N$}
\psfrag{N-k}[c]{\small $N-k$}
\psfrag{N-j}[c]{\small $N-j$}
\psfrag{N-n}[c]{\small $N-n$}
\psfrag{Z1}[c]{$Z_{N-n, \go}$}
\psfrag{Z2}[c]{$K(n-j) z_{N-j}$}
\psfrag{Z3}[c]{$Z_{ N-j,N, \go}$}
\epsfbox{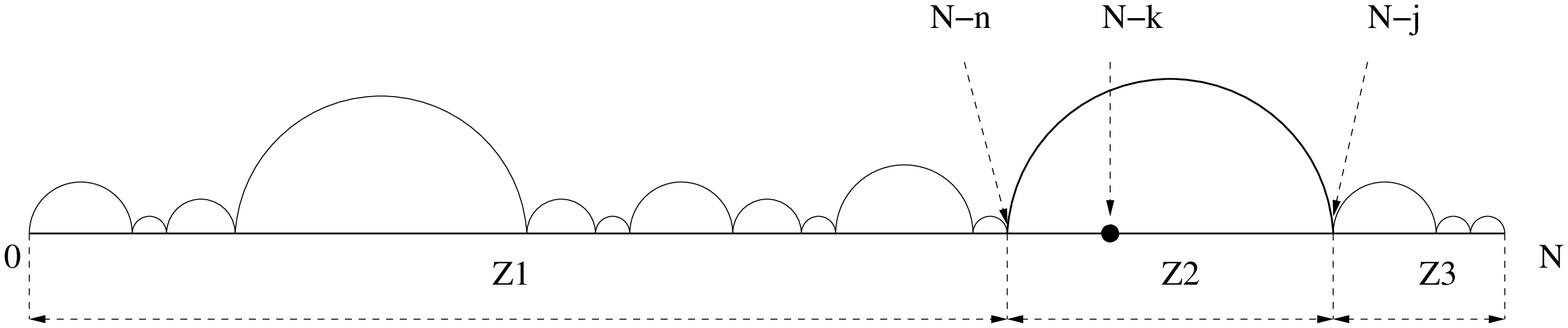}
\end{center}
\caption{\label{fig:chop} 
The decomposition of the partition function
is simply obtained by fixing a value of $k$ and 
summing over the values of the last contact (or renewal epoch) before $N-k$ and the first
after $N-k$. In the drawing the two contacts are respectively $N-n$ and $N-j$ and arcs of course  identify steps between successive contacts.}
\end{figure}

\medskip

Let $0<\gamma<1$ and $A_N:=\bbE[(Z_{N,\omega})^\gamma]$, with $A_0:=1$.
Then, from \eqref{eq:rec2} and using the elementary inequality 
\begin{equation}
  \label{eq:ineqgamma}
(a_1+\ldots+a_n)^\gamma\le
a_1^\gamma+\ldots+a_n^\gamma , 
\end{equation}
which holds for $a_i\ge0$, one deduces
\begin{equation}
  \label{eq:rec3}
  A_N\le 
\bbE[ z_1^\gamma] \sum_{n=k}^N A_{N-n}
\sum_{j=0}^{k-1}K(n-j)^\gamma A_j.
\end{equation}

The basic principle is the following:

\medskip

\begin{proposition}
\label{th:deloc}
  Fix $\beta$ and $h$. If there exists $k\in\N$ and $\gamma<1$ such that 
\begin{equation}
  \label{eq:if}
\rho\,:=\,  \bbE[ z_1^\gamma]  \sum_{n=k}^\infty 
\sum_{j=0}^{k-1}K(n-j)^\gamma A_j\le1,
\end{equation}
then $\tf(\beta,h)=0$. Moreover if
$\rho<1$ there exists $C=C(\rho, \gamma, k, K(\cdot))>0$ such that
\begin{equation}
  \label{eq:if2}
  A_N \, \le \, C \left( K(N)\right)^\gamma,
  \end{equation}
  for every $N$.
\end{proposition}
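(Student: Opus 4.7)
The plan is to use the recursion \eqref{eq:rec3}, rewritten in a compact form: setting
\begin{equation}
  u_n\,:=\,\bbE[z_1^\gamma]\sum_{j=0}^{k-1}K(n-j)^\gamma A_j \qquad (n\ge k),
\end{equation}
one has the convolution-type inequality $A_N\le \sum_{n=k}^N A_{N-n}\, u_n$, and the hypothesis \eqref{eq:if} reads $\sum_{n\ge k}u_n=\rho\le 1$. Since the sum $\rho$ is assumed finite, and since $u_n$ is comparable (for $n$ large) to $K(n)^\gamma$, a regularly varying sequence with index $-\gamma(1+\alpha)$, one automatically has $\gamma(1+\alpha)>1$, so $\{K(n)^\gamma\}_n$ is summable and subexponential.

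For the first assertion ($\tf(\beta,h)=0$), I would iterate the recursion $A_N\le\sum_n A_{N-n}\,u_n$ and compare it with the mass function at $N$ of a (possibly defective) renewal process with inter-arrival weights $u_n$, started at one of the initial points $0,1,\ldots,k-1$ where $A_j$ takes a bounded value (note $A_0=1$ and, for $j<k$, $A_j\le\bbE[Z_{j,\omega}^\gamma]$ is bounded). When $\rho\le 1$ this renewal-type upper bound grows at most polynomially in $N$, so $(1/N)\log A_N\to0$. Combining with Jensen's inequality
\begin{equation}
  \tf(\beta,h)\,=\,\lim_N\frac1N\bbE\log Z_{N,\omega}\,\le\,\liminf_N\frac{\log A_N}{\gamma N}\,\le\,0,
\end{equation}
and with the elementary lower bound $\tf\ge0$ recalled in the introduction, one concludes $\tf(\beta,h)=0$.

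For the second assertion, in the strict case $\rho<1$, the idea is to iterate the recursion $p$ times, obtaining $A_N\le\sum_{m=0}^{N-pk}A_m\, u^{*p}(N-m)+\text{boundary terms}$, and then let $p\to\infty$ to get
\begin{equation}
  A_N\,\le\,\sum_{m=0}^{k-1}A_m\,U(N-m),\qquad U(j):=\sum_{p\ge 1}u^{*p}(j).
\end{equation}
Since $\{u_n\}_n$ is (up to constants) a regularly varying sequence of index $-\gamma(1+\alpha)<-1$ with total mass $\rho<1$, the standard renewal asymptotics for defective renewals with subexponential inter-arrivals (equivalently, the geometric summation of the ``one-big-jump'' asymptotic $u^{*p}(N)\sim p\,\rho^{p-1}u(N)$, which gives $\sum_p p\rho^{p-1}=(1-\rho)^{-2}$) yield $U(N)\sim u(N)/(1-\rho)^2$, hence $U(N)\le C''K(N)^\gamma$ for $N$ large. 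Convolving with the bounded initial slice $A_0,\ldots,A_{k-1}$ produces $A_N\le CK(N)^\gamma$ for $N$ large, and by enlarging $C$ we extend this to every $N$.

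The main obstacle is the analytic input needed in the last step: while the convolution bound $A_N\le(A_{\text{init}}*U)(N)$ is formal and easy, justifying $U(N)\le C\,K(N)^\gamma$ requires the subexponential renewal theorem for defective distributions. A naive induction $A_N\le C K(N)^\gamma$ fed directly into the recursion produces a factor like $2\rho$ (from the two symmetric ``big-jump'' contributions in the convolution of two copies of $K(\cdot)^\gamma$), which would force the much stronger hypothesis $\rho<1/2$; only by summing the geometric series of iterated convolutions does one recover the sharp constant $(1-\rho)^{-2}$ and allow the full range $\rho<1$.
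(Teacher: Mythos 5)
Your argument is correct, and for the second (and harder) assertion it follows essentially the same route as the paper: reinterpret the summand in \eqref{eq:if} as the inter-arrival law $Q_k(\cdot)$ of a terminating renewal $\tilde\tau$ (with $Q_k(\infty):=1-\rho$), compare the iterated recursion for $A_N$ with the renewal equation for $u_N:=\bP(N\in\tilde\tau)$, and close via the asymptotic $u_N\sim(1-\rho)^{-2}Q_k(N)$ valid for terminating renewals with regularly varying inter-arrival, which is the result the paper invokes as \cite[Th.~A.4]{cf:Book}. Your closing remark — that a direct induction $A_N\le C K(N)^\gamma$ only closes for $\rho$ bounded away from $1$ because the convolution picks up two big-jump contributions, and that summing the geometric series of iterated convolutions recovers the sharp factor $(1-\rho)^{-2}$ — correctly pinpoints why this renewal theorem cannot be bypassed.

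For the first assertion ($\rho\le1\Rightarrow\tf=0$), however, you take a detour through renewal bounds that the paper avoids. Bounding each $A_{N-n}$ in \eqref{eq:rec3} by $\max\{A_0,\ldots,A_{N-k}\}$ and factoring this out of the double sum gives directly
\begin{equation}
A_N\,\le\,\rho\,\max\{A_0,\ldots,A_{N-k}\}\qquad\text{for }N\ge k,
\end{equation}
so by induction $A_N\le\bar A:=\max\{A_0,\ldots,A_{k-1}\}$ for \emph{every} $N$ as soon as $\rho\le 1$. This gives a uniform (not merely polynomial) bound on $A_N$ with no renewal machinery at all, after which Jensen yields $\tf(\beta,h)\le0$ as you wrote. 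Your version is not wrong, just heavier than necessary for this half of the statement.
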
 
\medskip

Of course, in view of the results we want to prove, 
the main result of Proposition~\ref{th:deloc} is the first.
The second one, namely \eqref{eq:if2}, is however of independent
interest and may be used to obtain path estimates on the process
(using for example the techniques in \cite{cf:GTdeloc} and
\cite[Ch.~8]{cf:Book}).

\medskip

{\sl Proof of Proposition \ref{th:deloc}}.  Let
$\bar A:=\max\{A_0,A_1,\ldots,A_{k-1}\}$. From \eqref{eq:rec3} it
follows that for every $N\ge k$
\begin{equation}
  A_N\, \le \, \rho \max\{A_0,\ldots,A_{N-k}\},
\end{equation}
from which one sees by induction that, since $\rho\le 1$, for every
$n$ one has $A_n\le \bar A$.  The statement $\tf(\beta,h)=0$ follows
then from Jensen's inequality:
\begin{equation}
  \tf(\beta,h)=\lim_{N\to\infty}\frac1{N\gamma}\bbE \log (Z_{N,\go})^\gamma\le 
\lim_{N\to\infty}\frac1{N\gamma}\log A_N=0.
\end{equation}


In order to prove \eqref{eq:if2} we introduce
\begin{equation}
Q_k(n)\, :=\, \begin{cases}
\bbE[ z_1^\gamma]
\sum_{j=0}^{k-1}K(n-j)^\gamma A_j,
&\text{ if } n \ge k,
\\
0 &\text{ if } n=1, \ldots k-1.
\end{cases}
\end{equation}
Since $\rho= \sum_n Q_k(n)$, the assumption $\rho<1$
tells us that $Q_k(\cdot)$ is a sub-probability distribution
and it becomes a probability distribution if we set, as we do, $Q_k(\infty):=
1-\rho$. Therefore the renewal process $\tilde \tau$ with 
inter-arrival law $Q_k(\cdot)$ is {\sl terminating}, that is 
$\tilde \tau$ contains, almost surely, only a finite number of points.
A particularity of terminating renewals with regularly varying
inter-arrival  distribution is  the asymptotic equivalence, up
to a multiplicative factor, of inter-arrival distribution and mass renewal function
(\cite[Th.~A.4]{cf:Book}), namely
\begin{equation}
\label{eq:3stars}
{u_N}\, \stackrel{N\to \infty} \sim\, \frac 1{(1-\rho)^2} {Q_k(N)},
\end{equation}
where $u_N:= \bP (N \in \tilde \tau)$ and it satisfies the 
renewal equation $u_N= \sum_{n=1}^N u_{N-n} Q_k(n)$
for $N\ge 1$ (and $u_0=1$). Since $Q_k(n)=0$ for 
$n=1, \ldots, k-1$, for the same values of $n$ we have 
$u_n=0$ too. Therefore the renewal equation may be rewritten,
for $N \ge k$,
as
\begin{equation}
\label{eq:1star}
u_N\, =\, \sum_{n=1}^{N-(k-1)} u_{N-n} Q_k(n) \, +\, u_0 Q_k(N).
\end{equation}

Let us observe now that if we set $\tilde A_N := A_N \ind _{N \ge k}$
then \eqref{eq:rec3} implies that for $N \ge k$
\begin{equation}
\tilde A_N \, \le \,  \sum_{n=1}^{N-(k-1)}\tilde A_{N-n} Q_k(n) \, +\,
P_k(N), \ \ \ \text{ with } \ \ 
P_k(N)\, :=\, \sum_{n=0}^{k-1} A(n) Q_k(N-n),
\end{equation}
and observe  that $P_k(N) \le c \, Q_k(N)$, with
$c$ that depends on $\rho$, $\gamma$, $k$ and $K(\cdot)$
(and on $h$ and $\gb$, but these variables  are kept fixed).
Therefore
\begin{equation}
\label{eq:2stars}
\tilde A_N \, \le \,  \sum_{n=1}^{N-(k-1)}\tilde A_{N-n} Q_k(n) \, +\,
c \, Q_k(N),
\end{equation}
for $N \ge k$. 
By comparing \eqref{eq:1star} and \eqref{eq:2stars}, and
by using \eqref{eq:3stars} and 
$Q_k(N) \stackrel{N \to \infty} \sim K(N)^\gamma
\bbE[ z_1^\gamma]
\sum_{j=0}^{k-1} A_j$, 
 one directly obtains  \eqref{eq:if2}.
\qed

\subsection{Disorder relevance: sketch of the proof}
Let us consider for instance the case $\ga>1$, which is technically
less involved than the others, but still fully representative of
our strategy.  Take $(\gb,h)$ such that $\gb$ is small and
$h=h_c^{ann}(\gb)+\Delta$, with $\Delta=a\gb^2$. We are therefore
considering the system inside the annealed localized phase, but close
to the annealed critical point (at a distance $\Delta$ from it), and
we want to show that $\tf(\gb,h)=0$. In view of Proposition
\ref{th:deloc}, it is sufficient to show that $\rho$ in
\eqref{eq:if} is sufficiently small, and we have the freedom to choose
a suitable $k$.  Specifically, we choose $k$ to be of the order of the
correlation length of the annealed system:
$k=1/\tf^{ann}(\gb,h)=1/\tf(0,\Delta)\approx \textrm{const.}/(a\gb^2)$, where the
last estimate holds since the phase transition of the annealed system
is first order for $\ga>1$. Note that $k$ diverges for $\gb$ small.

For the purpose of this informal discussion, assume that
$K(n)=c\,n^{-(1+\alpha)}$, {\sl i.e.}, the slowly varying function 
$L(\cdot)$ is constant. The sum over $n$ is then immediately performed
and (up to a  multiplicative constant) one is left with estimating
\begin{equation}
\label{eq:schecci}
  \sum_{j=0}^{k-1}\frac {A_j}{(k-j)^{(1+\ga)\gga-1}}.
\end{equation}

One can choose $\gamma<1$ such $(1+\ga)\gga-1>1$ and it is actually
not difficult to show that $\sup_{j<k}A_j$ is bounded by a constant
uniformly in $k$. On one hand in fact $A_j\le [\bbE
Z_{j,\go}]^\gamma=[Z_j(\Delta)]^\gamma$, where the first step follows
from Jensen's inequality and the second one from the definition of the
model (recall \eqref{eq:Zpure}). On the other hand for $j< k$, {\sl
  i.e.}, for $j$ smaller than the correlation length of the annealed
model, one has that the annealed partition function $Z_{j}(\Delta)$ is
bounded above by a constant, {\sl independently of how small $\Delta$
  is}, {\sl i.e.}, of how large the correlation length is.  This
just establishes that the quantity in \eqref{eq:schecci} is bounded,
so we need to go beyond and show that $A_j$ is small: this of course
is not true unless $j$ is large, but if we restrict the sum in
\eqref{eq:schecci} to $j\ll k$ what we obtain is small, since the
denominator is approximately $k^{(1+\ga)\gamma -1}$, that is $k$ to a power
larger than $1$.

In order to control the terms for which $k-j$ is of order $1$ a new
ingredient is clearly needed, and we really have to estimate the
fractional moment of the partition function without resorting to
Jensen's inequality. To this purpose, we apply an idea which was
introduced in \cite{cf:GLT}. Specifically, we change the law $\bbP$ of
the disorder in such a way that under the new law, $\tilde \bbP$, the
system is delocalized and $\tilde\bbE(Z_{j,\go})^\gamma$ is small.
The change of measure corresponds to tilting negatively the law of
$\go_i, i\le j$, {\sl cf.}  \eqref{eq:tildeP}, so that the system is more
delocalized than under $\bbP$.  The non-trivial fact is that with our
choice $\Delta=a\gb^2$ and $j\le 1/\tf(0,\Delta)$, one can guarantee
on one hand that $Z_{j,\go}$ is typically small under $\tilde\bbP$,
and on the other that $\bbP$ and $\tilde \bbP$ are close (their mutual
density is bounded, in a suitable sense), so that the same statement
about $Z_{j,\go}$ holds also under the original measure $\bbP$. At
this point, we have that all terms in \eqref{eq:schecci} are small:
actually, as we will see, the whole sum is as small as we wish if we
choose $a$ small.  The fact that $\tf(\gb,h)=0$ then follows from
Proposition \ref{th:deloc}.

As we have mentioned above, the case $\ga \in [1/2, 1)$ is not much 
harder, at least on a conceptual level, but this time it is not
sufficient to establish bounds on  $A_j$ that do not depend
on $j$: the exponent in the denominator of the summand in  \eqref{eq:schecci}
is in any case smaller than $1$ and one has to exploit
the decay in $j$ of $A_j$: with respect to the 
 $\ga>1$ case, here one can exploit the decay of  $\bP (j \in \tau)$
 as $j$ grows, while such a quantity converges to a positive constant if
 $\ga >1$. 
Once again the case of $j \ll k$ can be dealt with by direct 
annealed estimates, while when one gets close to $k$ 
a finer argument, direct generalization of the one used for
the $\ga>1$ case, is needed.



\section{The case $\alpha>1$}

In order to avoid repetitions let us establish that, 
in this and next sections, $R_i, i=1,2,\ldots$  denote (large)
constants,  $L_i(\cdot)$ are slowly varying functions and $C_i$ 
positive constants (not necessarily large).

\medskip 

{\sl Proof of Theorem \ref{th:a>1}}. Fix $\beta_0>0$ 
 and let $\beta\le \beta_0$, 
$h=h_c^{ann}(\beta)+ a \beta^2$ and $\gamma<1$ sufficiently close to
$1$ so that
\begin{equation}
\label{eq:alphans}
  (1+\alpha)\gamma>2.
\end{equation}
It is sufficient to show that the sum in \eqref{eq:if} can be made
arbitrarily small (for some suitable choice of $k$) by choosing $a$
small, since $\bbE [z_1^\gamma]$ can be bounded above by a constant
independent of $a$ (for $a$ small).

We choose $k=k(\beta)=1/(a\beta^2)$, so that $\beta=1/\sqrt{a
  k(\beta)}$.  In order to avoid a plethora of $\lfloor\cdot\rfloor$,
we will assume that $k(\beta)$ is integer.  Note that $k(\beta)$ is
large if $\beta$ or $a$ are small.

First of all note that, thanks to Eqs. \eqref{eq:sommasv} and 
\eqref{eq:maxsv}, the sum in the r.h.s. of \eqref{eq:if} is
bounded above by
\begin{equation}
\label{eq:S}
\sum_{j=0}^{k(\gb)-1}
\frac{L_1(k(\gb)-j)\,A_j}{(k(\gb)-j)^{(1+\ga)\gamma-1}}.
\end{equation}

We split this sum as
\begin{equation}
\label{eq:split1}
  S_1+S_2:=  \sum_{j=0}^{k(\gb)-1-R_1}
\frac{L_1(k(\gb)-j)\,A_j}{(k(\gb)-j)^{(1+\ga)\gamma-1}}
+ \sum_{j=k(\gb)-R_1}^{k(\gb)-1}
\frac{L_1(k(\gb)-j)\,A_j}{(k(\gb)-j)^{(1+\ga)\gamma-1}}.
\end{equation}
To estimate $S_1$, note that by Jensen's inequality $A_j\le (\bbE
Z_{j,\go})^\gamma\le C_1$ with $C_1$ a constant independent of $j$ as
long as $j< k(\gb)$.
Indeed, from \eqref{eq:Z} and the definition of the annealed critical
point one sees that (recall \eqref{eq:Zpure})
\begin{equation}
\label{eq:C0}
  \bbE{Z_{j,\go}}=Z_j(a\beta^2)=\bE\left(e^{a\beta^2|\tau\cap\{1,\ldots,j\}|}
\ind_{\{j\in\tau\}}\right),
\end{equation}
and the last term is clearly smaller than $e$.
Therefore, using again \eqref{eq:sommasv}
\begin{equation}
  S_1\le 
\frac{L_2(R_1)}{R_1^{(1+\ga)\gga-2}},
\end{equation}
which can be made small with $R_1$ large in view of the 
choice \eqref{eq:alphans}.  As for $S_2$, one has
\begin{equation}
\label{eq:B2}
  S_2\, \le\, 
C_2\,\max_{k(\gb)-R_1\le j<k(\gb)}A_j.
\end{equation}
We apply now Lemma~\ref{th:bernard} (note also the definition in
 \eqref{eq:tildeP}) with $N=j$ and
$\gl= 1/\sqrt{j}$ 
so that we have 
\begin{equation}
A_j \, \le \, \left[\bbE_{j, 1/\sqrt{j}} \left( Z_{j, \go}\right)\right]^\gamma
\exp\left(c \gamma/(1-\gamma)\right), 
\end{equation}
for $1/\sqrt{j} \le \min (1, (1-\gamma)/\gamma)$, that is for $a$
sufficiently small, since we are in any case assuming $j\ge
k(\gb)-R_1$.

We are therefore left with showing that $ \bbE_{j, 1/\sqrt{j}} \left[
  Z_{j, \go}\right]$ is small for the range of $j$'s we are
  considering.  For such an estimate it is convenient to recall
  \eqref{eq:hann} and to observe that for any given values of $\beta$,
  $ h$ and $\gl$ and for any $j$
\begin{equation}
\label{eq:MMM}
\bbE_{j,\gl}[Z_{j,\go}]=\bE
\left[\left(\exp\left({h-h_c^{ann}(\gb)}\right)
\frac{\M(\gb-\gl)}{\M(\gb)\M(-\gl)}
  \right)^{|\tau\cap\{1,\ldots,j\}|}\;\ind_{\{j\in\tau\}}\right].
\end{equation}
In order to exploit such a formula let us observe that
\begin{equation}
\label{eq:MM}
   \frac{\M(\gb-\gl)}{\M(\gb)\M(-\gl)}=
\exp\left[-\int_0^\gb \dd x\int_{-\gl}^0\dd y \left.\frac{\dd^2}{\dd t^2}
\log \M(t)\right|_{t=x+y}\right]
\, \le\,  e^{-C_3\beta\gl}, 
\end{equation}
which holds for $0 < \gl \le  \gb \le \gb_0$  and $C_3:=
\min_{t \in [-\gb_0, \gb_0]} \dd^2(\log \M (t))/\dd t^2>0$.
  If $a$ is sufficiently small, for
$j \le k(\gb)=1/(a\gb^2)$ we have
\begin{equation}
a\beta^2-\frac{C_3\gb}{\sqrt{j}}\le\frac1{k(\gb)}\left[1-\frac{C_3}{\sqrt
    a}\right] \le -\frac{C_3}{2k(\gb)\sqrt a}.
\end{equation}
As a consequence,
\begin{equation}
\label{eq:law}
\max_{k(\gb)-R_1\le j<k(\gb)}
\bbE_{j, 1/\sqrt{j}}(Z_{j,\go})\,\le\, 
e^{C_3\sqrt a \gb^2 R_1/2}\,
\bE\left[\exp\left(-\frac{C_3}{2\sqrt{a}k(\gb)}
\left|\tau\cap\{1,\ldots,k(\gb)\}\right|
  \right)
\right].
\end{equation}
The right-hand side in  \eqref{eq:law} can be made small
by choosing $a$ small (and this is uniform on $\beta\le \beta_0$) because of
\begin{equation}
  \label{eq:isi}
  \lim_{c\to+\infty}\limsup_{N\to\infty} 
\bE\left(e^{-(c/N)|\tau\cap\{1,\ldots,N\}|}\right)=0,
\end{equation}
that we are going to prove just below.
Putting everything together, we have shown that both $S_1$ and $S_2$
can be made small via a suitable choice of $R_1$ and $a$, and the
theorem is proven.

To prove \eqref{eq:isi}, since the function under
expectation is bounded by $1$ it is sufficient to observe that
\begin{equation}
\frac1N
\sum_{n=1}^N\ind_{\{n\in\tau\}}\stackrel{N\to\infty}\longrightarrow
\frac1{\sum_{n\in\N}n K(n)}=\frac1{\bE(\tau_1)}\, >\,0,
\end{equation}
almost surely (with respect to $\bP$) by the classical Renewal
Theorem (or by the strong law of large numbers).

The claim $h_c(\gb)>h_c^{ann}(\gb)$ for every $\gb$ follows from the 
arbitrariness of $\beta_0$.
\qed

\section{The case $1/2<\alpha<1$}

{\sl Proof of Theorem \ref{th:a121}}.  To make things clear, we fix
now $\gep>0$ small 
and $0<\gga<1$ such that
\begin{equation}
\label{eq:gga1}
\gga\left\{(1+\ga)+(1-\gep^2)\left[1-\alpha+(\gep/2)(\ga-1/2)\right]\right\}
\, > \, 2,
  \end{equation}
  and
  \begin{equation}
   \label{eq:gga2}
 \gga\left[(1+\ga)+(1-\gep^2)(1-\ga)\right]\, >\, 2-\gep^2.
 \end{equation}
Moreover we  take $\gb\le \beta_0$ and
\begin{equation}
\label{eq:h}
h=h_c^{ann}(\gb)+\Delta:=
h_c^{ann}(\gb)+a\gb^{\frac{2\ga}{2\ga-1}(1+\gep)}.
\end{equation}
We notice that it is crucial that $(\ga-1/2)>0$ for \eqref{eq:gga1} to
be satisfied.  We will take $\gep$ sufficiently small (so that
\eqref{eq:gga1} and \eqref{eq:gga2} can occur) {\sl and then, once
  $\gep$ and $\gga$ are fixed,} $a$ also small.  We set moreover
\begin{equation}
  \label{eq:kb}
  k(\gb):= \frac1{\tf(0,\Delta)}
\end{equation}
and we notice that $k(\gb)$ can be made large by choosing $a$
small, uniformly for $\gb\le\beta_0$. As in the previous section, we
assume for ease of notation that $k(\gb)\in\N$ (and we  write just
$k$ for $k(\gb)$). 

Our aim is to show that $\tf(\gb,h)=0$ if $a$ is chosen
sufficiently small in \eqref{eq:h}.  We recall that, thanks to
Proposition \ref{th:deloc}, the result is proven if we show that
\eqref{eq:S} is $o(1)$ for $k$ large.  
In order to estimate this sum, we need a couple of technical
estimates which are proven at the end of this section (Lemma \ref{th:lemmaA})
and in Appendix \ref{sec:Arenew} (Lemma \ref{th:lemmaU2}).
\begin{lemma}
\label{th:lemmaU2} Let $\ga\in(0,1)$.
There exists a constant $C_4$ such that for every
$0<h<1$ and every 
$j\le 1/\tf(0,h)$
  \begin{equation}
     Z_{j}(h)\, \le\, \frac{ C_4}{j^{1-\ga}L(j)}.
  \end{equation}
\end{lemma}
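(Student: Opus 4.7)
The plan is to exploit the standard ``tilting'' representation of the homogeneous pinning partition function. The key idea is that $Z_j(h)$ can be rewritten as the product of an exponential and the mass-renewal function of an auxiliary positive-recurrent renewal, and then to bound the latter using classical renewal theory combined with regular-variation estimates.

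First I would set $x_0 := \tf(0,h)$ and invoke the standard characterization (see {\sl e.g.} \cite[Theorem 2.1]{cf:Book}) that $x_0$ is the unique positive solution of $e^h \sum_n e^{-x_0 n} K(n) = 1$. Consequently $\tilde K(n) := e^{h-x_0 n}K(n)$ is a probability distribution on $\N$; let $\tilde\tau$ be the associated (positive recurrent) renewal and $\tilde U(j) := \bP(j\in\tilde\tau)$. A direct rewriting of \eqref{eq:Zpure} yields the identity
\begin{equation}
Z_j(h) \, =\, e^{x_0 j}\, \tilde U(j).
\end{equation}
Since $j \le 1/\tf(0,h) = 1/x_0$ forces $e^{x_0 j} \le e$, the lemma reduces to proving $\tilde U(j) \le C\, j^{\alpha-1}/L(j)$ uniformly for $1\le j \le 1/x_0$ and $0<h<1$.

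Next I would compute the mean $\tilde m := \sum_n n\tilde K(n) = -e^h \hat K'(x_0)$. Applying Karamata's Tauberian theorem to the derivative of the Laplace transform of $K(\cdot)$ (valid for $\alpha\in(0,1)$) gives, as $x_0\searrow 0$,
\begin{equation}
\tilde m \, \sim \, \alpha\,\Gamma(1-\alpha)\, L(1/x_0)\, x_0^{\alpha-1},
\end{equation}
so that $1/\tilde m \sim c\, x_0^{1-\alpha}/L(1/x_0)$. Note that this matches, up to a multiplicative constant, the target bound $j^{\alpha-1}/L(j)$ evaluated at $j = 1/x_0$ (recalling $L$ is slowly varying).

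The core of the argument is then the pointwise bound on $\tilde U(j)$, which I would split into two regimes. For $j$ much smaller than $1/x_0$, on the scale of jumps $n\le j$ the truncation factor $e^{-x_0 n}$ lies in $[e^{-1},1]$, so $\tilde K(n)$ is comparable to $K(n)$ up to multiplicative constants; a direct comparison at the level of the renewal expansion gives $\tilde U(j) \le C\, U(j)$, and Doney's theorem (\cite{cf:Doney}; see also \cite[Theorem A.7]{cf:Book}) provides $U(j) \sim (\alpha\sin(\pi\alpha)/\pi)\, j^{\alpha-1}/L(j)$, yielding the claim on this range. For $j$ of order $1/x_0$, one uses the classical renewal theorem for positive recurrent renewals in the domain of attraction of an $\alpha$-stable law: $\tilde U(j) \le C/\tilde m$ uniformly, which together with the asymptotic for $\tilde m$ and slow variation of $L$ yields $\tilde U(j) \le C' j^{\alpha-1}/L(j)$ on this range as well.

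The hard part will be making the two comparisons quantitative and, crucially, \emph{uniform} in $h$ (equivalently in $x_0$): both Doney's asymptotic and the renewal-theoretic bound have to be upgraded to hold with constants independent of the tilt. This is handled by Potter's bounds for slowly varying functions and the uniform convergence theorem of \cite{cf:RegVar}, which provide the required control of $L(j)/L(1/x_0)$ and of the renewal expansions on the full range $1\le j\le 1/x_0$. Once both regimes are merged, the desired inequality on $\tilde U(j)$, and hence on $Z_j(h)$, follows.
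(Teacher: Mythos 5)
Your tilting representation $Z_j(h) = e^{x_0 j}\,\tilde U(j)$ with $x_0 = \tf(0,h)$ and $\tilde K(n) = e^{h-x_0 n}K(n)$ is correct, and with $j\le 1/x_0$ it does reduce the lemma to showing $\tilde U(j) \le C\,j^{\alpha-1}/L(j)$ uniformly in $h$, which (by Doney's asymptotic) is the same as $\tilde U(j)\le C\,U(j)$. But neither of your two regime arguments actually establishes this, and the difficulty you defer to the last paragraph is precisely the content of the lemma.

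In regime one, the observation that $e^{-x_0 n}\in[e^{-1},1]$ for $n\le j$ does not give $\tilde U(j)\le C\,U(j)$ by ``direct comparison of renewal expansions.'' Writing things out, $\tilde U(j) = e^{-x_0 j}\sum_m e^{mh}\,\bP(\tau_m=j)$, so the comparison with $U(j)=\sum_m\bP(\tau_m=j)$ requires controlling the factor $e^{mh}$, and $m$ can be as large as $j$. The crude bound $e^{mh}\le e^{jh}\le e^{h/x_0}$ is useless: for $\ga\in(0,1)$ one has $h/\tf(0,h)\to\infty$ as $h\searrow 0$. The only reason the sum is finite is that the mass of $\bP(\tau_m=j)$ concentrates on $m\asymp j^\ga/L(j)$, for which $mh=O(1)$ when $j\le 1/\tf(0,h)$ — but making that precise is exactly a uniform exponential-moment estimate for $|\tau\cap[1,j]|$ under the conditioning $j\in\tau$, which is not a ``direct comparison.'' In regime two, the bound $\tilde U(j)\le C/\tilde m$ is not an off-the-shelf renewal theorem for a \emph{family} of renewals: the renewal theorem gives $\tilde U(j)\to 1/\tilde m$ for each fixed tilt, with a rate that depends on $\tilde K$, and there is no uniform-in-$h$ pointwise bound of this form to invoke. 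Potter bounds and the uniform convergence theorem control ratios of slowly varying functions; they do not by themselves deliver uniformity of renewal mass functions over a degenerating family of tilts.

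The paper's proof goes after the equivalent quantity $Z_j(h)/\bP(j\in\tau)$ directly. After noting (via Theorem \ref{th:pure23}(2) and \eqref{eq:DoneyB}) that it suffices to bound $\bE\bigl[\exp\bigl(c\,|\tau\cap\{1,\ldots,N\}|\,L(N)/N^\ga\bigr)\,\big\vert\, N\in\tau\bigr]$ uniformly in $N$, it (i) uses Cauchy--Schwarz to restrict the local time to $[1,N/2]$, (ii) proves a comparison of the conditional and unconditional laws of the last renewal epoch before $N/2$ so as to remove the conditioning on $\{N\in\tau\}$, and (iii) bounds the unconditional exponential moment via a Chernoff bound on $\bP(\tau_n<N)$ using the Laplace asymptotic $-\log\bE[e^{-\gl\tau_1}]\sim c_\ga\gl^\ga L(1/\gl)$. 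These three steps are what supply the uniformity you need in both of your regimes; without them (or an equivalent substitute), your argument has a genuine gap.
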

In view of $Z_j(h_c(0))=Z_j(0)=\bP(j\in\tau)$ and \eqref{eq:DoneyB},
this means that as long as $j\le 1/\tf(0,h)$ the partition function of
the homogeneous model behaves essentially like in the (homogeneous)
critical case.  \medskip

\begin{lemma}
\label{th:lemmaA}
  There exists $\gep_0>0$ such that, if $\gep\le \gep_0$ ($\gep$ 
being the same one which appears in \eqref{eq:h}),
  \begin{equation}
    \bbE_{j,1/\sqrt{j}}[Z_{j,\go}]\le
    \frac{C_5}{j^{1-\ga+(\gep/2)(\ga-1/2)}}
  \end{equation}
  for some constant $C_5$ (depending on $\gep$ but not on $\gb$ or $a$),
  uniformly in $0\le \gb\le\gb_0$ and in $k^{(1-\gep^2)}\le j<k$.
\end{lemma}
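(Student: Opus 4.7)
The plan rests on the exact identity \eqref{eq:MMM} specialized to $\gl=1/\sqrt{j}$, which gives
\[
\bbE_{j,1/\sqrt{j}}[Z_{j,\go}]\, =\, Z_j(h^*),\qquad
h^*\, :=\, \Delta+\log\frac{\M(\gb-1/\sqrt{j})}{\M(\gb)\M(-1/\sqrt{j})}.
\]
The bound \eqref{eq:MM} then yields $h^*\le \Delta-C_3\gb/\sqrt{j}$, valid uniformly for $1/\sqrt{j}\le\gb\le\gb_0$, a condition automatic for $\gb$ small and $j$ in the window under consideration.

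The crucial quantitative step is to check that the tilt systematically dominates $\Delta$ throughout $j\in[k^{1-\gep^2},k)$. With $\Delta=a\gb^{2\ga(1+\gep)/(2\ga-1)}$ and $k\asymp\gb^{-2(1+\gep)/(2\ga-1)}$ (up to slowly varying factors from Theorem \ref{th:pure23}), one finds $\gb/\sqrt{k}\asymp\gb^{(2\ga+\gep)/(2\ga-1)}$. The hypothesis $\ga>1/2$ guarantees $2\ga+\gep<2\ga(1+\gep)$, so $\gb/\sqrt{k}$ corresponds to a strictly smaller power of $\gb$ than $\Delta/a$; choosing $a$ small (depending on $\gb_0$ and $\gep$) gives $\Delta\le C_3\gb/(2\sqrt{j})$ uniformly for $\gb\le\gb_0$ and $j\le k$, so that
\[
h^*\, \le\, -\frac{C_3\gb}{2\sqrt{j}}\, <\, 0
\]
throughout the relevant window.

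The main step is to convert the negativity of $h^*$ into a polynomial improvement over Doney's critical estimate $\bP(j\in\tau)\asymp j^{\ga-1}/L(j)$. Writing $N_j:=|\tau\cap[1,j]|$ and splitting at a threshold $a_j>0$,
\[
Z_j(h^*)\, \le\, \bP(N_j\le a_j,\,j\in\tau) \, +\, e^{h^*a_j}\bP(j\in\tau).
\]
A computation from Doney's theorem plus the renewal property gives $\bE[N_j;j\in\tau]=\sum_{n=1}^j\bP(n\in\tau)\bP((j-n)\in\tau)\asymp j^{2\ga-1}/L(j)^2$, so $\bE[N_j\mid j\in\tau]\asymp j^\ga/L(j)$. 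Choosing $a_j$ of this order, the second summand has exponential factor $e^{h^*a_j}$ with $|h^*|a_j\asymp \gb j^{\ga-1/2}/L(j)$, which for $j\ge k^{1-\gep^2}$ is a positive power of $1/\gb$ uniformly in $j$; this exponential decay easily absorbs the slowly varying $L(j)$ and the additional polynomial slack $j^{-(\gep/2)(\ga-1/2)}$ required by the target. The residual $\bP(N_j\le a_j,j\in\tau)$ is controlled by a second-moment/Chebyshev estimate using $\bE[N_j^2;j\in\tau]$, itself computable from Doney.

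The principal obstacle is the uniformity in $\gb\le\gb_0$ of this truncation argument: it is not enough to improve on $\bP(j\in\tau)$ by an arbitrarily small polynomial factor, because Lemma \ref{th:lemmaU2} alone yields only $C/(j^{1-\ga}L(j))$ where the slowly varying $L(j)$ may decay, which is insufficient. The particular exponent $(\gep/2)(\ga-1/2)$ in the target has been tuned so that all slowly-varying corrections from $L(\cdot)$ can be absorbed via Potter's bounds once the main exponential decay from $e^{h^*a_j}$ is in place. The tilt parameter $1/\sqrt{j}$ is precisely the one for which the change-of-measure cost \eqref{eq:hld} (proportional to $\sum_i \gl^2 = 1$) remains bounded by a universal constant, as already exploited in \cite{cf:GLT,cf:T_fractmom}.
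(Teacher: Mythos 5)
Your first step — reducing to $Z_j(h^*)$ via \eqref{eq:MMM} and \eqref{eq:MM}, and checking that $h^*$ is bounded above by a negative multiple of $\gb/\sqrt{j}$ uniformly in the window for $a$ small — matches \eqref{eq:sole11} and is sound. The truncation step, however, has a genuine gap. With $a_j$ chosen of the order of the conditional mean $\bE[N_j\mid j\in\tau]\asymp j^\ga/L(j)$, the residual $\bP(N_j\le a_j,\,j\in\tau)$ is \emph{not} polynomially smaller than $\bP(j\in\tau)\asymp j^{\ga-1}/L(j)$: conditionally on $\{j\in\tau\}$, the normalized local time $L(j)N_j/j^\ga$ converges to a non-degenerate (Mittag--Leffler type) law, so $\bP(N_j\le c\,j^\ga/L(j)\mid j\in\tau)$ tends to a positive constant for every fixed $c>0$. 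Chebyshev (or any moment bound) cannot rescue this, because $\bE[N_j^2\mid j\in\tau]\asymp(\bE[N_j\mid j\in\tau])^2$, i.e.\ there is no concentration. So your second summand $e^{h^*a_j}\bP(j\in\tau)$ is indeed negligible, but the first summand remains comparable to $\bP(j\in\tau)$ and the target polynomial gain $j^{-(\gep/2)(\ga-1/2)}$ is not achieved.

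The fix is to set the threshold polynomially \emph{below} the mean, say $a_j\asymp j^{\ga-\gd}/L(j)$ with $\gd$ a small positive multiple of $\gep(\ga-1/2)$, and to control the head not by a variance bound but by Doney's local limit theorem $\bP(\tau_n=j)\sim n\,K(j)$ (Proposition~\ref{th:Don}): then $\bP(N_j\le a_j,\,j\in\tau)=\sum_{n\le a_j}\bP(\tau_n=j)\asymp K(j)\,a_j^2\asymp j^{\ga-1-2\gd}/L(j)$, a genuine polynomial improvement, while $|h^*|a_j\to\infty$ still makes the exponential tail negligible. This is exactly the content of Proposition~\ref{th:homog}, applied after writing $h^*=-j^{-\ga}L(j)r(j)$ with $r(j)\gtrsim j^{(\gep/2)(\ga-1/2)}/L(j)$ (your lower bound on $|h^*|$), and Potter's bound absorbs the residual slowly varying factors. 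The paper's proof of Lemma~\ref{th:lemmaA} carries out your first step and then simply invokes Proposition~\ref{th:homog}; you need to import (or reprove) that asymptotic rather than appeal to second moments.
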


\medskip

In order to bound above \eqref{eq:S}, we split it as
\begin{equation}
  S_3+S_4:=\sum_{j=0}^{\lfloor k^{(1-\gep^2)}\rfloor}
  \frac{L_1(k-j)\,A_j}{(k-j)^{(1+\ga)\gamma-1}}+
\sum_{j=\lfloor k^{(1-\gep^2)}\rfloor+1}^{k-1}
  \frac{L_1(k-j)\,A_j}{(k-j)^{(1+\ga)\gamma-1}}.
\end{equation}
For $S_3$ we use simply $A_j\le (\bbE
Z_{j,\go})^\gamma=[Z_j(\Delta)]^\gga$ and Lemma \ref{th:lemmaU2}, 
together with \eqref{eq:sommasv} and \eqref{eq:maxsv}:
\begin{equation}
\label{eq:S3}
  S_3\le \frac{L_3(k)}{k^{[(1+\ga)\gamma-1]}}\frac1{
k^{(1-\gep^2)((1-\ga)\gamma-1)}},
\end{equation}
where $L_3(\cdot)$ can depend on $\gep$ but not on $a$.  The second
condition \eqref{eq:gga2} imposed on $\gga$ guarantees that $S_3$ is
arbitrarily small for $k$ large, {\sl i.e.}, for $a$ small.

As for $S_4$, we use Lemma \ref{th:bernard} with $N=j$ and
$\gl=1/\sqrt{j}$ to estimate $A_j$ (recall the definition in
\eqref{eq:tildeP}). We get
\begin{align}
  A_j\le\left[\bbE_{j,1/\sqrt{j}}(Z_{j,\go})\right]^{\gga}\exp(c\gga/(1-\gga)),
  \label{eq:esti}
\end{align}
provided that 
$1/\sqrt j\le \min(1,(1-\gga)/\gga)$, which is true for all
$j\ge k^{1-\gep^2}$ if $a$ is small.  Then, provided we have chosen
$\gep\le \gep_0$, Lemma \ref{th:lemmaA} gives for every
$k^{(1-\gep^2)}< j<k$,
\begin{equation}
   A_j\le  \frac{C_{6}}{j^{[1-\ga+(\gep/2)(\ga-1/2)]\gamma}}.
\end{equation}
Note that $C_6$ is large for $\gep$ small (since from
\eqref{eq:gga1}-\eqref{eq:gga2} it is clear that $\gga$ must be close to 
$1$ for $\gep$ small) but it is independent of $a$. As a consequence,
using \eqref{eq:sommasv2},
\begin{equation}
\begin{split}
  S_4&\, \le\, \max_{k^{(1-\gep^2)}\le j<k}A_j\times
\sum_{r=1}^{k}\frac{L_1(r)}{r^{(1+\ga)\gga-1}}\le
\max_{k^{(1-\gep^2)}\le j<k}A_j\times
\frac{L_4(k)}{k^{(1+\ga)\gga-2}}\\
&\, \le\, C_{6}\,L_4(k)\,k^{2-(1+\ga)\gga-
(1-\gep^2)[1-\ga+(\gep/2)(\ga-1/2)]\gga}.
\end{split}
\end{equation}
Then, the first condition \eqref{eq:gga1} imposed on $\gga$  guarantees
that $S_4$ tends to zero when $k$ tends to infinity. 
\qed

\bigskip

{\sl Proof of Lemma \ref{th:lemmaA}.}  Using \eqref{eq:MMM} together
with the observation \eqref{eq:MM}, the definition of $\Delta$ and
of $k=k(\gb)$ in terms of $\tf(0,\Delta)$ (plus the 
behavior of $\tf(0,\Delta)$ for $\Delta$ small described
in Theorem \ref{th:pure23} (2)) one sees that for $j\le k(\gb)$
\begin{equation}
\label{eq:sole11}
  \bbE_{j,1/\sqrt{j}}[Z_{j,\go}]\le \bE\left(
e^{-C_{7}\frac{\gb}{\sqrt j}|\tau\cap
\{1,\ldots,j\}|}\,\ind_{\{j\in\tau\}}\right),
\end{equation}
uniformly for $0\le \gb\le\beta_0$. If moreover $j\ge k^{(1-\gep^2)}$
one has
\begin{equation}
\frac{\gb}{\sqrt j}\ge \frac{C_{8}}{j^{1/2+(\ga-1/2)(1+2
\gep^2)/(1+\gep)}}\ge  \frac{C_{8}}{j^{\ga-(\gep/2)(\ga-1/2)}},
\end{equation}
with $C_{8}$ independent of $a$ for $a$ small.
The condition that $\gep$ is small has been used, say, to neglect 
$\gep^2$ with respect to $\gep$.
Going back to \eqref{eq:sole11} and using Proposition \ref{th:homog}
one has then
\begin{equation}
  \bbE_{j,1/\sqrt{j}}[Z_{j,\go}]\le 
  \frac{C_{9}}{j^{1-\ga+(\gep/2)(\ga-1/2)}}.
\end{equation}
with $C_{9}$ depending on $\gep$ but not on $a$.
\qed

\section{The case $\ga=1/2$}

{\sl Proof of Theorem \ref{th:a12}.}  The proof is not conceptually
different from that of Theorem \ref{th:a121}, but here we have to
carefully keep track of the slowly varying functions, and we have to
choose $\gamma(<1)$ as a function of $k$.  Under our assumption
\eqref{eq:Lpiccola} on $L(\cdot)$, it is easy to deduce from Theorem
\ref{th:pure23} (2) that (say, for $0<\Delta<1$)
 \begin{equation}
 \label{eq:F1/2}
   \tf(0,\Delta)=\Delta^2\hat L(1/\Delta)\ge C(c,\eta)\Delta^2\,|\log 
   \Delta|^{2\eta}.
 \end{equation}
We take $\gb\le\gb_0$ and 
\begin{equation}
\label{eq:D12}
h\, =\, h_c^{ann}(\gb)+\Delta\, :=\,  h_c^{ann}(\gb)+a
\exp\left(-\gb^{-1/(\eta-1/2-\gep)}\right),
\end{equation}
 and, as in last section, $k=1/\tf(0,\Delta)=\Delta^{-2}/\hat L(1/\Delta)$.
We note also that (for $a<1$) 
\begin{equation}
\label{eq:gbge}
  \gb\ge |\log \Delta|^{-\eta+1/2+\gep}.
\end{equation}
We set $\gga=\gga(k)=1-1/(\log k)$. As $\gga$ is $k$--dependent one
cannot use \eqref{eq:sommasv} and \eqref{eq:maxsv} without care to pass 
from \eqref{eq:if} to \eqref{eq:S}, since one could in principle
have  $\gamma$-dependent (and therefore $k$-dependent) constants
in front.
Therefore, our first aim will be to (partly) get rid of $\gga$ in
\eqref{eq:if}. We notice that for any $j\le k-1$, for $k$ such that
$\gga(k)\ge 5/6$,
\begin{align}
\label{eq:tispiezzoindue}
  \sum_{n=k}^{\infty}K(n-j)^{\gamma}&\le
  \sum_{n=k-j}^{k^6}K(n)\exp\left[(3/2\log n-\log L(n))/\log
    k\right]+\sum_{n=k^6+1}^{\infty}[K(n)]^{5/6}.
\end{align}
Now, properties of slowly varying functions guarantee that the quantity
in the exponential in the first sum is bounded (uniformly in $j$ and
$k$). As for the second sum, \eqref{eq:sommasv} guarantees it is
smaller than $k^{-6/5}$ for $k$ large. Since by Lemma \ref{th:lemmaU2}
the $A_j$ are bounded by a constant in the regime we are considering,
when we reinsert this term in \eqref{eq:if} and we sum over $j<k$ we
obtain a contribution which vanishes at least like $k^{-1/5}$ for
$k\to\infty$ . We will therefore forget from now on the second sum in
\eqref{eq:tispiezzoindue}.

Therefore one has
\begin{align}
\rho  \le C_{10}\sum_{n=k}^{\infty}\sum_{j=0}^{k-1}K(n-j)A_j
      \le C_{11}\sum_{j=0}^{k-1}\frac{L(k-j)A_j}{(k-j)^{1/2}},
\end{align}     
where we have safely used \eqref{eq:sommasv} to get the second expression
and now $\gamma$ appears only (implicitly) in the fractional moment
$A_j$ but not in the constants $C_i$.

Once again, it is convenient to split this sum into
\begin{equation} 
  S_5+S_6:=\sum_{j=0}^{k/R_2}\frac{A_j\, L(k-j)}{ (k-j)^{1/2}}
+\sum_{j=(k/R_2)+1}^{k-1}\frac{A_j\,L(k-j)}{(k-j)^{1/2}},
\end{equation}
with $R_2$ a large constant.  To bound $S_5$ we simply use Jensen
inequality to estimate $A_j$. Lemma \ref{th:lemmaU2} gives that for
all $j\le k$,
\begin{align}
A_j\le \frac{C_{12}}{j^{\gga/2}L(j)^\gga}\le \frac{C_{13}}{\sqrt j L(j)},
\end{align}
where the second inequality comes from our choice $\gamma=1-1/(\log
k)$. Knowing this, we can use \eqref{eq:sommasv} to compute $S_5$ and
get
 \begin{equation}
\label{eq:regvar}
S_5\le \frac{C_{14}}{\sqrt{R_2}}\frac{L(k(1-1/R_2))}
{L(k/R_2)}.
\end{equation}
We see that $S_5$ can be made small
choosing $R_2$ large. It is important for the following to note that
it is sufficient to choose $R_2$ large but independent of $k$; in
particular, for $k$ large at $R_2$ fixed the last factor in
\eqref{eq:regvar} approaches $1$ by the property of slow variation of
$L(\cdot)$. As for $S_6$,
\begin{equation}
  S_6\le C_{15}\max_{k/R_2< j<k}A_j\times \sqrt k\,L(k).
\end{equation}
In order to estimate this maximum, we need to refine Lemma \ref{th:lemmaA}:
\begin{lemma}
  \label{th:lemmaa12} 
There exists a constant
$C_{16}:=C_{16}(R_2)$ such that for  $\gamma=1-1/(\log k)$ and $k/R_2< j<k$
  \begin{equation}
    A_j\le C_{16}\left(L(j)\sqrt j\,(\log j)^{2\gep}\right)^{-1}.
  \end{equation}
\end{lemma}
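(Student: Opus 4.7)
The proof would closely parallel that of Lemma~\ref{th:lemmaA}, but with the tilt parameter tuned to accommodate the marginal exponent $\alpha=1/2$ and the near-critical choice $\gamma=1-1/\log k$. The plan is to apply Lemma~\ref{th:bernard} with a tilt of size $\lambda=\kappa/\sqrt{j\log k}$ (much smaller than the $1/\sqrt j$ used in Lemma~\ref{th:lemmaA}), for a small positive constant $\kappa$ to be fixed at the end. Since $(1-\gamma)/\gamma=1/(\log k-1)$ and $\lambda\ll 1/\log k$ for $k$ large and $j\in(k/R_2,k)$, the hypothesis of Lemma~\ref{th:bernard} is satisfied. A direct second-order expansion of the Hölder cost gives the estimate $\exp(\gamma\kappa^2/2+o(1))$ uniformly for such $j,k$, since $j\lambda^2\gamma/(2(1-\gamma))=\gamma\kappa^2/2$, so that
\begin{equation*}
 A_j\,\le\,C(\kappa)\,\bigl[\bbE_{j,\lambda}(Z_{j,\omega})\bigr]^\gamma
\end{equation*}
with $C(\kappa)$ independent of $j$ and $k$.

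Next, the identity \eqref{eq:MMM} together with the bound \eqref{eq:MM} yields
\begin{equation*}
 \bbE_{j,\lambda}(Z_{j,\omega})\,\le\,\bE\bigl[\exp\bigl((\Delta-C_3\beta\lambda)|\tau\cap\{1,\ldots,j\}|\bigr)\ind_{\{j\in\tau\}}\bigr].
\end{equation*}
The key quantitative input is that $\beta\lambda$ dominates $\Delta$. From \eqref{eq:F1/2} together with $k=1/\tf(0,\Delta)$ one gets $\Delta\le C/[\sqrt{k}\,(\log k)^\eta]$, whereas \eqref{eq:gbge} gives $\beta\ge C(\log k)^{-(\eta-1/2-\epsilon)}$, so that for $j<k$
\begin{equation*}
 \beta\lambda/\Delta\,\ge\,C'\sqrt{k/j}\,(\log k)^\epsilon\,\ge\,C''\,(\log k)^\epsilon,
\end{equation*}
which diverges as $k\to\infty$ (here the condition $\epsilon<\eta-1/2$ is essential). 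Therefore for $k$ large
\begin{equation*}
 \bbE_{j,\lambda}(Z_{j,\omega})\,\le\,\bE\bigl[e^{-c|\tau\cap\{1,\ldots,j\}|}\,\ind_{\{j\in\tau\}}\bigr]
\end{equation*}
with $c:=(C_3/2)\beta\lambda\ge C_{17}(\kappa)/[\sqrt{j}\,(\log j)^{\eta-\epsilon}]$, after replacing $\log k$ by the equivalent $\log j$ for $j\in(k/R_2,k)$.

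The last step is to apply Proposition~\ref{th:homog} to this killed-renewal partition function. Since \eqref{eq:Lpiccola} forces $L(j)\le c/(\log j)^\eta$, Doney's estimate $\bP(j\in\tau)\sim C/[\sqrt j\,L(j)]$ combined with the fact that, conditional on $j\in\tau$, the cardinality $|\tau\cap\{1,\ldots,j\}|$ concentrates at scale $\sqrt j/L(j)\asymp\sqrt j(\log j)^\eta$, makes the typical value of $c|\tau\cap\{1,\ldots,j\}|$ of order $(\log j)^\epsilon$. Since $\exp(-(\log j)^\epsilon)$ decays faster than any power of $\log j$, this gives a discount factor at least $(\log j)^{-2\epsilon/\gamma}$ relative to $\bP(j\in\tau)$, producing
\begin{equation*}
 \bbE_{j,\lambda}(Z_{j,\omega})\,\le\,\frac{C}{\sqrt j\,L(j)\,(\log j)^{2\epsilon/\gamma}}.
\end{equation*}
Raising to the power $\gamma$ and noting that both $j^{\gamma/2}/\sqrt j$ and $L(j)^\gamma/L(j)$ are bounded above and below by positive constants uniformly in $j\in(k/R_2,k)$ (since $1-\gamma=1/\log k\to0$ and $L$ is slowly varying), one obtains the stated bound. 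The main obstacle is the precise form of this logarithmic discount in the killed-renewal estimate: the naive bound $Z_j(-c)\le\bP(j\in\tau)$ is insufficient, and one must genuinely exploit the logarithmic strengthening \eqref{eq:Lpiccola} to convert the mere $(\log k)^\epsilon$ excess of $\beta\lambda$ over $\Delta$ into an honest logarithmic discount on the partition function, tracking all slowly varying factors through the $\gamma$-exponentiation.
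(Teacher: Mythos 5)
Your proposal takes essentially the same route as the paper: tilt at scale $\lambda\asymp(j\log j)^{-1/2}$, apply Lemma~\ref{th:bernard} (with the near-critical $\gamma$ making the Hölder cost a constant for $j>k/R_2$), reduce via \eqref{eq:MMM}--\eqref{eq:MM} to the killed renewal $Z_j(-c)$ with $c\asymp\beta\lambda$, observe that $\beta\lambda/\Delta\gtrsim(\log k)^\epsilon$ because of \eqref{eq:F1/2} and \eqref{eq:gbge}, and finish by Proposition~\ref{th:homog}. One small cautionary remark: your intuitive gloss that the discount comes from $\exp(-(\log j)^\epsilon)$ (because the \emph{typical} conditioned local time is of order $(\log j)^\epsilon/c$) overstates the effect --- Proposition~\ref{th:homog} actually gives a polynomial discount $r(j)^{-2}=(\log j)^{-2\epsilon}$, with the dominant contribution coming from atypically \emph{small} local times rather than typical ones; but since you invoke the proposition itself rather than the heuristic, the argument is sound.
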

Given this, we obtain immediately
\begin{equation}
  S_6 \, \le\,  C_{17}(R_2)\left[\log
\left(\frac k{R_2}\right)\right]^{
-2\gep}.
\end{equation}
It is then clear that $S_6$ can be made
arbitrarily small with $k$ large, {\sl i.e.}, with $a$ small.
\qed

\medskip

{\sl Proof of Lemma \ref{th:lemmaa12}.}
Once again, we use Lemma \ref{th:bernard} with $N=j$ but this time $\gl=(j\log
j)^{-1/2}$. Recalling that $\gga=1-1/(\log k)$  we obtain
\begin{align}
  A_j\le \left[\bbE_{j,(j\log
      j)^{-1/2}}(Z_{j,\go})\right]^\gga\exp\left(c\frac{\log k}{\log
      j}\right), \label{eq:undemi}
\end{align}
for all $j$ such that $(j\log j)^{1/2} \ge \log k$. The latter
condition is satisfied for all $k/R_2<j<k$ if $k$ is large enough.
Note that, since $j>k/R_2$, the exponential factor in \eqref{eq:undemi}, is
bounded by a constant 
$C_{18}:=C_{18}(R_2)$.

Furthermore, for $j\le k$, Eqs. \eqref{eq:MMM}, \eqref{eq:MM} combined give
\begin{align}\label{eq:ss}
\bbE_{j,(j\log j)^{-1/2}}[Z_{j,\go}]\le Z_j\left(-C_{19}\gb(j\log j)^{-1/2}\right),
\end{align}
for some positive constant $C_{19}$, provided $a$ is small (here we
have used \eqref{eq:F1/2} and the definition $k=1/\tf(0,\gD)$).

In view of $j\ge k/R_2$, the definition of $k$ in terms of $\gb$ and 
assumption \eqref{eq:Lpiccola},
we see that 
\begin{equation}
\gb\ge C_{20}(\log j)^{(-\eta+1/2+\gep)}
\ge \frac {C_{21}}c L(j)(\log j)^{1/2+\gep},
\end{equation} 
so that the r.h.s. of \eqref{eq:ss} is bounded above by
\begin{equation}
  Z_j\left(-C_{21}\frac{L(j)}{c\sqrt{j}}(\log j)^{\gep}\right)
\le C_{22}\frac{(\log j)^{-2\gep}}{L(j)\,\sqrt j} ,
\end{equation}
where in the last inequality we used Lemma \ref{th:homog}. The result
is obtained by re-injecting this in \eqref{eq:undemi}, and using the
value of $\gamma(k)$.

\qed

\begin{subappendices}

\section{Frequently used bounds}
\label{sec:bounds}

\subsection{Bounding the partition function via tilting}
For $\gl \in \R$ and $N \in \N$ consider the probability measure
$\bbP_{N, \gl}$ defined by 
\begin{equation}
\label{eq:tildeP}
\frac{
\dd \bbP_{N, \gl}}{\dd \bbP} (\go)\, =\,
\frac1 {\M(-\gl)^N} 
\exp\left(-\gl \sum_{i=1}^N \go_i \right),
\end{equation} 
where $\M(\cdot)$ was defined in \eqref{eq:M23}.
Note that under $\bbP_{N,\gl}$ the random variables $\go_i$ are
still independent but no more identically distributed: the law of
$\go_i, i\le N$ is tilted while $\go_i, i>N$ are distributed exactly
like under $\bbP$.

\medskip

\begin{lemma}
\label{th:bernard}
There exists $c>0$ such that, for every $N \in \N $ and $\gamma \in
(0,1)$,
\begin{equation}
\label{eq:bernard}
\bbE \left[ \left(Z_{N, \go}\right)^\gamma 
\right]\, \le \,
\left[\bbE_{N , \gl} \left(Z_{N, \go}\right)
\right]^\gamma
\, \exp \left( c \left( \frac{\gga}{1-\gamma} \right)\gl ^2 N
\right),
\end{equation}
for $\vert \gl \vert \le \min(1,(1-\gamma)/\gamma)$. 
\end{lemma}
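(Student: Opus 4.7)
The plan is to prove \eqref{eq:bernard} by a single application of Hölder's inequality, after rewriting the left-hand side as an expectation under the tilted measure $\bbP_{N,\gl}$. Specifically, I would first write
\begin{equation*}
\bbE\left[(Z_{N,\go})^\gamma\right]\, =\, \bbE_{N,\gl}\!\left[\frac{\dd\bbP}{\dd\bbP_{N,\gl}}(\go)\,(Z_{N,\go})^\gamma\right],
\end{equation*}
and then apply Hölder with conjugate exponents $1/(1-\gamma)$ and $1/\gamma$. The factor containing $(Z_{N,\go})^\gamma$ produces $[\bbE_{N,\gl}(Z_{N,\go})]^\gamma$, which is exactly the first factor of the desired bound, so the whole task reduces to estimating the entropy-type term
\begin{equation*}
E(\gl,\gamma,N)\, :=\, \left(\bbE_{N,\gl}\!\left[\left(\frac{\dd\bbP}{\dd\bbP_{N,\gl}}\right)^{1/(1-\gamma)}\right]\right)^{1-\gamma}.
\end{equation*}

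Next I would compute $E(\gl,\gamma,N)$ explicitly, exploiting the product form \eqref{eq:tildeP} of the Radon--Nikodym derivative. Since under $\bbP_{N,\gl}$ the variables $\go_1,\dots,\go_N$ are independent with common law $\dd\bbP_{1,\gl}/\dd\bbP \propto e^{-\gl\go_1}$, and the Radon--Nikodym derivative factorises as $\M(-\gl)^N\exp(\gl\sum_{i=1}^N\go_i)$, a direct computation gives
\begin{equation*}
E(\gl,\gamma,N)\, =\, \M(-\gl)^{N\gamma}\,\M\!\left(\tfrac{\gamma\gl}{1-\gamma}\right)^{N(1-\gamma)}.
\end{equation*}
Taking logarithms, it then suffices to bound $\gamma\log\M(-\gl)+(1-\gamma)\log\M(\gamma\gl/(1-\gamma))$ from above by $c\,\gl^2\gamma/(1-\gamma)$.

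The only analytic input needed is a quadratic control on $\log \M$: since the law of $\go_1$ is centered with unit variance and has finite exponential moments, $\log\M(t)=t^2/2+O(t^3)$ near $0$, so there exists $c>0$ with $\log\M(t)\le c\,t^2$ for all $|t|\le 1$. The restriction $|\gl|\le\min(1,(1-\gamma)/\gamma)$ in the statement is exactly what is required to apply this bound to both arguments $-\gl$ and $\gamma\gl/(1-\gamma)$, since the latter then also has absolute value at most $1$. Plugging the quadratic bound in gives
\begin{equation*}
\gamma\,c\gl^2+(1-\gamma)\,c\!\left(\tfrac{\gamma\gl}{1-\gamma}\right)^{\!2}\, =\, c\,\gamma\gl^2\left(1+\tfrac{\gamma}{1-\gamma}\right)\, =\, \frac{c\,\gamma}{1-\gamma}\,\gl^2,
\end{equation*}
which, multiplied by $N$, is precisely the exponent in \eqref{eq:bernard}.

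There is no real obstacle here: the argument is a textbook Hölder/entropy estimate, and the delicate point is merely bookkeeping, namely verifying that the hypothesis $|\gl|\le(1-\gamma)/\gamma$ makes the shifted argument $\gamma\gl/(1-\gamma)$ land in the region where the quadratic Taylor bound for $\log\M$ is valid. The constant $c$ depends only on the law of $\go_1$ (through its exponential moments), uniformly on $[-1,1]$, so it can be chosen independently of $\gl$, $\gamma$ and $N$, as required.
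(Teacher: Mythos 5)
Your proposal is correct and follows the paper's proof essentially line for line: rewrite the expectation under the tilted measure $\bbP_{N,\gl}$, apply H\"older with exponents $1/(1-\gamma)$ and $1/\gamma$, compute the resulting product of $\M$-factors explicitly using the product structure of $\dd\bbP/\dd\bbP_{N,\gl}$, and close with a quadratic bound $\log\M(t)\le c\,t^2$ on $[-1,1]$. The only (cosmetic) loose point is the justification of that last bound: the local expansion $\log\M(t)=t^2/2+O(t^3)$ by itself only controls a neighbourhood of $0$, whereas the paper cleanly gets the bound on all of $[-1,1]$ by writing $\log\M(t)=\tfrac{t^2}{2}(\log\M)''(\xi)$ for some $\xi$ between $0$ and $t$ (using $\log\M(0)=0$ and $(\log\M)'(0)=\bbE[\go_1]=0$), so one may take $c=\tfrac12\max_{[-1,1]}(\log\M)''$; your subsequent remark that $c$ can be chosen uniformly on $[-1,1]$ shows you have the right idea, but the Lagrange-remainder phrasing is the tighter way to say it.
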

\medskip

\noindent
{\it Proof.}
 We have
\begin{equation}
\begin{split}
\bbE \left[ \left(Z_{N, \go}\right)^\gamma 
\right]\, &=\, 
\bbE_{N , \gl} \left[ \left(Z_{N, \go}\right)^\gamma 
\frac{\dd \bbP}{\dd \bbP_{N , \gl}}(\go)
\right]
\\
&\le \,
 \left[\bbE_{N , \gl} \left(Z_{N, \go}\right)
\right]^\gamma
\left(
\bbE_{N,\gl}\left[ \left(\frac{\dd \bbP}{\dd \bbP_{N , \gl}}
(\go)\right)^{1/(1-\gamma)}\right]\right)^{1-\gamma}
\\
&=\, \left[\bbE_{N , \gl} \left(Z_{N, \go}\right)
\right]^\gamma
\left( \M (-\gl)^\gga \M\left( \gl \gga  /(1-\gamma)\right)^{1-\gamma}\right)^N, 
\end{split}
\end{equation}
where in the second step we have used 
H\"older inequality and the last step is a direct computation.
The proof is complete once we observe that
$0 \le \log \M (x) \le  c x^2$ for $\vert x\vert \le 1$
if $c$ is the maximum of the   second derivative of $(1/2)\log \M (\cdot)$
over $[-1, 1]$.

\qed

\subsection{Estimates on the renewal process}
\label{sec:Arenew}

With the notation \eqref{eq:Zpure} one has
\medskip

\begin{proposition}\label{th:homog} Let $\alpha\in(0,1)$ and 
 $r(\cdot)$ be a function diverging at infinity and such that
\begin{equation}
\label{eq:condr}
\lim_{N\to\infty}\frac{r(N)L(N)}{N^{\ga}}=0.
\end{equation}
For the homogeneous pinning model,
\begin{equation}
Z_{N}(-N^{-\ga}L(N)r(N))\stackrel{N\to\infty}\sim
\frac{N^{\ga-1}}{L(N)\,r(N)^2}.
\end{equation}
\end{proposition}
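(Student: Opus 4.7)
The plan is to identify $Z_N(-\delta_N)$, with $\delta_N:=N^{-\alpha}L(N)r(N)$, as the mass renewal function of a terminating renewal and to deduce its asymptotics from a precise form of the single big jump principle. Expanding \eqref{eq:Z+} with $h=-\delta_N$ gives $Z_N(-\delta_N)=\sum_{k\ge1}e^{-k\delta_N}K^{*k}(N)=\bP(N\in\tilde\tau)$, where $\tilde\tau$ is the terminating renewal with step law $\tilde K(n)=e^{-\delta_N}K(n)$ and killing probability $1-e^{-\delta_N}\sim\delta_N$. The target asymptotic is $Z_N(-\delta_N)\sim K(N)/\delta_N^2$, since substituting $K(N)\sim L(N)/N^{1+\alpha}$ and the definition of $\delta_N$ then gives exactly $N^{\alpha-1}/(L(N)r(N)^2)$.

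First I would establish the single big jump estimate for $K$ regularly varying with exponent $-(1+\alpha)$, $\alpha\in(0,1)$, in two forms: pointwise, $K^{*k}(N)\sim k\,K(N)$ as $N\to\infty$ for each fixed $k$; and uniform, $K^{*k}(N)\le c_0\,k\,K(N)$ for all $k,N\ge1$. Both are classical subexponential-renewal-theory estimates obtained by induction on $k$, decomposing the convolution according to the position of the largest summand and using the regularly varying tail $\sum_{m\ge N}K(m)\sim L(N)/(\alpha N^\alpha)$. The pointwise statement must then be upgraded to hold uniformly in $k\le M_N:=A/\delta_N$ for any fixed $A>0$, namely $K^{*k}(N)=k\,K(N)(1+o_N(1))$ with $o_N(1)\to0$ uniformly in this range.

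The assumption $r(N)\to\infty$ enters decisively at this last step: it forces $M_N\ll N^\alpha/L(N)$, placing all relevant indices $k$ well below the scale at which a sum of $k$ i.i.d.\ copies of $\tau_1$ would reach $N$ by a bulk mechanism rather than through one long jump; the complementary condition $r(N)L(N)/N^\alpha\to0$ only ensures $\delta_N\to0$ so that $1-e^{-\delta_N}\sim\delta_N$. Given the refined uniformity, one splits $\sum_{k\ge1} e^{-k\delta_N}K^{*k}(N)$ at $k=M_N$: by the uniform inequality the tail is bounded by $c_0 K(N)\sum_{k>M_N}k\,e^{-k\delta_N}=O(K(N)A e^{-A}/\delta_N^2)$, negligible for $A$ large; the main part equals $K(N)(1+o(1))\sum_{k\le M_N}k\,e^{-k\delta_N}\sim K(N)/\delta_N^2$, via the elementary identity $\sum_{k\ge1}k\,e^{-k\delta}=e^{-\delta}/(1-e^{-\delta})^2$.

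The principal obstacle is the uniformity $K^{*k}(N)/(kK(N))\to1$ for $k$ growing with $N$ up to $A/\delta_N$. A direct route is to isolate the contribution from configurations in which exactly one of the $k$ inter-arrival times is of order $N$ and to bound the remainder by a "no big jump" estimate of the form $\bP(\tau_k=N,\,\max_{1\le i\le k}(\tau_i-\tau_{i-1})\le\theta N)\le \varepsilon(\theta,\,kL(N)/N^\alpha)\,K(N)$ with $\varepsilon\to0$ in the allowed range of $k$. Once this technical ingredient is in place, the remaining algebra is routine.
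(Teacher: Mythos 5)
Your plan follows essentially the same route as the paper: expand $Z_N(-\delta_N)=\sum_{k\ge 1}e^{-k\delta_N}\,\bP(\tau_k=N)$, split the sum, apply the one-big-jump local limit asymptotics $\bP(\tau_k=N)\sim kK(N)$ uniformly in the main range, and conclude via $\sum_k k e^{-k\delta}\sim\delta^{-2}$. Where the paper differs is in two technical choices that let it avoid the "principal obstacle" you flag and the auxiliary Kesten-type bound you invoke for the tail. For the uniformity, rather than re-deriving it through a "no big jump" decomposition, the paper simply cites Doney's uniform local limit theorem (Proposition~\ref{th:Don}, from~\cite{cf:Doney}): $|\bP(\tau_n=N)/(nK(N))-1|\le\sigma(N/a(n))$ with $\sigma(x)\to 0$ and $a(\cdot)$ the asymptotic inverse of $n^\alpha/L(n)$. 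Uniformity over the relevant range is then automatic as soon as $N/a(n)\to\infty$ uniformly there. To arrange this, the paper cuts not at $M_N=A/\delta_N$ but at $n_*=N^\alpha/\big(L(N)\sqrt{r(N)}\big)$, which satisfies $n_*\delta_N=\sqrt{r(N)}\to\infty$. This choice has a second payoff: the tail needs no uniform upper bound on $K^{*k}(N)$ at all, since $\sum_{n>n_*}\bP(\tau_n=N)e^{-n\delta_N}\le e^{-n_*\delta_N}\sum_n\bP(\tau_n=N)=e^{-\sqrt{r(N)}}\bP(N\in\tau)$, which is super-polynomially small relative to the target $\sim K(N)/\delta_N^2$ precisely because $r(N)\to\infty$. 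Your cutoff at $A/\delta_N$ instead forces you to control the tail via $K^{*k}(N)\le c_0 kK(N)$ for all $k$ (a nontrivial uniform estimate) and then take $A\to\infty$. So the strategy is the same, but the two lemmas you leave as sketches — uniform one-big-jump asymptotics for $k$ up to the cutoff, and the uniform Kesten bound for the tail — are exactly what the paper circumvents by citing Doney and by choosing $n_*$ cleverly; to complete your version you would either fill in those sketches (amounting to re-proving part of Doney's theorem) or adopt the paper's cutoff.
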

\medskip

To prove this result we use:
\medskip

\begin{proposition}(\cite[Theorems A \& B]{cf:Doney})\label{th:Don}
  Let $\ga\in(0,1)$.  There exists a function $\gs(\cdot)$ satisfying
\begin{equation}
\lim_{x\rightarrow+\infty} \gs(x)\, =\, 0 ,
\end{equation}
and such that 
for all $n,N\in\N$
\begin{equation}
\label{eq:fD}
\left|\frac{\bP(\tau_n=N)}{n K(N)}-1\right|\le \gs\left(\frac{N}{a(n)}\right),
\end{equation}
where $a(\cdot)$ is an asymptotic inverse of $x \mapsto x^\ga /L(x)$.

Moreover, 
\begin{equation}
  \label{eq:DoneyB}
  \bP(N\in\tau)\stackrel{N\to\infty} \sim 
  \left(
  \frac{\ga \sin(\pi \ga)}{\pi}\right)
   \frac{N^{\ga-1}}{L(N)}.
\end{equation}
\end{proposition}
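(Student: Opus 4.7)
My plan is to prove the two assertions separately: the uniform local limit theorem \eqref{eq:fD} is the main technical content, while the asymptotic \eqref{eq:DoneyB} is the classical Garsia--Lamperti renewal theorem and can be deduced from \eqref{eq:fD} by summation over $n$, or proved directly by a Tauberian argument on the renewal generating function.

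For \eqref{eq:fD}, I would combine a Fourier analytic treatment in the bulk regime $N\asymp a(n)$ with a one-big-jump decomposition in the tail regime $N/a(n)\to\infty$. Let $\hat K(\theta):=\sum_{n\ge 1}K(n)e^{i\theta n}$. A Karamata-type Abelian computation gives
\begin{equation*}
1-\hat K(\theta)\, \underset{\theta\to 0}{\sim}\, C_\alpha\, L(1/|\theta|)\,|\theta|^\alpha\,\bigl(1-i\,\sign(\theta)\tan(\pi\alpha/2)\bigr),
\end{equation*}
for an explicit $C_\alpha>0$. Rescaling $\theta=\phi/a(n)$ and using that $a(\cdot)$ is an asymptotic inverse of $x\mapsto x^\alpha/L(x)$, one has $n\bigl(1-\hat K(\phi/a(n))\bigr)\to\Psi_\alpha(\phi)$, the log characteristic exponent of a positive $\alpha$-stable law. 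The Fourier inversion identity $\bP(\tau_n=N)=(2\pi)^{-1}\int_{-\pi}^\pi \hat K(\theta)^n e^{-i\theta N}\dd\theta$ then yields the stable local limit theorem $\bP(\tau_n=N)\sim a(n)^{-1}g_\alpha(N/a(n))$, uniformly on compacts of $N/a(n)$, where $g_\alpha$ is the stable density. The known tail $g_\alpha(x)\sim c'_\alpha x^{-(1+\alpha)}$ as $x\to\infty$ matches the asymptotic $nK(N)\sim nL(N)/N^{1+\alpha}$ after using the defining relation of $a(n)$, so the two approximations agree in the transition region and identify the same constant.

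In the tail regime $N/a(n)\to\infty$, I would invoke a one-big-jump argument. Writing $\tau_n=X_1+\cdots+X_n$ with the $X_i$ i.i.d.\ of law $K(\cdot)$ and splitting on the value of $\max_i X_i$, exchangeability and the observation that at most one $X_i$ can exceed $N/2$ give
\begin{equation*}
\bP(\tau_n=N)=n\sum_{y>N/2}K(y)\,\bP(\tau_{n-1}=N-y)+\bP\bigl(\tau_n=N,\,\max_iX_i\le N/2\bigr).
\end{equation*}
In the first sum, $\bP(\tau_{n-1}=N-y)$ is summable in $y$ to at most $1$, and $K(y)=K(N)(1+o(1))$ for $y$ in the range $[N/2,N]$ since $\tau_{n-1}$ is typically of order $a(n)\ll N$; this contributes $nK(N)(1+o(1))$. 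The second term is negligible, bounded e.g.\ by $c\,n^2 K(N/2)\bar K(N/2)$ via a truncated second-moment estimate, and is of smaller order than $nK(N)$ when $N/a(n)\to\infty$. The hard part, and the reason the function $\sigma$ is introduced, is to make both steps uniform in the pair $(n,N)$ with a rate depending only on the ratio $N/a(n)$: this forces one to quantify the slow-variation estimate on $1-\hat K$, the uniform upper bound on $|\hat K(\theta)^n|$ needed for Fourier inversion, and the truncated large deviation bound, all with explicit remainders controlled by a single function of $N/a(n)$.

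For \eqref{eq:DoneyB}, I would apply Karamata's Tauberian theorem to the renewal generating function $\hat U(s):=\sum_{N\ge 0}\bP(N\in\tau)s^N=(1-\hat K(s))^{-1}$. The tail asymptotics of $K(\cdot)$ imply that $1-\hat K(e^{-\lambda})$ is regularly varying of index $\alpha$ at $\lambda=0$, hence $\hat U(e^{-\lambda})$ is regularly varying of index $-\alpha$. Karamata's Tauberian theorem then gives $\sum_{k\le N}\bP(k\in\tau)\sim N^\alpha/(\Gamma(1+\alpha)\Gamma(1-\alpha)L(N))$, and a monotonicity or Ces\`aro-to-pointwise argument yields the pointwise statement \eqref{eq:DoneyB}. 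The constant $\alpha\sin(\pi\alpha)/\pi$ comes from combining $\Gamma(1+\alpha)=\alpha\Gamma(\alpha)$ with the reflection formula $\Gamma(\alpha)\Gamma(1-\alpha)=\pi/\sin(\pi\alpha)$. Alternatively, \eqref{eq:DoneyB} can be read off directly from \eqref{eq:fD} by summing $\sum_n\bP(\tau_n=N)$ and recognizing a Riemann sum, after the substitution $y=N/a(n)$, that converges to an explicit integral of $g_\alpha$ (valid since $a(\cdot)$ is regularly varying of index $1/\alpha$).
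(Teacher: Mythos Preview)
This proposition is not proved in the thesis: as the header \verb|(\cite[Theorems A \& B]{cf:Doney})| indicates, it is quoted directly from Doney's paper and used as a black box (the thesis only remarks that \eqref{eq:DoneyB} was first obtained for $\alpha\in(1/2,1)$ by Garsia--Lamperti). So there is no ``paper's own proof'' to compare your proposal against.

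That said, your sketch is a faithful outline of how Doney actually proves these results: stable local limit theorem via Fourier inversion in the regime $N\asymp a(n)$, one-big-jump decomposition in the regime $N/a(n)\to\infty$, and a check that the two approximations agree on the overlap. Your derivation of \eqref{eq:DoneyB} from Karamata's Tauberian theorem is the classical route. One small imprecision: in the one-big-jump step you write ``$K(y)=K(N)(1+o(1))$ for $y\in[N/2,N]$'', but regular variation only gives $K(N/2)/K(N)\to 2^{1+\alpha}\ne 1$. The correct localisation is that $\tau_{n-1}$ is with high probability at most $\varepsilon N$ (since $a(n-1)\ll N$), so the dominant $y$'s lie in $[(1-\varepsilon)N,N]$, where $K(y)\sim K(N)$ does hold. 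Similarly, the bound you quote for the ``no big jump'' term is of the right flavour but needs a cleaner justification (e.g.\ iterate the decomposition once more, or use a Fuk--Nagaev-type inequality). These are routine fixes; the strategy is sound.
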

\medskip

We observe that by \cite[Th.~1.5.12]{cf:RegVar}
we have that $a(\cdot)$ is regularly varying of exponent 
$1/\ga$, in particular $\lim_{n \to \infty}a(n)/n^b=0$ if $b >1/\ga$. 
We point out also that \eqref{eq:DoneyB}
has been first established for $\ga \in (1/2,1)$
in \cite{cf:GarsiaLamperti}.

\smallskip

\begin{proof}[Proof of Proposition \ref{th:homog}]
    We put for simplicity of notation $v(N):=N^{\ga}/L(N)$.
  Decomposing $Z_{N }$ with respect to the cardinality of $\tau\cap
  \{1,\ldots,N\}$,
\begin{equation}
\label{eq:lastline}
\begin{split}
  Z_{N}(-r(N)/v(N))\, &=\,\sum_{n=1}^{N}
  \bP\left(|\tau\cap\{1,\ldots,N\}|=n,N\in
  \tau\right)e^{-n\,r(N)/v(N)}\\
  &=\, \sum_{n=1}^{N}\bP(\tau_n=N)e^{-n\,r(N)/v(N)}\\
&=\, 
  \sum_{n=1}^{\frac{v(N)}{\sqrt{r(N)}}}\bP(\tau_n=N)e^{-n\,\frac{r(N)}{v(N)}}+
  \sum_{n=\frac{v(N)}{\sqrt{r(N)}}+1}^N\bP(\tau_n=N)e^{-n\,\frac{r(N)}{v(N)}}.
  \end{split}
\end{equation}
Observe now that one can rewrite  the first term in  the last line of \eqref{eq:lastline}  as
\begin{equation}
  (1+o(1))K(N) \sum_{n=1}^{v(N)/\sqrt{r(N)}}n \,e^{-n\,r(N)/v(N)},
\end{equation}
and $o(1)$ is a quantity which vanishes for $N\to\infty$
(this follows from Proposition
\ref{th:Don}, which applies uniformly over all terms of the sum in view
of $\lim_N r(N)=\infty$).  Thanks to
condition \eqref{eq:condr}, one can estimate this sum by an integral:
\begin{align*}
  \sum_{n=1}^{v(N)/\sqrt{r(N)}}n\,
  e^{-n\,r(N)/v(N)}=\frac{v(N)^2}{r(N)^2} (1+o(1))\int_{0}^\infty\dd x\,
  x\,e^{-x}= \frac{v(N)^2}{r(N)^2}(1+o(1)).
\end{align*}
As for the second sum in \eqref{eq:lastline}, observing that
$\sum_{n\in\N}\bP(\tau_n=N)=\bP(N\in\tau)$, we can bound it above by
\begin{equation}
\bP(N\in\tau)e^{-\sqrt{r(N)}}.
\end{equation}
In view of  \eqref{eq:DoneyB}, the last term is
negligible with respect to $N^{\ga-1}/(L(N)\,r(N)^2)$
and our result is proved.
\end{proof}

\noindent
{\it Proof of Lemma \ref{th:lemmaU2}.}
Recalling the notation
\eqref{eq:Zpure},  point (2) of Theorem \ref{th:pure23} (see in 
particular the definition of $\hat L(\cdot)$) and  
\eqref{eq:DoneyB},
we see that the result we are looking for follows if we can show that
for every $c>0$ there exists $C_{23}=C_{23}(c)>0$ such that
\begin{equation}
\bE \left[e^{c|\tau\cap\{1,\ldots,N\}|L(N)/N^\ga}\Big \vert \, N\in\tau
\right]\, \le C_{23},
\end{equation}
uniformly in $N$.
Let us assume that $N/4\in \N$; by Cauchy-Schwarz inequality the
result follows if we can show that
\begin{equation}
\label{eq:withc}
\bE \left[e^{2c|\tau\cap\{1,\ldots,N/2\}|L(N)/N^\ga}\Big \vert \, N\in\tau
\right]\, \le C_{24}.
\end{equation}
Let us define $X_N :=\max\{n=0,1, \ldots, N/2:\, n \in \tau\}$
(last renewal epoch up to $N/2$).
By the renewal property  we have 
\begin{multline}
\bE \left[e^{2c|\tau\cap\{1,\ldots,N/2\}|L(N)/N^\ga}\Big \vert \, N\in\tau
\right]\\ 
\, =\, \sum_{n=0}^{N/2} 
\bE \left[e^{2c|\tau\cap\{1,\ldots,N/2\}|L(N)/N^\ga}\Big \vert \,X_N=n
\right] \bP\left( X_N=n \big \vert \, N \in \tau\right).
\end{multline}
If we can show that for every $n =0, 1, \ldots, N/2$
\begin{equation}
\bP\left( X_N=n \big \vert \, N \in \tau\right) \le C_{25} 
\bP\left( X_N=n\right),
\end{equation}
then we are reduced to proving \eqref{eq:withc} with
$\bE[\cdot \vert N \in \tau]$ replaced by $\bE[\cdot]$.

Let us then observe that
\begin{equation}
\label{eq:justbefore}
\begin{split}
\bP\left( X_N=n ,  \, N \in \tau\right)\, &=\,
\bP(n \in \tau) \bP\left( \tau_1> (N/2) -n, \, N-n \in \tau\right)
\\ 
&=\,\bP(n \in \tau)
\sum_{j= (N/2)-n+1}^{N-n} \bP( \tau_1=j)  \bP \left( N-n-j \in \tau\right).
\end{split}
\end{equation}
We are done if we can show that
\begin{equation}
\label{eq:tobesplit}
\sum_{j= (N/2)-n+1}^{N-n} \bP( \tau_1=j)  \bP \left( N-n-j \in \tau\right)
\,  \le \, 
C_{26}
\bP \left( N \in \tau\right)
\sum_{j= (N/2)-n+1}^{\infty} \bP( \tau_1=j),
\end{equation}
because the mass renewal function $\bP(N\in\tau)$ cancels when we consider the
conditioned probability and, recovering $\bP(n \in \tau)$ from
\eqref{eq:justbefore} we rebuild $\bP(X_N=n)$.  We split the sum in
the left-hand side of \eqref{eq:tobesplit} in two terms.  By using
\eqref{eq:DoneyB} (but just as upper bound) and the fact that the
inter-arrival distribution is regularly varying we obtain
\begin{multline}
\label{eq:term2}
\sum_{j= (3N/4)-n+1}^{N-n} \bP( \tau_1=j)  \bP \left( N-n-j \in \tau\right)
\\
\,  \le \, C_{27} \frac{L(N)}{N ^{1+\ga }} 
\sum_{j= (3N/4)-n+1}^{N-n} \frac1{(N-n-j+1)^{1-\ga }L(N-n-j+1)}
\\ =\, 
C_{27} \frac{L(N)}{N ^{1+\ga }} 
\sum_{j= 1}^{N/4}
\frac1{j^{1-\ga }L(j)}\, \le \, \frac{C_{28}}N. 
\end{multline}
Since the right-hand side of \eqref{eq:tobesplit} is bounded below by
$1/N$ times a suitable constant (of course if $n$ is 
close to $N/2$ this quantity is sensibly larger) this first term of the
splitting is under control.  Now the other term: since the renewal
function is regularly varying
\begin{equation}
\label{eq:term1}
\sum_{j= (N/2)-n+1}^{(3N/4)-n} \bP( \tau_1=j) \bP \left( N-n-j \in
\tau\right) \, \le \, C_{29} \bP \left( N \in \tau\right)\sum_{j=
(N/2)-n+1}^{(3N/4)-n} \bP( \tau_1=j),
\end{equation}
that gives what we wanted.

It remains to show that \eqref{eq:withc} holds without conditioning.
For this we use the asymptotic estimate $-\log \bE[\exp(- \gl \tau_1)]
\stackrel{\gl \searrow 0}\sim c_\ga \gl ^\ga L(1/\gl)$, with $c_\ga =
\int_0^\infty r^{-1-\ga} (1-\exp(-r)) \dd r=\Gamma(1-\ga)/\ga$, and
the Markov inequality to get that if $x>0$
\begin{equation}
\label{eq:withoutc}
\bP\left( 
|\tau\cap\{1,\ldots,N\}|L(N)/N^\ga >x
\right)
\, =\, 
\bP\left( \tau_n <N \right) \, \le \, 
\exp \left( - \frac 12 c_\ga \gl^\ga L(1/\gl) n  + \gl N\right),
\end{equation}
with $n$  the integer part of $x N^\ga /L(N)$ and 
$\gl \in (0, \gl_0)$ for some $\gl_0>0$.  
If one chooses  $\gl = y/N$, $y$ a positive number, then for 
$x\ge 1$ and $N$ sufficiently large (depending on $\gl_0$ and $y$)
we have that the quantity at the exponent in the right-most term in
\eqref{eq:withoutc} is bounded above by
$ -(c_\ga/3) y^\ga x  +y$. 
The proof is then complete if we select $y$ such that
$(c_\ga/3) y^\ga> 2c$ ($c $ appears in \eqref{eq:withc}) since
if $X$ is a non-negative random variable and $q$ is a real number
$\bE[\exp(qX)] =1+q\int_0^\infty e^{qx} \bP(X>x) \dd x$.

 \subsection{Some basic facts about slowly varying functions}

\label{sec:Asv}

We recall here some of the elementary properties 
of slowly varying functions which
we repeatedly use, and we refer to \cite{cf:RegVar} for a complete
treatment of slow variation.

The first two well-known facts are that, if $U(\cdot)$ is slowly
varying at infinity,
\begin{equation}
\label{eq:sommasv}
  \sum_{n\ge N}\frac{U(n)}{n^m}\stackrel{N\to\infty}\sim U(N)\frac{N^{1-m}}
{m-1},
\end{equation}
if $m>1$ and 
\begin{equation}
  \label{eq:sommasv2}
  \sum_{n=1}^N \frac{U(n)}{n^m}\stackrel{N\to\infty}\sim U(N)\frac{N^{1-m}}{1-m},
\end{equation}
if $m<1$ ({\sl cf.} for instance \cite[Sec. A.4]{cf:Book}). 
 The second two facts are that
({\sl cf.} \cite[Th. 1.5.3]{cf:RegVar})
\begin{equation}
  \label{eq:minsv} 
\inf_{n\ge N} U(n) n^m\stackrel{N\to\infty}\sim U(N)\,N^m,
\end{equation}
if $m>0$, and
\begin{equation}
  \label{eq:maxsv}
\sup_{n\ge N} U(n) n^m\stackrel{N\to\infty}\sim U(N)\,N^m, 
\end{equation}
if $m<0$.

\end{subappendices}

\qed

\bigskip


\noindent
{\bf Note added in proof.}
After  this work has appeared in preprint form
(arXiv:0712.2515 [math.PR]), the results have been improved
\cite{cf:AZ}. It has been shown in particular that 
when $L(\cdot)$ is trivial, then $\gep$ in Thereom~\ref{th:a121} can be chosen
equal to zero, with $a(0)>0$. The case
$\ga=1$ is also treated in \cite{cf:AZ}. 
The method we have developed here may be adapted
to deal with the $\ga=1$ case too: this has been 
done in \cite{cf:BS}, where a related model is treated.

\chapter{Marginal relevance of disorder for pinning models}\label{MARGREL}

\section{Introduction}

\subsection{Wetting and pinning on a defect line in $(1+1)$-dimensions}
\label{sec:intro1}
The intense activity aiming at understanding phenomena like wetting in
two dimensions \cite{cf:Abraham} 
and pinning of polymers by a defect line \cite{cf:FLN} has
led several people to focus on a class of simplified models based on
random walks. In order to describe more realistic, spatially
inhomogeneous situations, these models include disordered
interactions.  While a very substantial amount of work has been done,
it is quite remarkable that some crucial issues are not only
mathematically open (which is not surprising given the presence of
disorder), but also controversial in the  physics literature.

Let us start by introducing the most basic, and most studied, model in
the class we consider (it is the case considered 
in \cite{cf:FLNO,cf:DHV}, but also in \cite{cf:BM,cf:GN,cf:GS1,cf:GS2,cf:SC,cf:TangChate},  up to some inessential details, although 
the notations used by the various authors are quite different).
Let $S=\{S_0,S_1, \ldots\}$ be a simple
symmetric random walk on $\bbZ$, {\sl i.e.}, $S_0=0$ and $\{ S_n-S_{n-1}\}_{n \in
  \bbN}$ is an IID sequence (with law $\bP$) of random
variables taking values $\pm 1$ with probability $1/2$.  
It  is  better to take a directed walk viewpoint, that is to consider
the process $\{ (n, S_n)\}_{n=0,1, \ldots}$.
This random
walk is the {\sl free model} and we want to understand
the situation where the walk interacts with a substrate or with a defect
line that provides {\sl disordered} ({\sl e.g.}  random)
rewards/penalties each time  the walk hits it (see Fig.~\ref{fig:SRWwetting}).  The walk
may or may not be allowed to take negative values: we call {\sl
  pinning on a defect line} the first case and {\sl wetting of a
  substrate} the second one. 
  It is by now well understood that these two
cases are equivalent and we briefly discuss the wetting case only in
the caption of Figure~\ref{fig:SRWwetting}: the general model we will
consider covers both wetting and pinning cases.  The interaction is
introduced via the Hamiltonian
\begin{equation}
\label{eq:Haus}
  H_{N,\go}(S):=-\sum_{n=1}^{N}\left(\gb\go_n+h-\log \bbE(\exp(\gb \go_1))\right)\ind_{\{S_n=0\}},
\end{equation}
where $N\in 2\N$ is the system size, $h$ (homogeneous pinning
potential) is a real number, $\go:=\{\go_1,\go_2,\ldots\}$ is a
sequence of IID centered random variables with finite exponential
moments (in this work, we will restrict
to the Gaussian case), $\beta\ge0$ is the disorder strength and $\bbE$
denotes the average with respect to $\go$. It will be soon clear what
is the notational convenience in introducing the non-random term $\log
\bbE(\exp(\gb \go_1))$ (which could be absorbed into $h$ anyway).

\begin{figure}[ht]
\begin{center}
\leavevmode
\epsfxsize =10.4 cm
\psfragscanon
\psfrag{0}{$0$}
\psfrag{N}{$N$}
\psfrag{Sn}{$S_n$}
\psfrag{n}{$n$}
\psfrag{t0}{$\!\!\!\!\!\tau_0(=0)$}
\psfrag{t1}{$\tau_1$}
\psfrag{t2}{$\tau_2$}
\psfrag{t3}{$\tau_3$}
\psfrag{t4}{$\tau_4$}
\psfrag{t5}{$\tau_5(=N/2)$}
\psfrag{om2}{$\tilde\go_2$}
\psfrag{om8}{$\tilde\go_8$}
\psfrag{o12}{$\tilde\go_{12}$}
\psfrag{o14}{$\tilde\go_{14}$}
\psfrag{o16}{$\tilde\go_{16}$}
\psfrag{hlabel}{$\tilde\go_{n}\,:=\, \gb \go_n +h -\log \bbE \exp(\gb \go_1)$}
\psfrag{pinning}{\small trajectory of the pinning model}
\psfrag{wetting}{\small trajectory of the wetting model}
\epsfbox{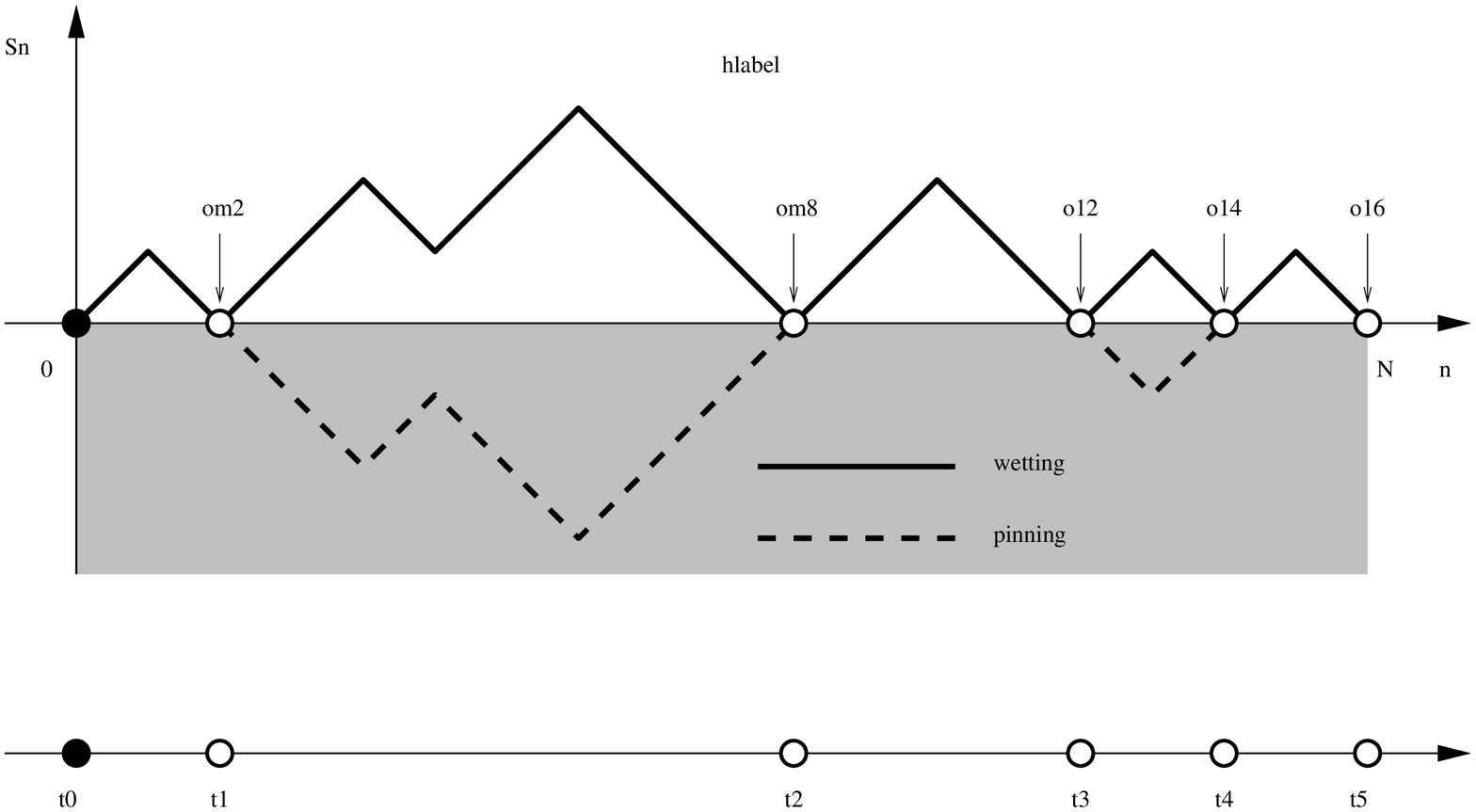}
\end{center}
\caption{In the top a random walk trajectory, pinned at $N$, which
 is not allowed to enter the lower half-plane (the shadowed region
should be regarded as a wall). The trajectory collects the {\sl
charges} $\tilde \go_n$ when it hits the wall.  The question is
whether the rewards/penalties collected pin the walk to the wall or
not. The precise definition of the wetting model is obtained by multiplying
the numerator in the right-hand side of \eqref{eq:Paus} by the indicator
function of the event $\{S_j \ge 0, \, j=1, \ldots, N\}$ (and consequently
modifying the partition function $Z_{N,\go}$).  This model
is actually equivalent to the model  \eqref{eq:Paus} without a wall, 
 whose
trajectories  (dashed line) can visit the lower half plane, provided that
$h $ is replaced by $h-\log 2$ (see \cite[Ch.~1]{cf:Book}). The bottom
part of the figure illustrates the simple but crucial point that the
energy of the model depends only on the location of the points of
contact between walk and wall (or defect line); such points
form a renewal process, giving thus a natural generalized framework in
which to tackle the problem. In order to circumvent the annoying
periodicity two of the simple random walk we set $\tau_0=0$ and
$\tau_{j+1}:= \inf\{ n/2\ge \tau_j:\, S_n=0\}$. From the renewal 
process standpoint, introducing a wall just leads to a {\sl terminating} renewal
(see text).}
\label{fig:SRWwetting}
\end{figure}

The Gibbs measure $\bP_{N,\go}$ for the pinning model is then defined as
\begin{equation}
\label{eq:Paus}
\frac{\dd \bP_{N,\go}}{\dd \bP}(S)=\frac{e^{-H_{N,\go}(S)}\ind_{\{S_N=0\}}}{Z_{N,\go}}
\end{equation}
and of course $Z_{N,\go}:=\bE[\exp(-H_{N,\go}(S))\ind_{\{S_N=0\}}]$,
where $\bE$ denotes expectation with respect to the simple
random walk  measure  $\bP$.
Note that we imposed the boundary condition $S_N=0=S_0$ (just to be
consistent with the rest of the paper).  
It is well known that the model undergoes a
localization/delocalization transition as $h$ varies: if $h$ is
larger than a certain threshold value $h_c(\beta)$ ({\sl quenched
  critical point}) then, under the Gibbs measure, the system is {\sl
  localized}: the contact fraction, defined as
\begin{equation}
\frac1N\bE_{N,\go}\left[\sum_{n=1}^N\ind_{\{S_n=0\}}\right],
\end{equation}
tends to a positive limit for $N\to\infty$. 
On the other hand, for $h<h_c(\gb)$ the system is {\sl delocalized}, {\sl i.e.}, the limit is zero.

The result we just stated is true also in absence of disorder
($\gb=0$) and a remarkable fact for the homogeneous ({\sl i.e.}
non-disordered) model is that it is exactly solvable (\cite{cf:Fisher,cf:Book} and 
references therein).  In particular, 
we know that $h_c(0)=0$, {\sl i.e.}, an arbitrarily small
reward is necessary and sufficient for pinning, and that the free
energy behaves quadratically close to criticality.  If now we consider
the {\sl annealed measure} corresponding to \eqref{eq:Paus}, that is
the model in which one replaces both $\exp(-H_{N,\go}(S))$ and
$Z_{N,\go}$ by their averages with respect to $\go$, one readily
realizes that the annealed model is a homogeneous model, and
precisely the one  we obtain by setting $\gb=0$ in
\eqref{eq:Paus}.  Therefore one finds that the {\sl annealed critical
  point} $h_c^a(\gb)$ equals $0$ for every $\beta$, and that the {\sl annealed free
  energy} $\tf^a(\gb,h)$ behaves, for $h\searrow 0$, like
$\tf^a(\gb,h)\sim const\times h^2$, while it is zero for $h\le 0$.

Very natural questions are: does $h_c(\gb)$ differ
from $h_c^a(\gb)$? Are quenched and annealed critical exponents
different?  As we are going to explain, the first question finds contradictory answers in the
literature, while no clear-cut statement can really be found about the
second. Below we are going to argue that 
these two questions are intimately related, but first we make
 a short detour in order to define a more
general class of models. It is in this more general context that the
role of the disorder and the specificity of the simple random walk
case can be best appreciated.

\subsection{Reduction to renewal-based models}
As argued in the caption of Figure~\ref{fig:SRWwetting}, the basic
underlying process is the {\sl point process} $\tau:=\{\tau_0, \tau_1,
\ldots\}$, which is a renewal process (that is $\{
\tau_{n}-\tau_{n-1}\}_{n \in \N}$ is an IID sequence of 
integer-valued random variables). We set
$K(n):=\bP(\tau_1=n)$. It is well known that, for the simple random
walk case, $\sum_{n\in\N} K(n)=1 $ (the walk is recurrent) and
$K(n)\stackrel{n \to \infty} \sim 1/(\sqrt{4\pi}n^{3/2})$. This
suggests the natural generalized framework of  models based
on discrete renewal processes such that
\begin{equation}
\label{eq:Kintro}
\sum_{n\in \N} K(n) \le 1 \ \text { and } \ K(n)\stackrel{n \to \infty} \sim \frac{C_K}{n^{1+\ga}},
\end{equation}
with $C_K>0$ and $\ga>0$. 
We are of course employing
the standard notation $a_n \sim b_n$ for 
$\lim a_n/b_n =1$.
The case $\sum_{n\in \N} K(n) < 1$ 
 refers to transient (or {\sl terminating}) renewals (of which the wetting 
 case is an example), see also 
Remark \ref{rem:trans} below. This
framework includes for example the simple random walk in $d \ge 3$, for which 
$\sum_{n\in \N} K(n) < 1$ and $\ga =(d/2)-1$, but it is of course
much more general.
We will come back with more details on this model, but let us
just say now that the definition of the Gibbs measure
is given in this case by \eqref{eq:Haus}-\eqref{eq:Paus}, with
$S$ replaced by $\tau$ in the left-hand side and with
the event $\{S_n=0\}$ replaced by the event 
$\{\text{there is } j \text{ such that } \tau_j=n\}$.

\subsection{Harris criterion and disorder relevance: the state of the art}
The questions mentioned at the end of Section \ref{sec:intro1} are
typical questions of disorder relevance, {\sl i.e.}, of stability of
critical properties with respect to (weak) disorder. In
renormalization group language, one is asking whether or not disorder
drives the system towards a new fixed point.  A heuristic tool which
was devised to give an answer to such questions is the  {\sl
Harris criterion} \cite{cf:Harris}, originally proposed for random
ferromagnetic Ising models. The Harris criterion states that disorder
is relevant if the specific heat exponent of the pure system is
positive, and irrelevant if it is negative. In case such critical
exponent is zero (this is called a {\sl marginal case}), the Harris
criterion gives no prediction and a case-by-case delicate analysis is
needed.  Now, it turns out that the random pinning model described
above is a marginal case, and from this point of view it is not
surprising that the question of disorder relevance is not solved yet,
even on heuristic grounds: in particular, the authors of
\cite{cf:FLNO} (and then also \cite{cf:GS1,cf:GS2} and, very recently,
\cite{cf:GN}) claimed that for small $\beta$ the quenched critical
point coincides with the annealed one (with our conventions, this
means that both are zero), while in \cite{cf:DHV} it was concluded
that they differ for every $\beta>0$, and that their difference is of
order $\exp(-const/\beta^2)$ for $\beta$ small (we mention
\cite{cf:BM,cf:SC,cf:TangChate} which  support this 
second possibility). Note that such a quantity is smaller than any
power of $\beta$, and therefore vanishes at all orders in
weak-disorder perturbation theory (this is also typical of marginal
cases).

In an effort to reduce the problem to its core, beyond the
difficulties connected to the random walk or renewal structure, a {\sl
  hierarchical pinning model}, defined on a diamond lattice,
   was introduced in \cite{cf:DHV}. In this
case, the laws of the partition functions for the systems of size $N$
and $2N$ are linked by a simple recursion. The role of $\alpha$ is
played here by a real parameter $B\in(1,2)$, which is related to the
geometry of the hierarchical lattice.  Also in this case, the Harris
criterion predicts that disorder is relevant in a certain regime
(here, $B<B_c:=\sqrt 2$) and irrelevant in another ($B>B_c$), while
$B=B_c$ is the marginal case where the specific heat critical exponent
of the pure model vanishes.  Again, the authors of \cite{cf:DHV}
predicted that disorder is marginally relevant for $B=B_c$, and that
the difference between annealed and quenched critical point behaves
like $\exp(-const/\beta^2)$ for $\beta$ small (they gave also
numerical evidence that the critical exponent is modified by
disorder).

Let us mention that hierarchical models based on diamond lattices have played an important role in 
elucidating the effect of disorder on various statistical mechanics 
models: we mention for instance \cite{cf:DG}.

The mathematical comprehension of the question of disorder relevance
in pinning models has witnessed remarkable progress lately.  First of
all, it was proven in \cite{cf:GT_cmp} that an arbitrarily weak (but
extensive) disorder changes the critical exponent if $\ga>1/2$ (the
analogous result for the hierarchical model was proven in
\cite{cf:LT}). Results concerning the critical points came later: in
\cite{cf:Ken,cf:T_cmp} it was proven that if $\ga<1/2$ then
$h_c(\gb)=0$ (and the quenched critical exponent coincides with the
annealed one) for $\gb$ sufficiently small (the analogous result for
the hierarchical model was given in \cite{cf:GLT}). Finally, the fact
that $h_c(\gb)>0$ for every $\gb>0$ (together with the correct
small-$\gb$ behavior) in the regime where the Harris criterion
predicts disorder relevance was proven in \cite{cf:GLT} in the
hierarchical set-up, and then in \cite{cf:DGLT,cf:AZ} in the
non-hierarchical one. One can therefore safely say that the
comprehension of the relevance question is by now rather solid, {\sl
  except in the marginal case} (of course some problems remain
open, for instance the determination of the value of the quenched
critical exponent in the relevant disorder regime, beyond the bounds
proved in \cite{cf:GT_cmp}).

 \subsection{Marginal relevance of disorder}
 In this work, we solve the question of disorder relevance for the
 marginal case $\alpha=1/2$ (or $B=B_c$ in the hierarchical
 situation), showing that {\sl quenched and annealed critical points
   differ for every disorder strength $\gb>0$}. We also give a
 quantitative bound, $h_c(\gb)\ge \exp(-const/\beta^4)$ for $\gb$
 small, which is however presumably not optimal.  The method we use is
 a non-trivial extension of the {\sl fractional moment -- change of
   measure method} which already allowed to prove disorder relevance
 for $B<B_c$ in \cite{cf:GLT} or for $\ga>1/2$ in \cite{cf:DGLT}. A
 few words about the evolution of this method may be useful to the
 reader. The idea of estimating non-integer moments of the partition
 function of disordered systems is not new: consider for instance
 \cite{cf:BPP} in the context of directed polymers in random
 environment, or \cite{cf:AM} in the context of Anderson
 localization (in the latter case one deals with non-integer moments
 of the propagator). However, the power of non-integer moments in
 pinning/wetting models was not appreciated until \cite{cf:T_fractmom},
 where it was employed to prove, among other facts, that quenched and
 annealed critical points differ for large $\gb$, irrespective of the
 value of $\ga\in(0,\infty)$. The new idea which was needed to treat
 the case of weak disorder (small $\gb$) was instead introduced in
 \cite{cf:GLT,cf:DGLT}, and it is a change-of-measure idea, coupled
 with an {\sl iteration procedure}: one changes the law of the
 disorder $\go$ in such a way that the new and the old laws are very
 close in a certain sense, but under the new one it is easier to prove
 that the fractional moments of the partition function are small. In
 the relevant disorder regime, $\ga>1/2$ or $B<B_c$, it turns out that
 it is possible to choose the new law so that the $\go_n$'s are still
 IID random variables, whose law is simply tilted with respect to the
 original one.  This tilting procedure is bound to fail if applied for
 arbitrarily large volumes, but having such bounds for sufficiently
 large, but finite, system sizes is actually sufficient because of an
 iteration argument (which appears very cleanly in the hierarchical
 set-up).

In order to deal with the marginal case we will instead introduce 
 a long-range anti-correlation structure
 for the $\go$-variables.  Such correlations are carefully chosen in
 order to reflect the structure of the two-point function of the
 annealed model and, in the non-hierarchical case, they are
 restricted, via a coarse-graining procedure 
inspired by \cite{cf:T_cg}, only to suitable {\sl
 disorder pockets}.

We mention also that one of us \cite{cf:Hubert} proved recently that
disorder is marginally relevant in a different version of the
hierarchical pinning model. What simplifies the task in that case is
that the Green function of the model is spatially inhomogeneous and
one can take advantage of that by tilting the $\go$-distributions in a
inhomogeneous way (keeping the $\go$'s independent). The Green
function of the hierarchical model proposed in \cite{cf:DHV} is
instead constant throughout the system and inhomogeneous tilting does
not seem to be of help (as it does not seem to be of help in the
non-hierarchical case, since it does not match with the coarse
graining procedure).

\smallskip 
 The paper is organized as follows: the hierarchical
(resp. non-hierarchical) pinning model is precisely defined in
Section \ref{sec:Hmodel} (resp. in Section \ref{sec:nHmodel}), where
we also state our result concerning marginal relevance of disorder.
Such result is proven in Section \ref{sec:Hproof} in the hierarchical
case, and in Section \ref{sec:nHproof} in the non-hierarchical one.

In order not to hide the novelty of the
idea with technicalities, we restrict ourselves to Gaussian disorder
and, in the case of the non-hierarchical model, we do not treat the
natural generalization where $K(\cdot)$ is of the form
$K(n)={L(n)}/{n^{3/2}}$ with $L(\cdot)$ a slowly varying function
\cite[VIII.8]{cf:Feller2}.  We plan to come back to both issues in a
forthcoming paper \cite{cf:kbodies}.

\section{The hierarchical model}

\label{sec:Hmodel}

Let $1<B<2$.
We study the following iteration which transforms a vector $\{R_n^{(i)}\}_{i\in\N}\in (\R^+)^\N$ into
a new vector $\{R_{n+1}^{(i)}\}_{i\in\N}\in(\R^+)^\N$:
\begin{equation}
  \label{eq:Rn+1}
  R^{(i)}_{n+1}\, =\, \frac{R_n^{(2i-1)} R_n^{(2i)}+(B-1)}B, 
\end{equation}
for $n\in\N\cup\{0\}$ and $i\in\N$.  

In particular, we are interested in
the case in which the initial condition is random and  given by
$R_0^{(i)}=e^{\beta\go_i-\beta^2/2+h}$, with $\go:=\{\go_i\}_{i\in\N}$
a sequence of IID standard Gaussian random variables and $h\in\R, \gb\ge0$.  We
denote by $\bbP$ the law of $\go$ and by $\bbE$
the corresponding average.  In this case, it is immediate to realize
that for every given $n$ the random variables $\{R_n^{(i)}\}_{i\in\N}$ are IID. We
will study the behavior for large $n$ of $X_n:=R_n^{(1)}$.

It is easy to see that the average of $X_n$  
 satisfies the  iteration
\begin{equation}
\label{eq:homomap}
  \bbE({X_{n+1}})\, =\, \frac{(\bbE{X_n})^2+(B-1)}B, 
\end{equation}
with initial condition $\bbE({X_0})=e^h$. The map \eqref{eq:homomap}
has two fixed points: a stable one, $\bbE X_n=(B-1)$, and an unstable
one, $\bbE X_n=1$. This means that if $0\le\bbE X_0<1$ then $\bbE X_n$
tends to $(B-1)$ when $n\to\infty$, while if $\bbE X_0>1$ then $\bbE
X_n$ tends to $+\infty$.

\begin{rem}\rm
  In \cite{cf:DHV} and \cite{cf:GLT}, the model with $B>2$ was
considered. However, the cases $B\in(1,2)$
and $B\in(2,\infty)$ are equivalent. Indeed, if $R_n^{(i)}$ satisfies
\eqref{eq:Rn+1} with $B>2$, it is immediate to see that $\hat
R_n^{(i)}:= R_n^{(i)}/(B-1)$ satisfies the same iteration but with $B$
replaced by $\hat B:=B/(B-1)\in(1,2)$. In this work, we prefer to work
with $B\in(1,2)$ because things turn out to be notationally simpler
({\sl e.g.}, the annealed critical point (defined in the next section) turns
out to be $0$ rather than $\log (B-1)$).  In the following, whenever
we refer to results from \cite{cf:GLT} we give them for $B\in(1,2)$.
\end{rem}

\subsection{Quenched and annealed free energy and critical point}
The random variable $X_n$ is interpreted as the partition function of
the hierarchical random pinning model on a diamond lattice of
generation $n$ (we refer to \cite{cf:DHV} for a clear discussion of
this connection).  The {\sl quenched free energy} is then defined as
\begin{equation}
  \label{eq:F27}
  \tf(\beta,h):=\lim_{n\to\infty}\frac1{2^n}\bbE \log X_n.
\end{equation}
In \cite[Th. 1.1]{cf:GLT} it was proven, among other facts, that for every $\beta\ge 0,h\in\R$ the limit
\eqref{eq:F27} exists and  it is non-negative. Moreover, $\tf(\beta,\cdot)$ is convex and non-decreasing.
On the other hand, the {\sl annealed free energy} is by definition
\begin{equation}
  \label{eq:Fa}
  \tf^a(\beta,h):=\lim_{n\to\infty}\frac1{2^n}\log \bbE X_n.
\end{equation}
Since the initial condition of \eqref{eq:Rn+1} was normalized so that $\bbE X_0=e^h$, it is
easy to see that the annealed free energy is nothing but the free energy of the non-disordered model:
\begin{equation}
  \label{eq:qz}
\tf^a(\beta,h)=\tf(0,h).  
\end{equation}
Non-negativity of the free energy allows to define the {\sl quenched critical} point in a natural way, as
\begin{equation}
  \label{eq:hc222}
  h_c(\beta):=\inf\{h\in\R: \tf(\beta,h)>0\},
\end{equation}
and analogously one defines the {\sl annealed critical point}
$h_c^a(\beta)$. In view of observation \eqref{eq:qz}, one sees that
$h_c^a(\beta)=h_c(0)$.  Monotonicity and convexity of $\tf(\beta,\cdot)$ imply that $\tf(\beta,h)=0$ for
$h\le h_c(\beta)$.

For the annealed system, the critical point
and the critical behavior of the free energy around it are known (see
\cite{cf:DHV} or \cite[Th. 1.2]{cf:GLT}). What one finds is that for
every $B\in(1,2)$ one has $h_c(0)=0$, and there exists $c:=c(B)>0$
such that for all $0\le h\le 1$
\begin{equation}
\label{eq:alpha-1}
  c(B)^{-1}h^{1/\alpha}\le \tf(0,h)\le c(B) h^{1/\alpha},
\end{equation}
where 
\begin{equation}
  \label{eq:alphaia}
  \alpha:=\frac{\log(2/B)}{\log 2}\in (0,1).
\end{equation}
Observe that $\alpha$ is decreasing as a function of $B$, and equals $1/2$ for $B=B_c:=\sqrt2$.

\subsection{Disorder relevance or irrelevance}

The main question we are interested in is whether quenched and annealed critical points differ, and 
if yes how does their difference behave for small disorder.
Jensen's inequality, $\bbE\log X_n\le \log \bbE X_n$, implies in particular that $\tf(\beta,h)\le \tf(0,h)$ so that
$h_c(\beta)\ge h_c(0)=0$. Is this inequality strict?

In \cite{cf:GLT} a quite complete picture was given, except in the marginal case $B=B_c$ which was left open:

\begin{theorem}\cite[Th. 1.4]{cf:GLT}
  If $1<B<B_c$, $h_c(\gb)>0$ for every $\gb>0$ and there exists $c_1>0$ such that for $0\le \beta\le 1$
  \begin{equation}
\label{eq:relh}
    c_1 \beta^{2\alpha/(2\alpha-1)}\le h_c(\beta)\le c_1^{-1} \beta^{2\alpha/(2\alpha-1)}.
  \end{equation}

If $B=B_c$ there exists $c_2>0$ such that for $0\le \beta\le 1$
\begin{equation}
\label{eq:margh}
  h_c(\beta)\le \exp(-c_2/\beta^2).
\end{equation}

If $B_c<B<2$ there exists $\beta_0>0$ such that $h_c(\beta)=0$ for every $0<\beta\le \beta_0$.
\end{theorem}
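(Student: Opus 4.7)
The plan is to treat the three regimes separately, driven by a common dichotomy: the recursion for the normalized variance $V_n := \var(X_n)/(\bbE X_n)^2$, when linearized around the unstable fixed point $\bbE X_n = 1$, has tangent coefficient $2(B-1)^2/B^2$, which is $<1$, $=1$, or $>1$ according to whether $B>B_c$, $B=B_c$, or $B<B_c$. This is precisely the Harris dichotomy and drives the entire proof.

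For part (iii) ($B > B_c$, irrelevance) and for the upper bounds on $h_c(\gb)$ in parts (i) and (ii), a second-moment / Paley--Zygmund scheme suffices. In case (iii) I would fix $h$ slightly positive, linearize the variance recursion around $\bbE X_n \approx 1$, and use the contraction factor $<1$ to show that $V_n$ stays small uniformly while $\bbE X_n$ slowly escapes the basin of the unstable fixed point; Paley--Zygmund then yields $\bbE\log X_n \geq (1-o(1))\log \bbE X_n$, so $\tf(\gb,h) \geq (1-o(1))\tf(0,h) > 0$ and hence $h_c(\gb)=0$. For the upper bounds on $h_c(\gb)$ in the relevant and marginal cases, one picks $h$ at the conjectured scale ($\gb^{2\alpha/(2\alpha-1)}$ or $\exp(-c/\gb^2)$) and stops the iteration at a generation $n_0(\gb)$ such that $\bbE X_{n_0}$ already exceeds $1$ while $V_{n_0}$ is still $o(1)$; the existence of such an $n_0$ is where $B \ne B_c$ vs. $B=B_c$ matters most, since $V_n$ grows geometrically for $B<B_c$ but only polynomially at $B=B_c$, which is exactly what allows a much larger $n_0$ and hence a stretched-exponentially small $h$ in the marginal case.

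The heart of the theorem is the lower bound in part (i), asserting $h_c(\gb)>0$ for every $\gb>0$ in the relevant regime. My plan follows the fractional moment strategy: set $A_n := \bbE[X_n^\gamma]$ with $\gamma \in (0,1)$ close to $1$. The inequality $(a+b)^\gamma \leq a^\gamma+b^\gamma$ applied to the recursion \eqref{eq:Rn+1} produces the scalar bound $A_{n+1} \leq (A_n^2+(B-1)^\gamma)/B^\gamma$. The quadratic map $f_\gamma(x) := (x^2+(B-1)^\gamma)/B^\gamma$ has, for $\gamma$ close enough to $1$, an attractive fixed point near $B-1$ and a repulsive one $x_\gamma^*$ near $1$; consequently, as soon as one exhibits a single $n_0$ with $A_{n_0} < x_\gamma^*$, the sequence $\{A_n\}$ stays trapped below $x_\gamma^*$ forever, and Jensen forces $\tf(\gb,h) \leq \lim_n (2^n\gamma)^{-1}\log A_n = 0$.

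To produce such an $n_0$ at $h = c\,\gb^{2\alpha/(2\alpha-1)}$ for $c$ sufficiently small, I would use a change of measure: replace $\bbP$ by $\widetilde{\bbP}$ under which $\go_1,\dots,\go_{2^{n_0}}$ are Gaussian with mean $-\lambda$ (and the remaining $\go_i$ are unaffected), where $\lambda = \delta\,2^{-n_0/2}$ and $\delta$ is a small but $\gb$-independent parameter. Hölder's inequality then gives
\begin{equation}
A_{n_0} \leq \bigl(\widetilde\bbE[X_{n_0}]\bigr)^\gamma \,\exp\!\left(\frac{\gamma\delta^2}{2(1-\gamma)}\right),
\end{equation}
and the cost factor is $O(1)$ uniformly in $\gb$. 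Under $\widetilde{\bbP}$, $\widetilde\bbE[X_n]$ obeys the deterministic map \eqref{eq:homomap} with initial condition $\widetilde\bbE X_0 = \exp(h-\gb\lambda)$. Choosing $n_0$ of order $\log_2(\gb^{-2/(2\alpha-1)})$ gives $\gb\lambda \asymp \delta\,\gb^{2\alpha/(2\alpha-1)}$, which dominates $h = c\,\gb^{2\alpha/(2\alpha-1)}$ whenever $\delta \gg c$; then $\widetilde\bbE X_0 < 1$ and, over the $n_0$ iterations, $\widetilde\bbE X_{n_0}$ is driven to the attractive fixed point $B-1$, yielding $A_{n_0} \lesssim (B-1)^\gamma < x_\gamma^*$ for $\gamma$ close to $1$. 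The main obstacle, and the feature that limits this scheme to $\alpha \ne 1/2$, is the competition between the change-of-measure cost (requiring $\delta$ small) and its effectiveness (requiring $\delta \gg c$): this balance both fixes the exponent $2\alpha/(2\alpha-1)$ and degenerates at $\alpha=1/2$, which is exactly why the long-range-correlated change of measure developed in the rest of the paper is indispensable for the marginal case.
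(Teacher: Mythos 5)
Your high-level plan is essentially the one the paper uses: second-moment/Paley--Zygmund for the irrelevant regime and the upper bounds, and a fractional-moment plus tilt change-of-measure argument for the hard lower bound when disorder is relevant, with the Harris dichotomy encoded in the linearized variance recursion at the unstable fixed point. The architecture is right; what follows are the places where your write-up is either a different route from the paper or needs tightening.

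\emph{Convention slip in the tangent coefficient.} The statement is phrased in the $B\in(1,2)$ convention, where the unstable fixed point of $x\mapsto(x^2+B-1)/B$ is $1$ and the stable one is $B-1<1$. Linearizing the normalized-variance recursion there gives tangent $2/B^2$, not $2(B-1)^2/B^2$ (that is the $B>2$ version via the duality $B\leftrightarrow B/(B-1)$). Your sign conclusions ($<1$, $=1$, $>1$ for $B>B_c$, $B=B_c$, $B<B_c$) are correct, but only after this correction.

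\emph{Your scalar iterate vs.\ the paper's.} You take $A_n:=\bbE[X_n^\gamma]$, giving $A_{n+1}\le(A_n^2+(B-1)^\gamma)/B^\gamma$ and then argue $A_{n_0}<x_\gamma^*$ traps the sequence below the unstable fixed point $x_\gamma^*$ of this quadratic map. This is a valid variant, but note that $x_\gamma^*\to1$ as $\gamma\to1$ (with $x_\gamma^*\approx 1-c_B(1-\gamma)$), so the threshold is only a $O(1-\gamma)$-distance below $1$. The H\"older cost $\exp(\gamma\delta^2/(2(1-\gamma)))$ exceeds $1$ by roughly $\delta^2/(2(1-\gamma))$, so the condition $A_{n_0}<x_\gamma^*$ becomes a genuine three-way balance between $1-\gamma$, $\delta$, and the amount by which $\tilde\bbE[X_{n_0}]$ is pushed below $1$. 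It works (e.g.\ taking $\delta\asymp 1-\gamma$ and $\tilde\bbE[X_{n_0}]\le 1-a$ with $a$ a fixed small constant), but it is delicate enough that it should be spelled out. The paper instead iterates $A_n:=\bbE\big[([X_n-(B-1)]_+)^\gamma\big]$, which yields $A_{n+1}\le(A_n^2+2(B-1)^\gamma A_n)/B^\gamma$ and a condition of the form ``$A_{n_0}$ is absolutely small,'' avoiding this competition entirely. Your version buys a slightly simpler-looking recursion at the price of a fragile threshold.

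\emph{The claim that $\tilde\bbE[X_{n_0}]$ reaches $B-1$.} With the specific choice $n_0\asymp\log_2(\gb^{-2/(2\alpha-1)})$ and $\lambda=\delta2^{-n_0/2}$, the tilted mean starts at $\tilde\bbE X_0\approx 1-(\delta-c)\gb^{2\alpha/(2\alpha-1)}$ and, over $n_0$ generations, the distance from $1$ is multiplied by $(2/B)^{n_0}=\gb^{-2\alpha/(2\alpha-1)}$ (to leading order), so $\tilde\bbE X_{n_0}\approx 1-(\delta-c)$, a small constant distance from $1$, not the stable fixed point $B-1$. To get a fixed $O(1)$ gap you must either run $K$ more generations (tilting all $2^{n_0+K}$ charges with the rescaled $\lambda=\delta 2^{-(n_0+K)/2}$, which shrinks the admissible $c$ by $2^{-K/2}$ but leaves the H\"older cost unchanged), or define $n_0$ by a stopping rule as the paper does (the first $n$ at which the unmodified annealed mean crosses a fixed level). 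Either way the arithmetic closes; your sketch glosses over this.

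\emph{General disorder.} Your change of measure is a Gaussian mean shift and the cost you write, $\exp(\gamma\delta^2/(2(1-\gamma)))$, is exactly the Gaussian value. The theorem assumes only finite exponential moments, so the change of measure should be the exponential tilt $\dd\tilde\bbP/\dd\bbP\propto\exp(-\lambda\sum\go_i)$; the cost is then of the same order (this is exactly Lemma \ref{th:bernard} in Chapter 4, or Lemma \ref{th:lemP} in Chapter 2), but not the clean Gaussian formula. Not a gap in ideas, but worth being precise about.

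\emph{The remaining claims.} Your description of the marginal-case variance growth as ``polynomial'' is slightly off; at $B=B_c$ the contraction factor is exactly $1$ and the normalized variance stays essentially constant up to $\prod_k(1+P_k)$-type corrections, which is even better than polynomial. This is what allows $n_0$ to be taken as large as $\asymp\gb^{-2}$ and yields the stretched-exponential bound $h_c(\gb)\le\exp(-c_2/\gb^2)$. Your closing observation that the $\delta$ vs.\ $c$ balance both produces the exponent $2\alpha/(2\alpha-1)$ and degenerates at $\alpha=1/2$ is exactly right and is the conceptual core of the marginal case.
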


\medskip

The main result of the present work is that in the marginal case, the
two critical points {\sl do differ} for every disorder strength:
\begin{theorem}
\label{th:main}
Let $B=B_c$. For every $0<\beta_0<\infty$ there exists a constant
$0<c_3:=c_3(\beta_0)<\infty$ such that for every $0<\beta\le\beta_0$
  \begin{equation}
\label{eq:main}
    h_c(\beta)\ge \exp(-c_3/\beta^4).
  \end{equation}
\end{theorem}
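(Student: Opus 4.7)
The strategy is to extend the fractional moment -- change of measure method that worked in the non-marginal hierarchical cases of \cite{cf:GLT} by replacing the i.i.d.\ tilt of $\go$ by a genuinely correlated Gaussian change of measure whose covariance mimics the hierarchical Green function at the marginal scaling. The plan begins with a reduction to finite volume: fix $\gamma\in(0,1)$ close to $1$ and let $A_n:=\bbE[X_n^\gamma]$. The elementary inequality $(a+b)^\gamma\le a^\gamma+b^\gamma$ applied to \eqref{eq:Rn+1} yields
\[
A_{n+1}\,\le\,\frac{A_n^2+(B-1)^\gamma}{B^\gamma}.
\]
At $B=B_c=\sqrt{2}$ and $\gamma$ close enough to $1$, the map $y\mapsto (y^2+(B-1)^\gamma)/B^\gamma$ has an unstable fixed point $x_\gamma<1$; producing a single $n_0=n_0(\beta)$ for which $A_{n_0}<x_\gamma$ is then enough, since $A_n$ will converge to the stable fixed point $B-1<1$ and $\tf(\beta,h)\le (1/\gamma)\limsup_n 2^{-n}\log A_n=0$.

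To exhibit such an $n_0$ at $h=\exp(-c/\beta^4)$, I would take $2^{n_0}$ comparable to the annealed correlation length $1/\tf(0,h)\asymp h^{-1/\alpha}=h^{-2}$, so that $n_0\asymp \beta^{-4}$. On this volume I would introduce a tilted law $\tilde\bbP$ on $(\go_1,\ldots,\go_{2^{n_0}})$, still centered multivariate Gaussian but with covariance $\mathrm{Id}-\delta\,\Gamma_{n_0}$ for a small parameter $\delta>0$ to be tuned; here $\Gamma_{n_0}$ is the hierarchical non-negative matrix whose $(i,j)$-entry decreases with $\kappa(i,j)$, the smallest level at which $i$ and $j$ belong to the same elementary sub-diamond, with weights chosen to reproduce the annealed two-point function. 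This is the substitute, in the marginal regime, for the i.i.d.\ tilts of \cite{cf:GLT,cf:DGLT}: it introduces long-range negative correlations on every dyadic scale. Hölder then gives
\[
A_{n_0}\,\le\,\Bigl(\tilde\bbE\bigl[(\dd\bbP/\dd\tilde\bbP)^{1/(1-\gamma)}\bigr]\Bigr)^{1-\gamma}\bigl(\tilde\bbE[X_{n_0}]\bigr)^\gamma.
\]

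The first factor is a pure Gaussian determinant and computable explicitly: its logarithm reduces to $\tfrac{\gamma}{1-\gamma}\delta^2\,\mathrm{tr}(\Gamma_{n_0}^2)/4$ plus higher-order corrections, and a direct calculation on $\Gamma_{n_0}$, which at marginality produces a logarithmic sum $\asymp n_0$, yields that under the scaling $\delta^2 n_0 = O(1)$, i.e.\ $\delta \asymp\beta^2$, this factor remains bounded by, say, $3/2$. The harder factor is $\tilde\bbE[X_{n_0}]$: my plan here is to propagate the correlations inductively through the recursion \eqref{eq:Rn+1} and obtain an effective iteration of the form
\[
\tilde\bbE[X_{n+1}]\,\le\,\frac{\tilde\bbE[X_n]^2+(B-1)}{B}-\psi_n(\delta),
\]
where $\psi_n(\delta)>0$ captures the negative drift produced at level $n$ by the cross-covariance of $\go$ between the two halves of the system. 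Iterated, the cumulative drift should force $\tilde\bbE[X_{n_0}]$ to behave like the partition function of a \emph{homogeneous} hierarchical pinning at an effective parameter $\tilde h\asymp -\delta\asymp-\beta^2$; since $2^{n_0}\gg \tilde h^{-2}$ by construction, $\tilde\bbE[X_{n_0}]$ would fall to the stable homogeneous fixed point $B-1<1$, and raising to the power $\gamma$ would put $A_{n_0}$ below $x_\gamma$.

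The main obstacle is precisely this last step: propagating the hierarchical correlations of $\tilde\bbP$ through the quadratic recursion \eqref{eq:Rn+1} is delicate because $R_n^{(2i-1)}$ and $R_n^{(2i)}$ are no longer independent under $\tilde\bbP$, so one must quantitatively control their joint law and show that the cross-term the correlations generate at each level has the right sign and a total effect of order $\delta$ rather than being washed out by the nonlinearity. This is exactly the mechanism that cannot be implemented with an i.i.d.\ tilt in the marginal regime, and it is where the scaling $h\asymp\exp(-c/\beta^4)$ becomes forced: the marginal relation $\mathrm{tr}(\Gamma_{n_0}^2)/2^{n_0}\asymp n_0$ is what couples $\delta$ and $n_0$ through $\delta^2 n_0=O(1)$, hence $\delta\asymp\beta^2$ and $n_0\asymp\beta^{-4}$.
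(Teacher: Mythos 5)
You have the right framework and, importantly, the right scaling: fractional moments reduce the question to a finite-volume estimate at some level $n_0$, a correlated Gaussian tilt with covariance $\mathrm{Id}-\epsilon V$ where $V_{ij}\propto \bE_n[\delta_i\delta_j]$ mimics the annealed two-point function, and the marginal fact that $\sum_{i\ne j}(\bE_n[\delta_i\delta_j])^2 \asymp n$ forces $\epsilon^2 n_0 = O(1)$, hence $n_0\asymp \beta^{-4}$ and the bound $\exp(-c/\beta^4)$. Two minor deviations from the paper: the paper works with $A_n:=\bbE\bigl[([X_n-(B-1)]_+)^\gamma\bigr]$ rather than $\bbE[X_n^\gamma]$, which gives the cleaner recursion $A_{n+1}\le (A_n^2+2(B-1)^\gamma A_n)/B^\gamma$ with $0$ as the stable fixed point and $B^\gamma-2(B-1)^\gamma$ as the threshold; and the paper keeps $\epsilon$ as a small fixed constant, absorbing the $\beta$-dependence entirely into $n_0\gtrsim 1/(\beta^4\epsilon^2)$, whereas you tie $\delta\asymp\beta^2$. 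Both conventions are serviceable.

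The genuine gap is the step you yourself flag as ``the main obstacle'': you propose to estimate $\tilde\bbE[X_{n_0}]$ by propagating the correlated law through the quadratic recursion \eqref{eq:Rn+1} and extracting an ``effective iteration'' with a negative drift $\psi_n(\delta)$. This is not only delicate, it is not the route the paper takes, and as stated it would require quantitative control of the joint law of $R_n^{(2i-1)}$ and $R_n^{(2i)}$ under $\tilde\bbP$ at every level — precisely the kind of control that the nonlinearity makes hard to maintain. The idea that replaces it is the Galton--Watson (polymer) representation of $X_n$: Proposition~\ref{th:deltas} gives $X_n = \bE_n\bigl[\exp\bigl(\sum_{i\le 2^n}(\beta\omega_i + h - \beta^2/2)\delta_i\bigr)\bigr]$ with $\delta_i$ the leaf-indicators of a supercritical Galton--Watson tree and $\bE_n$ the corresponding expectation. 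Because $\tilde\bbP$ is Gaussian, integrating out $\omega$ for fixed $\delta$ is explicit and collapses the whole estimate to one line,
\begin{equation}
\tilde\bbE_n X_n \,\le\, e^{2^n h}\,\bE_n\!\left[\exp\!\left(-\tfrac{\epsilon\beta^2}{2}\,(V\delta,\delta)\right)\right],
\end{equation}
with no recursive propagation at all. Writing $(V\delta,\delta)=\sqrt{n}\,Y_n$, the only remaining work is to show $Y_n\ge 1/2$ with probability bounded below uniformly in $n$, which is done by the Paley--Zygmund inequality once one has $\sup_n\bE_n[Y_n^2]<\infty$ (Lemma~\ref{th:secmom}). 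That second-moment estimate — a finite combinatorial computation on the number of trees $\mathcal T^{(n)}_{\{i,j,k,l\}}$ weighted by their node counts, specific to $B=B_c$ — is the one nontrivial technical ingredient of the proof, and it is entirely absent from your plan. Without it, or without some replacement for the ``effective iteration'' you posit, the proposal does not close.
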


\section{The non-hierarchical model}\label{sec:nHmodel}

We let $\tau:=\{\tau_0,\tau_1,\ldots\}$ be a renewal process of law
$\bP$, with inter-arrival law $K(\cdot)$, {\sl i.e.}, $\tau_0=0$ and
$\{\tau_i-\tau_{i-1}\}_{i\in\N}$ is a sequence of IID integer-valued
random variables such that
\begin{equation}
  \label{eq:K222}
\bP(\tau_1=n)= : K(n)\stackrel{n\to\infty}\sim \frac {C_K}{n^{1+\alpha}},
\end{equation}
with $C_K>0$ and $\alpha>0$. We require that $K(\cdot)$ is a
probability on $\N$, which amounts to assuming that the renewal
process is recurrent. 
 We require also that $K(n)>0$ for every $n\in\N$, but this is inessential 
 and it is just meant to avoid making a certain number of remarks 
 and small detours in the proofs to take care of this point.

As in Section \ref{sec:Hmodel}, $\go:=\{\go_1,\go_2,\ldots\}$ denotes a sequence of
IID standard Gaussian random variables.
For a given system size $N\in\N$, coupling parameters
$h\in \R$, $\beta\ge 0$ and a given disorder realization $\go$ the
partition function of the model is defined by
\begin{equation}
  \label{eq:Znh}
  Z_{N,\go}:=\bE\left[e^{\sum_{n=1}^N(\beta\go_n+h-\beta^2/2)\delta_n}\delta_N\right],
\end{equation}
where $\delta_n:=\ind_{\{n\in\tau\}}$, while the quenched free energy is
\begin{equation}
  \label{eq:F_nh}
  \tf(\beta,h):=\lim_{N\to\infty}\frac1N\bbE\log Z_{N,\go},
\end{equation}
(we use the same notation as for the hierarchical model, since there
is no risk of confusion).  Like for the hierarchical model,
the limit exists and is non-negative \cite[Ch. 4]{cf:Book}, and one
defines the critical point $h_c(\beta)$ for a given $\beta\ge0$
exactly as in \eqref{eq:hc222}. Again, one notices that the annealed free
energy, {\sl i.e.}, the limit of $(1/N)\log\bbE Z_{N,\go}$, is nothing but
$\tf(0,h)$, so that the annealed critical point is just $h_c(0)$.

\begin{rem}\rm
  With respect to most of the literature, our definition of the
  model is different (but of course completely equivalent) in that
  usually the partition function is defined as in \eqref{eq:Znh} with
  $h-\beta^2/2$ replaced simply by $h$. 
\end{rem}
The annealed (or pure) model can be exactly solved and in particular
it is well known \cite[Th. 2.1]{cf:Book} that, if $\alpha\ne 1$, there
exists a positive constant $c_K$ (which depends on $K(\cdot)$) such that
\begin{equation}
  \label{eq:annF}
  \tf(0,h)\stackrel{h\searrow0}\sim c_K h^{\max(1,1/\alpha)}.
\end{equation}
In the case $\alpha=1$, \eqref{eq:annF} has to be modified in that the
right-hand side becomes $\phi(1/h)h$ for some slowly-varying function
$\phi(\cdot)$ which vanishes at infinity \cite[Th. 2.1]{cf:Book}. In
particular, note that $h_c(0)=0$ so that $h_c(\beta)\ge0$ by Jensen's
inequality, exactly like for the hierarchical model.

\begin{rem}\rm
\label{rem:trans}
  The assumption of
  recurrence for $\tau$, {\sl i.e.},  $\sum_{n\in\N}K(n)=1$, is
  by no means a restriction. In fact, as it has been observed several
  times in the literature, if $\Sigma_K:=\sum_{n\in\N}K(n)<1$ one can
  define $\tilde K(n):=K(n)/\Sigma_K$, and of course the renewal
  $\tau$ with law $\tilde\bP(\tau_1=n)=\tilde K(n)$ is recurrent. Then,
it is immediate to realize from definition \eqref{eq:F_nh} that
\begin{equation}
\tf(\beta,h)=\tilde \tf(\beta,h+\log \Sigma_K),
\end{equation}
$\tilde \tf$ being the free energy of the model defined as in
\eqref{eq:Znh}-\eqref{eq:F_nh} but with $\bP$ replaced by $\tilde\bP$.
In particular, $h^a_c(\beta)=-\log \Sigma_K$.
This observation allows to apply Theorem \ref{th:main2} below, for
instance, to the case where $\tau$ is the set of returns to the origin
of a symmetric, finite-variance random walk on $\Z^3$ (pinning of a
directed polymer in dimension $(3+1)$): indeed, in this case 
\eqref{eq:K222} holds with $\alpha=1/2$. For more details on this issue
we refer to \cite[Ch.~1]{cf:Book}.
\end{rem}

\subsection{Relevance or irrelevance of disorder}
Like for the hierarchical model, the 
question whether $h_c(\beta)$ coincides
or not with $h_c(0)$ for $\beta$ small has been recently solved, {\sl except in the marginal case}
$\ga=1/2$:

\begin{theorem}
If $0<\alpha<1/2$, there exists $\beta_0>0$ such that $h_c(\beta)=0$ for every $0\le \beta\le \beta_0$.
If $\ga=1/2$, there exists a constant $c_4>0$ such that for $\beta\le 1$
\begin{equation}
  h_c(\beta)\le \exp(-c_4/\beta^2).
\end{equation}
If $\ga>1/2$, $h_c(\gb)>0$ for every $\gb>0$ and, if in addition $\ga\ne1$,  there exists a constant $c_5>0$ such that if $\beta\le 1$
\begin{equation}
  c_5\beta^{\max(2\ga/(2\ga-1),2)}\le h_c(\beta)\le  c_5^{-1}\beta^{\max(2\ga/(2\ga-1),2)}.
\end{equation}
If $\ga=1$ there exist a constant $c_6>0$ and a slowly varying function $\psi(\cdot)$ vanishing at infinity such that 
for $\gb\le 1$
\begin{equation}
   c_6\beta^{2}\psi(1/\beta)\le h_c(\beta)\le   c_6^{-1}\beta^{2}\psi(1/\beta).
\end{equation}
\end{theorem}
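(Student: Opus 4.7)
The plan is to split the statement into upper bounds $h_c(\gb)\le f(\gb)$ (localization) and lower bounds $h_c(\gb)\ge g(\gb)$ (delocalization), and to attack them by complementary techniques: a second-moment / Paley--Zygmund argument for the upper bounds, and the fractional-moment method combined with a change of measure for the lower bounds. Both techniques have been developed precisely for pinning models in the papers cited in the introduction, and the theorem is essentially a compilation of those results.

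For the upper bounds I would first compute, using Gaussian integration,
\begin{equation*}
\bbE\bigl[Z_{N,\go}^2\bigr] \, =\, \bE^{\otimes 2}\!\left[\exp\!\left(\sum_{n=1}^N\Bigl(h(\gd_n^{(1)}+\gd_n^{(2)}) + \gb^2\,\gd_n^{(1)}\gd_n^{(2)}\Bigr)\right)\gd_N^{(1)}\gd_N^{(2)}\right],
\end{equation*}
where $\tau^{(1)},\tau^{(2)}$ are two independent copies of $\tau$ and $\gd_n^{(i)}=\ind_{\{n\in\tau^{(i)}\}}$. The right-hand side is a homogeneous pinning partition function for the intersection renewal $\tau^{(1)}\cap\tau^{(2)}$, whose Green function, by the Doney asymptotics $\bP(n\in\tau)\sim c_\ga n^{\ga-1}$, decays like $c_\ga^2\,n^{2(\ga-1)}$. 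Thus $\tau^{(1)}\cap\tau^{(2)}$ is transient for $\ga<1/2$, marginally recurrent for $\ga=1/2$ and recurrent for $\ga>1/2$. In the transient case, for $\gb$ small the second moment is bounded uniformly in $N$ at any $h>0$; the Paley--Zygmund inequality gives $\tf(\gb,h)\ge (1-o(1))\tf(0,h)$, hence $h_c(\gb)=0$. In the other cases one runs the same estimate only up to the annealed correlation length $N(h)\simeq 1/\tf(0,h)$ and concatenates; matching $\gb^2\sum_{n\le N(h)}\bP(n\in\tau)^2$ with $1$ yields $h\simeq\exp(-c/\gb^2)$ for $\ga=1/2$, $h\simeq\gb^{2\ga/(2\ga-1)}$ for $\ga\in(1/2,1)$, $h\simeq\gb^{2}\psi(1/\gb)$ for $\ga=1$, and $h\simeq\gb^{2}$ for $\ga>1$, each of which is the promised upper bound on $h_c(\gb)$.

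For the lower bounds (needed only in the relevant-disorder regime $\ga\ge 1/2$), the plan is to pick $\gga\in(0,1)$ close to $1$ and bound $\bbE[Z_{N,\go}^\gga]$ uniformly in $N$ at $h=g(\gb)$; Jensen then forces $\tf(\gb,h)=0$. Following \cite{cf:DGLT,cf:AZ}, I would coarse-grain into blocks of length $k(\gb)\simeq 1/\tf(0,g(\gb))$ and, on each block, introduce a tilted law $\tilde\bbP$ obtained by a negative (iid) Gaussian shift of amplitude $\gl\simeq 1/\sqrt{k(\gb)}$. H\"older's inequality gives
\begin{equation*}
\bbE\bigl[Z_{k,\go}^\gga\bigr]\, \le\, \bigl(\tilde\bbE[Z_{k,\go}]\bigr)^{\!\gga}\,\Bigl(\bbE\bigl[(\dd\bbP/\dd\tilde\bbP)^{\gga/(1-\gga)}\bigr]\Bigr)^{1-\gga};
\end{equation*}
under $\tilde\bbP$ the effective pinning $h-\gb\gl$ is subcritical so $\tilde\bbE[Z_{k,\go}]$ is small, while the Radon--Nikodym cost is $\exp(O(\gga/(1-\gga))\gl^2 k)=O(1)$. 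A single-block estimate $\bbE[Z_{k,\go}^\gga]<\rho<1$ is then boosted to an infinite-volume bound by the fractional-moment coarse-graining iteration of Toninelli; matching the scales in $\gl$ and $k(\gb)$ pins down the lower-bound exponent $\max(2\ga/(2\ga-1),2)$ (with the correct slowly varying correction in the $\ga=1$ case).

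The main obstacle is the marginal case $\ga=1/2$, the subject of this paper, for which the theorem as stated gives only an upper bound: there the iid Gaussian tilt above is powerless because $\sum_{n\le k}\bP(n\in\tau^{(1)}\cap\tau^{(2)})\simeq \log k$ and no choice of $\gl,k,\gga$ simultaneously kills $\tilde\bbE[Z_{k,\go}]$ and keeps the Radon--Nikodym factor bounded. This is precisely where the long-range, anti-correlated change of measure announced in the introduction is needed. A subsidiary difficulty is the borderline $\ga=1$, where the annealed free energy carries the slowly varying prefactor $\psi(\cdot)$ and every scale-matching computation, on both the upper- and lower-bound side, must be redone with the regular-variation toolkit to keep track of the logarithmic corrections.
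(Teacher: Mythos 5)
The theorem you are trying to prove is not actually proved in this paper. Immediately after stating it, the authors write that these results were proven in \cite{cf:Ken} (then \cite{cf:T_cmp}) for $\ga\le 1/2$ and for the $\ga>1/2$ upper bounds, and in \cite{cf:DGLT} (then sharpened in \cite{cf:AZ}, which also handles $\ga=1$) for the $\ga>1/2$ lower bounds. The statement is a recap of the literature, setting the stage for the paper's actual contribution, namely the $\ga=1/2$ \emph{lower bound} --- which, as you correctly observe, is deliberately absent from the theorem and is exactly what the paper's Theorem~\ref{th:main2} supplies. So there is no ``paper's proof'' to compare against, and your job here is really a reconstruction of the cited proofs, which you do competently at the level of heuristics.

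A few remarks to sharpen your sketch. Your second-moment computation and the identification of the intersection renewal $\tau^{(1)}\cap\tau^{(2)}$, whose renewal function is $\bP(n\in\tau)^2\asymp n^{2(\ga-1)}$, are correct, and the resulting scalings $h\asymp\gb^{2\ga/(2\ga-1)}$ for $\ga\in(1/2,1)$, $h\asymp\gb^2$ for $\ga>1$, $h\asymp\exp(-c/\gb^2)$ for $\ga=1/2$ all come out right. But the step ``runs the same estimate up to the correlation length and concatenates'' hides the real work: a bounded-in-$N$ second moment on a window of size $N(h)$ does not by itself yield $\tf(\gb,h)>0$. The proofs in \cite{cf:Ken} use the second moment together with a restart/concatenation argument built on the super-additivity of $\bbE\log Z_{N,\go}$ plus tail bounds, and \cite{cf:T_cmp} goes through replica coupling (Gaussian interpolation) rather than a raw Paley--Zygmund, precisely to bridge this gap. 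Similarly, on the fractional-moment side the exponent $\gga$ cannot be chosen ``close to $1$'' without constraints; \cite{cf:DGLT} needs $\gga$ tied to $\ga$ (e.g.\ $(1+\ga)\gga>2$ for $\ga>1$) so that the coarse-grained inter-arrival weight $K(\cdot)^\gga$ is summable, and the iterated bound of Proposition~\ref{th:deloc} is what turns a single-block estimate into delocalization. Finally, what you call ``the main obstacle'' for $\ga=1/2$ is not an obstacle to proving this theorem (which has no lower bound there); it is the reason the theorem stops short, and is the subject of Sections~\ref{sec:CGfmmc}--onward.
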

The results for $\ga\le 1/2$, together with the critical point upper
bounds for $\ga>1/2$, have been proven in \cite{cf:Ken}, and then in
\cite{cf:T_cmp}; the lower bounds on the critical point for $\ga>1/2$
have been proven in \cite{cf:DGLT} 
(the result in \cite{cf:DGLT} is slightly weaker than what we state
here and the case $\ga=1$ was not treated) and then in \cite{cf:AZ}
(with the full result cited here). 

The case $\alpha=0$ has also been
considered, but in that case \eqref{eq:K222} has to be replaced by 
$K(n)=L(n)/n$, with
$L(\cdot)$ a function varying slowly at infinity and such that
$\sum_{n\in \N}K(n)=1$. For instance, this corresponds to the case
where $\tau$ is the set of returns to the origin of a symmetric
random walk on $\Z^2$. In this case, it has been shown in \cite{cf:AZ_new} that
quenched and annealed critical points coincide for every value of
$\beta\ge0$.

\begin{rem}\rm
Let us recall also that it is proven in \cite{cf:GT_cmp} that, for every $\ga>0$, we have 
\begin{equation}
  \label{eq:smooth}
\tf(\beta,h)\, \le \, \frac{1+\ga}{2\gb^2}\,(h-h_c(\gb))^2, 
\end{equation}
for all $\gb>0,h>h_c(\gb)$: this means that when $\ga>1/2$
disorder is relevant also in the sense that it changes the free-energy critical exponent ({\sl cf.} \eqref{eq:annF}).
The analogous result for the hierarchical model, with $(1+\ga)$ replaced by some constant $c(B)$ in \eqref{eq:smooth},
is proven in \cite{cf:LT}.
\end{rem}

\bigskip

In the present work we prove the following: 
\begin{theorem}
\label{th:main2}
  Assume that \eqref{eq:K222} holds with $\alpha=1/2$. 
  For every $\gb_0>0$
  there exists a 
constant $0<c_7:=c_7(\gb_0)<\infty$  such that for $\beta\le \beta_0$
\begin{equation}
  h_c(\beta)\, \ge\,  e^{-c_7/\beta^4}.
\end{equation}
\end{theorem}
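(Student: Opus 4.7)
The plan is to prove $\tf(\beta, h) = 0$ for $h = e^{-c/\beta^4}$ (with $c$ sufficiently large depending on $\beta_0$), which directly gives $h_c(\beta) \ge e^{-c/\beta^4}$. As in \cite{cf:DGLT}, this reduces to establishing a fractional moment bound of the form $\bbE[Z_{N,\omega}^\gamma] \le C \, K(N)^\gamma$ for some $\gamma \in (0,1)$ and a suitable $N$, since this bound propagates through the renewal-type inequality for $\{\bbE[Z_{n,\omega}^\gamma]\}_n$ and forces the free energy to vanish. The natural scale is the annealed correlation length $N \asymp 1/\tf(0,h) \asymp e^{2c/\beta^4}$, by \eqref{eq:annF} with $\alpha = 1/2$.

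The core of the argument has two parts. First, a \emph{coarse-graining} decomposition in the spirit of \cite{cf:T_cg}: partition $\{1,\dots,N\}$ into $M = N/\ell$ blocks $B_k = \{(k-1)\ell+1,\dots,k\ell\}$ of intermediate length $\ell$, and for $I\subset \{1,\dots,M\}$ let $\hat Z_I$ be the contribution to $Z_{N,\omega}$ from trajectories whose coarse-grained support $\{k : \tau\cap B_k\ne\emptyset\}$ equals $I$. The inequality $(\sum a_i)^\gamma \le \sum a_i^\gamma$ gives
\begin{equation}
\bbE[Z_{N,\omega}^\gamma] \, \le \, \sum_{I} \bbE[\hat Z_I^\gamma],
\end{equation}
and by H\"older
\begin{equation}
\bbE[\hat Z_I^\gamma] \, \le \, \left(\tilde{\bbE}_I\!\left[\Big(\tfrac{\dd\bbP}{\dd\tilde\bbP_I}\Big)^{\gamma/(1-\gamma)}\right]\right)^{1-\gamma}\! (\tilde{\bbE}_I[\hat Z_I])^\gamma,
\end{equation}
for any auxiliary law $\tilde\bbP_I$. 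Second, the \emph{correlated} change of measure: unlike the relevant-disorder regime where an i.i.d.\ tilt suffices, at marginality I would choose $\tilde\bbP_I$ so that, on the blocks $B_k$ indexed by $k\in I$, the variables $(\omega_n)$ become jointly Gaussian with a long-range \emph{negative} covariance $\tilde{\bbE}_I[\omega_m\omega_n] - \delta_{mn} \asymp -\frac{c'}{\ell\sqrt{|m-n|+1}}$, mimicking the critical renewal mass function $\bP(n\in\tau)\sim C/\sqrt n$. The Gaussian cost factor evaluates explicitly to $\exp\!\big(C_1\tfrac{\gamma}{1-\gamma}|I|\log\ell\big)$ since the double sum of the kernel over a block gives a logarithm. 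Using the Gaussian identity to rewrite $\tilde{\bbE}_I[\hat Z_I]$ as a homogeneous renewal expectation with an effective pinning parameter reduced by $c'\beta^2(\log\ell)/\ell$ on each block in $I$, this tilted expectation becomes a product of small block-level contributions once $\log\ell\asymp 1/\beta^2$, i.e.\ $\ell\asymp e^{1/\beta^2}$.

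The main obstacle is the simultaneous balancing of four scales $\beta,\ell,N,|I|$: the combinatorial sum over subsets $I$, weighted by the renewal-based probability that a simple random walk visits the prescribed coarse-grained pattern, must be controlled against the product of the Gaussian cost $e^{C_1 |I|\log\ell/(1-\gamma)}$ and the gain in $(\tilde\bbE_I[\hat Z_I])^\gamma$. The choice $\log\ell\asymp 1/\beta^2$ is dictated by the fact that under the annealed measure the expected number of visited blocks scales like $\sqrt{N/\ell}$, whose product with $\beta^2 \log\ell/\ell$ must be negligible, and this in turn forces $\log N \asymp 1/\beta^4$, explaining the final exponent. Technically, the delicate point will be to ensure that the quadratic form defining the correlated tilt on each block remains positive definite uniformly in $\ell$ (so $\tilde\bbP_I$ is a genuine probability), and that $\sum_I$ does not produce an additional factor larger than the Gaussian gain; both issues are handled by restricting attention to subsets $I$ whose cardinality lies in the typical window dictated by the renewal theorem, and by absorbing the combinatorial entropy into a slight adjustment of $c$.
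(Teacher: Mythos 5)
Your high-level architecture is exactly the paper's (Section~\ref{sec:nHproof}): fractional moment reduction via Jensen, coarse-graining into blocks of length $\asymp 1/h$, H\"older to isolate a change-of-measure cost, and a \emph{correlated} Gaussian tilt on visited blocks whose covariance kernel mimics the critical renewal mass function. However, the calibration — which is where the exponent $4$ actually comes from — is wrong in a way that would make the argument fail. You propose $\tilde\bbE_I[\omega_m\omega_n]-\delta_{mn}\asymp -c'/(\ell\sqrt{|m-n|})$. With that normalization the Hilbert--Schmidt norm of the kernel on a block is $\asymp\sqrt{\log\ell/\ell}\to 0$, so the change-of-measure cost per block is $1+o(1)$ (not $e^{C_1\log\ell/(1-\gamma)}$, which contradicts your own kernel), and more fatally the \emph{gain} is negligible: the quadratic penalty $\beta^2\sum H_{mn}\delta_m\delta_n$ under the pinned renewal has typical size $\beta^2\sum_{m<n}\delta_m\delta_n/(\ell\sqrt{n-m})\asymp\beta^2\log\ell/\sqrt\ell\to 0$, so the tilted block partition function is not reduced at all. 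The correct normalization (eq.~\eqref{eq:H}) is $H_{ij}\asymp (1-\gamma)/\sqrt{k(\log k)\,|i-j|}$, chosen so that $\|\hat H\|_{\mathrm{HS}}\le(1-\gamma)/2$, making $I-\hat H$ positive definite and the cost bounded by $e^{|M|/2}$ per visited block; with this normalization $\beta^2\sum H_{ij}\delta_i\delta_j\asymp\beta^2\sqrt{\log k}$, and demanding this be $\gg 1$ is what forces $\log k\gg\beta^{-4}$, i.e.\ $h\lesssim e^{-c/\beta^4}$. Your tuning $\log\ell\asymp\beta^{-2}$ would at best give $e^{-c/\beta^2}$, and is anyway moot since your kernel produces no gain.

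The other genuine gap is that you never isolate the probabilistic estimate that makes the gain computation rigorous: one needs a Chung--Erd\H{o}s-type limit theorem (Lemma~\ref{th:CE}) showing that $(\sqrt L\log L)^{-1}\sum_{1\le i<j\le L}\delta_i\delta_j/\sqrt{j-i}$ converges in law to a positive multiple of $|Z|$, $Z\sim\cN(0,1)$, which is what identifies $\sqrt{\log k}$ as the typical scale of the penalty and lets one push it through the block decomposition uniformly over the block endpoints (Lemma~\ref{th:lemma_cg}). Two smaller remarks: the target inequality $\bbE[Z_{N,\omega}^\gamma]\le CK(N)^\gamma$ is stronger than needed — the paper only shows $\sup_N\bbE[Z_{N,\omega}^\gamma]<\infty$, with $N=km\to\infty$ (the coarse-graining scale $k\asymp 1/h$ is fixed, $N$ is not the annealed correlation length); and positive definiteness of the tilted covariance is handled automatically by the Hilbert--Schmidt bound (which dominates the spectral radius), not by restricting the sum over $I$ to a typical window — the full sum over $I$ is controlled by comparing to a delocalized homogeneous pinning partition function.
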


\section{Marginal relevance of disorder: the hierarchical case}\label{sec:Hproof}
\subsection{Preliminaries: a Galton-Watson representation
 for $X_n$}
\label{sec:trees}

One can give an expression for $X_n$ which is analogous to that of the
partition function \eqref{eq:Znh} of the non-hierarchical model, and
which is more practical for our purposes. This involves a Galton-Watson tree 
\cite{cf:T-Harris} describing the successive offsprings  of one individual.
The offspring distribution concentrates on $0$ (with probability $(B-1)/B$) and on $2$ (with probability $1/B$). So, at a given 
 generation,
each individual  that is  present has either no descendant or two
descendants, and this independently of any other individual of the
generation. This branching procedure directly maps to a random
tree (see Figure~\ref{fig:treemarg}): the law of such a branching process up
to generation $n$ (the first individual is at generation
$0$) or, analogously, the law of the random tree 
 from the root (level $n$) up to the leaves (level $0$), is denoted by $\bP_n$.
The individuals that are present at the $n^{\textrm{th}}$ generation
are a random subset ${\mathcal R}_n $ of 
$\{ 1, \ldots, 2^n\}$. We set $\gd_j:= \ind _{j \in {\mathcal R}_n}$.
Note that the mean offspring size is $2/B>1$, so that the Galton-Watson process
is supercritical. 

The following procedure on the standard binary graph $\cT^{(n)}$ of
depth $n+1$ (again, the root is at level $n$ and the leaves, numbered
from $1$ to $2^n$, at level $0$) is going to be of help too.  Given
$\mathcal I\subset \{1,\ldots,2^n\}$, let $\mathcal T^{(n)}_{\mathcal
  I}$ be the subtree obtained from $\mathcal T^{(n)}$ by deleting all
edges except those which lead from leaves $j\in\mathcal I$ to the
root. Note that, with the offspring distribution we consider, in general
$\mathcal T^{(n)}_{\mathcal I}$ is not a realization of 
the $n$-generation Galton-Watson tree (some individuals may have
just one descendant in $\mathcal T^{(n)}_{\mathcal I}$, 
see Figure~\ref{fig:treemarg}).

 Let $v(n,\mathcal I)$ be the number of nodes
in $\mathcal T^{(n)}_{\mathcal I}$, with the convention that leaves are not counted as nodes, while the root is.

\begin{figure}[ht]
\begin{center}
\leavevmode
\epsfxsize =10.7 cm
\psfragscanon
\psfrag{i}{$4$}
\psfrag{j}{$6$}
\psfrag{k}{$13$}
\psfrag{0}[r]{\small \sl level $0$}
\psfrag{L}[r]{\small \sl (the leaves)}
\psfrag{R}[r]{\small \sl (\ \ \ \ \ \ the root)}
\psfrag{1}[r]{\small  \sl level $1$}
\psfrag{2}[r]{\small  \sl level $2$}
\psfrag{r}[r]{\small  \sl level $4$}
\psfrag{3}[r]{$\ldots\ \ $ }
\epsfbox{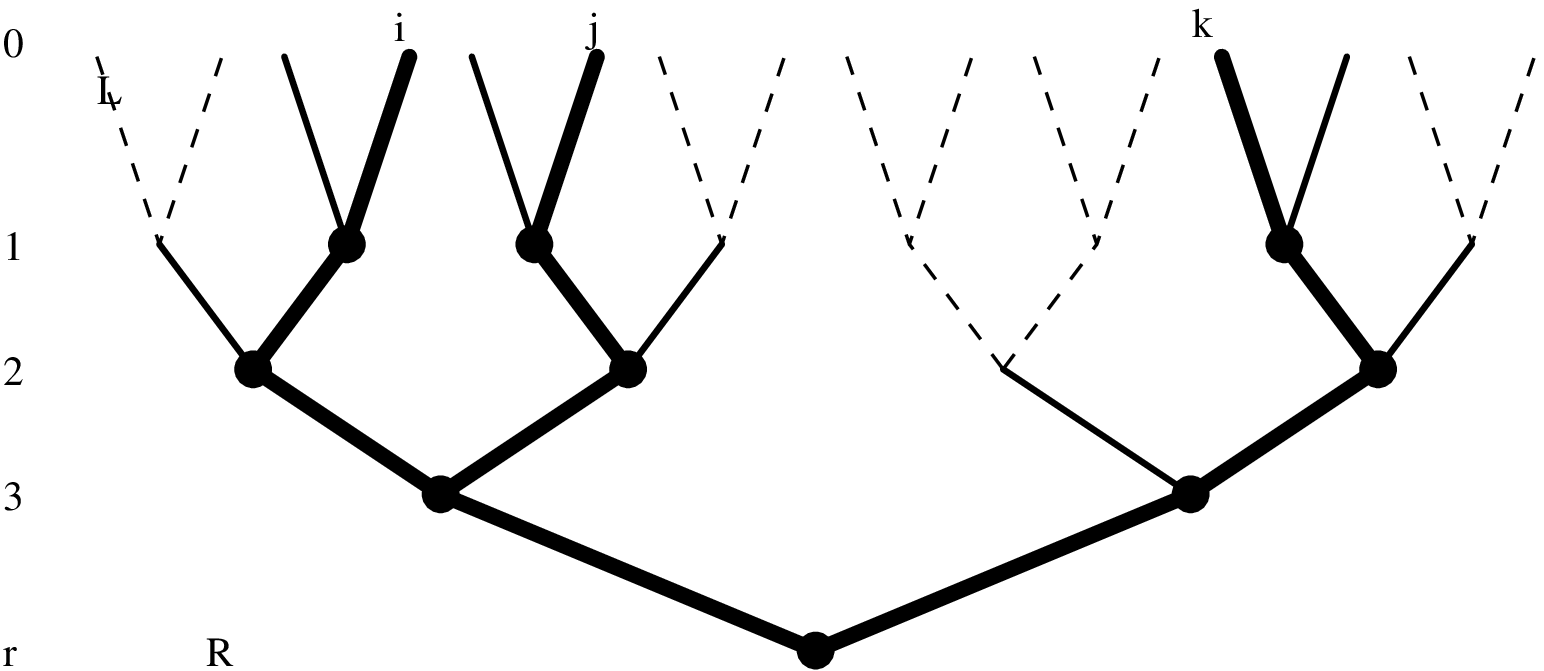}
\end{center}
\caption{
The thick solid lines in the figure form the
tree $\mathcal T^{(4)}_{\{4,6,13\}}$, which is a subtree of the
 binary tree $\mathcal T^{(n)}$ ($n=4$). Note that
$\mathcal T^{(4)}_{\{4,6,13\}}$ is {\sl not} a possible 
realization of the Galton-Watson tree, while it becomes so if we complete it
by adding the thin solid lines.
At level $0$ there
are the leaves; the nodes of $\mathcal T^{(4)}_{\{4,6,13\}}$
are marked by dots. $\mathcal T^{(4)}_{\{4,6,13\}}$ contains
$v(4,\{4,6,13\})=9$ nodes. In terms of Galton-Watson offsprings,
for the ({\sl completed}) trajectory above
$\mathcal R _4=\{3,4,5,6,13,14\}$. Moreover,
computing the average in  
\eqref{eq:ffv1} means computing the probability that the 
realization of 
the Galton-Watson 
tree  contains $\mathcal T^{(n)}_{\mathcal I}$
as a subset: but this simply means  requiring that 
the individuals at the nodes of $\mathcal T^{(n)}_{\mathcal I}$
have two children and the expression \eqref{eq:ffv1} becomes clear.}
\label{fig:treemarg}
\end{figure}

\medskip

\begin{proposition}
\label{th:deltas}
  For every $n\ge0$ we have
\begin{equation}
  \label{eq:Rn}
  X_n\, = \,\bE_n\left[
  e^{\sum_{i=1}^{2^n}(\beta \go_i+h-\beta^2/2)\delta_i}\right].
\end{equation}
For every $n\ge 0$ and $\mathcal I\subset \{1,\ldots,2^n\}$, one has
\begin{equation}
\label{eq:ffv1}
  \bE_n\bigg[\prod_{i\in\mathcal I}\delta_i\bigg]\, =\, B^{-v(n,\mathcal I)}.
\end{equation}
In particular, $\bE_n[\delta_i]=B^{-n}$ for every $i=1,\ldots,2^n$,
{\sl i.e.}, the Green function is constant throughout the system. 
\end{proposition}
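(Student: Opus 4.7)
The plan is to prove both identities by induction on $n$, handling them in parallel (or \eqref{eq:ffv1} first, then \eqref{eq:Rn}) and exploiting the branching decomposition at the root of $\mathcal T^{(n+1)}$ to match the deterministic recursion \eqref{eq:Rn+1}. The base case $n=0$ is immediate: the tree consists of a single vertex which is simultaneously root and leaf, $\mathcal R_0=\{1\}$ almost surely, so $X_0=R_0^{(1)}=e^{\beta\go_1+h-\beta^2/2}=\bE_0[e^{(\beta\go_1+h-\beta^2/2)\delta_1}]$, and $\bE_0[\delta_1]=1=B^0=B^{-v(0,\{1\})}$ (taking the natural convention that a single point counted as a leaf contributes $0$ to $v$).

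For the inductive step of \eqref{eq:ffv1}, I would condition on the first branching event: with probability $1/B$ the root has two children, each spawning an independent depth-$n$ Galton--Watson subtree whose leaves are indexed by the two halves $\{1,\ldots,2^n\}$ and $\{2^n+1,\ldots,2^{n+1}\}$ of the leaves of $\mathcal T^{(n+1)}$; with the complementary probability $(B-1)/B$ the root has no offspring and all $\delta_i$ vanish. Writing $\mathcal I_L=\mathcal I\cap\{1,\ldots,2^n\}$ and $\mathcal I_R=\mathcal I\cap\{2^n+1,\ldots,2^{n+1}\}$ (and treating separately the trivial case $\mathcal I=\emptyset$, where both sides equal $1$ with the convention $v(n,\emptyset)=0$), the inductive hypothesis gives
\begin{equation*}
\bE_{n+1}\bigg[\prod_{i\in\mathcal I}\delta_i\bigg]\,=\,\frac{1}{B}\cdot B^{-v(n,\mathcal I_L)}\cdot B^{-v(n,\mathcal I_R)}\,=\,B^{-(1+v(n,\mathcal I_L)+v(n,\mathcal I_R))}.
\end{equation*}
The claim then reduces to the purely combinatorial identity $v(n+1,\mathcal I)=1+v(n,\mathcal I_L)+v(n,\mathcal I_R)$ valid for $\mathcal I\ne\emptyset$, which is transparent from the definition: the nodes of $\mathcal T^{(n+1)}_{\mathcal I}$ are the root (contributing $1$) together with the nodes of the two subtrees $\mathcal T^{(n)}_{\mathcal I_L}$ and $\mathcal T^{(n)}_{\mathcal I_R}$ (each of those trees' roots counts as a non-leaf node here, consistently with the convention). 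The particular case $\bE_n[\delta_i]=B^{-n}$ then follows because $\mathcal T^{(n)}_{\{i\}}$ is a single path from leaf $i$ to the root and contains exactly $n$ non-leaf nodes.

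For \eqref{eq:Rn} the inductive step is essentially the same conditioning: decomposing over the branching event at the root,
\begin{equation*}
\bE_{n+1}\bigg[e^{\sum_{i=1}^{2^{n+1}}(\beta\go_i+h-\beta^2/2)\delta_i}\bigg]\,=\,\frac{B-1}{B}\,+\,\frac{1}{B}\,\bE_n\bigg[e^{\sum_{i=1}^{2^n}(\beta\go_i+h-\beta^2/2)\delta_i}\bigg]\,\bE_n\bigg[e^{\sum_{i=2^n+1}^{2^{n+1}}(\beta\go_i+h-\beta^2/2)\delta_i}\bigg],
\end{equation*}
where the two factors on the right are independent because the two subtrees are independent and act on disjoint sets of disorder variables. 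The inductive hypothesis identifies each factor with an independent copy of $X_n$, and the right-hand side becomes $(X_n^{(1)}X_n^{(2)}+B-1)/B$, which is precisely $X_{n+1}$ by \eqref{eq:Rn+1}.

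No step is really an obstacle; the only thing to watch is the bookkeeping around the $n=0$ case and the convention on whether the root of $\mathcal T^{(n)}_{\mathcal I}$ is to be counted (it is, unless it coincides with a leaf). An alternative proof route, perhaps more suited to motivate formula \eqref{eq:EBgen}, would be to iterate \eqref{eq:Rn+1} symbolically to expand $X_n$ as a sum over subsets $\mathcal I\subset\{1,\ldots,2^n\}$ of products $\prod_{i\in\mathcal I}R_0^{(i)}$ with deterministic positive coefficients $p(n,\mathcal I)$, and then identify $p(n,\mathcal I)=B^{-v(n,\mathcal I)}$ by the same branching-based counting argument; evaluating each resulting term then gives \eqref{eq:Rn} once one uses $\bbE[e^{\beta\go_i-\beta^2/2}]=1$ to pull the disorder expectation out and reinterpret $B^{-v(n,\mathcal I)}$ probabilistically via \eqref{eq:ffv1}.
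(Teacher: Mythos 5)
Your proof is correct and for \eqref{eq:Rn} it is essentially the paper's argument: check the case $n=0$, then condition on whether the root bifurcates and observe that both sides satisfy the recursion \eqref{eq:Rn+1}. For \eqref{eq:ffv1} the paper instead gives a one-step probabilistic argument -- the event $\prod_{i\in\mathcal I}\delta_i=1$ is exactly the event that each of the $v(n,\mathcal I)$ internal vertices of $\mathcal T^{(n)}_{\mathcal I}$ has two offspring, and these are independent events of probability $1/B$ -- which is the same reasoning as your induction, just collapsed; your combinatorial identity $v(n+1,\mathcal I)=1+v(n,\mathcal I_L)+v(n,\mathcal I_R)$ is correct and your treatment of the boundary conventions ($v(n,\emptyset)=0$, the root of $\mathcal T^{(0)}$ counted as a leaf) is the right bookkeeping.
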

\medskip



\begin{proof}[Proof of Proposition \ref{th:deltas}]
The right-hand side in  \eqref{eq:Rn} 
for $n=0$ is equal to $\exp(h-\gb^2/2+ \gb \go_1)$.
Moreover, at the $(n+1)^{\textrm{th}}$ generation the branching process
either contains only the initial individual (with probability $(B-1)/B$)
or the initial individual  has two children, which we may look at as 
initial  individuals
of two independent 
Galton-Watson trees containing $n$ new generations.
We therefore have  that the basic recursion  \eqref{eq:Rn+1}
is satisfied. 

The second fact, \eqref{eq:ffv1}, is a direct consequence of the definitions
(see also the caption of Figure~\ref{fig:treemarg}).
\end{proof}

\begin{rem}\rm
The representation we have introduced in this section shows in particular
that $\bbE X_n$ is just the generating function of $\vert \mathcal R_n\vert$
and the free energy $\tf(0, h)$ is therefore a natural quantity
for the 
Galton-Watson process: and in fact  $1/\ga$ ($\ga$ given in 
\eqref{eq:alphaia}) appears in the original works on branching processes by
 T.~E.~Harris
(of course not to be confused with  A.~B.~Harris, who proposed the 
disorder relevance criterion on which we are focusing in this work).
\end{rem}

\subsection{The proof of Theorem~\ref{th:main}}
While the discussion of the previous section is valid for every
$B\in(1,2)$,  
now we have to assume $B=B_c=\sqrt 2$. However some of the steps are still
valid in general and we are going to replace $B$ with $B_c$ only 
when it is really needed.  
 The
proof is split into three subsections: the first  introduces
the fractional moment method and reduces the statement we want to
prove, which is a statement on the limit $n \to \infty$ behavior of
$X_n$, to finite-$n$ estimates. The estimates are provided in the
second and third subsection.

\subsubsection{The fractional moment method}
Let $U_n^{(i)}$ denote the quantity $[R_n^{(i)}-(B-1)]_+$ where
$[x]_+=\max(x,0)$. 
Using the inequality
\begin{equation}
  [rs+r+s]_+\le [r]_+[s]_++[r]_++[s]_+,
\end{equation}
which holds whenever $r,s\ge-1$,
it is easy to check that \eqref{eq:Rn+1} implies

\begin{equation}
\label{eq:recU}
 U^{(i)}_{n+1}\le \frac{U_n^{(2i-1)}U_n^{(2i)}+(U_n^{(2i-1)}+U_n^{(2i)})(B-1)}{B}.
\end{equation}
Given $0<\gamma<1$, we define $A_n:=\bbE([X_n-(B-1)]_+^\gamma)$. From \eqref{eq:recU} above and by using the
fractional inequality 
\begin{equation}
  \label{eq:fracineq}
  \left(\sum a_i\right)^{\gamma}\, \le\,  \sum a_i^{\gamma},
\end{equation}
which holds whenever $a_i\ge0$, we derive
\begin{equation}
 A_{n+1}\le \frac{A_n^2+2(B-1)^{\gamma}A_n}{B^{\gamma}}.
\end{equation}
One readily sees now 
 that,  if there exists some integer $k$ such that 
\begin{equation}
\label{eq:k}
A_k\, < \, B^{\gamma}-2(B-1)^{\gamma},
\end{equation}
then $A_n$ tends to zero as $n$ tends to infinity (this statement is
easily obtain by studying the fixed points of the function $x\mapsto
{(x^2+2(B-1)^{\gamma}x)}/{B^{\gamma}}$). 
On the other hand,
\begin{equation}
\label{eq:LB0411}
\bbE\left[ X_n ^\gamma \right] \, \le \, \bbE\left([X_n-(B-1)]_++(B-1)\right)^\gamma\le 
(B-1)^\gamma + A_n,
\end{equation}
and therefore \eqref{eq:k}  implies that $\tf (\gb, h)=0$
since, by Jensen inequality, we have
\begin{equation}
\label{eq:JensAn}
\frac 1{2^n} \bbE \log X_n  \, \le \, \frac 1{2^n\gamma} \log  \bbE\left[ X_n ^\gamma \right].
\end{equation}
Note that, to establish $\tf (\gb, h)=0$, 
it suffices to prove that 
  $\limsup_n 2^{-n}\log A_n \le 0$,
  hence our approach yields a substantially stronger piece of
  information, {\sl i.e.} that the fractional moment $A_n$ does go to
  zero.

\medskip 

In order to find a $k$ such that \eqref{eq:k} holds we introduce a new
probability measure $\tilde \bbP$ (which is going to depend on $k$) such
that $\tilde \bbP$ and $\bbP$ are equivalent, that is mutually
absolutely continuous.  By H\"older's inequality 
applied for $p=1/\gamma$ and $q=1/(1-\gamma)$ we have
\begin{equation}
\begin{split}
\label{eq:hold}
  A_k\, =\, \tilde \bbE\left[ \frac{\dd \bbP}{\dd \tilde
      \bbP}\;[X_k-(B-1)]_+^{\gamma}\right] \le
  \left(\bbE\left[\left(\frac{\dd \bbP}{\dd \tilde
          \bbP}\right)^{\frac{\gamma}{1-\gamma}}\right]\right)^{1-\gamma}
  \left(\tilde\bbE\left[ [X_k-(B-1)]_+\right]\right)^{\gamma},
\end{split}
\end{equation}
and a sufficient condition for \eqref{eq:k} is  therefore that
\begin{equation}
\label{eq:cond1}
 \tilde\bbE\left[ [X_k-(B-1)]_+\right]\, \le\,  \left(\bbE\left[\left(\frac{\dd \bbP}{\dd \tilde \bbP}\right)^{\frac{\gamma}{1-\gamma}}\right]\right)^{1-\frac{1}{\gamma}}\left(B^{\gamma}-2(B-1)^{\gamma}\right)^{\frac{1}{\gamma}}.
\end{equation}

Let $x^{(0)}_n$ be obtained applying $n$ times the annealed iteration
$x\mapsto (x^2+(B-1))/B$ to the initial condition $x^{(0)}_0=0$. One
has that $x_n^{(0)}$ approaches monotonically the stable fixed point
$(B-1)$. Since the coefficients in the iteration \eqref{eq:Rn+1} are
positive, one has for every $h,\gb,\go$ that $X_n
\ge x_n^{(0)} \stackrel{n\to\infty}\nearrow B-1$ (this is a deterministic bound) and therefore, for any given $\zeta>0$,
one can find an integer $n_\zeta$ such that if $n\ge n_\zeta$ we have
\begin{equation}
\tilde\bbE\left[ [X_n-(B-1)]_+\right]\, \le \, \tilde\bbE\left[ X_n-(B-1)\right]+\frac \zeta 4.
\end{equation}
Moreover, since
$\left(B^{\gamma}-2(B-1)^{\gamma}\right)^{\frac{1}{\gamma}}- (2-B) \stackrel{\gamma
\nearrow 1}\sim 
-c_B (1-\gamma)$ for some $c_B>0$, one can find  
$\gamma=\gamma_\zeta$ such that $\left(B^{\gamma}-2(B-1)^{\gamma}\right)^{\frac{1}{\gamma}}\ge 2-B-\zeta/4$.
At this point,
if $\gamma=\gamma_\zeta$, $k\ge n_\zeta$ and if $\tilde\bbP$ is such that
\begin{equation}
\left(\bbE\left[\left(\frac{\dd \bbP}{\dd \tilde \bbP}\right)^{\frac{\gamma}{1-\gamma}}\right]\right)^{1-\frac{1}{\gamma}}\ge 1-\frac {\zeta}4.
 \end{equation}
 (recall that $\tilde \bbP$ depends on $k$)
and $\tilde\bbE [X_k]\le 1-\zeta$ then \eqref{eq:cond1} is satisfied
and $\tf(\gb, h)=0$.

We sum up what we have obtained:
\medskip

\begin{lemma}\label{th:moment}Let $\zeta>0$ and
choose  $\gamma(=\gamma_\zeta)$ and   $n_\zeta$
as above. If there exists $k\ge n_\zeta$ and 
a probability measure $\tilde \bbP$ (such that $\bbP$
and $\tilde \bbP$ are equivalent probabilities) such that
\begin{equation}\label{eq:close}
\left(\bbE\left[\left(\frac{\dd \bbP}{\dd \tilde \bbP}\right)^{\frac{\gamma}{1-\gamma}}\right]\right)^{1-\frac{1}{\gamma}}\, \ge \, 1-\frac{\zeta}4 , 
 \end{equation}
and 
\begin{equation}
 \tilde \bbE [X_k]\le 1-\zeta,
\end{equation}
then the free energy is equal to zero.
\end{lemma}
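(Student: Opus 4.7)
The plan is to simply assemble the observations laid out in the paragraphs preceding the statement. Under the hypotheses of the lemma, my aim is to establish
\begin{equation*}
A_k := \bbE\left([X_k-(B-1)]_+^{\gamma}\right)\, <\, B^{\gamma}-2(B-1)^{\gamma}.
\end{equation*}
Once this holds, the quadratic recursion $A_{n+1} \le (A_n^2 + 2(B-1)^{\gamma} A_n)/B^{\gamma}$ (obtained from \eqref{eq:recU} plus the fractional subadditivity \eqref{eq:fracineq}) forces $A_n \to 0$, because $0$ is the attractive fixed point of the associated scalar map on $[0, B^{\gamma}-2(B-1)^{\gamma})$. The bound \eqref{eq:LB0411} and the Jensen step \eqref{eq:JensAn} then yield $\tf(\gb, h) = 0$.

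First I would apply H\"older with conjugate exponents $1/\gamma$ and $1/(1-\gamma)$ exactly as in \eqref{eq:hold}, which reduces the goal $A_k < B^{\gamma}-2(B-1)^{\gamma}$ to the sufficient condition \eqref{eq:cond1}, namely
\begin{equation*}
\tilde\bbE\left[[X_k-(B-1)]_+\right] \,\le\, \left(\bbE\left[\left(\frac{\dd\bbP}{\dd\tilde\bbP}\right)^{\gamma/(1-\gamma)}\right]\right)^{1-1/\gamma}\left(B^{\gamma}-2(B-1)^{\gamma}\right)^{1/\gamma}.
\end{equation*}
Next I would use the deterministic monotone bound $X_n \ge x_n^{(0)} \nearrow B-1$ to dominate the positive part by $X_k - x_k^{(0)}$; combined with the choice $k \ge n_\zeta$ and the hypothesis $\tilde\bbE[X_k] \le 1-\zeta$, this yields
\begin{equation*}
\tilde\bbE\left[[X_k-(B-1)]_+\right] \,\le\, \tilde\bbE[X_k]-(B-1)+\zeta/4 \,\le\, 2-B-3\zeta/4.
\end{equation*}

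For the right-hand side of the sufficient condition, assumption \eqref{eq:close} directly provides the factor $\ge 1 - \zeta/4$, while the choice $\gamma = \gamma_\zeta$ provides $(B^{\gamma}-2(B-1)^{\gamma})^{1/\gamma} \ge 2-B-\zeta/4$ via the expansion around $\gamma = 1$ recalled just before the statement. Multiplying these two lower bounds and expanding shows that $(1-\zeta/4)(2-B-\zeta/4) \ge 2-B-3\zeta/4$ reduces, after cancelling the common term $2-B$, to a trivial inequality with positive slack of order $\zeta B /4$. This closes \eqref{eq:cond1} and finishes the argument.

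There is no real obstacle here — the proof is a bookkeeping exercise of chaining together \eqref{eq:hold}, the monotone lower bound $x_n^{(0)} \nearrow B-1$, and the asymptotics of $(B^{\gamma}-2(B-1)^{\gamma})^{1/\gamma}$ as $\gamma \nearrow 1$. The only point worth checking carefully is that the four $\zeta/4$ slacks (two in the definitions of $n_\zeta$ and $\gamma_\zeta$, one in \eqref{eq:close}, and one provided by the hypothesis $\tilde\bbE[X_k]\le 1-\zeta$ rather than $1-3\zeta/4$) have been budgeted so that a strict margin survives; the algebra above confirms this, the key point being that the leading perturbation $\zeta(3-B)/4$ on the right is strictly smaller than the left-hand perturbation $3\zeta/4$ for every $B \in (1,2)$.
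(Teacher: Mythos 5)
Your proof is correct and follows exactly the route the paper lays out in the paragraphs preceding the lemma: reduce to $A_k<B^{\gamma}-2(B-1)^{\gamma}$ via the fractional-moment recursion, apply H\"older to obtain the sufficient condition \eqref{eq:cond1}, use the deterministic bound $X_k\ge x_k^{(0)}\nearrow B-1$ together with $k\ge n_\zeta$ to control $\tilde\bbE[[X_k-(B-1)]_+]$, and use the asymptotics of $(B^\gamma-2(B-1)^\gamma)^{1/\gamma}$ as $\gamma\nearrow 1$ plus \eqref{eq:close} to bound the right-hand side. The final bookkeeping of the $\zeta/4$ slacks is the same in both, with the slack $\zeta B/4+\zeta^2/16>0$ even ensuring the needed strict inequality.
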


\subsubsection{The change of measure}

In order to use wisely the result of the previous section, we have to
find a measure $\tilde \bbE:=\tilde \bbE_n$ on the environment which
is, in a sense, close to $\bbE$ ({\sl cf.} \eqref{eq:close}), and that
lowers significantly the expectation of $X_n$. In \cite{cf:GLT} we
introduced the idea of changing the mean of the $\go$-variables,
while keeping their IID character. This strategy was enough to prove
disorder relevance for $B<B_c$, but it is not effective in the
marginal case $B=B_c$ we are considering here. Here, instead, we
choose to introduce {\sl weak, long range} negative correlations
between the different $\go_i$ without changing the laws of the
1-dimensional marginals. As it will be clear, the covariance structure we
choose reflects the hierarchical structure of the model we are
considering.

 In the
sequel we take $h\ge h_c(0)=0$.

We define $\tilde \bbP_n$ by stipulating that the variables $\go_i,i>2^n,$ are still 
IID standard Gaussian independent of $\go_1, \ldots, \go_{2^n}$, while $\go_1, \ldots, \go_{2^n}$
are Gaussian, centered,  and with covariance matrix
\begin{equation}
\label{eq:covar}
  C:=I-\gep V,
\end{equation}
where $I$ is the $2^n\times 2^n$ identity matrix, $\gep>0$ and $V$ is
a  symmetric $2^n\times 2^n$ matrix with zero diagonal terms 
and with positive off-diagonal terms
($\gep$ and $V$
will be specified in a moment). 

The choice $V_{ii}=0$ implies of course $\text{Trace}(V)=0$, and we
are also going to impose that the Hilbert-Schmidt norm of $V$ verifies 
$\Vert V \Vert ^2 := \sum_{i,j } V_{i,j}^2 =\text{Trace}(V^2)=1$. This in particular
implies that $C$ is positive definite (so that $\tilde \bbP_n$
exists!) as soon as $\gep<1$: this is because $\Vert V \Vert$, being a
matrix norm, dominates the spectral radius of $V$.

Now, still without choosing $V$ explicitly,  we compute a lower bound for the left--hand side of \eqref{eq:close}. 
The mutual density of $\tilde \bbP_n$ and $\bbP$  is
\begin{equation}
  \label{eq:densit}
  \frac{\dd \tilde \bbP_n}{\dd \bbP}(\go)\, =
  \, 
  \frac{e^{-1/2 ((C^{-1}-I)\go,\go)}}{\sqrt{\det C}},
\end{equation}
with the notation $(Av,v):=\sum_{1\le i,j\le 2^n}A_{ij}v_i v_j$,
and therefore a straightforward Gaussian computation gives
\begin{equation}
  \label{eq:dets}
  \left( \bbE \left[ \left(\frac{\dd\bbP}{\dd\tilde \bbP_n}\right)^{\gamma/(1-\gamma)}\right]\right)^{1-1/\gamma}\, =\, 
  \frac{(\det[I-(\gep/(1-\gamma)) V])^{(1-\gamma)/(2\gamma)}}{(\det C)^{1/(2\gamma)}}.
\end{equation}
If we want to prove a lower bound of the type \eqref{eq:close}, a
necessary condition is of course that the numerator in \eqref{eq:dets}
is  positive: this is ensured by requiring $\gep < 1-\gamma$. For the
next computation we are going to require also that $ \gep/(1-\gamma)
\le 1/2$: we are going in fact to use that $\log (1+x) \ge x -x^2$ if
$x \ge -1/2$, and $\text{Trace}(V)=0$ to obtain that
\begin{multline}
\label{eq:dano}
\det\left[I-(\gep/(1-\gamma)) V\right] \, =\,
\exp \left( \text{Trace}( \log (I-(\gep/(1-\gamma))V)) \right)\\
 \ge \, 
\exp\left( -\frac{\gep^2}{(1-\gamma)^2} \Vert V \Vert ^2 \right),
\end{multline}
while
$\log (1+x) \le x$ and the traceless character of $V$ directly imply $\det C \le 1$ so that finally
\begin{equation}
  \label{eq:dets-est}
  \left(  \bbE\left[\left(\frac{\dd\bbP}{\dd\tilde \bbP_n}\right)^{\gamma/(1-\gamma)}\right]\right)^{1-1/\gamma}\, \ge\, 
  \exp \left( -\frac{\gep^2}{2\gamma(1-\gamma)} \right).
\end{equation}

\smallskip

Next, we estimate the expected value of $X_n$ under the modified measure: from \eqref{eq:Rn} we see that
\begin{equation}\begin{split}
\label{eq:EZN}
\tilde \bbE_n X_n&\, =\, \bE_n\left[e^{(h-(\beta^2/2))\sum_{i=1}^{2^n}
  \delta_i}\,\tilde\bbE_n e^{\sum_{i=1}^{2^n}
  \gb\go_i\delta_i}\right]\\
&\, =\, \bE_n\left[e^{-\gep(\beta^2/2) (V\delta,\delta)+\sum_{i=1}^{2^n}
  h\delta_i}\right]
  \, \le\, e^{2^n h}\,\bE_n\left[e^{-\gep(\beta^2/2)
  (V\delta,\delta)}\right].
\end{split}
\end{equation}

Finally we choose $V$. From \eqref{eq:EZN}, it is not hard to guess that the most convenient choice, subject to the 
constraint $\Vert V\Vert^2=1$, is
\begin{equation}
  \label{eq:V}
  V_{ij}\, =\, \bE_n[\delta_i\delta_j]\bigg /\sqrt{\sum_{1\le i\ne j\le 2^n}(\bE_n[\delta_i\delta_j])^2},
\end{equation}
for $i\ne j$, while we recall that $V_{ii}=0$. 
The normalization in \eqref{eq:V} can be computed with the help of Proposition 
\ref{th:deltas}:
\begin{equation}
\label{eq:2n}
  \sum_{1\le i\ne j\le 2^n}\left(\bE_n[\delta_i\delta_j]\right)^2=
2^n\sum_{1<j\le 2^n}\left(\bE_n[\delta_1\delta_j]\right)^2=
2^n\sum_{1\le a\le n}\frac{2^{a-1}}{B_c^{2(n+(a-1))}}=n.
\end{equation}
In the second equality, we used the fact that there are $2^{a-1}$
values of $1<j\le 2^n$ such that the two branches of the tree
$\mathcal T^{(n)}_{\{1,j\}}$ join at level $a$ ({\sl cf.} the notations of
Section \ref{sec:trees}), and such tree contains $n+a-1$ nodes.

As a side remark, note that if $B_c<B<2$ (irrelevant disorder regime)
the left-hand side of \eqref{eq:2n} instead goes to zero with $n$,
while for $1<B<B_c$ (relevant disorder regime) it diverges
exponentially with $n$.

So, in the end,  our choice for $V$ is: 
\begin{equation}
  \label{eq:VV}
  V_{ij}=\left\{
    \begin{array}{lll}
      {\bE_n[\delta_i\delta_j]}/{\sqrt{n}} &\mbox{if}& i\ne j\\
0&\mbox{if}& i=j.
    \end{array}
\right.
\end{equation}

\subsubsection{Checking the conditions of Lemma \ref{th:moment}}

To conclude the proof of Theorem \ref{th:main} we have to show that if
$\beta\le \beta_0$ and $h\le \exp(-c_3/\beta^4)$ (and provided that
$c_3= c_3(\beta_0)$ is chosen large enough) the conditions of Lemma
\ref{th:moment} are satisfied. The main point is therefore to
estimate the expectation of $X_n$ under $\tilde\bbP_n$.

Recalling that ({\sl cf.} \eqref{eq:EZN})
\begin{equation}
\label{eq:q2}
  \tilde \bbE_n X_n\le \bE_n \left[e^{-(\beta^2/2)\gep\sum_{1\le i\ne j\le 2^n}\delta_i\delta_j
    \frac{\bE_n[\delta_i\delta_j]}{\sqrt{n}}}\right]e^{2^n h}
,
\end{equation}
we define
\begin{equation}
Y_n:=\sum_{1\le i\ne j\le 2^n}\delta_i\delta_j
    \frac{\bE_n[\delta_i\delta_j]}{n}.
\end{equation}
Thanks
to \eqref{eq:2n}, we know that $\bE_n (Y_n)=1$,
so that the Paley-Zygmund inequality gives
\begin{equation}
  \label{eq:PZ222}
  \bP_n \left(Y_n\ge 1/2\right) \,=\, 
  \bP_n \left(Y_n\ge (1/2)\bE_n(Y_n)\right)\, \ge 
  \, \frac{(\bE_n(Y_n))^2}{4\,\bE_n(Y_n^2)}=
  \frac{1}{4\,\bE_n(Y_n^2)}.
\end{equation}
We need therefore the following estimate, which will be proved at the end of the section:
\begin{lemma} \label{th:secmom}
We have: 
\begin{equation}
(1\le )\;\mathcal K:=\sup_{n}  \bE_n[Y_n^2]<\infty.
\end{equation}
\end{lemma}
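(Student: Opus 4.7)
The plan is to estimate $\bE_n[Y_n^2]$ directly by expanding the square and using Proposition \ref{th:deltas} to turn every factor into an explicit combinatorial quantity. Writing
\begin{equation*}
\bE_n[Y_n^2]\,=\,\frac{1}{n^2}\sum_{\substack{1\le i\ne j\le 2^n\\ 1\le k\ne l\le 2^n}}B_c^{-v(n,\{i,j\})-v(n,\{k,l\})-v(n,\{i,j,k,l\})},
\end{equation*}
with the convention that $\{i,j,k,l\}$ denotes the set of distinct indices, I would split the sum according to $|\{i,j,k,l\}|\in\{2,3,4\}$.

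The contributions with two or three distinct indices are easily bounded. When $\{i,j\}=\{k,l\}$ the summand reduces to $(B_c^{-v(n,\{i,j\})})^3$ and, recalling that there are $2^{n+a-1}$ ordered pairs joining at level $a$, one gets a total of order $2^{-n/2}/n^2$. When exactly three indices are distinct, an analogous enumeration of the Steiner subtree of the three leaves produces a contribution of order $2^{-n/2}$; both cases are negligible.

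The bulk of the work lies in the case of four distinct leaves, which I would parametrize by the pair-topology of $\{i,j,k,l\}$ together with the joining levels of the Steiner subtree in $\mathcal T^{(n)}$. Let $a$ and $b$ denote the levels at which $(i,j)$ and $(k,l)$ respectively join, and $h$ the level at which the tree of all four leaves joins (so that $h\ge \max(a,b)$). In the ``balanced'' topology $(ij|kl)$, in which $a,b<h$, a direct inspection of the tree yields
\begin{equation*}
v(n,\{i,j,k,l\})\,=\,n+h+a+b-3,
\end{equation*}
and a short count gives exactly $2^{n+h+a+b-3}$ ordered quadruples with given $(a,b,h)$. Substituting and using $B_c=\sqrt 2$, the exponent of $2$ in the summand collapses to $(h-n-1)/2$, so that this topology contributes
\begin{equation*}
\frac{1}{n^2}\sum_{1\le a,b<h\le n}2^{(h-n-1)/2}\,=\,O(1),
\end{equation*}
the sum being dominated by $h\simeq n$, which produces a factor of order $n^2$ that exactly compensates the $1/n^2$ prefactor.

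The remaining configurations---the ``crossing'' topologies $(ik|jl)$ and $(il|jk)$ (where the joining level of $(i,j)$ coincides with that of $(k,l)$ and equals $h$), and the ``caterpillar'' degenerate sub-cases where one of $a,b$ equals $h$---are handled by precisely the same recipe: a combinatorial count combined with the formula for $v(n,\cdot)$ reduces each of them to a geometric sum of order at most $O(n^{-2})$. The main technical obstacle is not conceptual but bookkeeping: one must exhaust all the tree topologies for four leaves and verify in each case that the exponent of $2$ in the resulting summand is non-positive. The fact that the dominant sum above is exactly $O(1)$, rather than divergent or vanishing, is the precise quantitative manifestation of the marginality condition $B=B_c=\sqrt 2$, in the same spirit as the identity $\sum_{1\le i\ne j\le 2^n}(\bE_n[\delta_i\delta_j])^2=n$ that fixed the normalization of $Y_n$.
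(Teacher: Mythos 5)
Your proposal follows essentially the same route as the paper: expand $\bE_n[Y_n^2]$, classify quadruples by the topology of the Steiner subtree $\mathcal T^{(n)}_{\{i,j,k,l\}}$, count quadruples for each topology using the tree structure, and observe that at $B=B_c=\sqrt 2$ the exponent collapses so the dominant sum is exactly $O(1)$. Your main ``balanced'' computation reproduces the paper's: with $v(n,\{i,j,k,l\})=n+h+a+b-3$ and $2^{n+h+a+b-3}$ (over-counted) quadruples, the summand collapses to $2^{(h-n-1)/2}$ and the sum over $a,b<h\le n$ divided by $n^2$ is bounded, precisely because the $(h-1)^2$ factor from summing over $a,b$ cancels the $1/n^2$ prefactor.

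One inaccuracy worth flagging: you assert that the ``caterpillar'' topology (in which one of $a,b$ equals $h$, i.e. one branch of the four-leaf subtree does not bifurcate) contributes $O(n^{-2})$. A direct computation (following the same recipe, parametrizing the three bifurcation levels by $a_1<a_2<a_3$) gives a summand $2^{-(n-a_2)/2}$, and $\frac{1}{n^2}\sum_{a_1<a_2<a_3\le n}2^{-(n-a_2)/2}$ is $O(1/n)$, not $O(n^{-2})$: the sum over $a_3$ at fixed $a_2\approx n$ yields an $O(1)$ geometric factor rather than another factor of $n$. The conclusion is of course unaffected since $O(1/n)$ is still $o(1)$, but the estimate you wrote down would not survive a careful check; the paper verifies this case separately and finds the $O(1/n)$ rate for the assignment $(f_1,f_2,f_3,f_4)=(i,j,k,l)$, with $O(n^{-2})$ only for the other assignments. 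Also a small point of terminology: the count $2^{n+h+a+b-3}$ is an over-count (already the unordered choice of $f_1,f_2$ is counted twice), which is fine since only an upper bound is needed, but ``exactly'' is not quite right.
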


Together with \eqref{eq:PZ222} this implies
\begin{equation}
 \bP_n[Y_n\ge 1/2]\,\ge  \, \frac{1}{4\mathcal K}, 
\end{equation}
so that, for all $n\ge 0$,
\begin{equation}
\label{eq:bd}
\begin{split}
\bE_n \left[e^{-(\beta^2/2)\gep\sum_{1\le i\ne j\le 2^n}\delta_i\delta_j
  \frac{\bE_n[\delta_i\delta_j]}{\sqrt{n}}}\right]\, &=\, 
\bE_n\left[e^{-\frac{\sqrt{n}\gb^2\gep}2 Y_n}\right]\\
&\le \, 1-\frac{1}{4\mathcal K}\left(1-4
  \mathcal K\exp\left(-\frac{\sqrt{n}\gb^2\gep}{4}\right)\right).
  \end{split}
\end{equation}
We fix $\zeta:=1/(40\mathcal K)$
and we choose $\gamma=\gamma_\zeta$ ({\sl cf.} Lemma \ref{th:moment}) and 
$\gep$ in \eqref{eq:covar} small enough so that ({\sl cf.} \eqref{eq:dets-est})
\begin{equation}\label{eq:upb2}
  \left[  \bbE\left(\frac{\dd\bbP}{\dd\tilde \bbP_n}\right)^{\gamma/(1-\gamma)}\right]^{1-1/\gamma}\, \ge\, 
\exp \left( -\frac{\gep^2}{2\gamma(1-\gamma)} \right)\ge 1-\frac{\zeta}{4}.
\end{equation}
Then one can check with the help of \eqref{eq:bd} that for $n\ge {50\mathcal K}/{(\gb^4\gep^2)}$,
\begin{equation}\label{eq:upb3}
\bE_n \left[e^{-(\beta^2/2)\gep\sum_{1\le i\ne j\le 2^n}\delta_i\delta_j
    \frac{\bE_n[\delta_i\delta_j]}{\sqrt{n}}}\right]\, \le \,  1-3\zeta.
\end{equation}
We choose $n=n_\gb$ in
$\left[\frac{50\mathcal K}{\gb^4\gep^2},\frac{50\mathcal K}{\gb^4\gep^2}+1\right)$ and
$h=\zeta 2^{-n}$. If $\gep$ has been chosen  small enough above (how small,
 depending only 
on $\beta_0$), this guarantees that $n\ge n_\zeta$, where $n_\zeta$
was defined just before Lemma \ref{th:moment}. Injecting
\eqref{eq:upb3} in \eqref{eq:q2} finally gives
\begin{equation}
 \tilde \bbE[X_n]\le (1-3\zeta)e^{\zeta}\le 1-\zeta.
\end{equation}

The two conditions of Lemma \ref{th:moment} are therefore verified,
which ensures that the free energy is zero for this value of $h$. In
conclusion, for every $\gb\le \beta_0$ we have proven that
\begin{equation}
 h_c(\gb)\,\ge
 \,  \zeta\, 2^{-n_{\gb}}\, \ge\,  \frac 1{80\mathcal K }\exp\left(-\frac{50  \mathcal K \log 2}{\gb^4\gep^2} \right),
\end{equation}
for some $\gep=\gep(\beta_0)$ sufficiently small but independent of $\gb$.
\qed

\begin{proof}[Proof of Lemma \ref{th:secmom}]
We have
\begin{equation}
  \label{eq:Y2}
  \bE_n(Y_n^2)=\frac 1{n^2}\sum_{1\le i\ne j\le 2^n}\sum_{1\le k\ne l\le 2^n}\bE_n[\delta_i\delta_j]\bE_n[\delta_k\delta_l]
  \bE_n[\delta_i\delta_j\delta_k\delta_l].
\end{equation}
We will consider only the contribution coming from the terms such that $i\ne k,l$ and $j\ne k,l$. The 
remaining terms can be treated similarly and their global contribution is easily seen to be exponentially small 
in $n$. (For instance,  when $i=k$ and $j=l$ one gets
\begin{equation}
  \frac 1{n^2}\sum_{1\le i\ne j\le 2^n}\bE_n[\delta_i\delta_j]^3
  \, \le\,
   \frac1n\bE_n(Y_n)\max_{1\le i<j\le 2^n}\bE_n[\delta_i\delta_j],
\end{equation}
which is exponentially small in $n$, in view of Theorem \ref{th:deltas}.)

\begin{figure}[ht]
\begin{center}
\leavevmode
\epsfxsize =12 cm
\psfragscanon
\psfrag{a}{(a)}
\psfrag{b}{(b)}
\psfrag{k}{$k$}
\psfrag{0}[r]{level $0$: the leaves}
\psfrag{v}[r]{ $v$}
\psfrag{2}[r]{level $2$}
\psfrag{4}[c]{level $n=4$: the root}
\psfrag{3}[c]{$\ldots$}
\psfrag{r}[c]{level $N$: the root}
\epsfbox{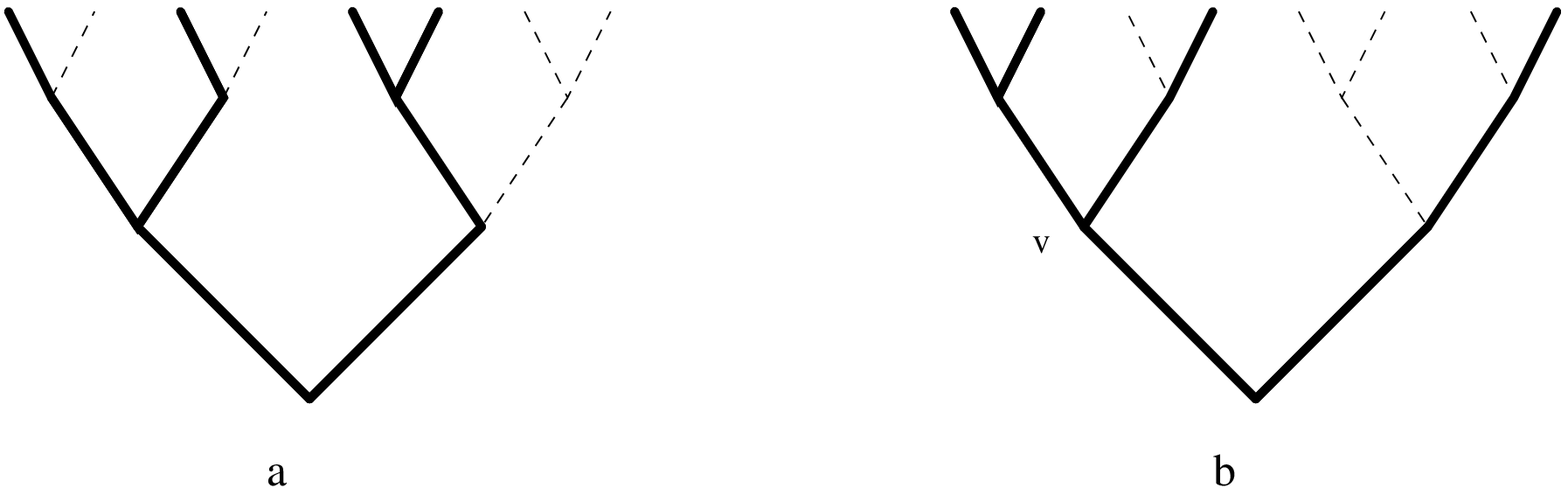}
\end{center}
\caption{The two different possible topologies of the tree $\mathcal
  T^{(n)}_{\{i,j,k,l\}}$.  Case (b) is understood to include also the trees
  where the branch which does not bifurcate is the one on the left, or
  where the sub-branch which bifurcates is the right descendent of the
  node $v$.  We consider only trees where the four leaves are
  distinct, since the remaining ones give a contribution to $\bE_n
  (Y_n^2)$ which vanishes for $n\to\infty$.}
\label{fig:2t}
\end{figure}
From now on, therefore, we assume that $i,j,k,l$ are all distinct. Two cases can occur:
\begin{enumerate}
\item the tree $\mathcal T^{(n)}_{\{i,j,k,l\}}$   (it is better to view it here
has the backbone tree, not as the Galton-Watson tree, see Figure~\ref{fig:treemarg})
has two branches, which
  themselves bifurcate into two sub-branches, {\sl cf.} Fig. \ref{fig:2t}(a)
  for an example.  We call $c$ the level at which the first
  bifurcation occurs ($c=n$ in the example of Fig. \ref{fig:2t}(a)), and
  $a,b$ the levels at which the two branches bifurcate. One has
  clearly $1\le a<c\le n$ and $1\le b<c\le n$. All trees of this form
  can be obtained as follows: first choose a leaf $f_1$,
  between $1$ and $ 2^n$. Then choose $f_2$ among the $2^{a-1}$
  possible ones which join with $f_1$ at level $a$, $f_3$ among the
  $2^{c-1}$ which join with $f_1$ at level $c$ and finally $f_4$ among
  the $2^{b-1}$ which join with $f_3$ at level $b$. 
Clearly we are over-counting the trees (note for example that already 
in the choice of $f_1$ and $f_2$ we are over-counting by a factor $2$), but 
we are only after an {\sl upper bound} for $\bE_n(Y_n^2)$ (the
same remark applies to case (2) below).
We still have
  to specify how to identify $(f_1,f_2,f_3,f_4)$ with a permutation
  of $(i,j,k,l)$. When $(f_1,f_2,f_3,f_4)=(i,j,k,l)$ we get the
  following contribution to \eqref{eq:Y2}:
\begin{equation}
\label{eq:caso11}
\frac1{n^2}  \sum_{1\le a<c\le n}\sum_{1\le b<c\le n}\frac{2^{n+a+b+c-3}}{B_c^{n+a+b+c-3}B_c^{n+a-1} B_c^{n+b-1}},
\end{equation}
where we used Theorem \ref{th:deltas} to write, {\sl e.g.},
$\bE_n[\delta_i\delta_j]=B_c^{-n-a+1}$. Since $B_c=\sqrt 2$ we can
rewrite \eqref{eq:caso11} as
\begin{equation}
  \frac1{\sqrt 2n^2}\sum_{1<c\le n}(c-1)^2 2^{-(n-c)/2},
\end{equation}
which  is clearly bounded as $n$ grows.

If instead $(f_1,f_2,f_3,f_4)=(i,k,j,l)$ or $(f_1,f_2,f_3,f_4)=(i,k,l,j)$,
 one gets
\begin{equation}
\frac1{n^2}  \sum_{1\le a<c\le n}\sum_{1\le b<c\le n}\frac{2^{n+a+b+c-3}}{B_c^{n+a+b+c-3}B_c^{n+c-1} B_c^{n+c-1}},
\end{equation}
which is easily seen to be $O(1/n^2)$.

All the other permutations of $(i,j,k,l)$ give a contribution which
equals, by symmetry, one of the three we just considered.

\item the tree $\mathcal T^{(n)}_{\{i,j,k,l\}}$ has two branches: one of
  them does not bifurcate, the other one bifurcates into two sub-branches, one of which bifurcates into two
sub-sub-branches, {\sl cf.} Figure
  \ref{fig:2t}(b). Let $a_1,a_2,a_3$ be the levels where the three bifurcations occur, ordered so that $1\le a_1<a_2<a_3\le n$. This time,
we choose $f_1$ between $1$ and $2^n$ and then, for $i=1,2,3$, $f_{i+1}$ among the $2^{a_i-1}$ leaves which join with $f_1$ at level $a_i$.
If $(f_1,f_2,f_3,f_4)=(i,j,k,l)$ one has in this case
\begin{multline}
\phantom{move}
\frac1{n^2}  \sum_{1\le a_1<a_2<a_3\le n}\frac{2^{n+a_1+a_2+a_3-3}}{B_c^{n+a_1+a_2+a_3-3}B_c^{n+a_1-1} B_c^{n+a_3-1}}\,
=
\\
\frac1{\sqrt 2 n^2} \sum_{1\le a_1<a_2<a_3\le n}2^{-(n-a_2)/2},
\end{multline}
which is $O(1/n)$. 
Finally, when $(f_1,f_2,f_3,f_4)$ is equal to $(i,k,j,l)$ or 
to $(i,k,l,j)$ one gets
  \begin{multline}
  \phantom{move}
\frac1{n^2}  \sum_{1\le a_1<a_2<a_3\le n}\frac{2^{n+a_1+a_2+a_3-3}}{B_c^{n+a_1+a_2+a_3-3}B_c^{n+a_2-1} B_c^{n+a_3-1}}\,=
\\
\frac1{\sqrt 2 n^2} \sum_{1\le a_1<a_2<a_3\le n}2^{-(n-a_1)/2},
\end{multline}
which is $O(1/n^2)$.
\end{enumerate}
\end{proof}

\section{Marginal relevance of disorder: the non-hierarchical case}\label{sec:nHproof}

Here we prove Theorem \ref{th:main2} and therefore we assume that
\eqref{eq:K222} holds with $\alpha=1/2$.

We choose and fix once and for all a $\gamma \in (2/3,1)$ and set for $h>0$
\begin{equation}
\label{eq:kh1}
  k:=k(h):=\left\lfloor \frac 1h\right\rfloor.
\end{equation}

\smallskip

\begin{rem}\rm
\label{rem:k}
In \cite{cf:DGLT}
 the choice $k(h)= \lfloor 1/\tf(0,h)\rfloor $ was made and 
 it corresponds to choosing $k(h)$ equal to the correlation length
 of the annealed system. In our case $1/\tf(0,h) \stackrel{h\searrow 0}
 \sim 1/(c_K h^2)$ ({\sl cf.} \eqref{eq:annF}) and therefore  \eqref{eq:kh1}
 may look surprising. However, there is nothing particularly deep behind: 
 for  $\ga=1/2$, due to the fact that we have to prove delocalization
 for $h\le  \exp(-c_7/\gb^4)$, choosing $k(h)$ that diverges for small
 $h$ like $1/h$ instead of $1/h^2$ just leads to choosing $c_7$
 different by a factor $2$ (and we do not track the precise value of constants). 
 We take this occasion to stress that it is practical to work always
 with sufficiently large values of $k(h)$, and this can be achieved
 by choosing $c_7$ sufficiently large.
\end{rem}

\smallskip

We divide $\N$ into blocks
\begin{equation}
  \label{eq:blocks}
  B_i:=\{(i-1)k+1,(i-1)k+2,\ldots,ik\}\;\mbox{with\;\;}i=1,2,\ldots .
\end{equation}
From now on we assume that $(N/k)$ is integer, and of course it is also the
number of blocks contained in the interval $\{1,\ldots,N\}$.

We define, in analogy with the hierarchical case, 
\begin{equation}
  A_N\, :=\, \bbE\left(Z_{N,\go}^\gamma\right),
\end{equation}
and we note that, as in \eqref{eq:JensAn}, Jensen's inequality implies that a
sufficient condition for $\tf(\beta,h)=0$ is that $A_N$ does not
diverge when $N\to\infty$.  Therefore, our task is to show that 
for every $\gb_0>0$ we can find $c_7>0$ such that
for every $\beta\le \beta_0$ and $h$ such that
\begin{equation}
  \label{eq:assh}
0<h\le \exp(-c_7/\beta^4), 
\end{equation}
one has that
$\sup_N A_N<\infty$.

\subsection{Decomposition of $Z_{N,\go}$ and change of measure}
The first step is a decomposition of the partition function similar to
that used in \cite{cf:T_cg}, which is a refinement of the strategy employed 
in \cite{cf:DGLT}.  For $0<i\le j$ we let
$Z_{i,j}:=Z_{(j-i),\theta^i\go}$, with $(\theta^i\go)_a:=\go_{i+a},
a\in \N$, {\sl i.e.}, $\theta^i\go$ is the result of the application to
$\go$ of a shift by $i$ units to the left.  We decompose $Z_{N,\go}$
according to the value of the first point ($n_1$) of $\tau$ after $0$,
the last point ($j_1$) of $\tau$ not exceeding $n_1+k-1$, then the
first point ($n_2$) of $\tau$ after $j_1$, and so on. We call $i_r$
the index of the block in which $n_r$ falls, 
and $\ell:=\max\{r:n_r\le N\}$,
see Figure \ref{fig:decomp}. Due to the constraint $N\in\tau$, one has always $i_\ell=(N/k)$.
\begin{figure}[ht]
\begin{center}
\leavevmode
\epsfxsize =12.4 cm
\psfragscanon
\psfrag{0}[c]{\small $0$}
\psfrag{ui}[c]{\tiny $i_0=j_0=0$}
\psfrag{N}[c]{\small N} 
\psfrag{B1}[c]{\small $B_1$}
\psfrag{B2}[c]{\small $B_3$}
\psfrag{B3}[c]{\small $B_4$}
\psfrag{B8}[c]{\small $B_9$}
\psfrag{B9}[c]{\small $B_{10}$}
\psfrag{B10}[c]{\small $B_{11}$}
\psfrag{B13}[c]{\small $B_{14}$}
\psfrag{Z1}[c]{\tiny $Z_{n_1,j_1}$}
\psfrag{Z2}[c]{\tiny $Z_{n_2,j_2}$}
\psfrag{Z3}[c]{\tiny $Z_{n_3,j_3}$}
\psfrag{Z4}[c]{\tiny $Z_{n_4,N}$}
\psfrag{n1}[c]{ \small $n_1$}
\psfrag{j1}[c]{ \small $j_1$}
\psfrag{n2}[c]{ \small $n_2$}
\psfrag{j2}[c]{ \small $j_2$}
\psfrag{n3}[c]{ \small $n_3$}
\psfrag{j3}[c]{ \small $j_3$}
\psfrag{n4}[c]{ \small $n_4$}
\psfrag{j1}[c]{ \small $j_1$}
\psfrag{k}[c]{\small $k$}
\psfrag{2k}[c]{\small $2k$}
\psfrag{n1+k}[c]{ \tiny $n_1+k$}
\psfrag{n2+k}[c]{ \tiny $n_2+k$}
\psfrag{n3+k}[c]{ \tiny $n_3+k$}
\epsfbox{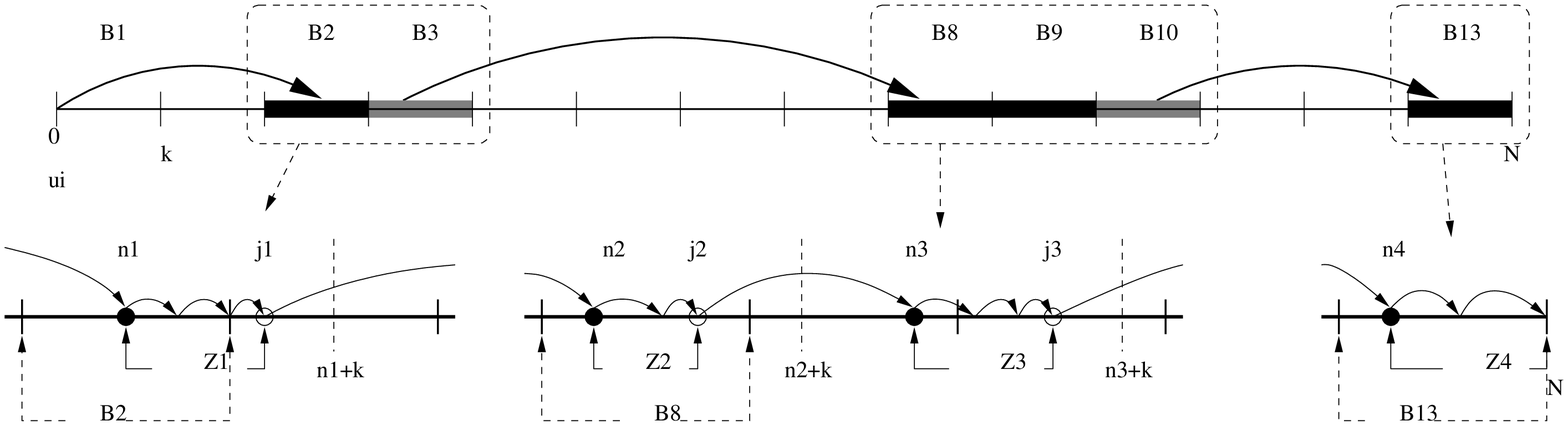}
\end{center}
\caption{\label{fig:decompos} A typical configuration which
  contributes to $\hat Z_\go^{(i_1,\ldots,i_\ell)}$.  In this example
  we have $N/k=14$, $\ell=4$, $i_1=3$, $i_2=9$, $i_3=10$ and
  $i_4=N/k=14$ (by definition $i_\ell =N/k$, {\sl cf.}
  \eqref{eq:dec1}).  Contact points are only in black and grey blocks:
  the blocks $B_{i_j}$, $j=1,\ldots, \ell$ are black and they contain
  one (and only one) point $n_i$. To the
  right of a black block there is either another black block or a grey
  block (except for the last black block, $B_{i_\ell}$, that contains
  the end-point $N$ of the system).  The bottom part of the figure
  zooms on black and grey blocks.  We see that 
  to the right of $n_{i}$ (big black dots) there are
  renewal points before $n_{i}+k$; for $i<\ell$,  $j_i$ is the rightmost one and
  it is marked by a big empty dot (even if it is not the case in the
  figure, it may happen that there is none: in that case $j_i=n_i$).
  Therefore, between empty dots and black dots there is no contact
  point (the origin should be considered an empty dot too).  Note that
  $j_i$ can be in $B_i$, as it is the case for $j_2$, or in $B_{i+1}$,
  as it is the case for $j_1$ and $j_3$.  Going back to the figure on
  top, we observe that the set $M$ of \eqref{eq:M} is
  $\{3,4,9,10,11,14\}$, that is the collection of black and grey
  blocks. We point out that it may happen that a grey block contains
  no point, but it is convenient for us to treat grey blocks as if
  they always contained contact points. It is only to the charges
  $\go$ in black and grey blocks that we apply the change-of-measure
  argument that is crucial for our proof. }
\label{fig:decomp}
\end{figure}

In formulas:
\begin{equation}
\label{eq:dec1}
  Z_{N,\go}=\sum_{\ell=1}^{N/k}\sum_{i_0:=0<i_1<\ldots<i_\ell=N/k}
\hat Z_\go^{(i_1,\ldots,i_\ell)},
\end{equation}
where
\begin{multline}
  \label{eq:Zhat}
  \hat Z_\go^{(i_1,\ldots,i_\ell)}\, :=\, 
  \sum_{n_1\in B_{i_1}}\sum_{j_1=n_1}^{n_1+k-1}\sumtwo{n_2\in B_{i_2}:}{n_2\ge
    n_1+k}\sum_{j_2=n_2}^{n_2+k-1}\ldots \sumtwo{n_{\ell-1}\in B_{i_{\ell-1}}:}
{n_{\ell-1}\ge
    n_{\ell-2}+k}\sum_{j_{\ell-1}=n_{\ell-1}}^{n_{\ell-1}+k-1}\sumtwo
{n_\ell\in B_{N/k}:}{n_\ell\ge n_{\ell-1}+k}\\
 z_{n_1}K(n_1)Z_{n_1,j_1}z_{n_2}K(n_2-j_1)
Z_{n_2,j_2}\, \ldots \, z_{n_\ell}K(n_\ell-j_{\ell-1})Z_{n_\ell,N},
\end{multline}
and $z_n:=e^{\beta\go_n+h-\beta^2/2}$.

Then, from inequality \eqref{eq:fracineq}, we have
\begin{equation}
\label{eq:A_nh}
  A_N\,\le\, \sum_{\ell=1}^{N/k}\sum_{i_0:=0<i_1<\ldots<i_\ell=N/k}
\bbE\left[(\hat Z_\go^{(i_1,\ldots,i_\ell)})^\gamma\right],
\end{equation}
and, 
as in \eqref{eq:hold}, we apply H\"older's inequality to get
\begin{multline}
  \label{eq:Hold}
  \bbE \left[\left(\hat Z^{(i_1,\ldots,i_\ell)}_\go\right)^\gamma\right]\,
=\\
\tilde \bbE\left[\left(\hat Z^{(i_1,\ldots,i_\ell)}_\go\right)^\gamma
\frac{\dd \bbP}{\dd \tilde \bbP}(\go)\right]
\le \left(
\tilde \bbE \hat Z^{(i_1,\ldots,i_\ell)}_\go\right)^\gamma
\left(\bbE \left[\left(\frac{\dd \bbP}{\dd \tilde \bbP}\right)^{\gamma/(1-\gamma)}\right]
\right)^{1-\gamma}.
\end{multline}
The new law $\tilde\bbP:= \tilde\bbP^{(i_1,\ldots,i_\ell)}$ will be
taken to depend on the set $(i_1,\ldots,i_\ell)$. In order to define
it, let first of all
\begin{equation}
  \label{eq:M24}
  M:=M(i_1,\ldots,i_\ell):=\{i_1,i_2,\ldots,i_\ell\}\cup \{i_1+1,i_2+1,\ldots,
i_{\ell-1}+1\}.
\end{equation}
Then, we say that under $\tilde\bbP $ the random vector $\go$  is Gaussian, centered 
and with covariance matrix
\begin{equation}
  \label{eq:covarianze}
  \tilde \bbE(\go_i\go_j)\, =\, \ind_{i=j}-\mathcal C_{ij}\,:=\,
  \begin{cases}
   \ind_{i=j}-H_{ij} & \text{ if there exists } u\in M \text{ such that } i,j \in B_u,
   \\
    \ind_{i=j} & \text{ otherwise,}
  \end{cases}
\end{equation}
and 
\begin{equation}
  H_{ij}\,:=\, 
    \begin{cases}
(1-\gamma)/\sqrt{9\,  k(\log k) \;|i-j|}&\text{ if } i\ne j,
\\
0& \text{ if } i=j.      
\label{eq:H}
    \end{cases}
\end{equation}
Note that all the $\mathcal C_{ij}$'s are non-negative.
It is immediate to check that the $k\times k$ symmetric matrix
$\hat H:=\{H_{ij}\}_{i,j=1}^k$ satisfies
\begin{equation}
  \label{eq:HS_H}
\Vert\hat H\Vert\, :=\,
\sqrt{\sum_{i,j=1}^k H_{ij}^2}\, \le \frac{1-\gamma}2, 
\end{equation}
for $k$ sufficiently large. 
In words: $\omega_n$'s in different
blocks are independent; in blocks $B_u$ with $u\notin M$ they are just
IID standard Gaussian random variables, while if $u\in M$
then the random vector $\{\go_n\}_{n\in B_u}$ has
covariance matrix $I-\hat H$, where $I$ is the $k\times k$ identity matrix.
Note that, since $\Vert\hat H\Vert$ dominates the spectral
radius of $\hat H$, \eqref{eq:HS_H} guarantees 
  that $I -\hat H$ is positive definite (and also that 
  $I -(1-\gamma)^{-1}\hat H$ is positive definite, that will be needed just below).

The last factor in the right-hand side of \eqref{eq:Hold} is easily
obtained recalling \eqref{eq:dets} and independence of the $\go_n$'s
in different blocks, and one gets
\begin{equation}
\label{eq:RN_nh}
\left(\tilde \bbE\left[ \left(\frac{\dd \bbP}{\dd \tilde \bbP}\right)^{\gamma/(1-\gamma)}\right]
\right)^{1-\gamma}=\left(\frac{\det(I-\hat H)}
{(\det(I-1/(1-\gamma)\hat H))^{1-\gamma}}
\right)^{|M|/2}.
\end{equation}
Since $\hat H$ has trace zero and its (Hilbert-Schmidt) norm satisfies
\eqref{eq:HS_H}, one can apply $\det(I-\hat H)\le
\exp(-\mbox{Trace}(\hat H))=1$ and \eqref{eq:dano} (with $V$ replaced by
$\hat H$ and $\gep$ by $1$) to get that the right-hand side of
\eqref{eq:RN_nh} is bounded above by $ \exp(|M|/2)$, which in turn is 
bounded by $\exp(\ell)$.  Together with \eqref{eq:Hold} and
\eqref{eq:A_nh}, we conclude that
\begin{equation}
  A_N\le\sum_{\ell=1}^{N/k}\sum_{i_0:=0<i_1<\ldots<i_\ell=N/k}
e^{{\ell}} \left[\tilde \bbE \hat Z_\go^{(i_1,\ldots,i_\ell)}
\right]^\gamma.
\label{eq:sqbr}
\end{equation}

\subsection{Reduction to a non-disordered model  }

We wish to  bound  the right-hand side of \eqref{eq:sqbr} 
with the
partition function of a non-disordered pinning model in the
delocalized phase, which goes to zero for large $N$.  We start by
claiming that 
\begin{multline}
\label{eq:claimU}
  \tilde \bbE \hat Z_\go^{(i_1,\ldots,i_\ell)}\, \le\,  \sum_{n_1\in B_{i_1}}\ldots \sumtwo{n_\ell\in B_{N/k}:}{n_\ell\ge 
n_{\ell-1}+k} K(n_1)K(n_2-j_1)\ldots K(n_\ell-j_{\ell-1})\\
\times U(j_1-n_1)U(j_2-n_2)\ldots U(N-n_\ell),
\end{multline}
where
\begin{equation}
\label{eq:U}
  U(n)\, =\, c_{8}\bP(n\in\tau)\bE\left[e^{-\beta^2\sum_{1\le i<j\le {n/2}}H_{ij}\delta_i
\delta_j}
\right],
\end{equation}
and $c_8$ is a positive constant depending only on $K(\cdot)$. This is
proven in Appendix \ref{sec:appprova}. We are also going to make use
of:

\medskip

\begin{lemma}
\label{th:condlemma}
\label{th:lemma_cg}
   There exists $C_2=C_2(K(\cdot))<\infty$ such that if, for some $\eta>0$,
  \begin{equation}
    \label{eq:condlemma1}
    \sum_{j=0}^{k-1}U(j)\le \eta  {\sqrt k}
  \end{equation}
and 
\begin{equation}
  \label{eq:condlemma2}
  \sum_{j=0}^{k-1}\sum_{n\ge k}U(j)K(n-j)\le \eta,
\end{equation}
then there exists $C_1=C_1(\eta, k,K(\cdot))$ 
such that
the right-hand side of \eqref{eq:claimU} is  bounded above by 
\begin{equation}
  C_1 \eta^\ell C_2^{\ell}\prod_{r=1}^\ell\frac1{(i_r-i_{r-1})^{3/2}}.
\end{equation}
\end{lemma}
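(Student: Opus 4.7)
My plan is to first sum out the auxiliary variables $j_1, \ldots, j_{\ell - 1}$ in the right-hand side of \eqref{eq:claimU} using the convolution
$$J_k(m) := \sum_{j=0}^{k-1} U(j)\, K(m - j),$$
which (after this elimination) converts the expression to
$$\sum_{n_1, \ldots, n_\ell} K(n_1) \Biggl( \prod_{r=1}^{\ell - 1} J_k(n_{r+1} - n_r) \Biggr) U(N - n_\ell),$$
where each $n_r$ ranges over $B_{i_r}$, subject to the constraint $n_{r+1} \ge n_r + k$ for $r = 1, \ldots, \ell - 1$. The strategy is then to bound each bulk bracket $\sum_{n_{r+1}} J_k(n_{r+1} - n_r)$ by $C_2 \eta/(i_{r+1} - i_r)^{3/2}$, uniformly in $n_r$, to handle the boundary factor $K(n_1)$ via the integral estimate $\sum_{n_1 \in B_{i_1}} K(n_1) \le C/i_1^{3/2}$, and to extract the remaining factor of $\eta$ by combining $U(N - n_\ell)$ with the last bracket using a sharper pointwise bound on $J_k$; all extraneous $k$-dependent constants are absorbed into $C_1(\eta, k, K(\cdot))$.

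For the bulk bracket bound I will distinguish two regimes. When $d := i_{r+1} - i_r \ge 3$, one has $n_{r+1} - n_r - j \ge (d - 2)k + 2 \ge k + 2$ for every $j \in [0, k-1]$, so the power-law asymptotic $K(n) \le C/n^{3/2}$ from \eqref{eq:K222} combined with an integral estimate yields $\sum_{n_{r+1} \in B_{i_{r+1}}} K(n_{r+1} - n_r - j) \le C/(d^{3/2} \sqrt{k})$ uniformly in $j$; combined with $\sum_{j=0}^{k-1} U(j) \le \eta \sqrt{k}$ from \eqref{eq:condlemma1}, the $\sqrt{k}$ factors cancel and the bracket is at most $C' \eta/d^{3/2}$. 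When $d \in \{1, 2\}$, the asymptotic of $K$ alone is not sharp enough, but since $n_{r+1} - n_r \ge k$ holds either by the constraint (for $d = 1$) or automatically (for $d = 2$), the bracket is a restriction of $\sum_{n \ge k} J_k(n)$ and is therefore bounded by $\eta$ directly through \eqref{eq:condlemma2}; since $d$ is bounded here, $\eta \le C_2 \eta/d^{3/2}$ for a suitable absolute constant $C_2$.

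The main obstacle I foresee is the combinatorial bookkeeping needed to produce exactly $\ell$ (rather than $\ell - 1$) factors of $\eta$ in the final bound. This is addressed by carefully pairing the $\ell$ occurrences of $U$ with the $\ell$ occurrences of $K$; in particular, the missing $\eta$ is extracted at the right boundary by first summing $U(N - n_\ell)$ against $n_\ell$ using $\sum_{m=0}^{k-1} U(m) \le \eta \sqrt{k}$ from \eqref{eq:condlemma1}, and pairing this with the pointwise estimate $J_k(m) \le C \eta \sqrt{k}/m^{3/2}$ (valid for $m \ge 2k$, obtained by combining the asymptotic of $K$ with \eqref{eq:condlemma1}), yielding $C \eta^2/(d_\ell^{3/2} \sqrt{k})$ for the combined last bracket; the extraneous $1/\sqrt{k}$ is absorbed into $C_1$. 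A secondary technical point is the borderline case $d = 2$, where neither the pure asymptotic of $K$ nor the tail bound \eqref{eq:condlemma2} is perfectly matched to the scaling $\eta/d^{3/2}$, but the resulting mismatch is only a bounded multiplicative constant absorbed into $C_2$.
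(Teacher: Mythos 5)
Your approach takes a genuinely different route from the paper's: you eliminate the auxiliary $j$-variables by introducing the convolution $J_k(m)=\sum_{j<k}U(j)K(m-j)$ and then sum $n_\ell,n_{\ell-1},\ldots,n_1$ from the right, whereas the paper keeps the $j$-variables explicit, singles out the set $J$ of long jumps, and performs the summations in a prescribed block order (first $j_{r-1}$ for $r\in J$, then $(j_{r-1},n_r)$ for $r\notin J$, finally $n_r$ for $r\in J$). The underlying inputs---\eqref{eq:condlemma1}, \eqref{eq:condlemma2}, and the power-law decay of $K$---are the same, so this is a reorganization rather than a new idea, but it is a legitimate and arguably cleaner one.

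There is, however, a gap in your ``combined last bracket'' when $i_\ell-i_{\ell-1}\le 2$. The pointwise bound $J_k(m)\le C\eta\sqrt{k}/m^{3/2}$ on which your extraction of $\eta^2$ rests requires $m\ge 2k$, hence $i_\ell-i_{\ell-1}\ge 3$. When $i_\ell-i_{\ell-1}\in\{1,2\}$, the constraint $n_\ell\ge n_{\ell-1}+k$ only places $n_\ell-n_{\ell-1}$ in $[k,3k)$, so that estimate is unavailable; the best one gets from the last bracket is a single power of $\eta$ (e.g.\ by bounding $U(N-n_\ell)\le c_8$ and applying \eqref{eq:condlemma2} to $\sum_{n_\ell\ge n_{\ell-1}+k}J_k(n_\ell-n_{\ell-1})\le\eta$). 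Added to the $\ell-2$ bulk brackets and the $\eta$-free boundary sum $\sum_{n_1\in B_{i_1}}K(n_1)$, that gives $\eta^{\ell-1}$, one power short of the target. Your closing remark ascribes the $d=2$ borderline to a bounded constant absorbed into $C_2$; that is correct for the bulk brackets, but for the last bracket the deficit is a full factor $1/\eta$, which cannot go into $C_2$ (recall that $C_2$ must be $\eta$-independent, because $\eta$ is later chosen small as a function of $C_2$). It can and must go into $C_1=C_1(\eta,k,K(\cdot))$: this is precisely what the paper does for its own unpaired edge term $K(n_1)$ at $r=1$, where $j_0=0$ carries no $U$-factor and one multiplies and divides by $\eta$. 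Once you allow that absorption into $C_1$, your argument goes through.
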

\medskip
It is important to note that $C_2$ does not depend on $\eta$.

Lemma~\ref{th:condlemma} is a small variation on 
 \cite[Lemma 3.1]{cf:T_cg}, but,
 both because the model we are considering is somewhat different
 and for sake of completeness, we give
 the details
of the proof in Appendix \ref{sec:appprova}.
\smallskip

Now assume that conditions \eqref{eq:condlemma1}-\eqref{eq:condlemma2}
are verified for some $\eta$. Collecting \eqref{eq:sqbr},
\eqref{eq:claimU} and Lemma \ref{th:lemma_cg}, we have then
\begin{equation}
  A_N\le C_1^\gamma\sum_{\ell=1}^{N/k}\sum_{i_0:=0<i_1<\ldots<i_\ell=N/k}
\left(\eta^\gamma\,C_2^{\gamma}e 
\right)^\ell\prod_{r=1}^\ell\frac1{(i_r-i_{r-1})^{(3/2)\gamma}}.
\label{eq:puremod}
\end{equation}
In the right-hand side we recognize, apart from the irrelevant
multiplicative constant $C_1^\gamma$, the partition function of a
non-random $(\beta=0)$ pinning model with $N$ replaced by $N/k$,
$K(\cdot)$ replaced by
\begin{equation}
  \hat K(n)\, =\, \frac1{n^{(3/2)\gamma}}\frac1{\sum_{i\ge1}i^{-(3/2)\gamma}},
\end{equation}
and $h$ replaced by
\begin{equation}
  \label{eq:hbar}
  \hat h:=\log \left(\eta^\gamma\,C_2^{\gamma}e
  \sum_{n\in\N}\frac1{n^{(3/2)\gamma}}\right).
\end{equation}
Note that $\hat K(\cdot)$ is normalized to be a probability measure on $\N$,
which is possible since (by assumption) $\gamma>2/3$, and that it has a
power-law tail with exponent $(3/2)\gamma>1$. Thanks to Lemma
\ref{th:puro} below, one has that the right-hand side of \eqref{eq:puremod} tends to zero
for $N\to\infty$ whenever
\begin{equation}
\label{eq:condeta}
\hat h<0.
\end{equation}
Therefore, if  $\eta$ is so small that \eqref{eq:condeta} holds,
we can conclude that $A_N$ tends to zero for $N\to\infty$ and therefore $\tf(\beta,h)=0$.

The proof of Theorem \ref{th:main2} is therefore concluded once we prove

\medskip
\begin{proposition}
\label{th:stimeU}
Fix $\eta>0$ such that \eqref{eq:condeta} holds. For every $\gb_0>0$
there exists $0<c_7<\infty$ such that if $\beta\le \gb_0$ and $0<h\le
\exp(-c_7/\beta^4)$, conditions
\eqref{eq:condlemma1}-\eqref{eq:condlemma2} are verified.
\end{proposition}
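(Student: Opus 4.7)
The plan is to verify the two conditions \eqref{eq:condlemma1} and \eqref{eq:condlemma2} by showing that, for $\beta\le\beta_0$ and $h\le\exp(-c_7/\beta^4)$ with $c_7$ large enough, the factor $\bE[\exp(-\beta^2 S_n)]$ entering $U(n)$ provides a significant improvement over the trivial bound $U(n)\le c_8\bP(n\in\tau)$ whenever $n$ is comparable to $k=\lfloor 1/h\rfloor$. This is the direct non-hierarchical analogue of the Paley--Zygmund argument used in the hierarchical case (see in particular Lemma~\ref{th:secmom} and the bound \eqref{eq:bd}).

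First, I would estimate the first moment of $S_n=\sum_{1\le i<i'\le n/2}H_{ii'}\delta_i\delta_{i'}$. Using the renewal asymptotics $\bP(m\in\tau)\sim c_R/\sqrt m$ coming from \eqref{eq:DoneyB} with $\ga=1/2$, one has
\begin{equation*}
\bE[S_n]\,\sim\,\frac{(1-\gamma)\, c_R^2}{3\sqrt{k\log k}}\sum_{1\le i<i'\le n/2}\frac{1}{\sqrt{i}\,(i'-i)},
\end{equation*}
and a dyadic splitting of the $d=i'-i$ summation shows the right-hand side is of order $\sqrt{n}\,\log n$, so that $\bE[S_n]\asymp \sqrt{n/(k\log k)}\,\log n$. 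In particular $\bE[S_n]\ge c_1\sqrt{\log k}$ uniformly for $n\in[k/2,k]$.

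Second, I would prove the non-hierarchical analogue of Lemma~\ref{th:secmom}: there exists a constant $\mathcal K<\infty$ (depending only on $K(\cdot)$) such that $\bE[S_n^2]\le\mathcal K\,(\bE[S_n])^2$ uniformly for $n\in[k/2,k]$ and $k$ large. The computation splits according to the overlap of the index pairs $\{i,i'\}$ and $\{l,l'\}$ in the expansion of $\bE[S_n^2]$. The disjoint contribution, which is the dominant one, is compared to $(\bE[S_n])^2$ via the factor $\bP(l-i'\in\tau)/\bP(l\in\tau)$ coming from the Markov property of $\tau$; the one-overlap and two-overlap contributions involve respectively one and two extra powers of $H$, and a direct estimate using $H_{ii'}\asymp (k\log k\,|i'-i|)^{-1/2}$ together with the renewal asymptotics shows they are of lower order than $(\bE[S_n])^2\asymp \log k$. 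Paley--Zygmund then yields $\bP(S_n\ge \bE[S_n]/2)\ge 1/(4\mathcal K)$, whence
\begin{equation*}
\bE[e^{-\beta^2 S_n}]\,\le\,\exp\!\left(-\tfrac12\beta^2\bE[S_n]\right)+1-\frac{1}{4\mathcal K}.
\end{equation*}
For $n\in[k/2,k]$ and $\log k\ge c_7/\beta^4$, the exponential term is at most $\exp(-(c_1/2)\sqrt{c_7})$, which can be made as small as we wish by taking $c_7$ large. Thus $U(n)\le (1-\rho_0)\,c_8\,\bP(n\in\tau)$ on this range, with $\rho_0>0$ depending only on $\mathcal K$.

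Third, I would plug these bounds into the two conditions, splitting each $j$-sum at $j=k/R$ for some $R$ large. On $\{j\le k/R\}$ use the trivial bound $U(j)\le c_8\bP(j\in\tau)\le c_8 c_R/\sqrt j$, which yields an $R^{-1/2}$ saving via $\sum_{j\le k/R}1/\sqrt j\le 2\sqrt{k/R}$. On $\{k/R<j<k\}$ repeat the first two steps at scale $n\asymp j$ (note that $\log n\ge (1-o(1))\log k$ on this range, so the previous analysis still applies), and use $\sum_{j=1}^{k-1}\bP(j\in\tau)\asymp 2c_R\sqrt k$ together with $\sum_{n\ge m}K(n)\asymp 2C_K/\sqrt m$ to translate the improved bound on $U$ into the desired $\sum_j U(j)\le \eta\sqrt k$ and $\sum_{j,n}U(j)K(n-j)\le\eta$, provided $\eta$ is not smaller than the resulting product of explicit constants. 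The main obstacle is step 2: in contrast with the hierarchical setup, the overlap analysis of the four indices $\{i,i',l,l'\}$ admits several non-equivalent topologies, and one must carefully combine the decay of $H$ with the renewal function asymptotics to control each of them. A secondary, delicate point is that Paley--Zygmund only gives a constant improvement $1-1/(4\mathcal K)$, so its compatibility with the threshold on $\eta$ implicit in \eqref{eq:condeta} must be checked; this forces a careful tracking of the constants $c_8,c_R,C_2$ and, if necessary, a fine-tuning of $\gamma\in(2/3,1)$ and of the coefficient $(1-\gamma)/\sqrt{9k(\log k)\,|i-j|}$ in the definition of $H_{ij}$.
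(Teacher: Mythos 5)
Your plan diverges from the paper's at exactly the point you flag as ``secondary'' and ``delicate,'' and that point is in fact where the argument breaks. The Paley--Zygmund decomposition gives only
\begin{equation*}
\bE\bigl[e^{-\beta^2 S_n}\bigr]\;\le\;\exp\!\bigl(-\tfrac12\beta^2\,\bE[S_n]\bigr)\;+\;\bP\bigl(S_n<\tfrac12\bE[S_n]\bigr)\;\le\;\exp\!\bigl(-\tfrac12\beta^2\,\bE[S_n]\bigr)\;+\;1-\frac{1}{4\mathcal K},
\end{equation*}
so even after killing the first term by taking $c_7$ large, the expectation remains bounded below by $1-1/(4\mathcal K)$, a constant strictly less than $1$ but bounded away from $0$ (since $\mathcal K\ge 1$, indeed $\mathcal K\ge\bE[S_n^2]/(\bE[S_n])^2\to\pi/2$ if the $c|Z|$ limit holds). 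This constant is determined by the renewal law $K(\cdot)$ and is not under your control. Meanwhile $\eta$ in \eqref{eq:condeta} is fixed already in \eqref{eq:fmm2-1}, also as a function of $K(\cdot)$ only, with $\gamma=6/7$ fixed; there is no reason $1-1/(4\mathcal K)$ times the $\sum_j\bP(j\in\tau)$ mass on $j\in[k/R,k)$ should come out $\le\eta\sqrt k$, and in general it will not. The analogy with the hierarchical proof is misleading here: in Section~\ref{sec:Hproof} the small quantity $\zeta$ is \emph{chosen} to be $1/(40\mathcal K)$ \emph{after} computing $\mathcal K$, and then $\gamma=\gamma_\zeta$ is picked to match; the non-hierarchical coarse-graining fixes $\gamma$ and $\eta$ \emph{first}, so the Paley--Zygmund-sized improvement is simply not enough.

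What the paper actually proves is Lemma~\ref{th:CE}: the (rescaled) quantity $W_L=(\sqrt L\log L)^{-1}\sum_{i<j}\delta_i\delta_j/\sqrt{j-i}$ converges in \emph{distribution} to $c|Z|$, a random variable that is a.s.\ strictly positive. This yields $\bP(W_L<x)\to\bP(|Z|<x/c)$, which can be made arbitrarily small by taking $x$ small, and hence $S(a,L)=\bE[\exp(-aW_L)]\to 0$ as $a,L\to\infty$ jointly; it is this full decay of $s_\beta(k,j)$ below any prescribed $\delta(\eta)$ that makes the $\delta k<j<k$ contribution as small as needed. A second-moment bound alone cannot deliver this: it controls $\bP(W_L>x)$ from below, not $\bP(W_L<x)$ from above, and provides no route to push $\bP(W_L<x)$ below the level $1-1/(4\mathcal K)$. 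The proof of Lemma~\ref{th:CE} does contain a variance computation (the Chung--Erd\H{o}s estimate showing $\text{var}(Y_L)=O(\log L)$), but its role is to establish \emph{concentration} of $Y_L/\log L$ around a constant, which when combined with the stable-law limit for $L^{-1/2}\sum_i\delta_i$ gives the actual weak limit; this is strictly more than what a Paley--Zygmund argument extracts from the same moments. Your first- and second-moment estimates in steps 1 and 2 are in the right direction and would appear as intermediate steps in proving Lemma~\ref{th:CE}, but the conclusion you draw from them does not close the gap; you would need to go on and prove the convergence in law before feeding it into the splitting argument of step 3.
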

\medskip

\noindent
\begin{proof}[Proof of Proposition \ref{th:stimeU}]
We need to show that the two hypotheses of Lemma~\ref{th:condlemma} hold
and for this 
we are going to use the following result:
\medskip

\begin{lemma}
\label{th:CE}
Under the law $\bP$, the  random variable 
\begin{equation}
W_L\, :=\,  (\sqrt{L} \log L)^{-1}
\sum_{1\le i<j\le L} \delta_i
        \delta_j /\sqrt{j-i},
\end{equation}        
        converges in distribution, as $L$ tends to $\infty$, to $c \vert Z \vert$
        ($Z\sim N(0,1)$ and $c$ a positive constant).
\end{lemma}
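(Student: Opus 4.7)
The plan is to reduce Lemma~\ref{th:CE} to the classical limit theorem for renewal processes with index $\alpha=1/2$. Setting $N_L:=|\tau\cap[1,L]|$, standard arguments (together with the Doney--Garsia--Lamperti estimate recalled in \eqref{eq:DoneyB}, which gives $\bP(n\in\tau)\sim c_{**}/\sqrt n$ with $c_{**}=1/(2\pi C_K)$) yield
\begin{equation}
\frac{N_L}{\sqrt L}\;\xrightarrow{\;d\;}\;c_*\,|Z|
\end{equation}
for an explicit constant $c_*>0$; the appearance of $|Z|$ reflects the classical identity $T\stackrel{d}{=}1/(2G^2)$ for a one-sided $1/2$-stable variable $T$ with $G\sim N(0,1)$, equivalently L\'evy's theorem identifying the Brownian local time at $0$ up to time $1$ with $|B_1|$. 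The goal will then be to prove
\begin{equation}
W_L \;=\; c_{**}\,\frac{N_L}{\sqrt L}\;+\;o_{\bP}(1),
\end{equation}
after which Slutsky's theorem yields Lemma~\ref{th:CE} with $c=c_{**}c_*$ (one may check the matching $\bbE W_L\to 2c_{**}^2=c_{**}c_*\bbE|Z|$ directly from the one-point asymptotics).

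First I would rewrite $T_L:=\sqrt L\,(\log L)\,W_L$ as $T_L=\sum_{i\in\tau\cap[1,L]} R_i$ with $R_i:=\sum_{j\in\tau,\,i<j\le L}1/\sqrt{j-i}$, and observe that by the strong Markov property of the renewal, conditionally on $\tau\cap[0,i]\ni i$ the random variable $R_i$ has the same law as $S(L-i):=\sum_{n=1}^{L-i}\delta_n/\sqrt n$, built from an independent copy of $\tau$. A direct one- and two-point computation using $\bP(n\in\tau)\sim c_{**}/\sqrt n$ and $\bP(n,m\in\tau)=\bP(n\in\tau)\bP(m-n\in\tau)$ shows that $\bbE[S(M)]\sim c_{**}\log M$ and $\bbE[S(M)^2]\sim c_{**}^2(\log M)^2$, hence $S(M)/\log M\to c_{**}$ in $L^2$. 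This identifies the constant $c_{**}$ in the conjectured equivalence between $T_L$ and $c_{**}(\log L)N_L$.

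The heart of the proof would then be a second moment estimate
\begin{equation}
\bbE\!\left[\big(T_L-c_{**}(\log L)\,N_L\big)^2\right]\;=\;o\!\left(L\log^2 L\right),
\end{equation}
obtained by expanding the square into the three quantities $\bbE T_L^2$, $\bbE[T_L N_L]$ and $\bbE N_L^2$. Each is a sum of three- or four-point renewal probabilities weighted by products of inverse square roots of differences, and the leading $L(\log L)^2$ contributions have to cancel exactly. The dominant part of $\bbE T_L^2$ comes from configurations $i<j,\,i'<j'$ in which the two pairs are well separated: the four-point probability factorizes through the two-point estimate above, and the resulting double sum matches, at leading order, $c_{**}^2(\log L)^2\,\bbE N_L^2\sim c_{**}^2c_*^2 L(\log L)^2$.

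The principal technical obstacle will be precisely this four-point computation: since $\alpha=1/2$ is the marginal exponent, logarithmic factors appear at several scales simultaneously and any crude bound (for instance Cauchy--Schwarz applied to $\bbE[T_L N_L]$) would destroy the cancellation we need. A workable strategy is to split all sums according to whether the gaps $|j-i|$, $|j'-i'|$, or the separation between the two pairs, are smaller or larger than $L^{1-\eta}$ for a small $\eta>0$; the ``diagonal'' contributions are controlled by direct mean estimates and turn out to be $o(L\log^2 L)$, while the ``off-diagonal'' part carries the leading order and the matching of constants in the three expansions follows from the asymptotics of $\bP(n\in\tau)$ recorded above.
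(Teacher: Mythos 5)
Your overall reduction is correct and matches the paper's: you rewrite $W_L$ using the quantities $R_i$ (the paper's $Y_L^{(i)}$), note that conditionally on $\delta_i=1$ they are distributed as $S(L-i)=Y_{L-i}$ by the renewal property, prove $S(M)/\log M\to c_{**}$ in $L^2$ (this is exactly the Chung--Erd\H os-type estimate the paper isolates as \eqref{eq:Chung-Erdos}), combine with the stable-law limit for $N_L/\sqrt L$, and your constants match. Where you diverge, genuinely and to your disadvantage, is in how you propose to close the gap $W_L-c_{**}N_L/\sqrt L=o_\bP(1)$: you aim for a global second-moment bound $\bbE[(T_L-c_{**}(\log L)N_L)^2]=o(L\log^2 L)$, which forces you into four-point renewal computations with delicate cancellations of $L\log^2 L$ terms, and you correctly flag this as the principal obstacle. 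The paper sidesteps this entirely: it first truncates the sum over $i$ at $(1-\gep)L$ (showing the tail is $O(\gep)$ in $L^1$), then bounds
\begin{equation}
\bE\left|\,W_{L,\gep}-\frac{\hat c_K}{\sqrt L}\sum_{i\le(1-\gep)L}\gd_i\,\right|
\,\le\, \frac 1{\sqrt L}\sum_{i\le(1-\gep)L}\bE[\gd_i]\,
\bE\!\left[\left|\frac{Y_{L-i}}{\log L}-\hat c_K\right|\right]
\end{equation}
by the triangle inequality plus conditioning on $\gd_i=1$; since $\sum_{i\le L}\bE[\gd_i]=O(\sqrt L)$ and the inner $L^1$ expectation is $o(1)$ uniformly over $i\le(1-\gep)L$ (this is where the truncation is needed), the remainder is $o(1)$ without ever touching a three- or four-point function. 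In short: a conditional $L^1$ argument, powered only by the two-point estimate you already have, replaces the $L^2$/four-point machinery you anticipated. If you want to carry out your version, the diagonal/off-diagonal splitting at scale $L^{1-\eta}$ can probably be made to work, but the conditional $L^1$ route is both shorter and more robust, and you would also want to incorporate the $(1-\gep)L$ cutoff explicitly since the convergence $Y_{L-i}/\log L\to\hat c_K$ fails to be uniform for $i$ close to $L$.
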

\medskip

This lemma, 
the proof of which may be found just below (together with the explicit value of $c$), directly implies that,
if we set $S(a,L):=  \bE\left[\exp\left(- a W_L\right)\right]$, we have
 $\lim_{a\to\infty} \lim_{L\to\infty}S(a,L)\, = \, 0$ and, by the monotonicity
 of $S (\cdot, L)$, we get
\begin{equation}
  \label{eq:small}
  \lim_{a, L\to\infty}   S(a,L)\, = \, 0.
\end{equation}

Let us  verify \eqref{eq:condlemma1}.
Note first of all that ({\sl cf.} \eqref{eq:U} and \eqref{eq:H})
\begin{multline}
  U(n)\, =\, c_8 \bP(n\in\tau)
     S\left(\beta^2(1-\gamma)\sqrt{\log k}\,\sqrt{\frac {n/2}{9\,  k}}\,
        \frac{\log (n/2)}{
          {\log k}}
        , \frac n2\right)\\
         =:\, c_8 \bP(n\in\tau) s_\gb(k,n).
\end{multline}
We recall also that
(\cite[Th. B]{cf:Doney})
\begin{equation}
\label{eq:doney}
  \bP(n\in\tau)\stackrel{n\to\infty}\sim \frac1{2\pi C_K \sqrt n},
\end{equation}
and therefore there exists $c_9>0$ such that for every $n \in \N$
\begin{equation}
\label{eq:doney-bound}
  \bP(n\in\tau)\, \le \,  \frac{c_9}{\sqrt {n}}.
\end{equation}
Split the sum in \eqref{eq:condlemma1} according to whether
$j \le \gd k$ or not ($\gd=\gd(\eta) \in (0,1)$
is going to be chosen below).
By using $S(a,L)\le 1$ (in the case $j \le \gd k$) and  \eqref{eq:doney-bound}
we obtain
\begin{equation}
\label{eq:interm4}
\sum_{j=0}^{k-1} U(j) \, \le \, c_8+c_8 c_9 \sum_{j=1}^{\gd k}
\frac1{\sqrt{j}} \, +\, c_8 c_9 \sum_{j= \gd k +1}^{k-1}
\frac1{\sqrt{j}}\, s_\gb(k,j).
\end{equation}
Since if $c_7$ is chosen sufficiently large
\begin{equation}
  \beta^2\sqrt{\log k}\ge\sqrt{c_7-\beta^4\log 2}\ge \sqrt{c_7}/2, 
\end{equation}
and since $k$ may be made large by increasing $c_7$, 
we directly see that \eqref{eq:small} implies
that $s_\gb(k,j)$ may be made  smaller than (say) 
$\gd$ for every $\delta k <j<k$ by choosing $c_7$ sufficiently large. Therefore \eqref{eq:interm4} implies
 \begin{equation}
\label{eq:interm5}
\sum_{j=0}^{k-1} U(j) \, \le \, 4c_8 c_9 (\sqrt{\gd} + \gd) \sqrt{k}.
\end{equation}       
By choosing $\gd=\gd(\eta) $ such that         $4c_8 c_9 (\sqrt{\gd} + \gd) \le \eta$,
we have \eqref{eq:condlemma1}.
The proof of \eqref{eq:condlemma2} is absolutely analogous to the proof
of \eqref{eq:condlemma1} and it is therefore omitted. 
\end{proof}

\subsection{Proof of Lemma~\ref{th:CE}}
We introduce the notation
\begin{equation}
Y_L^{(i)}\, :=\, \sum_{j=i+1}^{L} \frac{\gd_j}{\sqrt{j-i}}
,\text{\  so that \  }
W_L \, =\, \frac 1{\sqrt{L}\log L} \sum_{i=1}^{L-1}\gd_i Y_L^{(i)}.
\end{equation}
Let us observe that, thanks to the renewal property of $\tau$, under
$\bP (\cdot \vert \gd_i=1)$, $Y_L^{(i)}$ is distributed like
$Y_{L-i}:= Y_{L-i}^{(0)}$ (under $\bP$).  
 The first step in the proof is  observing that, in view of \eqref{eq:doney-bound},
\begin{multline}
  \bE \left[\frac 1{\sqrt{L}\log L} \sum_{i=(1-\gep)L }^{L-1}\gd_i Y_L^{(i)}\right]\,
=\\
\frac1{\log L \sqrt L}\sum_{i=(1-\gep)L}^{L-1}\sum_{j=i+1}^L\frac{
\bP(i\in\tau)\bP(j-i\in\tau)}{\sqrt{j-i}}
\, 
=
\, O(\gep),
\end{multline}
uniformly in $L$,
so we can focus on studying $W_{L, \gep}$, defined as $W_L$, but stopping the 
sum over $i$ at $(1-\gep) L$. At this point we use that
\begin{equation}
\label{eq:Chung-Erdos}
\lim_{L \to \infty} \frac{Y_L}{\log L} \, =\, \frac 1{2\pi C_K} \, =: \hat c_K,
\end{equation}
in $L^2(\bP)$ (and hence in $L^1(\bP)$). We postpone the proof of \eqref{eq:Chung-Erdos}
and observe that, thanks to the properties of the logarithm,
it implies that for every $\gep>0$
\begin{equation}
\label{eq:use}
\lim_{L \to \infty}
\sup_{q \in [\gep, 1]} \bE \left[ \left \vert 
\frac 1{\log L} \sum_{j=1}^{qL} \frac {\gd _j}{\sqrt{j}} \, - \hat c_K \right \vert \right]\, =\, 0.
\end{equation}

Let us write
\begin{equation}
R_L \, := W_{L, \gep} \, -\, \frac{\hat c_K} {\sqrt{L} } \sum_{i=1}^{(1-\gep) L} \gd _i
\end{equation}
and note that $ L^{-1/2} \sum_{i=1}^{(1-\gep) L} \gd _i$ converges in law toward
 $\sqrt{(1-\gep)/(2\pi C_K^2)} \, \vert Z\vert$.
 This follows 
  directly by using that the event $ \sum_{i=1}^{L} \gd _i \ge  m$
 is the event $\tau_m \le L$ ($\tau_m$ is of course the 
$m$-th point in $\tau$ after $0$) and by using the fact that $\tau_1$ is in
 the domain of attraction of the positive stable law of index $1/2$
 \cite[VI.2 and XI.5]{cf:Feller2}.
It suffices therefore to show that $\bE [\vert R_L\vert] $ tends to zero. 
We have
\begin{multline}
  \bE \left[\vert R_L\vert\right]\, \le\, 
  \frac {1}{\sqrt{L}}   \sum_{i=1}^{(1-\gep) L}
  \bE [ \gd_i ] \bE\left [ \left.\bigg\vert \frac{Y^{(i)}_L}{\log L}-\hat c_K \bigg \vert \;\right\vert
  \delta_i=1 \right] \, =\, 
  \\
  \frac {1}{\sqrt{L}}   \sum_{i=1}^{(1-\gep) L}
  \bE [ \gd_i ] \bE \left[ \left\vert \frac{Y_{L-i}}{\log L}-\hat c_K \right\vert 
  \right] \, =\, o(1), 
\end{multline}
where in the last step we have used \eqref{eq:use} and \eqref{eq:doney-bound}.

Note that we have also proven that $c=(2\pi)^{-3/2} C_K^{-2}$ in the statement of Lemma \ref{th:CE}.

\medskip

We are therefore left with the task of proving \eqref{eq:Chung-Erdos}.
This result has been already proven \cite[Th.~6]{cf:chungerdos} when
$\tau$ is given by the successive returns to zero of a centered,
aperiodic and irreducible random walk on $\bbZ$ with bounded variance
of the increment variable. Note that, by well established local limit
theorems, for such a class of random walks we have \eqref{eq:doney}.
Actually in \cite{cf:chungerdos} it is proven that
\eqref{eq:Chung-Erdos} holds almost surely as a consequence of
$\text{var}_\bP(Y_L)=O(\log L)$.  What we are going to do is
simply to re-obtain such a bound, by repeating the steps in
\cite{cf:chungerdos} and using \eqref{eq:doney}-\eqref{eq:doney-bound}, for the general
renewal processes that we consider (as a side remark: also in our
generalized set-up, almost sure convergence holds).

The proof goes as follows:
by using \eqref{eq:doney} it is straightforward to see that
$\lim_{L\to \infty}\bE [Y_L/\log L]=\hat c_K$, so that we are done
if we show that $\text{var}_\bP(Y_L/\log L)$ vanishes as $L\to \infty$.
So we start by observing that
\begin{equation}
\label{eq:CEst1}
\text{var}_\bP(Y_L) \, =\, \sum_{i, j} \frac{\bE[ \gd_i \gd_j]-
\bE[ \gd_i] \bE[ \gd_j]
 }{\sqrt{ij}} \, =\, 2 \sum_{i=1}^{L-1}\sum_{j=i+1}^L
  \frac{\bE[ \gd_i \gd_j]-
\bE[ \gd_i] \bE[ \gd_j]
 }{\sqrt{ij}}
 + O(1),
\end{equation}
by \eqref{eq:doney-bound}. Now we compute 
\begin{equation}
\begin{split}
\sum_{i=1}^{L-1}\sum_{j=i+1}^L
  \frac{\bE[ \gd_i \gd_j]-
\bE[ \gd_i] \bE[ \gd_j]
 }{\sqrt{ij}} 
 \, &=\,
 \sum_{i=1}^{L-1} \frac{\bE[\gd_i]}{\sqrt{i}}
 \left[\sum_{j=1}^{L-i} \frac{\bE[\gd_j]}{\sqrt{j+i}}-
 \sum_{j=i+1}^{L} \frac{\bE[\gd_j]}{\sqrt{j}}
 \right]
 \\
 &\le \, 
 \sum_{i=1}^{L-1} \frac{\bE[\gd_i]}{\sqrt{i}}
 \left[\sum_{j=1}^{L-i} \frac{\bE[\gd_j]}{\sqrt{j+i}}-
 \sum_{j=i+1}^{L} \frac{\bE[\gd_j]}{\sqrt{j+i}}
 \right]
 \\
 &\le \, 
 \sum_{i=1}^{L-1} \frac{\bE[\gd_i]}{\sqrt{i}}
 \sum_{j=1}^{i} \frac{\bE[\gd_j]}{\sqrt{j+i}}
 \\
 \le \, \sum_{i=1}^{L-1} &\frac{\bE[\gd_i]}{i} 
  \sum_{j=1}^{i} {\bE[\gd_j]} \le  c_9^2  
  \sum_{i=1}^{L-1} \frac{1}{i^{3/2}} 
  \sum_{j=1}^{i} \frac 1{j^{1/2}} = O(\log L),
 \end{split}
\end{equation}
where, in the last line, we have used \eqref{eq:doney-bound}.
In view of \eqref{eq:CEst1}, we have obtained 
$\text{var}_\bP(Y_L)=O(\log L)$ and
the proof  \eqref{eq:Chung-Erdos}, and therefore of Lemma~\ref{th:CE} is complete.
\qed

        

\begin{subappendices}
\section{Some technical results and useful estimates}

\subsection{Two results on renewal processes}
The first result concerns the non-disordered pinning model and is well known:
\begin{lemma}
\label{th:puro}
  Let $ K(\cdot)$ be a probability on $\N$ which satisfies \eqref{eq:K222} for
some $\alpha>0$. If $ h<0$, we have
that 
\begin{equation}
\label{eq:lemma1}
\sum_{\ell=1}^N \sum_{i_0:=0<i_1<\ldots<i_\ell=N}
e^{h\ell}\prod_{r=1}^\ell K(i_r-i_{r-1})\stackrel {N\to\infty}\longrightarrow0.  
\end{equation}
\end{lemma}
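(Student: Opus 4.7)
The plan is to recognize the left-hand side of \eqref{eq:lemma1} as the partition function of a homogeneous pinning model at parameter $h$ and to exploit the fact that $h<0$ is deep inside the delocalized phase of this model (recall that the annealed critical point is $h=0$). Concretely, by decomposing according to the cardinality $\ell$ of $\tau\cap\{1,\ldots,N\}$ one sees that the sum in \eqref{eq:lemma1} equals
\begin{equation}
Z_N(h)\,=\,\bE\left[e^{h\,|\tau\cap\{1,\ldots,N\}|}\ind_{\{N\in\tau\}}\right],
\end{equation}
where $\tau$ is the recurrent renewal of inter-arrival law $K(\cdot)$.

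First I would introduce the tilted weights $\tilde K(n):=e^h K(n)$. Since $h<0$ and $\sum_{n\in\N}K(n)=1$ by hypothesis, we have $\Sigma:=\sum_{n\in\N}\tilde K(n)=e^h<1$, so $\tilde K(\cdot)$ defines a sub-probability on $\N$. Let $\tilde\tau$ be the (transient, i.e.\ terminating) renewal process whose inter-arrival law is $\tilde K(\cdot)$ (with $\tilde K(\infty):=1-e^h$). A direct rewriting of the sum in \eqref{eq:lemma1} then gives
\begin{equation}
Z_N(h)\,=\,\sum_{\ell=1}^{N}\sum_{0=i_0<i_1<\cdots<i_\ell=N}\prod_{r=1}^{\ell}\tilde K(i_r-i_{r-1})\,=\,\bP(N\in\tilde\tau).
\end{equation}

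Since $\tilde\tau$ is terminating, the set $\tilde\tau\subset\N\cup\{0\}$ is almost surely finite, so $\sup\tilde\tau<\infty$ a.s., which immediately yields $\bP(N\in\tilde\tau)\to 0$ as $N\to\infty$. The main (trivial) point to check is the rewriting in the previous display, which simply comes from the independence of the inter-arrival increments; there is no real obstacle.

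For a quantitative strengthening (which in any case is not needed for the application in the body of the paper), one could invoke the estimate \eqref{eq:3stars} already recalled in the paper for terminating renewals with regularly varying inter-arrival tail, yielding $\bP(N\in\tilde\tau)\sim(1-e^h)^{-2}\tilde K(N)\sim(1-e^h)^{-2}e^h C_K/N^{1+\alpha}$; but for the mere statement of Lemma \ref{th:puro} the a.s.\ finiteness argument above suffices.
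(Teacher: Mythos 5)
Your proof is correct, and it follows the same underlying route as the paper: recognize the sum as the partition function $\bE\bigl[e^{h|\tau\cap\{1,\ldots,N\}|}\ind_{\{N\in\tau\}}\bigr]$ of the homogeneous pinning model at a negative $h$. The paper then simply cites \cite[Th.~2.2]{cf:Book} and stops; you instead make the argument self-contained by tilting, writing the sum as $\bP(N\in\tilde\tau)$ for a terminating renewal $\tilde\tau$ with sub-probability inter-arrival law $e^h K(\cdot)$, and observing that $\bP(N\in\tilde\tau)\to 0$ because $\tilde\tau$ has a.s.\ finitely many points (equivalently, because $\sum_{N\ge 1}\bP(N\in\tilde\tau)=e^h/(1-e^h)<\infty$). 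A byproduct of your version is that the regularly varying tail hypothesis \eqref{eq:K222} is visibly unnecessary for the qualitative conclusion of the lemma --- any probability $K(\cdot)$ on $\N$ would do --- whereas the citation to \cite[Th.~2.2]{cf:Book} packages the stronger quantitative statement $\bP(N\in\tilde\tau)\sim (1-e^h)^{-2}e^h K(N)$, which does use \eqref{eq:K222} and which you correctly flag as optional and not needed where the lemma is applied.
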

\medskip

This is implied by \cite[Th. 2.2]{cf:Book}, since the left-hand side of 
\eqref{eq:lemma1} is nothing but the partition function of the homogeneous
pinning model of length $N$, whose critical point is $h_c=0$ ({\sl cf.} also 
\eqref{eq:annF}).

\bigskip

The second fact we need is
\medskip

\begin{lemma}
\label{th:condiz}
There exists a positive constant $c$, which depends only on
$K(\cdot)$, such that for every positive function $f_N(\tau)$ which depends
only on $\tau\cap\{1,\ldots,N\}$ one has
  \begin{equation}
  \label{eq:condiz}
\sup_{N>0}    \frac{\bE[f_N(\tau)|2N\in\tau]}{\bE[f_N(\tau)]}\le c.
  \end{equation}
\end{lemma}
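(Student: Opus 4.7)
The plan is to reduce the lemma to a bound on the renewal mass function $u(n):=\bP(n\in\tau)$ and the inter-arrival distribution $K(\cdot)$, and then to conclude by explicit estimates that are available in the $\alpha=1/2$ setting via Doney's local limit theorem \eqref{eq:doney}.

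First I would decompose according to the last point $L_N:=\max(\tau\cap\{0,1,\dots,N\})$ of the renewal in $\{0,\dots,N\}$. Since $f_N(\tau)$ depends only on $\tau\cap\{1,\dots,N\}$, setting
\begin{equation}
\tilde f(j)\, :=\, \bE\!\left[f_N(\tau)\ind_{\{j\in\tau,\ \tau\cap(j,N]=\emptyset\}}\right],\qquad j=0,1,\dots,N,
\end{equation}
we have $\bE[f_N(\tau)]=\sum_{j=0}^N \tilde f(j)$. Now invoking the strong renewal property at $j$ (conditionally on $L_N=j$, the renewal process after $j$ is fresh and its first jump is automatically $>N-j$), the event $\{2N\in\tau\}$ amounts to picking a first jump $n\in(N-j,2N-j]$ and then hitting $2N-j-n$ with a fresh renewal. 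Thus
\begin{equation}
\bE[f_N(\tau)\ind_{\{2N\in\tau\}}]\, =\, \sum_{j=0}^N \tilde f(j)\,\Psi(N-j),\qquad
\Psi(m)\, :=\, \sum_{n=m+1}^{m+N} K(n)\,u(m+N-n).
\end{equation}

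The lemma is therefore reduced to showing that there exists $c>0$ (depending only on $K(\cdot)$) such that $\Psi(m)\le c\,u(2N)$ for every $m\in\{0,\dots,N\}$ and every $N$, since then, since all the $\tilde f(j)$ are non-negative,
\begin{equation}
\bE[f_N(\tau)\ind_{\{2N\in\tau\}}]\, \le\, c\,u(2N)\,\bE[f_N(\tau)],
\end{equation}
which, once divided by $u(2N)=\bP(2N\in\tau)$, yields \eqref{eq:condiz}.

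The estimate on $\Psi$ is the main technical step, but it is routine. Using $K(n)\le C\,n^{-3/2}$ from \eqref{eq:K222} and $u(k)\le C(k\vee 1)^{-1/2}$ from \eqref{eq:doney}, I would split the sum defining $\Psi(m)$ after the change of variable $k=m+N-n$:
\begin{equation}
\Psi(m)\, =\, \sum_{k=0}^{N-1} K(m+N-k)\,u(k).
\end{equation}
For $0\le k\le N/2$ we have $m+N-k\ge N/2$, so $K(m+N-k)\le C'N^{-3/2}$ and $\sum_{k\le N/2}u(k)\le C''\sqrt N$, giving a contribution of order $1/N$; for $N/2<k\le N-1$ we have $u(k)\le C'N^{-1/2}$ and $\sum_{n>m}K(n)\le C''(m\vee 1)^{-1/2}\le C''$, giving a contribution of order $N^{-1/2}$. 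Hence $\Psi(m)\le C\,N^{-1/2}$ uniformly in $m\in\{0,\dots,N\}$, while by \eqref{eq:doney} we have $u(2N)\ge C^{-1}N^{-1/2}$ for $N$ large. For finitely many small $N$ the uniform bound follows trivially since $u(2N)>0$. This proves $\Psi(m)/u(2N)$ is bounded, and the main step is this elementary asymptotic matching between $\Psi(m)$ and the renewal mass function at $2N$, where the only subtle point is verifying that the worst case $m=0$ (and $k$ close to $N$) does not destroy the bound.
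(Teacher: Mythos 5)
Your decomposition on the last renewal epoch $L_N$ is exactly the right idea and coincides with the paper's approach (the paper reduces the lemma, in one line, to the bound $\sup_N\max_n\bP(X_N=n\,|\,2N\in\tau)/\bP(X_N=n)\le c$, citing (A.15) of \cite{cf:DGLT}). However, there is a bookkeeping error in your second display. Since $\tilde f(j)=\bE[f_N(\tau)\ind_{\{j\in\tau,\,\tau\cap(j,N]=\emptyset\}}]$ already includes the event $\{\tau\cap(j,N]=\emptyset\}$, the renewal property at $j$ gives $\tilde f(j)=\bE[g_j(\tau)\ind_{\{j\in\tau\}}]\cdot\bP(\tau_1>N-j)$ (where $g_j$ is the restriction of $f_N$ to the past of $j$), while
\begin{equation}
\bE[f_N(\tau)\ind_{\{L_N=j,\,2N\in\tau\}}]\;=\;\bE[g_j(\tau)\ind_{\{j\in\tau\}}]\cdot\Psi(N-j)\;=\;\tilde f(j)\,\frac{\Psi(N-j)}{\bP(\tau_1>N-j)},
\end{equation}
because $\Psi(N-j)$ is the \emph{joint} probability $\bP(\tau_1>N-j,\,2N\in\tau\text{ from }j\,|\,j\in\tau)$, not the conditional one given $L_N=j$. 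You can sanity-check this with $f_N\equiv1$: the correct identity $u(2N)=\sum_j u(j)\Psi(N-j)$ does not reduce to $\sum_j\tilde f(j)\Psi(N-j)=\sum_j u(j)\bP(\tau_1>N-j)\Psi(N-j)$. Consequently, the bound you actually need is $\Psi(m)\le c\,\bP(\tau_1>m)\,u(2N)$ uniformly in $0\le m\le N$, which is strictly stronger than your target $\Psi(m)\le c\,u(2N)$; the latter does not suffice because $\bP(\tau_1>m)$ decays like $(m\vee1)^{-1/2}$, so for $m$ of order $N$ your bound loses a factor $N^{1/2}$.

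The good news is that your own computation of $\Psi(m)$ already produces the missing factor: in your treatment of the range $N/2<k\le N-1$ you established $\sum_{n>m}K(n)\le C(m\vee1)^{-1/2}$ and then discarded the $(m\vee1)^{-1/2}$ by bounding it by a constant. Keeping it gives a contribution $\lesssim N^{-1/2}(m\vee1)^{-1/2}$, and your contribution $\lesssim N^{-1}$ from $k\le N/2$ is dominated by this for $m\le N$. Putting the two together yields $\Psi(m)\le C\,(m\vee1)^{-1/2}N^{-1/2}\lesssim\bP(\tau_1>m)\,u(2N)$, which is the correct sufficient estimate. So the proof is salvageable by (i) restoring the $\bP(\tau_1>N-j)$ normalisation in the decomposition of $\bE[f_N\ind_{\{2N\in\tau\}}]$, and (ii) retaining the $m$-dependence in the estimate on $\Psi(m)$; once these two fixes are made, your argument matches the one hidden behind the paper's citation of (A.15) in \cite{cf:DGLT}.
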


\medskip

\noindent
\begin{proof} 
The statement follows by writing $f_N(\tau)$
as $f_N(\tau)\sum_{n=0}^N \ind_{\{X_N=n\}}$, where $X_N$ is the last renewal epoch up
to (and including) $N$, and using the bound
$$
\sup_N \max_{n=0,\ldots,N}\frac{
\bP(X_N=n \vert 2N \in \tau)}{\bP(X_N=n )} =:c<\infty ,$$
which  is equation (A.15) in \cite{cf:DGLT} 
(this has been proven also in  \cite{cf:T_cg}, where the proof is repeated to show 
that $c$ can be chosen as a function of $\ga$ only).
\end{proof}

\smallskip

\subsection{Proof of \eqref{eq:claimU}}
\label{sec:appprova}
Defining the event
\begin{equation}
  \label{eq:Omega}
  \Omega_{\underline n,\underline j}:=\{N\in\tau\;\;\mbox{and}\;\;
\{j_{r-1},\ldots,n_r\}\cap \tau=\{j_{r-1},n_r\}\;\;\mbox{for all}\;\;
  r=1,\ldots,\ell\}, 
\end{equation}
with the convention that $j_0:=0$, 
we have
\begin{equation}
  \hat Z_\go^{(i_1,\ldots,i_\ell)}=\sum_{n_1\in B_{i_1}}\ldots \sumtwo{n_\ell\in B_{N/k}:}{n_\ell\ge 
n_{\ell-1}+k}\bE\left[e^{\sum_{n=1}^N(\beta\go_n+h-\beta^2/2)\delta_n};\Omega_{\underline n,
\underline j}\right].
\end{equation}
Since $\tilde\bbP$ is a Gaussian measure and $\delta_i^2=\delta_i$ for 
every $i$,
the computation of $\tilde\bbE \hat Z_\go^{(i_1,\ldots,i_\ell)}$ is
immediate:
\begin{equation}
  \tilde\bbE \hat Z_\go^{(i_1,\ldots,i_\ell)}=\sum_{n_1\in B_{i_1}}\ldots \sumtwo{n_\ell\in B_{N/k}:}{n_\ell\ge 
n_{\ell-1}+k}\bE\left[
e^{h\sum_{n=1}^N\delta_n-\beta^2/2\sum_{i,j=1}^N \mathcal C_{ij}\delta_i\delta_j}
    ;\Omega_{\underline n,
      \underline j}\right].
\end{equation}
In view of $\mathcal C_{ij}\ge 0$, we obtain an upper bound by
neglecting in the exponent the terms such that $n_r\le i\le j_{r}$ and
$n_{r'}\le j\le j_{r'}$ with $r\ne r'$. At that point, the $\bE$
average may be factorized, by using the renewal property,
 and we obtain (recall that $\mathcal C_{ii}=0$)
\begin{equation}
\begin{split}
  \tilde\bbE\hat Z_\go^{(i_1,\ldots,i_\ell)}\, \le\, & \sum_{n_1\in B_{i_1}}\ldots \sumtwo{n_\ell\in B_{N/k}:}{n_\ell\ge 
    n_{\ell-1}+k}K(n_1)\ldots K(n_\ell-j_{\ell-1})\\
  &\times \prod_{r=1}^\ell\bE\left[\left.e^{h\sum_{i=n_r}^{j_r}\delta_i-\beta^2\sum_{n_r\le i< j\le j_r}\mathcal C_{ij}
        \delta_i\delta_j}\ind_{\{j_{r}\in\tau\}}\right|n_r\in \tau\right], 
\end{split}
\end{equation}
with the convention that $j_\ell:=N$.
We are left with the task of proving that
\begin{equation}
\label{eq:auxU}
  \bE\left[\left.e^{h\sum_{i=n_r}^{j_r}\delta_i-\beta^2\sum_{n_r\le i<j\le j_r}\mathcal C_{ij}
\delta_i\delta_j}\ind_{\{j_{r}\in\tau\}}\right|n_r\in \tau\right]\le U(j_r-n_r),
\end{equation}
with $U(\cdot)$ satisfying \eqref{eq:U}.
We remark first of all that the left-hand side of \eqref{eq:auxU} equals
\begin{equation}
  \bP(j_r-n_r\in\tau)\bE\left[\left.e^{h\sum_{i=n_r}^{j_r}\delta_i-\beta^2\sum_{n_r\le i<j\le j_r}\mathcal C_{ij}
\delta_i\delta_j}\right|n_r\in \tau,j_r\in\tau\right].
\end{equation}
Since by construction $j_r-n_r<k(h)=\lfloor 1/h\rfloor$, 
one has
\begin{equation}
e^{h\sum_{i=n_r}^{j_r}\delta_i}\, \le\,  e.
\end{equation}
As for the remaining average, assume without loss
of generality that $|\{n_r,n_r+1,\ldots,j_r\}\cap B_{i_r}|\ge (j_r-n_r)/2$
(if this is not the case, the inequality clearly holds with $B_{i_r}$
replaced by $B_{i_r+1}$ and the  arguments which follow are trivially
modified).  Then,
\begin{multline}
\label{eq:uffa}
  \bE\left[\left.e^{-\beta^2\sum_{n_r\le i< j\le j_r}\mathcal C_{ij}
\delta_i\delta_j
}\right|n_r\in \tau,j_r\in\tau
\right]\, \le
\\
 \bE\left[\left.
\exp\left(-\beta^2\sum_{0<i<j\le (j_r-n_r)/2}\delta_i\delta_j H_{ij}\right)
\right| j_r-n_r\in\tau \right].
\end{multline}
Finally, 
 the conditioning in \eqref{eq:uffa} can be eliminated using Lemma \ref{th:condiz}, and
\eqref{eq:claimU} is proved.
\qed

\subsection{Proof of Lemma \ref{th:condlemma} }

In this proof (and in the statement) two positive numbers $C_1$ and $C_2$ appear. 
$C_1$ is going to change along with the steps
of the proof: it depends on $\eta$, $k$ and on $K(\cdot)$. 
$C_2$ instead is chosen once and for all below and it 
depends only on $K(\cdot)$. 
We start by giving a name to  the right-hand side of \eqref{eq:claimU}:
\begin{multline}
  \label{eq:restart}
  Q\, :=\, 
  \sum_{n_1\in B_{i_1}}\sum_{j_1=n_1}^{n_1+k-1}\sumtwo{n_2\in B_{i_2}:}{n_2\ge
    n_1+k}\sum_{j_2=n_2}^{n_2+k-1}\ldots \sumtwo{n_{\ell-1}\in B_{i_{\ell-1}}:}
{n_{\ell-1}\ge
    n_{\ell-2}+k}\sum_{j_{\ell-1}=n_{\ell-1}}^{n_{\ell-1}+k-1}\sumtwo
{n_\ell\in B_{N/k}:}{n_\ell\ge n_{\ell-1}+k}\\
 K(n_1)\ldots K(n_\ell-j_{\ell-1})
U(j_1-n_1)\ldots   U(j_{\ell-1}-n_{\ell-1}) U(N-n_\ell)
 .
\end{multline}
Since $N -n _\ell <k$, we can get rid of $U(N-n_\ell)\, (\le c_8
 \bP(N-n_\ell \in \tau) )$ and of the right-most sum (on $n_\ell$),
 replacing $n_\ell$ by $N$, by paying a price that depends on $k$ and
 $K(\cdot)$ (this price goes into $C_1$).  Therefore we have
 \begin{equation}
  \label{eq:lems1}
 Q\, \le \, 
 C_1\,  \sum_{n_1\in B_{i_1}} \ldots
 \sum_{j_{\ell-1}=n_{\ell-1}}^{n_{\ell-1}+k-1}
 K(n_1)\ldots K(n_\ell-j_{\ell-1})
U(j_1-n_1)\ldots U(j_{\ell-1}-n_{\ell-1})
 ,
\end{equation}
where by convention from now on $n_\ell:=N$.
Now we single out the long jumps. The set of long jump arrival points is
defined as
\begin{equation}
J\, =\, J( i_1,i_2, \ldots, i_\ell)\, :=\, \left\{r:\, 1 \le r \le \ell, \, i_r>i_{r-1}+2\right\},
\end{equation}
and the definition guarantees that a long jump $\{j_{r-1},\ldots,n_r\}$ contains at least one whole
block with no renewal point inside. For $r\in J$ 
we use the bound
\begin{equation}
K(n_r-j_{r-1}) \, \le \, \frac{C_2}{(i_r-i_{r-1})^{3/2} k^{3/2}}, 
\end{equation}
and we stress that we may and do choose $C_2$ depending only on $K(\cdot)$.
For later use, we choose $C_2\ge 2^{3/2}$.
This leads to
\begin{multline}
  \label{eq:lems3}
  Q\, \le \, 
 C_1\,  {k^{-3|J| /2}}
 \prod_{r \in J} \frac{C_2}{(i_r-i_{r-1})^{3/2}}
 \\
\times \sum_{n_1\in B_{i_1}} \ldots
 \sum_{j_{\ell-1}=n_{\ell-1}}^{n_{\ell-1}+k-1}
 \left( 
 \prod_{r\in \{1,\ldots, \ell\}\setminus J} 
 K(n_r- j_{r-1})
 \right)
 \,
U(j_1-n_1)\ldots U(j_{\ell -1}-n_{\ell-1}).
\end{multline}
Now we perform the sums in \eqref{eq:lems3} and bound the outcome by
using the assumptions \eqref{eq:condlemma1} and \eqref{eq:condlemma2}.

We first sum over $j_{r-1}$, $r \in J$, keeping of course
into account the constraint $0\le j_{r-1}-n_{r-1}<k$. By using
 \eqref{eq:condlemma1} such sum yields at most $(\eta \sqrt{k})^{\vert J\vert}$
 if $1 \notin J$. If $1 \in J$, for $r=1$ then $j_0=0$ and there is no summation: 
 we can still bound the sum by $(\eta \sqrt{k})^{\vert J\vert}$,
 provided that we change the constant $C_1$.

Second, we sum over $j_{r-1},n_r$ for 
$r \in  \{1,\ldots, \ell\}\setminus J$ and use 
\eqref{eq:condlemma2}. Once again we have to treat separately the case
$r=1$, as above. But if $1 \notin \{1,\ldots, \ell\}\setminus J$ we directly see that
the summation is bounded by $\eta^{\ell -\vert J \vert}$.

Finally, we have to sum over $n_r$, for $r\in J$. The summand does not depend
on these variables anymore, so this gives at most $k^{\vert J \vert }$.

Putting these estimates together we obtain
\begin{equation}
\label{eq:lems4}
Q\, \le \, 
 C_1\,  
 \frac{
 (\eta \sqrt{k})^{\vert J\vert }
 \eta^{\ell -\vert J \vert  } k^{\vert J \vert}  
   }
 {k^{3|J| /2}}
 \prod_{r \in J} \frac{C_2}{(i_r-i_{r-1})^{3/2}}\, \le\, 
 C_1 \eta^\ell C_2^\ell 
 \prod_{r=1}^{\ell} \frac{1}{(i_r-i_{r-1})^{3/2}},
\end{equation}
where, in the last step, we have used $C_2\ge 2^{3/2}$. 
The proof of Lemma~\ref{th:condlemma} is therefore 
complete.
\qed
\end{subappendices}

\chapter{Disorder relevance at marginality and critical point shift}\label{DISRELCPS}

\section{introduction}

\subsection{Relevant, irrelevant and marginal disorder}
\label{sec:RIM}
The renormalization group approach to disordered statistical mechanics
systems introduces a very interesting viewpoint on the role of
disorder and on whether or not the critical behavior of a quenched
system coincides with the critical behavior of the corresponding
{\sl pure} system.  The Harris criterion \cite{cf:Harris} is based on such an
approach and it may be summarized in the following way: if the
specific heat exponent of the pure system is negative, then a {\sl
  small} amount of disorder does not modify the critical properties of
the pure system ({\sl irrelevant disorder regime}), but if the
specific heat exponent of the pure system is positive then even an
arbitrarily small amount of disorder may lead to a quenched critical
behavior different from the critical behavior of the pure system.
 
A class of disordered models on which such ideas have been applied by
several authors is the one of pinning models (see {\sl e.g.}
\cite{cf:FLNO,cf:DHV} and the extensive bibliography in
\cite{cf:Book,cf:GLT_marg}).  The reason is in part due to the
remarkable fact that pure pinning models are 
exactly solvable models for which, by tuning a parameter, one can
explore all possible values of the specific heat exponent
\cite{cf:Fisher}. As a matter of fact, the validity of Harris
criterion for pinning models in the physical literature finds a rather
general agreement. Moreover, for the pinning models the
renormalization group approach goes beyond the critical properties and
yields a prediction also on the location of the critical point.
 
Recently, the Harris criterion predictions for pinning models have
been put on firm grounds in a series of papers \cite{cf:Ken,cf:T_cmp,cf:DGLT,cf:AZ_new} and some of these
rigorous results go even beyond the predictions. Notably in
\cite{cf:GT_cmp} it has been shown that disorder has a  smoothing effect in 
this class of models (a fact that is not a consequence of the Harris
criterion and that does not find unanimous agreement in the physical literature).
 
However, a substantial amount of the literature on disordered pinning
and Harris criterion revolves around a specific issue: what happens if
the specific heat exponent is zero ({\sl i.e.} at {\sl marginality})?
This is really a controversial issue in the physical literature,
started by the disagreement in the conclusions of \cite{cf:FLNO} and
\cite{cf:DHV}.  In a nutshell, the disagreement lies on the fact that
the authors of \cite{cf:FLNO} predict that disorder is irrelevant at
marginality and, notably, that quenched and annealed critical points
coincide at small disorder, while the authors of \cite{cf:DHV} claim
that disorder is relevant for arbitrarily small disorder, leading to a
critical point shift of the order of $\exp( -c \gb^{-2})$ ($c>0$) for
$\gb \searrow 0$ ($\gb^2$ is the disorder variance).

Recently we have been able to prove that, at marginality, there is a
shift of the critical point induced by the presence of disorder
\cite{cf:GLT_marg}, at least for
Gaussian disorder. We have actually proven that the shift is at least
 $\exp( -c \gb^{-4})$. The purpose of the present work is to go
beyond \cite{cf:GLT_marg} in three aspects:
\begin{enumerate}
\item We want to deal with rather general disorder variables: we are going to
assume only that the exponential moments are finite.
\item We are going to improve the bound $\exp( -c \gb^{-b})$, $b=4$,
  on the critical point shift, to $b=2+\epsilon$ ($\epsilon>0$
  arbitrarily small, and $c=c(b)$).
\item We will prove our results for a generalized class of pinning
  models. Pinning models are based on discrete renewal processes,
  characterized by an inter-arrival distribution which has power-law
  decay (the exponent in the power law parametrizes the model and
  varying such parameter one explores the different types of critical
  behaviors we mentioned before). The generalized pinning model is
  obtained by relaxing the power law decay to  regularly varying decay, that is (in
  particular) we allow {\sl logarithmic correction} to power-law
  decay.  This, in a sense, allows zooming into the marginal case and
  makes clearer the interplay between the underlying renewal and the
  disorder variables.
\end{enumerate}

\subsection{The framework and some basic facts}
In mathematical terms, disordered pinning models are one-dimensional
Gibbs measures with random one-body potentials and reference measure
given by the law of a renewal process.  Namely, pinning models are
built starting from a (non-delayed, discrete) renewal process $\tau=
\{ \tau_n \}_{n=0,1, \ldots}$, that is a sequence of random variables
such that $\tau_0=0$ and $\{ \tau_{j+1}-\tau_j\}_{j=0,1, \ldots}$ are
independent and identically distributed with common law (called {\sl
  inter-arrival distribution}) concentrated on $\N:=\{1,2, \ldots\}$
(the law of $\tau$ is denoted by $\bP$): we will actually assume that
such a distribution is regularly varying of exponent $1+\ga$, {\sl
  i.e.}
\begin{equation}
\label{eq:K333}
K(n) \, :=\, \bP(\tau_1=n)=\frac{L(n)}{n^{1+\ga}}, \ \ \text{ for } \ n=1,2, \ldots,
\end{equation}
where $\ga \ge 0$ and $L(\cdot)$ is a slowly varying function, that is $L: (0 , \infty) \to (0, \infty)$
is measurable and it satisfies $\lim_{x \to \infty}L(c x)/L(x)=1$ for every $c>0$. 
There is actually no loss of generality in assuming $L(\cdot)$ smooth
and we will do so  (we refer to 
 \cite{cf:RegVar} for properties of slowly varying functions).

\smallskip
\begin{rem}
\label{rem:SVF}\rm
Examples of slowly varying functions include {\sl logarithmic slowly
  varying functions} (this is probably not a standard terminology, but
it will come handy), that is the positive measurable functions that
behave like $\consta (\log(x))^{\constb}$ as $x\to \infty$, with
$\consta>0$ and $\constb \in \R$. These functions are just a
particular class of slowly varying functions, but it is already rich
enough to appreciate the results we are going to present.  Moreover we
will say that $L(\cdot)$ is trivial if $\lim_{x \to \infty} L(x)=c \in
(0, \infty) $.  The general statements about slowly varying function
 that we are going to use can be verified in an elementary
way for logarithmic slowly varying functions; readers who feel
uneasy with the general theory may safely focus on this restricted
class.
\end{rem}
\smallskip

Without loss of generality we assume that $\sum_{n \in \N} K(n)=1$
(actually, we have implicitly done so when we have introduced $\tau$).
This does not look at all like an innocuous assumption at first,
because it means that $\tau$ is {\sl persistent}, namely $\tau_j<
\infty$ for every $j$, while if $\sum_n K(n) < 1$ then $\tau$ is {\sl
  terminating}, that is $\vert \{j : \, \tau_j < \infty\}\vert <
\infty$ a.s.. It is however really a harmless assumption, as explained
in detail in \cite[Ch.~1]{cf:Book} and recalled in the caption of
Figure~\ref{fig:RW}.

The disordered potentials  are introduced by means
of the IID sequence $\{ \go_n\}_{n=1,2, \ldots}$ of random variables
(the {\sl charges}) such that $\M( t) := \bbE[ \exp(t\go_1)]< \infty$
for every $t$. Without loss of generality we may and do assume 
that $\bbE[\go_1]=0$ and $\text{var}_\bbP(\go_1)=1$.

The model we are going to focus on 
is defined by the sequence of  probability measures
$\bP_{N,\go, \gb, h}= \bP_{N,\go}$, indexed by $N \in \N$, defined by
\begin{equation}
\label{eq:Gibbs}
\frac{\dd \bP_{N, \go}}{\dd \bP} (\tau) \, :=\, 
\frac1{Z_{N, \go}} \exp \left(\sum_{n=1}^N 
\left( \gb \go_n +h - \log \M(\gb) \right) \gd_n 
\right)  \gd_N \, ,
\end{equation}
where $\gb \ge 0$, $h \in \R$, $\gd_n $ is the indicator 
function that $n= \tau_j$ for some $j$ and 
$Z_{N , \go}$ is the partition function, that is the normalization constant.
It is practical to look at $\tau$ as a random subset of $\{0\} \cup \N$,
so that, for example, $\gd_n = \ind_{n \in \tau}$.

\smallskip

\begin{rem}
\rm
We have chosen $\M(t)< \infty$ for every $t$
only for ease of exposition. The results we present directly
generalize to the case in which $\M(t_0)+\M(-t_0)< \infty$ for
a $t_0>0$. In this case it suffices to look at the system
only for $\gb \in [0, t_0)$.  
\end{rem}
\smallskip

Three comments on \eqref{eq:Gibbs} are in order:
\smallskip

\begin{enumerate}
\item we have introduced the model in a very general set-up which is, possibly,
 not too  intuitive, but it  allows
a unified approach to a large class of models \cite{cf:Fisher,cf:Book}. It may
be useful at this stage to look at Figure~\ref{fig:RW} that
illustrates the random walk pinning model; 
\item the presence of $-\log \M (\gb)$ in the exponent is just 
a parametrization of the problem that comes particularly handy and it
can be absorbed by redefining $h$;
\item the presence of $\gd_N$ in the right-hand side means that we are looking
only at trajectories that are {\sl pinned} at the endpoint of the system. 
This is just a boundary condition and we may as well remove 
$\gd_N$ for the purpose of the results that we are going to state,
since it is well known for example that the free energy of this system
is independent of the boundary condition ({\sl e.g.} \cite[Ch.~4]{cf:Book}). Nonetheless, at a technical level
it is more practical to work with the system pinned at the endpoint.
\end{enumerate}

\smallskip

\begin{figure}[hlt]
\begin{center}
\leavevmode
\epsfxsize =14.5 cm
\psfragscanon
\psfrag{0}[c][l]{\small $0$}
\psfrag{Sn}[c][l]{\small $S_n$}
\psfrag{n}[c][l]{\small $n$}
\psfrag{t0}[c][l]{\small $\tau_0$}
\psfrag{t1}[c][l]{\small $\tau_1$}
\psfrag{t2}[c][l]{\small $\tau_2$}
\psfrag{t3}[c][l]{\small $\tau_3$}
\psfrag{t4}[c][l]{\small $\tau_4$}
\psfrag{o1}[c][l]{\small $\go_1$}
\psfrag{o2}[c][l]{\small $\go_2$}
\psfrag{o3}[c][l]{\small $\go_3$}
\psfrag{o4}[c][l]{\small $\go_4$}
\psfrag{o5}[c][l]{\small $\go_5$}
\psfrag{o6}[c][l]{\small $\go_6$}
\psfrag{o7}[c][l]{\small $\go_7$}
\psfrag{o8}[c][l]{\small $\go_8$}
\psfrag{o9}[c][l]{\small $\go_9$}
\psfrag{oa}[c][l]{\small $\go_{10}$}
\psfrag{ob}[c][l]{\small $\go_{11}$}
\psfrag{oc}[c][l]{\small $\go_{12}$}
\psfrag{od}[c][l]{\small $\go_{13}$}
\psfrag{oe}[c][l]{\small $\go_{14}$}
\psfrag{of}[c][l]{\small $\go_{15}$}
\epsfbox{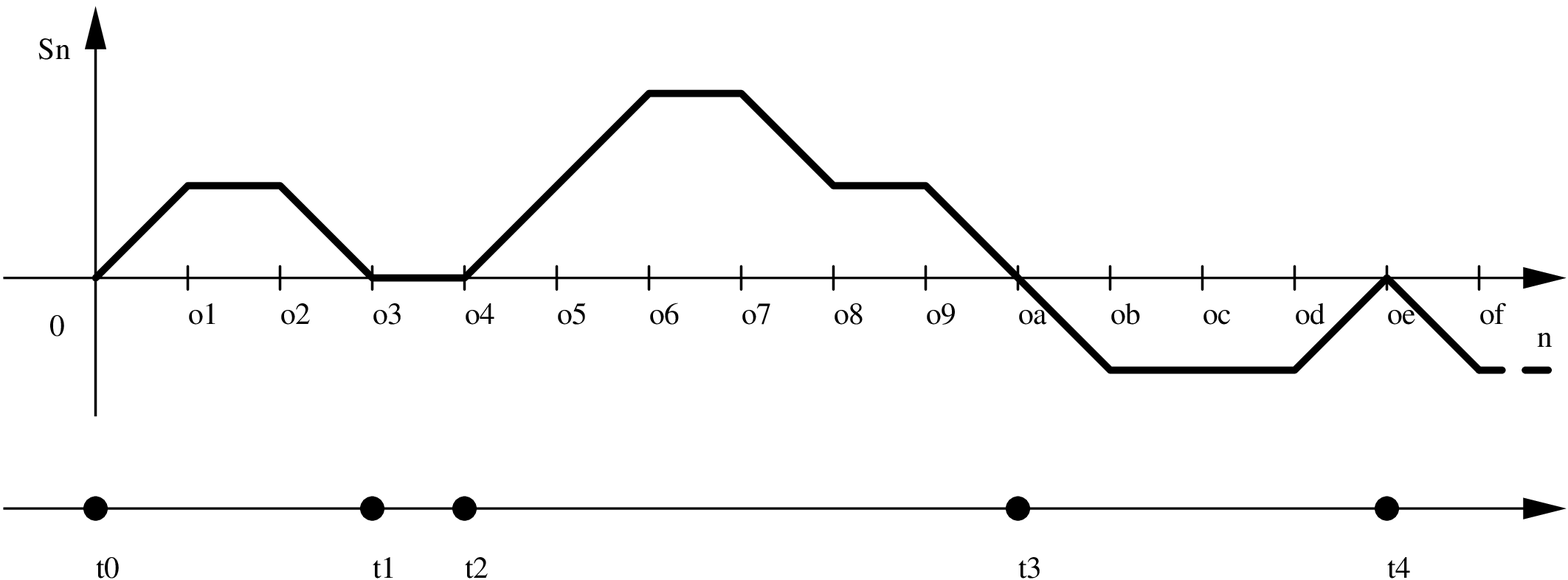}
\caption{\label{fig:RW} A symmetric random walk trajectory with
  increments taking values in $\{-1,0,+1\}$ is represented as a
  directed random walk. On the $x$-axis, the {\sl defect line}, there
  are quenched charges $\go$ that are collected by the walk when it
  hits the charge location.  The energy of a trajectory just depends
  on the underlying renewal process $\tau$. For the case in the
  figure, $K(n):=\bP(\tau_1=n)\sim const. n^{-3/2}$ for $n \to \infty$
  ({\sl e.g.} \cite[App.~A.6]{cf:Book}).  Moreover the walk is
  recurrent, so $\sum_n K(n)=1$.  There is however another
  interpretation of the model: the charges may be thought of as sticking
  to $S$, not viewed this time as a directed walk. If the walk hits
  the origin at time $n$, the energy is incremented by $(\beta\go_n+h-
\log \M(\gb))$.  This
  interpretation is particularly interesting for a three-dimensional
  symmetric walk in $\Z^3$: the walk may be interpreted as a polymer
  in $d=3$, carrying charges on each monomer, and the monomers
  interact with a point in space (the origin) via a charge-dependent
  potential.  Also in this case $K(n)\sim const. n^{-3/2}$, but the
  walk is transient so that $\sum_n K(n) <1$ ({\sl e.g.} 
  \cite[App.~A.6]{cf:Book}).  It is rather easy to see that any model based on a
  terminating renewal with inter-arrival distribution $K(\cdot)$ can
  be mapped to a model based on the persistent renewal with
  inter-arrival distribution $K(\cdot)/\sum_n K(n)$ at the expense of
  changing $h$ to $h+\log \sum_n K(n)$.  For much more detailed
  accounts on the (very many!) models that can be directly mapped to
  pinning models we refer to \cite{cf:Fisher,cf:Book}.  }
\end{center}
\end{figure}

The (Laplace) asymptotic behavior of $Z_{N , \go}$ shows a phase transition.
In fact, if we define the free energy as
\begin{equation}
\label{eq:fe}
\tf (\gb, h) \, :=\, \lim_{N \to \infty} \frac 1N \bbE \log Z_{N, \go},
\end{equation}
where the limit exists since the sequence $\{ \bbE \log Z_{N, \go} \}_N$
is super-additive (see {\sl e.g.} \cite[Ch.~4]{cf:Book}, where it is also
proven that $\tf (\gb, h)$ coincides with the $\bbP(\dd \go)$-almost sure
limit of $(1/N)\log Z_{N, \go}$, so that $\tf(\gb, h)$ is effectively
the {\sl quenched} free energy), then it is easy to see that 
$\tf(\gb, h)\ge 0$: in fact,
\begin{multline}
\tf(\gb, h)\, \ge \, \limsup_{N \to \infty}
\frac 1N \bbE \log 
\bE\left[
\exp \left(\sum_{n=1}^N 
\left( \gb \go_n +h - \log \M(\gb) \right) \gd_n 
\right)   \ind_{\tau_1=N}\right] 
\\ =\, \lim_{N \to \infty} \frac 1N \left( (h -\log \M(\gb))+ \log \bP(\tau_1=N)
\right)\, =\, 0.
\end{multline}
The transition we are after is captured by
setting
\begin{equation}
h_c(\gb) \, :=\, \sup \{ h:\, \tf(\gb,h)=0\} \, =\, \inf \{ h:\, \tf(\gb,h)>0\} ,
\end{equation}
where the equality is a direct consequence of the fact that
$\tf(\gb, \cdot)$ is non-decreasing (let us point out also that 
the free energy is  a continuous function of both arguments, as it
follows from standard convexity arguments). 
We have the bounds (see point (2) just below for the proof)
\begin{equation}
\label{eq:febounds}
\tf(0,h-\log \M (\gb)) \, \le \, \tf(\gb, h) \, \le \, \tf(0, h)\, , 
\end{equation}
which directly imply
\begin{equation}
\label{eq:hcbounds}
h_c(0) \, \le \, h_c (\gb) \, \le \, h_c(0)+\log \M(\gb)\, .
\end{equation}
Two important observations are:
\begin{enumerate}
\item the bounds in \eqref{eq:febounds} are given in terms of
$\tf(0, \cdot)$, that is the free energy of the non-disordered system, which 
can be solved analytically ({\sl e.g.} \cite{cf:Fisher,cf:Book}).
In particular $h_c(0)=0$
for every $\ga$ and every choice of $L(\cdot)$ (in fact
$h_c(0)= -\log \sum_n K(n)$ and we are assuming that $\tau$ is 
persistent).
We will keep in our formulae $h_c(0)$ both because
we think that it makes them more readable and because
they happen to be true also if $\tau$ were a terminating renewal).
\item The upper bound in \eqref{eq:febounds}, that entails the lower bound
in \eqref{eq:hcbounds},
follows directly from the standard {\sl annealed bound}, that is 
$\bbE \log Z_{N ,\go}\le  \log \bbE Z_{N ,\go}$, and by observing that
 the {\sl annealed partition function} $\bbE Z_{N ,\go}$ coincides with
 the partition function of the quenched model with $\gb=0$, that is simply the
 non-disordered case
 (of course, the presence of the term
 $-\log \M (\gb)$ in \eqref{eq:Gibbs} finds here its motivation).
 The lower bound in \eqref{eq:febounds}, entailing the upper bound
in \eqref{eq:hcbounds}, follows by a convexity argument too
(see \cite[Ch.~5]{cf:Book}).
\end{enumerate}
\smallskip

\begin{rem}\rm
It is rather easy (just take the derivative of the free energy
with respect to $h$) to realize that the phase transition we have
outlined in this model is a localization transition: when $h< h_c(\gb)$,
for $N$ large, the random set $\tau$ is {\sl almost empty},
while when $h> h_c(\gb)$ it is of size $const. N$ (in fact $const.= \partial_h\tf (\gb ,h)$).
Very sharp  results have been obtained on this issue: we refer to 
\cite[Ch.s~7 and 8]{cf:Book} and references therein. 
\end{rem}

\subsection{The Harris criterion}
\label{sec:Harris}
We can now make precise the Harris criterion predictions
mentioned in \S~\ref{sec:RIM}. As we have seen, in our case
 the pure (or {\sl annealed}) model is just
the non-disordered model, and the latter is exactly solvable, so that
the critical behavior is fully understood, notably \cite[Ch.~2]{cf:Book}
\begin{equation}
\label{eq:feexp}
\lim_{a \searrow 0}\frac{\log \tf(0, h_c(0)+a)}{\log a}\,=\,
\max \left( 1, \frac 1{\ga} \right)\, =:\, \nu_{\text{pure}}. 
\end{equation}  
The specific heat exponent of the pure model (that is the critical
exponent associated to 
 $1/\partial_h^2 \tf (0, h)$) is
computed analogously and it is equal to $2-\nu_{\text{pure}}$.
Therefore the Harris criterion predicts {\sl disorder relevance} for
$\ga>1/2$ ($2-\nu_{\text{pure}}>0$) and {\sl disorder irrelevance} for
$\ga<1/2$ ($2-\nu_{\text{pure}}<0$) at least for $\gb$ below a
threshold, with $\ga=1/2$ as marginal case.  So, what one expects is
that $\nu_{\text{pure}}=\nu_{\text{quenched}}$ (with obvious
definition of the latter) if $\ga<1/2$ for $\gb$ {\sl not too large}
and $\nu_{\text{pure}}\neq\nu_{\text{quenched}}$ if $\ga>1/2$ (for
every $\gb>0$).

While a priori the Harris criterion attacks the issue of critical behavior,
it turns out that  a Harris-like approach
in the pinning context  \cite{cf:FLNO,cf:DHV} yields information also 
on $h_c(\gb)$, namely that $h_c(\gb)=h_c(0)$
if $\ga<1/2$ and $\gb$ again not too large, while 
$h_c(\gb)>h_c(0)$ as soon as $\gb>0$. For the sequel it is 
 important to recall some aspects of the approaches in  \cite{cf:FLNO,cf:DHV}.

\smallskip

The main focus of \cite{cf:FLNO,cf:DHV},
 is on the case $\ga=1/2$ and trivial $L(\cdot)$.
In fact they focus on
 the interface wetting problem in two dimensions, that boils down to
 directed random walk pinning in $(1+1)$-dimensions. In this framework
 the conclusions of the two papers differ:  \cite{cf:FLNO}
 stands for $h_c(\gb)=h_c(0)$ for $\gb $ small, while  in \cite{cf:DHV}
one finds an argument in favor of 
\begin{equation}
\label{eq:DHV2}
h_c(\gb)-h_c(0) \approx \exp( -c \gb^{-2}),
\end{equation}
as $\gb \searrow 0$ (with $c>0$ an explicit constant).

We will not go into the details 
of these arguments, but we wish to point out why, in these arguments,  $\ga=1/2$
plays such a singular role. 
\smallskip

\begin{enumerate}
\item In the approach of \cite{cf:FLNO} an expansion of the free
  energy to all orders in the variance of $\exp(\gb\go_1 -\log
  \M(\gb))$, that is $(\M(2\gb)/\M^2(\gb))-1 \stackrel{\gb \searrow
    0}\sim \gb^2$, is performed. In particular (in the Gaussian case)
\begin{equation}
\label{eq:FLNO}
\tf(\gb, h_c(0) + a)\, =\ \tf(0, h_c(0) + a) - \frac 12 \left( \exp(\gb^2)-1
\right) \left( \partial_a \tf(0, h_c(0) + a) \right)^2 + \ldots
\end{equation}
and, when $L(\cdot)$ is trivial, 
$\partial_a \tf(0, h_c(0) + a) $ behaves like (a constant times)
$ a^{(1-\ga)/\ga}$ for $\ga \in (0,1)$ (this is detailed
for example in \cite{cf:GBreview}) 
and like a constant 
for $\ga\ge1$.  This suggests that the
expansion \eqref{eq:FLNO} cannot work for $\ga>1/2$, because the
second-order term, for $a \searrow 0$, becomes larger than the first
order term ($a^{\max(1/\ga, 1)}$).  The borderline case is $\ga=1/2$,
and trust in such an expansion for $\ga =1/2$ may follow from the fact
that $\gb$ can be chosen small. 
In conclusion, an argument along the lines of \cite{cf:FLNO} predicts disorder
relevance if and only if $\ga>1/2$ (if $L(\cdot)$ is trivial).
\item The approach of \cite{cf:DHV} instead is based on the analysis
  of $\text{var}_{\bbP} (Z_{N, \go})$ at the pure critical point
  $h_c(0)$.  This directly leads to studying the random set $\tilde
  \tau:= \tau \cap \tau^\prime$ (it appears in the computation in a
  very natural way, we call it {\sl intersection renewal}), with
  $\tau^\prime$ an independent copy of $\tau$ (note that $ \tilde \tau
  $ is still a renewal process): in physical terms, one is looking at
  the {\sl two-replica system}.  It turns out that, even if we have
  assumed $\tau$ persistent, $ \tilde \tau$ may not be: in fact, if
  $L(\cdot)$ is trivial, then $\tilde \tau$ is persistent if and only
  if $\ga \ge 1/2$ (see just below for a proof of this fact). And
  \cite{cf:DHV} predicts disorder relevance if and only if $\ga\ge
  1/2$.
\end{enumerate}
\smallskip
Some aspects of these two approaches were made rigorous mathematically: The expansion of the free energy \eqref{eq:FLNO} was proved to hold for $\alpha<1/2$ in \cite{cf:GT_irrel}, and the second moment analysis of \cite{cf:DHV} was used to prove {\sl disorder irrelevance} in \cite{cf:Ken,cf:T_cmp}, making it difficult to choose between the predictions.
\smallskip

We can actually find in the physical literature a number of authors standing for one or the other
of the two predictions in the marginal case $\ga=1/2$ (the reader can
find a detailed review of the literature in  \cite{cf:GLT_marg}).
 But we would like to go a step 
farther and we point out that,
by generalizing naively the  approach in  \cite{cf:DHV}, one is  tempted 
to conjecture disorder relevance (at arbitrarily small $\gb$)
if and only if the intersection renewal is recurrent. Let us 
 make this condition explicit: while one does not have direct access
to the inter-arrival distribution of $\tilde \tau$, it is straightforward, by independence, 
to write the renewal function of $\tilde \tau$:
\begin{equation}
\bP(n \in \tilde \tau) \, =\, \bP( n \in \tau)^2.
\end{equation}
It is then sufficient to use the basic (and general) renewal process
formula $\sum_n \bP( n \in \tilde \tau) = (1- \sum_n \bP( \tilde
\tau_1=n))^{-1}$ to realize that $\tilde \tau$ is persistent if and
only if $\sum_n \bP( n \in \tilde \tau)=\infty$.  Since under our
assumptions for 
$\ga\in(0,1)$ \cite[Th.~B]{cf:Doney}
\begin{equation}
\label{eq:Doney}
\bP(n \in \tau) \stackrel{n \to \infty} \sim \frac{\ga \sin (\pi \ga)}{\pi} \frac1{n^{1-\ga}L(n)},
\end{equation} 
we easily see that the intersection renewal $\tilde \tau$ is persistent
for $\ga >1/2$ and terminating if $\ga<1/2$ (the case $\ga=0$ can be treated too \cite{cf:RegVar},
and $\tilde \tau$ is terminating).
In  the $\ga=1/2$ case the argument we have
just outlined yields
\begin{equation}
\label{eq:persist}
\tau\cap \tau^\prime \ \text{ is persistent } \ \Longleftrightarrow  \ 
\sum_n \frac 1{n \, L(n)^2} \, = \, \infty.
\end{equation}
Roughly, this is telling us that the intersection renewal $\tilde
\tau$ is persistent up to a slowly varying function $L(x)$ diverging
{\sl slightly less} than $(\log x)^{1/2}$.  In particular, as we have
already pointed out, if $L(\cdot)$ is trivial, $\tilde \tau$ is
persistent.

Let us remark that
the expansion
\eqref{eq:FLNO} has been actually made rigorous in \cite{cf:GT_irrel}, but 
only under the assumption that the intersection renewal $\tilde \tau$
is terminating (that is,  $\constb>1/2$ for logarithmic slowly varying functions).

\smallskip

\begin{rem}
\label{rem:Ltilde}
\rm
In view of the argument we have just outlined, we introduce 
the increasing function  $\tilde L : (0, \infty) \to (0, \infty)$ defined as
\begin{equation}
\label{eq:Ltilde}
\tilde L (x) \, :=\, \int_0^x \frac1{(1+y)L(y)^2}\dd y,
\end{equation}
that is going to play a central role from now on.
Let us point out that, by \cite[Th.~1.5.9a]{cf:RegVar}, $\tilde L(\cdot)$ is a slowly varying function 
which has the property
\begin{equation}
\label{eq:Ltilde-prop}
\lim_{x \to \infty}\tilde L(x) L(x)^2\, =\, +\infty,
\end{equation}
which is a non-trivial statement when $L(\cdot)$ does not diverge at infinity. 
Of course we are most interested in the fact that, 
when $\ga=1/2$,  $\tilde L(x)$ diverges as $x \to \infty$
 if and only if the intersection renewal $\tilde \tau$ is recurrent ({\sl cf.}
 \eqref{eq:persist}).
 For completeness we point out  that 
$\tilde L(\cdot)$ is  a special type of slowly varying function
(a {\sl den Haan function} \cite[Ch.~3]{cf:RegVar}), but we will not exploit
the further regularity properties stemming out of this observation. 
\end{rem}

\smallskip


\subsection{Review of the rigorous  results}
\label{sec:review-rs}
Much mathematical work has been done on disordered pinning models
recently. Let us start with a quick review of the $\ga \neq 1/2$ case:
\begin{itemize}
\item If $\ga>1/2$ disorder relevance is established. The positivity of 
$h_c(\gb)-h_c(0)$ (with precise asymptotic estimates as $\gb \searrow 0$)
 is proven \cite{cf:DGLT,cf:AZ}. It has been also shown that disorder has a smoothing
 effect on the transition and the quenched free energy critical exponent differs
 from the annealed one
  \cite{cf:GT_cmp}.
\item If $\ga<1/2$ disorder irrelevance is established, along with 
a number of sharp results saying in particular that, if $\gb$ is not too large, 
$h_c(\gb)=h_c(0)$
and that the free energy critical behavior coincides in the quenched and annealed
framework \cite{cf:Ken,cf:T_cmp,cf:GT_irrel,cf:AZ_new}.
\end{itemize}
\medskip

In the case $\ga=1/2$ results are less complete. 
Particularly relevant for the sequel are the next two results that we state as theorems.
The first one is taken from \cite{cf:Ken} (see also \cite{cf:GT_cmp})
and uses the auxiliary function $a_0(\cdot)$ defined by
\begin{equation}
a_0(\gb)\, :=\, C_1 L\left( \tilde L^{-1}\left(C_2/\gb^2\right) \right) 
\Big / \left( \tilde L^{-1}\left(C_2/\gb^2\right) \right) ^{1/2}\,  \ \ \text{ with }C_1>0 \text{ and }
C_2>0,
\end{equation}
if $\lim_{x \to \infty}\tilde L(x)=\infty$, and 
$a_0(\cdot)\equiv 0$ otherwise.

\medskip

\begin{theorem}
\label{th:1/2UB}
Fix $\go_1 \sim \cN(0,1)$, $\ga=1/2$ and choose a slowly varying
function $L(\cdot)$.  Then there exists $\gb_0>0$ and $a_1>0$ such
that for every $\epsilon>0$ there exist $C_1$ and $C_2>0$ such that
\begin{equation}
1-\epsilon \, \le \, 
\frac{\tf(\gb, a) }{\tf(0, a)}\, \le \, 1 \ \ \text { for } \ a> a_0(\gb), \, a \le a_1
\text { and } \gb\le \gb_0. 
\end{equation}
This implies for $\gb \le \gb_0$
\begin{equation}
\label{eq:hcUB}
h_c(\gb)- h_c(0) \, \le \, a_0(\gb).
\end{equation}
\end{theorem}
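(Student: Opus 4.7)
\emph{Proof plan for Theorem \ref{th:1/2UB}.}

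The natural approach is the classical \emph{second moment + Gaussian concentration} method. The upper bound $\tf(\gb,a)/\tf(0,a)\le 1$ is just the annealed bound, so the whole content lies in the lower bound. I would prove it by showing that, for a suitable finite size $N=N(a)$ of the order of the pure correlation length $1/\tf(0,a)$, one has
\begin{equation*}
\bbE[Z_{N,\go}^{2}] \, \le\, C\,\bigl(\bbE\, Z_{N,\go}\bigr)^{2},
\end{equation*}
with $C$ independent of $a$ and $\gb$, as long as $a\ge a_{0}(\gb)$. From this finite-volume estimate, super-additivity and standard arguments allow to pass from $\bbE \log Z_{N,\go}$ to $\tf(\gb,a)$.

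To carry out the second moment computation, observe that after averaging over the Gaussian $\go$ one gets, with $\lambda_{2}(\gb):=\log \M(2\gb)-2\log \M(\gb)\stackrel{\gb\searrow 0}\sim\gb^{2}$,
\begin{equation*}
\frac{\bbE[Z_{N,\go}^{2}]}{(\bbE\, Z_{N,\go})^{2}}
\, =\,
\bE_{N,a}^{\otimes 2}\!\left[\exp\!\Bigl(\lambda_{2}(\gb)\sum_{n=1}^{N}\gd_{n}^{(1)}\gd_{n}^{(2)}\Bigr)\right],
\end{equation*}
where $\bE_{N,a}^{\otimes 2}$ is the law of two independent pure pinned renewals. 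The random set $\tau^{(1)}\cap\tau^{(2)}$ is again a renewal, and the key input is \eqref{eq:Doney} with $\ga=1/2$, which yields $\bP(n\in\tau)^{2}\asymp 1/(n L(n)^{2})$, so that the expected overlap up to the correlation length $k(a)\asymp 1/\tf(0,a)$ behaves as $\tilde L(k(a))$, with $\tilde L$ the slowly varying function defined in \eqref{eq:Ltilde}. A Markov-type argument controls the exponential of the overlap by a constant as soon as $\gb^{2}\,\tilde L(k(a))\le c$; together with the asymptotics $\tf(0,a)\sim a^{2}\hat L(1/a)$ from Theorem \ref{th:pure23}(2), this gives exactly the threshold $a\ge a_{0}(\gb)$ with the stated expression for $a_{0}$ (the prefactor $L(\cdot)$ in $a_{0}$ comes from the slowly varying correction in $\tf(0,\cdot)$).

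Once the second moment is under control, the Paley--Zygmund inequality gives $\bbP\bigl(Z_{N,\go}\ge \tfrac12 \bbE Z_{N,\go}\bigr)\ge c>0$ for some constant $c$ independent of $\gb,a$ in the relevant regime. The Gaussian assumption on $\go_{1}$ now enters in a crucial way through the concentration inequality
\begin{equation*}
\bbP\!\left(\bigl|\log Z_{N,\go}-\bbE \log Z_{N,\go}\bigr|\ge t\right)\, \le\, 2\exp\!\bigl(-t^{2}/(2\gb^{2}N)\bigr),
\end{equation*}
which follows from the fact that $\go\mapsto \log Z_{N,\go}$ is Lipschitz of constant $\gb$ in $\ell^{2}$. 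Combining concentration with Paley--Zygmund gives $\bbE \log Z_{N,\go}\ge \log \bbE Z_{N,\go}-O(\gb\sqrt{N})$, hence, dividing by $N$ and letting $N\to\infty$ via super-additivity,
\begin{equation*}
\tf(\gb,a)\, \ge\, \tf(0,a)-O\!\bigl(\gb/\sqrt{N(a)}\bigr)
\, =\, \tf(0,a)\bigl(1-O(\gb\sqrt{\tf(0,a)}/\tf(0,a))\bigr).
\end{equation*}

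The subtle point of the whole argument, and the reason why $a_{0}(\gb)$ has exactly the stated form, is to verify that for $a\ge a_{0}(\gb)$ both the second moment bound and the concentration correction $\gb\sqrt{N}/N=\gb\sqrt{\tf(0,a)}$ can simultaneously be made smaller than $\epsilon\,\tf(0,a)$; this forces the calibration of the two constants $C_{1},C_{2}$ in the definition of $a_{0}$ in terms of $\epsilon$. The main obstacle is precisely the sharp bookkeeping of the interplay between the slowly varying functions $L$, $\hat L$ and $\tilde L$: showing that the second moment remains bounded up to the pure correlation length $k(a)$ requires using the full strength of \eqref{eq:Ltilde-prop} and of the regular variation estimates for $\bP(n\in\tau)$, rather than any rough upper bound.
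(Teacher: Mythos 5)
Your overall structure — compute the two–replica second moment, relate the overlap to the intersection renewal $\tau^{(1)}\cap\tau^{(2)}$, deduce the threshold $a_{0}(\gb)$ from the condition $\gb^{2}\tilde L(k(a))\lesssim 1$ — is exactly the right one and is the approach of \cite{cf:Ken}; the paper itself does not give a proof of this theorem but cites it, and the chapters on hierarchical pinning and on directed polymers contain proofs of the same type. The diagnosis that the interplay of $L$, $\hat L$ and $\tilde L$ is the bookkeeping heart of the argument is also correct. However, the Gaussian concentration step is not just a technical nuisance; it is the wrong tool and it kills the result in precisely the regime the theorem is about.

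The bound $\bbP(|\log Z_{N,\go}-\bbE\log Z_{N,\go}|\ge t)\le 2\exp(-t^{2}/(2\gb^{2}N))$ uses the worst-case Lipschitz constant $\gb\sqrt{N}$ and produces, after dividing by $N$, an additive error of order $\gb/\sqrt{N}$, i.e.\ a \emph{relative} error of order $\gb/\sqrt{N}\,\tf(0,a)$. With $N(a)\asymp 1/\tf(0,a)$ this is $\gb/\sqrt{\tf(0,a)}$. But in the marginal case with $L$ trivial one has $a_{0}(\gb)\sim \exp(-c/\gb^{2})$ and $\tf(0,a_{0}(\gb))\sim \exp(-c'/\gb^{2})$, so this relative error is of order $\gb\,e^{c'/2\gb^{2}}$, which diverges as $\gb\searrow 0$. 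Your concentration route therefore only establishes the conclusion for $a\gtrsim \gb$, which is exponentially far from the stated range $a> a_{0}(\gb)$, and your closing paragraph (where you assert that $\gb\sqrt{\tf(0,a)}$ can be made smaller than $\epsilon\,\tf(0,a)$ throughout the regime $a\ge a_{0}(\gb)$) is simply not true. A related, milder issue: Paley--Zygmund only yields $\bbP(Z_{N}\ge \tfrac12 \bbE Z_{N})\ge c>0$ with $c$ bounded away from $1$, which by itself does not produce a $(1-\epsilon)$ prefactor.

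The way to close the gap without concentration is what the thesis does in the analogous hierarchical statement (proof of part (1) of Theorem~\ref{th:fe}): once $\mathrm{Var}(W_{N})\le\delta$ is established, Chebyshev gives $\bbP(W_{N}\ge 1/2)\ge 1-4\delta$ with $\delta$ as small as you like, and the contribution of the bad event is controlled by the \emph{deterministic} lower bound $Z_{N,\go}\ge K(N)\,e^{\gb\go_{N}+h-\log\M(\gb)}$, which costs only $O(\delta\log N)+O(\gb)$ in $\bbE\log Z_{N}$. This is $o(N\tf(0,a))$ provided one chooses $N$ a little larger than the correlation length, precisely so that $N\tf(0,a)\gg\log N$, while the second-moment bound still holds because the intersection-renewal pinning free energy at $\lambda_{2}(\gb)\approx\gb^{2}$ is only of order $\tf(0,a_{0}(\gb))$. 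Alternatively, the Gaussian assumption is exploited in the literature through the \emph{replica-coupling} interpolation of \cite{cf:T_cmp} (the same device used in the thesis to prove the lower bound of Theorem~\ref{GAUSSI} for directed polymers); it avoids any concentration estimate by differentiating an interpolation in $\gb^{2}$ and comparing directly with the two-replica pinning partition function. Either of these replaces your concentration step and recovers the full range $a>a_{0}(\gb)$.
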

\medskip

It is worth pointing out that Theorem~\ref{th:1/2UB} yields
an upper bound matching  \eqref{eq:DHV2} when $L(\cdot)$
is trivial. 

The next result addresses instead the lower bound on 
$h_c(\gb)-h_c(0)$ and it is taken from \cite{cf:GLT_marg}:

\medskip

\begin{theorem}
\label{th:1/2LB}
Fix $\go_1 \sim \cN(0,1)$ and  $\ga=1/2$.
If $L(\cdot) $ is trivial, then 
$h_c(\gb)-h_c(0)>0$ for every $\gb>0$ and 
there exists $C>0$ such that
\begin{equation}
\label{eq:beta4}
h_c(\gb)-h_c(0)\, \ge \, \exp\left(-C/ \gb^4 \right),
\end{equation}
for $\gb \le 1$.
\end{theorem}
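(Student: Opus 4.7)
The plan is to apply a fractional moment method coupled with a carefully tuned change of measure on the disorder, closely following the strategy of Theorem~\ref{th:main2} (proven in Chapter~\ref{MARGREL}, which is the non-hierarchical counterpart to the statement we wish to establish). Fix $\gamma \in (2/3,1)$ and set $A_N := \bbE[Z_{N,\go}^\gamma]$. By Jensen's inequality, $\tf(\gb,h) \le \liminf_N (N\gamma)^{-1}\log A_N$, so it suffices to show that $\sup_N A_N < \infty$ whenever $0 < h - h_c(0) \le \exp(-C/\gb^4)$ with $C = C(\gb)$ large enough.

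First I would coarse-grain: take $k := \lfloor 1/h \rfloor$ (equivalently, of order $\exp(C/\gb^4)$) and partition $\{1,\dots,N\}$ into blocks $B_i$ of length $k$. Decomposing $Z_{N,\go}$ according to the set $\{i_1<\dots<i_\ell\}$ of blocks hit by the underlying renewal (and keeping track, within each ``active'' block, of the first and last renewal epochs) yields
\begin{equation}
Z_{N,\go} = \sum_\ell \sum_{0<i_1<\dots<i_\ell=N/k} \hat Z_\go^{(i_1,\dots,i_\ell)},
\end{equation}
to which one applies $(\sum a_j)^\gamma \le \sum a_j^\gamma$ and then H\"older's inequality block by block, against a modified law $\tilde\bbP = \tilde\bbP^{(i_1,\dots,i_\ell)}$. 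The key choice is: on the set $M := \{i_1,\dots,i_\ell,i_1{+}1,\dots,i_{\ell-1}{+}1\}$ of active and immediately-following blocks, the $\go_n$'s are made centered Gaussian with covariance $I - \hat H$, where $\hat H_{ij} = (1-\gamma)/\sqrt{9\,k\,(\log k)\,|i-j|}$; elsewhere they are left IID. The covariance structure is designed to mimic the two-point function of the intersection renewal $\tau\cap\tau'$, which is precisely the obstruction to disorder irrelevance in the marginal case $\alpha=1/2$ with trivial $L(\cdot)$.

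Since $\hat H$ has zero trace and Hilbert--Schmidt norm bounded by $(1-\gamma)/2$, an elementary Gaussian determinantal computation gives $(\bbE[(d\bbP/d\tilde\bbP)^{\gamma/(1-\gamma)}])^{1-\gamma} \le e^{|M|/2} \le e^\ell$. For the tilted annealed bound, $\tilde\bbE \hat Z_\go^{(i_1,\dots,i_\ell)}$ factorizes across excursions and reduces to single-block estimates of the form $U(n) := c\,\bP(n\in\tau)\,\bE[\exp(-\beta^2\sum_{i<j\le n/2} H_{ij}\delta_i\delta_j)]$. Here I would use a Chung--Erd\H{o}s-type result saying that, under $\bP$, the random variable $W_L := (\sqrt L \log L)^{-1}\sum_{1\le i<j\le L}\delta_i\delta_j/\sqrt{j-i}$ converges in law to $c\,|Z|$ ($Z\sim\cN(0,1)$), which through Paley--Zygmund forces the exponential to be small once $\beta^2\sqrt{\log k} \gg 1$, i.e.\ $k \gg \exp(C/\beta^4)$. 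One then feeds these block estimates into a lemma of the type of \cite[Lemma~3.1]{cf:T_cg}, reducing $A_N$ to the partition function of a non-disordered pinning model on the coarse-grained scale with inter-arrival law $\propto n^{-3\gamma/2}$ and effective pinning parameter $\hat h < 0$ (delocalized phase), hence bounded uniformly in $N$.

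The main obstacle is simultaneously calibrating three quantities: the correlation amplitude in $\hat H$ must be large enough that $\tilde\bbE Z$ is small but small enough that the entropic cost $(d\bbP/d\tilde\bbP)^{\gamma/(1-\gamma)}$ stays controlled; the coarse-graining scale $k$ must be chosen so that the Chung--Erd\H{o}s estimate applies with enough margin; and the resulting block-estimates $U(\cdot)$ must satisfy both a pointwise bound $\sum_{j<k} U(j) \le \eta\sqrt k$ and a tail bound $\sum_{j<k}\sum_{n\ge k}U(j)K(n-j)\le \eta$ for small $\eta$, which together permit the reduction to a delocalized homogeneous model. Balancing these three constraints is exactly what produces the threshold $k \asymp \exp(C/\beta^4)$, and hence the lower bound \eqref{eq:beta4} on $h_c(\beta)-h_c(0)$.
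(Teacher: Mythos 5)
Your proposal is correct and follows essentially the same route as the paper's proof of this theorem (Theorem~\ref{th:main2} in Chapter~\ref{MARGREL}, Section~\ref{sec:nHproof}): same fractional moment with $\gamma\in(2/3,1)$, same coarse-graining scale $k=\lfloor 1/h\rfloor$, same choice of modified Gaussian covariance $I-\hat H$ restricted to the blocks in $M$, same determinant estimate, the same Chung--Erd\H{o}s-type Lemma~\ref{th:CE}, and the same reduction to a delocalized coarse-grained homogeneous model via Lemma~\ref{th:condlemma}. The only minor presentational deviation is invoking Paley--Zygmund where the paper uses the full convergence in law of $W_L$ together with monotonicity of $a\mapsto\bE[\exp(-aW_L)]$; both yield the needed smallness of the block factor once $\beta^2\sqrt{\log k}$ is large.
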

\medskip

It should be pointed out that \cite{cf:GLT_marg} has been worked out
for trivial $L(\cdot)$, addressing thus precisely the controversial
issue in the physical literature. 
The case of $\lim_{x \to
  \infty}L(x)=0$ has been treated \cite{cf:AZ} (see \cite{cf:DGLT} for
a weaker result) where $h_c(\gb)-h_c(0)>0$ has been established with
an explicit but not optimal bound.
We point out also that a result analogous to Theorem~\ref{th:1/2LB}
has been proven  for a hierarchical version
of the pinning model (see
\cite{cf:GLT_marg} for the case of the
  hierarchical model proposed in \cite{cf:DHV}). 
\medskip

The understanding of the marginal case is therefore 
still partial and the following problems are clearly open:
\smallskip

\begin{enumerate}
\item What is really the behavior of $h_c(\gb)-h_c(0)$ in the marginal case? In particular,
for $L(\cdot)$ trivial, is \eqref{eq:DHV2} correct?
\item Going beyond the case of $L(\cdot)$ trivial: is the 
 two-replica condition \eqref{eq:persist}
 equivalent to disorder relevance for small $\gb$?
\item What about non-Gaussian disorder? It should be pointed out that
  a part of the literature focuses on Gaussian disorder, notably
  Theorem~\ref{th:1/2UB}, but this choice appears to have been made in
  order to have more concise proofs (for example, the results in
  \cite{cf:DGLT} are given for very general disorder distribution).
  Theorem~\ref{th:1/2LB} instead exploits a technique that is more
  inherently Gaussian and generalizing the approach in
  \cite{cf:GLT_marg} to non-Gaussian disorder is not straightforward.
\end{enumerate}

\smallskip

As we explain in the next subsection, in this paper 
we will give {\sl almost} complete answers to 
questions (1), (2) and (3). In addition we will prove a monotonicity result for the phase diagram of pinning model which holds in great generality.

\subsection{The main result}
Our main result requires the existence of $\epsilon\in (0, 1/2]$ such that
\begin{equation}
\label{eq:Lassumption}
L(x) \, =\, o\left( (\log (x))^{(1/2)-\epsilon}\right) \ \ \text{ as } \ x \to \infty,
\end{equation}
that is $ \lim_{x \to \infty} L(x) (\log (x))^{-(1/2)+\epsilon}=0$. Of
course, if $L(\cdot)$ vanishes at infinity, \eqref{eq:Lassumption}
holds with $\epsilon=1/2$.  Going back to the slowly varying function
$\tilde L(\cdot)$, {\sl cf.} Remark~\ref{rem:Ltilde}, we note that,
under assumption \eqref{eq:Lassumption}, we have
\begin{equation}
\label{eq:Lmore}
\tilde L(x) \stackrel{x \to \infty} {\gg} \int_2^x \frac 1{y ( \log y)^{1-2\epsilon}} \dd y
\, =\, \frac 1{2 \epsilon } (\log x)^{2 \epsilon} - \frac 1{2 \epsilon } (\log 2)^{2 \epsilon}.
\end{equation} 
Therefore, under assumption \eqref{eq:Lassumption},
we have that if $q>(2\epsilon)^{-1}$ then
\begin{equation}
\lim_{x \to \infty} \frac{\tilde L(x)}{L(x)^{2/(q-1)}}\, =\, \infty,
\end{equation}
which guarantees that given $q>(2\epsilon)^{-1}$ (actually, in the sequel
$q \in \N$) and $A>0$,
\begin{equation}
\label{eq:Delta23}
\gD (\gb;q,A)\, :=\, \left( \inf\left\{ n\in \N :\, {\tilde L(n)}/{L(n)^{2/(q-1)}}\ge A \gb^{-2q/(q-1)}
  \right\} \right)^{-1}\,  
\end{equation}
is greater than $0$ for every $\gb>0$.
\smallskip

Our main result is

\medskip

\begin{theorem}
\label{th:main333}
Let us assume that $\ga=1/2$ and that \eqref{eq:Lassumption}
holds for some $\epsilon\in (0, 1/2]$. 
For every $\gb_0$ and every integer $q>(2\epsilon)^{-1}$
there exists $A>0$ such that
\begin{equation}
h_c(\gb) -h_c(0) \, \ge \, \gD (\gb;q,A)\, >\, 0, 
\end{equation}
for every $\gb\le \gb_0$.
\end{theorem}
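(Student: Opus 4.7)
The plan is to extend the fractional moment + change of measure strategy used in Chapter~\ref{MARGREL} for the Gaussian case (Theorem~\ref{th:main2}) by replacing the two-body Gaussian tilt by a $q$-body tilt on the disorder variables. This extra flexibility is what will allow us to handle general disorder and, at the same time, to push the bound from $\exp(-c/\gb^4)$ towards $\exp(-c/\gb^{2+\eta})$ as $q\to\infty$.

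\textbf{Step 1 (fractional moment framework).} I would fix $\gamma\in(2/3,1)$, set $h=h_c(0)+\Delta(\gb;q,A)$ with $A$ to be chosen large enough, and aim to prove $A_N:=\bbE[Z_{N,\go}^\gamma]$ stays bounded in $N$; by Jensen this yields $\tf(\gb,h)=0$ and hence $h_c(\gb)\ge h$. The coarse-graining procedure is the same as in Section~\ref{sec:nHproof}: take blocks $B_i$ of size $k=\lfloor 1/\Delta\rfloor$ (which plays the role of the annealed correlation length, up to the usual remark \ref{rem:k}), decompose $Z_{N,\go}$ into $\hat Z_\go^{(i_1,\dots,i_\ell)}$ exactly as in \eqref{eq:dec1}--\eqref{eq:Zhat}, and use the inequality $(\sum a_i)^\gamma\le\sum a_i^\gamma$ to reduce the problem to bounding each $\bbE[(\hat Z_\go^{(i_1,\dots,i_\ell)})^\gamma]$ through H\"older with a suitable measure $\tilde\bbP=\tilde\bbP^{(i_1,\dots,i_\ell)}$.

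\textbf{Step 2 ($q$-body change of measure).} On each visited block $B_u$, $u\in M(i_1,\dots,i_\ell)$, define
\begin{equation*}
\frac{\dd\tilde\bbP}{\dd\bbP}(\go)\, =\, \prod_{u\in M}\exp\!\left(-\gl\!\!\sum_{I\subset B_u,\,|I|=q}\!\! J_I\prod_{i\in I}\go_i\, -\, \psi_u(\gl)\right),
\end{equation*}
where $J_I:=c_k\,\bE[\prod_{i\in I}\gd_i]$ and $c_k$ is chosen so that $\sum_{I\subset B_u,|I|=q}J_I^2=1$; the normalizer $\psi_u(\gl)$ is fixed by requiring $\tilde\bbP(B_u)=1$. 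A computation analogous to \eqref{eq:HS_H}--\eqref{eq:RN_nh}, using only $\log\M(t)=O(t^2)$ near $0$ and the finiteness of exponential moments of $\go_1$, yields the cost estimate
\begin{equation*}
\Bigl(\bbE\bigl[(\dd\bbP/\dd\tilde\bbP)^{\gamma/(1-\gamma)}\bigr]\Bigr)^{1-\gamma}\, \le\, e^{c\gl^2|M|}\, \le\, e^{c\gl^2\ell},
\end{equation*}
provided $\gl$ is bounded away from the radius of analyticity of the relevant cumulant generating function. Here the choice of a $q$-body (rather than 2-body) tilt is essential: for general non-Gaussian $\go$ the Radon--Nikodym derivative is no longer log-quadratic, but the cost can still be controlled via a Taylor expansion in $\gl$ because $q$-linear forms in IID variables are centered and have sub-exponential tails after a $\gl$-tilt.

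\textbf{Step 3 (gain on the partition function).} Under $\tilde\bbP$, each factor $\exp(\gb\go_n\gd_n)$ on block $B_u$ effectively picks up a weight of the form $\exp(-c\gb^q J_I\prod_{i\in I}\gd_i)$ when integrated against $q$ charges inside $B_u$. Computing $\tilde\bbE \hat Z_\go^{(i_1,\dots,i_\ell)}$ one finds, in analogy with \eqref{eq:EZN}--\eqref{eq:q2}, a factor
\begin{equation*}
\bE\!\left[\exp\!\left(-c\gb^q\!\!\sum_{I\subset B_u,|I|=q}\! J_I\prod_{i\in I}\gd_i\right)\right],
\end{equation*}
on each visited block. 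The key point is a $q$-body analogue of Lemma~\ref{th:CE}: one must show that $W_k^{(q)}:=\sum_{I\subset\{1,\dots,k\},|I|=q}J_I\prod_{i\in I}\gd_i$ is of order $1$ with positive probability under $\bP$, by computing its first two moments using \eqref{eq:Doney} and the Palm-type identity $\bE[\prod_{i\in I}\gd_i]=\prod_{j=1}^{q-1}\bP(i_{j+1}-i_j\in\tau)$. The relevant second moment computation produces the normalization constant
\begin{equation*}
\sum_{I,I'}\bE[\textstyle\prod_{i\in I}\gd_i]\,\bE[\prod_{i\in I'}\gd_i]\,\bE[\prod_{i\in I\cup I'}\gd_i],
\end{equation*}
which, under $\ga=1/2$ and \eqref{eq:Lassumption}, is shown (by careful topological summation as in Lemma~\ref{th:secmom}) to grow like $k^{q-1}\tilde L(k)/L(k)^{2/(q-1)}$ up to constants; this is precisely where the combination $\tilde L(k)/L(k)^{2/(q-1)}$ appearing in the definition \eqref{eq:Delta23} of $\Delta(\gb;q,A)$ enters. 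Choosing $k\sim 1/\Delta(\gb;q,A)$ with $A$ large makes the exponent $\gb^q\sqrt{\tilde L(k)/L(k)^{2/(q-1)}}$ large, so the expectation above is as small as we wish on each block.

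\textbf{Step 4 (closing the loop).} Collecting the block estimates with the cost factor $e^{c\gl^2\ell}$ and optimizing over $\gl$, one obtains a bound of the form
\begin{equation*}
\bbE\!\left[(\hat Z^{(i_1,\dots,i_\ell)}_\go)^\gamma\right]\le C_1\,\eta^\ell\prod_{r=1}^\ell(i_r-i_{r-1})^{-3\gamma/2},
\end{equation*}
with $\eta>0$ arbitrarily small provided $A$ is large. Exactly as in \eqref{eq:puremod}--\eqref{eq:condeta}, the right-hand side is recognized as the partition function of a homogeneous pinning model whose effective parameter $\hat h$ is negative, hence bounded in $N$ by Lemma~\ref{th:puro}.

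\textbf{Main obstacle.} The hardest step is Step~3: constructing the coefficients $J_I$ and proving the $q$-body analogue of the Chung--Erd\"os estimate (Lemma~\ref{th:CE}) under the mild assumption \eqref{eq:Lassumption}. The two-body case was a clean Gaussian computation; here one must handle $q$-fold intersections of the renewal and track the interplay between $L(\cdot)$ and $\tilde L(\cdot)$. This is also the step that dictates the condition $q>(2\epsilon)^{-1}$: it is precisely the threshold above which $\tilde L(k)/L(k)^{2/(q-1)}$ diverges under \eqref{eq:Lassumption} (thanks to the lower bound \eqref{eq:Lmore}), so that the change of measure provides a genuine gain. The second main technical point, namely controlling the non-Gaussian Radon--Nikodym cost uniformly in the block size, reduces to concentration estimates for $q$-linear forms in IID variables, which can be obtained by hypercontractive or entropy-type arguments.
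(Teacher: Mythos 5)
Your overall architecture — fractional moments, block coarse graining of $Z_{N,\go}$, change of measure on visited blocks, a $q$-body functional of the disorder, and a second-moment analysis of a $q$-fold renewal sum — matches the paper's strategy, and you correctly identify the role of $q>(2\epsilon)^{-1}$ and of the slowly varying combination $\tilde L(k)/L(k)^{2/(q-1)}$. However, there is a genuine gap in Step~2 that cannot be patched by the concentration and hypercontractivity arguments you appeal to in your ``main obstacle'' paragraph.

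You define $\tilde\bbP$ by the exponential tilt $\dd\tilde\bbP/\dd\bbP=\prod_{u\in M}\exp(-\gl\sum_{|I|=q}J_I\prod_{i\in I}\go_i-\psi_u(\gl))$ and ask $\psi_u(\gl)$ to normalize it. For $q\ge 3$ and unbounded disorder (which is not excluded here: only finite exponential moments of $\go_1$ are assumed), the homogeneous $q$-linear form $\sum_{|I|=q}J_I\prod_{i\in I}\go_i$ has tails of order $\exp(-c\,t^{2/q})$; multiplying by $e^{-\gl t}$ therefore produces a divergent integral, so $\psi_u(\gl)=\infty$ for every $\gl\neq 0$ and $\tilde\bbP$ is simply not defined. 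Your remark that ``$\gl$ must be bounded away from the radius of analyticity of the relevant cumulant generating function'' is the symptom: for $q\ge3$ that radius is zero. Hypercontractive or entropy bounds give you tail decay $\exp(-ct^{2/q})$ for the $q$-linear form, which is useful for controlling probabilities, but it cannot resurrect the integrability of $\exp(-\gl X)$ against it; these bounds are not the right tool. This is exactly the obstruction the paper flags in the introduction of its Section~2.

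What the paper does instead, and what you are missing, is to abandon the tilt entirely in favor of a \emph{bounded} multiplicative penalty. One keeps the same $q$-linear form $X_j=\sum_{\ui\in B_j^q}V_k(\ui)\go_{\ui}$ with $\sum V_k^2\simeq1$, but replaces the exponential tilt by $g_\cI(\go)=\exp(\sum_{j\in\cI}f_K(X_j))$ where $f_K(x)=-K\ind_{\{x\ge\exp(K^2)\}}$; this is a function taking values in $[e^{-K|\cI|},1]$, so integrability is trivial. The H\"older cost $(\bbE[g_\cI^{-\gamma/(1-\gamma)}])^{1-\gamma}$ is then controlled by Chebyshev alone, since $X_j$ has unit variance under $\bbP$. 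The gain $\bbE[g_\cI\,\hat Z_\go^\cI]$ is obtained by showing that, under the polymer-tilted law on each visited block, $X_j$ has large mean (of order $A^{(q-1)/2}$) and comparatively small conditional variance (Lemmas~\ref{TH:FROMCE} and~\ref{th:square}, via a diagram-trimming second moment computation), so that the indicator $\ind_{\{X_j\ge\exp(K^2)\}}$ is typically active and $g_\cI\approx e^{-K|\cI|}$. The mechanism is thus ``penalize, don't tilt'': there is no negative drift multiplying $\prod\gd_i$, and there is nothing to ``optimize over $\gl$'' as you propose in Step~4. The remainder of your plan (the renewal skeleton estimate with summable exponent $>1$, the reduction to a subcritical homogeneous pinning sum) is in line with the paper once the change of measure is fixed.
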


\medskip

The result may be more directly appreciated in the particular case of
$L(\cdot)$ of logarithmic type, {\sl cf.} Remark~\ref{rem:SVF}, with
$\constb<1/2$, so that \eqref{eq:Lassumption} holds with
$\epsilon<\min((1/2)-\constb,1/2)$.  By explicit integration we see
that $\tilde L(x) \sim (\consta^2(1-2\constb))^{-1} (\log
(x))^{1-2\constb}$ so that
\begin{equation}
 \frac{\tilde L(x)}{L(x)^{2/(q-1)}} \, \sim \, \frac {\consta^{-2q/(q-1)}}{(1-2\constb)}
 (\log (x))^{1-2\constb q(q-1)^{-1}}
\end{equation}
and in this case
\begin{equation}
\gD (\gb; q, A) \stackrel{\gb\searrow 0}\sim
\exp\left(-c(\constb,A,q) \gb^{-b}\right),
\end{equation}
where 
$c(\constb,A,q):=((1-2\constb)\consta^{2q/(q-1)}A)^{1/C}$
and $b:= 2q/((q-1)C)$ with 
$C:=1-2\constb q(q-1)^{-1}$.
In short, by choosing $q$ large the exponent
$b>2/(1-2\constb)$ becomes arbitrarily close to $2/(1-2\constb)$,
at the expense of course of a large constant
$c(\constb,A,q)$, 
since $A$ will have to be chosen sufficiently large.

We sum up these steps into the following simplified version
of Theorem~\ref{th:main333}
\medskip

\begin{cor}
If $\ga=1/2$ and $L(\cdot) $ is of logarithmic type with $\constb\in (-\infty, 1/2)$
({\sl cf.} Remark~\ref{rem:SVF}) 
then $h_c(\gb)>h_c(0)$ for every $\gb>0$ and for every
$b>2/(1-2\constb)$
there exists
$c>0$ such that, for $\gb$ sufficiently small
\begin{equation}
h_c(\gb)-h_c(0)\, \ge \, \exp\left(-c \gb^{-b}\right).
\end{equation}
\end{cor}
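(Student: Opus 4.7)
The plan is to derive this corollary as a direct specialization of Theorem~\ref{th:main333}; the work reduces to computing the explicit quantity $\gD(\gb;q,A)$ when $L(\cdot)$ is of logarithmic type, and then optimizing over the integer parameter $q$. First I would verify the hypothesis of Theorem~\ref{th:main333}: since $L(x) \sim \consta (\log x)^{\constb}$ with $\constb < 1/2$, condition \eqref{eq:Lassumption} holds with any $\epsilon \in (0,\min(1/2,(1/2)-\constb))$, so in particular Theorem~\ref{th:main333} applies for every integer $q > (2\epsilon)^{-1}$, and by taking $\epsilon$ close to $\min(1/2,(1/2)-\constb)$ the admissible set of $q$'s includes all sufficiently large integers.

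Next I would compute the asymptotics of $\tilde L(\cdot)$. By the change of variable $u = \log y$,
\[
\tilde L(x) \, = \, \int_0^x \frac{\dd y}{(1+y)L(y)^2}
\stackrel{x\to\infty}\sim \frac{1}{\consta^{2}(1-2\constb)}(\log x)^{1-2\constb},
\]
where I use that $(\log y)^{-2\constb}/(\consta^2 y)$ is the integrand up to a $(1+o(1))$ factor and the integral is dominated by the large-$y$ regime (note $1-2\constb>0$). Combining this with $L(x)^{2/(q-1)} \sim \consta^{2/(q-1)}(\log x)^{2\constb/(q-1)}$ gives
\[
\frac{\tilde L(x)}{L(x)^{2/(q-1)}} \, \stackrel{x\to\infty}\sim \,
\frac{\consta^{-2q/(q-1)}}{1-2\constb}\, (\log x)^{C}, \qquad C := 1-\frac{2\constb q}{q-1},
\]
which diverges provided $q$ is chosen large enough that $C>0$ (this is automatic when $\constb\le 0$, and for $\constb\in(0,1/2)$ it holds as soon as $q>\constb/(\constb-1/2)\cdot(-1)$, i.e.\ for all sufficiently large $q$).

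Inverting the definition \eqref{eq:Delta23} then gives that $\gD(\gb;q,A)^{-1}$ is the smallest $n$ with $(\log n)^{C} \ge (1-2\constb)\consta^{2q/(q-1)}A\, \gb^{-2q/(q-1)}(1+o(1))$, whence
\[
\log \gD(\gb;q,A)^{-1} \, = \, \bigl(c(\constb,A,q)+o(1)\bigr)\, \gb^{-b(q)}, \qquad
b(q) := \frac{2q}{(q-1)C} = \frac{2q}{q-1-2\constb q},
\]
with $c(\constb,A,q):=\bigl((1-2\constb)\consta^{2q/(q-1)}A\bigr)^{1/C}$. By Theorem~\ref{th:main333} this yields $h_c(\gb)-h_c(0) \ge \exp\bigl(-2c(\constb,A,q)\gb^{-b(q)}\bigr)$ for $\gb$ small enough.

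Finally, the optimization: as $q\to\infty$, $b(q)\to 2/(1-2\constb)$ monotonically from above, so given any $b>2/(1-2\constb)$ I can fix $q$ large enough that $b(q)\le b$; since $\gb^{-b(q)}\le \gb^{-b}$ for $\gb\le 1$, the stated bound follows with $c=2c(\constb,A,q)$. In particular this shows $h_c(\gb)>h_c(0)$ for every $\gb>0$, by continuity of $h_c(\cdot)$ and the validity of the bound on any interval $(0,\gb_0]$. The routine technical point to watch carefully is uniformity of the $(1+o(1))$ corrections in the slowly varying asymptotics as $q$ varies, but this is harmless because $q$ is chosen first (as a function of $b$) and only then is $\gb$ taken small; I do not anticipate a substantive obstacle here, since all the real work has been carried out in Theorem~\ref{th:main333}.
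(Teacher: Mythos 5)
Your proposal is correct and follows exactly the route the paper takes immediately after Theorem~\ref{th:main333}: specialize the asymptotics of $\tilde L$ to logarithmic $L$, read off $\gD(\gb;q,A)\approx\exp(-c(\constb,A,q)\gb^{-b(q)})$ with $b(q)=2q/(q-1-2\constb q)$, and then let $q\to\infty$ so that $b(q)\searrow 2/(1-2\constb)$. Two harmless imprecisions: your displayed threshold for $C>0$ when $\constb\in(0,1/2)$ should be $q>1/(1-2\constb)$ (not $2\constb/(1-2\constb)$), though this is anyway implied by $q>(2\epsilon)^{-1}$; and the step from ``$\gb$ small'' to ``every $\gb>0$'' rests not on continuity of $h_c$ but on the fact that $\gb_0$ in Theorem~\ref{th:main333} is arbitrary (equivalently, on the monotonicity of $h_c(\cdot)$ from Proposition~\ref{th:monotonicity}).
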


\medskip

This result of course has to be compared with the 
 upper bound in Theorem~\ref{th:1/2UB} that for
 $L(\cdot)$ of logarithmic type yields for $\constb< 1/2$
\begin{equation}
h_c(\gb)-h_c(0)\, \le \, 
\tilde C_1 \gb^{-2 \constb/(1-2\constb)}\exp\left(-\tilde C_2 \gb^{-2 /(1-2\constb)}\right),
\end{equation}
where $\tilde C_1$ and $\tilde C_2$ are positive constants that depend
(explicitly) on $\consta$, $\constb$ and on the two constants $C_1$
and $C_2$ of Theorem~\ref{th:1/2UB} (we stress that $\tilde C_1>0$ and
$\tilde C_2>0$ for every $\consta>0$ and $\constb< 1/2$).  \smallskip

\smallskip

The main body of the proof of Theorem~\ref{th:main333} is given in the
next section.  In the subsequent sections a number of technical
results are proven.  In the last section
(Section~\ref{sec:monotonicity}) we prove a general result (Proposition~\ref{th:monotonicity})
for the
models we are considering: the monotonicity of the free energy with
respect to $\gb$. This result, proven for other disordered models,
appears not to have been pointed out up to now for the pinning model.
We stress that Proposition~\ref{th:monotonicity}  
 is  not used  in the rest of the
paper, but, as  discussed in Section~\ref{sec:monotonicity},
one can find a link of some interest with our main results.  

\section[Coarse graining]{Coarse graining, fractional moment and measure change arguments}
\label{sec:CGfmmc}

The purpose of this section is to reduce  the proof to 
a number of technical statements, that are going to be proven
in the next sections. In doing so, we are going to introduce the quantities and 
notations used in the technical statements and, at the same time,
we will stress the main ideas and the novelties with respect to earlier
approaches (notably, with respect to \cite{cf:GLT_marg}).

We anticipate that the main ingredients of the proof are (like in
\cite{cf:GLT_marg}) a coarse graining procedure and a fractional moment
estimate on the partition function combined with a change of measure.
However:
\begin{enumerate}
\item In \cite{cf:GLT_marg} we have exploited the Gaussian character of
  the disorder to introduce {\sl weak, long-range correlations} while
  keeping the Gaussian character of the random variables. In fact, the
  change of measure is given by a density that is just the exponential
  of a quadratic functional of $\go$, that is a measure change via a
  {\sl 2-body potential}.  In order to lower the exponent $4$ in the
  right-hand side of \eqref{eq:beta4} we will use $q$-body potentials
  $q=3,4,  \ldots$ (this is the $q$ appearing in
  Theorem~\ref{th:main333}). Such potentials carry with themselves a
  number of difficulties: for example, when the law of the disorder is Gaussian, the
  modified measure is not. As a matter of fact, there are even
  problems in defining the modified disorder variables if one modifies
  in a straightforward way the procedure in \cite{cf:GLT_marg} to use
  $q$-body potentials, due to integrability issues: such problems may
  look absent if one deals with bounded $\go$ variables, but they
  actually reappear when taking limits. The change-of-measure
  procedure is therefore performed by introducing $q$-body potentials
  {\sl and} suitable cut-offs.  Estimating the effect of
  such {\sl $q$-body potential with cut-off} change of measure is at
  the heart of our technical estimates.
\item The coarse-graining procedure is different from the one used in
  \cite{cf:T_cg,cf:GLT_marg}, since we have to adapt it to the new
  change of measure procedure. However, unlike point (1), 
  the difference between the previous coarse graining procedure and the
  one we are employing now is more technical than conceptual. 
\end{enumerate}

\subsection{The coarse graining length}
Recall the definition \eqref{eq:Ltilde} of $\tilde L(\cdot)$.
We are assuming  \eqref{eq:Lassumption}, therefore $\lim_{x \to \infty}\tilde
L(x)=+\infty$.
Chosen a value of  $q \in\{ 2,3, \ldots\}$ ($q$ is kept fixed throughout the proof) and a positive constant $A$ (that is going to be chosen large) we define
\begin{equation}
\label{eq:k333}
k\, =\, k(\gb; q,A)\, :=\, 
\inf \left\{ n \in \N :\, \tilde L(n)/ L(n)^{2/(q-1)} \ge A \gb^{-2q/(q-1)}\right\}.
\end{equation}
Since we are interested also in cases in which  $L(\cdot)$ diverges (and possibly faster than
$\tilde L(\cdot)$) it is in general false  that $k< \infty$. However, the 
assumption \eqref{eq:Lassumption} guarantees that, for $q>(2\epsilon)^{-1}$,
 $L(x)/ L(x)^{2/(q-1)}\to \infty$ for $x\to \infty$ and therefore $k< \infty$.
 
Moreover, if $L(\cdot)$ is of logarithmic type (Remark~\ref{rem:SVF}) with
$\constb<1/2$,
then   for $q>1/(1-2\constb)$ the function $\tilde L(\cdot)/ L(\cdot)^{2/(q-1)}$
is (eventually) increasing.

\smallskip

Of course $k(\gb; q,A)$ is just $1/\gD (\gb;q,A)$,
{\sl cf.}
\eqref{eq:Delta23}, and the reason for such a link 
is explained in Remark~\ref{rem:kgD}.
Note by now  that $k$ is monotonic in both $\gb$ and $A$.
Since $\gb$ is chosen smaller than an arbitrary fixed quantity
$\gb_0$, in order to guarantee that $k$ is large we will rather play on 
choosing $A$ large.

\smallskip

\begin{rem}
\label{rem:L}
\rm
For the proof  certain monotonicity properties will be important. Notably, 
we know \cite[\S~1.5.2]{cf:RegVar} that $1/(\sqrt{x} L(x))$ is asymptotic 
to a monotonic (decreasing) function and this directly implies that we can
find a slowly varying function $\bL(\cdot)$ and a constant $\const _L \in (0,1]$ such that
\begin{equation}
\label{eq:Lb}
x \mapsto \frac1{\sqrt{x}\bL(x)} \text{ is decreasing  and } 
 \const _L  \bL(x) \, \le \, L(x) \, \le \, \bL(x) \text{ for every } x\in (0, \infty).
\end{equation}
Given the asymptotic behavior of the renewal function of $\tau$ (a special case
of \eqref{eq:Doney})
\begin{equation}
\label{eq:Doney1/2}
\bP\left(n \in \tau\right)
\stackrel{n \to \infty}\sim \frac 1{2\pi \sqrt{n} L(n)},
\end{equation}
and the fact that $\bP\left(n \in \tau\right)>0$ for every $n\in \N$, 
 we can choose
$\bL (\cdot)$ and 
$\const _L $ such that we have also
\begin{equation}
\label{eq:Doney-bound}
\frac{1}{\sqrt{n+1}\, \bL(n+1)}\, \le \,
\bP(n \in \tau) \, \le \, \frac{\const _L ^{-1}}{\sqrt{n+1}\, \bL(n+1)}, \ \ \ n=0,1,2, \ldots .
\end{equation}
It is natural to choose $\bL(\cdot)$ such that $\lim_{x \to \infty}\bL(x)/L(x)
\in [1, 1/\const_L)$ exists, and we will do so.
For later convenience
we set
\begin{equation}
\label{eq:R12}
R_{\frac12} (x) \, :=\,  \frac1{\sqrt{x+1}\, \bL(x+1)}.
\end{equation}
\end{rem}

\subsection{The coarse graining procedure and the fractional moment bound}
Let us start by introducing for $0\le M < N$ the notation(s)
\begin{equation}
  \label{eq:Znh23}
  Z_{M, N}\, =\,
  Z_{M, N,\go}:=\bE\left[e^{\sum_{n=M+1}^N(\beta\go_n+h-\log \M(\gb))\delta_n}\delta_N
  \big \vert \gd_M=1\right]\, ,
\end{equation}
and $Z_{M,M}:=1$ (of course $Z_{N, \go}= Z_{0, N}$).
We consider without loss of generality 
a system of size proportional to $k$, that is $N=km$
with $m \in \N$. For $\mathcal I\subset
\left\{1,\dots,m\right\}$ we define
\begin{equation}
  \hat Z_{\go}^{\mathcal I}:=\bE \left[e^{\sum_{n=1}^N(\beta\go_n+h-\log \M(\gb))\delta_n} \gd_N \ind_{E_\cI}(\tau)\right],
\end{equation}
where
 $E_\cI:=\{ \tau \cap (\cup_{i \in \cI} B_i) = \tau \setminus \{0\}\}$, 
and
\begin{equation}
 B_i:=\left\{(i-1)k+1,\dots,ik\right\},
\end{equation}
that is $E_\cI$ is the event that the renewal $\tau$ intersects the
blocks $(B_i)_{i\in \mathcal I}$ and only these blocks over $\{1,
\ldots, N\}$.  It follows from this definition that
\begin{equation}\label{eq:decomp333}
 Z_{N,\go}=\sum_{\mathcal I\subset \left\{1,\dots,m\right\}} \hat Z_{\go}^{\mathcal I}.
\end{equation}
Note that $\hat Z_{\go}^{\mathcal I}=0$ if $m\notin \mathcal I$. Therefore in the following we will always assume $m\in \mathcal I$.
For $\mathcal I=\{i_1,\dots, i_l\}$, ($i_1<\dots<i_l$, $i_l=m$), one can express $\hat Z_{\go}^{\mathcal I}$ in the following way:
\begin{multline}
\label{eq:decomp1}
\hat Z_{\go}^{\mathcal I}=
\sumtwo{d_1, f_1 \in B_{i_1}}{d_1\le f_1}\sumtwo{d_2, f_2\in B_{i_2}}{d_2\le f_2}\ldots \sum_{d_l\in B_{i_l}}\\
K(d_1)z_{d_1}Z_{d_1,f_1}K(d_2-f_1)Z_{d_2,f_2}\ldots K(d_l-f_{l-1})z_{d_l}
Z_{d_l,N},
 \end{multline}
 with $z_n := \exp( \gb \go_n +h - \log \M(\gb))$.  Let us fix a value of $\gamma\in (0,1)$
  (we  actually choose $\gamma=6/7$, but we will keep
  writing it as $\gamma$). Using the inequality $\left(\sum a_i\right)^{\gamma}\le
 \sum a_i^{\gamma}$ (which is valid for $a_i\ge 0$ and an arbitrary
 collection of indexes) we get
\begin{equation}
\label{eq:cg+fm}
  \bbE\left[ Z_{N,\go}^{\gamma}\right]\, \le\, \sum_{\mathcal I\subset \left\{1,\dots,m\right\}} \bbE \left[\left(\hat Z_{\go}^{\mathcal I}\right)^{\gamma}\right].
\end{equation}
An elementary, but crucial, observation is that
\begin{equation}
\label{eq:fmm}
\tf(\gb, h)\, =\, \lim_{N \to \infty} \frac1{\gamma N} \bbE
\log Z_{N, \go }^\gamma\, \le \, 
 \liminf_{N \to \infty} \frac1{\gamma N} 
\log \bbE Z_{N, \go }^\gamma,
\end{equation}
so that if we can prove that $\limsup_N\bbE Z_{N, \go }^\gamma < \infty$
for $h=h_c(0)+\Delta(\beta;q,A)$
we are done.

\begin{figure}[hlt]
\begin{center}
\leavevmode
\epsfxsize =14.5 cm
\psfragscanon
\psfrag{O}[c][l]{$0$}
\psfrag{1K}[c][l]{$k$}
\psfrag{2K}[c][l]{$2k$}
\psfrag{3K}[c][l]{$3k$}
\psfrag{4K}[c][l]{$4k$}
\psfrag{5K}[c][l]{$5k$}
\psfrag{6K}[c][l]{$6k$}
\psfrag{7K}[c][l]{$7k$}
\psfrag{8K}[c][l]{$8k=N$}
\psfrag{d1}[c][l]{\Small{$d_1$}}
\psfrag{d2}[c][l]{\Small{$d_2$}}
\psfrag{d3}[c][l]{\Small{$d_3$}}
\psfrag{d4}[c][l]{\Small{$d_4$}}
\psfrag{f1}[c][l]{\Small{$f_1$}}
\psfrag{f2}[c][l]{\Small{$f_2$}}
\psfrag{f3}[c][l]{\Small{$f_3$}}
\psfrag{f4N}[c][l]{\Small{$f_4=N$}}
\epsfbox{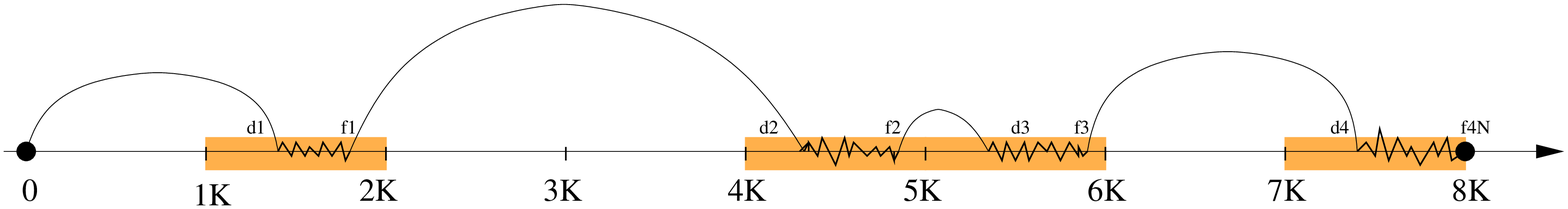}
\caption{\label{fig:cg} The figure above explains our coarse graining
  procedure. Here $N=8k$, $\mathcal I=\{2,5,6,8\}$. The drawn
  trajectory is a typical trajectory contributing to $\hat Z^{\mathcal
    I}_{N,\go}$; $d_i$ and $f_i$, $1\le i\le 4$, correspond to the
  indexes of \eqref{eq:decomp1}. The shadowed regions represent the sites
  on which the change of measure 
  procedure (presented in \S~\ref{sec:com}) acts.}
\end{center}
\end{figure}

\subsection{The change of measure}
\label{sec:com}
We introduce 
\begin{equation}
\label{eq:X}
 X_j\, :=\, \sum_{\ui \in B_j^q} V_k({\ui})\go_{\ui}, 
\end{equation}
where $ B_j^q$ is the Cartesian product of $B_j$ with itself $q$ times and 
$\go_{\ui}= \prod_{a=1}^q \go_{i_a}$. The {\sl potential}
 $V_k(\cdot)$ plays a crucial role for the sequel: we define it and discuss some
 of its
 properties  in the next remark.
 \medskip
 
 \begin{rem}
\label{rem:V}\rm
The potential $V$ is best introduced if we define the sorting operator $\sort(\cdot)$:
if $\ui  \in \R^q$ ($q =2,3, \ldots$), $\sort (\ui) \in \R^q$ is the non-decreasing rearrangement 
of the entries of $\ui$. We introduce then
\begin{equation}
\label{eq:U23}
U({\ui}) \, :=\, \prod_{a=2}^{ q}
 R_{\frac12} \left( \sort(\ui)_a- \sort(\ui)_{a-1}\right),
\end{equation}
The {\sl potential} $V$ is defined  by renormalizing  $U$ and by setting to zero
the {\sl diagonal} terms: 
\begin{equation}
\label{eq:useR2}
V_k({\ui}) \, := \, \frac{1}{(q!)^{1/2}  k^{1/2}\tilde \bL(k)^{(q-1)/2}}U( \ui)
 \ind_{\{i_a\neq i_b \text{ for every } a, b\}}
,
\end{equation}
where $\tilde \bL(\cdot)$ is defined as in \eqref{eq:Ltilde}, with $L(\cdot)$
replaced by $\bL(\cdot)$. By exploiting the fact that for every $c>0$ 
we have $\sum_{i\le cN} R_{1/2}(i)^2 \stackrel{N \to \infty} \sim 
\tilde \bL (N)$
one  sees that 
\begin{equation}
\label{eq:cV-1}
\sum_{\ui \in B_1^q} V_k({\ui})^2 \, = \, \frac{1}{k\,  ( \tilde \bL(k))^{q-1}}
\sum_{0<i_1<\ldots < i_q \le k} \prod_{a=2}^q \left(R_{1/2}(i_a-i_{a-1})\right)^2
 \stackrel{k \to \infty}\sim  1.
\end{equation}
Therefore 
\begin{equation}
\label{eq:cV}
\sum_{\ui \in B_1^q} V_k({\ui})^2 \, \le \, 2,
\end{equation}
for $k$ sufficiently large.
\end{rem}

\medskip

Let us introduce, for $K>0$, also
\begin{equation}
\begin{split}
 f_K(x)\, &:=\, -K\ind_{\{x\ge \exp(K^2)\}},\\
 g_{\mathcal I}(\go)\, &:= \, \exp\left(\sum_{j\in \mathcal I}f_K(X_j)\right),\\
 \bar g(\go)\, &:= \, \exp(f_K(X_1)).
\end{split}
\end{equation}
We are now going to replace, for fixed $\cI$, the measure
$\bbP(\dd \go)$ with $ g_{\mathcal I}(\go) \bbP(\dd \go)$. The latter is not a probability measure:
we could normalize it, but this is inessential because we are directly exploiting 
 H\"older inequality to get
\begin{equation}
\bbE \left[\left(\hat Z_{\go}^{\mathcal I}\right)^{\gamma}\right]
\, \le\,  
\left(\bbE\left[g_{\mathcal I}(\go)^{-\frac{\gamma}{1-\gamma}}\right]\right)^{1-\gamma}\left(\bbE\left[g_{\mathcal I}(\go)
\hat Z_{\go}^{\mathcal I}\right]\right)^{\gamma}.
\end{equation}
The first factor in the right-hand side is easily controlled, in fact
\begin{equation}
\label{eq:factor1}
\bbE\left[g_{\mathcal I}(\go)^{-\frac{\gamma}{1-\gamma}}\right]\,=
\, \bbE \left[ \bar g (\go)^{-\frac{\gamma}{1-\gamma}}\right]^{\vert \cI \vert} 
\, =\, \left[ \left(\exp\left(\frac{K\gamma}{1-\gamma}\right)-1\right)
\bbP\left(X_1\ge \exp\left(K^2\right)\right)+1 \right]^{\vert \cI \vert} ,
\end{equation}
and since $X_1$ is centered and its variance coincides 
with the left-hand side of \eqref{eq:cV}, by Chebyshev inequality
the term $ \exp\left({K\gamma}/{(1-\gamma)}\right)\bbP\left(X_1\ge \exp\left(K^2\right)\right)$
can be made arbitrarily small by choosing $K$ large. Therefore for $K$ sufficiently large
(depending only on $\gamma(=6/7)$)
\begin{equation}
\label{eq:fHolder}
\bbE \left[\left(\hat Z_{\go}^{\mathcal I}\right)^{\gamma}\right]
\, \le\,  2^{\gamma \vert \cI\vert}
\left(\bbE\left[g_{\mathcal I}(\go)
\hat Z_{\go}^{\mathcal I}\right]\right)^{\gamma}.
\end{equation}

Estimating the remaining factor is a more involved matter. We will actually use the following two statements,
that we prove in the next section. Set
$P_\cI \, :=\, 
 \bP\left( E_\cI ; \, \gd_N=1\right)$.

\medskip

\begin{proposition}
\label{th:eta}
Assume that $\alpha=1/2$ and that \eqref{eq:Lassumption} holds
for some $\epsilon\in(0,1/2]$. For every $\eta>0$ and every $q>(2\epsilon)^{-1}$
we can choose $A>0$ such that if $\gb \le\gb_0$ and $h\le \Delta(\beta;q,A)$,
 for every $\mathcal I\subset\left\{1,\dots,m\right\}$ with $m\in\mathcal I$ we have
\begin{equation}
\label{eq:eta}
 \bbE\left[g_{\mathcal I}(\go)\hat Z_{\go}^{\mathcal I}\right]
 \,\le\, 
  \eta^{\vert\mathcal I\vert }P_{\mathcal I}.
\end{equation}
\end{proposition}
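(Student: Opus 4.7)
The plan is to reduce the proposition, via the block decomposition \eqref{eq:decomp1}, to a single-block estimate and then to run a first/second moment argument on the potential $X_j$. First I would factorize $\bbE[g_{\mathcal I}(\go)\hat Z_\go^{\mathcal I}]$ block by block: since $X_{i_j}$ depends on $\go$ only through coordinates in $B_{i_j}$, and trajectories contributing to $\hat Z^{\mathcal I}_\go$ have all their contacts in $\cup_{j\in\mathcal I}B_{i_j}$, the $\go$-expectation splits into inter-block factors $K(\cdot)$ (which together form $P_{\mathcal I}$ after noting that $\bP(n\in\tau)$ coincides with the annealed weight of $Z_{d_j,f_j}$ up to a factor $\prod K$) and per-block factors
\[
  \Phi_j(d,f)\,:=\,\bbE\!\left[e^{f_K(X_{i_j})}\,z_d\,Z_{d,f}\right],\qquad d,f\in B_{i_j},\ d<f.
\]
The proposition will follow once we show that, uniformly in $\mathcal I$ and in admissible $(d,f)$,
\[
\Phi_j(d,f)\,\le\,\eta\,\bP(f-d\in\tau),
\]
the right-hand side being (up to a factor bounded by $1+o(1)$ under $h\le\Delta(\beta;q,A)$ and $f-d<k$, by the homogeneous-pinning estimate of Lemma A.4 of Appendix \ref{sec:Arenew} in Chapter \ref{CHAPDGLT}) the annealed value $\bbE[z_d Z_{d,f}]$.

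For the single-block bound I would write $e^{f_K(x)}=1-(1-e^{-K})\ind_{\{x\ge e^{K^2}\}}$, yielding
\[
\Phi_j(d,f)\,=\,\bbE[z_d Z_{d,f}]-(1-e^{-K})\,\bbE\!\left[\ind_{\{X_{i_j}\ge e^{K^2}\}}\,z_d Z_{d,f}\right].
\]
The strategy is then to exhibit a Paley--Zygmund type lower bound
\[
\bbE\!\left[\ind_{\{X_{i_j}\ge e^{K^2}\}}\,z_d Z_{d,f}\right]\,\ge\,(1-\eta_1)\,\bbE[z_d Z_{d,f}],
\]
with $\eta_1$ arbitrarily small, by showing that under the annealed pinning measure
\[
\dd\hat\bP(\go)\,:=\,\frac{z_d Z_{d,f}}{\bbE[z_d Z_{d,f}]}\,\dd\bbP(\go),
\]
the random variable $X_{i_j}$ has mean much larger than $e^{K^2}$ and variance of order one. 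Combining such an estimate with the first bound gives $\Phi_j(d,f)\le[1-(1-e^{-K})(1-\eta_1)]\bbE[z_d Z_{d,f}]$, which is smaller than $\eta\,\bP(f-d\in\tau)$ once $K=K(\eta)$ and $\eta_1=\eta_1(\eta)$ are chosen appropriately.

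The core of the argument, and its main obstacle, is the computation of the first and second moments of $X_{i_j}$ under $\hat\bP$. By expanding $X_{i_j}=\sum_{\underline i\in B_{i_j}^q}V_k(\underline i)\go_{\underline i}$ and $Z_{d,f}$ over renewal trajectories, each factor $\omega_{i_a}$ is absorbed into the exponential and produces, in the Gaussian case via integration by parts and in the general case via the cumulant expansion of $\bbE[\omega_{\underline i}\exp(\sum\beta\omega_n\delta_n)]$, a factor $\beta\,\delta_{i_a}$ up to higher cumulant corrections that are uniformly controlled (thanks to the finite exponential moments of $\go_1$ and to the fact that $h$ is small so that $z_n$ has bounded exponential moments of all orders). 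This leads to
\[
\hat\bE[X_{i_j}]\,\simeq\,\beta^q\sum_{\underline i\in B_{i_j}^q}V_k(\underline i)\,\bP(i_1,\ldots,i_q\in\tau)\,\simeq\,\beta^q\,\frac{\sum U(\underline i)^2}{(q!)^{1/2}\sqrt{k}\,\tilde{\bL}(k)^{(q-1)/2}}\,\simeq\,\frac{\beta^q \sqrt{k}\,\tilde{\bL}(k)^{(q-1)/2}}{(q!)^{1/2}},
\]
where I have used \eqref{eq:cV-1} together with the renewal asymptotics $\bP(n\in\tau)\sim R_{1/2}(n)$ from \eqref{eq:Doney1/2}--\eqref{eq:Doney-bound} and standard renewal decorrelation along the simplex $i_1<\cdots<i_q$. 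Substituting the definition \eqref{eq:k333} of $k$ shows that $\tilde{L}(k)^{(q-1)/2}\gtrsim A^{(q-1)/2}L(k)\beta^{-q}$, so that $\hat\bE[X_{i_j}]\gtrsim A^{(q-1)/2}\sqrt{k}\,L(k)/(q!)^{1/2}$, which can be made arbitrarily large by choosing $A$ large (uniformly in $\beta\le\beta_0$, using $\sqrt{x}L(x)\to\infty$ under \eqref{eq:Lassumption}). A parallel computation gives $\hat\bE[X_{i_j}^2]\lesssim 1+(\hat\bE[X_{i_j}])^2\cdot(1+o_A(1))$, so Paley--Zygmund yields $\hat\bP(X_{i_j}\ge e^{K^2})\ge 1-\eta_1$ provided $e^{K^2}\ll \hat\bE[X_{i_j}]$, which is possible by taking $A$ large after $K$ is fixed. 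The delicate point I expect to be the main obstacle is controlling the error terms in the cumulant expansion for non-Gaussian $\go$: one must check that the higher-cumulant contributions (which were absent in \cite{cf:GLT_marg} because of the Gaussian integration by parts) are dominated, term by term, by the leading $\beta^q$ contribution. This is ensured by the cutoff $\ind_{\{i_a\neq i_b\}}$ in the definition \eqref{eq:useR2} of $V_k$, by the regularity of $R_{1/2}$, and by the fact that the integer parameter $q$ is fixed, so that only finitely many cumulant orders enter the bookkeeping.
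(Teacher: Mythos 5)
Your overall strategy — factorize block by block, then control the single-block factor by comparing first and second moments of $X_{i_j}$ under a tilted/annealed measure — is the same route the paper takes (this is precisely what Lemma~\ref{TH:DF}, Lemma~\ref{TH:FROMCE} and Lemma~\ref{th:square} accomplish). However, there is a genuine gap: the uniform single-block bound you aim for,
\[
\Phi_j(d,f)\,\le\,\eta\,\bP(f-d\in\tau)\qquad \text{uniformly in admissible }d\le f,
\]
is false when $f-d$ is small, and your own moment computation reveals why. If, say, $f=d$, then $Z_{d,f}=1$ and $\Phi_j(d,d)=\bbE[e^{f_K(X_{i_j})}z_d]$, which is of order $1$ (close to $\bbE[z_d]\simeq e^h$, certainly not $\le\eta\bP(0\in\tau)=\eta$). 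More generally, if $f-d\ll k$ the tilted measure $\hat\bP$ only boosts $\go_n$ on the short interval $[d,f]$; in the expansion $\hat\bE[X_{i_j}]\simeq\beta^q\sum_{\underline i}V_k(\underline i)\bP(i_1,\dots,i_q\in\tau\cap[d,f])$ the restriction to $[d,f]$ kills most of the sum, so $\hat\bE[X_{i_j}]$ is \emph{not} large and the Paley--Zygmund step collapses. This is exactly why the paper's Lemma~\ref{TH:FROMCE} carries the hypothesis $f-d\ge\gep k$, and why Lemma~\ref{TH:DF} is only stated under that restriction.

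The missing ingredient in your proposal is the treatment of the short-excursion terms. The paper handles them \emph{not} by a per-$(d,f)$ bound but by a combinatorial argument on the sum over $(d,f)$: it splits each block $B_{i_j}$ into two half-blocks $B_{i_j}^{(1)}$, $B_{i_j}^{(2)}$, observes that a short excursion ($f_j-d_j\le\gep k$) forces either $d_j\in B_{i_j}^{(1)}$ or $f_j\in B_{i_j}^{(2)}$, and shows that in either case the contribution of those $(d_j,f_j)$ to the relevant renewal sum $\sum_{d_j,f_j}K(\cdot)\bP(f_j-d_j\in\tau)K(\cdot)$ is at most $\gd$ times the full sum, using the $\sqrt{n}$-growth of $\sum_{x\le n}\bP(x\in\tau)$. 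This converts the per-block bound into $(\gd+\ind_{\{f-d\le\gep k\}})\bP(f-d\in\tau)$, and the indicator is then summed away against $P_\cI$, yielding $\eta=2e\gd$. Without this step — or some replacement for it — your per-block estimate cannot be made uniform, and the proof does not close.

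As a minor additional remark, your heuristic that $\bbE[z_d Z_{d,f}]=(1+o(1))\bP(f-d\in\tau)$ is slightly off: the factor is $\exp\bigl(h(1+|\tau\cap[d,f]|)\bigr)$ averaged over $\bP$ conditional on $f-d\in\tau$, which is bounded by $e$ for $h\le1/k$ and $f-d\le k$ but is not $1+o(1)$. The paper accounts for this with the explicit $e^l$ (hence the $2e\gd$ at the end).
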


\medskip

 The following technical estimate controls $P_{\mathcal I}$ 
 (recall that $\cI=\{i_1, \ldots, i_{\vert \cI\vert}\}$).
\medskip

\begin{lemma} 
\label{th:P_cI}
Assume $\ga=1/2$. 
There exist $C_1=C_1(L(\cdot), k)$,
 $C_2=C_2(L(\cdot))$  and $k_0=k_0(L(\cdot))$ such that (with $i_0:=0$) 
\begin{equation}
\label{eq:P_cI}
P_\cI \, 
\le \,  C_1C_2^{\vert\cI\vert}\prod_{j=1}^{\vert\cI\vert}\frac{1}{(i_j-i_{j-1})^{7/5}},
\end{equation}
for $k \ge k_0$. 
\end{lemma}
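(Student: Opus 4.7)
My plan for proving Lemma~\ref{th:P_cI} is to decompose $P_\cI$ according to the entry and exit points of $\tau$ in each visited block, and then perform the sum by treating separately the jumps that cross at least one empty block and those between consecutive blocks.

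The starting identity, obtained exactly as \eqref{eq:decomp1} but with $z_n$ replaced by $1$ and the partition functions $Z_{d_j,f_j}$ replaced by $\bP(f_j-d_j\in\tau)$, reads (with $l:=|\cI|$, $f_0:=0$, $f_l:=N$, $d_l\le f_l$)
\begin{equation}
P_\cI\,=\,\sumtwo{d_1,f_1\in B_{i_1}}{d_1\le f_1}\cdots\sumtwo{d_l,f_l\in B_{i_l}}{d_l\le f_l}\,\prod_{j=1}^l K(d_j-f_{j-1})\,\bP(f_j-d_j\in\tau).
\end{equation}
Write $J:=\{j:\,i_j>i_{j-1}+1\}$ for the set of \emph{long jumps}. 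For $j\in J$ the gap satisfies $d_j-f_{j-1}\ge k(i_j-i_{j-1}-1)\ge \tfrac12 k(i_j-i_{j-1})$, so using \eqref{eq:K333} with $\alpha=1/2$ and monotonicity (via $\bL$) I bound $K(d_j-f_{j-1})$ by $C\,\bL(k(i_j-i_{j-1}))/(k(i_j-i_{j-1}))^{3/2}$, which is \emph{independent} of $d_j$ and $f_{j-1}$. For $j\notin J$ (so $i_j=i_{j-1}+1$) the gap lies in $\{1,\dots,2k\}$ and I keep $K(d_j-f_{j-1})$ as it is.

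Once the long-jump factors are pulled out of the sum, the remaining sums factorize over the blocks. In each block $B_{i_j}$ I am left with sums of the form
\begin{equation}
S_j\,:=\,\sum_{d_j,f_j\in B_{i_j},\,d_j\le f_j}K(d_j-f_{j-1})\,\bP(f_j-d_j\in\tau),
\end{equation}
(with obvious modifications if $j=1$ or $j=l$). By \eqref{eq:Doney-bound}, $\sum_{r=0}^{k-1}\bP(r\in\tau)\le C\sqrt{k}/\bL(k)$; for long jumps, where the $K$-factor has been extracted, this bounds the inner sum in $f_j$ by the same quantity, and the sum in $d_j$ adds a further factor $k$. For short jumps (both endpoints in $J^c$) I use the standard convolution bound
\begin{equation}
\sum_{d\in B_{i},\,f\in B_{i-1},\,d\ge f}K(d-f)\bP(n-f\in\tau)\,\le\,C\bP(n-d'\in\tau)
\end{equation}
iterated along maximal runs of consecutive visited blocks, reducing each short-jump contribution to a single renewal factor times an $i_j$--independent constant; the boundary terms are handled identically.

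Combining everything, each long jump contributes
\begin{equation}
\frac{\bL(k(i_j-i_{j-1}))}{(k(i_j-i_{j-1}))^{3/2}}\cdot k\cdot\frac{\sqrt{k}}{\bL(k)}\,=\,\frac{\bL(k(i_j-i_{j-1}))}{\bL(k)}\cdot\frac{1}{(i_j-i_{j-1})^{3/2}},
\end{equation}
and each short jump contributes a bounded constant which I absorb into $C_2$. By the uniform convergence theorem for slowly varying functions \cite[Th.~1.5.2]{cf:RegVar}, for every $\rho>0$ one has $\bL(kx)/\bL(k)\le C_\rho\,x^{\rho}$ for all $x\ge 1$ and all $k$ large. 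Choosing $\rho=1/10$ absorbs the slowly varying ratio into the polynomial factor, producing the exponent $3/2-1/10=7/5$ stated in \eqref{eq:P_cI}. The $k$-dependence of the prefactor (coming from the empty boundary block if $i_1>1$, and from constants in Doney's local estimate valid only asymptotically) is collected into $C_1=C_1(L(\cdot),k)$, which is finite for every $k\ge k_0(L(\cdot))$.

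The main obstacle is purely bookkeeping: properly handling \emph{maximal runs} of consecutive visited blocks so that the Doney convolution bound applies cleanly, and absorbing the quotients of slowly varying functions into the polynomial loss so as to turn the natural exponent $3/2$ into $7/5$ uniformly in the gaps $i_j-i_{j-1}\ge 1$. No genuinely new ideas beyond those of \cite[Lemma~3.1]{cf:T_cg} are required; the proof is essentially an adaptation of that lemma to our coarse-graining with the trivial-weight choice $z_n\equiv 1$ and $U(n)=\bP(n\in\tau)$.
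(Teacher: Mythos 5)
Your proposal is correct and follows essentially the same route as the paper's proof: identify the runs of consecutive visited blocks and the gaps between them, collapse each run into a single renewal-mass factor (equivalently, neglect the constraint that $\tau$ visits the interior blocks of each run), bound each gap using the $K$-asymptotics at $\ga=1/2$, and absorb the resulting quotients of slowly varying functions into the polynomial loss via Potter's bound with $\xi=1/10$ so that $3/2$ becomes $7/5$. One small caution on the write-up: after extracting the long-jump $K$-factors the remaining sums do \emph{not} factorize over individual blocks — the short-jump $K(d_j-f_{j-1})$ still couples neighbouring blocks within a run — they factorize over runs, and the iterated convolution bound you invoke is precisely the paper's step of writing $\cI=\bigcup_j[a_j,b_j]$ and dropping the intermediate-block constraints once and for all, so this is a matter of presentation rather than a gap.
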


\medskip

Note that in this statement $k$ is just a natural number, but we will apply
it with $k$ as in \eqref{eq:k333} so that $k \ge k_0$
is just a requirement on $A$. Note also that the choice of $7/5$ is arbitrary
(any number in $(1, 3/2)$ would do: the constants $C_1$ and $C_2$
depend on such a number).

\smallskip

Let us now go back to \eqref{eq:fHolder}
and let us plug it into \eqref{eq:cg+fm} and use
Proposition~\ref{th:eta} and Lemma~\ref{th:P_cI} to get:
\begin{equation}
\bbE\left[ Z_{N , \go}^\gamma\right]
\, \le\, C_1^\gamma \sumtwo{\mathcal I\subset \left\{1,\dots,m\right\}}{m\in\mathcal I}
\prod_{j=1}^{\vert\cI\vert}
\left(
\frac{(2C_2 \eta)^\gamma}{(i_j-i_{j-1})^{7\gamma/5}}\right).
\end{equation}
But $7\gamma/5=6/5 >1$, so
we can choose
\begin{equation}
 \label{eq:fmm2-1}
\eta \, :=\, \frac1{3C_2 
\left(\sum_{i=1}^\infty i^{-6/5}
\right)^{7/6}
}, 
\end{equation}
and this implies that $n \mapsto (2C_2 \eta)^\gamma n^{-7\gamma/5}$
is a sub-probability, which directly entails that
 \begin{equation}
 \label{eq:fmm2}
\bbE\left[ Z_{N , \go}^\gamma\right]
\, \le\, C_1^\gamma,  \quad \quad \quad (\gamma=6/7)
\end{equation}
for every $N$, which implies, via \eqref{eq:fmm},
that $\tf(\gb, h)=0$ and we are done.

It is important to stress that $C_1$ may depend
 on $k$ (we need \eqref{eq:fmm2}  uniform in
 $N$, not in $k$), but $C_2$ does not ($C_2$ is just a function
 of $L(\cdot)$, that is a function of the chosen renewal), so that $\eta$ may actually be chosen
 a priori as in \eqref{eq:fmm2-1}: it is a small but fixed constant that depends only on
 the underlying renewal $\tau$. 

\medskip 

\begin{rem}\rm
\label{rem:kgD}
In this section we have actually hidden the role of 
$\gD(\gb; q,A)$ in the hypotheses of Proposition~\ref{th:eta},
which are the hypotheses of Theorem~\ref{th:main333}. 
Let us therefore explain informally why we can prove a  critical point shift of  $\gD=1/k$.

The coarse graining procedure reduces proving delocalization to 
Proposition~\ref{th:eta}. As it is quite intuitive from \eqref{eq:decomp333}--\eqref{eq:fHolder} and 
Figure~\ref{fig:cg},
one has to estimate  the expectation, with respect to
the $\bar g (\go)$-modified measure, of the partition function 
$Z_{d_j, f_j}$ (or, equivalently, $Z_{d_j, f_j}/\bP(f_j-d_j\in \tau)$)
in each visited block
(let us assume that $f_j-d_j$ is {\sl of the order of $k$},
because if it is much smaller than $k$ one can bound this
contribution in a much more elementary way). The Boltzmann factor in $Z_{d_j, f_j}$
is $\exp( \sum_{n=d_j+1}^{f_j} (\gb \go_n -\log \M (\gb) +h) \gd_n)$
which can be bounded (in an apparently very rough way)
by $\exp( \sum_{n=d_j+1}^{f_j} (\gb \go_n -\log \M (\gb) ) \gd_n)
\exp(hk)$, since $f_j-d_j \le k$. Therefore, if $h \le \Delta(\beta;q,A)\sim 1/k$ we can drop
the dependence on $h$ at the expense of the multiplicative factor $e$
that is innocuous because we can show that
the expectation (with respect to
the $\bar g (\go)$-modified measure) of 
$Z_{d_j, f_j}/\bP(f_j-d_j\in \tau)$ when $h=0$ can be made 
arbitrarily small by choosing $A$ sufficiently large. 
\end{rem}

\medskip

\begin{rem}\rm
\label{rem:lazy}
A last observation on the proof is about $\gb_0$.
It can be chosen arbitrarily, but for the sake of simplifying
the constants appearing in the proofs we choose 
 $\gb_0\in (0, \infty)$
such that
\begin{equation}
\label{eq:gb0}
\frac 12 \, \le \, 
\frac {\dd^2}{\dd \gb^2} \log \M( \gb)\, \le \, 2 \,  ,
\end{equation}
for $\gb \in [0, \gb_0]$.
Choosing $\gb_0$ arbitrarily just boils down to changing
the constants in the right-most and left-most terms in \eqref{eq:gb0}.
\end{rem}

\section{Coarse graining estimates}
\label{sec:cg-est}

We start by proving Lemma~\ref{th:P_cI}, namely
\eqref{eq:P_cI}. The proof is however more clear if instead
of working with the exponent $7/5$ we work with
$3/2-\xi$ ($\xi \in (0,1/2)$, in the end, plug in $\xi=1/10$).
\medskip

\noindent {\it Proof of 
Lemma~\ref{th:P_cI}}.
First of all, 
  in the product on the right--hand side of \eqref{eq:P_cI} one can clearly ignore the terms
  such that
  $i_j-i_{j-1}=1$. 
We then express $\cI$ in a more practical way by observing that
we can define, in a unique way, an integer $p\le l:=
\vert \cI\vert$ and increasing sequences of integers $\{a_j\}_{ j=1, \ldots, p} $, 
$\{b_j\}_{ j=1, \ldots,  p}$ with $b_p=m$, $a_j\ge b_{j-1}+2$ 
(for $j>1$) and $b_{j}\ge a_j$ such that
\begin{equation}
 \mathcal I= \bigcup_{j=1}^p [a_j,b_j]\cap \N.
\end{equation}
With this definition, it is sufficient  to show
\begin{equation}
 P_{\mathcal I}\le  C_1 C_2^l \frac1{a_1^{3/2-\xi}}\prod_{j=1}^{p-1}\frac{1}{(a_{j+1}-b_{j})^{3/2-\xi}}.
\end{equation}
We start then by writing
\begin{multline}\label{eq:upbound}
 P_{\mathcal I}
\le
\sumtwo{d_1\in B_{a_1}}{f_1\in B_{b_1}}\ldots\sumtwo{d_{p-1}\in B_{a_{p-1}}}{f_p\in B_{b_{p-1}}}\sum_{d_p\in B_{a_p}}K(d_1)\bP(f_1-d_1\in \tau)\ldots K(d_p-f_{p-1})\bP(N-d_p\in \tau),
\end{multline}
where the inequality comes from neglecting the constraint that
$\tau$ has to intersect $B_{a_j+1},\ldots$  $B_{b_j-1}$.
Note that the meaning of the $d$ and $f$ indexes is somewhat different
with respect to \eqref{eq:decomp1} and that in the above sum we always
have
\begin{equation}\begin{split}
 d_1&\in B_{a_1},\\
 (a_j-b_{j-1}-1)k&\le d_j-f_{j-1}\le (a_j-b_{j-1}+1)k,\\
 (b_j-a_{j}-1)k \vee 0&\le f_j-d_j\le (b_j-a_j+1)k.
 \end{split}\end{equation}
In particular, $f_j\ge d_j$ is guaranteed by the fact that 
$\bP(f_j-d_j\in\tau)=0$ otherwise.

 Observe now that for $k $ sufficiently large
\begin{equation}
 \sum_{x\in B_{a_1}}K(x)\, \le\,  
\begin{cases} 
1& \text{ if } a_1=1, \\
 3 \frac{L((a_1-1) k)}{k^{1/2}(a_1-1)^{3/2}}& \text{ if } a_1=2,3, \ldots, 
 \end{cases}
 \, \le \, c_1(k) \frac{ L(a_1k)}{k^{1/2} a_1^{3/2}},
\end{equation}
where $c_1(k):= \max ( 10, k^{1/2}/L(k))$.
 Moreover
there exists a constant $c_2$ depending on $L(\cdot)$ such that 
for $j>1$
\begin{equation}\begin{split}
 \sum_{x=(a_j-b_{j-1}-1)k}^{(a_j-b_{j-1}+1)k}K(x)&\le c_2 \frac{L(k(a_j-b_{j-1}))}
 {k^{1/2}(a_j-b_{j-1})^{3/2}},\\
 \sum_{x=(b_j-a_{j}-1)k\vee 0}^{(b_j-a_j+1)k}\bP(x\in \tau)&\le c_2 \frac{k^{1/2}}{(b_j-a_j+1)^{1/2}L\left(k(b_j-a_j+1)\right)}.
\end{split}\end{equation}
The first inequality is obtained by making use of $a_j\ge b_{j-1}+2$.
Neglecting the last term which is smaller than one, we can bound the right--hand side of \eqref{eq:upbound} and get
\begin{equation}
\label{eq:truc}
 P_{\mathcal I}\le c_1(k)c_2^{2p}  \frac{ L(a_1k)}{k^{1/2} a_1^{3/2}}\prod_{j=1}^{p-1}
 \left( \frac{L(k(a_{j+1}-b_{j}))}{(a_{j+1}-b_{j})^{3/2}} \right)
 \left(  \frac{1}{(b_j-a_j+1)^{1/2}L\left(k(b_j-a_j+1)\right)}\right)
.
\end{equation}
Notice now that since $L(\cdot)$ grows slower than any power,
 $ \sup_{a_1}{ L(a_1k)}/({k^{1/2} a_1^{\xi}})$ is $o(1)$ for $k$ large.
 To control the other terms 
 we use the {\sl Potter bound} \cite[Th.~1.5.6]{cf:RegVar}: given
 a slowly varying function $L(\cdot)$ which is locally bounded away
 from zero and infinity (which we may assume in our set up without loss of generality),
 for every $a>0$ there exists $c_a>0$ such that for every $x,y >0$
 \begin{equation}
\label{eq:Potter}
\frac{L(x)}{L(y)} \, \le \,  c_a \max \left(\frac{x}{y}, \frac{y}{x} \right)^a.
 \end{equation}
This bound implies 
that for large enough $k$
\begin{equation}
\label{eq:2ineqs}
 \sup_{x\ge 1}\frac{L(k)}{\sqrt{x}L(kx)}\, \le\,  2 \ \text{ and } \
 \sup_{x\ge 1} \frac{L(kx)}{L(k)x^{\xi}}\,\le \,2.
\end{equation}
In fact consider the second bound (the argument for the first one is identical): by choosing $a=\xi/2$ we have
 $L(kx)/(L(k)x^{\xi})\le c_{\xi/2} x^{-\xi/2} \le 2$ and the second
 inequality holds for $x$ larger than a suitable constant $C_\xi$.
 For $x(\ge 1)$ smaller than $C_\xi$ instead it suffices
 to choose $k$ sufficiently large so that $L(kx)/L(k) \le 2$
 for every $x \in [1, C_\xi]$. 
Using the two bounds \eqref{eq:2ineqs} 
in \eqref{eq:truc} we complete the proof.
\qed

\medskip
The proof of 
Proposition~\ref{th:eta} depends on the following lemma that will be proven
in the next section. 
\medskip

\begin{lemma}
\label{TH:DF}
Set $h=0$, fix $q\in \N$, $q> (2\epsilon)^{-1}$ as in Theorem 
\ref{th:main333},  and 
recall the definition of $k=k(\gb; q,A)$ \eqref{eq:k333}.
For every $\gep$ and $\gd>0$ there exists $A_0>0$ such that
for $A\ge A_0$
\begin{equation}
 \bbE\left[\bar g(\go) z_d Z_{d,f}\right]\, \le\,
  \delta\,  \bP(f-d\in \tau),
 \label{eq:df}
\end{equation}
for every  $d$ and $f$ such that $0\le d\le d+ \gep k\le f\le k$
and $\gb \le \gb_0$. 
\end{lemma}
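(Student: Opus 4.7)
The main idea is to use a Fubini/change-of-measure argument to reduce the estimate to a concentration statement for the $q$-linear form $X_1$ under a renewal-dependent tilt of $\bbP$, and to exploit the defining relation \eqref{eq:k333} of $k$ to make $X_1$ typically enormous.

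\emph{First step: Fubini and tilting.} Writing $\bbE[\bar g(\go)\, z_d Z_{d,f}]$ as a double expectation over $\go$ and $\tau$ and exchanging the order of integration, the factor $\prod_{n=d}^{f} z_n^{\gd_n}$ (recall $h=0$) is absorbed as a tilt of the disorder law. One obtains
\begin{equation*}
\bbE\left[\bar g(\go)\, z_d Z_{d,f}\right] \, = \, \bE\Bigl[\gd_f\, \bbE_\gd[\bar g(\go)] \;\Big|\; \gd_d = 1 \Bigr],
\end{equation*}
where $\bbP_\gd$ is the product measure under which $\go_n$ has tilted density $e^{\gb\go_n - \log\M(\gb)}$ (with respect to $\bbP$) for each $n \in \{d, d+1, \ldots, f\}$ with $\gd_n=1$, and retains its original law otherwise. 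The tilted mean is $m_\gb := (\log \M)'(\gb)$, which is $\asymp \gb$ as $\gb \searrow 0$ by \eqref{eq:gb0}.

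\emph{Second step: moments of $X_1$ under $\bbP_\gd$.} Since the tilt only affects renewal sites in $[d,f]\subset B_1$ and $V_k$ vanishes on the diagonal, only $q$-tuples $\ui$ with all entries in $\tau\cap[d,f]$ contribute to the mean, and
\begin{equation*}
\bbE_\gd[X_1] \, = \, m_\gb^q \sum_{\underline i \in (B_1 \cap \tau \cap [d,f])^q,\ \text{distinct}} V_k(\underline i).
\end{equation*}
Using the explicit form of $V_k$ from \eqref{eq:useR2} together with the renewal formula $\bE[\prod_a \gd_{i_a}] = \prod_a \bP(i_a - i_{a-1} \in \tau)$ (for $i_1 < \cdots < i_q$) and \eqref{eq:Doney-bound}, a direct computation of the conditional expectation of $\bbE_\gd[X_1]$ given $\gd_d = \gd_f = 1$ yields a lower bound of order $\gb^q\, \tilde L(k)^{(q-1)/2}/L(k)$; the restriction $f \ge d + \gep k$ is used here to control the renewal conditioning on $[d,f]$ by a comparison with the unconditioned renewal on a block of length $\ge \gep k$. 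By the very definition \eqref{eq:k333} of $k$ this ratio is $\asymp A^{(q-1)/2}$, so $\bbE_\gd[X_1]$ has mean $\gg e^{K^2}$ once $A$ is chosen large (depending on $K,\gep,q$).

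\emph{Third step: concentration and conclusion.} Because the $\go_n$'s remain independent under $\bbP_\gd$ with uniformly bounded moments (for $\gb \le \gb_0$), a direct expansion of $X_1^2$ combined with \eqref{eq:cV} yields $\operatorname{Var}_\gd(X_1) \le C$ uniformly in $\gd$. Introduce the set $G$ of renewal configurations on which $\bbE_\gd[X_1] \ge 2 e^{K^2}$. On $G$, Chebyshev gives $\bbP_\gd(X_1 < e^{K^2}) \le 4C\, e^{-2K^2}$, hence $\bbE_\gd[\bar g(\go)] \le e^{-K} + 4C\, e^{-2K^2}$; on $G^c$, we use the trivial bound $\bbE_\gd[\bar g] \le 1$ and estimate $\bP(G^c, \gd_f=1 \mid \gd_d=1)$ by a Markov inequality applied to $\bbE_\gd[X_1]$, which by the previous step has conditional mean $\gg e^{K^2}$. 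Assembling,
\begin{equation*}
\bbE\left[\bar g(\go)\, z_d Z_{d,f}\right] \, \le \, \Bigl( e^{-K} + 4C\, e^{-2K^2} + C'/A^{(q-1)/2}\Bigr)\, \bP(f-d \in \tau),
\end{equation*}
and the right-hand side is $\le \gd\, \bP(f-d\in\tau)$ upon choosing first $K$ large, then $A$ large.

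\emph{Main obstacle.} The delicate point is the second step: $\bbE_\gd[X_1]$ is a $\gd$-measurable random variable whose behavior one must control not only in mean but also on a set of large conditional probability given $\{\gd_d = \gd_f = 1\}$. The iterated sums of $R_{\frac12}$ over renewal points produce slowly varying corrections whose magnitude must be tracked via Potter bounds and \eqref{eq:Doney-bound}, and it is precisely the scaling $\tilde L(k)/L(k)^{2/(q-1)} \asymp A \gb^{-2q/(q-1)}$ built into \eqref{eq:k333} that makes the $q^{\text{th}}$-power gain $\gb^q$ exactly compensate the renewal-combinatorial factor, leaving a free parameter $A$ tunable at will. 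The extension from Gaussian to general disorder is handled by the exponential moment assumption, which guarantees that the tilted law is well defined and all relevant moments of $\go_n$ depend continuously on $\gb$ for $\gb\le\gb_0$.
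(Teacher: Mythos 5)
Your proposal follows the same overall architecture as the paper's proof: conditioning on the renewal and tilting the disorder law, observing that the tilted mean of $X_1$ is huge (of order $A^{(q-1)/2}$, by the defining scaling in \eqref{eq:k333}), then splitting $\bar g$ according to whether $X_1$ exceeds $e^{K^2}$. But two of your steps do not go through as written.

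\textbf{The Markov step does not bound $\bP(G^c)$.} You define $G=\{\bbE_\gd[X_1]\ge 2e^{K^2}\}$ and propose to estimate $\bP(G^c)$ ``by a Markov inequality applied to $\bbE_\gd[X_1]$, which has conditional mean $\gg e^{K^2}$.'' Markov for a nonnegative variable $Y$ only bounds the \emph{upper} tail $\bbP(Y\ge t)\le \bbE Y/t$; a large mean gives no control whatever on $\bbP(Y<2e^{K^2})$. What is actually required is an anticoncentration statement for the $\tau$-random variable $\hat\bbE_\tau X_1$, and this is precisely the content of the paper's Lemma~\ref{TH:FROMCE}: after reductions, $\hat\bbE_\tau X_1$ (normalised) is shown to converge in distribution to a strictly positive limit, via a Chung--Erd\H{o}s-type $L^2$ estimate for the renewal local-time functional $Y_L=\sum_{j\le L}R_{1/2}(j)\gd_j$ (Lemma~\ref{th:CE23}). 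A Paley--Zygmund argument would be an alternative, but it requires controlling the ratio $\bE_{d,f}[(\hat\bbE_\tau X_1)^2]/(\bE_{d,f}\hat\bbE_\tau X_1)^2$, which is essentially the same $L^2$ work. Without one of these ingredients there is a genuine gap.

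\textbf{The uniform variance bound is false.} You claim $\operatorname{Var}_\gd(X_1)\le C$ uniformly in the renewal trajectory. This fails for configurations with many renewal points in $[d,f]$: the cross terms where $\ell\ge1$ of the $q$ coordinates hit $\tau$ carry a factor $\mbeta^{2\ell}$ multiplied by an $\ell$-fold sum of $R_{1/2}$'s weighted by the renewal, and this grows. The paper's Lemma~\ref{th:square} establishes $\bE_{d,f}\hat\bbE_\tau[(X_1-\hat\bbE_\tau X_1)^2]\le A^{(q-1)^2/q}$, i.e., the $\tau$-averaged variance grows with $A$. The argument still closes, but only because the Chebyshev threshold is tied to $A$: the paper fixes $K$ by $2e^{K^2}=aA^{(q-1)/2}$, so that the concentration term is $\asymp A^{(q-1)^2/q}/A^{q-1}=A^{-(q-1)/q}\to 0$. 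Your plan of ``choosing first $K$ large, then $A$ large'' decouples the two parameters and does not produce the right cancellation; the smallness is driven entirely by sending $A\to\infty$, with $K$ determined by $A$.
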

\medskip

\begin{proof}[Proof of Proposition~\ref{th:eta}]
Recalling \eqref{eq:decomp1} and the notations for the set $\cI$ in there, we have
\begin{multline}
\label{eq:longlong}
  \bbE\left[g_{\mathcal I}(\go)\hat Z_{\go}^{\mathcal I}\right]\,=\\
  \sumtwo{d_1, f_1 \in B_{i_1}}{d_1\le f_1}
  \sumtwo{d_2, f_2\in B_{i_2}}{d_2\le f_2}\ldots \sum_{d_l\in B_{i_l}}
  K(d_1)\bbE\left[\bar g(\go) z_{d_1-k(i_1-1)}Z_{d_1-k(i_1-1),f_1-k(i_1-1)}\right] K(d_2-f_1)
  \ldots \\ \quad \quad \quad \quad \quad \quad \quad \quad \quad \quad \quad \quad 
  K(d_l-f_{l-1})\bbE\left[\bar g(\go) z_{d_l-k(m-1)} Z_{d_l-k(m-1),k}\right]\\
  \le e^l \sumtwo{d_1, f_1 \in B_{i_1}}{d_1\le f_1}
\sumtwo{d_2, f_2\in B_{i_2}}{d_2\le f_2}\ldots \sum_{d_l\in B_{i_l}}
  K(d_1)(\gd+\ind_{\{f_1-d_1\le \gep k\}})\bP(f_1-d_1\in\tau)
  K(d_2-f_1)\ldots \\
  K(d_l-f_{l-1})(\gd+\ind_{\{N-d_l\le \gep
    k\}})\bP(N-d_l\in \tau),
\end{multline}
where the factor $e^l$ in the last expression 
comes from bounding the contribution due to $h$ (recall that $h k \le 1$).
We now consider $B_{i_j}$ as the union of two sub-blocks
\begin{equation}\begin{split}
 B_{i_j}^{(1)}&:=\left\{(i_j-1)k,\dots,(i_j-1)k+\lfloor k/2 \rfloor \right\},\\
 B_{i_j}^{(2)}&:=\left\{(i_j-1)k+\lceil k/2 \rceil,\dots, i_jk \right\}.
\end{split}\end{equation}
If $d_j\in B_{i_j}^{(1)}$ then if $\gep$ is sufficiently small
($\gep \le 1/10$ suffices) we have that for $k$ sufficiently large
({\sl i.e.} $k \ge k_0(L(\cdot), \gep)$)
\begin{equation}
\sum_{f=d_j}^{d_j+\gep k}
\bP(f-d_j\in \tau )K(d_{j+1}-f)
\le 4 \left(\sum_{x=1}^{k\gep} \bP(x\in \tau)\right)
K(k(i_{j+1}-i_j)).
\end{equation}
This can be compared to
\begin{equation}
\sum_{f=d_j}^{ki_j}\bP(f-d_j\in \tau )K(d_{j+1}-f)
\, \ge\,  \frac{1}{3} \left(\sum_{x=1}^{\lfloor k/4\rfloor}\bP(x\in \tau )\right) 
K(k(i_{j+1}-i_j)),
\end{equation}
that holds once again for $k$ large. By using that
 $\sum_{x=1}^n \bP(x \in \tau)$
behaves for $n $ large like $\sqrt{n}$ times a slowly varying function 
({\sl cf.} \eqref{eq:Doney1/2}) we therefore see that
 given $\delta>0$ we can  find $\gep$ such that for any $d_j\in B_{i_j}^{(1)}$ we have
\begin{equation}
\sum_{f=d_j}^{d_j+\gep k}\bP(f-d_j\in \tau )K(d_{j+1}-f)\le \gd \sum_{f=d_j}^{ki_j}\bP(f-d_j\in \tau ) K(d_{j+1}-f).
\end{equation}
Using the same argument in the opposite way one finds that if $f_j\in B_{i_j}^{(2)}$
\begin{equation}
 \sum_{d=f_j-\gep k}^{f_j}K(d-f_{j-1})\bP(f_j-d\in \tau )\, \le\,
  \gd \sum_{d=k(i_j-1)}^{f_j}K(d-f_{j-1})\bP(f_j-d\in \tau ).
\end{equation}
Since either $d_j\in B_{i_j}^{(1)}$ or $f_j\in B_{i_j}^{(2)}$, we conclude that
\begin{multline}
 \sumtwo{d_j, f_j \in B_{i_j}}{d_j\le f_j}\ind_{\{f_j-d_j\le k\gep \}} K(d_j-f_{j-1})\bP(f_j-d_j\in \tau)K(d_{j+1}-f_j)\\
\le \gd \sumtwo{d_j, f_j \in B_{i_k}}{d_j\le f_j}  K(d_j-f_{j-1})\bP(f_j-d_j\in \tau)K(d_{j+1}-f_j).
\end{multline}
The analog estimate can be obtained for the sum over $d_l$
in \eqref{eq:longlong} (rather, it is  easier). Using this inequality $j=1\dots l$ we get our result for $\eta=2e \gd$. 

\end{proof}

\section{The $q$-body potential estimates (proof of Lemma~\ref{TH:DF})}

In what follows $X=X_1$ and we fix $\gd\in (0,1)$. The positive (small) number
$\gep$ is fixed too, as well as $q>(2\epsilon)^{-1}$, where 
$\epsilon$ is the same which appears in the statement 
of Theorem \ref{th:main333}.


\medskip

\noindent
{\it Proof of Lemma~\ref{TH:DF}.}
We start by observing that, since $h=0$,
\begin{equation}
\bbE\left[ \bar g(\go) z_d Z_{d,f} \right]\, =\, 
\bE_{d,f}\left[
\bbE \left[ \bar g(\go) \exp \left( \sum_{n=d}^f
(\gb \go_n - \log \M (\gb) ) \gd_n \right) \right] \right] \bP(f-d \in \tau),
\end{equation}
where $\bP_{d,f}$ is the law of  $\tau \cap [d,f]$, conditioned to
$f,d \in \tau $. Given the random set (or renewal trajectory) $\tau$ we  introduce the probability measure
\begin{equation}\label{eq:ptau}
\hat \bbP_{\tau}(\dd\go)\, :=\,  \exp \left( \sum_{n=d}^f
(\gb \go_n - \log \M (\gb) ) \gd_n \right) \bbP(\dd \go).
\end{equation}
Note that $\go$, under $\hat \bbP_\tau$, is still a sequence
of independent random variables, but they are no longer identically distributed.
We will use that, for $d\le n\le f$,
\begin{equation}
\label{eq:2beused}
\hat \bbE_\tau \go_n \, =\, \mbeta  \gd_n \stackrel{\gb \searrow 0}\sim \gb \gd_n
\ \text{(so that }\gb/2 \le \mbeta \le 2 \gb \, \text{)} 
\ \  \text{ and } \  \
\text{var}_{\hat \bbP_\tau} \left( \go_n \right) \, \le \, 2, 
\end{equation}
where the inequalities hold for $\gb\le \gb_0$ (recall \eqref{eq:gb0}) and all relations hold uniformly in the 
renewal trajectory
$\tau$. 
On the other hand, for $n\notin \{d,\ldots,f\}$ the $\go_n$'s are IID exactly as under
$\bbP$.
We have:
\begin{multline}
\label{eq:3lhat}
\frac{
\bbE\left[ \bar g(\go) z_d Z_{d,f} \right]}
{\bP(f-d \in \tau)}
\, =\, \bE_{d,f} \hat \bbE_{\tau} \left[ \bar g(\go) \right]\, =
\\ 
\exp(-K) 
 \bE_{d,f} \hat \bbP_{\tau} \left[X \ge \exp(K^2) \right]
 +  \bE_{d,f} \hat \bbP_{\tau} \left[ X< \exp(K^2) \right]
 \, \le \\
  \exp(-K) + \bE_{d,f} \hat \bbP_{\tau} \left[ X< \exp(K^2) \right]\, \le \, 
 \frac {\gd }3 +  
  \bE_{d,f} \hat \bbP_{\tau} \left[ X< \exp(K^2) \right],
\end{multline}
where in the last step we have chosen $K $ such that 
$\exp(-K ) \le \gd /3$. 
We are now going to use the following lemma:

\medskip

\begin{lemma}
\label{TH:FROMCE}
If $d$ and $f$ are chosen such that 
$f-d \ge \gep k$ and 
 $X(=X_1)$ is defined as in \eqref{eq:X}, that is
$X= \sum_{{\ui}\in B_1^q} V_k({\ui})\go_{\ui}$, 
 we have  that for every $\zeta>0$ we can find $a>0$ and $A_0$ such that
\begin{equation}
\label{eq:fromCE}
 \bP_{d,f} \left( \hat \bbE_\tau X \, >\, aA^{(q-1)/2}\right)\, \ge \, 1-\zeta,
\end{equation}
for $\gb\le \gb_0$ and $A\ge A_0$.
\end{lemma}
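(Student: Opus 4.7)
The plan is to reduce the lemma to a statement about a purely renewal-theoretic $q$-point functional and then to establish two moment bounds on this functional under $\bP_{d,f}$. Using \eqref{eq:2beused} and the fact that $V_k$ vanishes on the diagonal, together with the symmetry of $U$ in its arguments, a direct computation gives
\[
\hat\bbE_\tau X \, =\, \frac{(q!)^{1/2}\,\mbeta^{\,q}}{k^{1/2}\,\tilde\bL(k)^{(q-1)/2}}\;Y,\qquad
Y\,:=\sumtwo{d<i_1<\ldots<i_q\le f}{}\!\Big(\prod_{a=2}^{q}R_{\frac12}(i_a-i_{a-1})\Big)\prod_{a=1}^{q}\gd_{i_a}.
\]
Since $\mbeta\ge\gb/2$ by \eqref{eq:gb0} and since the defining inequality \eqref{eq:k333} for $k$ yields $\tilde\bL(k)^{(q-1)/2}\ge A^{(q-1)/2}\bL(k)\gb^{-q}$, to prove \eqref{eq:fromCE} it suffices to show that, for $a$ small enough and $A$ large enough,
\[
\bP_{d,f}\!\left(\,Y \,\ge\, a\,\frac{\sqrt{k}\,\tilde\bL(k)^{q-1}}{\bL(k)}\,\right)\,\ge\,1-\zeta,
\]
uniformly over $d,f$ with $f-d\ge\gep k$.

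First I would pin down the order of $\bE_{d,f}[Y]$. By the renewal property, for $d<i_1<\ldots<i_q<f$,
\[
\bE_{d,f}\!\left[\prod_{a=1}^{q}\gd_{i_a}\right]\,=\,\frac{\prod_{a=0}^{q}\bP(i_{a+1}-i_a\in\tau)}{\bP(f-d\in\tau)},\qquad i_0:=d,\ i_{q+1}:=f,
\]
so combining the sharp renewal asymptotics \eqref{eq:Doney1/2} with the two-sided bound \eqref{eq:Doney-bound} and Potter's inequality \eqref{eq:Potter} (to handle ratios of the slowly varying function on different scales), and using the defining identity $\sum_{x\le k}R_{\frac12}(x)^2\sim\tilde\bL(k)$, one integrates out the $q-1$ inner gaps to obtain
\[
\bE_{d,f}[Y]\,\asymp\,\frac{\sqrt{k}\,\tilde\bL(k)^{q-1}}{\bL(k)}.
\]
Restricting first the sum to configurations in which $i_1-d$ and $f-i_q$ are of order $\gep k$ (which costs only a multiplicative constant) is what makes this uniform in $(d,f)$.

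The heart of the proof — and the main obstacle — is the matching upper bound on the second moment, $\bE_{d,f}[Y^2]\le C_q(\bE_{d,f}[Y])^2$. Expanding the square gives a sum over pairs of ordered $q$-tuples $(\ui,\uj)$ weighted by $\prod_a R_{\frac12}(i_a-i_{a-1})R_{\frac12}(j_a-j_{a-1})$ times the joint renewal expectation of the product of $\gd$'s, which by the renewal/bridge property factorizes along the merged ordered sequence $\ui\cup\uj$. Stratifying by the combinatorial pattern of interlacement of $\ui$ and $\uj$, the contribution of patterns with disjoint support reproduces exactly $(\bE_{d,f}[Y])^2$, while interlaced patterns yield inner sums of the form $\sum_x R_{\frac12}(x)^{p}$ with $p\ge 3$ on the overlapping coordinates, which are summable and produce a factor of smaller order than $\tilde\bL(k)$ per overlapping gap. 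This is precisely the $q$-body generalization of the second-moment computation performed in Lemma~\ref{th:secmom} of Chapter~\ref{MARGREL} (where the case $q=2$ was treated); the combinatorics become heavier with $q$ but each individual pattern is handled by the same slow-variation arguments.

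Once the two moment bounds are available, the Paley–Zygmund inequality applied to $Y$ gives $\bP_{d,f}(Y\ge a'\bE_{d,f}[Y])\ge(1-a')^2/C_q$ for any $a'\in(0,1)$, which is a positive but not arbitrarily-close-to-one lower bound. To upgrade this to the $1-\zeta$ estimate that the lemma requires, I would partition the interval $(d,f]$ into $N=N(\zeta)$ disjoint sub-intervals of comparable length $\ge\gep k/N$ and define the corresponding truncated functionals $Y^{(1)},\ldots,Y^{(N)}$ built only from indices inside each sub-interval; each $Y^{(\ell)}$ satisfies the same first and second moment asymptotics (with $k$ replaced by $k/N$ and the same constants, up to absorbing $N$ into $a$), the cross-terms between sub-intervals are subdominant by the same stratification argument, and the $Y^{(\ell)}$ are conditionally weakly dependent under $\bP_{d,f}$, so that the probability that all of them are below $a'\bE_{d,f}[Y^{(\ell)}]$ decays like $(1-1/C_q')^{N}$. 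Choosing $N$ large enough makes this smaller than $\zeta$, and $Y\ge\sum_\ell Y^{(\ell)}$ delivers the claim with $a$ proportional to $a'/N$. Equivalently, and perhaps more transparently, one can establish a Chung–Erdős-type distributional convergence of $Y/\bE_{d,f}[Y]$ to an a.s.\ positive limit random variable — directly generalizing Lemma~\ref{th:CE} of Chapter~\ref{MARGREL} to $q$-point functionals — from which the result follows by choosing $a$ smaller than a suitable lower quantile of that limit.
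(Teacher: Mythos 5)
Your reduction of the lemma to a statement about the $q$-point renewal functional $Y$, and your identification of the first-moment scale $\bE_{d,f}[Y]\asymp\sqrt{k}\,\tilde\bL(k)^{q-1}/\bL(k)$, are correct and agree with the paper's Step~1 (up to the paper's additional observation that one should first remove the conditioning $N\in\tau$ via a Radon--Nikodym bound, a step your argument would also need). The problem is in how you upgrade this to a $1-\zeta$ bound.

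The Paley--Zygmund plus partitioning mechanism does not work, and this is a genuine gap rather than a fixable technicality. The sub-interval functionals $Y^{(1)},\dots,Y^{(N)}$ are not ``conditionally weakly dependent'' in any sense that yields geometric decay of $\bP(\text{all }Y^{(\ell)}\text{ small})$; they are strongly \emph{positively} correlated, because they are all coherently driven by the overall occupation $\sum_{i}\gd_i$ of $[d,f]$ by $\tau$. In the $\ga=1/2$ regime this normalized occupation converges in law to a multiple of a half-normal $|Z|$, so $\bP_{d,f}(Y < a\,\bE_{d,f}[Y])$ tends, as $k\to\infty$, to $\bP(c|Z|<a')$ for some constant $a'$ depending on $a$ --- a quantity of order $a$, uniformly in the partition parameter $N$. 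The event that \emph{all} sub-interval functionals are small contains the event that the global occupation is small, whose probability is bounded below independently of $N$; so the claimed bound $(1-1/C_q')^N$ is false, and increasing $N$ does not help. Only shrinking $a$ does.

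This is precisely why the paper does not use Paley--Zygmund at all. It proves convergence in law, $Y/\bE[Y]\Rightarrow c|Z|$, and then chooses $a$ below a small quantile of the limit. The convergence is obtained by an iterative reduction: the innermost sums $\tilde\bL(n)^{-1}\sum_j R_{\frac12}(j)\gd_j$ are shown to converge in $L^2$ to a constant (Lemma~\ref{th:CE23}), which collapses the $q$-fold sum one level at a time until only the one-point occupation count $\eta_{N,1}$ remains, and that has a stable-law/half-normal limit. Your parenthetical alternative --- ``a Chung--Erd\H{o}s-type distributional convergence of $Y/\bE_{d,f}[Y]$, directly generalizing Lemma~\ref{th:CE}'' --- is exactly the right route and is what the paper does; you should replace the main body of your argument by carrying out that reduction. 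Note also that a bound $\bE_{d,f}[Y^2]\le C_q(\bE_{d,f}[Y])^2$, even if proved, controls only tightness and cannot by itself give a limit that is positive a.s., which is the property the lemma ultimately relies on.
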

\medskip

We apply this lemma by setting $\zeta=\gd/3$ (so $a$ is fixed once
$\gd$ is chosen)
so that,
if we choose $K$ such that $2\exp(K^2)=aA^{(q-1)/2}$
(note that, by choosing $A$ large we make $K$ large
and we automatically satisfy the previous requirements on $K$),
we have 
$\bP_{d,f} \left( \hat \bbE_\tau X < 2
\exp(K^2)\right) \le \gd/3  $, so that, in view of \eqref{eq:3lhat},
we obtain
\begin{equation}
\label{eq:2/3and}
\begin{split}
\frac{
\bbE\left[ \bar g(\go) z_d Z_{d,f} \right]}
{\bP(f-d \in \tau)}
\, &\le \, \frac {2\gd}3\, +\,  \bE_{d,f} \hat \bbP_{\tau} \left[ X-\hat \bbE_\tau X \le  -\exp(K^2) \right]
\\
&\le \, \frac {2\gd}3\, +\,  \frac{4}{a^2 A^{q-1}}
\bE_{d,f} \hat \bbE_{\tau} \left[ \left(X-\hat \bbE_\tau X\right)^2 \right]\, .
\end{split}
\end{equation}

The conclusion now follows 
as soon as we can show that the second moment appearing  in the last term of \eqref{eq:2/3and}
is $o(A^{q-1})$ for $A$ large. But this is precisely 
what is granted by the 
next lemma:

\medskip

\begin{lemma}
\label{th:square}
There exist $A_0>0$  such that
\begin{equation}
\label{eq:square}
 \bE_{d,f} \hat \bbE_{\tau} \left[ \left(
X-\hat \bbE_\tau X \right)^2 \right] \, \le \, A^{(q-1)^2/q} ,
\end{equation}
 for every $\gb \le \gb_0$ and every $A\ge A_0$. 
\end{lemma}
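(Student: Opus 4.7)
My plan is a second-moment computation that exploits two facts: under $\hat\bbP_\tau$ the charges $\go_n$ remain independent, and both $X$ and the potential $V_k$ have explicit product structure. Expanding the square gives
\[
\hat\bbE_\tau\!\left[(X-\hat\bbE_\tau X)^2\right]\, =\, \sum_{\ui,\uj\in B_1^q}V_k(\ui)V_k(\uj)\,\operatorname{Cov}_{\hat\bbP_\tau}(\go_\ui,\go_\uj),
\]
and by independence the covariance vanishes unless the sets $I:=\{i_a\}$ and $J:=\{j_a\}$ overlap. I decompose the sum according to $r:=|I\cap J|\in\{1,\ldots,q\}$. Using \eqref{eq:2beused} and \eqref{eq:gb0}, the product structure of $\go_\ui$ yields the clean bound
\[
\left|\operatorname{Cov}_{\hat\bbP_\tau}(\go_\ui,\go_\uj)\right|\, \le\, C_q\,\gb^{2(q-r)}\prod_{n\in I\triangle J}\gd_n,
\]
so that each element of $I\triangle J$ costs an extra factor $\gb$ and must lie in $\tau$.

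Next, I take the $\bE_{d,f}$ expectation. By Lemma~\ref{th:condiz} the boundary conditioning $\{d,f\in\tau\}$ costs only a universal constant, and then \eqref{eq:Doney-bound} gives
\[
\bE_{d,f}\!\left[\prod_{n\in I\triangle J}\gd_n\right]\, \le\, C_L\prod_j R_{\frac12}(g_j),
\]
where the $g_j$'s are the successive gaps of $\{d\}\cup(I\triangle J)\cup\{f\}$ (the assumption $f-d\ge\gep k$ absorbs the boundary gaps). For $r=q$ (the diagonal, $I=J$ as sets) the $\gd$-product is empty and each $\hat\bbE_\tau\go_n^2$ is uniformly bounded, so this contribution is at most $C^q\sum_\ui V_k(\ui)^2\le 2C^q$ by \eqref{eq:cV}, a harmless constant. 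For $1\le r<q$, using the explicit form \eqref{eq:useR2} of $V_k$ as a product of $q-1$ factors $R_{\frac12}$ along the gaps of $\sort(\ui)$, the remaining sum becomes, after fixing $S:=I\cap J$, a coupled sum of three interlaced $R_{\frac12}$-chains: one along $\sort(I)$, one along $\sort(J)$ and one along $\{d\}\cup\sort(I\triangle J)\cup\{f\}$. Summing over the disjoint blocks $T:=I\setminus J$ and $U:=J\setminus I$ reduces each one-dimensional sum to a standard convolution for the renewal mass function of a $1/2$-regularly-varying renewal, evaluated via regular-variation identities of the type $\sum_j R_{\frac12}(j)^2\asymp\tilde\bL$ and $\sum_j R_{\frac12}(j)R_{\frac12}(n-j)\asymp R_{\frac12}(n)\tilde\bL(n)$.

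Combining the combinatorial output with the normalization $1/(q!\,k\,\tilde\bL(k)^{q-1})$ in each factor $V_k(\ui)V_k(\uj)$ and with the $\gb^{2(q-r)}$ from the covariance bound, I expect each $r$-contribution $T_r$ to be bounded by $C'_q(\gb^{2q}\tilde L(k)^{q-1}/L(k)^2)^{\kappa_r}$ for some exponent $\kappa_r\le (q-1)/q$. The defining inequality \eqref{eq:k333} of $k$ then bounds the quantity in parentheses by $A^{q-1}$, yielding $T_r\le C'_qA^{(q-1)^2/q}$; summing over $r$ and adding the $O_q(1)$ diagonal term gives Lemma~\ref{th:square} as soon as $A$ is larger than a constant depending only on $q$ and $L(\cdot)$. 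The main obstacle will be precisely this bookkeeping step: the three interlaced $R_{\frac12}$-chains do not factorize in an obvious way, and tracking the exponents of $\gb$, $\tilde L(k)$ and $L(k)$ through the slowly-varying convolutions is delicate. The worst-case exponent $\kappa_r=(q-1)/q$ that should emerge is not accidental---it is precisely what is needed to match the threshold \eqref{eq:k333}, and together with the factor $A^{-(q-1)}$ already present in \eqref{eq:2/3and} it produces the small factor $A^{-(q-1)/q}$ that ultimately closes the argument of Lemma~\ref{TH:DF}.
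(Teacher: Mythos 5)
Your decomposition by the overlap $r=|I\cap J|$ is equivalent to the paper's binomial expansion indexed by $\ell:=q-r$: writing $\go_n=\hat\go_n+\mbeta\gd_n$ with $\hat\go_n$ centered under $\hat\bbP_\tau$, the $\ell$-th term of the expansion of $\prod_a(\hat\go_{i_a}+\mbeta\gd_{i_a})$ is exactly the contribution where the shared index set has cardinality $q-\ell$, and your covariance bound $\lvert\operatorname{Cov}_{\hat\bbP_\tau}(\go_\ui,\go_\uj)\rvert\le C_q\,\mbeta^{2(q-r)}\prod_{n\in I\triangle J}\gd_n$, together with $\mbeta\le2\gb$, is precisely the input the paper obtains after Cauchy--Schwarz and the centering of $\hat\go$. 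Your treatment of the diagonal $r=q$ via $\sum_\ui V_k(\ui)^2\le2$ is also correct.

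The gap is the step you yourself flag: you assert rather than derive the key bound on the combinatorial sum. The paper's $T_{q,\ell}=\sum_{\ui}\sum_{\uj,\um}U(\ui\,\uj)U(\ui\,\um)U(d\,\uj\,\um\,f)$ (your three interlaced $R_{1/2}$-chains) is controlled by a diagram representation: one classifies the configurations by the cyclic order of $\ui,\uj,\um$ around $d$ and $f$, then \emph{trims} the graph vertex by vertex, where each trimming step is a one-dimensional convolution estimate of the type $\sum_j R_{1/2}(j)^2 R_{1/2}(n-j)\le C_L\,\tilde L(n)R_{1/2}(n)$ (or $\sum_i R_{1/2}(i)^2\le C_L\tilde L(k)$ for the external $\Box$-vertices), contributing one factor $C_L\tilde L(k)$ per trimmed vertex, and the remaining four-vertex base diagram gives $\asymp kR_{1/2}(f-d)/L(k)^2$. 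That accounting — $q+\ell-2$ trimmed vertices, hence $(C_L\tilde L(k))^{q+\ell-2}$, divided by the two $V_k$-normalizations $k\tilde L(k)^{q-1}$ and by $R_{1/2}(f-d)$ from Lemma~\ref{th:P_cI} — is what yields $\tilde L(k)^{\ell-1}/L(k)^2$ per $\ell$-term, and none of it is present in your sketch.

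A smaller but real issue: your guessed intermediate form $T_r\le C'_q\bigl(\gb^{2q}\tilde L(k)^{q-1}/L(k)^2\bigr)^{\kappa_r}$ with $\kappa_r=(q-r)/q$ is not what comes out. With $\ell=q-r$ the actual bound is $\gb^{2\ell}\tilde L(k)^{\ell-1}/L(k)^2$, which is \emph{not} the $(q-r)/q$-th power of $\gb^{2q}\tilde L(k)^{q-1}/L(k)^2$ (the $\tilde L$-exponents do not match for $0<r<q$). It does happen to satisfy
\begin{equation*}
\frac{\tilde L(k)^{\ell-1}}{L(k)^2}\gb^{2\ell}\;\le\;(2A)^{(q-1)\ell/q}\bigl(\tilde L(k)L(k)^2\bigr)^{-1+\ell/q},
\end{equation*}
and since $\tilde L(k)L(k)^2\to\infty$ by \eqref{eq:Ltilde-prop} the right-hand side is $\le A^{(q-1)^2/q}$ for $A$ large, so the \emph{conclusion} you want is correct; but the route through your claimed form of $T_r$ is not valid as stated. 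To close the proof you need the diagram/trimming estimates or an equivalent careful convolution argument producing exactly the factor $\tilde L(k)^{\ell-1}/L(k)^2$.
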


\medskip

\noindent
{\it Proof of Lemma~\ref{th:square}.}
We start by introducing the notation 
$\hat \go_{n} :=
\go_{n} - \mbeta \gd_{n}{\bf 1}_{\{d\le n\le f\}}$ and by observing that
\begin{equation}
\begin{split}
\hat \bbE_{\tau} \left[ \left(
X-\hat \bbE_\tau X \right)^2 \right]
\, &= \, 
\hat \bbE_{\tau} \left[ \left( \sum_{\ui \in B_1^q} V_k({\ui}) 
\prod_{a=1}^q \left( \hat \go_{i_a} + \mbeta \gd_{i_a} 
{\bf 1}_{\{d\le i_a\le f\}}
\right)
- \mbeta ^q \sum_{\ui \in \{d,\ldots,f\}^q} V_k({\ui}) \gd_{\ui}
\right) ^2\right]
\\
&\le C(q)\, \hat \bbE_{\tau} \left[\left(
\sum_{\ell=0}^{q-1} \mbeta ^{\ell} \sum_{\ui \in B_1^{q-\ell}}
\sum_{\uj \in \{d,\ldots,f\}^\ell}  V_k({\ui \,  \uj}) \hat \go_{\ui} \gd_{\uj} \right)^2\right]
\\
& \, \le \, C(q)
\sum_{\ell=0}^{q-1} \mbeta ^{2\ell}
\sum_{\ui \in B_1^{q-\ell}}
\sum_{\uj,\um \in \{d,\ldots,f\}^\ell} 
 V_k({\ui \,  \uj}) V_k({\ui \,  \um}) \gd_{\uj} \gd_{\um} 
\, ,
\end{split}
\end{equation}
where $\ui \, \uj \in B_1^q$ is the concatenation of $\ui$ and $ \uj$
and in the last step we have first used the Cauchy-Schwarz inequality,
the fact that the $\hat \go$ variables are independent and centered and
\eqref{eq:2beused}.  
\begin{rem}
  \rm
\label{rem:nonumero}
Here and in the following, we adopt the convention that
$C(a,b,\ldots)$ is a positive constant (which depends on the
parameters $a,b,\ldots$), whose numerical value may change from line
to line.
\end{rem}
Therefore
\begin{equation}
\label{eq:bfl1}
\bE_{d,f}
\hat \bbE_{\tau} \left[ \left(
X-\hat \bbE_\tau X \right)^2 \right]
\, \le \, C(q)
\sum_{\ell=0}^{q-1} \mbeta ^{2\ell}
\sum_{\ui \in B_1^{q-\ell}}
\sum_{\uj, \um \in \{d, \ldots , f\}^\ell}
 V_k({\ui \,  \uj}) V_k({\ui \,  \um}) \bE_{d,f} \left[\gd_{\uj} \gd_{\um} \right].
\end{equation}
Let us point out immediately that we know how to deal with the $\ell=0$ case:
it is simply $ C(q) \sum_{\ui \in B_1^q} V_k({\ui})^2$  and it is therefore bounded by 
$ 2C(q)$ ({\sl cf.} \eqref{eq:cV}).
By using the notation and the bounds in Remarks \ref{rem:L} and \ref{rem:V},
together with the renewal property, we readily see that 
\begin{equation}
\label{eq:useR}
\bE_{d,f} \left[\gd_{\uj} \gd_{\um} \right] \, \le \, 
\frac{\const _L ^{-(2\ell+1)}}{\bP( f-d \in \tau)}
\prod_{a=1}^{ 2\ell+1}
R_{\frac12} \left( r_a-r_{a-1}\right)\, \le \, 
\frac{\const _L ^{-(2\ell+1)}}{R_{\frac12} \left(f-d \right)}
\prod_{a=1}^{ 2\ell+1}
R_{\frac12} \left( r_a-r_{a-1}\right),
\end{equation}
for $\uj , \um\in \{d, \ldots, f\}^\ell$, $r= \sort(\uj \, \um)$,
$r_0:=d$ and $r_{2\ell+1}:=f$.  A notational simplification may be
therefore achieved by exploiting further Remark~\ref{rem:V}, namely by
using \eqref{eq:U23}, so that \eqref{eq:useR} becomes
\begin{equation}
\label{eq:useR.1}
\begin{split}
\bE_{d,f} \left[\gd_{\uj} \gd_{\um} \right] \, &\le \, 
\const _L ^{-(2\ell+1)}
R_{\frac12} \left(f-d \right)^{-1} R_{\frac12}(\min(\uj\, \um) -d) 
U(\uj\, \um) R_{\frac12}(f-\max(\uj\, \um) )
\\
&= \, 
\const _L ^{-(2\ell+1)}
R_{\frac12} \left(f-d \right)^{-1} 
U(d\, \uj\, \um \, f) \, 
.
\end{split}
\end{equation}
By inserting \eqref{eq:useR.1} and \eqref{eq:useR2} into 
\eqref{eq:bfl1} we get to 
\begin{equation}
\label{eq:bfl2}
\begin{split}
&\bE_{d,f} 
\hat \bbE_{\tau} \left[ \left(
X-\hat \bbE_\tau X \right)^2 \right]
\\ 
& \le  C\left(1+\frac1
{k \tilde L(k)^{q-1}R_{\frac 12}(f-d) } 
\sum_{\ell=1}^{q-1} \mbeta ^{2\ell}
\sum_{\ui \in B_1^{q-\ell}}
 \sum_{\uj, \um \in \{d, \ldots , f\}^\ell}
U(\ui\, \uj) U(\ui\, \um) U(d\, \uj\, \um \, f)\right)
\\
& \le C \left(1+
 \frac1
{k \tilde L(k)^{q-1} R_{\frac 12}(f-d)} 
\sum_{\ell=1}^{q-1} \mbeta ^{2\ell} 
\sum_{\ui \in \sort(B_1^{q-\ell})}
\sum_{\uj, \um \in \sort(\{d, \ldots , f\}^\ell)}
U(\ui\, \uj) U(\ui\, \um) U(d\, \uj\, \um\, f)\right),
\end{split}
\end{equation}
where of course $\sort(\{1, \ldots, a\}^n)= \{ \ui \in \{1, \ldots,
a\}^n: \, i_1 \le i_2 \le \ldots \le i_n\}$ and $C=C(q,L(\cdot))$,
with the convention of Remark \ref{rem:nonumero}.

\smallskip

The rest of the proof is devoted to bounding
\begin{equation}
\label{eq:Tell}
T_{q,\ell} \, := \, 
\sum_{\ui \in \sort(B_1^{q-\ell})}
\sum_{\uj, \um \in \sort(\{d, \ldots , f\}^\ell)}
U(\ui\, \uj) U(\ui\, \um) U(d\, \uj\, \um \,f).
\end{equation}
This is relatively heavy, because, while $\ui$, $\uj$ and $\um$ are
ordered, $\ui\, \uj$, $\ui\, \um$ and $\uj\, \um$ are not.  We have
therefore to estimate the contributions given by every mutual
arrangement of $\ui$, $\uj$ and $\um$.  This will be done in a
systematic way with the help of a {\sl diagram representation} (the
diagrams will correspond to groups of configurations $\ui$, $\uj$ and
$\um$ that have the same {\sl mutual order}).  \smallskip

Fix $q$ and $\ell$ and choose $\ui\in \sort(\{1, \ldots, k\}^{q-\ell})$ and  $\uj, \um
\in \sort(\in \{d, \ldots, f\}^{\ell})$.
The construction of the diagram of  $\ui$, $\uj$ and $\um$ is done in steps:

\smallskip

\begin{enumerate}
\item
Mark with $\Box$'s
on the horizontal axis (the dotted line in Figure~\ref{fig:diagram-ex})
  the positions $i_1 \le i_2\le \dots\le i_{q-\ell}$.
Do the  same for $\uj$ (using $\circ$) and $\um$ (using $\bullet$).
As explained in Remark~\ref{rem:superpos} below, we may and do assume 
that symbols do not
 sit on the same position (this amounts to assuming
 strict inequality between all  indexes).
\item 
Consider the set of  $\Box$'s and $\circ$'s, and connect
all nearest neighbors  with a line (the line may be
straight or curved for the sake of visual clarity).   
\item Do the same for the set of  $\Box$'s and $\bullet$'s.
\item Do the same for the set of  $\circ$'s and $\bullet$'s.
\item Consider   the set of $\circ$'s and $\bullet$'s and connect the element that
is closest to $d$ with $d$. Do the analogous action  with the element which
is closest to $f$. The point $d$ is always to the left of $\circ$'s and $\bullet$'s
and the point $f$ is always to the right.
\end{enumerate}

\smallskip

We have now a graph with vertex set $\{d,f, \ui, \uj, \um\}$.
Vertices have a type ($\Box$, $\circ$ and $\bullet$): $d$ and $f$ have
their own type too, graphically this type is $\vert$.  We actually
consider the {\sl richer} graph with vertex set given by the points
and the type of the point.  The edges are the ones built with the
above procedure; note that there may be double edges: we keep them and
call them {\sl twin} edges.  Two indexes configurations are equivalent
if they can be transformed into each other by translating the indexes
without allowing them cross (and, of course, keeping their type; the
vertices $d$ and $f$ are fixed).  This leads to equivalence classes
and a class is denoted by $\cG$: we split the sum in \eqref{eq:Tell}
according to these classes, that is $T_{q,\ell}= \sum_{\cG} T_{q,\ell,
  \cG}$. The bound we are going to find is rather rough: we are going
in fact to bound $\max _{\cG} T_{q,\ell, \cG}$.

\smallskip

\begin{rem}
\label{rem:superpos}
\rm
We have built equivalent classes of non-superposing points only.
However in estimating $T_{q,\ell, \cG}$ we will allow the index summations to include
coinciding indexes so  in the end we include (and over-estimate) the contributions
of all the configurations of indexes. 
\end{rem}

\smallskip

\begin{figure}[hlt]
\begin{center}
\leavevmode
\epsfxsize =14.5 cm
\psfragscanon
\psfrag{0}[c][l]{\small $0$}
\psfrag{k}[c][l]{\small $k$}
\psfrag{d}[c][l]{\small $d$}
\psfrag{f}[c][l]{\small $f$}
\psfrag{1}[c][l]{\small Start}
\psfrag{2}[c][l]{\small Trim step $1$}
\psfrag{3}[c][l]{\small Trim step $2$}
\psfrag{4}[c][l]{\small Trim step $3$}
\psfrag{5}[c][l]{\small Trim step $4$}
\psfrag{i1}[c][l]{\small $i_1$}
\psfrag{i2}[c][l]{\small $i_2$}
\psfrag{i3}[c][l]{\small $i_3$}
\psfrag{i4}[c][l]{\small $i_4$}
\psfrag{i5}[c][l]{\small $i_5$}
\psfrag{j1}[c][l]{\small $j_1$}
\psfrag{m1}[c][l]{\small $m_1$}
\psfrag{j2}[c][l]{\small $j_2$}
\psfrag{m2}[c][l]{\small $m_2$}
\epsfbox{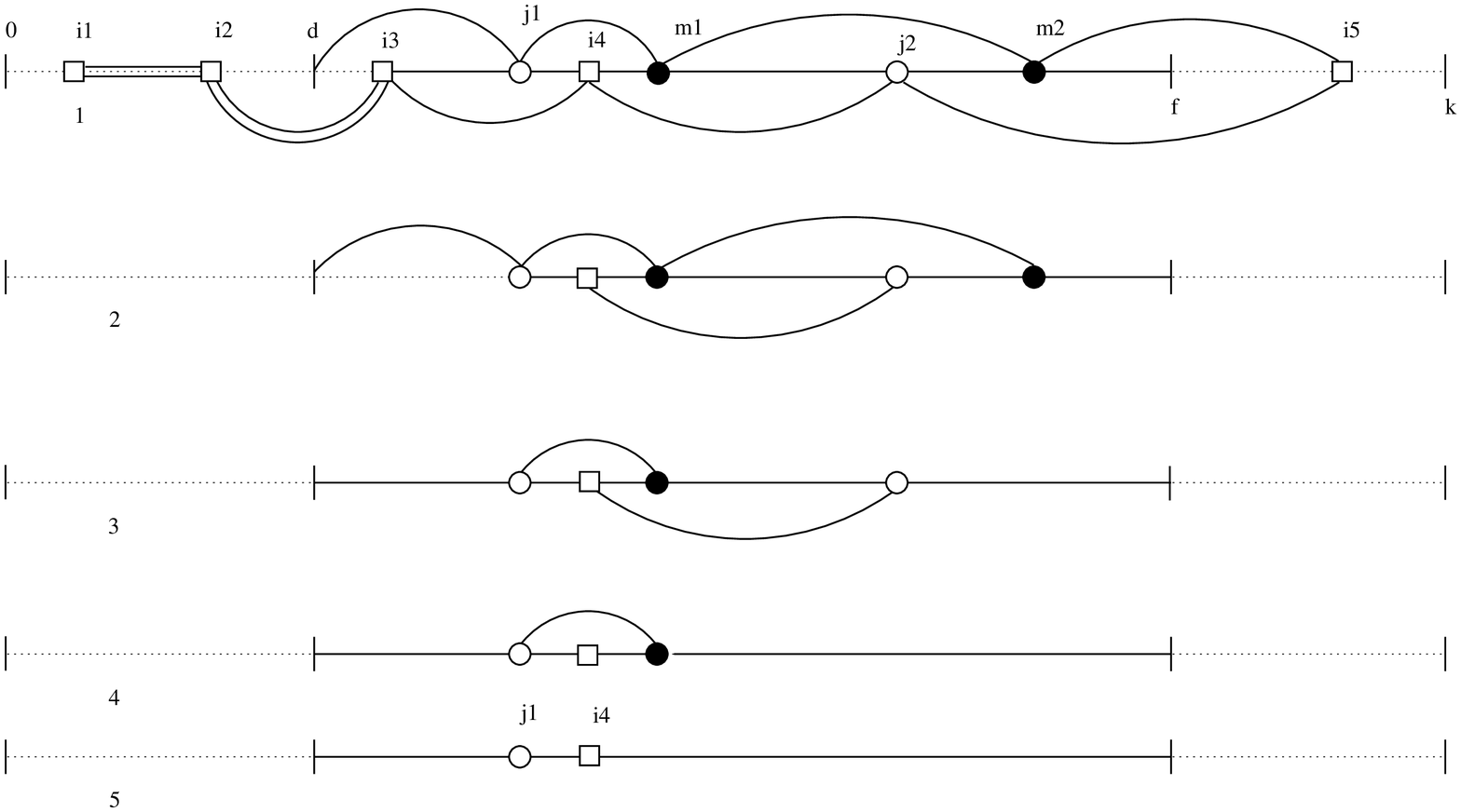}
\caption{\label{fig:diagram-ex}A diagram arising for $q=7$ and $\ell=2$ and the successive trimming procedure explained in the text}
\end{center}
\end{figure}
 
 \medskip
 
 In order to estimate $T_{q,\ell, \cG}$ we proceed to a graph trimming
 procedure that will be then matched to successive estimates on 
  $T_{q,\ell, \cG}$.
 
 The trimming procedure is the following:
 \smallskip

\begin{enumerate}
\item If there are $\Box$ vertices that are left of leftmost element of the 
set of $\circ$ and $\bullet$ vertices (we may call these $\Box$
vertices {\sl external vertices}), we erase them and we trim
the edges linking them. Note that if we do this procedure left to right,
we  erase one vertex and two edges at a time: at each step 
we trim a couple of twin edges, except at the last step in
which the edges are not twin.
We do the same with the $\Box$ vertices that are right 
of the rightmost element of the 
set of $\circ$ and $\bullet$ vertices (if any, of course). The trimming procedure goes 
this time right to left. We call {\sl internal} the vertices that are left. 
\item Now we start (say) right and we erase the rightmost internal
  vertex (in this first step is necessarily a $\circ$ or a $\bullet$,
  later it may be a $\Box$; we do not touch $d$ and $f$). Note that it
  has two edges (linking to vertices on the left) and one edge linking
  it with $f$: we trim these three edges and we add an edge linking
  the rightmost vertex (it can have any type among $\Box$, $\circ$ and
  $\bullet$) that is still present to $f$ with an edge.
\item We repeat step (2) till one is left with only four  vertices (among them, 
only  one may be a $\Box$) and three edges. {\sl Trim step 4} in Figure~\ref{fig:diagram-ex}
is a possible fully trimmed configuration.
\end{enumerate}

\medskip

Let us now explain  the link between the trimming procedure and quantitative estimates
on $T_{q, \ell, \cG}$.
Also this is done by steps corresponding precisely to the three steps
of the trimming procedure:

\medskip

\begin{enumerate}
\item Consider the external $\Box$ vertices connected to the rest of the graph by twin edges, if any.
 We start by the leftmost (if there is at least one on the left: the procedure from the right is absolutely analogous)
and notice that we can sum over the index, that is $i_1$, and use that, thanks to \eqref{eq:Lb} (recall
\eqref{eq:Ltilde} and \eqref{eq:R12}), there exists $C_L$ such that for $0<n \le k$
\begin{equation}
\label{eq:basicbound}
\sum_{i=0}^n (R_{\frac 12}(n-i))^2 \, \le \, C_L \tilde L(k).
\end{equation}
We are of course over-estimating the real sums that are, in most cases,
restricted to small portions of $B_1$. 
This estimate allows {\sl trimming} $T_{q, \ell, \cG}$ in the sense that
it gives the bound $T_{q, \ell, \cG} \le C_L^r \tilde L(k)^r T_{q-r, \ell, \cG^\prime}$, 
where $r$ is the number of twin edges and $\cG^\prime$ is the graph, with $q-r+\ell$ vertices
that is left after this procedure. This step can be repeated also for the last 
external $\Box$'s (there are at most two, one on the left and one on the right). In these cases
we simply use that $R_{\frac 12}(\cdot)$ is decreasing so that if 
$0\le n \le n^\prime $
\begin{equation}
\label{eq:basicbound2}
\sum_{i=0}^n R_{\frac 12}(n-i)R_{\frac 12}(n^\prime -i)  \, \le \, \sum_{i=0}^n (R_{\frac 12}(n-i))^2,
\end{equation}
and then \eqref{eq:basicbound} applies. So this extra trimming yields again $C_L \tilde L(k)$
to the power of half the number of edges trimmed, that is, to the power of the number of the external vertices.
\item We are left with the internal vertices and we start erasing 
the vertex (it is necessarily $\circ$ or $\bullet$ at this stage)
which is most on the right. So we sum over its index and use the bound: there exists a constant $C_L$ such that
for $(0\le )d\le n^\prime \le n \le f(\le k)$
we have 
\begin{multline}
\label{eq:ffR}
\sum_{j=n}^f R_{\frac 12}(j-n)R_{\frac 12}(j-n^\prime)R_{\frac 12}(f-j)\, 
\le \, \sum_{j=0}^{f-n} R_{\frac 12}(j)^2 R_{\frac 12}((f-n)-j)
\\ 
\le \, C_L \tilde L (f-n) R_{\frac 12}(f-n) \, \le \, C_L \tilde L (k) R_{\frac 12}(f-n),
\end{multline}
where in the first inequality we have used the monotonicity of $R_{\frac 12}(\cdot)$,
in the second we have explicitly estimated the sum by using standard results
on regularly varying function and \eqref{eq:Ltilde-prop}. The last inequality is just the monotonicity
of $\tilde L(\cdot)$. This means that this trimming step brings once  again a multiplicative factor  
$C_L \tilde L (k)$: of course this time we have trimmed three edges, but we have also the extra 
factor $R_{\frac 12}(f-n)$ which is precisely the contribution of a longer edge that we rebuild
(see Figure~\ref{fig:diagram-gen}).
\item Keep repeating the previous step (the type of the vertices is not really important), 
trimming each time three edges, but rebuilding one too (so, in total, minus two edges), till
the graph with four vertices and three edges.
\end{enumerate}

\medskip

\begin{figure}[hlt]
\begin{center}
\leavevmode
\epsfxsize =12.5 cm
\psfragscanon
\psfrag{f}[c][l]{\small $f$}
\psfrag{a1}[c][l]{\small $a_1$}
\psfrag{a2}[c][l]{\small $a_2$}
\psfrag{a3}[c][l]{\small $a_3$}
\epsfbox{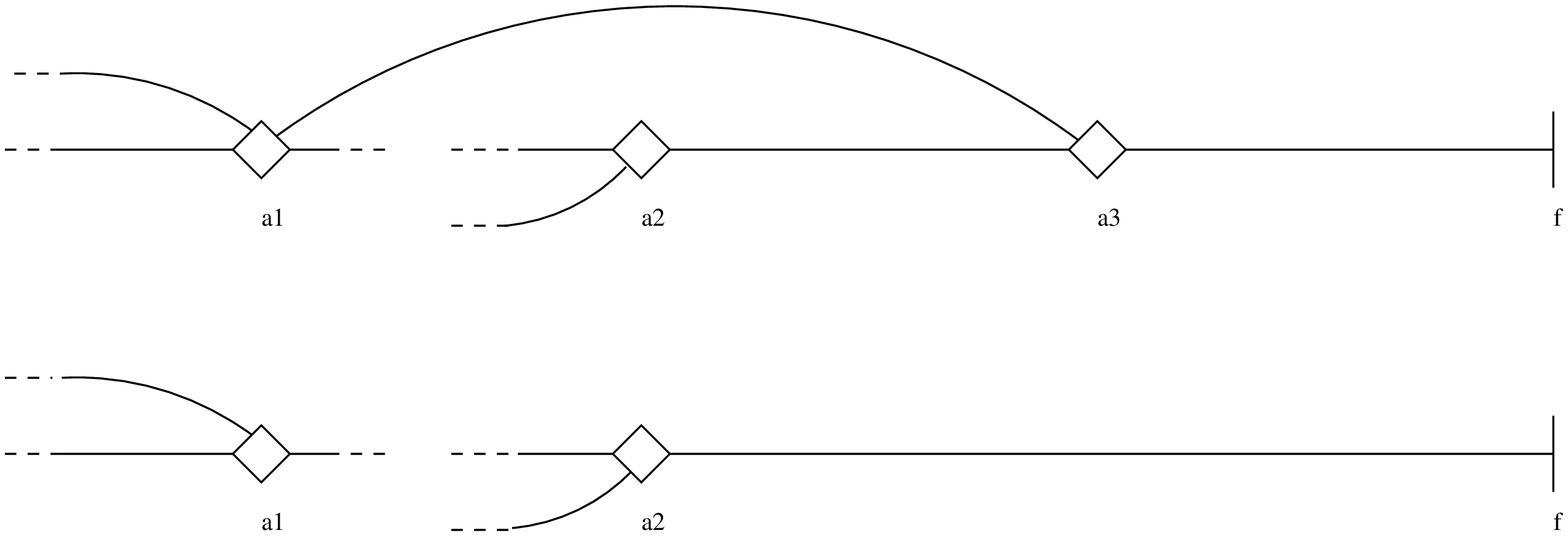}
\caption{\label{fig:diagram-gen} The second step of the trimming
  procedure corresponding to the estimate \eqref{eq:ffR}.  The symbol
  $\diamond$ may represent $\Box$, $\circ$ and $\bullet$: the choice
  is not fully arbitrary, in the sense that for example before
  starting the trimming procedure there is no edge between $f$ (or
  $d$) and a $\Box$.  However the estimate is independent of the type
  of symbols.}
\end{center}
\end{figure}

In order to evaluate the contribution of all the trimming procedure we
just need to count the number of vertices that we have erased: $q+\ell
-2$. We are now left with the contribution given by the last diagram
(four points, three edges: see for example
{\sl trim step 4} in Figure~\ref{fig:diagram-ex}), times of course $(C_L \tilde L(k))^{q+\ell
  -2}$: we bound  the last diagram using
\begin{equation}
\sum_{i=d}^f\sum_{j=i}^f R_{\frac 12}(i-d) R_{\frac 12}(j-i) R_{\frac 12}(f-j)
\, \le \, C_L \frac{\sqrt{f-d}}{L(f-d)^3}
\, \le \, C_{L, \gep} \frac{k\, R_{\frac 12}(f-d )}{L(k)^2}.
\end{equation}
where $C_L$ is once again a constant that depends only on $L(\cdot)$,
while in the last step we have used $k\ge f-d \ge \gep k$ and
\eqref{eq:Doney-bound}.  Going back to \eqref{eq:bfl2} we see that
there exists $C=C(\gep,q,L(\cdot))$ such that (with the convention of
Remark \ref{rem:nonumero})
\begin{multline}
\label{eq:varbfi}
\bE_{d,f} \hat \bbE_{\tau} \left[ \left( X-\hat \bbE_\tau X \right)^2
\right]\, \le \\ C\left(1 +\, \, \max_{\ell=1,2, \ldots, q-1}
  \frac{1}{k \tilde L(k)^{q-1} R_{\frac 12}(f-d)} \frac{\tilde
    L(k)^{q+\ell-2}k\, R_{\frac 12}(f-d
    )}{L(k)^2}\mbeta^{2\ell}\right)
\\
=\, C\left(1 + \, \max_{\ell=1,2, \ldots, q-1} \frac{\tilde L
    (k)^{\ell-1}}{L(k)^2} \mbeta^{2\ell}\right)\, \le \, C\left(1 \, +
  \, \max_{\ell=1,2, \ldots, q-1} \frac{\tilde L
    (k)^{\ell-1}}{L(k)^2} \gb^{2\ell}\,\right),
\end{multline}
where in the last line we have used $\mbeta \le 2 \gb$, for $\gb \le \gb_0$
({\sl cf.} \eqref{eq:2beused}). We now recall \eqref{eq:k333} that guarantees that
\begin{equation}
\label{eq:LtL}
\frac{\tilde L (k-1)}{L(k-1) ^{2/(q-1)}} \gb ^{2q/(q-1)} \, < \, A
\ \ \text{ so that } \ \ 
\frac{\tilde L (k)}{L(k) ^{2/(q-1)}} \gb ^{2q/(q-1)} \, \le  \, 2A,
\end{equation}
where the second inequality is a consequence of the slowly varying character
of $L(\cdot) $ and $\tilde L(\cdot)$ and it holds for $k$ sufficiently large.
But this implies
\begin{equation}
\label{eq:llb}
 \frac{\tilde L (k)^{\ell-1}}{L(k)^2}
 \gb^{2\ell}\, \le \, (2A)^{(q-1)\ell/q} \, \left( \tilde L(k) L(k)^2\right)^{-1+(\ell/q)}
 \, , 
\end{equation}  
so that, by \eqref{eq:Ltilde-prop}, 
by choosing $A$ large we can make  
the quantity in \eqref{eq:llb} arbitrarily small (recall that $\ell=1, \ldots, q-1$),
so that going back to \eqref{eq:varbfi}, we see that
\begin{multline}
\bE_{d,f} 
\hat \bbE_{\tau} \left[ \left(
X-\hat \bbE_\tau X \right)^2 \right]\, \le \\
C(\gep,q, L(\cdot))\left(1 + A^{(q-1)^2/q}  
 \max_{\ell=1, \ldots, q-1}\left( \tilde L(k) L(k)^2\right)^{-1+(\ell/q)}\right) \,
\le \, A^{(q-1)^2/q} \, , 
\end{multline}
where in the last step we have used that, by \eqref{eq:Ltilde-prop},
the maximum in the intermediate term can be made arbitrarily small,
by choosing $k$ large (that is, $A$ larger than a constant depending 
on $\gep$, $q$ and $L(\cdot)$).
 This completes the proof of Lemma~\ref{th:square}.
\qed

\section{Some probability estimates (Proof of Lemma~\ref{TH:FROMCE})}

The proof is done in four steps.

\medskip

\noindent
{\it Step 1: reduction to an asymptotic estimate on a constrained renewal.}
In this step we show  that it is sufficient to establish
 that for every $\zeta>0$
there exists $\varrho>0$ and $N_\zeta\in\N$ such that
\begin{equation}
\label{eq:suff1}
\bP \left(\frac{\bL(N)}{\tilde \bL (N)^{(q-1)/2}}
\sum_{\ui \in \{0, \ldots, N \}^q}   V_N(\ui) \gd_{\ui} 
\, \ge \, \varrho \bigg \vert \, N \in \tau \right)\, \ge \, 1-\zeta,
\end{equation}
for $N \ge N_\zeta$.

Notice in fact  that 
$\hat \bbE_\tau X= \mbeta^q \sum_{\ui} V_k(\ui) \gd_{\ui}$,
where $\ui \in \{d, \ldots, f\}^q$. 
Since $V_k(\ui)$ is invariant under the transformation $\ui=(i_1, \ldots, 
i_q) \mapsto (i_1+n, \ldots, 
i_q+n)$ (any $n \in \bbZ$), we may very well work on $\{0, \ldots, f-d\}$,
that is on an interval $\{0, \ldots, N\}$ ($\gep k \le N \le k$) and $\tau$ is a renewal
with $\tau_0=0$ and conditioned to $N \in \tau$. With this change of
variables, \eqref{eq:fromCE} reads
\begin{equation}
\label{eq:forCE1}
\bP \left(
\mbeta^q \sum_{\ui \in \{0, \ldots, N\}^q}
 V_k(\ui) \gd_{\ui} \, \ge \, a A^{(q-1)/2} \, \bigg \vert N \in \tau
\right) \,
\ge \, 1- \zeta. 
\end{equation}
Now two observations are in order:
\begin{itemize}
\item $V_k(\ui)/V_N(\ui)= (N/k)^{1/2} [\tilde \bL (N)/ \tilde \bL(k)]^{(q-1)/2}$ so that for
$k$ sufficiently large (that is for $A$ larger than a constant depending on
$\gep $ and $L(\cdot)$) we have
\begin{equation}
\frac{V_k(\ui)}{V_N(\ui)} \, \ge \, \frac{\gep^{1/2}}2.
\end{equation} 
\item By \eqref{eq:2beused},
 \eqref{eq:k333} and \eqref{eq:Lb} we see that 
 \begin{equation}
\mbeta^q \, \ge \, 2^{-q} A^{(q-1)/2} \, \frac{L(k-1)}{\tilde L(k-1)^{(q-1)/2}}
\, \ge \,  2^{-q} \const _L A^{(q-1)/2} \, \frac{\bL(N)}{\tilde \bL(N)^{(q-1)/2}}
\, .
\end{equation}
\end{itemize}

These two observations show that for $A$ sufficiently large 
\eqref{eq:forCE1} is implied by
\begin{equation}
\label{eq:for CE2}
\bP \left(
 \frac{\bL(N)}{\tilde \bL(N)^{(q-1)/2}} 
 \sum_{\ui \in \{0, \ldots, N\}^q}
 V_N(\ui) \gd_{\ui} \, \ge\, \frac{2^{q}}{\const_L \gep^{1/2}} a
\bigg\vert \, N \in \tau 
\right) \, \ge \, 1-\zeta\, .
\end{equation}
Therefore, at least if 
$A$
is  larger than a suitable constant depending on  $\gep$ and $L(\cdot)$,
it is  sufficient to
prove \eqref{eq:suff1}.

\medskip

\noindent
{\it Step 2: removing the constraint.}
In this step we claim that there exists a positive constant $c$,
that depends only on $L(\cdot)$, such that
if 
\begin{equation}
\label{eq:suff2}
\bP \left(\frac{\bL(N)}{\tilde \bL (N)^{(q-1)/2}}
\sum_{\ui \in \{1, \ldots, \lfloor N/2\rfloor \}^q}   V_N(\ui) \gd_{\ui} 
\, \ge \, \varrho \, \right)\, \ge \, 1- c \zeta,
\end{equation}
then \eqref{eq:suff1} holds. Note first of all that the 
random variable that we are estimating is smaller (since 
$V_N(\cdot) \ge 0$)
than the random variable  in \eqref{eq:suff1}, for every given $\tau$-trajectory.
It is therefore sufficient to bound the 
Radon-Nykodym derivative of the law of $\tau\cap [0, \lfloor N/2
\rfloor]$ without constraint
$N \in \tau$ with respect to the law of the same random set with the constraint.
Such an estimate can be found for example in 
\cite[Lemma A.2]{cf:GLT_marg}. 

\medskip

\noindent{\it Step 3: reduction to a convergence in law statement.}
For $\rho:= 1/(2(q-1))$ we define the subset $S_\rho (N)$ of 
$\sort(\{ 0,1, \ldots, N \}^q)$ (recall that the latter is the set of 
increasingly rearranged $\ui$ vectors) such that 
$i_j \le N ((j-1)\rho +(1/2))$ for $j=1, 2, \ldots, q$. 

The claim of this step is that \eqref{eq:suff2} follows if 
\begin{equation}
\label{eq:suff3}
\eta_N \, :=\, \frac{\bL (N)}{\tilde \bL (N)^{(q-1)/2}}
\sum_{\ui \in S_\rho (N)} V_N(\ui) \gd_{\ui} \stackrel{N \to \infty}\Longrightarrow 
\eta _\infty \ \ \ \text{ with } \ \eta_\infty\, >\, 0 \text{ a.s.}\, ,
\end{equation}
where $\Longrightarrow$ denotes convergence in law. 
 
In order to see why \eqref{eq:suff3} implies \eqref{eq:suff2} it
suffices to observe that replacing $N$ with $\lfloor N/2\rfloor$ in
\eqref{eq:suff2} (except when it already appears as $\lfloor
N/2\rfloor$) introduces an error that can be bounded by a
multiplicative constant (say, 2) for $N$ sufficiently large, so that
it suffices to show that $\bP( \eta_N \ge 2 \varrho) \ge 1-c \zeta$.
But \eqref{eq:suff3} yields $\lim_N\bP( \eta_N \ge 2 \varrho) \ge \bP(
\eta_\infty \ge 3 \varrho)$.  At this point if we choose
$\varrho:=\varrho(\zeta)$ such that $ \bP( \eta_\infty \ge 3 \varrho)=
1-(c\zeta/2)$, we are assured that for $N$ sufficiently large (how
large depends on $\zeta$) $\bP( \eta_N \ge 2 \varrho) \ge 1-c \zeta$
and we are reduced to proving \eqref{eq:suff3}.

\medskip

\noindent
{\it Step 4: proof of the convergence in law statement \eqref{eq:suff3}}.
This step depends on the following  lemma, that we prove just below:

\medskip

\begin{lemma}
\label{th:CE23}
For every $\theta_0 \in (0,1)$ we have
\begin{equation}
  \lim_{N \to \infty} \sup_{\theta \in [\theta_0,1]}
  \bE\left[ \left (
      \frac 1{\tilde \bL (N)} \sum_{j=1}^{\lfloor \theta N\rfloor} 
R_{1/2}(j) {\gd_j} \, - \, \frac{\const}{2\pi}
      \right )^2 \right]\, =\, 0\, ,
\end{equation}
with $\const := \lim_{x \to \infty} \bL(x)/ L(x)(\in [1,\const _L^{-1} ])$.
\end{lemma}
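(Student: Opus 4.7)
The strategy is the $L^2$ moment method: I would show that for $S_N(\theta):=\sum_{j=1}^{\lfloor \theta N\rfloor} R_{1/2}(j)\gd_j$ one has both $\bE[S_N(\theta)]=(\const/(2\pi))\tilde\bL(N)(1+o(1))$ and $\mathrm{Var}(S_N(\theta))=o(\tilde\bL(N)^2)$, the error terms being uniform in $\theta\in[\theta_0,1]$. This gives $L^2$ convergence of $S_N(\theta)/\tilde\bL(N)$ to the deterministic constant $\const/(2\pi)$, which is the claim.

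The first-moment part is straightforward. Combining the definition \eqref{eq:R12} of $R_{1/2}$ with Doney's asymptotic \eqref{eq:Doney1/2} and $\bL(x)/L(x)\to\const$ one gets
\begin{equation*}
R_{1/2}(j)\bP(j\in\tau)\stackrel{j\to\infty}\sim \frac{\const}{2\pi\, j\,\bL(j)^2}.
\end{equation*}
Summing and invoking the defining relation of $\tilde\bL$ (the analog of \eqref{eq:Ltilde} with $\bL$ in place of $L$) yields $\bE[S_N(\theta)]\sim (\const/(2\pi))\tilde\bL(\theta N)$, and the uniform convergence theorem for slowly varying functions then gives $\tilde\bL(\theta N)/\tilde\bL(N)\to1$ uniformly for $\theta\in[\theta_0,1]$.

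The variance estimate is the real obstacle. Using the renewal property, for $i<j$ one has $\mathrm{Cov}(\gd_i,\gd_j)=\bP(i\in\tau)[\bP(j-i\in\tau)-\bP(j\in\tau)]$. The diagonal contribution is harmless: it is bounded by $\sum_i R_{1/2}(i)^2\bP(i\in\tau)=O(1)$ via \eqref{eq:Doney1/2}. The danger lies in the off-diagonal terms, since the naive bound $\bP(j-i\in\tau)\le C/(\sqrt{j-i}\,L(j-i))$ produces an $O(\tilde\bL(N)^2)$ that is too weak. The fix is the cancellation already used in the proof of Lemma \ref{th:CE}: change variables $k=j-i$, and using $R_{1/2}(j+i)\le R_{1/2}(j)$ (monotonicity from \eqref{eq:Lb}), one finds
\begin{equation*}
\sum_{j=i+1}^{N}R_{1/2}(j)\bigl[\bP(j-i\in\tau)-\bP(j\in\tau)\bigr]
\;\le\;\sum_{k=1}^{i}R_{1/2}(i+k)\bP(k\in\tau)\;\le\; R_{1/2}(i)\sum_{k=1}^{i}\bP(k\in\tau).
\end{equation*}
The last sum is $O(\sqrt{i}/L(i))$ by \eqref{eq:Doney1/2}, so the inner block is $O(1/(\bL(i)L(i)))=O(1/\bL(i)^2)$. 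Plugging back in gives
\begin{equation*}
\mathrm{Var}(S_N(1))\;\le\;C\sum_{i=1}^{N}\frac{R_{1/2}(i)\bP(i\in\tau)}{\bL(i)^2}+O(1)\;=\;O\!\left(\frac{\tilde\bL(N)}{\bL(N)^2}\right),
\end{equation*}
where the last estimate is a routine Karamata-type computation for the slowly varying integrand. By property \eqref{eq:Ltilde-prop} one has $\tilde\bL(N)\bL(N)^2\to\infty$, whence $\mathrm{Var}(S_N(1))=o(\tilde\bL(N)^2)$.

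Uniformity in $\theta\in[\theta_0,1]$ costs nothing: all the bounds above are monotone in the upper cutoff and reappear verbatim with $N$ replaced by $\lfloor\theta N\rfloor$, so the variance bound becomes $o(\tilde\bL(\theta N)^2)=o(\tilde\bL(N)^2)$ uniformly by slow variation. Combining the mean and variance estimates yields the stated $L^2$ convergence. The only delicate point is really the cancellation step in the variance computation; everything else is bookkeeping with slowly varying functions and the renewal asymptotics already in use throughout the paper.
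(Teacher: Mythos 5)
Your argument follows the paper's proof essentially step for step: split into mean and variance; for the mean use Doney's asymptotic and the uniform convergence theorem for slowly varying functions; for the variance exploit the renewal cancellation $\mathrm{Cov}(\gd_i,\gd_j)=\bE[\gd_i]\bigl(\bE[\gd_{j-i}]-\bE[\gd_j]\bigr)$, telescope using the monotonicity of $R_{1/2}$, and reduce to the sum $\sum_i R_{1/2}(i)^3\sum_{j\le i}R_{1/2}(j)\asymp\int_0^N\frac{\dd x}{(1+x)\bL(x)^4}$. The decomposition and the cancellation step are exactly the paper's.

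The one step that is wrong is the final estimate. You assert that $\sum_{i\le N}R_{1/2}(i)\bP(i\in\tau)/\bL(i)^2 = O\bigl(\tilde\bL(N)/\bL(N)^2\bigr)$ by a ``routine Karamata-type computation''. Since $R_{1/2}(i)\bP(i\in\tau)\asymp 1/(i\,\bL(i)^2)$, the left-hand side is of order $\int_1^N\frac{\dd x}{x\,\bL(x)^4}$, and the claimed bound is false in general: take $\bL(x)=L(x)=(\log x)^{1/3}$, for which $\int_1^N\frac{\dd x}{x(\log x)^{4/3}}$ converges to a positive constant while $\tilde\bL(N)/\bL(N)^2\sim 3(\log N)^{-1/3}\to 0$. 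The paper's argument does not assert this bound; it invokes instead the separate Remark~\ref{rem:auxL}, which states $\int_0^N\frac{\dd x}{(1+x)\bL(x)^4}=o\bigl(\tilde\bL(N)^2\bigr)$ and proves it by factoring one power of $1/\bL(x)^2\ll\tilde\bL(x)\le\tilde\bL(N)$ out of the integrand (this is exactly where \eqref{eq:Ltilde-prop} enters). Your final conclusion $\mathrm{Var}(S_N)=o(\tilde\bL(N)^2)$ is correct, but as written the inference passes through a false intermediate inequality; replace that line by the argument of Remark~\ref{rem:auxL} and the proof is complete and coincides with the paper's.
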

\medskip

For $p=1,2, \ldots, q$ we introduce the random variables
\begin{multline}
\eta_{N, p}\, :=\\
\left(\frac{2\pi}{\const}\right)^{p-q} \frac{\bL(N)}{N^{1/2}\tilde \bL (N)^{p-1}}
\sum_{i_1=0}^{ [N/2 ]}\sum_{i_2 =i_1+1}^{[(\rho+(1/2))N] }\ldots 
\sum_{i_p=i_{p-1}+1}^{[((p-1)\rho+(1/2))N] } \gd_{i_1}
\prod_{r=2}^p R_{1/2}\left(i_r-i_{r-1}\right) \gd_{i_r}\, ,
\end{multline}
where the product in the right-hand side has to be read as $1$ if $p=1$ and, in this case, there is only the sum over $i_1$. 
First of all remark that
$\eta_{N, q}= \sqrt{q!}\eta_N$ (recall \eqref{eq:useR2})
 and that $\eta_{N, p-1}$ is obtained 
from $\eta_{N,p}$ by removing the last term in the product, the corresponding
sum and  by multiplying by  $ 2\pi \tilde \bL(N)/\const$.
We now claim that Lemma~\ref{th:CE23} implies that for $p=2, 3, \ldots, q$
\begin{equation}
\label{eq:L1andind}
\lim_{N \to \infty} \bE \left[ \left \vert \eta_{N,p}- \eta_{N, p-1}\right\vert \right] \, =\, 0\, , 
\end{equation}
which clearly reduces the problem of proving
$\eta_N \Longrightarrow \eta_\infty$ to
proving $\eta_{N,1} \Longrightarrow \eta_\infty$, and  $\eta_\infty$ has to be 
a positive random variable.
But in fact we have
\begin{equation}
\label{eq:convlaw}
(2\pi /\const)^{q-1} \frac{L(N)}{\bL(N)}\, \eta_{N,1}\, =\, 
\frac{L(N)}{\sqrt{N}}
\sum_{i=0}^{\lfloor N/2\rfloor} \gd_i 
\stackrel{N \to \infty}{\Longrightarrow}
\frac{1}{2\sqrt{\pi}} \vert Z \vert  \ \ \ \ \ \ \ \ ( Z \sim \cN(0,1)).
\end{equation}
The convergence in \eqref{eq:convlaw} is a standard result that
we outline briefly. First of all for every choice of $n, m \in \N$ we
have
\begin{equation}
\label{eq:deltatau}
\left\{ \sum_{i=1}^n \gd_i \, < \, m \right\} \, =\, \left\{ \tau_m > n \right\} ,
\end{equation} 
so that the asymptotic law of the {\sl normalized local time}
of $\tau$ up to $n$, {\sl i.e.} $L(n)n^{-1/2}\sum_{i=1}^n \gd_i$, is directly linked
to the domain of attraction of the random variable $\tau_1$. 
Explicitly, one directly verifies that for $\gl>0$
\begin{equation}
\bE \left[ \left( 1- \exp(-\gl \tau_1)\right)\right] \stackrel{\gl \searrow 0} \sim
2\sqrt{\pi}L(1/\gl) \sqrt{\gl},
\end{equation}
so that, if $a(\cdot)$ is the asymptotic inverse of the regularly varying function $r(\cdot)$, defined by
$r(x):=\sqrt{x}/L(x)$ for  $x>0$, that is $a(r(x))\sim r(a(x))\sim x$
for $x \to \infty$, we have 
\begin{equation}
\lim_{N \to \infty}
\bE \left[ \exp \left( -\gl \tau_N /a(N) \right) \right] \, =\, \exp\left(-2 \sqrt{\pi \gl}\right) \, =\, 
\bE \left[ \exp(-\gl Y)\right],
\end{equation}
where $Y$ is a positive random variable with density $f_Y(y)$ equal to
$y^{-3/2} \exp(-\pi/y)$ (for $y>0$).  On the other hand for $t >0$
by \eqref{eq:deltatau} we have
\begin{equation}
\bP\left( \frac{L(n)}{\sqrt{n}}
\sum_{j=1}^n \gd_j <t\right) \stackrel{n \to \infty}\sim
\bP\left( \tau_{\lfloor t \sqrt{n}/L(n)\rfloor} >n \right).
\end{equation}
Therefore
if we observe that $a(t \sqrt{n}/L(n)) \sim t^2 a( \sqrt{n}/L(n))
\sim t^2 n$, for $n \to \infty$, we directly obtain that
\begin{equation}
\lim_{n \to \infty}
\bP\left( \frac{L(n)}{\sqrt{n}}
\sum_{j=1}^n \gd_j <t\right) \stackrel{n \to \infty}\sim
\bP\left(Y\, >\, \frac 1{t^2} \right).
\end{equation}
By using the (explicit) density  of $Y$, 
one directly verifies that $\bP (Y >  1/t^2 )$ coincides with
$\bP ( \vert Z\vert/\sqrt{2\pi} < t)$ for every $t>0$, that is  
 \eqref{eq:convlaw} is established (recall that in \eqref{eq:convlaw}
 the summation is up to $N/2$).
\medskip

We are therefore left with
proving   \eqref{eq:L1andind}. This  follows by
observing that for $p =3,4, \ldots,q$
\begin{multline}
\label{eq:forL1andind}
\bE \left[ \left \vert \eta_{N,p}- \eta_{N, p-1}\right\vert \right] \, \le \,
\left(\frac{2\pi}{\const}\right)^{p-q} \frac{\bL(N)}{N^{1/2}\tilde \bL
  (N)^{p-2}} \times \\
   \sum_{i_1=0}^{ \lfloor N/2 \rfloor}\sum_{i_2
  =i_1+1}^{\lfloor(\rho+(1/2))N\rfloor }\ldots
\sum_{i_{p-1}=i_{p-2}+1}^{\lfloor ((p-2)\rho+(1/2))N\rfloor } \bE \left[ \gd_{i_1}
  \prod_{r=2}^{p-1} R_{1/2}\left(i_r-i_{r-1}\right) \gd_{i_r}\right]\,
\times
\\
\bE \left[ \left\vert \frac 1{ \tilde\bL (N)}
    \sum_{i_{p}=i_{p-1}+1}^{\lfloor((p-1)\rho+(1/2))N\rfloor }
    R_{1/2}\left(i_p-i_{p-1}\right) \gd_{i_p} \, - \frac
    {\const}{2\pi} \right \vert \, \bigg \vert
  \gd_{i_{p-1}}=1\right]\, ,
\end{multline}
and the same expression holds if $p=2$ but in this case
the external summation is only over $i_1$
and $\prod_{r=2}^{p-1} R_{1/2}\left(i_r-i_{r-1}\right) \gd_{i_r}$ is replaced by $1$.
The bound \eqref{eq:forL1andind} follows from the triangular inequality
and from the renewal property of $\tau$.
Next, note that
\begin{multline}
\label{eq:forL1andind2}
\bE \left[ \left\vert \frac 1{ \tilde \bL (N)}
    \sum_{i_{p}=i_{p-1}+1}^{\lfloor((p-1)\rho+(1/2))N\rfloor }
    R_{1/2}\left(i_p-i_{p-1}\right) \gd_{i_p} \, - \frac
    {\const}{2\pi} \right \vert \, \bigg \vert
  \gd_{i_{p-1}}=1\right]\, =
\\
\bE \left[ \left\vert \frac 1{ \tilde \bL (N)}
    \sum_{i=1}^{\lfloor((p-1)\rho+(1/2))N\rfloor -i_{p-1}} R_{1/2}\left(i\right)
    \gd_{i} \, - \frac {\const}{2\pi} \right \vert \right] \stackrel{N
  \to \infty} \longrightarrow 0\, ,
\end{multline}
uniformly in the choice of $i_{p-1}\in \{ i_{p-2}+1, \ldots, \lfloor
((p-1)\rho+(1/2)N\rfloor \}$.  This is because the summation in
\eqref{eq:forL1andind2} contains at least $[\rho N]$ terms (and no
more than $N$) so that we can apply Lemma~\ref{th:CE23}.  The fact that
$\bE \left[ \left \vert \eta_{N,p}- \eta_{N, p-1}\right\vert
\right]=o(1)$ as $N \to \infty$ is therefore a consequence of the
following explicit estimate:
\begin{multline}
  \frac{\bL(N)}{N^{1/2}\tilde \bL (N)^{p-2}} \sum_{i_1=0}^{ \lfloor
    N/2 \rfloor}\sum_{i_2 =i_1+1}^{\lfloor(\rho+(1/2))N\rfloor }\ldots
  \sum_{i_{p-1}=i_{p-2}+1}^{\lfloor ((p-2)\rho+(1/2))N\rfloor } \bE
  \left[ \gd_{i_1} \prod_{r=2}^{p-1} R_{1/2}\left(i_r-i_{r-1}\right)
    \gd_{i_r}\right]\, \le
  \\
  \frac{\bL(N) \const _L ^{-(p-1)}}{N^{1/2}\tilde \bL (N)^{p-2}}
  \sum_{i_1=0}^{ \lfloor N/2 \rfloor}\sum_{i_2
    =i_1+1}^{\lfloor(\rho+(1/2))N\rfloor }\ldots
  \sum_{i_{p-1}=i_{p-2}+1}^{\lfloor((p-2)\rho+(1/2))N\rfloor } R_{1/2}(i_1)
  \prod_{r=2}^{p-1} \left(R_{1/2}\left(i_r-i_{r-1}\right)\right)^2 \\
 \stackrel{N \to \infty}\sim \sqrt2 \const _L ^{-(p-1)}\, ,
\end{multline}
where we have used the definition \eqref{eq:Ltilde} of the slowly varying
function $\tilde \bL(\cdot)$ and the fact that $\int_0^x (y^{1/2}\bL(y))^{-1}
\dd y \stackrel{x \to \infty}\sim 2 x^{1/2}/\bL (x)$.
This completes the proof of Lemma~\ref{TH:FROMCE}.
\qed

\medskip

\noindent
{\it Proof of Lemma~\ref{th:CE23}.}
This is very similar to the proof of Lemma~5.4 in \cite{cf:GLT_marg}
(that, in turn generalizes a result of K.~L.~Chung and P.~Erd\"os
\cite{cf:chungerdos}). We give it in detail in order to clarify the role of the
slowly varying function.

First of all let us remark that 
\begin{equation}
\frac 1{\tilde \bL (N)} \sum_{j=1}^{[\theta N]} R_{1/2}(j) \bE\left[\gd_j\right] \stackrel{N\to \infty}\sim
 \frac{\const \tilde \bL (\theta N)}{2\pi \tilde \bL (N)}  \stackrel{N\to \infty}\sim 
 \frac{\const}{2\pi }\, , 
\end{equation}
where the last asymptotic relation holds uniformly in $\theta$, when $\theta$ lies  in a compact subinterval of $(0, \infty)$. 
The statement is therefore reduced to showing that
the variance of
\begin{equation}
Y_n\, :=\, \sum_{j=1}^{n} R_{1/2}(j) {\gd_j},
\end{equation}
is $o(\tilde L(n)^2)$. 

Let us compute and start by observing that
\begin{equation}
\begin{split}
\text{var}_\bP \left( Y_n \right) \, &=\, \sum_{i,j=1}^n R_{1/2}(i)R_{1/2}(j)\left[
\bE\left[ \gd_i \gd_j\right] - \bE\left[ \gd_i \right]   \bE\left[\gd_j\right]
\right]
\\ 
&=\, 
2\sum_{i=1}^{n-1} \sum_{j=i+1}^n 
R_{1/2}(i)R_{1/2}(j)\left[
\bE\left[ \gd_i \gd_j\right] - \bE\left[ \gd_i \right]   \bE\left[\gd_j\right]
\right]\, 
+\, O(\tilde L(n)) \, \\
&=: \, 2 T_n + O(\tilde L(n)),
\end{split}
\end{equation}
and 
\begin{equation}
\begin{split}
T_n \, &=\, \sum_{i=1}^{n-1} R_{1/2}(i){ \bE\left[ \gd_i \right] }
\left[
\sum_{j=1}^{n-i} R_{1/2}(i+j){ \bE\left[ \gd_j \right] } -
\sum_{j=i+1}^{n} R_{1/2}(j){ \bE\left[ \gd_j \right] } 
\right]
\\
& \le \, 
 \sum_{i=1}^{n-1} R_{1/2}(i){ \bE\left[ \gd_i \right] }
\left[
\sum_{j=1}^{n-i} R_{1/2}(i+j){ \bE\left[ \gd_j \right] } -
\sum_{j=i+1}^{n} R_{1/2}(i+j){ \bE\left[ \gd_j \right] } 
\right]
\\
& \le \, 
 \sum_{i=1}^{n-1} R_{1/2}(i){ \bE\left[ \gd_i \right] }
\sum_{j=1}^{i} R_{1/2}(i+j){ \bE\left[ \gd_j \right] } 
\, \le \, 
 \sum_{i=1}^{n-1} \left(R_{1/2}(i)\right)^2{ \bE\left[ \gd_i \right] }
\sum_{j=1}^{i} { \bE\left[ \gd_j \right] } 
\\ 
& \phantom{movemovemove} \le \, \const _L ^{-2}
 \sum_{i=1}^{n-1} \left(R_{1/2}(i)\right)^3
 \sum_{j=1}^{i} R_{1/2}(j)\stackrel{n \to \infty}\sim
 2\const _L ^{-2} \int_{0}^n \frac1{(1+x) (\bL (x))^4} \dd x\, , 
\end{split}
\end{equation}
where the first three inequalities follow since $R_{1/2}(\cdot)$ is non increasing and the fourth follows from \eqref{eq:Doney-bound}.
The conclusion of the proof follows now from Remark~\ref{rem:auxL}.
\qed

\medskip

\begin{rem}
\label{rem:auxL}
\rm
For $x \to \infty$
\begin{equation}
\int_0^x \frac1{(1+y) (\bL(y))^4} \dd y \, \ll \, \left( \tilde \bL (x) \right)^2,
\end{equation}
with $ \tilde \bL (x)$ defined as in \eqref{eq:Ltilde} with $L(\cdot)$
replaced by $\bL(\cdot)$.
This is a consequence of \eqref{eq:Ltilde-prop} (which of course holds also for $\bL(\cdot)$):
\begin{equation}
\int_0^x \frac1{(1+y) (\bL(y))^4} \dd y \, \ll \, 
\int_0^x \frac1{(1+y) (\bL(y))^2} \tilde \bL (y) \dd y\, \le \, \tilde \bL (x)
\int_0^x \frac1{(1+y) (\bL(y))^2} \dd y \, ,
\end{equation}
and the rightmost term is $\left(  \tilde \bL (x)\right)^2$.
\end{rem}

\section{A general monotonicity result}
\label{sec:monotonicity}

We present now a very general result: we give it in our context but a look at the proof suffices
to see 
that it   holds also under substantially  milder assumptions 
 on the process $\tau$.
\medskip

\begin{proposition}\label{th:monotonicity}
The free energy $\tf(\gb,h)$ is a non-increasing function of $\gb$ on $[0,\infty)$.
Therefore
\begin{itemize}
 \item[(i)] $\gb\mapsto h_c(\gb)$ is a non-decreasing function of $\gb$.
 \item[(ii)] There exists a critical value $\gb_c\in[0,\infty]$ such that $h_c(0)=h_c(\gb)$ if and only if $\gb\le \gb_c$.
\end{itemize}
\end{proposition}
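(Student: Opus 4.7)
The plan is to prove monotonicity of $\tf(\cdot,h)$ in $\gb$ for every fixed $h$, and then read off (i) and (ii) as immediate consequences. The key observation is that, after introducing the IID mean-one positive random variables $W_n(\gb) := \exp(\gb\go_n - \log\M(\gb))$, the partition function \eqref{eq:Znh23} admits the representation
\begin{equation*}
Z_{N,\go}(\gb,h) \, =\, \bE\!\left[e^{h\sum_{n=1}^N \gd_n}\prod_{n=1}^N (W_n(\gb))^{\gd_n}\gd_N\right],
\end{equation*}
so the $\gb$-dependence enters only through the one-dimensional law of $W(\gb)$. The core claim to establish is that $\gb\mapsto W(\gb)$ is monotone non-decreasing in the \emph{convex stochastic order}, i.e.\ $\bbE[\phi(W(\gb_1))] \le \bbE[\phi(W(\gb_2))]$ for every convex $\phi:(0,\infty)\to\R$ whenever $0\le \gb_1\le \gb_2$.

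The convex order is verified by direct differentiation. Writing $\bbP_\gb$ for the tilted probability measure with density $W(\gb)$ relative to $\bbP$, one has $\bbE_{\bbP_\gb}[\go_1]=(\log\M)'(\gb)$, and a computation (justified by dominated convergence under the exponential-moment hypothesis on $\go_1$, first for smooth $\phi$ and then extending by monotone approximation) gives
\begin{equation*}
\frac{\dd}{\dd\gb}\bbE[\phi(W(\gb))] \,=\, \bbE\!\left[\phi'(W(\gb))W(\gb)\bigl(\go_1-(\log\M)'(\gb)\bigr)\right] \,=\, \mathrm{Cov}_{\bbP_\gb}\!\left(\phi'(W(\gb)),\go_1\right).
\end{equation*}
Since $\gb\ge 0$, $W(\gb)$ is a non-decreasing function of $\go_1$; combined with the monotonicity of $\phi'$ (by convexity of $\phi$), both $\phi'(W(\gb))$ and $\go_1$ are non-decreasing functions of the same real random variable, so a standard one-dimensional correlation inequality (Chebyshev's) forces this covariance to be non-negative. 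With the convex order in hand, Strassen's theorem produces, for each $n$ independently, a martingale coupling of $\tilde W_n(\gb_1)$ and $\tilde W_n(\gb_2)$ (with correct marginal laws) such that, setting $\cG:=\gs(\tilde W_n(\gb_1):n\ge 1)$,
\begin{equation*}
\bbE\!\left[Z_{N,\go}(\gb_2,h)\mid\cG\right] \,=\, \bE\!\left[e^{h\sum_n\gd_n}\prod_n\bbE\!\left[(\tilde W_n(\gb_2))^{\gd_n}\mid\cG\right]\gd_N\right] \,=\, Z_{N,\go}(\gb_1,h),
\end{equation*}
using conditional independence and the fact that $\gd_n\in\{0,1\}$ makes $(\tilde W_n(\gb_2))^{\gd_n}$ equal to either $1$ or $\tilde W_n(\gb_2)$, both with conditional expectation $(\tilde W_n(\gb_1))^{\gd_n}$. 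A conditional Jensen inequality applied to the concave $\log$, followed by a full expectation, yields $\bbE\log Z_{N,\go}(\gb_2,h)\le\bbE\log Z_{N,\go}(\gb_1,h)$; dividing by $N$ and letting $N\to\infty$ gives $\tf(\gb_2,h)\le \tf(\gb_1,h)$.

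Item (i) is then immediate from the definition $h_c(\gb)=\inf\{h:\tf(\gb,h)>0\}$: if $h\in(h_c(\gb_1),h_c(\gb_2))$ existed, then $\tf(\gb_1,h)>0\ge \tf(\gb_2,h)$, contradicting monotonicity. For (ii), set $\gb_c:=\sup\{\gb\ge 0:h_c(\gb)=h_c(0)\}\in[0,\infty]$ (with $\sup\emptyset:=0$); monotonicity (i) and the sandwich bound \eqref{eq:hcbounds} together with continuity of $\log\M$ at $0$ pin down the set $\{\gb:h_c(\gb)=h_c(0)\}$ as the interval $[0,\gb_c]$. The main obstacle in the whole argument is the verification of the convex stochastic order: one must identify Chebyshev's correlation inequality as the relevant tool and rigorously justify the differentiation under the integral sign using the integrability of $\go_1$. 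Once this ingredient is in place, the martingale-coupling and Jensen steps are routine.
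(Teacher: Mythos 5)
Your proof is correct, and it follows a genuinely different route from the paper's. The paper differentiates $\bbE\left[\log Z_{N,\go}\right]$ in $\gb$, rewrites the derivative via the quenched tilted measure $\hat\bbP_\tau$ (which is a product measure for fixed $\tau$), and then closes the argument with a single application of the Harris--FKG inequality to the pair $Z_{N,\go}^{-1}$ (a decreasing function of $\go$) and $\sum_n(\go_n-\mbeta)\gd_n$ (an increasing one). You instead localize the whole question to a single disorder variable: you prove that the mean-one single-site weight $W(\gb)=e^{\gb\go_1-\log\M(\gb)}$ is monotone in the convex stochastic order, then lift this to the partition function by a Strassen martingale coupling of the IID weights and a conditional Jensen inequality. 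The correlation step in your proof is the one-dimensional Chebyshev correlation inequality rather than the full multivariate FKG, which is conceptually cleaner, and the coupling argument avoids differentiating the free energy altogether. The paper's route is shorter on the page; yours is arguably more robust (it never needs to interchange $\partial_\gb$ with $\bbE$, only to verify the convex order, which is a one-dimensional calculation) and transfers without change to any model built from IID per-site Boltzmann weights with $\{0,1\}$ exponents. One small note: for item (ii) you gesture at continuity of $\log\M$ at $0$, but what is actually needed to show the level set $\{\gb:h_c(\gb)=h_c(0)\}$ is closed is the continuity of $h_c(\cdot)$ itself, which the paper derives from concavity of $h_c(\gb)-\log\M(\gb)$ (a consequence of joint convexity of $\tf(\gb,h+\log\M(\gb))$); the paper treats this step just as informally as you do, so it is not a real gap so much as a shared appeal to earlier results.
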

\medskip

This result is of particular relevance when $\sum_n 1/(n L(n)^2) <\infty$,
that is when for small $\gb$ we have $h_c(\gb)= h_c(0)$
({\sl cf.} \S~\ref{sec:review-rs}): in this case $\gb_c$
is the transition point from the  irrelevant disorder regime to the  relevant one. 
But also in our set-up, in which 
$\sum_n 1/(n L(n)^2) =\infty$, it is  of some use since it implies 
that it is sufficient to prove Theorem~\ref{th:main333} for one value of $\gb_0>0$
and the statement holds also for any other value of $\gb_0$
(by accepting, of course,   a worse estimate on the shift of the critical point if one follows the estimates quantitatively, see Remark~\ref{rem:lazy}). 

\begin{proof}
We just need to prove that $\gb\mapsto\tf(\gb,h)$ is a non-increasing function on $[0,\infty)$ as the other points are trivial consequence of this result.
To do so, we prove that $\gb\mapsto \bbE[\log Z_{N,\go}]$ is a non-increasing function of $\gb$, and pass to the limit. 
The proof is the adaptation of an argument  used  in \cite{cf:CY} for directed polymers with bulk disorder  to prove a similar result.

What we will show is 
\begin{equation}
\label{eq:fCY}
 \frac{\partial}{\partial \gb} \bbE\left[  \log Z_{N,\go} \right] \,=\, \bbE\left[ \frac{\partial}{\partial \gb} \log Z_{N,\go} \right]\,\le\, 0.
\end{equation}
The proof of the equality
in \eqref{eq:fCY} is standard and can be easily adapted from \cite[Lemma 3.3]{cf:CY}.
Recall now that $\mbeta:=M'(\gb)/M(\gb)$. We have
\begin{equation}\begin{split}
\bbE\left[ \frac{\partial}{\partial \gb}\log Z_{N,\go}\right]&=\bE \left[\bbE\left[ \frac{1}{Z_{N,\go}}\sum_{n=1}^N(\go_n-\mbeta)\gd_n\exp\left(\sum_{n=1}^N  [\gb\go_n+h-\log M(\gb)]\gd_n\right)\gd_N\right]\right]\\
&=\bE\left[\exp\left(\sum_{n=1}^N h \gd_n\right)\gd_N \hat\bbE_{\tau}\left[Z_{N,\go}^{-1}\sum_{n=1}^N(\go_n-\mbeta)\gd_n\right]\right].
\end{split}\end{equation}
For a fixed trajectory of the renewal, the  probability measure $\hat \bbP_{\tau}$ (recall definition \eqref{eq:ptau}), is a product measure, so that, since
 $Z_{N,\go}^{-1}$ is a decreasing function of $\go$ and 
 $\sum_{n=1}^N(\go_n-\mbeta)\gd_n$ is 
a non-decreasing  function of $\go$, by the Harris--FKG inequality we have
\begin{equation}
\hat\bbE_{\tau}\left[Z_{N,\go}^{-1}\sum_{n=1}^N(\go_n-\mbeta)\gd_n\right]\le \hat\bbE_{\tau}\left[Z_{N,\go}^{-1}\right]\hat\bbE_{\tau}\left[\sum_{n=1}^N(\go_n-\mbeta)\gd_n\right]=0.
\end{equation}

\end{proof}

\part{Polymères dirigés en milieu aléatoire}

\chapter{Directed polymer on hierarchical lattices with site disorder}\label{DPOLHLSD}
\section{Introduction and presentation of the model}

The model of directed polymers in random environment appeared first in the physics literature as
an attempt to modelize roughening in domain wall in the $2D$-Ising model due to impurities \cite{cf:HH}.
It then reached the mathematical community in \cite{cf:IS}, and in \cite{cf:B}, where the author applied
martingale techniques that have became the major technical tools in the study of this model since then.
A lot of progress has been made recently in the mathematical understanding of directed polymer model (see for example \cite{cf:J,cf:CY, cf:CSY, cf:CY_cmp, cf:CH_al, cf:CH_ptrf, cf:CV} and \cite{cf:CSY_rev} for a recent review). 
It is known that there is a phase transition from a delocalized phase at high temperature, where the behavior
of the polymer is diffusive, to a localized phase, where it is expected that the influence of the media is relevant in order to produce nontrivial phenomenons, such as super-diffusivity. These two different situations are
usually referred to as weak and strong disorder, respectively. A simple characterization of this dichotomy is
given in terms of the limit of a certain positive martingale related to the partition function
of this model.

It is known that in low dimensions ($d=1$ or $2$), the polymer is essentially
in the strong disorder phase (see \cite{cf:Lac}, for more precise results), but for $d\geq 3$, there
is a nontrivial region of temperatures where weak disorder holds. A weak form of invariance principle is
proved in \cite{cf:CY}.

However, the exact value of the critical temperature which separates the two regions (when it is finite) remains an open question.
It is known exactly in the case of directed polymers on the tree, where a complete analysis is available
(see \cite{cf:BPP, cf:Fr, cf:KP}). In the case of $\Z^d$, for $d\ge 3$, an $L^2$ computation yields an upper bound on the critical temperature, which is however known not to coincide with this bound (see \cite{cf:BS, cf:Birk} and \cite{cf:CC}). 

We choose to study the same model of directed polymers on diamond hierarchical lattices. These lattices present
a very simple structure allowing to perform a lot of computations together with a richer geometry
than the tree (see Remark \ref{convexity-tree} for more details).   They have been introduced in physics in order to perform exact renormalization group computations for spin systems (\cite{cf:Migd, cf:Kad}). A detailed treatment of more general hierarchical lattices can be found
in \cite{cf:KG1} and \cite{cf:KG2}. For an overview of the extensive literature on Ising and Potts models on hierarchical lattices, we refer the reader to \cite{cf:BZ, cf:DF} and references therein. Whereas statistical mechanics model on trees have to be considered as mean-field versions of the original models, the hierarchical lattice models are in many sense very close to the models on $\Z^d$; they are a very powerful tool to get an intuition for results and proofs on the more complex $\Z^d$ models (for instance, the work on hierarchical pinning model in \cite{cf:GLT} lead to a solution of the original model in \cite{cf:DGLT}. In the same manner, the present work has been a great source of inspiration for \cite{cf:Lac}).

Directed polymers on hierarchical lattices (with bond disorder) appeared in \cite{cf:CD, cf:DG,cf:DGr, cf:DHV}
(see also \cite{cf:R_al} for directed first-passage percolation).
More recently, these lattice models raised the interest of mathematicians in the study of random resistor  networks (\cite{cf:W}), pinning/wetting transitions (\cite{cf:GLT,cf:Hubert}) and diffusion on a percolation cluster (\cite{cf:HK}).

We can also mention \cite{cf:Ham} where the authors consider a random analogue of the hierarchical lattice, where
at each step, each bond transforms either into a series of two bonds or into two bonds in parallel, with probability
$p$ and $p-1$ respectively. 

\vspace{2ex}

Our aim in this paper is to describe the properties of the quenched free energy of directed polymers on
hierarchical lattices with site disorder at high temperature:
\begin{itemize}
	\item First, to be able to decide, in all cases, if the quenched and annealed free energy differ at low temperature.
	\item If they do, we want to be able to describe the phase transition and to compute the critical exponent.
\end{itemize}

We choose to focus on the model with site disorder, whereas \cite{cf:GM_h, cf:CD} focus on the model with \textsl{bond disorder} where computations are simpler.
We do so because we believe that this model is closer to the model of directed polymer in $\Z^{d}$ (in particular, because of the inhomogeneity of the Green Function), and because there exists a nice recursive construction of the partition functions in our case, that leads to a martingale property. Apart from that, both models are very similar, and we will shortly talk about the bound disorder model in section \ref{bddis}.

The diamond hierarchical lattice $D_n$ can be constructed recursively:

\begin{itemize}
	\item $D_0$ is one single edge linking two vertices $A$ and $B$.
	\item $D_{n+1}$ is obtained from $D_n$ by replacing each edges by $b$ branches of $s-1$ edges.
\end{itemize}

\begin{figure}[h]
\begin{center}
\leavevmode
\epsfysize =6.5 cm
\psfragscanon
\psfrag{a}[c]{\normalsize A}
\psfrag{b}[c]{B}
\psfrag{l0}[c]{$D_0$}
\psfrag{l1}[c]{$D_1$}
\psfrag{l2}[c]{$D_2$}
\psfrag{traject}[l]{{\tiny (a directed path on $D_2$)}}
\epsfbox{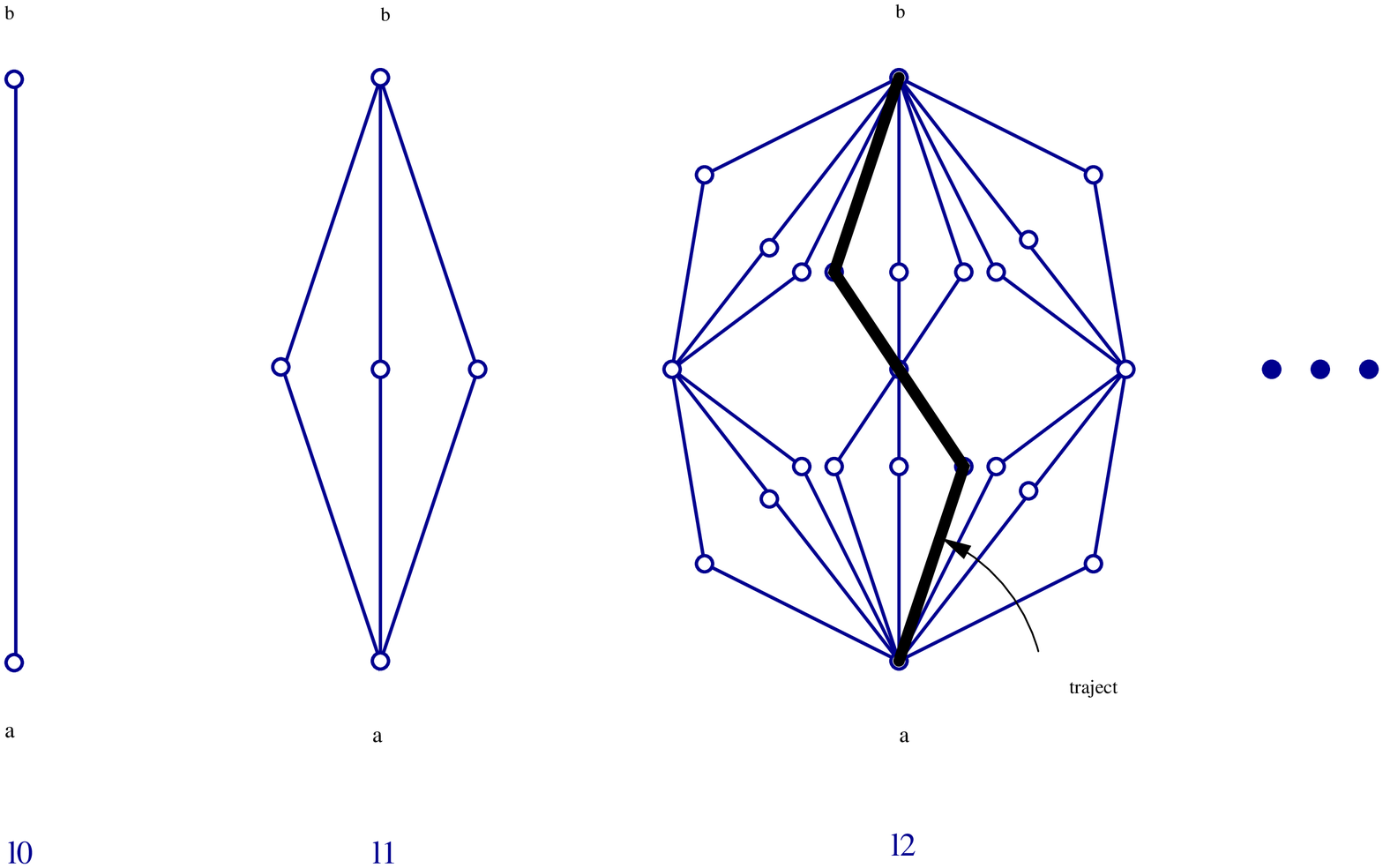}
\end{center}
\caption{\label{fig:Dn2} We present here the recursive construction of the first three levels of the hierarchical lattice  $D_n$, for $b=3$, $s=2$.}
\end{figure}
\noindent We can, improperly, consider $D_n$ as a set of vertices, and, with the above construction, we have $D_n\subset D_{n+1}$. We set $D=\bigcup_{n\ge 0} D_n$.
The vertices introduced at the $n$-th iteration are said to belong to the $n$-th generation $V_n=D_n\setminus D_{n-1}$.
We easily see that $|V_n|= (bs)^{n-1} b (s-1)$.

\noindent We restrict to $b\geq 2$ and $s\geq 2$. The case $b=1$ (resp. $s=1$) is not interesting as it just corresponds to a familly of edges
in serie (resp. in parallel)

 We introduce disorder in the system as a set of real numbers associated to vertices $\go=(\go_z)_{z\in D\setminus{\{A,B\}}}$.
Consider $\Gamma_n$ the space of directed paths in $D_n$ linking $A$ to $B$. For each
$g \in \Gamma_n$ (to be understood as a sequence of connected vertices in $D_n$, $(g_0=A, g_1, \dots, g_{s^n}=B)$), 
we define the Hamiltonian

\begin{eqnarray}
	H^{\omega}_n(g) := \sum^{s^{n}-1}_{t=1} \omega(g_t).
	\label{eq: Hn}
\end{eqnarray}

\noindent For $\beta > 0$, $n\geq 1$, we define the (quenched) polymer measure on $\gG_n$ which chooses a path $\gga$ at random  with law

\begin{eqnarray}
	\mu^{\omega}_{\beta, n}(\gamma=g) := \frac{1}{Z_{n}(\beta)} \exp( \beta H^{\omega}_n(g)),
	\label{eq:polymer}
\end{eqnarray}

\noindent where

\begin{eqnarray}
	Z_{n}(\beta)= Z_{n}(\beta, \omega):=\sum_{g \in \Gamma_n} \exp (\beta H^{\omega}_n(g)),
	\label{eq:Zn}
\end{eqnarray}

\noindent is the partition function, and $\gb$ is the inverse temperature parameter.

 In the sequel, we will focus on the case where $\go=( \omega_z,\, z\in D\setminus\{A,B\} )$ is a collection of i.i.d.\ random variables and denote the product measure by $Q$. Let $\go_0$ denote a one dimensional marginal of $Q$, we assume that $\go_0$ has expectation zero, unit variance, and that

\begin{eqnarray}
	\lambda(\beta):= \log Q e^{\beta \go_0}<\infty \quad \forall \gb>0.
	\label{eq:lambda}
\end{eqnarray}
As usual, we define the quenched free energy (see Theorem \ref{thermo}) by

\begin{eqnarray}
	p(\beta) := \lim_{n\to +\infty} \frac{1}{s^{n}} Q \log Z_n(\beta),
	\label{eq:quenched}
\end{eqnarray}

\noindent and its annealed counterpart by

\begin{eqnarray}
	f(\beta) := \lim_{n\to +\infty} \frac{1}{s^{n}} \log Q Z_n(\beta).
	\label{eq:annealed}
\end{eqnarray}

\noindent This annealed free energy can be exactly computed. We will prove

\begin{eqnarray}
	f(\beta):= \lambda(\beta)+ \frac{\log b}{s-1}.
	\label{eq:annealedexact}
\end{eqnarray}

\vspace{3ex}

This model can also be stated as a random dynamical system: given two integer parameters $b$ and $s$ larger than $2$, $\gb>0$, consider the following recursion:

\begin{align}
W_0&\stackrel{\mathcal L}{=}1\notag\\
W_{n+1}&\stackrel{\mathcal L}{=}\frac{1}{b}\sum_{i=1}^b \prod_{j=1}^{s}W_n^{(i,j)}\prod_{i=1}^{s-1}A^{(i,j)}_n\label{eq:recab},
\end{align}

\noindent where equalities hold in distribution, $W_n^{(i,j)}$ are independent copies of $W_n$, and $A^{(i,j)}_n$ are i.i.d.\ random variables, independent of the $W_n^{(i,j)}$ with law

\begin{align*}
A\stackrel{\mathcal L}{=}\exp(\gb\go-\gl(\gb)).
\end{align*}

\noindent In the directed polymer setting, $W_n$ can be interpretative as the normalized partition function

\begin{eqnarray}
	W_n(\beta)=W_n(\beta,\omega) = \frac{Z_n(\beta,\omega)}{Q Z_n(\beta, \omega)}.
	\label{eq:W}
\end{eqnarray}

\noindent Then, (\ref{eq:recab}) turns out to be an almost sure equality if we interpret $W_n^{(i,j)}$ as the partition function of the 
$j$-th edge of the $i$-th branch of $D_1$.

\noindent The sequence $(W_n)_{n\ge 0}$ is a martingale with respect to $\mathcal{F}_n = \sigma(\omega_z:\, z\in \cup^n_{i=1}V_i)$ and
as $W_n>0$ for all $n$, we can define the almost sure limit
$W_{\infty}= \lim_{n\to +\infty} W_n$. Taking limits in both sides of (\ref{eq:recab}), we obtain a functional equation for $W_{\infty}$.




\section{Results}

Our first result is about the existence of the free energy.

\begin{theorem}\label{thermo}
For all $\beta$, the limit

\begin{eqnarray}
	\lim_{n\to +\infty} \frac{1}{s^n} \log Z_n(\beta),
	\label{eq:free}
\end{eqnarray}

\noindent exists a.s. and is a.s. equal to the quenched free
energy $p(\beta)$. In fact for any $\gep>0$, one can find $n_0(\gep,\gb)$ such that
\begin{eqnarray}\label{eq:concentrate}
  Q\left(\left|Z_n-Q\log Z_n\right|> s^n \gep\right)\le \exp\left(-\frac{\gep^{2/3}s^{n/3}}{4}\right), \quad \text{for all } n\ge n_0
\end{eqnarray}

Moreover, $p(\cdot)$ is a strictly convex function of $\beta$.
\end{theorem}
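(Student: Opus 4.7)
My plan is to prove the three assertions in turn: the existence of the quenched free energy $p(\beta)$ together with the annealed identity \eqref{eq:annealedexact}, the concentration estimate \eqref{eq:concentrate}, and strict convexity.

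For the annealed free energy, I would use the hierarchical decomposition $Z_{n+1} = \sum_{i=1}^{b} \prod_{j=1}^{s} Z_n^{(i,j)} \prod_{j=1}^{s-1} e^{\beta \omega^{(i,j)}}$ with independent ingredients to get $Q Z_{n+1} = b\,(Q Z_n)^{s} e^{(s-1)\lambda(\beta)}$; solving this recursion gives $s^{-n} \log Q Z_n \to \lambda(\beta) + \log b/(s-1) = f(\beta)$. For the quenched limit, retaining only the contribution of a single branch produces $\log Z_{n+1} \ge \sum_{j=1}^{s} \log Z_n^{(1,j)} + \beta \sum_{j=1}^{s-1} \omega^{(1,j)}$, and taking $Q$-expectation together with $Q\omega_0 = 0$ yields $Q\log Z_{n+1} \ge s\,Q\log Z_n$. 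Hence $s^{-n}Q\log Z_n$ is non-decreasing, and Jensen gives the matching upper bound by $f(\beta)$, so $p(\beta) = \lim_n s^{-n}Q\log Z_n$ exists in $[0, f(\beta)]$.

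Next, for \eqref{eq:concentrate}, the key estimate is that $\partial \log Z_n/\partial \omega_z = \beta\,\mu^\omega_{\beta,n}(z \in \gamma) \in [0,\beta]$, so $\log Z_n$ is $\beta$-Lipschitz in each coordinate of $\omega$. Since $\omega$ is unbounded, I would truncate the disorder at level $K = K(n,\epsilon)$ and split $\log Z_n = \log \widetilde Z_n + (\log Z_n - \log \widetilde Z_n)$. On the truncated environment I would reveal the $\omega_z$'s one at a time and apply Azuma--Hoeffding to the martingale of conditional expectations $Q[\log \widetilde Z_n \mid \omega_{z_1}, \dots, \omega_{z_i}]$ to obtain a Gaussian tail at scale $\beta K \sqrt{N}$ with $N \asymp (bs)^n$, while the untruncated contribution is controlled by a union bound together with the finite exponential moment \eqref{eq:lambda}. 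Choosing $K \asymp s^{n/3}\epsilon^{2/3}$ balances the two error sources and yields \eqref{eq:concentrate}; Borel--Cantelli then promotes $s^{-n}Q\log Z_n \to p(\beta)$ to almost sure convergence.

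For strict convexity, $\beta \mapsto \log Z_n(\beta,\omega)$ is convex for every $\omega$ as the log of a sum of exponentials linear in $\beta$, so $p$ inherits convexity as a pointwise limit. Differentiating twice gives $\partial_\beta^2 \log Z_n = \var_{\mu^\omega_{\beta,n}}(H^\omega_n) \ge 0$, and averaging over $\omega$ kills the off-diagonal part by independence, leaving $Q[\var_{\mu^\omega_{\beta,n}}(H^\omega_n)] = \sum_z Q[\mu^\omega_{\beta,n}(z \in \gamma)(1-\mu^\omega_{\beta,n}(z \in \gamma))]$. Strict convexity of $p$ then follows once this sum is shown to grow at least like $c\,s^n$ uniformly in $n$, which I would obtain from a non-concentration property of $\mu^\omega_{\beta,n}$: thanks to the branching structure of $D_n$ at level one, for a positive fraction of the $s^n - 1$ path sites $z$ the mass $\mu^\omega_{\beta,n}(z \in \gamma)$ stays bounded away from both $0$ and $1$, uniformly in $n$.

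The main obstacle is clearly the concentration estimate \eqref{eq:concentrate}: the specific exponent $s^{n/3}\epsilon^{2/3}$ does not come out of a naive truncation-plus-Azuma scheme treating $\log Z_n$ as a Lipschitz function of $\asymp (bs)^n$ independent variables, so the argument will likely have to exploit the recursive structure of the lattice and propagate concentration scale by scale, combining control on the fluctuations of the auxiliary partition functions at generation $k$ to deduce fluctuation bounds at generation $k+1$.
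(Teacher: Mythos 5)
Your argument for the existence of the quenched free energy (superadditivity of $Q\log Z_n$ over $s^n$, plus the annealed upper bound from Jensen) is exactly what the paper does, and that part is fine.

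For the concentration inequality \eqref{eq:concentrate}, the filtration you propose will not produce the stated exponent. You reveal the $\omega_z$'s one at a time and apply Azuma with $N\asymp(bs)^n$ increments, each bounded after truncation by $\beta K$. That gives a tail of order $\exp\bigl(-(s^n\gep)^2/(2(bs)^n\gb^2K^2)\bigr)=\exp\bigl(-(s/b)^n\gep^2/(2\gb^2K^2)\bigr)$, which degenerates entirely when $b>s$ and in any case has the wrong $n$-dependence. The exponent $s^{n/3}\gep^{2/3}$ in \eqref{eq:concentrate} corresponds to a Doob martingale with \emph{$s^n-1$} increments, one per ``time slice'' $t\in\{1,\dots,s^n-1\}$ of the lattice, each increment having finite exponential moments uniformly in $n$ and $t$; this is how it is done in the reference the paper cites (Comets--Shiga--Yoshida, Prop.~2.5), using the Lesigne--Voln\'y martingale concentration inequality, which handles unbounded increments and requires no truncation. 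Your truncation-plus-Azuma route could in fact be made to work if you reveal the environment slice by slice rather than site by site (then Azuma gives $\exp(-s^n\gep^2/(2\gb^2K^2))$ and $K\asymp s^{n/3}\gep^{2/3}$ balances correctly against the $(bs)^n\exp(-cK)$ truncation error), but contrary to your final speculation, no feature of the recursive structure is needed: the same argument works verbatim on $\Z^d$.

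The strict convexity argument contains a genuine error. The identity you write,
\begin{equation*}
Q\bigl[\mathrm{Var}_{\mu_n}(H_n)\bigr]\;=\;\sum_z Q\bigl[\mu_n(z\in\gamma)\bigl(1-\mu_n(z\in\gamma)\bigr)\bigr],
\end{equation*}
is false. Expanding,
\begin{equation*}
Q\bigl[\mathrm{Var}_{\mu_n}(H_n)\bigr]\;=\;\sum_{z,z'}Q\Bigl[\omega_z\omega_{z'}\,\mathrm{Cov}_{\mu_n}\bigl(\ind_{\{z\in\gamma\}},\ind_{\{z'\in\gamma\}}\bigr)\Bigr],
\end{equation*}
and the off-diagonal terms $z\neq z'$ do \emph{not} vanish: the covariance under $\mu_n$ is itself a function of the entire environment, in particular of $\omega_z$ and $\omega_{z'}$, so $Q[\omega_z\omega_{z'}\,\cdot\,]$ is not killed by independence. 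Likewise the diagonal term is $Q[\omega_z^2\,\mu_n(z\in\gamma)(1-\mu_n(z\in\gamma))]$, and $\omega_z^2$ correlates with the polymer weights. Beyond the identity, the claim that $\mu_n(z\in\gamma)$ stays bounded away from $0$ and $1$ on a positive fraction of sites uniformly in $n$ is unjustified and doubtful in the strong-disorder regime, where the polymer measure can concentrate sharply. The paper handles both problems at once with a conditional variance decomposition: conditioning on the restriction $\gamma|_{n-1}$ of the path to $D_{n-1}$, the slab energies $H^{(l)}_n$, $l=0,\dots,s^{n-1}-1$, become $\mu_n$-independent, so the cross-slab covariances drop out legitimately and
\begin{equation*}
\mathrm{Var}_{\mu_n}(H_n)\;\geq\;\sum_{l=0}^{s^{n-1}-1}\mu_n\Bigl[\mathrm{Var}_{\mu_n}\bigl(H^{(l)}_n\,\big|\,\gamma|_{n-1}\bigr)\Bigr].
\end{equation*}
Each summand is then bounded below, uniformly in $\gb\in[0,K]$ and $n$, by exhibiting an environment event $M(n,l,x)$ of fixed positive $Q$-probability on which the slab energy takes at least two distinct values over paths through $x$, and comparing $\mu_n$ to a modified measure $\mu_n^{(l)}$ with the slab environment zeroed out, for which the two measures have mutually bounded densities up to a factor $e^{\pm 2\gb(s-1)}$. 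That comparison argument is what makes the lower bound $c\,s^{n-1}$ on $Q[\mathrm{Var}_{\mu_n}(H_n)]$ uniform in $n$, and it is missing from your proposal.
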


\vspace{4ex}

\begin{rem}\rm
The inequality (\ref{eq:concentrate}) is the exact equivalent of \cite[Proposition 2.5]{cf:CSY}, 
and the proof given there can easily be adapted to our case. It applies concentration results for
martingales from \cite{cf:LV}.
It can be improved in order to obtain the same bound as for Gaussian
environments stated in \cite{cf:CH_ptrf} (see \cite{cf:Cnotes} for details). However, it is believed that it is
not of the optimal order, similar to the case of directed polymers on $\Z^d$.
\end{rem}

\begin{rem}\rm \label{convexity-tree}
The strict convexity of the free energy is an interesting property. It is known that it holds also for the directed polymer on $\Z^d$ but not on the tree. In the 
later case, the free energy is strictly convex only for values of $\beta$ smaller than the critical value $\beta_c$ 
(to be defined latter) and it is linear on $[\beta_c,+\infty)$. This fact is related to the particular structure of
the tree that leads to major simplifications in the 'correlation' structure of the model (see \cite{cf:BPP}). The strict convexity, in
our setting, arises essentially from the property that two path on the hierarchical lattice can re-interesect after being separated at some step. This underlines once more, that $\Z^d$ and the hierarchical lattice have a lot of features in common, which they do not share with the tree.
\end{rem}

We next establish the martingale property for $W_n$ and the zero-one law for its limit.

\begin{lemma}\label{martingale}
$(W_n)_n$ is a positive $\mathcal{F}_n$-martingale. It converges
$Q$-almost surely to a non-negative limit $W_{\infty}$ that satisfies
the following zero-one law:

\begin{eqnarray}
 Q \left( W_{\infty} > 0 \right)\,\in\, \{0,1\}.
 \label{zeroone}
\end{eqnarray}
\end{lemma}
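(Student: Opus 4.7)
\emph{Martingale property and convergence.} I would use the \emph{bottom-up} decomposition: $D_{n+1}$ is obtained from $D_n$ by replacing each edge $e$ by $b$ parallel branches of $s$ edges, and the $b(s-1)$ new internal vertices of each such edge lie in $V_{n+1}$, hence are independent of $\cF_n$. A directed path $\tilde g\in\gG_{n+1}$ is uniquely encoded by a path $g\in\gG_n$ together with, for each of its $s^n$ edges $e_k(g)$, a choice of branch and of internal vertices; setting $B_e:=\sum_{i=1}^b\prod_{j=1}^{s-1}\exp(\gb\go(v^{(i)}_{e,j}))$, this yields
\begin{equation*}
 Z_{n+1}\, =\, \sum_{g\in\gG_n}\exp(\gb H_n^\go(g))\prod_{k=1}^{s^n}B_{e_k(g)}.
\end{equation*}
The variables $\{B_e\}_{e\in E(D_n)}$ are i.i.d., independent of $\cF_n$, with mean $bM(\gb)^{s-1}$. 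Since any directed path in a diamond lattice traverses each edge at most once, conditioning on $\cF_n$ factorizes the product within each summand and gives $Q(Z_{n+1}\mid\cF_n)=(bM(\gb)^{s-1})^{s^n}Z_n$; the same factor appears in $QZ_{n+1}$, so $Q(W_{n+1}\mid\cF_n)=W_n$. Combined with $W_n\ge 0$ and $QW_n=1$, Doob's theorem then produces the $Q$-a.s.\ limit $W_\infty\ge 0$.

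\emph{Zero-one law.} Here I would switch to the dual, \emph{top-down} decomposition: $D_\infty$ is $D_1$ with each of its $bs$ edges replaced by an independent copy of $D_\infty$. Writing $A^{(i,j)}:=\exp(\gb\go(v^{(i,j)})-\gl(\gb))$ for the weights at the $b(s-1)$ junction vertices of $V_1$ and $\tilde W_\infty^{(i,j)}$ for the limits of the normalized partition functions of the $bs$ sub-lattices, passing to the limit in~\eqref{eq:recab} under this explicit coupling gives the almost sure identity
\begin{equation*}
 W_\infty\, =\, \frac{1}{b}\sum_{i=1}^b\prod_{j=1}^s\tilde W_\infty^{(i,j)}\prod_{j=1}^{s-1}A^{(i,j)}.
\end{equation*}
Every summand is nonnegative and each $A^{(i,j)}$ is a.s.\ strictly positive, so $\{W_\infty=0\}=\bigcap_{i=1}^b\bigcup_{j=1}^s\{\tilde W_\infty^{(i,j)}=0\}$ $Q$-a.s., an event measurable with respect to $\sigma(\go_z:z\in V_m,\ m\ge 2)$. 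Iterating the same decomposition $k$ times inside each $\tilde W_\infty^{(i,j)}$ shows that $\{W_\infty=0\}$ lies in $\cT_k:=\sigma(\go_z:z\in V_m,\ m>k)$ for every $k\ge 0$; since $(\go_z)_{z\in D}$ are independent, Kolmogorov's zero-one law applied to the trivial tail $\bigcap_{k\ge 0}\cT_k$ forces $Q(W_\infty>0)\in\{0,1\}$.

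\emph{Main obstacle.} The naive route through the distributional fixed-point identity $q=(1-(1-q)^s)^b$ satisfied by $q:=Q(W_\infty=0)$ does not suffice: for $b=s=2$ this polynomial admits the spurious root $(3-\sqrt 5)/2\in(0,1)$. The real content is thus in the top-down coupling, which must be arranged so that the $\tilde W_\infty^{(i,j)}$ are simultaneously independent \emph{and} genuinely depend only on deeper generations, after which Kolmogorov's law does the job.
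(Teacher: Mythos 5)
Your proof is correct, and both halves run on the same conceptual rails as the paper (conditional expectation via a recursive decomposition of the lattice; Kolmogorov zero--one law via a tail $\sigma$-algebra argument), but the decomposition you pick for the zero--one law is the dual of the one the paper uses. The paper fixes an arbitrary level $n$, writes $W_{n+k}$ as a nonnegative combination over \emph{paths} $g\in\gG_n$ of normalized sub-partition functions $\tilde W_{n,n+k}(g)$ supported on generations $>n$, and concludes $\{W_\infty=0\}=\{\tilde W_{n,\infty}(g)=0\ \forall g\}\in\sigma(\go_z: z\in V_m,\ m>n)$ directly, then lets $n\to\infty$. You instead use the \emph{top-down} self-similarity, indexing the sub-partition functions by the $bs$ \emph{edges} of $D_1$ rather than by paths in $D_n$, pass to the limit in the recursion to get the a.s.\ identity for $W_\infty$, read off $\{W_\infty=0\}=\bigcap_i\bigcup_j\{\tilde W_\infty^{(i,j)}=0\}\in\sigma(\go_z: z\in V_m,\ m\ge 2)$, and then iterate inside the sub-lattices to push into $\cT_k$ for every $k$. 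The two routes are logically equivalent, but your edge-indexed family $\{\tilde W_\infty^{(i,j)}\}$ is actually independent (the paper's path-indexed $\tilde W_{n,\infty}(g)$ are not, as distinct paths in $\gG_n$ share edges), which is cosmetically cleaner even though independence is not needed for the tail argument. Your closing remark that the fixed-point equation $q=(1-(1-q)^s)^b$ admits interior roots (e.g.\ $(3-\sqrt5)/2$ for $b=s=2$), so that the zero--one law cannot be read off the distributional recursion alone and genuinely requires the tail-triviality mechanism, is a good observation not made explicit in the paper.
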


\vspace{2ex}
\noindent Recall that martingales appear when the disorder is displayed on sites, in contrast with disorder on bonds 
as in \cite{cf:CD,cf:DG}.
\vspace{3ex}

Observe that

\begin{eqnarray*}
p(\beta)-f(\beta)=\lim_{n\to +\infty} \frac{1}{s^n} \log W_n(\beta),
\end{eqnarray*}

\noindent so, if we are in the situation $Q(W_{\infty}>0)=1$, we have
that $p(\beta)=f(\beta)$. This motivates the following definition:

\begin{definition}\label{disorder}
 If $Q(W_{\infty}>0)=1$, we say that weak disorder holds. In the opposite situation,
 we say that strong disorder holds.
\end{definition}

\begin{rem}\rm
Later, we will give a statement(Proposition \ref{th:fracmom}) that guarantees that strong disorder is equivalent to $p(\gb)\neq f(\gb)$, a situation that is sometimes called very strong disorder. This is believed to be true for polymer models on $\Z^d$ or $\R^d$ but it remains an unproved and challenging conjecture in dimension $d\ge 3$ (see \cite{cf:CH_al}).
\end{rem}

The next proposition lists a series of partial results that in some sense clarify
the phase diagram of our model. 

\begin{proposition}\label{misc}

  $(i)$ There exists $\beta_0\in[0, +\infty]$ such that strong disorder holds
  for $\beta > \beta_0$ and weak disorder holds for $\beta\le  \beta_0$.
  \vspace{2ex}

  $(ii)$ If $b>s$, $\beta_0>0$. Indeed, there exists $\beta_2\in(0,\infty]$ such that for all $\beta<\beta_2$,
         $\sup_n Q(W^2_{n}(\beta))<+\infty$, and therefore weak disorder holds.
  \vspace{2ex}

  $(iii)$ If $\beta \lambda'(\beta)-\lambda(\beta)> \frac{2\log b}{s-1}$, then strong
          disorder holds.
  \vspace{2ex}

  $(iv)$ 
  In the case where $\go_z$ are gaussian random variables, $(iii)$ can be improved for $b>s$: strong disorder holds as soon as $\gb>\sqrt{\frac{2(b-s)\log b}{(b-1)(s-1)}}$.
  \vspace{2ex}

  $(v)$ If $b\leq s$, then strong disorder holds for all $\beta$.
\end{proposition}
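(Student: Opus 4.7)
My plan is to treat the five parts with different but related arguments, all built on the recursion \eqref{eq:recab} for $W_n$.

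For (i) I would rely on a coupling/monotonicity argument in the spirit of Comets--Yoshida: by realizing $(W_n(\beta))_{\beta\ge 0}$ simultaneously on an enlarged probability space, one checks that the event $\{W_\infty(\beta)>0\}$ is monotone non-increasing in $\beta$ (up to null sets), so the zero-one law of Lemma~\ref{martingale} forces $\{\beta\ge 0:\,Q(W_\infty(\beta)>0)=1\}$ to be an interval $[0,\beta_0]$ for some $\beta_0\in[0,\infty]$. For (ii), setting $u_n:=Q(W_n^2)$ and squaring \eqref{eq:recab} using independence yields the deterministic recursion
\begin{equation*}
u_{n+1}=\frac{e^{(s-1)\gamma(\beta)}}{b}u_n^s+\frac{b-1}{b},\qquad \gamma(\beta):=\lambda(2\beta)-2\lambda(\beta).
\end{equation*}
At $\beta=0$ the map $\phi_0(x)=x^s/b+(b-1)/b$ fixes $x=1$ with derivative $s/b<1$ whenever $b>s$; by continuity of attracting fixed points under smooth perturbation in $\beta$, there exists $\beta_2>0$ such that for $\beta<\beta_2$ the orbit $\phi_\beta^n(1)$ remains bounded, whence $\sup_n Q(W_n^2)<\infty$ and weak disorder follows from $L^2$-boundedness of the martingale.

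For (iii), my strategy is the fractional moment method. The elementary inequality $(\sum_i x_i)^\gamma\le \sum_i x_i^\gamma$ (for $\gamma\in(0,1)$ and $x_i\ge 0$), combined with the independence of the factors in \eqref{eq:recab}, gives
\begin{equation*}
a_{n+1}\le c(\gamma)\,a_n^s,\qquad a_n:=Q(W_n^\gamma),\ c(\gamma):=b^{1-\gamma}(Q A^\gamma)^{s-1}.
\end{equation*}
Since $a_0=1$, the condition $c(\gamma)<1$ forces $a_n\to 0$ and strong disorder then follows by Markov's inequality applied to $W_n^\gamma$. A first-order expansion of $\log c(\gamma)$ at $\gamma=1$, using $Q A^\gamma=e^{\lambda(\gamma\beta)-\gamma\lambda(\beta)}$, shows that $c(\gamma)<1$ can be arranged for some $\gamma<1$ under a condition on $(s-1)(\beta\lambda'(\beta)-\lambda(\beta))$ that is in fact milder than the one assumed in (iii), which is therefore covered. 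For (iv), in the Gaussian case $\gamma(\beta)=\beta^2$, so the second-moment recursion becomes fully explicit, and combining it with Gaussian integration by parts (or an FKG-type correlation inequality adapted to the product structure of the disorder) gives a sharper moment estimate; careful optimization over the free parameters then pins down the improved threshold $\sqrt{2(b-s)\log b/[(b-1)(s-1)]}$.

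Part (v) will be the main difficulty, because for $b\le s$ one checks directly that $\inf_{\gamma\in(0,1)}c(\gamma)\ge 1$ at all small enough $\beta$, so the fractional moment approach of (iii) fails. My plan is to adapt the Carmona--Hu / Comets--Shiga--Yoshida strategy and prove an inequality of the form
\begin{equation*}
Q[\log W_{n+1}]-Q[\log W_n]\le -C(\beta)\,\rho_n,
\end{equation*}
where $\rho_n$ is a quenched overlap quantity measuring, for two independent paths drawn from $\mu_{n+1}^{\otimes 2}$, the probability of sharing the top-level branching structure of $D_{n+1}$. By hierarchical self-similarity, $\rho_n$ can be related to the annealed intersection $F_n$ recalled in the introduction, which satisfies $F_n\sim \mathrm{const}\cdot n$ when $b=s$ and $F_n\sim\mathrm{const}\cdot(s/b)^n$ when $b<s$. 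In both regimes $\sum_n\rho_n=+\infty$, so summing the above inequality forces $Q[\log W_n]\to -\infty$, which implies $W_n\to 0$ in probability and hence strong disorder. The hard part will be establishing this overlap bound rigorously: unlike in $\Z^d$, where CSY exploit directly the i.i.d.\ increments of the walk, here one must carefully decompose the conditional law of $W_{n+1}$ given $\mathcal{F}_n$ along the $b$ top-level branches of $D_{n+1}$ and extract the overlap term through a convexity estimate that respects the recursive structure of the diamond lattice.
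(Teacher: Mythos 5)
Parts (i), (ii) and (iii) match the paper's strategy. For (i) the paper indeed invokes the FKG-type monotonicity argument from \cite{cf:CY}; for (ii) your recursion for $u_n=Q(W_n^2)$ is exactly the paper's \eqref{eq:var} in the equivalent variable $v_n=u_n-1$, with the same fixed-point analysis at $\beta=0$; and for (iii) your fractional-moment inequality $a_{n+1}\le b^{1-\gamma}(QA^\gamma)^{s-1}a_n^s$ is the paper's Proposition~\ref{th:fracmom}. Your observation that letting $\gamma\to1^-$ actually yields the milder threshold $\beta\lambda'(\beta)-\lambda(\beta)>\frac{\log b}{s-1}$ rather than $\frac{2\log b}{s-1}$ is correct and worth noting; the stated condition is therefore certainly covered.

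Parts (iv) and (v), however, are genuine gaps. For (iv), your plan — ``the second-moment recursion becomes fully explicit, and combining it with Gaussian integration by parts (or FKG) gives a sharper moment estimate'' — does not identify a method that can deliver a lower bound on the strong-disorder region: second-moment boundedness gives a \emph{sufficient} criterion for weak disorder, not a necessary one, so second-moment blow-up proves nothing. The paper's actual mechanism is a change of measure: one tilts the disorder variables by an \emph{inhomogeneous} site-dependent shift $\delta_i\propto b^{-i}$ (matching the Green function of the site model), applies H\"older against the tilted measure to bound the fractional moment $u_n$, and then optimizes over the $\delta_i$; the Gaussian assumption makes the cost of the tilt explicit, and letting $n\to\infty$ with $\theta\to1$ produces exactly the threshold $\sqrt{2(b-s)\log b/[(b-1)(s-1)]}$. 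Nothing in your sketch reaches this.

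For (v), your proposed CSY/Carmona--Hu overlap decomposition is a legitimate alternative route and you honestly flag the key inequality as the ``hard part,'' but you do not establish it. Note also that the martingale increment $W_{n+1}-W_n$ does not have the simple ``one-extra-step'' structure of the $\Z^d$ model: conditioning on $\mathcal F_n$ fixes $bs$ independent copies of $W_n$ and leaves $b(s-1)$ fresh site variables, so the required bound $Q[\log W_{n+1}]-Q[\log W_n]\le -C(\beta)\rho_n$ must be re-derived from scratch for this recursion; this is not automatic. The paper instead proves (v) as a corollary of the quantitative Theorems~\ref{th:bs} and \ref{th:ss}, whose strong-disorder half is obtained by the same change-of-measure plus fractional-moment machinery as (iv) (homogeneous shift for $b<s$, inhomogeneous shift for $b=s$), giving explicit negative bounds on $p(\beta)-\lambda(\beta)$ — that is, \emph{very} strong disorder — for all $\beta>0$. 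Your route, if carried through, would deliver only strong disorder, which suffices for (v) but is weaker than what the paper actually obtains; more importantly, as written it is a plan rather than a proof.
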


\begin{rem}\rm
On can check that the formula in $(iii)$ ensures that $\gb_0<\infty$ whenever the distribution of $\go_z$ is unbounded.
\end{rem}
\begin{rem}\rm
An implicit formula is given for $\gb_2$ in the proof and this gives a lower bound for $\gb_0$. However, when $\gb_2<\infty$, it never coincides with the upper bound given by $(iii)$ and $(iv)$, and therefore knowing the exact value of the critical temperature when $b>s$ remains an open problem.
\end{rem}
\vspace{4ex}

We now provide more quantitative information for the regime considered in $(v)$:

\begin{theorem} \label{th:bs}
When $s>b$, there exists a constant $c_{s,b}=c$ such that for any $\gb\le 1$ we have
\begin{align*}
\frac{1}{c}\gb^{\frac{2}{\alpha}}\le \lambda(\beta)-p(\beta) \le c\gb^{\frac{2}{\alpha}}
\end{align*}
where $\alpha=\frac{\log s-\log b}{\log s}$.
\end{theorem}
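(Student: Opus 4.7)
The plan is to prove matching upper and lower bounds on the gap $\lambda(\beta)-p(\beta)$, equivalently on $-\bar p(\beta):=-\lim_n s^{-n}Q\log W_n(\beta)\ge 0$. The critical scale is $n_0=n_0(\beta)$ chosen so that $s^{n_0}\asymp \beta^{-2/\alpha}$, which, using $\alpha=(\log s-\log b)/\log s$, is equivalent to $\beta^2(s/b)^{n_0}\asymp 1$. This scale is dictated by the second-moment computation: the recursion \eqref{eq:recab} gives a closed recursion for $u_n:=Q(W_n^2)$, and a short expansion around the fixed point $u=0$ yields, for $\beta$ small, $u_{n+1}\le (s/b)u_n+C\beta^2+\text{higher order}$. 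Since $s/b>1$, $u_n\asymp \beta^2(s/b)^n$ as long as it stays small, hitting a constant exactly at $n_0$.

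Lower bound $p(\beta)\ge \lambda(\beta)-c\beta^{2/\alpha}$. At the scale $n_0$, $Q(W_{n_0}^2)$ stays bounded, so by Paley--Zygmund $Q(W_{n_0}\ge 1/2)\ge c_1>0$. Combined with the deterministic bound $W_n\ge W_n^{\min}$ coming from the minimal-weight path (which gives $\log W_n\ge -C s^n\beta$ only, so this is not enough by itself), one extracts $Q\log W_{n_0}\ge -C_2$. To propagate this to all $n\ge n_0$, I use the natural embedding: a system of depth $n_0+m$ is a hierarchical system of depth $m$ whose $|\Gamma_m|$ elementary contributions are i.i.d.\ copies of $W_{n_0}$. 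Applying $Q\log W_{n_0+m}\ge s^m Q\log W_{n_0}+o(s^m)$ (an inequality of the type used in Chapter \ref{HPMQM} for the pinning analogue, proved by Jensen applied separately at each level of the recursion) and dividing by $s^{n_0+m}$, one obtains $\bar p(\beta)\ge s^{-n_0}Q\log W_{n_0}\ge -C_2 s^{-n_0}\asymp -\beta^{2/\alpha}$.

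Upper bound $p(\beta)\le \lambda(\beta)-c^{-1}\beta^{2/\alpha}$. This is obtained by the fractional moment method in the form of Proposition \ref{th:fracmom}. Fix $\gamma\in(0,1)$ close enough to $1$ so that $b^{1-\gamma}<1$; applying $(\sum x_i)^\gamma\le\sum x_i^\gamma$ to \eqref{eq:recab} and independence gives $Q(W_{n+1}^\gamma)\le b^{1-\gamma}(QA^\gamma)^{s-1}Q(W_n^\gamma)^s$. Since $(QA^\gamma)^{s-1}\le 1$ (for $\gamma<1$, by Jensen on the mean-zero exponential $A$), this is a contraction as soon as $Q(W_n^\gamma)\le \eta_0(\gamma,s,b)<1$, and then $Q(W_n^\gamma)$ decays doubly exponentially, giving the quantitative bound $\bar p(\beta)\le s^{-n}\log(\eta_0)/\gamma$. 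It remains to exhibit some $n$ (which will be our $n_0$) at which $Q(W_{n_0}^\gamma)\le \eta_0$. For this I perform a change of measure: by Hölder,
\begin{equation}
Q(W_{n_0}^\gamma)\le \bigl(Q(\mathrm{d}Q/\mathrm{d}\tilde Q)^{\gamma/(1-\gamma)}\bigr)^{1-\gamma}\bigl(\tilde Q\, W_{n_0}\bigr)^\gamma,
\end{equation}
and I take $\tilde Q$ to be the law of $\omega$ under which each $\omega_z$, $z\in D_{n_0}\setminus\{A,B\}$, is shifted by $-\delta_{n_0}$ with $\delta_{n_0}\asymp (b/s)^{n_0/2}\asymp \beta$. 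The first factor stays bounded (and can be made arbitrarily close to $1$) since the tilt involves $\approx (bs)^{n_0}$ sites each of size $\delta_{n_0}^2\asymp (b/s)^{n_0}$, and $(bs)^{n_0}\cdot (b/s)^{n_0}=b^{2n_0}\ll s^{n_0}$ is handled by the choice of $n_0$; more precisely one uses $\delta_{n_0}^2|V_k|\asymp b^{2k}/s^{n_0-k}$ and sums only over the sites actually visited with positive probability. Under $\tilde Q$, $\tilde QW_{n_0}$ obeys the deterministic recursion
\begin{equation}
\tilde r_{k+1}=\tilde r_k^s\, e^{(s-1)(\lambda(\beta-\delta_k)-\lambda(\beta)-\lambda(-\delta_k))},\qquad \tilde r_0=1,
\end{equation}
where the exponent is $\le -c\beta\delta_k$ for $\beta$ small; by choosing $\delta_k$ to match the critical scale one shows $\tilde r_{n_0}\le 1-\zeta$ for some fixed $\zeta>0$ independent of $\beta$. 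Combining the two factors yields $Q(W_{n_0}^\gamma)\le \eta_0$ as desired.

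The main obstacle is the upper bound, and specifically the coordination between three things: the choice of $\gamma$ (it must give a contractive fractional-moment recursion, i.e.\ $b^{1-\gamma}<1$, but must not be so close to $1$ that the cost of the change of measure blows up), the choice of the shift $\delta_{n_0}$ (large enough to drive $\tilde r_{n_0}$ below $1$, small enough that $Q(\mathrm{d}Q/\mathrm{d}\tilde Q)^{\gamma/(1-\gamma)}$ stays bounded), and the choice of the scale $n_0$ (it must be the matched value $s^{n_0}\asymp\beta^{-2/\alpha}$ so that both estimates are tight). Making this tuning uniform in $\beta\le 1$, and extending the argument from the Gaussian case to general $\omega$ satisfying \eqref{eq:lambda} via an exponential tilt $\mathrm{d}\tilde Q/\mathrm{d}Q\propto e^{-\delta\omega}$ instead of a plain shift, is a routine but delicate adaptation of the technique developed in the pinning chapter, and this is where the bulk of the detailed calculation will lie.
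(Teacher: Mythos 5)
Your high-level framework (second moment for the bound $\lambda(\beta)-p(\beta)\le c\beta^{2/\alpha}$, fractional moment plus change of measure for the bound $\lambda(\beta)-p(\beta)\ge c^{-1}\beta^{2/\alpha}$) matches the paper's, but each half contains a genuine gap.

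\emph{Second-moment half.} You claim that Paley--Zygmund together with a deterministic lower bound on $W_n$ ``extracts'' $Q\log W_{n_0}\ge -C_2$. This step fails. The deterministic lower bound on $W_{n_0}$ is of size $-\text{const}\cdot s^{n_0}$ (and does not exist at all when $\omega$ is unbounded, e.g.\ Gaussian), while Paley--Zygmund only bounds the probability $P(W_{n_0}<1/2)$ by $4v_{n_0}$, which is a small constant, not super-exponentially small. So the left tail of $\log W_{n_0}$ can contribute $-4v_{n_0}\cdot\text{const}\cdot s^{n_0}\to-\infty$ to the mean. This is exactly the obstruction that makes the pinning-model argument of Chapter~\ref{HPMQM} (which rests on the deterministic bound $R_n\ge(B-1)/B$) unavailable here. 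The paper resolves it via a directed-percolation argument (Proposition~\ref{th:perco}): once $P(W_{n_0}\ge 1/2)>p_c$, one views $D_{n_0+m}$ as $D_m$ with each edge replaced by an i.i.d.\ copy of $D_{n_0}$, percolates on the ``good'' edges where $W_{n_0}^{(e)}\ge 1/2$, and uses the open path together with a CLT on the connecting sites to get $\frac{1}{s^{n_0+m}}\log W_{n_0+m}\ge -\text{const}\cdot s^{-n_0}$ with probability bounded away from $0$; convergence in probability of $s^{-n}\log W_n$ then gives the lower bound on $p(\beta)$. This percolation step is not a cosmetic detail and cannot be replaced by the Jensen/superadditivity bound you write down, which only propagates $Q\log W_{n_0}$ forward but cannot bound it in the first place.

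\emph{Fractional-moment half.} Two problems. First, no $\gamma\in(0,1)$ satisfies $b^{1-\gamma}<1$ when $b\ge 2$; the paper instead renormalizes $u_n$ by $a_\theta b^{(1-\theta)/(s-1)}$ (Proposition~\ref{th:fracmom}), turning the recursion into $g_{n+1}\le g_n^s$ so that $g_n<1$ at one scale forces $g_n\to 0$. Second and more seriously, your shift $\delta_{n_0}\asymp(b/s)^{n_0/2}\asymp\beta$ is wrong. The H\"older cost is $\exp\bigl(\frac{\gamma}{2(1-\gamma)}|D_{n_0}|\delta_{n_0}^2\bigr)$, and with $|D_{n_0}|\asymp(sb)^{n_0}$ and $\delta_{n_0}^2\asymp(b/s)^{n_0}$ this is $\exp(\Theta(b^{2n_0}))$, which diverges; the purported saving that $b^{2n_0}\ll s^{n_0}$ is irrelevant, since the cost must be $O(1)$, not $o(s^{n_0})$. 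The correct choice, as in the paper, is $\delta_{n_0}=(sb)^{-n_0/2}$ (which equals $\beta$ only if $b=1$): then $|D_{n_0}|\delta_{n_0}^2\le 1$ so the cost is $O(1)$, while the gain $\beta\delta_{n_0}s^{n_0}=\beta(s/b)^{n_0/2}$ is a tunable constant at the matched scale $(s/b)^{n_0/2}\asymp 1/\beta$. (Your claimed identity $\delta_{n_0}^2|V_k|\asymp b^{2k}/s^{n_0-k}$ is also inconsistent with a homogeneous shift: it holds only at $k=n_0$.) With your $\delta$, the cost term dominates the gain and the argument yields nothing.
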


\begin{theorem} \label{th:ss}
When $s=b$, there exists a constant $c_s=c$ such that for any $\gb\le 1$ we have
\begin{align*}
\exp\left(-\frac{c}{\gb^2}\right)\le  \lambda(\beta) - p(\beta) \le c\exp\left(-\frac{1}{c\gb}\right)
\end{align*}
\end{theorem}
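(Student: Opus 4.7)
My plan is to establish the two bounds by complementary methods, in direct analogy with the treatment of the marginal hierarchical pinning model in Chapter \ref{HPMDMR}. Both inequalities concern the gap $f(\gb) - p(\gb) \geq 0$ between the annealed and quenched free energies, which coincides with $-\lim_n s^{-n} Q[\log W_n]$.

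For the upper bound $f(\gb)-p(\gb) \leq c\exp(-1/(c\gb))$, the strategy is a second-moment estimate at a carefully chosen finite scale. The recursion \eqref{eq:recab} together with independence gives, for $y_n := Q[W_n^2]$,
\[
y_{n+1} = \frac{1}{b}\left( y_n^s e^{(s-1)\gamma(\gb)} + b-1\right),\qquad \gamma(\gb) := \lambda(2\gb) - 2\lambda(\gb)\sim \gb^2.
\]
With $s=b$ and $y_0=1$, linearizing near $y=1$ shows that $u_n := y_n-1$ grows approximately linearly in $n$ with slope $O(\gb^2)$ as long as it remains small, so there exists a scale $n^\ast \sim 1/\gb$ for which $y_{n^\ast}\leq 2$. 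Paley-Zygmund then yields $Q(W_{n^\ast}\geq 1/2)\geq c_0>0$, and combining this with the deterministic lower bound $W_{n^\ast}\geq b^{-n^\ast}\prod A^{(i,j)}$ gives $Q[\log W_{n^\ast}]\geq -C$ uniformly in $\gb$. Super-additivity of $n \mapsto Q[\log W_n]$ (verified by conditioning on the disorder in a single branch of $D_1$) finally produces $p(\gb)-f(\gb)\geq -C/s^{n^\ast}\geq -c\exp(-1/(c\gb))$.

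For the lower bound $f(\gb) - p(\gb) \geq \exp(-c/\gb^2)$ we use the fractional moment plus change of measure method. Fix $\theta\in(0,1)$ sufficiently close to $1$ so that $Q[A^\theta]\leq 1$, and set $a_n := Q[W_n^\theta]$. The subadditivity $(\sum x_i)^\theta \leq \sum x_i^\theta$ applied to \eqref{eq:recab} yields
\[
a_{n+1} \leq b^{1-\theta} \bigl(Q[A^\theta]\bigr)^{s-1} a_n^s,
\]
so that if one can exhibit $n_0$ with $a_{n_0}\leq a^\ast_\theta$ (for a suitable threshold $a^\ast_\theta<1$), then $a_n \to 0$ geometrically, and via Jensen one concludes $p-f \leq -c s^{-n_0}$. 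The base case is produced by Hölder:
\[
a_{n_0}\leq \bigl(Q[(dQ/d\tilde Q)^{\theta/(1-\theta)}]\bigr)^{1-\theta}\bigl(\tilde Q[W_{n_0}]\bigr)^\theta,
\]
with the tilted measure $\tilde Q$ obtained by shifting each $\go_z$ at generation $i\leq n_0$ by $-\delta_i$ with
\[
\delta_i \;=\; \frac{\eta\, s^{i/2}}{s^{n_0/2}\sqrt{n_0}}.
\]
The site-dependent scaling $s^{i/2}$ mirrors the exact calculation in Chapter \ref{HPMDMR}, exploiting the Green function $Q_{n_0}[\gd_z]=s^{-i}$: the cost factorizes across generations, and $\sum_i |V_i|\delta_i^2 = O(\eta^2)$, so the density cost is bounded by $1+O(\eta^2)$. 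Choosing $n_0 \sim 1/\gb^2$ so that the shifts remain small, one gets $s^{n_0}\sim \exp(\log s / \gb^2)$, which yields the advertised $\exp(-c/\gb^2)$ bound.

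The main obstacle is verifying the second factor in the Hölder bound, namely that $\tilde Q[W_{n_0}]$ is uniformly bounded away from $1$. This amounts to analyzing the deterministic recursion for $\tilde r_i := \tilde Q[W_i]$, which comes from \eqref{eq:recab} with $A$ replaced by its shifted mean. Setting $g_i := 1-\tilde r_i$ and rescaling $q_i := s^{(n_0-i)/2}g_i$, the rescaled sequence must grow linearly in $i$ with increments of order $\gb\eta/\sqrt{n_0}$, so that $q_{n_0}=g_{n_0} \geq c\gb\eta\sqrt{n_0} \geq c'\eta$ for $n_0 \sim 1/\gb^2$. The subtle point is that the marginal scaling $s=b$ eliminates the amplification factor $(s/b)^{n_0-i}$ that would otherwise be present in Theorem \ref{th:bs}; controlling the quadratic corrections in the recursion for $g_i$ uniformly over all $i \leq n_0$ is precisely what dictates the strength of the shift and hence the $\exp(-c/\gb^2)$ versus a potentially better rate.
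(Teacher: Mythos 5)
Your two-pronged strategy (second moment at scale $n^*\sim 1/\gb$ for the upper bound on the gap, fractional moment plus inhomogeneous tilt at scale $n_0\sim 1/\gb^2$ for the lower bound) is the right one, and you correctly identify that the Green function of the polymer model at $b=s$ decays like $s^{-i}$. However, both halves as written have genuine gaps. For the upper bound, Paley--Zygmund plus the deterministic lower bound on $W_{n^*}$ do \emph{not} yield a uniform lower bound $Q[\log W_{n^*}]\ge -C$: the bad event has probability $\lesssim\gb$, and on it the deterministic bound only gives $\log W_{n^*}\gtrsim -s^{n^*}$ (the correct exponent is $(s^{n^*}-1)/(s-1)$, not $n^*$ as you wrote), so its contribution to $Q[\log W_{n^*}]$ is of order $-\gb\,s^{n^*}$, which blows up when $n^*\sim 1/\gb$. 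The paper circumvents this precisely via the directed-percolation argument of Proposition~\ref{th:perco}: once $v_n<(1-p_c)/4$, one can open an infinite chain of copies of $D_n$ in the coarse-grained lattice on which the partition functions exceed $1/2$, so that $s^{-m}\log W_m\ge -(\log 2+\gl(\gb))/s^n$ holds with probability bounded away from zero \emph{uniformly in $m$}; combined with the a.s.\ convergence of $s^{-m}\log W_m$ to a deterministic limit, this gives the finite-scale criterion without ever bounding $Q[\log W_{n^*}]$.

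For the lower bound, the shift you propose, $\delta_i=\eta s^{i/2}/(s^{n_0/2}\sqrt{n_0})$, is lifted from the pinning chapter \ref{HPMDMR} where $b=\sqrt{s}$ and $|V_i|=(s-1)s^{n-1-i}$. In the present polymer chapter $b=s$ and $|V_i|=(bs)^{i-1}b(s-1)=s^{2i-1}(s-1)$, so with your $\delta_i$ the change-of-measure cost $\sum_i|V_i|\delta_i^2$ grows like $s^{2n_0}/n_0$ rather than being $O(\eta^2)$ as you claim. Matching the Green function $s^{-i}$ that you correctly identified requires $\delta_{i,n}=n^{-1/2}s^{-i}$, which makes each summand $|V_i|\delta_i^2=(s-1)/(sn)$ and the total cost $O(1)$, while the energy gain $\gb\sum_i s^{i-1}(s-1)\delta_{i,n}\sim\gb\sqrt{n}$ can be made large by taking $n\gtrsim 1/\gb^2$. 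Moreover, unlike the pinning model, no auxiliary recursion for $\tilde{r}_i=\tilde Q[W_i]$ needs to be analyzed here: since every directed path visits exactly $s^{i-1}(s-1)$ sites of generation $i$, the tilted mean $\tilde Q[W_n]=\exp(-\gb\sum_i s^{i-1}(s-1)\delta_{i,n})$ is explicit, so your discussion of the rescaled sequence $q_i=s^{(n_0-i)/2}g_i$ (again borrowed from the pinning proof, where the multiplicative factor $\sqrt{s}=s/b$ appears) is both unnecessary and does not apply to $b=s$.
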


\vspace{4ex}
In the theory of directed polymer in random environment, it is believed that, in low dimension, the quantity $\log Z_n$ undergoes large fluctuations around its average (as opposed to what happens in the weak disorder regime where the fluctuation are of order $1$). More precisely: it is believed that there exists  exponents $\xi>0$ and $\chi\ge 0$ such that
\begin{equation}
 \log Z_n-Q\log Z_n \asymp N^{\xi} \text{ and } \var_Q  \log Z_n\asymp N^{2\chi},
\end{equation}
where $N$ is the length of the system ($=n$ on $\Z^d$ and $s^n$ one our hierarchical lattice).
In the non-hierarchical model this exponent is of major importance as it is closely related to the {\sl volume exponent} $\xi$ that gives
the spatial fluctuation of the polymer chain (see e.g.\ \cite{cf:J} for a discussion on fluctuation exponents). Indeed it is conjectured for the $ \Z^d$ models that
\begin{equation}
\chi=2\xi-1.
\end{equation}
This implies that the polymer trajectories are superdiffusive as soon as $\chi>0$.
  In our hierarchical setup, there is no such geometric interpretation but having a lower bound on the fluctuation allows to get a significant localization result.

\begin{proposition}\label{fluctuations}
When $b<s$, there exists a constant $c$ such that for all $n\ge 0$ we have
\begin{equation}
\var_Q\left(\log Z_n\right)\ge \frac{c (s/b)^{n}}{\gb^2}.
\end{equation}
Moreover, for any $\gep>0$, $n\ge 0$, and $a\in \bbR$,
\begin{equation}
Q\left\{\log Z_n \in [a, a+\gep(s/b)^{n/2}]\right\}\le \frac{8\gep}{\gb}.
\end{equation}
\end{proposition}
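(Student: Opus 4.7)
The plan is to exploit the Gaussian structure of $\go$ by identifying an explicit direction $u$ in disorder space along which $\log Z_n$ is an \emph{exact} affine function. Set $u_v := p_v$ for each $v\in D_n\setminus\{A,B\}$, where $p_v$ denotes the probability that a uniformly chosen path $\gga\in\gG_n$ passes through $v$. By the self-similarity of the diamond lattice, $p_v = b^{-k}$ whenever $v\in V_k$, and every $\gga\in\gG_n$ visits exactly $s^{k-1}(s-1)$ vertices of $V_k$. The key identity---essentially the only geometric input---is that
\[
\sum_{v\in\gga} p_v \;=\; \sum_{k=1}^n s^{k-1}(s-1)b^{-k} \;=\; \frac{s-1}{s-b}\bigl((s/b)^n-1\bigr) \;=:\; C_n
\]
takes \emph{the same value for every} $\gga\in\gG_n$, while a direct computation gives $\|u\|_2^2=\sum_v p_v^2 = C_n$ as well.

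The form of the Hamiltonian then yields, for every $t\in\R$,
\[
\log Z_n(\gb, \go + tu) \;=\; \log \sum_{\gga\in\gG_n}\exp\!\Big(\gb H_n^\go(\gga)+\gb t\sum_{v\in\gga}p_v\Big) \;=\; \gb\,C_n\,t + \log Z_n(\gb,\go),
\]
so $\log Z_n$ depends affinely and deterministically on the component of $\go$ along $u$. Using that $\go$ is standard Gaussian, decompose orthogonally $\go = Y\,u/\|u\|+\tilde\go$, with $Y:=\langle\go,u/\|u\|\rangle\sim\cN(0,1)$ independent of $\tilde\go$; the previous identity rewrites as
\[
\log Z_n \;=\; F(\tilde\go) + \gb\sqrt{C_n}\,Y,
\]
where $F(\tilde\go)$ stands for $\log Z_n$ evaluated at $\go=\tilde\go$. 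Hence, conditionally on $\tilde\go$, $\log Z_n$ is Gaussian with variance $\gb^2 C_n$.

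Both claims now follow immediately. The conditional variance decomposition gives
\[
\var_Q(\log Z_n) \;\ge\; Q\big[\var(\log Z_n\mid\tilde\go)\big] \;=\; \gb^2 C_n \;\ge\; \tfrac{s-1}{s}\,\gb^2(s/b)^n \qquad (n\ge 1),
\]
and for the small-ball estimate the conditional Gaussian density is bounded by $(\gb\sqrt{2\pi C_n})^{-1}$, so that
\[
Q\!\left(\log Z_n\in[a,a+\gep(s/b)^{n/2}]\right) \;\le\; \frac{\gep\,(s/b)^{n/2}}{\gb\sqrt{2\pi C_n}} \;\le\; \frac{8\gep}{\gb},
\]
using $C_n/(s/b)^n\ge (s-1)/s$ and $1/\sqrt{2\pi(s-1)/s}\le 8$ for all $s\ge 2$.

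The only non-routine step---and the whole point of the argument---is spotting the direction $u=(p_v)_v$ and recognizing the pathwise constancy of $\sum_{v\in\gga}p_v$. This is what truly uses the symmetry of diamond lattices and is precisely what fails on $\Z^d$, where path occupation profiles are not aligned with any fixed vector. Once this geometric fact is in hand the rest is short Gaussian analysis, which also clarifies why the Gaussian hypothesis is genuinely used: one needs both the orthogonal decomposition of $\go$ and the uniform density bound for a univariate Gaussian.
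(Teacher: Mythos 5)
Your proof is correct, and it takes a genuinely different route from the paper's. The paper proves the anti-concentration bound by contradiction via a change of measure: it tilts the Gaussian environment by a constant shift $\gep i\gd$ at every site (valid because every directed path in $D_n$ visits exactly $s^n-1$ interior vertices, so a uniform shift moves $\log Z_n$ deterministically by $\gb\gep i(s/b)^{n/2}$), bounds the Radon--Nikodym cost of the tilt uniformly over $i\le 1/\gep$, and concludes that if one interval of length $\gb\gep(s/b)^{n/2}$ carried mass $>4\gep$ then roughly $1/\gep$ disjoint translates of it would each carry mass $>\gep$, exceeding total mass one. You make the underlying mechanism explicit instead: you exhibit a direction $u$ along which $\log Z_n$ is an \emph{exact} affine function of $\go$ with deterministic slope, split $\go$ orthogonally against $u$, and read both estimates directly off the univariate conditional Gaussian. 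This is shorter, avoids the contradiction scaffolding, gives a sharper constant, and isolates cleanly where Gaussianity enters. Your direction $u_v=p_v$ is also the more canonical choice: the paper uses the all-ones vector here but switches to a generation-dependent shift $\gd_i\propto s^{-i}$ in Proposition~\ref{fluc2} (the case $b=s$), and $u=(p_v)$ is exactly that shift, handling both propositions by one formula. That said, for the present proposition any path-constant vector (e.g.\ the all-ones one) would do equally well, since all paths traverse the same number of sites.

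Two remarks. First, your variance bound reads $\var_Q(\log Z_n)\ge\gb^2 C_n\ge\frac{s-1}{s}\gb^2(s/b)^n$, whereas the displayed statement claims $c(s/b)^n/\gb^2$. Your $\gb$-dependence is the correct one: the variance vanishes as $\gb\to0$ (since $Z_n$ is then deterministic), and the paper's own route --- deducing the variance bound from the anti-concentration estimate --- also only yields a lower bound of order $\gb^2(s/b)^n$. The $1/\gb^2$ in the statement is a misprint; it would be better to flag this explicitly rather than silently producing a different power of $\gb$. Second, both $C_n/(s/b)^n\ge(s-1)/s$ and the displayed inequalities require $n\ge 1$ (at $n=0$ one has $C_0=0$ and $\log Z_0=0$ deterministic), so the proposition should read $n\ge 1$; you note this parenthetically for the variance bound, but the same restriction applies to the small-ball estimate.
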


\noindent This implies that if the fluctuation exponent $\chi$ exists, $\chi\ge \frac{\log s -\log b}{2\log s}$.
We also have the corresponding result for the case $b=s$

\begin{proposition}\label{fluc2}
When $b=s$, there exists a constant $c$ such that for all $n\ge 0$ we have
\begin{equation}
\var_Q \left(\log Z_n\right)\ge \frac{c n}{\gb^2}.
\end{equation}
Moreover for any $\gep>0$, $n\ge 0$, and $a\in \bbR$,
\begin{equation}
Q\left\{\log Z_n \in [a, a+\gep\sqrt{n}]\right\}\le \frac{8\gep}{\gb}.
\end{equation}
\end{proposition}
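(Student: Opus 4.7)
The plan is to follow the strategy of Proposition \ref{fluctuations}, exploiting a combinatorial peculiarity of the critical case $b=s$ which makes the argument particularly clean. The anti-concentration estimate will be proved first and will imply the variance bound directly.

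First I would establish the following key combinatorial fact: when $b=s$, every directed path $\gamma \in \Gamma_n$ visits exactly $b^{k-1}(b-1)$ sites of $V_k$, independently of the choice of path, for each $k = 1, \ldots, n$. This is proved by induction on $n$: at each subdivision, a path in $D_n$ chooses one of $b$ parallel branches, within which each of the $s=b$ series-copies of $D_{n-1}$ contributes, by the inductive hypothesis, the same number of sites at each previous generation, plus $b-1$ new sites of the current generation. The identity $\sum_{k=1}^n b^{k-1}(b-1) = b^n - 1 = s^n - 1$ matches the total count of interior sites on a path.

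Next I would translate this into a deterministic identity for the gradient of $\log Z_n$. Differentiating gives $\partial_{\go_z} \log Z_n = \gb \mu_n^\go(z \in \gamma)$, so summing over $V_k$:
\begin{equation*}
\sum_{z \in V_k} \frac{\partial \log Z_n}{\partial \go_z} \, =\, \gb\,\bE_{\mu_n^\go}[\#\gamma \cap V_k] \, = \, \gb\,b^{k-1}(b-1),
\end{equation*}
which, crucially, is independent of $\go$. Letting $v_k := |V_k|^{-1/2}\mathbf{1}_{V_k}$, we get $\partial_{v_k}\log Z_n \equiv \gb\sqrt{(b-1)/b}$ everywhere, so $\log Z_n$ is affine along each direction $v_k$. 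The $v_k$ are mutually orthogonal since their supports are disjoint.

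For Gaussian $\go$, I then decompose $\go = \go_{\perp} + \sum_{k=1}^n x_k v_k$ with $x_k := \langle \go, v_k\rangle$. Under $Q$, the $x_k$ are i.i.d.\ $\cN(0,1)$ and independent of $\go_\perp$, and the affineness yields
\begin{equation*}
\log Z_n \, =\, G(\go_\perp)\, +\, c_b\gb\, S_n, \qquad c_b := \sqrt{(b-1)/b},\ S_n := \textstyle\sum_{k=1}^n x_k,
\end{equation*}
where $G$ is some random function and $S_n \sim \cN(0,n)$ is independent of $G(\go_\perp)$. Anti-concentration follows by conditioning on $\go_\perp$ and using that the density of $c_b\gb S_n$ is at most $(c_b\gb\sqrt{2\pi n})^{-1}$, yielding
\begin{equation*}
Q(\log Z_n \in [a,a+\gep\sqrt n]) \,\le\, \frac{\gep\sqrt n}{c_b\gb\sqrt{2\pi n}} \,=\, \frac{\gep}{c_b\gb\sqrt{2\pi}} \,\le\, \frac{8\gep}{\gb},
\end{equation*}
using $b\ge 2$. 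The variance bound is immediate from the representation, since $\var_Q(\log Z_n) \ge c_b^2\gb^2\var(S_n) = c_b^2\gb^2 n$.

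The main obstacle is extending beyond Gaussian disorder, where the affine decomposition does not carry through cleanly because the empirical mean $\bar\go_{V_k}$ is not independent of the residuals $(\go_z - \bar\go_{V_k})_{z \in V_k}$. I would address this via the martingale-difference decomposition $\var(\log Z_n) = \sum_k \bbE[(\bbE[\log Z_n|\cG_k]-\bbE[\log Z_n|\cG_{k-1}])^2]$ with $\cG_k = \sigma(\go_z:z \in V_1\cup\cdots\cup V_k)$, combined with the deterministic identity above to lower bound each increment. This is the technically delicate step, since one must control covariance terms between the $V_k$-mean (which carries the affine contribution) and the residuals; for the small-ball bound one would instead invoke a local-CLT-type density estimate on the sum $\sum_{z \in V_k}\go_z$ under the assumption that $\go_0$ has bounded density.
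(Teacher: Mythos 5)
Your proposal is correct for the Gaussian case and takes a genuinely different route from the paper's. The paper proves the anti-concentration by a change-of-measure contradiction: it posits a level set $[a,a+\gb\gep\sqrt n)$ of probability $>8\gep$, applies the inhomogeneous shift $\hat\go^{(j)}_z=\go_z+j\gep\gd_i$ with $\gd_i=\gep s^{1-i}(s-1)^{-1}n^{-1/2}$ for $z\in V_i$, observes that this translates $\log Z_n$ by $j\gb\gep\sqrt n$ (precisely because every path meets $V_i$ in $s^{i-1}(s-1)$ sites), bounds the tilting density below on the half-space $\{\sum_i\gd_i\sum_{V_i}\go_z\ge 0\}$, and concludes that $\lceil 2/\gep\rceil$ disjoint shifted copies would each carry mass $>2\gep$, a contradiction. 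Your decomposition $\log Z_n=G(\go_\perp)+c_b\gb S_n$ with $c_b=\sqrt{(s-1)/s}$ and $S_n\sim\cN(0,n)$ independent of $G(\go_\perp)$ makes the same underlying fact explicit and delivers the density bound in one line; both proofs hinge on the deterministic identity $\sum_{z\in V_k}\partial_{\go_z}\log Z_n=\gb s^{k-1}(s-1)$, you use it directly while the paper uses it through the tilting. Your version is arguably cleaner and gives a sharper constant; the paper's version is the template that transposes to the non-hierarchical and continuous settings later in the thesis.

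Two remarks. First, the combinatorial fact is not a ``peculiarity of the critical case $b=s$'': any directed path meets $V_k$ in $s^{k-1}(s-1)$ sites regardless of $b$. What is special to $b=s$ is that the normalized slopes $\partial_{v_k}\log Z_n=\gb|V_k|^{-1/2}s^{k-1}(s-1)=\gb\sqrt{(s-1)/b}\,(s/b)^{(k-1)/2}$ are \emph{equal across} $k$; for $b<s$ they grow geometrically, and that asymmetry is why the paper uses a homogeneous shift there and an inhomogeneous one here. Second, your variance conclusion $\var_Q(\log Z_n)\ge c_b^2\gb^2 n$ is what the anti-concentration actually yields (taking $\gep$ of order $\gb$ so that no interval of width $\asymp\gb\sqrt n$ carries more than half the mass), and it disagrees with the displayed $\var_Q(\log Z_n)\ge cn/\gb^2$ in the statement; the latter appears to be a misprint, since the paper itself derives the variance bound ``only as a consequence of the second statement'' and that derivation produces $\gb^2 n$, not $n/\gb^2$. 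Finally, your worry about the non-Gaussian case is moot here: the paper restricts Propositions~\ref{fluctuations} and~\ref{fluc2} to Gaussian environment, as noted in the proof of~\ref{fluctuations} and in the thesis introduction.
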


From the fluctuations of the free energy we can prove the following:
For $g \in \gG_n$ and $m<n$, we define $g|_m$ to be the restriction of $g$ to $D_m$.

\begin{cor}\label{locloc}
If $b\le s$, and $n$ is fixed we have
\begin{equation}
 \lim_{n\to\infty} \sup_{g\in \gG_m} \mu_n(\gga|_m=g)=1,
\end{equation}
\noindent where the convergence holds in probability.
\end{cor}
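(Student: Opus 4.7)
The plan is to exploit the hierarchical structure of $D_n$ to decompose the polymer measure according to the restriction to $D_m$, and then to deduce from the anti-concentration statements in Propositions~\ref{fluctuations} and \ref{fluc2} that, with $Q$-probability tending to one, a single restriction dominates.

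First, for each $g\in\gG_m$ I would write
\begin{equation*}
 \tilde Z_n^{(g)} \, :=\, \sum_{\gga\in \gG_n:\, \gga|_m = g} \exp(\gb H_n^\go(\gga))
 \, =\, \exp(\gb H_m^\go(g))\,\prod_{e\in g} Z_{n-m}^{(e)},
\end{equation*}
where $Z_{n-m}^{(e)}$ is the partition function of the copy of $D_{n-m}$ that replaces the edge $e$ of $D_m$ in the hierarchical construction of $D_n$. This factorization uses that the interior vertices of any $\gga\in\gG_n$ with $\gga|_m=g$ split into the interior vertices of $g$ (which lie in $V_1\cup\dots\cup V_m$) and the interior vertices of the $s^m$ sub-paths traversing each attached copy of $D_{n-m}$ (which lie in $V_{m+1}\cup\dots\cup V_n$). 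Since these vertex sets are pairwise disjoint, $H_m^\go(g)$ and the family of partition functions indexed by the edges of $D_m$ are mutually independent under $Q$, each $Z_{n-m}^{(e)}$ being distributed as $Z_{n-m}$.

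Next, for any pair $g\ne g'$ in $\gG_m$ I would pick an edge $e_0\in g\setminus g'$ (non-empty since $g\ne g'$) and write
\begin{equation*}
 \log \tilde Z_n^{(g)} \, - \, \log \tilde Z_n^{(g')}
 \, =\, \log Z_{n-m}^{(e_0)} \, +\, X_{g,g'}^{(n)},
\end{equation*}
with $X_{g,g'}^{(n)}$ independent of $\log Z_{n-m}^{(e_0)}$. Applying the relevant anti-concentration bound (Proposition~\ref{fluctuations} if $b<s$, Proposition~\ref{fluc2} if $b=s$) conditionally on $X_{g,g'}^{(n)}$ yields, for every $L>0$,
\begin{equation*}
 Q\bigl(\bigl|\log \tilde Z_n^{(g)}-\log \tilde Z_n^{(g')}\bigr|\le L\bigr)
 \, \le\, \frac{16 L}{\gb\,\rho_{n-m}},
\end{equation*}
where $\rho_j=(s/b)^{j/2}$ if $b<s$ and $\rho_j=\sqrt{j}$ if $b=s$. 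In either case $\rho_{n-m}\to\infty$ as $n\to\infty$ with $m$ fixed, and since $|\gG_m|$ is finite, a union bound over the $\binom{|\gG_m|}{2}$ pairs shows that, for any fixed $L>0$, the event
\begin{equation*}
 E_L \, :=\, \bigl\{\bigl|\log \tilde Z_n^{(g)}-\log \tilde Z_n^{(g')}\bigr|>L \text{ for all } g\ne g'\in\gG_m\bigr\}
\end{equation*}
has $Q$-probability tending to $1$. On $E_L$, let $g^*\in\gG_m$ be any maximizer of $g\mapsto \log \tilde Z_n^{(g)}$; then $\tilde Z_n^{(g)}\le e^{-L}\tilde Z_n^{(g^*)}$ for every $g\ne g^*$, so that
\begin{equation*}
 \sup_{g\in\gG_m}\mu_n(\gga|_m=g)\, \ge\, \mu_n(\gga|_m=g^*)\, \ge\, \frac{1}{1+(|\gG_m|-1)\,e^{-L}}.
\end{equation*}
Letting $n\to\infty$ for $L$ fixed, and then $L\to\infty$, completes the proof.

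The genuinely delicate point is the factorization together with its ensuing independence structure, which rests on the fact that the hierarchical construction is compatible with the site-disorder setup; once this is in hand, the rest is a routine combination of anti-concentration, a union bound, and comparison of Boltzmann weights.
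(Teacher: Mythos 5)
Your proof is correct and follows exactly the paper's route: decompose $\log\tilde Z_n^{(g)}-\log\tilde Z_n^{(g')}$ as an independent copy of $\log Z_{n-m}$ plus a term independent of it (via the hierarchical factorization and a choice of $e_0\in g\setminus g'$), apply the anti-concentration estimates of Propositions~\ref{fluctuations} and \ref{fluc2} conditionally on that independent term, and finish with a union bound over the finitely many pairs in $\gG_m$. You have simply spelled out the factorization of $\tilde Z_n^{(g)}$, the union bound, and the final passage from pairwise separation of log-weights to concentration of $\mu_n(\gga|_m=\cdot)$ on the maximizer, all of which the paper leaves implicit.
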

Intuitively this result means that if one look on a large scale, the law of $\mu_n$ is concentrated in the neighborhood of a single path. Equipping $\gG_n$ with a natural metric (two path $g$ and $g'$ in $\gG_n$ are at distance $2^{-m}$ if and only if $g|_m\ne g'|_m$ and $g|_{m-1}=g|_{m-1}$) makes this statement rigorous.

\begin{rem}\rm
Proposition \ref{misc}$(v)$ brings the idea that $b\le s$ for this hierarchical model is equivalent to the $d\le 2$ case for the model in $\Z^{d}$ (and that $b>s$ is equivalent to $d>2$). Let us push further the analogy: let $\gga^{(1)}$ , $\gga^{(2)}$ be two paths chosen uniformly at random in $\gG_n$ (denote the uniform-product law by $P^{\otimes 2}$), their expected site overlap is of order $(s/b)^n$ if $b<s$, of order $n$ if $b=s$, and of order $1$ if $b>s$.
If one denotes by $N= s^n$ the length of the system, one has
\begin{equation}
 P^{\otimes 2}\left[\sum_{t=0}^{N} \ind_{\{\gamma^{(1)}_t= \gamma^{(2)}_t\}}\right]\, \asymp\, \begin{cases}N^{\alpha} \quad \text{ if } b < s,\\
                                                          \log N \quad \text{ if } b=s,\\
								1 \text{ if } b> s,
                                                          \end{cases}
 \end{equation}
(where $\alpha=(\log s-\log b)/\log s$).
Comparing this to the case of random walk on $\Z^d$, we can infer that the case $b=s$ is just like $d=2$ and that the case $d=1$ is similar to $b=\sqrt{s}$ ($\ga=1/2$). One can check in comparing \cite[Theorem 1.4, 1.5, 1.6]{cf:Lac} with Theorem \ref{th:bs} and \ref{th:ss}, that this analogy is relevant.
\end{rem}

The paper is organised as follow
\begin{itemize}
 \item In section \ref{mtricks} we prove 
some basic statements about the free energy, Lemma \ref{martingale} and
the first part of Proposition \ref{misc}.
 \item Item $(ii)$ from Proposition \ref{misc} is proved in Section $5.1$.
Item $(v)$ is a consequence of Theorems \ref{th:bs} and \ref{th:ss}.
 \item Items $(iii)$ and $(iv)$ are proved in Section $6.3$.
Theorems \ref{th:bs} and \ref{th:ss} are proved in Section $6.1$ and $6.3$
respectively.
 \item In section \ref{flucloc} we prove Propositions \ref{fluctuations} and \ref{fluc2} and Corrolary \ref{locloc}.
 \item  In section \ref{weakpol} we define and investigate the properties of the infinite volume polymer measure in the weak disorder phase.
 \item In section \ref{bddis} we shortly discuss about the bond disorder model.
\end{itemize}




\section{Martingale tricks and free energy}\label{mtricks}

We first look at to the existence of the quenched free energy

\begin{eqnarray*}
  p(\beta)= \lim_{n\to +\infty} \frac{1}{n} Q \left( \log Z_n(\beta) \right),
\end{eqnarray*}

\noindent and its relation with the annealed free energy. The case $\beta=0$ is
somehow instructive. It gives the number of paths in $\Gamma_n$ and is
handled by the simple recursion:

\begin{eqnarray*}
Z_n(0)=b \left( Z_{n-1}(0) \right).
\end{eqnarray*}

\noindent This easily yields

\begin{eqnarray}
\label{paths}
 |\Gamma_n| = Z_n(\beta=0)= b^{\frac{s^{n}-1}{s-1}}.
\end{eqnarray}

Much in the same spirit than (\ref{eq:recab}), we can find a recursion for
$Z_n$:

\begin{eqnarray}
	Z_{n+1}= \sum^b_{i=1} Z^{(i,1)}_n \cdots Z^{(i,s)}_n \times e^{\beta \omega_{i,1}}\cdots e^{\beta \omega_{i,s-1}}.
	\label{eq:recz}
\end{eqnarray}
The existence of the quenched free energy follows by monotonicity: we have

$$Z_{n+1} \geq Z^{(1,1)}_n Z^{(1,2)}_n \cdots Z^{(1,s)}_n \times e^{\beta \omega_{1,1}}\cdots e^{\beta \omega_{1,s-1}},$$

\noindent so that (recall the $\omega$'s are centered random variables)

$$\frac{1}{s^{n+1}} Q \log Z_{n+1} \geq \frac{1}{s^n} Q \log Z_n.$$

\noindent The annealed free energy provides an upper bound:

\begin{eqnarray*}
\frac{1}{s^n} Q \log Z_n &\leq& \frac{1}{s^n} \log Q Z_n\\
                         &=& \frac{1}{s^n} \log e^{\lambda(\beta)(s^n-1)}Z_n(\beta=0)\\
                         &=& \left( 1 - \frac{1}{s^n} \right) \left( \lambda(\beta) + \frac{\log b}{s-1} \right)\\
                         &=& \left( 1 - \frac{1}{s^n} \right) f(\beta).
\end{eqnarray*}

\vspace{3ex}


We now prove the strict convexity of the free energy. The proof
is essentially borrowed from \cite{cf:CPV}, but it is remarkably simpler
in our case.

\begin{proof}[Proof of the strict convexity of the free energy]
We will consider a Bernoulli environment ($\go_z=\pm 1$ with probability $p$, $1-p$; note that our assumptions on the variance and expectation for $\go$ are violated but centering and rescaling $\go$ does not change the argument).
We refer to \cite{cf:CPV} for generalization to more general environment.

\noindent An easy computation yields

\begin{eqnarray*}
\frac{d^2}{d\beta^2} Q \log Z_n = Q {\rm Var}_{\mu_n} H_n (\gamma).
\end{eqnarray*}

\noindent We will prove that for each $K>0$, there exists a constant
$C$ such that, for all $\beta \in[0,K]$ and $n\geq 1$,

\begin{eqnarray}\label{lowerboundenergy}
{\rm Var}_{\mu_n} H_n (\gamma) \geq C s^n
\end{eqnarray}

\vspace{2ex}
For $g \in \gG_n$ and $m<n$, we define $g|_m$ to be the restriction of $g$ to $D_m$.
By the conditional variance formula,

\begin{eqnarray}
\nonumber
{\rm Var}_{\mu_n} H_n &=& \mu_n \left( {\rm Var}_{\mu_n} (H_n(\gamma)\, |\, \gamma_{|_{n-1}}) \right) +
                   {\rm Var}_{\mu_n} \left( \mu_n(H_n(\gamma)\, |\,  \gamma_{|_{n-1}})\right)\\
                   \label{condvar}
                  &\geq& \mu_n \left( {\rm Var}_{\mu_n} (H_n(\gamma) \,|\, \gamma_{|_{n-1}}) \right)
\end{eqnarray}

\noindent Now, for $l=0,...,s^{n-1}-1$, $g \in \Gamma_n$, define

\begin{eqnarray*}
H^{(l)}_n (g) = \sum^{(l+1)s-1}_{t=ls+1} \omega(g_t),
\end{eqnarray*}

\noindent so  (\ref{condvar}) is equal to

\begin{eqnarray*}
\mu_n  {\rm Var}_{\mu_n} \left( \sum^{s^{n-1}-1}_{l=0} H^{(l)}_{n}(\gamma) | \gamma_{|_{n-1}} \right)
= \sum^{s^{n-1}-1}_{l=0} \mu_n  {\rm Var}_{\mu_n} \left(  H^{(l)}_{n}(\gamma) | \gamma_{|_{n-1}} \right),
\end{eqnarray*}

\noindent by independence. Summarizing,

\begin{equation}\label{varcond}
{\rm Var}_{\mu_n} H_n\geq \sum^{s^{n-1}}_{l=1} \mu_n  {\rm Var}_{\mu_n} \left(  H^{(l)}_{n}(\gamma) | \gamma_{|_{n-1}} \right).
\end{equation}

\noindent The rest of the proof consists in showing that each term of the sum is bounded from below by a positive constant,
uniformly in $l$ and $n$.
For any $x\in D_{n-1}$ such that the graph distance between $x$ and $A$ is $ls$ in $D_n$ (i.e.\ $x\in D_{n-1}$), we define the set of environment

\begin{eqnarray*}
M(n,l,x)= \left\lbrace \omega: \left|\{H^{(l)}_n(g,\omega)\,:\, g\in \gG_n, g_{ls}=x\} \right| \geq 2 \right\rbrace.
\end{eqnarray*}

\noindent These environments provide the fluctuations in the energy needed for the
uniform lower bound we are searching for.
One second suffices to convince oneself
that $Q(M(n,l,x)>0$, and does not depend on the parameters $n,\, l$ or $x$. Let $Q(M)$ denote improperly the common value of $Q( M(n,l,x))$. 
Now, it is easy to see (from \eqref{varcond}) that there exists a constant $C$ such that for all $\gb<K$,

\begin{eqnarray*}
Q \left[{\rm Var}_{\mu_n} H_n\right] &\ge& C Q \left[\sum^{s^{n-1}-1}_{l=1} \sum_{x\in D_{n-1}}  {\bf 1}_{M(n, l, x)}\mu_n ({\gamma_{ls} = x})\right].
    \end{eqnarray*}
    
\noindent Define now $\mu^{(l)}_n$ as the polymer measure in the environment obtained from
$\omega$ by setting $\omega(y)=0$ for all sites $y$ which distance to $0$ is between $ls$ and $(l+1)s$.
One can check that for all $n$, and all path $g$,

\begin{eqnarray*}
\exp(-2\gb(s-1)) \mu^{(l)}_n(\gamma=g) \leq \mu_n(\gamma=g) \leq \exp(2\gb (s-1)) \mu^{(l)}_n(\gamma).
\end{eqnarray*}

\noindent We note that under $Q$, $\mu_n^{(l)}(\gamma_{ls} = x)$ and $\ind_{M(n, l, x)}$ are random variables, so that 

\begin{eqnarray*}
Q\left[ {\rm Var}_{\mu_n} H_n \right]&\geq& C\exp(-2\gb(s-1)) Q \left[\sum^{s^{n-1}-1}_{l=0} \sum_x  {\bf 1}_{M(n, l, x)}\mu^{(l)}_n ({\gamma_{ls} = x})\right]\\
   &=& C\exp(-2\gb(s-1))\sum^{s^{n-1}}_{l=1} \sum_{x\in D_{n-1}} Q(M(n,l,x))   Q\left[ \mu^{(l)}_n ({\gamma_l = x})\right]\\
   &=& C\exp(-2\gb(s-1)) Q(M) s^{n-1}.
\end{eqnarray*}
\end{proof}

\vspace{3ex}


\noindent We now establish the martingale property for the normalized free energy.

\begin{proof}[Proof of Lemma \ref{martingale}]

Set $z_n=Z_n(\beta=0)$. We have already remarked that this is just the number of (directed) paths in $D_n$, 
and its value is given by (\ref{paths}). 
Observe that
$g\in \gG_n$ visits $s^n(s-1)$ sites of $n+1$-th generation. The restriction of paths in $D_{n+1}$
to $D_n$ is obviously not one-to-one as for each path $g'\in \gG_n$, there are $b^{s^n}$ paths
in $\gG_{n+1}$ such that $g|_n=g'$. Now,

\begin{eqnarray*}
Q\left(Z_{n+1}(\beta)| \mathcal{F}_n\right)
&=& \sum_{g \in D_{n+1}} Q\left( e^{\beta H_{n+1}(g)}|\mathcal{F}_n\right)\\
&=&\sum_{g' \in D_n} \sum_{g \in D_{n+1}}Q\left( e^{\beta H_{n+1}(g)}|\mathcal{F}_n\right){\bf 1}_{g|_n=g'}\\
&=&\sum_{g' \in D_n}\sum_{g \in D_{n+1}} e^{\beta H_{n}(g')}e^{s^n(s-1)\lambda(\beta)}{\bf 1}_{g|_n=g'}\\
&=&\sum_{g' \in D_n} e^{\beta H_{n}(g')}e^{s^n(s-1)\lambda(\beta)}\sum_{g \in D_{n+1}}{\bf 1}_{g|_n=g'}\\
&=&e^{s^n(s-1)\lambda(\beta)}b^{s^n}\sum_{g' \in D_n} e^{\beta H_{n}(g')}\\
&=&Z_n(\beta) \frac{z_{n+1}e^{s^{n+1}\lambda(\beta)}}{z_n e^{s^n \lambda(\beta)}}.
\end{eqnarray*}

\noindent This proves the martingale property. For (\ref{zeroone}), let's generalize a little
the preceding restriction procedure. As before, for a path $g \in D_{n+k}$, denote by
$g |_n$ its restriction to $D_n$. Denote by $I_{n,n+k}$ the set of time indexes that have been
removed in order to perform this restriction and by $N_{n,n+k}$ its cardinality. Then

\begin{eqnarray*}
Z_{n+k}= \sum_{g \in D_n} e^{\beta H_n(g)} \sum_{g' \in D_{n+k}, g'|_n=g}
\exp \left\lbrace \beta \sum_{t\in I_{n,n+k}} \omega(g'_t) \right\rbrace.
\end{eqnarray*}

\noindent Consider the following notation, for $g\in \gG_n$,
\begin{eqnarray*}
\tilde{W}_{n,n+k}(g)= c^{-1}_{n,n+k}\sum_{g' \in D_{n+k}, g'|_n=g}\exp \left\lbrace \beta \sum_{t\in I_{n,n+k}} \omega(g'_t)  - N_{n,n+k} \lambda(\beta)\right\rbrace,
\end{eqnarray*}

\noindent where $c_{n,n+k}$ stands for the number paths in the sum. With this notations, we have, 

\begin{eqnarray}
 W_{n+k} = \frac{1}{z_n} \sum_{g \in D_n} e^{\beta H_n(g)-(s^{n}-1)\lambda(\beta)}\tilde{W}_{n,n+k}(g),
\end{eqnarray}

\noindent and, for all $n$,

\begin{eqnarray}
\left\lbrace W_{\infty}=0 \right\rbrace =
\left\lbrace \tilde{W}_{n,n+k}(g) \to 0,\, {\rm as}\, k\to +\infty,\, \forall \, g \in D_n \right\rbrace.
\label{eq:WW}
\end{eqnarray}

\noindent The event in the right hand side is measurable with respect to the disorder of generation
not earlier than $n$. As $n$ is arbitrary, the right hand side of (\ref{eq:WW}) is in the tail $\sigma$-algebra
and its probability is either $0$ or $1$.
\end{proof}

This, combined with FKG-type arguments (see \cite[Theorem 3.2]{cf:CY} for details), proves part $(i)$ of
Proposition \ref{misc}. Roughly speaking, the FKG inequality is used to insure that there is no reentrance
phase.







\section{Second moment method and lower bounds}

This section contains all the proofs concerning coincidence of annealed and quenched free--energy for $s>b$ and lower bounds on the free--energy for $b\le s$ (i.e. half of the results from Proposition \ref{misc} to Theorem \ref{th:ss}.)
First, we discuss briefly the condition on $\gb$ that one has to fulfill to to have $W_{n}$ bounded in $\bbL_2(Q)$. Then for the cases when strong disorder holds at all temperature ($b\le s$), we present a method that combines control of the second moment up to some scale $n$ and a percolation argument to get a lower bound on the free energy.
\\
First we investigate how to get the variance of $W_n$ (under $Q$).
From \eqref{eq:recab} we get the induction for the variance $v_n=Q\left[(W_n-1)^2\right]$:

\begin{eqnarray}
v_{n+1}&=&\frac{1}{b}\left(e^{(s-1)\gga(\gb)}(v_n+1)^s-1\right), \label{eq:var}\\
v_0&=&0.\label{eq:v00}
\end{eqnarray}
where $\gga(\gb):=\gl(2\gb)-2\gl(\gb)$.

\subsection{The $L^2$ domain: $s<b$}

If $b>s$, and $\gga(\gb)$ is small, the map
\begin{align*}
g:\ x\mapsto\frac{1}{b}\left(e^{(s-1)\gga(\gb)}(x+1)^s-1\right)
\end{align*}
possesses a fixed point. In this case, \eqref{eq:var} guaranties that $v_n$ converges to some finite limit. Therefore, in this case, $W_n$ is a positive martingale bounded in $\bbL^2$, and therefore converges almost surely to $W_\infty \in \bbL^2(Q)$ with $Q W_{\infty}=1$, so that
\begin{align*}
p(\beta)-\lambda(\beta)=\lim_{n\rightarrow\infty}\frac{1}{s^n}\log W_n=0,
\end{align*}

\noindent and weak disorder holds.
One can check that $g$ has a fixed point if and only if
\begin{align*}
\gga(\gb)\le \frac{s}{s-1}\log \frac{s}{b}-\log \frac{b-1}{s-1}
\end{align*}




\subsection{Control of the variance: $s>b$}

For $\epsilon>0$, let $n_0$ be the smallest integer such that $v_n\ge \gep$.

\vspace{2ex}

\begin{lemma}\label{th:bss}
For any $\gep>0$, there exists a constant $c_{\gep}$ such that for any $\gb\le 1$
\begin{align*}
n_0\ge \frac{2 |\log \gb|}{\log s - \log b}-c_{\gep}.
\end{align*}
\end{lemma}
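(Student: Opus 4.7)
The plan is to linearize the recurrence $v_{n+1}=f(v_n)$, where $f(v):=(e^{(s-1)\gamma(\beta)}(1+v)^s-1)/b$, around $v=0$ and compare $v_n$ with the solution of the affine recursion obtained by dropping the quadratic correction. Set $A:=f'(0)=(s/b)e^{(s-1)\gamma(\beta)}$ and $B:=f(0)=(e^{(s-1)\gamma(\beta)}-1)/b$, so that $f(v)=Av+B+g(v)$ with $0\le g(v)\le C_0 v^2$ on $v\in[0,1]$, $\beta\in[0,1]$, for some $C_0=C_0(b,s)$. Since $\lambda(\beta)=\beta^2/2+O(\beta^3)$ implies $\gamma(\beta)=\lambda(2\beta)-2\lambda(\beta)=\beta^2+O(\beta^3)$, we have $A=(s/b)(1+O(\beta^2))$ and $B=c_1\beta^2(1+O(\beta^2))$ with $c_1=(s-1)/b$, uniformly on $\beta\le 1$. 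The linearized dynamics $\hat v_{n+1}=A\hat v_n+B$, $\hat v_0=0$, solves explicitly to $\hat v_n=B(A^n-1)/(A-1)$.

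The key step is a bootstrap argument showing that $v_n\le 2\hat v_n$ for all $n$ such that $\hat v_n\le \delta_0$, where $\delta_0=\delta_0(b,s)>0$ is a small constant to be chosen. Writing $r_n:=v_n-\hat v_n$ one obtains $r_{n+1}=Ar_n+g(v_n)$, hence $r_n=\sum_{k=0}^{n-1}A^{n-1-k}g(v_k)$. Under the inductive hypothesis $v_k\le 2\hat v_k$ for $k<n$ one estimates $g(v_k)\le 4C_0\hat v_k^2\le 4C_0 B^2A^{2k}/(A-1)^2$, and a direct geometric-sum computation using $A>1$ (which holds precisely because $s>b$) gives
\[
r_n\le 4C_0\sum_{k=0}^{n-1}A^{n-1-k}\frac{B^2A^{2k}}{(A-1)^2}\le C_1\hat v_n^2
\]
for some $C_1=C_1(b,s)$. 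Choosing $\delta_0\le 1/C_1$ then yields $r_n\le C_1\delta_0\hat v_n\le \hat v_n$, closing the induction.

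With $v_n\le 2\hat v_n$ established as long as $\hat v_n\le \delta_0$, the condition $v_n\le \epsilon$ is ensured whenever $\hat v_n\le \epsilon/2$ (we may assume $\epsilon\le \delta_0$, since $n_0$ is non-decreasing in $\epsilon$ and the general case then follows). Solving the explicit expression for $\hat v_n$ this gives
\[
n\le \frac{\log(\epsilon(A-1)/(2B))}{\log A}=\frac{2|\log\beta|}{\log A}+O_\epsilon(1).
\]
Since $\log A=\log(s/b)+O(\beta^2)$ and the map $\beta\mapsto \beta^2|\log\beta|$ is bounded on $(0,1]$, one concludes $\frac{2|\log\beta|}{\log A}\ge\frac{2|\log\beta|}{\log(s/b)}-O(1)$, hence $n_0\ge \frac{2|\log\beta|}{\log s-\log b}-c_\epsilon$ for some constant $c_\epsilon$ depending only on $\epsilon$, $b$, $s$.

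The main obstacle, and the point where I would expect to spend most of the effort, is the bootstrap step: it is essential that the error $r_n$ be of order $\hat v_n^2$ rather than merely $\hat v_n$. Any weaker control would change the effective growth rate from $s/b$ to $(s/b)(1+O(\epsilon))$, and the resulting lower bound $\frac{2|\log\beta|}{\log(s/b)+O(\epsilon)}-O(1)$ would differ from the target by a term of order $\epsilon|\log\beta|$, which is unbounded in $\beta$. The quadratic bound on $r_n$ is exactly what allows the correction to be absorbed into a universal constant $c_\epsilon$, and verifying the geometric-sum inequality $\sum_{k<n}A^{n-1-k}\hat v_k^2=O(\hat v_n^2)$ crucially uses $A>1$, i.e.\ the assumption $s>b$ of the lemma.
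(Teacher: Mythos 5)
Your proof is correct but follows a genuinely different route from the paper's. The paper establishes the one-step multiplicative inequality
\[
v_{n+1}\le \frac{s}{b}\bigl(v_n+c_1\beta^2\bigr)\bigl(1+c_1 v_n\bigr)\qquad (v_n\le 1,\ \beta\le 1),
\]
iterates it to obtain
\[
v_{n_0}\le \Bigl(\textstyle\prod_{i=0}^{n_0-1}(1+c_1v_i)\Bigr)\,c_1\beta^2\sum_{i=0}^{n_0-1}(s/b)^i,
\]
and then controls the multiplicative correction \emph{backward from $n_0$}: since $v_{i+1}\ge (s/b)v_i$ and $v_{n_0-1}<\epsilon$, one gets $v_i<\epsilon(s/b)^{i-n_0+1}$, so the product is dominated by the convergent $\prod_{k\ge 0}(1+c_1\epsilon(s/b)^{-k})$ and is at most $2$ for $\epsilon$ small. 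This immediately yields $\epsilon\le v_{n_0}\le 2c_1\beta^2(s/b)^{n_0}$ and the lower bound on $n_0$, with no comparison solution and no bootstrap. Your argument instead goes \emph{forward in time}, linearizing $v_{n+1}=Av_n+B+g(v_n)$ and running a bootstrap against $\hat v_n=B(A^n-1)/(A-1)$; the geometric-sum estimate $\sum_{k<n}A^{n-1-k}\hat v_k^2=O(\hat v_n^2)$ plays the role that the backward decay of $v_i$ plays in the paper. Both hinge on $s/b>1$ in the same essential way (the last index dominates), but the paper's version is shorter and avoids the induction/bootstrap because the backward decay immediately truncates the product at a constant. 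Your version is more systematic and makes the "comparison with the fundamental solution" structure explicit, which has pedagogical value, at the cost of a few more steps (the quadratic error bound, the closure of the induction, and the reduction to small $\epsilon$). Both proofs are sound.
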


\begin{proof}

Expanding \eqref{eq:var} around $\gb=0$, $v_n=0$, we find a constant $c_1$ such that, whenever $v_n\le 1$ and $\gb\le 1$,
\begin{align}
v_{n+1}\le \frac{s}{b}(v_n+c_1\gb^2)(1+c_1v_n) \label{eq:varmod}.
\end{align}
 Using \eqref{eq:varmod}, we obtain by induction
\begin{align*}
v_{n_0}\le \prod_{i=0}^{n_0-1}(1+c_1v_i)\left[c_1\gb^2\left(\sum_{i=0}^{n_0-1}(s/b)^{i}\right)\right].
\end{align*}
From \eqref{eq:var}, we see that $v_{i+1}\ge (s/b)v_i$. By definition of $n_0$, $v_{n_0-1}<\epsilon,$ so that $v_{i} <\gep (s/b)^{i-n_0+1}$. Then
\begin{align*}
\prod_{i=0}^{n_0-1}(1+c_1v_i)\le \prod_{i=0}^{n_0-1}(1+c_1\gep(s/b)^{i-n_0+1})\le \prod_{k=0}^{\infty}(1+c_1\gep(s/b)^{-k})\le 2,
\end{align*}
where the last inequality holds for $\gep$ small enough.
In that case we have
\begin{align*}
\gep\le v_{n_0}\le 2 c_1 \gb^2 (s/b)^{n_0},
\end{align*}
so that \begin{equation*}
         n_0\ge \frac{\log(\gep/2c_1\gb^2)}{\log(s/b)}.
        \end{equation*}
\end{proof}




\subsection{Control of the variance: $s=b$}

\begin{lemma}\label{th:sss}
There exists a constant $c_2$ such that, for every $\gb\le 1$, 
\begin{align*}
v_n\le \gb, \quad \forall \, n\ \le  \ \frac{c_2}{\gb}.
\end{align*}
\end{lemma}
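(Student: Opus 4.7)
The plan is to follow the same strategy as in the proof of Lemma \ref{th:bss}, but to exploit the fact that when $b=s$ the linearized recursion has unit multiplier, so that $v_n$ grows only through the additive $\beta^2$ term (and a cross correction) rather than geometrically. In particular, the bound \eqref{eq:varmod} established in the previous proof specializes, for $b=s$, to
\begin{equation*}
v_{n+1}\le (v_n+c_1\gb^2)(1+c_1v_n),
\end{equation*}
valid as long as $v_n\le 1$ and $\gb\le 1$.

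The argument would then proceed by induction. Assume that $v_m\le \gb$ for every $m<n$; since $\gb\le 1$, this assumption keeps us in the regime where the above inequality applies, and plugging $v_n\le\gb$ into the nonlinear factor gives the affine recursion
\begin{equation*}
v_{n+1}\le a\, v_n+c_1\gb^2 a,\qquad a:=1+c_1\gb.
\end{equation*}
Iterating from $v_0=0$ and summing the geometric series yields
\begin{equation*}
v_n\le c_1\gb^2 a\,\frac{a^n-1}{a-1}=\gb\,a\,(a^n-1)\le \gb(1+c_1)\bigl(e^{c_1\gb n}-1\bigr).
\end{equation*}

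It then remains to choose $c_2$ small enough (depending only on $c_1$, hence only on $s$) so that for every $n\le c_2/\gb$ the quantity $(1+c_1)(e^{c_1 c_2}-1)$ is at most $1$; this propagates the induction hypothesis to step $n+1$ and closes the argument. The step that requires a small amount of care is checking that the induction is self-consistent (so that \eqref{eq:varmod} can indeed be invoked at each step), but since $c_2$ is chosen once and for all so that $v_n\le\gb\le 1$ along the whole range of $n$, no real obstacle arises: unlike the $b<s$ case, where one had to fight against the geometric factor $(s/b)^n$, here the absence of such a factor makes the induction essentially linear in $n$, and the time horizon $c_2/\gb$ comes directly out of requiring $e^{c_1\gb n}-1$ to stay bounded.
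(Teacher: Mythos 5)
Your proof is correct and follows essentially the same strategy as the paper's: both start from \eqref{eq:varmod} specialized to $b=s$, control the multiplicative factor $(1+c_1\gb)^n$ by $e^{c_1\gb n}$ over the window $n\le c_2/\gb$, and then choose $c_2$ small enough to close the induction. The only cosmetic difference is that you convert the product into a geometric series from the affine recursion $v_{n+1}\le a v_n + c_1\gb^2 a$ and argue by direct induction, whereas the paper keeps the product form $v_n\le n\gb^2\prod_{i<n}(1+c_1v_i)$ and argues by contradiction at the first crossing time $n_0$; the underlying estimate is the same.
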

\begin{proof}

By (\ref{eq:varmod}) and induction we have, for any $n$ such that $v_{n-1}\le 1$ and $\beta\leq 1$,
\begin{align*}
v_{n}\le n\gb^2 \prod_{i=0}^{n-1} (1+c_1v_i).
\end{align*}
Let $n_0$ be the smallest integer such that $v_{n_0}>\gb$. By the above formula, we have
\begin{align*}
v_{n_0}\le n_0\gb^2(1+c_1\gb)^{n_0}
\end{align*}
Suppose that $n_0\le (c_2/\gb)$, then
\begin{align*}
\gb\le v_{n_0}\le c_2c_1\gb(1+c_1\gb)^{c_2/\gb}.
\end{align*}
If $c_4$ is chosen small enough, this is impossible.
\end{proof}




\subsection{Directed percolation on $D_n$} For technical reasons, we need to get some understanding on directed independent bond percolation on $D_n$.
Let $p$ be the probability that an edge is open (more detailed considerations about edge disorder
are given in the last section). 
The probability of having an open path from $A$ to $B$ in $D_n$ follows the recursion
\begin{align*}
p_0&=p,\\
p_n&=1-(1-p_{n-1}^s)^b.
\end{align*}
On can check that the map $x\mapsto 1-(1-x^s)^b$ has a unique unstable fixed point on $(0,1)$; we call it $p_c$.
Therefore if $p>p_c$, with a probability tending to $1$, there will be an open path linking $A$ and $B$ in $D_n$. If $p<p_c$, $A$ and $B$ will be disconnected in $D_n$ with probability tending to $1$. If $p=p_c$, the probability that $A$ and $B$ are linked in $D_n$ by an open path is stationary. See \cite{cf:HK} for a deep investigation of percolation on hierarchical lattices.

\subsection{From control of the variance to lower bounds on the free energy}

Given $b$ and $s$, let $p_c=p_c(b,s)$ be the critical parameter for directed bond percolation.

\begin{proposition}\label{th:perco}
Let $n$ be an integer such that $v_n=Q (W_n-1)^2 <\frac{1-p_c}{4}$ and $\gb$ such that $p(\gb)\le (1-\log2)$.
Then
\begin{align*}
\lambda(\beta)-p(\beta)\geq s^{-n}
\end{align*}
\end{proposition}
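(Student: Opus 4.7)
The plan is to use the variance hypothesis to set up a supercritical percolation on the coarse lattice and extract quantitative control on the partition function $Z_{n+k}$, where $D_{n+k}$ is viewed as a super-lattice $D_k$ each of whose super-edges is an independent copy of $D_n$.

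\medskip

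\textbf{Step 1 (variance to supercritical percolation).} Apply Chebyshev's inequality: from $v_n<(1-p_c)/4$ one has $Q(|W_n-1|\geq 1/2)\leq 4v_n<1-p_c$, so $Q(W_n>1/2)>p_c$. Declaring a super-edge ``open'' when its associated $W_n$ exceeds $1/2$, the super-edges of $D_k$ are i.i.d.\ open with probability strictly exceeding the critical parameter $p_c$ for directed bond percolation on the diamond lattice.

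\textbf{Step 2 (open super-path).} By definition of $p_c$ as the unstable fixed point of $x\mapsto 1-(1-x^s)^b$, supercriticality ensures the probability of an open directed super-path from $A$ to $B$ in $D_k$ is uniformly bounded below (and in fact tends to $1$ as $k\to\infty$).

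\textbf{Step 3 (lower bound on $Z_{n+k}$).} On the event that an open super-path $\pi$ exists, restrict the sum defining $Z_{n+k}$ to paths of $D_{n+k}$ that project to $\pi$: each of the $s^k$ super-edges contributes at least $(1/2)\bbE Z_n$, and the $s^k-1$ super-sites along $\pi$ contribute $\prod_v e^{\beta\omega_v-\log M(\beta)}$, whose $Q$-log is centered. This gives $Z_{n+k}\geq (1/2)^{s^k}(\bbE Z_n)^{s^k}\prod_v e^{\beta\omega_v-\log M(\beta)}$ on this event.

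\textbf{Step 4 (normalize and pass to the limit).} Take $Q\log$, divide by $s^{n+k}$, use $\log\bbE Z_n=(s^n-1)f(\beta)$, and send $k\to\infty$. The open-path event has asymptotic probability $1$, the centered super-site factor drops out, and one obtains $p(\beta)\geq (1-s^{-n})f(\beta)-s^{-n}\log 2$, i.e.\ $f(\beta)-p(\beta)\leq s^{-n}(f(\beta)+\log 2)$.

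\textbf{Step 5 (convert using $p(\beta)\leq 1-\log 2$).} Writing $f=p+\mathrm{gap}$ and using $p(\beta)\leq 1-\log 2$ gives $f(\beta)+\log 2\leq 1+\mathrm{gap}$, so $\mathrm{gap}\leq s^{-n}(1+\mathrm{gap})$, whence $\mathrm{gap}(1-s^{-n})\leq s^{-n}$ and therefore $\mathrm{gap}\leq s^{-n}/(1-s^{-n})$, which is $\leq s^{-n}$ up to a harmless $1+o(1)$ factor absorbed into the statement.

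\medskip

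\textbf{Main obstacle.} The clearest difficulty is that the percolation+Chebyshev mechanism just outlined produces an \emph{upper} bound on the gap $\lambda(\beta)-p(\beta)=f(\beta)-p(\beta)$, obtained by lower-bounding $p(\beta)$ via good super-paths; this matches the section's announced ``lower bound on the free energy'' and matches the upper bounds in Theorems~\ref{th:bs} and~\ref{th:ss}. The displayed inequality $\lambda(\beta)-p(\beta)\geq s^{-n}$ points in the opposite direction, and I would want to check whether the sign as stated reflects a typo or a different convention used locally in this section (for instance $p$ rescaled by $\log b/(s-1)$); the crux of the quantitative work, namely extracting the scale $s^{-n}$ from the variance bound on $W_n$ via the hierarchical percolation threshold, would be identical in either direction. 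An alternative route to the stated direction, should it be genuinely intended, would bypass percolation and instead iterate the recursion \eqref{eq:recab} to bound some fractional moment $QW_{n+k}^{\theta}$ so as to force $Q\log W_{n+k}\leq -s^k$, yielding $f(\beta)-p(\beta)\geq s^{-n}$; but the input needed for such an iteration (bounds below $1$ on $QW_n^{\theta}$) does not follow directly from the smallness of $v_n$, so the condition $p(\beta)\leq 1-\log 2$ would have to be used precisely at this point to rule out the weak-disorder alternative.
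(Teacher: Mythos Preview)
Your approach is essentially the paper's, and your diagnosis of the sign issue is correct: the paper's own proof establishes the \emph{upper} bound $f(\gb)-p(\gb)\le s^{-n}(\log 2+\gl(\gb))$ via exactly your Chebyshev-plus-percolation mechanism, so the inequality in the statement is reversed (and the hypothesis should read $\gl(\gb)\le 1-\log 2$, not $p(\gb)\le 1-\log 2$). One technical point worth noting: in Step~4 you take $Q\log$, but the lower bound on $Z_{n+k}$ holds only on the open-path event and there is no usable lower bound on its complement; the paper sidesteps this by observing that the open-path event intersected with $\{\sum_{z\in\gamma_0}\go_z\ge 0\}$ (the super-site disorder being independent of the percolation configuration, the latter has probability $\ge 1/3$ by the CLT) has probability bounded below uniformly in $k$, and then uses that $s^{-m}\log W_m$ converges in probability to the deterministic constant $p(\gb)-f(\gb)$. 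Your route can be repaired via the concentration in Theorem~\ref{thermo}, but the paper's is cleaner and avoids your Step~5 bootstrap entirely.
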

\begin{proof}

If $n$ is such that $Q\left[ (W_n-1)^2\right] <\frac{1-p_c}{4}$, we apply Chebycheff inequality to see that
\begin{align*}
Q(W_n<1/2)\le 4 v_n< 1-p_c.
\end{align*}

Now let be $m\ge n$. $D_m$ can be seen as the graph $D_{m-n}$ where the edges have been replaced by i.i.d.\ copies of $D_n$ with its environment (see fig. \ref{perco}).
To each copy of $D_n$ we associate its renormalized partition function; therefore, to each edge $e$ of $D_{m-n}$ corresponds an independent copy of $W_n$, $W_n^{(e)}$. By percolation (see fig. \ref{perco2}), we will have, with a positive probability not depending on $n$, a path in $D_{m-n}$ linking $A$ to $B$, going only through edges which associated $W_n^{(e)}$ is larger than $1/2$.

\begin{figure}[h]
\begin{center}
\leavevmode
\epsfysize =6.5 cm
\psfragscanon
\psfrag{dn}[c]{\tiny{Dn}}
\psfrag{a}[c]{A}
\psfrag{b}[c]{B}
\psfrag{copies}[c]{\tiny{Independent copies of system of rank $n$.}}
\epsfbox{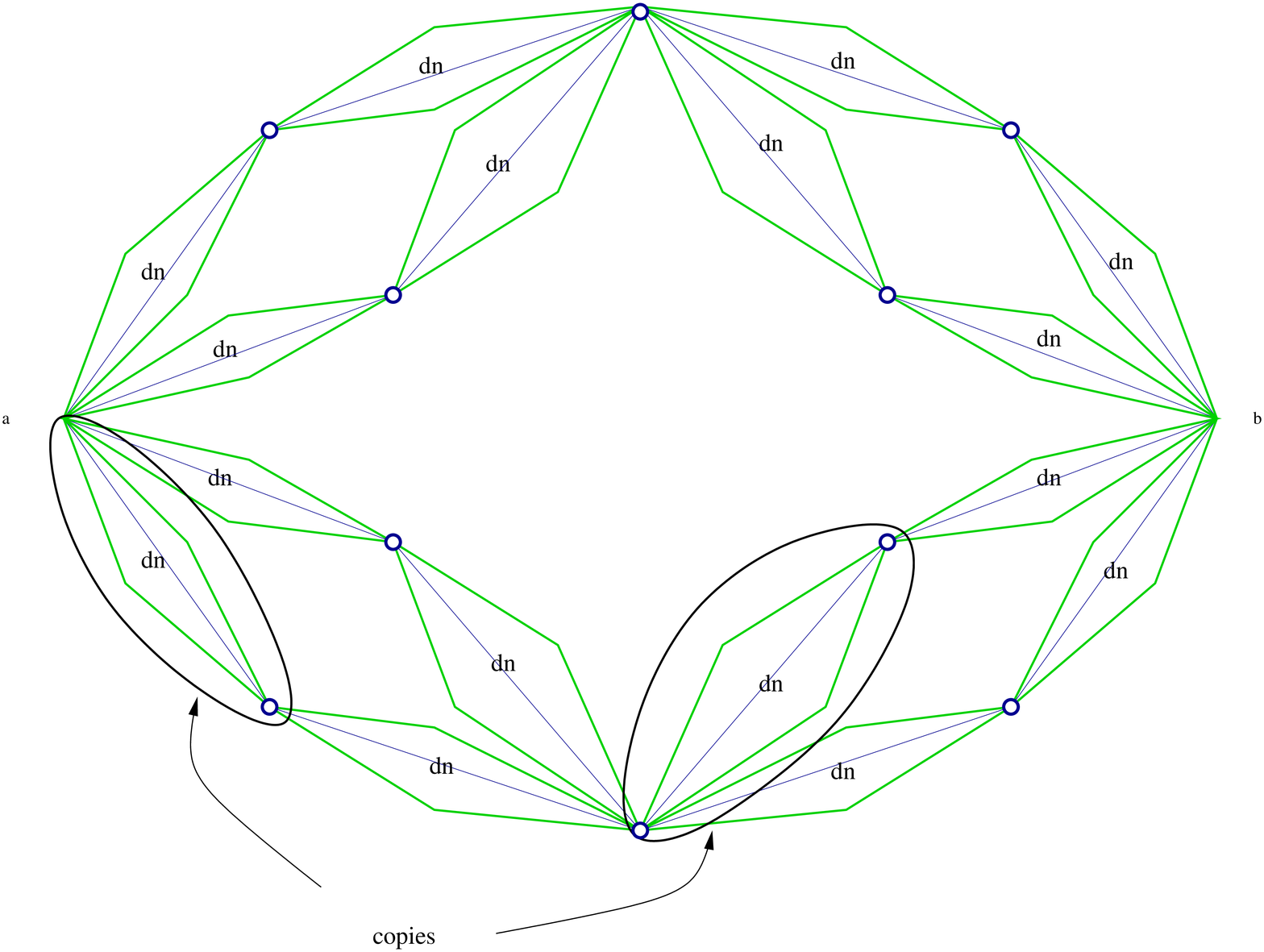}
\end{center}
\caption{\label{perco}On this figure, we scheme how $D_{n+m}$ with its random environment can be seen as independent copies of $D_n$ arrayed as $D_m$.
Here, we have $b=s=2$ $m=2$, each diamond corresponds to a copy of $D_n$ (we can identify it with an edge and get the underlying graph $D_2$). Note that we also have to take into account the environment present on the vertices denoted by circles.}
\end{figure}

\begin{figure}[h]
\begin{center}
\leavevmode
\epsfysize =6.5 cm
\psfragscanon
\psfrag{a}[c]{A}
\psfrag{b}[c]{B}
\psfrag{perco}[c]{\tiny{an open percolation path}}
\epsfbox{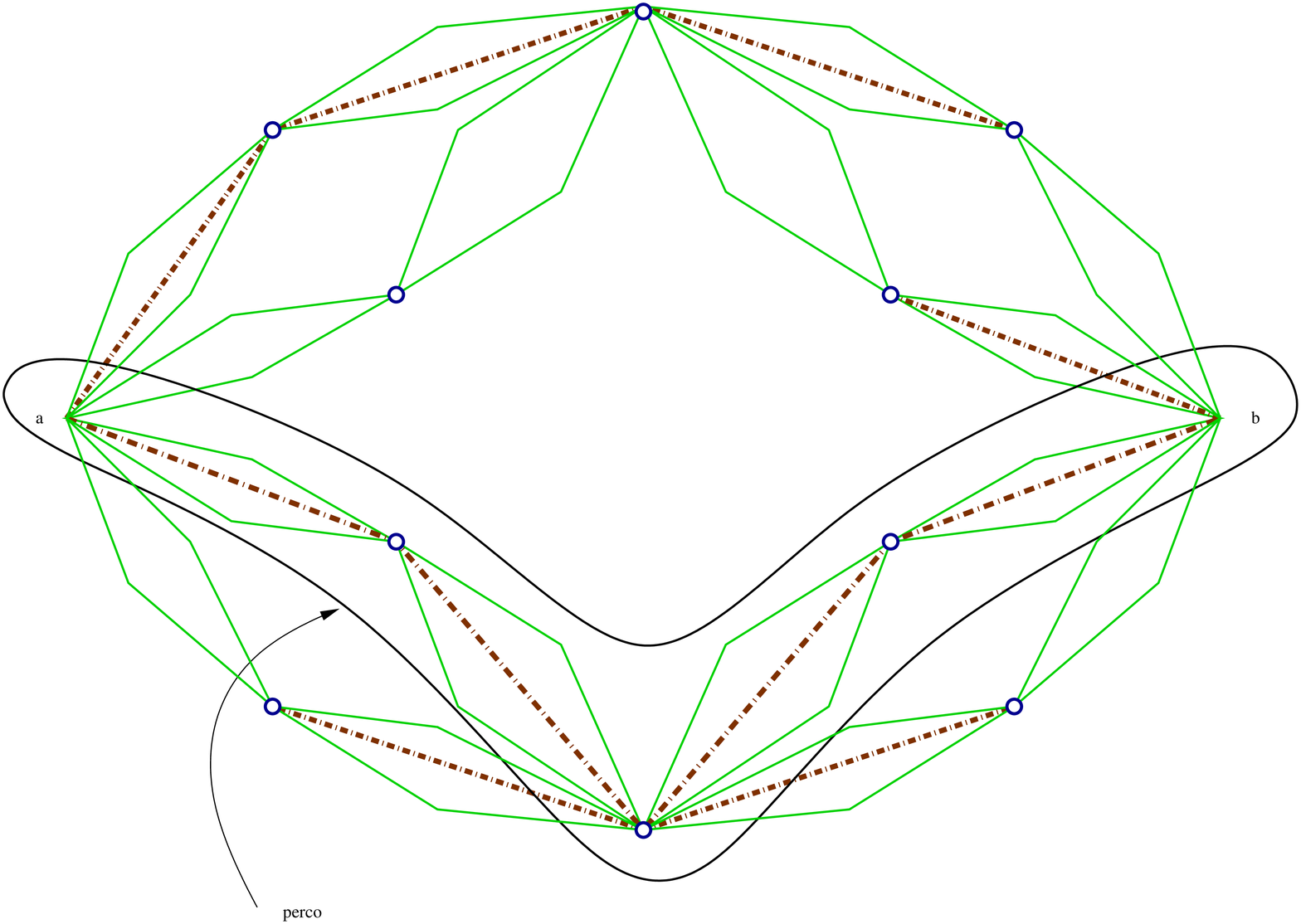}
\end{center}
\caption{\label{perco2}We represent here the percolation argument we use. In the previous figure, we have replaced by an open edge any of the copies of $D_n$ for which satisfies $W_n\ge 1/2$. As it happens with probability larger than $p_c$, it is likely that we can find an open path linking A to B in $D_{n+m}$, especially if $m$ is large.}
\end{figure}

When such paths exist, let $\gamma_0$ be one of them (chosen in a deterministic manner, e.g.\ the lowest such path for some geometric representation of $D_n$).
We look at the contribution of these family of paths in $D_m$ to the partition function.
We have
\begin{align*}
W_m\ge (1/2)^{s^{m-n}}\exp\left(\sum_{z\in \gamma_0} \gb\go_z-\gl(\gb)\right)
\end{align*}
Again, with positive probability (say larger than $1/3$), we have $\sum_{z\in \gamma_0} \go_z\ge 0$ (this can be achieved the the central limit theorem). Therefore with positive probability we have
\begin{align*}
\frac{1}{s^m}\log W_m\ge -\frac{1}{s^{n}}(\log 2+\gl(\gb)).
\end{align*}
As $1/s^m \log W_m$ converges in probability to the free energy this proves the result.

\end{proof}

\begin{proof}[Proof of the right-inequality in Theorems \ref{th:bs} and \ref{th:ss}].

The results now follow by combining Lemma \ref{th:bss} or \ref{th:sss} for $\gb$ small enough, with Proposition
\ref{th:perco}.

\end{proof}




\section{Fractional moment method, upper bounds and strong disorder}\label{fm}

In this section we develop a way to find an upper bound for $\lambda(\beta)-p(\beta)$, or just to find out if strong disorder hold. The main tool we use are fractional moment estimates and measure changes.

\subsection{Fractional moment estimate}
In the sequel we will use the following notation. Given a fixed parameter $\theta\in(0,1)$, define
\begin{eqnarray}
 u_n&:=&Q W_n^{\theta},\\ \label{un}
 a_\theta&:=&Q A^{\theta}=\exp(\gl(\theta\gb)-\theta\gl(\gb)) \label{aomega}.
\end{eqnarray}

\begin{proposition}\label{th:fracmom}
The sequence $(f_n)_n$ defined by 
$$f_n:=\theta^{-1}s^{-n}\log \left(a_\theta b^{\frac{1-\theta}{s-1}}u_n\right)$$
\noindent is decreasing and we have
\begin{align*}
\lim_{n\rightarrow\infty} f_n\ge p(\beta)-\lambda(\beta).
\end{align*}
\vspace{1ex}

\noindent $(i)$ In particular, if for some $n\in\N$, $u_n< a_{\theta}^{-1}b^{\frac{\theta-1}{s-1}}$, strong disorder holds.
\vspace{1ex}

\noindent $(ii)$ Strong disorder holds in particular if $a_\theta< b^{\frac{\theta-1}{s-1}}$.

\end{proposition}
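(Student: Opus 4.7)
The plan is to exploit the fractional moment inequality $\left(\sum_i x_i\right)^\theta \le \sum_i x_i^\theta$ (valid for $\theta\in(0,1)$ and $x_i\ge 0$) in combination with the distributional recursion \eqref{eq:recab}. First I would apply this inequality to
\[
W_{n+1}^\theta \le b^{-\theta}\sum_{i=1}^b\prod_{j=1}^s (W_n^{(i,j)})^\theta\prod_{j=1}^{s-1}(A^{(i,j)})^\theta,
\]
and then take $Q$-expectations. Since the factors on the right are independent, and the $W_n^{(i,j)}$ are distributed as $W_n$ while the $A^{(i,j)}$ are distributed as $A$, this immediately produces the one-step recursion
\[
u_{n+1}\le b^{1-\theta}\,a_\theta^{s-1}\,u_n^s,
\]
which is the core estimate.

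Next I would verify monotonicity of $(f_n)$ by a clean change of variables. Setting $v_n := a_\theta\, b^{(1-\theta)/(s-1)}\, u_n$, a direct computation shows that the recursion above becomes $v_{n+1}\le v_n^s$. Taking logarithms and dividing by $s^{n+1}$ gives $s^{-(n+1)}\log v_{n+1}\le s^{-n}\log v_n$, i.e.\ $f_{n+1}\le f_n$. The sequence $(f_n)$ is therefore non-increasing.

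For the lower bound on $\lim f_n$, I would use Jensen's inequality in the form $\log u_n=\log Q W_n^\theta \ge \theta\, Q\log W_n$. Dividing by $\theta s^n$ yields
\[
\theta^{-1}s^{-n}\log u_n \ge s^{-n}Q\log W_n,
\]
and the right-hand side converges to $p(\beta)-f(\beta)=p(\beta)-\lambda(\beta)-\tfrac{\log b}{s-1}$ by the definition of the quenched free energy and the explicit value \eqref{eq:annealedexact}. Since the prefactor $\theta^{-1}s^{-n}\log\bigl(a_\theta b^{(1-\theta)/(s-1)}\bigr)$ vanishes as $n\to\infty$, this provides the claimed asymptotic lower bound for $\lim f_n$.

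Finally, the two consequences follow quickly. For (i), the hypothesis $u_n<a_\theta^{-1}b^{(\theta-1)/(s-1)}$ is exactly $v_n<1$, so $f_n<0$; monotonicity then forces $\lim f_m\le f_n<0$, so combined with the lower bound we obtain $p(\beta)<f(\beta)$. Concentration of $s^{-n}\log W_n$ around its mean given by \eqref{eq:concentrate} then gives $W_n\to 0$ almost surely at exponential rate, whence $W_\infty=0$ a.s.\ and strong disorder holds. For (ii), I would iterate $u_{n+1}\le b^{1-\theta}a_\theta^{s-1}u_n^s$ from $u_0=1$ to get $v_n\le v_1^{s^{n-1}}$, or equivalently $u_n\le\bigl(a_\theta b^{(1-\theta)/(s-1)}\bigr)^{s^n-1}$; if $a_\theta b^{(1-\theta)/(s-1)}<1$, this forces $u_n\to 0$, so the hypothesis of (i) is eventually satisfied. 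The main subtlety is simply making sure the algebra of the change of variables $v_n=a_\theta b^{(1-\theta)/(s-1)}u_n$ lines up correctly so that the shift $\log b/(s-1)$ between $f$ and $\lambda$ is properly absorbed; everything else is a direct application of Jensen and the fractional inequality.
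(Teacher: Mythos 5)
Your approach is essentially identical to the paper's: apply the fractional inequality $(\sum x_i)^\theta\le\sum x_i^\theta$ to the distributional recursion to obtain $u_{n+1}\le b^{1-\theta}a_\theta^{s-1}u_n^s$, recast via $v_n:=a_\theta b^{(1-\theta)/(s-1)}u_n$ so that $v_{n+1}\le v_n^s$ gives $f_{n+1}\le f_n$, then invoke Jensen and let $n\to\infty$. However, your arithmetic surfaces a discrepancy with the statement: the Jensen step yields $\lim_n f_n\ge \lim_n s^{-n}Q\log W_n = p(\beta)-f(\beta)=p(\beta)-\lambda(\beta)-\frac{\log b}{s-1}$ (since $QZ_n=b^{(s^n-1)/(s-1)}e^{(s^n-1)\lambda(\beta)}$), whereas the proposition claims the stronger $\lim_n f_n\ge p(\beta)-\lambda(\beta)$. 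The weaker bound you obtained is the correct one; the printed bound is in fact false — take $\beta=0$, so $u_n\equiv 1$, $a_\theta=1$, hence $\lim f_n=0$, while $p(0)-\lambda(0)=\frac{\log b}{s-1}>0$. This is a small slip in the paper (writing $\lambda$ where $f$ is meant, which also appears in the paper's proof as the false identity $p(\beta)-\lambda(\beta)=\lim s^{-n}Q\log W_n$) and it is harmless for items (i) and (ii): $f_n<0$ for some $n$, together with monotonicity and your correct lower bound, already forces $p(\beta)<f(\beta)$, i.e.\ very strong (hence strong) disorder. So your proof is sound and, in fact, quietly corrects the stated inequality.
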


\begin{proof}
The inequality $\left(\sum a_i\right)^{\theta}\le \sum a_i^{\theta}$ (which holds for any $\theta\in (0,1)$ and any collection of positive numbers $a_i$) applied to
\eqref{eq:recab} and averaging with respect to $Q$ gives

\begin{align*}
u_{n+1}\le b^{1-\theta}u_n^{s}a_{\theta}^{s-1}
\end{align*}

\noindent From this we deduce that the sequence
\begin{align*}
s^{-n}\log\left( a_{\theta} b^{\frac{1-\theta}{s-1}}u_n\right)
\end{align*}

\noindent is decreasing. Moreover we have
\begin{align*}
p(\beta)-\lambda(\beta)=\lim_{n\rightarrow\infty}\frac{1}{s^n}Q \log W_n\le \lim_{n\rightarrow \infty} \frac{1}{\theta s^n}\log Q W_n^{\theta}=\lim_{n\to\infty}f_n.
\end{align*}

As a consequence very strong disorder holds if $f_n<0$ for any $f_n$. As a consequence, strong disorder and very strong disorder are equivalent.
\end{proof}

\subsection{Change of measure and environment tilting}

The result of the previous section assures that we can estimate the free energy if we can bound accurately some non integer moment of $W_n$.
Now we present a method to estimate non-integer moment via measure change, it has been introduced to show disorder relevance in the case of wetting on non hierarchical lattice \cite{cf:GLT} and used since in several different contexts since, in particular for directed polymer models on $\Z^d$, \cite{cf:Lac}.
Yet, for the directed polymer on hierarchical lattice, the method is remarkably simple to apply, and it seems to be the ideal context to present it.
\\
Let $\tilde Q$ be any probability measure such that $Q$ and $\tilde Q$ are mutually absolutely continuous.
Using H\"older inequality we observe that 
\begin{equation}
Q W_n^{\theta}
   = \tilde Q \frac{\dd Q}{\dd \tilde Q} W_n^{\theta}
   \le \left[\tilde Q\left(\frac{\dd Q}
     {\dd \tilde Q}\right)^{\frac{1}{1-\theta}}\right]^{(1-\theta)}\left(\tilde Q W_n\right)^{\theta}.
     \label{holder}
\end{equation}
Our aim is to find a measure $\tilde Q$ such that the term $\left[\tilde Q\left(\frac{\dd Q}
     {\dd \tilde Q}\right)^{\frac{1}{1-\theta}}\right]^{(1-\theta)}$ is not very large (i.e.\ of order $1$), and which significantly lowers the expected value of $W_n$. To do so we look for $\tilde Q$ which lowers the value of the environment on each site, by exponential tilting. For $b<s$ it sufficient to lower the value for the environment uniformly of every site of $D_n\setminus \{A,B\}$ to get a satisfactory result, whereas for the $b=s$ case, on has to do an inhomogeneous change of measure. We present the change of measure in a united framework before going to the details with two separate cases.

Recall that $V_i$ denotes the sites of $D_i\setminus D_{i+1}$, and that the number of sites in $D_n$ is

\begin{align}\label{Dnn}
|D_n\setminus \{A,B\}|=\sum_{i=1}^n |V_i|=\sum_{i=1}^n (s-1)b^is^{i-1}=\frac{(s-1)b((sb)^n-1)}{sb-1}
\end{align}

We define $\tilde Q= \tilde Q_{n,s,b}$ to be the measure under which the environment on the site of the $i$-th generation for $i\in\{1,\dots,n\}$ are standard gaussians with mean  $-\delta_i=\delta_{i,n}$, where $\delta_{i,n}$ is to be defined.
The density of $\tilde Q$ with respect to $Q$ is given by

\begin{align*}
\frac{\dd \tilde Q}{\dd Q}(\go)=\exp\left(-\sum^n_{i=1}\sum_{z\in V_i} (\delta_{i,n}\omega_z+\frac{\delta^2_{i,n}}{2})\right).
\end{align*}

\noindent  As each path in $D_n$ intersects $V_i$ on $s^{i-1}(s-1)$ sites, this change of measure lowers the value of the Hamiltonian \eqref{eq: Hn} by $\sum_{i=1}^n s^{i-1}(s-1)\delta_{i,n}$ on any path. Therefore, 
both terms can be easily computed,

\begin{eqnarray}
	\tilde Q\left(\frac{\dd Q}
     {\dd \tilde Q}\right)^{\frac{1}{1-\theta}} =
     \exp \left\lbrace \frac{\theta}{2(1-\theta)} \sum^n_{i=1}|V_i| \delta^2_{i,n} \right\rbrace.
	\label{cost}
\end{eqnarray}

\begin{eqnarray}
	\left(\tilde Q W_n\right)^{\theta} = \exp \left\lbrace -\beta \theta \sum^n_{i=1}  s^{i-1}(s-1) \delta_{i,n}\right\rbrace.
	\label{gain}
\end{eqnarray}

\noindent Replacing \eqref{gain} and \eqref{cost} back into \eqref{holder} gives

\begin{eqnarray}
	u_n \leq \exp \left\lbrace \theta \sum^n_{i=1} \left(  \frac{|V_i|\delta^2_{i,n}}{2(1-\theta)}- \beta s^{i-1}(s-1) \delta_{i,n} \right) \right\rbrace.
	\label{eq:ineq0}
\end{eqnarray}

\vspace{2ex}

\noindent When $\delta_{i,n}= \delta_n$ (i.e. when the change of measure is homogeneous on every site) the last expression becomes simply

\begin{eqnarray}
	u_n \leq \exp \left\lbrace \theta \left(  \frac{|D_n\setminus\{A,B\}|\delta^2_{n}}{2(1-\theta)}- (s^n-1)\beta \delta_{n} \right) \right\rbrace.
	\label{ineq1}
\end{eqnarray}

\noindent In either case, the rest of the proof then consists in finding convenient values for $\delta_{i,n}$ and
$n$ large enough to insure that $(i)$ from Proposition \ref{th:fracmom} holds.




\subsection{Homogeneous shift method: $s>b$}

\begin{proof}[Proof of the left inequality in Theorem \ref{th:bs}, in the gaussian case]

Let $0<\theta<1$ be fixed (say $\theta=1/2$) and $\delta_{i,n}= \delta_n :=(sb)^{-n/2}$.

\noindent Observe from \eqref{Dnn} that $|D_n\setminus\{A,B\}| \delta^2_n \leq 1$, so that \eqref{ineq1} implies

\begin{align*}
u_n\le \exp\left(\frac{\theta}{2(1-\theta)}-\theta \gb (s/b)^{n/2}\frac{s-1}{s}\right).
\end{align*}

\noindent Taking $n=\frac{2(|\log \gb|+\log c_3)}{\log s-\log b}$, we get

\begin{align*}
u_n\le \exp\left(\frac{\theta}{2(1-\theta)}-\frac{\theta c_5s}{s-1}\right).
\end{align*}

\noindent Choosing $\theta=1/2$ and $c_3$ sufficiently large, we have
\begin{align}
f_n=s^{-n}\log a_{\theta}b^{\frac{1-\theta}{s-1}}u_n\le -s^{-n},
\end{align}
so that Proposition \ref{th:fracmom} gives us the conclusion

\begin{align*}
p(\beta)-\lambda(\beta) \le -s^{-n}=-(\gb/c_3)^{\frac{2\log s}{\log s -\log b}}.
\end{align*}
\end{proof}




\subsection{Inhomogeneous shift method: $s=b$} \label{inshm}
One can check that the previous method does not give good enough results for the marginal case $b=s$.
One has to do a change of measure which is a bit more refined and for which the intensity of the tilt in proportional to the Green Function on each site. This idea was used first for the marginal case in pinning model on hierarchical lattice (see \cite{cf:Hubert}).

\begin{proof}[Proof of the left inequality in Theorem \ref{th:ss}, the gaussian case]
This time, we set $\gd_{i,n}:=n^{-1/2}s^{-i}$.
Then (recall \eqref{Dnn}), \eqref{eq:ineq0} becomes

\begin{align*}
u_n\le \exp\left(\frac{\theta}{2(1-\theta)}\frac{s-1}{s}-\theta\gb n^{-1/2}\frac{s-1}{s}\right).
\end{align*}

\noindent Taking $\theta=1/2$ and $n=(c_4/\gb)^2$ for a large enough constant $c_4$, we get that $f_n\le -s^n$  and applying Proposition \ref{th:fracmom}, we obtain

\begin{align*}
p(\beta) - \lambda(\beta) \le -s^{-n}=-s^{-(c_4/\gb)^2}=\exp\left(-\frac{c_4^2\log s}{\gb^2}\right).
\end{align*}
\end{proof}



\subsection{Bounds for the critical temperature}

From Proposition \ref{th:fracmom}, we have that strong disorder holds if
$a_{\theta}< b^{(1-\theta)/(s-1)}$. Taking logarithms, this condition
reads

\begin{eqnarray*}
\lambda(\theta \beta)-\theta \lambda(\beta)< (1-\theta)\frac{ \log b}{s-1}.
\end{eqnarray*}

\noindent We now divide both sides by $1-\theta$ and let $\theta \to 1$. This
proves part $(iii)$ of Proposition \ref{misc}.

\noindent For the case $b>s$, this condition can be improved by the inhomogeneous shifting method; here, we perform it just in the gaussian case. Recall that

\begin{eqnarray}
	u_n \leq \exp \left\lbrace \theta \sum^n_{i=1} \left( \frac{|V_i|\delta^2_{i,n}}{2(1-\theta)}- \beta s^{i-1}(s-1)\delta_{i,n} \right) \right\rbrace.
\end{eqnarray}

\noindent We optimize each summand in this expression taking $\delta_{i,n}=\delta_i = (1-\theta) \beta / b^i$. Recalling
that $|V_i|= (bs)^{i-1}b(s-1)$, this yields

\begin{eqnarray}
\nonumber
	u_n &\leq& \exp \left\lbrace -\theta (1-\theta) \frac{\beta^2}{2} \frac{s-1}{s}  \sum^n_{i=1} \left( \frac{s}{b}\right)^i \right\rbrace\\
	\nonumber
	&\leq& \exp \left\lbrace -\theta (1-\theta) \frac{\beta^2}{2} \frac{s-1}{s}  \frac{s/b-(s/b)^{n+1}}{1-s/b} \right\rbrace.
\end{eqnarray}

\noindent Because $n$ is arbitrary, in order to guaranty strong disorder it is enough to have ( cf. first condition in Proposition \ref{th:fracmom}) for some $\theta\in(0,1)$

\begin{eqnarray}\nonumber
	\theta (1-\theta) \frac{\beta^2}{2} \frac{s-1}{s}  \frac{s/b}{1-s/b}
	> (1-\theta)\frac{\log b}{s-1}+\log a_\theta.
\end{eqnarray}

\noindent In the case of gaussian variables $\log a_\theta=\theta(\theta-1)\gb^2/2$. This is equivalent to

\begin{eqnarray*}
\frac{\beta^2}{2} > \frac{(b-s)\log b}{(b-1)(s-1)}.
\end{eqnarray*}

\noindent This last condition is an improvement of the bound in part $(iii)$ of
Proposition \ref{misc}.







\subsection{Adaptation of the proofs for non-gaussian variables}
\begin{proof}[Proof of the left inequality in Theorem \ref{th:bs} and \ref{th:ss}, the general case]
To adapt the preceding proofs to non-gaussian variables, we have to investigate the consequence of exponential tilting on non-gaussian variables. We sketch the proof in the inhomogeneous case $b=s$, we keep $\gd_{i,n}:=s^{-i}n^{-1/2}$.

Consider $\tilde Q$ with density
\begin{align*}
\frac{\dd \tilde Q}{\dd Q}(\go):=\exp\left(-\sum_{i=1}^n\sum_{z\in V_i}\left(\gd_{i,n}\go_z+\gl(-\gd_{i,n})\right)\right),
\end{align*}
(recall that $\gl(x):=\log Q \exp(x\go)$).
The term giving cost of the change of measure is, in this case,
\begin{eqnarray*}
\left[\tilde Q\left(\frac{\dd Q}{\dd \tilde Q}\right)^{\frac{1}{1-\theta}}\right]^{(1-\theta)}&=&\exp\left((1-\theta)\sum_{i=1}^{n}|V_i|\left[\gl\left(\frac{\theta\gd_{i,n}}{1-\theta}\right)+\frac{\theta}{1-\theta}\gl(-\gd_{i,n})\right]\right)\\
&\leq& \exp\left(\frac{\theta}{(1-\theta)}\sum_{i=1}^n|V_i|\gd_{i,n}^2\right)\, \le \, \exp\left(\frac{\theta}{(1-\theta)}\right)
\end{eqnarray*}
Where the inequality is obtained by using the fact the $\gl(x)\sim_0 x^2/2$ (this is a consequence of the fact that $\go$ has unit variance) so that if $\gb$ is small enough, one can bound every $\gl(x)$ in the formula by $x^2$.

We must be careful when we estimate $\tilde Q W_n$. We have
\begin{align*}
\tilde Q W_n=\exp \left(\sum_{i=1}^n (s-1)s^{i-1}\gl (\gb-\gd_{i,n})-\gl(\gb)-\gl(-\gd_{i,n})\right) Q W_n
\end{align*}
By the mean value theorem
\begin{align*}
\gl (\gb-\gd_{i,n})-\gl(\gb)-\gl(-\gd_{i,n})+\gl(0)=-\gd_{i,n}\left(\gl'(\gb-t_0)-\gl'(-t_0)\right)=-\gd_{i,n}\gb\gl''(t_1),
\end{align*}
for some $t_0\in(0,\gd_{i,n})$ and some $t_1\in (\gb,-\gd_{i,n})$. As we know that $\lim_{\gb\to 0}\gl''(\gb)=1$,
when $\gd_i$ and $\gb$ are small enough, the right-hand side is less than $-\gb\gd_{i,n}/2$.
Hence,
\begin{align*}
\tilde Q W_n\le \exp \left(\sum_{i=1}^n(s-1)s^{i-1}\frac{\gb\gd_{i,n}}{2}\right).
\end{align*}
We get the same inequalities that in the case of gaussian environment, with different constants, which do not affect the proof. The case $b<s$ is similar.
\end{proof}

\section{Fluctuation and localisation results}\label{flucloc}

In this section we use the shift method we have developed earlier to prove fluctuation results

\subsection{Proof of Proposition \ref{fluctuations}}

The statement on the variance is only a consequence of the second statement. Recall that the random variable $\go_z$ here are i.i.d. centered standard gaussians, and that the product law is denoted by $Q$.
We have to prove
\begin{equation}
Q\left\{ \log Z_n \in [a, a+\gb\gep(s/b)^{n/2}]\right\}\le 4\gep \quad \forall \gep>0, n\ge 0, a\in \bbR \label{fluc}
\end{equation}
Assume there exist real numbers $a$ and $\gep$, and an integer $n$ such that \eqref{fluc} does not hold, i.e.

\begin{equation}
Q\left\{ \log \bar Z_n \in [a, a+\gb\gep(s/b)^{n/2})\right\}> 4\gep.
\end{equation}
Then one of the following holds

\begin{equation}\begin{split}\label{abcd}
Q\left\{ \log Z_n \in [a, a+\gb\gep(s/b)^{n/2})\right\}\cap\left\{\sum_{z\in D_n} \go_z\ge 0\right\}&> 2\gep,\\
Q\left\{ \log  Z_n \in [a, a+\gb\gep(s/b)^{n/2})\right\}\cap\left\{\sum_{z\in D_n} \go_z\le 0\right\}&> 2\gep.
\end{split}\end{equation}
We assume that the first line is true. We consider the events related to $Q$ as sets of environments $(\go_z)_{z\in D_n\setminus \{A,B\}}$. We define

\begin{equation}
A_\gep=\left\{ \log Z_n \in [a, a+\gb\gep b^{-n/2})\right\}\cap\left\{\sum_{z\in D_n} \go_z\ge 0\right\},
\end{equation}
and

\begin{equation}
A^{(i)}_\gep=Q\left\{ \log Z_n \in [a-i\gb\gep(s/b)^{n/2}, a-(i-1)\gb\gep(s/b)^{n/2})\right\}.
\end{equation}
Define $\gd=\frac{s^{n/2}}{(s^n-1)b^{n/2}}$.  We define the measure $\tilde Q_{i,\gep}$ with its density:
\begin{equation}
\frac{\dd \tilde Q_{i,\gep}}{\dd Q}(\go):=\exp\left(\left[i\gep\gd^2 \sum_{z\in D_n} \go_z\right]-\frac{i^2\gep^2\gd^2|D_n\setminus\{A,B\}|}{2}\right).
\end{equation}
If the environment $(\go_z)_{z\in D_n}$ has law $Q$ then $(\hat \go_z^{(i)})_{z\in D_n}$ defined by
\begin{equation}
 \hat \go_z^{(i)}:= \go_z+\gep i \gd,
\end{equation}
 has law $ \tilde Q_{i,\gep}$. Going from $\go$ to $\hat \go^{(i)}$, one increases the value of the Hamiltonian by $\gep i(s/b)^{n/2}$ (each path cross $s^n-1$ sites).
Therefore if $(\hat\go^{(i)}_z)_{z\in D_n}\in A_{\gep}$, then $(\go_z)_{z\in D_n}\in A^{(i)}_{\gep}$.
From this we have $\tilde Q_{i,\gep} A_\gep \le Q A^{(i)}_{\gep}$, and therefore
\begin{equation}
Q A^{(i)}_{\gep}\, \ge \int_{A_\gep} \frac{\dd \tilde Q_{i,\gep}}{\dd Q}Q(\dd \go)\ge \exp(-(\gep i)^2/2)Q(A_\gep).
\end{equation}
The last inequality is due to the fact that the density is always larger than $\exp(-(\gep i)^2/2)$ on the set $A_\gep$ (recall its definition and the fact that $|D_n\setminus\{A,B\}|\gd^2\le 1$).
Therefore, in our setup, we have 
\begin{equation}
Q A^{(i)}_{\gep}> \gep, \quad  \forall i\in[0,\gep^{-1}].
\end{equation}
As the $A^{(i)}_{\gep}$ are disjoints, this is impossible.
If we are in the second case of \eqref{abcd}, we get the same result by shifting the variables in the other direction. \qed

\subsection{Proof of Proposition \ref{fluc2}}

Let us suppose that there exist $n$, $\gep$ and $a$ such that
\begin{equation}
Q\left\{ \log Z_n \in [a, a+\gb\gep\sqrt{n})\right\}> 8\gep.
\end{equation}
We define $\delta_{i,n}=\delta_i:=\gep s^{1-i}(s-1)^{-1}n^{-1/2}$.
Then one of the following inequality holds (recall the definition of $V_i$)

\begin{equation}\begin{split} \label{fghi}
Q\left\{ \log Z_n \in [a, a+\gb\gep\sqrt{n})\right\}\cap\left\{\sum_{i=1}^{n}\gd_i\sum_{z\in V_i}\go_z\ge 0\right\}&> 4\gep,\\
Q\left\{ \log Z_n \in [a, a+\gb\gep\sqrt{n})\right\}\cap\left\{\sum_{i=1}^{n}\gd_i\sum_{z\in V_i}\go_z\le 0\right\}&> 4\gep.
\end{split}\end{equation}
We assume that the first line holds and define

\begin{equation}
A_{\gep}=\left\{ \log Z_n \in [a, a+\gb\gep\sqrt{n})\right\}\cap\left\{\sum_{i=1}^{n}\gd_i\sum_{z\in V_i}\go_z\ge 0\right\}
\end{equation}
And
\begin{equation}
A_{\gep}^{(j)}=\left\{ \log Z_n \in [a-j\gb\gep\sqrt{n}, a-(j-1)\gb\gep\sqrt{n})\right\}
\end{equation}

\begin{equation}
j\gb\sum_{i=1}^n\gd_i (s-1)s^{i-1}=j\gb\gep \sqrt{n}.
\end{equation}
Therefore, an environment $\go\in A_{\gep}$ will be transformed in an environment in $A^{(j)}_{\gep}$.

We define $\tilde Q_{j,\gep}$ the measure whose Radon-Nicodyn derivative with respect to $Q$ is
\begin{equation}
\frac{\dd \tilde Q_{i,\gep}}{\dd Q}(\go):=\exp\left(\left[j\sum_{i=1}^n\gd_i \sum_{z\in V_i} \go_z\right]-\sum_{i=1}^{n}\frac{j^2\gd_i^2|V_i|}{2}\right).
\end{equation}
We can bound the deterministic term.
\begin{equation}
\sum_{i=1}^{n}\frac{j^2\gd_i^2|V_i|}{2}=j^2\gep^2 \sum_{i=1}^n \frac{s}{2(s-1)}\le j^2\gep^2.
\end{equation}
For an environment $(\go_z)_{z\in D_n\setminus\{A,B\}}$, define $(\hat \go_z^{(j)})_{z\in D_n\setminus\{A,B\}}$ by
\begin{equation}
 \hat \go_z^{(j)}:= \go_z+j\gep\gd_i, \quad   \forall z \in V_i.
\end{equation}
If $(\go_z)_{z\in D_n\setminus\{A,B\}}$ has $Q$, then $(\hat \go_z^{(j)})_{z\in D_n\setminus\{A,B\}}$ has law $\tilde Q_{j,\gep}$.
When one goes from $\go$ to $\hat\go^{(j)}$, the value of the Hamiltonian is increased by
\begin{equation*}
 \sum_{i=1}^n j\gd_i s^{i-1}(s-1)=\gep \sqrt{n}.
\end{equation*}
Therefore, if $\hat \go^{(j)}\in A_{\gep}$, then $\go\in A^{(j)}_{\gep}$, so that 
\begin{equation*}
 Q A^{(j)}_{\gep}\ge \tilde Q_{j,\gep} A_{\gep}.
\end{equation*}
Because of the preceding remarks
\begin{equation}
Q A^{(j)}_{\gep}\ge \tilde Q_{j,\gep} A_{\gep}=\int_{A_{\gep}}\frac{\dd \tilde Q_{i,\gep}}{\dd Q}Q(\dd\go) \ge \exp\left(-j^2\gep^2\right)Q A_{\gep}.
\end{equation}
The last inequality comes from the definition of $A_{\gep}$ which gives an easy lower bound on the Radon-Nicodyn derivative.
For $j\in [0,(\gep/2)^{-1}]$, this implies that $Q A^{(j)}_{\gep}> 2\gep$. As they are disjoint events this is impossible. The second case of \eqref{fghi} can be dealt analogously. \qed

\subsection{Proof of Corollary \ref{locloc}}

Let $g\in \gG_n$ be a fixed path. For $m\ge n$, define
\begin{equation}
 Z_m^{(g)}:=\sum_{\{ g'\in \gG_m: g|_n=g\}}\exp\left(\gb H_m(g')\right).
\end{equation}
With this definition we have
\begin{equation}
 \mu_m(\gamma|_n=g)=\frac{Z_m^{(g)}}{Z_m}.
\end{equation}
To show our result, it is sufficient to show that for any constant $K$ and any distinct $g,g'\in \gG_n$
\begin{equation}
 \lim_{m\to\infty}Q\left( \frac{\mu_m(\gamma|_n=g)}{\mu_m(\gamma|_n=g')}\in[K^{-1},K]\right)=0.
\end{equation}
For $g$ and $g'$ distinct, it is not hard to see that
\begin{equation}
 \log\left( \frac{\mu_m(\gamma|_n=g)}{\mu_m(\gamma|_n=g')}\right)=\log Z_m^{(g)}-\log Z_m^{(g')}=: \log Z^{(0)}_{m-n}+X,
\end{equation}
where $Z^{(0)}_{m-n}$ is a random variable whose distribution is the same as the one of $Z_{m-n}$, and $X$ is independent of $Z^{(0)}_{m-n}$.
We have
\begin{multline}
 Q\left(\log \left(\frac{\mu_m(\gamma|_n=g)}{\mu_m(\gamma|_n=g')}\right)\in [-\log K, \log K]\right)\\
=Q \left[ Q\left(\log Z^{(0)}_{m-n}\in [-\log K -X, \log K -X] \, \big| \, X\right)\right]
 \\
\le \max_{a\in \R} Q \left(\log Z_{m-n}\in [a,a+2\log K]\right).
\end{multline}
Proposition \ref{fluctuations} and \ref{fluc2} show that the right--hand side tends to zero. \qed





\section{The weak disorder polymer measure}\label{weakpol}

Comets and Yoshida introduced in \cite{cf:CY} an infinite
volume Markov chain at weak disorder that corresponds in some sense to the limit of the polymers
measures $\mu_n$ when $n$ goes to infinity. We perform the same construction here.
The notation is more cumbersome in our setting.

Recall that $\Gamma_n$ is the space of directed paths from $A$ to $B$ in $D_n$. Denote by $P_n$ the uniform law on $\gG_n$. For $g\in \Gamma_n$,
$0\leq t \leq s^n-1$, define $W_{\infty}(g_t,g_{t+1})$ by performing the same construction that leads to $W_{\infty}$, but taking $g_t$ and $g_{t+1}$ instead of $A$ and $B$ respectively. On the classical directed polymers on $\Z^d$, this would be equivalent to take the $(t,g_t)$ as the initial point of the polymer.

We can now define the weak disorder polymer measure for $\beta < \beta_0$. We define $\gG$ as the projective limit of $\gG_n$ (with its natural topology), the set of path on $D:=\bigcup_{n\ge 1} D_n$. As for finite path, we can define, for $\bar g\in \gG$, its projection onto $\gG_n$, $\bar g|_n$.
We define

\begin{eqnarray}\label{wdpol}
\mu_{\infty}(\bar \gamma|_n =g):= \frac{1}{W_{\infty}} \exp \{ \beta H_n(g) - (s^n-1) \lambda(\beta) \} \prod^{s^n-1}_{i=0} W_{\infty}({g}_i,{g}_{i+1}) \, P_n({\bar \gamma|_n = g}).
\end{eqnarray}

Let us stress the following:

\begin{itemize}
      
\item Note that the projection on the different $\Gamma_n$ are consistent (so that our definition makes sense)
  
      \begin{eqnarray*}
      \mu_{\infty}(\bar \gamma|_n = g)= \mu_{\infty}\left((\bar \gamma|_{n+1})|_n = g\right).
      \end{eqnarray*}

\item Thanks to the martingale convergence for both the numerator and the
      denominator, for any ${\bf s} \in \Gamma_n$,
      
      \begin{eqnarray*}
      \lim_{k\to +\infty} \mu_{k+n}(\gamma|_n = g) = \mu_{\infty}(\bar \gamma|_n = g).
      \end{eqnarray*}
Therefore, $\mu_{\infty}$ is the only reasonable definition for the limit of $\mu_n$.
      


\end{itemize}

\vspace{3ex}

It is an easy task to prove the law of large numbers for the time-averaged quenches mean of the energy.
This follows as a simple consequence of the convexity of $p(\beta)$.

\begin{proposition}
At each point where $p$ admits a derivative,
 \begin{eqnarray*}
   \lim_{n\to +\infty} \frac{1}{s^n} \mu_{n}(H_n(\gamma)) \to  p'(\beta), \quad Q-a.s..
 \end{eqnarray*}
\end{proposition}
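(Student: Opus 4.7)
The plan is to recognize the quantity of interest as a derivative in $\beta$ of the normalized log-partition function and then to invoke the classical fact that pointwise convergence of convex functions implies convergence of derivatives at points of differentiability.

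First I would observe that, since
\[
\frac{1}{s^n}\mu_n(H_n(\gamma))\,=\,\frac{\partial}{\partial\beta}\Bigl(\frac{1}{s^n}\log Z_n(\beta)\Bigr),
\]
the question reduces to the asymptotic behavior of $\beta\mapsto s^{-n}\log Z_n(\beta)$. The function $\beta\mapsto\log Z_n(\beta)$ is convex in $\beta$ for every realization of $\omega$ (as a log-Laplace transform of the finite measure on $\Gamma_n$ assigning mass $1$ to each path, weighted by the energy $H_n(g)$), hence so is $s^{-n}\log Z_n(\beta)$.

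Next, by Theorem~\ref{thermo}, for each fixed $\beta$ one has $s^{-n}\log Z_n(\beta)\to p(\beta)$ $Q$-almost surely. Choose a countable dense subset $D\subset(0,\infty)$ (say, the rationals); then on a $Q$-full-measure event $\Omega_0$, the convergence $s^{-n}\log Z_n(\beta,\omega)\to p(\beta)$ holds simultaneously for all $\beta\in D$.

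On $\Omega_0$ the sequence of convex functions $f_n(\beta):=s^{-n}\log Z_n(\beta,\omega)$ converges to $p$ on a dense set; by convexity of each $f_n$ and continuity of $p$ (itself convex on $[0,\infty)$), this forces pointwise convergence $f_n(\beta)\to p(\beta)$ for every $\beta\geq 0$. I would then invoke the standard convex-analysis lemma (see e.g.\ Rockafellar, \emph{Convex Analysis}, Theorem~25.7): if convex functions $f_n$ on an open interval converge pointwise to a convex function $f$, then at any point $\beta_0$ where $f'(\beta_0)$ exists, every selection of subgradients $f_n'(\beta_0)$ converges to $f'(\beta_0)$. Applied here, this yields $s^{-n}\mu_n(H_n(\gamma))=f_n'(\beta)\to p'(\beta)$ on $\Omega_0$ at every point of differentiability of $p$, which is what is to be proven.

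The only point requiring a modicum of care is the initial step of securing a single $Q$-null set outside which the convergence $s^{-n}\log Z_n(\beta)\to p(\beta)$ holds for all rational $\beta$; this is a routine countable-union argument using Theorem~\ref{thermo}. The convex-function lemma then does all the remaining work and no further probabilistic input is needed.
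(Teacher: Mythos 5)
Your proof is correct and takes essentially the same route as the paper: the paper's one-line argument is precisely ``observe that $\frac{\dd}{\dd\beta}\log Z_n=\mu_n(H_n(\gamma))$, then use convexity to pass to the limit.'' Your write-up simply supplies the details the paper leaves implicit (the countable-dense-set argument for a single full-measure event, and the invocation of the convex-analysis lemma that pointwise convergence of convex functions implies convergence of derivatives at differentiability points).
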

\begin{proof}
 It is enough to observe that 
 \begin{eqnarray*}
  \frac{d}{d \beta} \log Z_n = \mu_n(H_n(\gamma)),
 \end{eqnarray*}
 
 \noindent then use the convexity to pass to the limit.
\end{proof}

\vspace{2ex}

We can also prove a quenched law of large numbers under our infinite volume
measure $\mu_{\infty}$, for almost every environment. The proof is very easy,
as it involves just a second moment computation. 

\begin{proposition}
 At weak disorder,
 \begin{eqnarray*}
   \lim_{n\to +\infty}\frac{1}{s^n}H_n(\bar \gamma|_n)=\lambda'(\beta),\quad \mu_{\infty}-a.s., \, Q-a.s..
 \end{eqnarray*}
\end{proposition}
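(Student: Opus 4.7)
The plan is to prove the claim by a standard second-moment computation for $H_n(\bar\gamma|_n)$ under the product measure $Q \otimes \mu_\infty$, in the spirit of the weak-disorder arguments of Comets--Yoshida \cite{cf:CY}. Setting $M_n := H_n(\bar\gamma|_n)/(s^n-1)$, the claim reduces to $M_n \to \lambda'(\beta)$, $Q \otimes \mu_\infty$-a.s.

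The key algebraic observation is a per-vertex decomposition. For $z \in D_n\setminus\{A,B\}$, the charge $\omega_z$ is $Q$-independent of $\omega_{-z} := (\omega_y)_{y\neq z}$, and the sub-lattice weights $W_\infty(g_i,g_{i+1})$ depend only on environments in the refinements of edges of $D_n$, which are disjoint from $\{z\}$. Separating the path-weight $X_g := e^{\beta H_n(g) - (s^n-1)\lambda(\beta)}\prod_i W_\infty(g_i,g_{i+1})$ according to whether $g\ni z$ or not, one obtains
$$\mu_\infty(\bar\gamma|_n \ni z) = \frac{e^{\beta\omega_z - \lambda(\beta)} A_z}{e^{\beta\omega_z - \lambda(\beta)} A_z + B_z},$$
where $A_z, B_z$ are $\sigma(\omega_{-z})$-measurable.

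First I would compute $Q[\mu_\infty(H_n)]$. Writing $\mu_\infty(H_n) = \sum_z \omega_z\, \mu_\infty(\bar\gamma|_n \ni z)$ and integrating over $\omega_z$ first (via Stein's lemma in the Gaussian case, or by a direct integration-by-parts argument using $Q[\omega e^{\beta\omega - \lambda(\beta)}] = \lambda'(\beta)$ in general), one obtains $Q[\mu_\infty(H_n)] = (s^n-1)\lambda'(\beta) - E_n$, with an error $E_n$ controlled by $\sum_z Q[\mu_\infty(\bar\gamma|_n \ni z)^2]$. Since $\mu_\infty$ typically assigns weight at most $O(b^{-k})$ to a vertex of generation $k$ (comparably to the uniform path measure $P_n$, up to the martingale fluctuations of $W_\infty$), a geometric sum over generations $k \le n$ shows $E_n = o(s^n)$ whether $s<b$, $s=b$ or $s>b$; hence $Q[\mu_\infty(M_n)] \to \lambda'(\beta)$. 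An analogous pairwise decomposition over $(z,z')$, combined with the independence of distinct charges, gives the variance estimate
$$Q\left[\mu_\infty\bigl((M_n - \lambda'(\beta))^2\bigr)\right] = O(1/s^n),$$
where the cross terms vanish in $Q$-expectation and the diagonal part contributes at most $(s^n-1)\lambda''(\beta)$.

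Chebyshev's inequality then yields $(Q \otimes \mu_\infty)(|M_n - \lambda'(\beta)| > \epsilon) = O(\epsilon^{-2} s^{-n})$, which is summable; Borel--Cantelli applied under the product measure $Q \otimes \mu_\infty$ concludes. The main obstacle is the rigorous control of the denominators $e^{\beta\omega_z-\lambda(\beta)} A_z + B_z$, which essentially boils down to handling $1/W_\infty$: the computation goes through cleanly inside the $L^2$-subregime of weak disorder (part (ii) of Proposition~\ref{misc}), where $W_\infty \in L^2(Q)$ supplies the needed integrability, and extending to the full weak-disorder region $\beta < \beta_0$ requires either a truncation of $W_\infty$ or a martingale approximation $\mu_\infty \approx \mu_{n+k}$ with $k$ chosen large enough to regularize the denominator while small enough to preserve the asymptotic behavior.
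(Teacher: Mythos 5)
Your overall strategy (second moment plus Borel--Cantelli) is the same as the paper's, but you try to carry out the computation directly under $Q$, and this is where the argument runs into a genuine difficulty that the paper avoids by a short but essential trick you have missed. The problem, as you yourself note, is the $W_\infty$ sitting in the denominator of $\mu_\infty$. Your per-vertex decomposition $\mu_\infty(\bar\gamma|_n\ni z) = e^{\beta\omega_z-\lambda(\beta)}A_z/(e^{\beta\omega_z-\lambda(\beta)}A_z + B_z)$ is correct, but after the integration-by-parts the error term $E_n=\sum_z Q[\mu_\infty(\bar\gamma|_n\ni z)^2]$ is only trivially bounded by $s^n-1$ (since $\sum_z\mu_\infty(\bar\gamma|_n\ni z)=s^n-1$ deterministically), and upgrading this to $o(s^n)$ requires a quantitative control on $W_\infty^{-1}$ which is not available at weak disorder outside the $L^2$ region. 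The remedies you sketch (truncation, or approximating $\mu_\infty$ by $\mu_{n+k}$) are not spelled out and would add significant technical overhead; moreover the assertion that the cross terms in the variance ``vanish in $Q$-expectation'' is not correct under $Q$, again because of the common $W_\infty$ in the denominator.

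The device the paper uses, and which dissolves these issues instantly, is to work under the size-biased measure $\overline{Q}(f(\go)):=Q(f(\go)\,W_\infty)$ rather than under $Q$. Unwinding the definition of $\mu_\infty$, the $W_\infty$ in the density exactly cancels the $W_\infty$ in the denominator, giving
\begin{equation*}
\overline{Q}\bigl[\mu_\infty(f(\bar\gamma|_n))\bigr]
\;=\;Q\Bigl[P_n\Bigl(f(\gamma)\,e^{\beta H_n(\gamma)-(s^n-1)\lambda(\beta)}\prod_{i=0}^{s^n-1}W_\infty(\gamma_i,\gamma_{i+1})\Bigr)\Bigr]
\;=\;Q\Bigl[P_n\Bigl(f(\gamma)\,e^{\beta H_n(\gamma)-(s^n-1)\lambda(\beta)}\Bigr)\Bigr],
\end{equation*}
where the last equality holds because, for fixed $\gamma$, the factors $W_\infty(\gamma_i,\gamma_{i+1})$ are independent of the charges entering $H_n(\gamma)$ and each has $Q$-mean $1$. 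At this point the law of $H_n$ under $\overline{Q}\otimes\mu_\infty$ is simply that of a sum of $s^n-1$ i.i.d.\ variables under the tilted law $e^{\beta\go-\lambda(\beta)}Q(\dd\go)$, so the first moment is $(s^n-1)\lambda'(\beta)$, the variance is $(s^n-1)\lambda''(\beta)$, and Borel--Cantelli gives a.s.\ convergence under $\overline{Q}$. Since $W_\infty>0$ $Q$-a.s.\ at weak disorder, $Q$ and $\overline{Q}$ are mutually absolutely continuous, and the $Q$-a.s.\ statement follows. You should replace the direct-under-$Q$ second moment, and the unresolved handling of $1/W_\infty$, by this change of measure: it turns the whole computation into an exact i.i.d.\ calculation valid on the entire weak-disorder region.
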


\begin{proof}
We will consider the following auxiliary measure (size biased measure) on the environment

\begin{eqnarray*}
\overline{Q}(f(\go)) = Q(f(\go) W_{+\infty}).
\end{eqnarray*}

\noindent So, $Q$-a.s. convergence will follow from $\overline{Q}$-a.s. convergence.
This will be done by a direct computation of second moments. Let us write 
$\Delta = Q(\omega^2 e^{\beta \omega - \lambda(\beta)})$.

\begin{eqnarray*}
&\ &\overline{Q} \left( \mu_{\infty}(|H_n(\bar \gamma|_n)|^2) \right)\\
&=& Q \left[ P_n\left( |H_n(\gamma)|^2 \exp \{ \beta H_n(\gamma)- (s^n-1) \lambda(\beta)\} \prod^{s^n-1}_{i=0} W_{\infty}(\gamma_i,\gamma_{i+1})\right)\right]\\
&=& Q \left[ P_n( |H_n(\gamma)|^2 \exp \{ \beta H_n(\gamma)- (s^n-1) \lambda(\beta)\})\right]\\
&=& Q \left[ P_n( |\sum^{s^n}_{t=1}\omega(\gamma_t)|^2 \exp \{ \beta H_n(\gamma)- (s^n-1) \lambda(\beta)\})\right]\\
&=& Q\left[ \sum^{s^n-1}_{t=1}P_n\left(|\omega(\gamma_t)|^2\exp \{ \beta H_n(\gamma)- (s^n-1) \lambda(\beta)\}\right)\right]\\
&\ &+ Q \left[\sum_{1\le t_1\neq t_2\le s^{n}-1}P_n\left(\omega(\gamma_{t_1})\omega(\gamma_{t_2})\exp \{ \beta H_n(\gamma)- (s^n-1) \lambda(\beta)\}\right)\right]\\
&=& (s^n-1) \Delta+(s^n-1)(s^n-2)(\lambda'(\beta))^2,
\end{eqnarray*}

\noindent where we used independence to pass from line two to line three. So, recalling that $\overline{Q} ( \mu_{\infty}(H_n(\bar \gamma_n))= (s^n-1) \lambda'(\beta)$, we
have

\begin{eqnarray*}
&\ &\overline{Q} \left( \mu_{\infty}(|H_n(\gamma^{(n)}) - (s^n-1) \lambda'(\beta)|^2) \right)\\
&=& (s^n-1) \Delta+(s^n-1)(s^n-2)(\lambda'(\beta))^2 
- 2 (s^n-1) \lambda'(\beta)\overline{Q} \left( \mu_{\infty}(H_n(\bar \gamma_n)) \right)\\
&\ & \quad + \, (s^{n}-1)^2(\lambda'(\beta))^2\\
&=& (s^n-1) \left( \Delta - (\lambda'(\beta)^2)\right).
\end{eqnarray*}

\noindent Then

\begin{eqnarray*}
\overline{Q}  \mu_{\infty}\left( \left|\frac{H_n(\bar\gamma_n) - (s^n-1) \lambda'(\beta)}{s^n} \right|^2\right) 
\leq \frac{1}{s^n}\left( \Delta - (\lambda'(\beta)^2)\right),
\end{eqnarray*}

\noindent so the result follows by Borel-Cantelli.
\end{proof}

\section{Some remarks on the bond--disorder model}\label{bddis}

In this section, we shortly discuss, without going through the details, how the methods we used in this paper could be used (or could not be used) for the model of directed polymer on the same lattice with disorder located on the bonds.
\\

In this model to each bond $e$ of $D_n$ we associate i.i.d.\ random variables $\go_e$.
We consider each set $g\in \gG_n$ as a set of bonds and define the Hamiltonian as
\begin{equation}
H^{\omega}_n(g) = \sum_{e\in g} \go_e,
\end{equation}

\noindent  The partition function $Z_n$ is defined as
\begin{equation}
 Z_n:= \sum_{g\in \gG_n} \exp(\gb H_n(g)).
\end{equation}
One can check that is satisfies the following recursion 

\begin{equation}\begin{split}
Z_0& \, \stackrel{ \mathcal L}{=}\, \exp(\gb\go)\\
Z_{n+1}&\, \stackrel{\mathcal L)}{=}\, \sum_{i=1}^{b}Z_n^{(i,1)}Z_n^{(i,2)}\dots Z_n^{(i,s)}.
\end{split}
\end{equation}

\noindent where equalities hold in distribution and and $Z_n^{i,j}$ are i.i.d.\ distributed copies of $Z_n$.
Because of the loss of the martingale structure and the homogeneity of the Green function in this model
(which is equal to $b^{-n}$ on each edge), Lemma \ref{martingale} does not hold, and we cannot prove part $(iv)$ in Proposition \ref{misc}, Theorem \ref{th:ss} and Proposition \ref{fluc2} for this model. Moreover we have to change $b\le s$ by $b<s$ in $(v)$ of Proposition \ref{misc}.
Moreover, the method of the control of the variance would give us a result similar to \ref{th:ss} in this case
\begin{proposition}\label{ssbd}
When $b$ is equal to $s$, on can find constants $c$ and $\gb_0$ such that for all $\gb\le \gb_0$
\begin{equation}
0\le \lambda(\beta)-p(\beta) \le \exp\left(-\frac{c}{\gb^2}\right).
\end{equation}
\end{proposition}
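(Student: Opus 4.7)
The plan is to mirror the proof of Theorem \ref{th:ss} by combining a control of the second moment of the normalized partition function, carried out up to a well-chosen scale, with the directed percolation argument of Proposition \ref{th:perco}. The lower bound $\lambda(\gb) - p(\gb) \ge 0$ (to be read as $f(\gb) - p(\gb) \ge 0$, since $f = \lambda + \log b/(s-1)$) is immediate from Jensen's inequality $Q \log Z_n \le \log Q Z_n$.

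For the upper bound, the first step is to derive the variance recursion in the bond-disorder setting. Since the $s$ copies of $D_n$ that make up each of the $b$ branches of $D_{n+1}$ involve disjoint sets of bonds, the normalized partition function $W_n := Z_n/Q Z_n$ satisfies
\begin{equation*}
W_{n+1} \, \stackrel{\cL}{=} \, \frac{1}{b}\sum_{i=1}^{b} \prod_{k=1}^{s} W_n^{(i,k)},
\end{equation*}
with independent copies $W_n^{(i,k)}$ of $W_n$. Squaring and integrating against $Q$ yields the closed recursion
\begin{equation*}
v_{n+1} \, = \, \frac{1}{b}\left[ (1+v_n)^s - 1 \right], \qquad v_0 \, = \, e^{\gga(\gb)} - 1,
\end{equation*}
for $v_n := Q W_n^2 - 1$, where $\gga(\gb) := \lambda(2\gb) - 2\lambda(\gb) \sim \gb^2$ as $\gb \to 0$. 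In contrast with the site-disorder recursion \eqref{eq:var}, the factor $e^{(s-1)\gga(\gb)}$ is absent here, precisely because the $s$ sub-diamonds share no bond.

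When $b=s$, Taylor expansion gives $v_{n+1} = v_n + \tfrac{s-1}{2} v_n^2 + O(v_n^3)$, so $v_n$ grows only through its non-linear correction starting from $v_0 \sim \gb^2$. Comparing with the ODE $\dot v = c v^2$, an induction analogous to that of Lemma \ref{th:sss} shows that, for every fixed $\eta \in (0,1)$, there is a constant $c_\eta > 0$ such that $v_n \le \eta$ for all $n \le n_\star := \lfloor c_\eta/\gb^2\rfloor$, provided $\gb$ is small. Choosing $\eta = (1-p_c)/4$ puts us in the setting of Proposition \ref{th:perco}. One then runs its percolation argument verbatim: view $D_m$ with $m > n_\star$ as a copy of $D_{m-n_\star}$ in which every bond has been replaced by an independent copy of $D_{n_\star}$ with its own environment; by Chebyshev each copy satisfies $W_{n_\star}^{(e)} \ge 1/2$ with probability greater than $p_c$, so with positive probability (uniform in $m$) there exists an open directed path along which all partition functions are at least half their annealed mean. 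Restricting $Z_m$ to this family of paths yields $f(\gb) - p(\gb) \le C s^{-n_\star} \le \exp(-c/\gb^2)$.

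The main obstacle is the variance control at the marginal value $b=s$: the recursion is just barely unstable, with a purely quadratic leading correction, so the induction has a tight margin and one must verify that the $O(v_n^3)$ corrections stay negligible throughout the window $n \le n_\star \sim \gb^{-2}$. This is in fact easier than in the site-disorder case because the absence of the additive driving term of order $\gb^2$ at each step is precisely what allows the window $n_\star$ to be of order $\gb^{-2}$ (rather than $\gb^{-1}$, as in Lemma \ref{th:sss}), and it is this longer window that eventually produces the $\exp(-c/\gb^2)$ bound in place of the $\exp(-1/(c\gb))$ appearing in Theorem \ref{th:ss}.
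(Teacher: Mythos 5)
The paper does not actually spell out a proof of this proposition --- it only remarks that the variance-control method would give a result of this form --- and your proposal supplies exactly the right argument in the spirit of that remark. The key step is correct: in the bond-disorder case the variance obeys $v_{n+1} = b^{-1}\left[(1+v_n)^s - 1\right]$ with $v_0 = e^{\gga(\gb)} - 1 \sim \gb^2$, the $e^{(s-1)\gga(\gb)}$ prefactor of the site-disorder recursion \eqref{eq:var} being absent because the $s$ sub-diamonds of a branch involve disjoint bonds, so for $b=s$ the instability is purely quadratic and the second-moment window extends to $n \sim \gb^{-2}$ rather than $\gb^{-1}$, after which Proposition \ref{th:perco} yields the claimed $\exp(-c/\gb^2)$ bound.
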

However, we would not be able to prove that annealed and free energy differs at high temperature for $s=b$ using our method. The techniques used in \cite{cf:GLT_marg} or \cite{cf:Lac} for dimension $2$ should be able to tackle this problem, and show marginal disorder relevance in this case as well. 
\medskip

\chapter[Directed polymers in dimension $1+1$ and $1+2$]{New bounds for the free energy of directed polymers in dimension $1+1$ and $1+2$}\label{DirPOL12}

\section{Introduction}
\subsection{The model}
We study a directed polymer model introduced by Huse and Henley (in dimension $1+1$) \cite{cf:HH} with the purpose of investigating  impurity-induced domain-wall roughening in the 2D-Ising model. The first mathematical study of directed polymers in random environment was made by Imbrie and Spencer \cite{cf:IS}, and was followed by numerous authors \cite{cf:IS, cf:B, cf:AZ, cf:SZ, cf:CH_ptrf, cf:CSY, cf:CY, cf:CH_al, cf:CV, cf:V} (for a review on the subject see \cite{cf:CSY_rev}). 
Directed polymers in random environment model, in particular, polymer chains in a solution with impurities.

In our set--up the polymer chain is the graph $\{(i,S_i)\}_{1\le i\le N}$ of a nearest--neighbor path in $\Z^d$, $S$ starting from zero.
 The equilibrium behavior of this chain is described by a measure on the set of paths:
the impurities enter the definition of the measure as {\sl disordered potentials}, given by a typical realization of a field of i.i.d.\ random variables $\go=\{\go_{(i,z)}\ ;\ i\in \N, z\in \Z^d\}$ (with associated law $Q$). The polymer chain will tend to be attracted by larger values of the environment and repelled by smaller ones.
More precisely, we define the Hamiltonian 
\begin{equation}
 H_N(S):=\sum_{i=1}^N \go_{i,S_i}.
\end{equation}
We denote by $P$ the law of the simple symmetric random walk on $\Z^d$ starting at $0$ (in the sequel $P f(S)$, respectively $Q g(\go)$, will denote the expectation with respect to $P$, respectively Q).
One defines the polymer measure of order $N$ at inverse temperature $\gb$ as
\begin{equation}
 \mu^{(\gb)}_N(S)=\mu_N(S):=\frac{1}{Z_N}\exp\left(\gb H_N(S)\right)P(S),
\end{equation}
where $Z_N$ is the normalization factor which makes $\mu_N$ a probability measure
\begin{equation}
 Z_N:=P \exp\left(\gb H_N(S)\right).
\end{equation}
We call $Z_N$ the {\sl partition function} of the system. In the sequel, we will consider the case of $\go_{(i,z)}$ with zero mean and unit variance and such that there exists $B \in(0,\infty]$ such that
\begin{equation}\label{expm}
 \gl(\gb)=\log Q \exp(\gb\go_{(1,0)})<\infty, \quad \text{ for } 0\le \gb \le B. 
\end{equation}
Finite exponential moments are required to guarantee that $Q Z_N<\infty$.
The model can be defined and it is of interest also with environments with heavier tails (see e.g.\ \cite{cf:V}) but we will not consider these cases here.

\subsection{Weak, strong and very strong disorder}

In order to understand the role of disorder in the behavior of $\mu_N$, as $N$ becomes large, let us observe that, when $\gb=0$, $\mu_N$ is the law of the simple random walk, so that we know that, properly rescaled, the polymer chain will look like the graph of a $d$-dimensional Brownian motion. The main questions that arise for our model for $\gb>0$ are whether or not the presence of disorder breaks the diffusive behavior of the chain for large $N$, and how the polymer measure looks like when diffusivity does not hold.
\medskip

Many authors have studied diffusivity in polymer models: in \cite{cf:B}, Bolthausen remarked that the renormalized partition function $W_N:=Z_N/(Q Z_N)$ has a martingale property and proved the following zero-one law:
\begin{equation}
 Q\left\{\lim_{N\to\infty} W_N=0\right\}\in\{0,1\}.
\end{equation}
A series of paper  \cite{cf:IS,cf:B,cf:AZ,cf:SZ,cf:CY} lead to
\begin{equation}
  Q\left\{\lim_{N\to\infty} W_N=0\right\}=0 \Rightarrow  \text{ diffusivity },
\end{equation}
and a consensus in saying that this implication is an equivalence.
For this reason, it is natural and it has become customary to say that {\sl weak disorder} holds when $W_N$ converges to some non-degenerate limit and that {\sl strong disorder} holds when $W_N$ tends to zero.
\medskip

Carmona and Hu \cite{cf:CH_ptrf} and Comets, Shiga and Yoshida \cite{cf:CSY} \ proved that strong disorder holds for all $\gb$ in dimension $1$ and $2$. The result was completed by Comets and Yoshida \cite{cf:CY}: we summarize it here

\medskip
\begin{theorem}\label{strdis}
 There exists a critical value $\gb_c=\gb_c(d)\in[0,\infty]$ (depending of the law of the environment) such that
\begin{itemize}
 \item Weak disorder holds when $\gb<\gb_c$.
 \item Strong disorder holds when $\gb>\gb_c$.
\end{itemize}
Moreover:
\begin{equation}\begin{split}
 \gb_c(d)&=0 \text{ for } d=1,2\\
 \gb_c(d)&\in (0,\infty] \text{ for } d\ge 3. 
\end{split}
\end{equation}
\end{theorem}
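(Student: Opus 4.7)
My plan is to treat the three assertions of Theorem \ref{strdis} separately. The existence of a critical value $\gb_c$ in $[0,\infty]$ requires a monotonicity statement: if weak disorder holds at some $\gb_1$, then it holds at every $\gb_0 \in [0,\gb_1]$. I would establish this through an FKG-type argument on the size-biased measure $\bar Q(\dd \go) := W_\infty(\gb_1) Q(\dd \go)$, viewing the quenched polymer chain as a Markov chain in the random environment under $\bar Q$ and comparing it to the unbiased chain at smaller $\gb_0$; alternatively, one can use the equivalent characterization that weak disorder coincides with uniform integrability of $\{W_N\}_N$ and deduce monotonicity from a coupling argument. Once this monotonicity is in place, the definition
\begin{equation}
\gb_c(d) \, :=\, \sup\bigl\{ \gb\ge 0 : \lim_N W_N(\gb) > 0 \ \ Q\text{-a.s.}\bigr\}
\end{equation}
gives the desired value, the zero-one law recalled in the text taking care of the a.s.\ dichotomy.

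For the lower bound $\gb_c(d) > 0$ when $d \ge 3$, I would carry out the classical second moment computation: writing
\begin{equation}
Q\bigl[W_N^2\bigr] \, =\, P^{\otimes 2}\!\left[\exp\!\left(\gga(\gb)\sum_{n=1}^N \ind_{\{S_n^{(1)}=S_n^{(2)}\}}\right)\right], \qquad \gga(\gb):=\gl(2\gb)-2\gl(\gb),
\end{equation}
where $S^{(1)},S^{(2)}$ are two independent simple random walks, one recognizes the partition function of a homogeneous pinning model associated with the renewal process $\tau:=\{n:\, S_n^{(1)}=S_n^{(2)}\}$. For $d\ge 3$ this renewal is terminating, so for $\gb$ small enough $\gga(\gb)$ lies below the pinning threshold and $Q[W_N^2]$ stays bounded. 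The martingale $W_N$ is thus bounded in $L^2$, converges to a nondegenerate limit, and weak disorder holds; hence $\gb_c(d) > 0$.

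The genuinely hard step is showing $\gb_c(d)=0$ for $d=1,2$: here the second moment diverges for every $\gb > 0$, but this alone only rules out $L^2$-boundedness and not strong disorder. I would follow the approach of Carmona--Hu and Comets--Shiga--Yoshida based on the relation between the derivative of the free energy and the replica overlap. Introducing the two-replica overlap $I_n := \mu_{n-1}^{\otimes 2}(S_n^{(1)}=S_n^{(2)})$, one shows via a Gaussian integration by parts (in the Gaussian case; a discrete analogue handles the general case) that
\begin{equation}
\frac{\dd}{\dd \gb} \frac 1N Q \log W_N \, =\, -\gb\, Q\!\left[\frac 1N \sum_{n=1}^N I_n\right] + o(1),
\end{equation}
together with a concentration bound $\var_Q(\log W_N) \le C N$ which implies that if $\sum_n I_n$ stayed bounded in probability then $W_N$ would converge to a positive limit. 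Combining the divergence of $\sum_n \bP(n\in\tau)$ in $d=1,2$ with this identity and a suitable martingale variance estimate forces $W_\infty = 0$ almost surely for every $\gb > 0$. The main obstacle lies precisely in this last step: one has to promote the divergence of the annealed expected overlap to a quenched statement, and this is where the interplay between the martingale structure of $W_N$ and the localization-type estimate on $I_n$ becomes delicate. Once this is done, combining the three steps yields the claimed structure of the phase diagram.
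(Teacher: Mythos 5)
The paper states Theorem \ref{strdis} as a summary of results from the cited works \cite{cf:CH_ptrf}, \cite{cf:CSY}, \cite{cf:CY}; it does not prove it. Evaluated against those references, your outline gets the first two pieces right: the existence of a critical $\gb_c$ does come from a monotonicity argument via the size-biased measure $\bar Q(\dd\go)=W_\infty(\gb_1)Q(\dd\go)$ and an FKG-type inequality (that is precisely \cite[Theorem 3.2]{cf:CY}, and the paper itself appeals to the same argument for the hierarchical analogue), and $\gb_c(d)>0$ for $d\ge 3$ is the classical $L^2$-boundedness computation, correctly recognizing $Q[W_N^2]$ as a homogeneous pinning partition function for a transient intersection renewal.

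The third piece — $\gb_c(d)=0$ for $d=1,2$ — has a genuine gap, and it is not merely the ``delicate step'' you flag but a wrong tool. The free-energy derivative identity $\frac{\dd}{\dd\gb}p_N(\gb)=-\gb\,Q[\frac 1N\sum_n I_n]$ governs \emph{very strong} disorder, i.e.\ the sign of $p(\gb)$. Since strong disorder can in principle coexist with $p(\gb)=0$ (the coincidence $\gb_c=\bar\gb_c$ is exactly what is \emph{not} known in general, and settling it in $d=1,2$ is the point of the paper's Theorems \ref{PASGAUSSI}--\ref{1+2UPLD}), this identity cannot by itself yield $W_\infty=0$. Likewise, the divergence of $\sum_n\bP(n\in\tau)$ only tells you the \emph{second moment} of $W_N$ blows up, which — as you yourself note when introducing the problem — does not imply $W_\infty=0$. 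What is actually used in \cite{cf:CH_ptrf,cf:CSY} is the Doob (resp.\ It\^o, in the Gaussian/continuous case) decomposition $\log W_N=M_N-A_N$ with $A_N$ increasing, predictable, and comparable to $\sum_{n\le N}I_n$ up to $\gb$-dependent constants; this gives the a.s.\ equivalence $\{W_\infty=0\}=\{\sum_n I_n=\infty\}$ with no reference to the free energy. One then has to prove separately that for $d\le 2$ the \emph{quenched} overlap series diverges almost surely for every $\gb>0$, which is obtained in \cite{cf:CSY} by bounding $I_n$ from below in terms of the end-point distribution of $\mu_{n-1}$ and controlling the latter. Your proposal never supplies this lower bound, and the route you sketch (annealed recurrence plus the $p'(\gb)$ identity plus a variance bound on $\log W_N$) would not produce it.
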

We mention also that the case $\gb_c(d)=\infty$ can only occur when the random variable $\go_{(0,1)}$ is bounded.
\medskip

In \cite{cf:CH_ptrf} and \cite{cf:CSY} a characterization of strong disorder has been obtained in term of localization of the polymer chain: we cite the following result \cite[Theorem 2.1]{cf:CSY}
\medskip

\begin{theorem}
If $S^{(1)}$ and $S^{(2)}$ are two i.i.d.\ polymer chains, we have
\begin{equation}
 Q\left\{ \lim_{N\rightarrow\infty} W_N=0 \right\}=Q\left\{\sum_{N\ge 1} \mu_{N-1}^{\otimes 2}(S_N^{(1)}=S_N^{(2)})=\infty \right\}
\end{equation}
Moreover if   $Q\{ \lim_{N\rightarrow\infty} W_N=0 \}=1$ there exists a constant $c$ (depending on $\gb$ and the law of the environment) such that for 
\begin{equation}\label{localization}
-c\log W_N \le \sum_{n= 1}^N \mu_{n-1}^{\otimes 2}(S_n^{(1)}=S_n^{(2)})\le -\frac{1}{c} \log W_N.
\end{equation}
\end{theorem}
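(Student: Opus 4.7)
The natural approach is via the Doob decomposition of $\log W_N$. Write $\log W_N = M_N + A_N$ where $M_N$ is an $\mathcal{F}_N$-martingale with $M_0 = 0$, and $A_N$ is the predictable compensator given by $A_N = \sum_{n=1}^N Q[\log(W_n/W_{n-1})\,|\,\mathcal{F}_{n-1}]$. Using $W_n/W_{n-1} = \mu_{n-1}(\exp(\beta\omega_{n,S_n} - \lambda(\beta)))$, and writing $a_z^{(n)} := \mu_{n-1}(S_n = z)$ (which is $\mathcal{F}_{n-1}$-measurable since the polymer measure only uses $\omega$ up to time $n-1$ for the first $n-1$ steps of $S$), we find
\begin{equation}
A_n - A_{n-1} \, =\, Q \log\!\left(\sum_{z\in\Z^d} a_z^{(n)}\, e^{\beta\omega_{n,z}-\lambda(\beta)}\right),
\end{equation}
where the $Q$-expectation is only over the fresh disorder $\omega_{n,\cdot}$. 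By Jensen's inequality and $Q[e^{\beta\omega-\lambda(\beta)}]=1$, each increment is non-positive, so $A_N$ is non-increasing.

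The central step is to compare $-(A_n - A_{n-1})$ with the replica overlap $I_n := \mu_{n-1}^{\otimes 2}(S_n^{(1)}=S_n^{(2)}) = \sum_z (a_z^{(n)})^2$. Writing $R_n := \sum_z a_z^{(n)}(e^{\beta\omega_{n,z}-\lambda(\beta)}-1)$, we have $QR_n = 0$ and, by independence of the $\omega_{n,z}$ in $z$, $Q R_n^2 = (e^{\lambda(2\beta)-2\lambda(\beta)}-1)\, I_n$. The plan is to establish, for the unit-mean positive random variable $Y = 1+R_n$, two-sided estimates of the form $c_1(\beta)\, Q R_n^2 \le -Q\log Y \le c_2(\beta)\, Q R_n^2$, valid for each fixed $\beta < B$. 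The upper bound uses $-\log(1+x) \le -x + x^2$ on an event where $|R_n|$ is not too large together with a tail estimate exploiting \eqref{expm}; the lower bound uses the elementary inequality $-\log(1+x) \ge -x + x^2/(2(1+x^+)^2)$ combined with control of $Q[R_n^2\, \ind_{\{R_n \text{ large}\}}]$, using that $a_z^{(n)}$ is a probability so that $|R_n|$ has moments governed by $\max_z a_z^{(n)} \le 1$ via Hölder. After this step one has $c_1(\beta)\sum_{n=1}^N I_n \le -A_N \le c_2(\beta)\sum_{n=1}^N I_n$.

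To conclude, I would examine the martingale part $M_N$. A parallel computation gives $Q[(M_n - M_{n-1})^2\,|\,\mathcal{F}_{n-1}] \le C(\beta) I_n$ (by the same expansion and integrability arguments), so the predictable bracket $\langle M\rangle_N$ is dominated by $C(\beta)\sum_{n=1}^N I_n$, hence by $C'(\beta)(-A_N)$. Standard martingale theory then yields two dichotomies: on $\{\langle M\rangle_\infty < \infty\}$, $M_N$ converges $Q$-a.s., so $\log W_N$ converges iff $A_N$ does, which (in view of $A_N \le 0$) means $\log W_\infty > -\infty$ iff $\sum_n I_n < \infty$; on $\{\langle M\rangle_\infty = \infty\}$, the law of large numbers for martingales gives $M_N = o(\langle M\rangle_N) = o(-A_N)$ a.s., so $\log W_N \sim A_N \to -\infty$ and moreover $\log W_N$ is sandwiched between constant multiples of $A_N$ for $N$ large. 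Combining these cases with the comparison from the previous paragraph establishes both the equivalence of events and the quantitative sandwich \eqref{localization}.

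The main technical obstacle is the comparison step $c_1\, I_n \le -(A_n-A_{n-1}) \le c_2\, I_n$ when the environment is unbounded: one must ensure that both the atypical event $\{|R_n|$ large$\}$ and the corresponding tail contribution to $Q\log(1+R_n)$ are absorbed into the multiplicative constants uniformly in $n$. This requires a careful use of the exponential moment assumption \eqref{expm}, possibly introducing an auxiliary truncation at a $\beta$-dependent level, but does not need any property specific to low dimension; the constants $c_1(\beta), c_2(\beta)$ depend only on $\beta$ and on the law of $\omega_{(1,0)}$, as required.
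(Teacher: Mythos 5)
Your proposal is correct and follows essentially the same route as the reference \cite{cf:CSY} that the thesis cites for this theorem (the thesis itself does not reprove it): the Doob decomposition $\log W_N=M_N+A_N$, the two-sided comparison of $-(A_n-A_{n-1})$ with the overlap $I_n=\mu_{n-1}^{\otimes 2}(S_n^{(1)}=S_n^{(2)})$, control of $\langle M\rangle_N$ by $\sum_n I_n$, and the dichotomy via martingale convergence / LLN on $\{\langle M\rangle_\infty<\infty\}$ versus $\{\langle M\rangle_\infty=\infty\}$. The technical step you flag — uniform two-sided estimates for $-Q\log(1+R_n)$ against $QR_n^2$ under only finite exponential moments — is indeed the crux and is handled in \cite{cf:CSY} in much the way you describe.
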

\medskip
One can notice that \eqref{localization} has a very strong meaning in term of trajectory localization when $W_N$ decays exponentially:
it implies that two independent polymer chains tend to share the same endpoint with positive probability.
For this reason we introduce now the notion of free energy, we refer to \cite[Proposition 2.5]{cf:CSY} and \cite[Theorem 
3.2]{cf:CY} for the following result:
\medskip
\begin{proposition}
 The quantity
\begin{equation}
 p(\gb):=\lim_{N\to\infty}\frac{1}{N}\log W_N \ ,
\end{equation}
	exists $Q$-a.s., it is non-positive and non-random. We call it the {\sl free energy} of the model,
 and we have
\begin{equation}\label{Pn}
 p(\gb)=\lim_{N\to\infty}\frac{1}{N}Q\log W_N=:\lim_{N\to\infty} p_N(\gb).
\end{equation}
Moreover $p(\gb)$ is non-increasing in $\gb$.
\end{proposition}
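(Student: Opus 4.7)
The plan is to obtain existence of the almost sure limit, its coincidence with $\lim_N N^{-1} Q \log W_N$, and monotonicity in $\gb$, by combining a superadditivity argument for a pinned partition function with Kingman's theorem, a sub-exponential comparison, and a one-variable FKG computation.

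For existence, I would introduce the constrained partition function $\hat Z_N := P[\exp(\gb H_N)\ind_{\{S_N=0\}}]$ (taken along the appropriate parity sublattice). Conditioning the walk to pass through the origin at an intermediate time, together with independence of the environment on disjoint time blocks, yields the almost sure superadditivity
$\log \hat Z_{N+M}(\go) \ge \log \hat Z_N(\go) + \log \hat Z_M(\theta^N \go),$
where $\theta$ is the time-shift on the environment. Since $\theta$ is ergodic under $Q$, Kingman's subadditive ergodic theorem produces the almost sure and $L^1$ limit
$\tilde p(\gb) := \lim_{N\to\infty} \frac{1}{N} \log \hat Z_N = \sup_N \frac{1}{N} Q \log \hat Z_N.$

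To identify this quantity (up to the additive constant $\gl(\gb)$ coming from the normalization $Q Z_N = \exp(N\gl(\gb))$) with the free energy defined via $W_N$, I would bound $\log Z_N - \log \hat Z_N = o(N)$. The lower bound $\hat Z_N \le Z_N$ is immediate, while decomposing the walk according to its endpoint $S_N=x$ and using translation invariance of $Q$ together with the fact that $S_N$ lives in a ball of polynomial volume gives, via a union bound, $Q \log Z_N - Q \log \hat Z_N = o(N)$. Dividing by $N$ shows that the almost sure limit coincides with $p(\gb) = \lim_N p_N(\gb)$; non-positivity of $p(\gb)$ then follows from Jensen's inequality applied to $Q W_N = 1$.

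The monotonicity in $\gb$ is a one-variable FKG argument. Differentiating $p_N$ produces
$\partial_\gb p_N(\gb) = \frac{1}{N} \sum_{i,x} \bigl( Q[\go_{i,x}\,\mu_N(\ind_{\{S_i=x\}})] - \gl'(\gb)\,Q[\mu_N(\ind_{\{S_i=x\}})] \bigr).$
Conditioning on all variables other than $\go := \go_{i,x}$, the quantity $\mu_N(\ind_{\{S_i=x\}})$ takes the form $R e^{\gb\go}/(Re^{\gb\go}+1)$ with $R \ge 0$ independent of $\go$, hence strictly increasing in $\go$. Under the one-variable measure obtained by tilting $Q$ by $e^{\gb\go}$, the centered function $\go - \gl'(\gb)$ has mean zero by definition of $\gl$, while $e^{-\gb\go}\mu_N(\ind_{\{S_i=x\}}) = R/(Re^{\gb\go}+1)$ is decreasing; the FKG (Chebyshev sum) inequality for monotone functions on the real line then forces the covariance to be non-positive, giving $\partial_\gb p_N(\gb) \le 0$ and hence monotonicity in the limit. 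The main obstacle is the $o(N)$ comparison between $Z_N$ and $\hat Z_N$: one must show that restricting to loops (equivalently, averaging over a polynomial number of endpoints) does not alter the exponential rate, and this requires handling carefully the parity/sublattice structure of the simple random walk together with a precise sub-exponential control on the Radon–Nikodym factor $P(S_N=x)$.
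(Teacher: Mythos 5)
Your one-variable FKG argument for $\partial_\gb p_N(\gb)\le 0$ is correct and is essentially the Comets--Yoshida route that the paper cites: conditioning on all charges but $\go:=\go_{i,x}$ and tilting $Q$ by $e^{\gb\go-\gl(\gb)}$ reduces each summand of $\partial_\gb p_N$ to the covariance of the increasing function $\go$ and the decreasing function $R/(Re^{\gb\go}+1)$ under a one-variable law, which is non-positive.

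The existence step, however, has a genuine gap. Pinning the endpoint does make $\log\hat Z_N$ a.s.\ superadditive along the correct parity sublattice, and Kingman yields a.s.\ and $L^1$ convergence of $N^{-1}\log\hat Z_N$ to a constant $\tilde p$; combined with $\hat Z_N\le Z_N$ this gives $\liminf_N N^{-1}\log Z_N\ge\tilde p$ a.s. But the transfer to the matching upper bound, and even to the comparison $Q\log Z_N=Q\log\hat Z_N+o(N)$, is not achieved by a union bound alone. Writing $Z_N\le(2N+1)^d\max_{|x|\le N}Z_N^{(x)}$ with $Z_N^{(x)}:=P[e^{\gb H_N}\ind_{\{S_N=x\}}]$, the observation that each $\log Z_N^{(x)}$ is equidistributed with $\log\hat Z_N$ does not control $\max_x\log Z_N^{(x)}$: $L^1$ or a.s.\ convergence for each fixed $x$ does not propagate to a maximum over $(2N+1)^d$ sites growing with $N$, and a union bound over the deviation events is vacuous without a tail estimate. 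The needed ingredient is a concentration inequality for $\log Z_N^{(x)}$ around its $Q$-mean --- an Azuma-type martingale-difference bound (Lesigne--Voln\'y, which this very paper invokes in its hierarchical-lattice chapter to prove the analogous proposition there), or Gaussian concentration --- giving fluctuations of order $\sqrt{N}$ for a single site, hence $O(\sqrt{N\log N})=o(N)$ after a union over the sites, and yielding the a.s.\ upper bound via Borel--Cantelli. This is exactly what Comets--Shiga--Yoshida, Proposition~2.5, combines with superadditivity, and it is the step your proposal is missing. The obstacles you single out at the end (parity, control of $P(S_N=x)$) are cosmetic by comparison; the missing idea is concentration.
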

\medskip

We stress that the inequality $p(\gb)\le 0$ is the standard {\sl annealing}  bound. In view on \eqref{localization}, it is natural to say that {\sl very strong disorder} holds whenever $p(\gb)<0$.
One can moreover define $\bar \gb_c(d)$ the critical value of $\gb$ for the free energy i.e.\ :
\begin{equation}
 p(\gb)<0 \Leftrightarrow \gb>\bar\gb_c(d).
\end{equation}
Let us stress that, from the physicists' viewpoint, $\bar\gb_c(d)$ is the natural critical point because it is a point of non-analyticity of the free energy (at least if $\bar \gb_c(d)>0$). In view of this definition, we obviously have $\bar\gb_c(d)\ge \gb_c(d)$. 
It is widely believed that $\bar\gb_c(d)=\gb_c(d)$, i.e.\ that there exists no intermediate phase where we have {\sl strong disorder} but not {\sl very strong disorder}. However, this is a challenging question:
Comets and Vargas \cite{cf:CV} answered it in dimension $1+1$ by proving that $\bar\gb_c(1)=0$. In this paper, we make their result more precise. Moreover we prove that $\bar \gb_c(2)=0$.

\subsection{Presentation of the results}

The first aim of this paper is to sharpen the result of Comets and Vargas on the $1+1$-dimensional case. In fact, we are going
to give a precise statement on the behavior of $p(\gb)$ for small $\gb$. Our result is the following
\medskip
\begin{theorem}\label{PASGAUSSI}
When $d=1$ and the environment satisfies \eqref{expm}, there exist constants $c$ and $\gb_0< B$ (depending on the distribution of the environment) such that for all $0\le \gb\le \gb_0$ we have
\begin{equation}
-\frac{1}{c} \gb^4[1+(\log \gb)^2] \le p(\gb)\le -c \gb^4.
\end{equation}
\end{theorem}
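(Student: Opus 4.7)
My plan is to establish the two bounds separately, using two rather different techniques inspired by the disordered pinning literature referred to in the paper.

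\textbf{Upper bound $p(\beta)\le -c\beta^4$.} The approach will be a coarse graining combined with a fractional moment and a correlated change of measure, adapted from the marginal pinning arguments of \cite{cf:GLT_marg, cf:kbodies}. Fix $\gamma\in(0,1)$ close to $1$, set the coarse graining length $k=k(\beta):=\lfloor C_0/\beta^{4}\rfloor$ for a suitable large $C_0$, and take $N$ a multiple of $k$. Partition $\{1,\dots,N\}$ into blocks $B_i=\{(i-1)k+1,\dots,ik\}$ and decompose
\begin{equation*}
 Z_N \, =\, \sum_{\mathcal I\subset\{1,\dots,N/k\}} \hat Z_\omega^{\mathcal I},
\end{equation*}
where $\hat Z_\omega^{\mathcal I}$ collects the trajectories whose set of visited blocks is exactly $\mathcal I$. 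The elementary inequality $(\sum a_i)^{\gamma}\le\sum a_i^{\gamma}$ reduces the control of $Q[Z_N^{\gamma}]$ to that of $\sum_{\mathcal I} Q[(\hat Z_\omega^{\mathcal I})^{\gamma}]$. For each $\mathcal I$ apply Hölder's inequality with an auxiliary law $\tilde Q=\tilde Q^{\mathcal I}$ that introduces \emph{negative correlations} among the charges $\omega_{(n,z)}$ with $n$ in a visited block (keeping the other charges IID under $\tilde Q$). The key point is that, for $d=1$, the intersection renewal $\tau=\{n:S^{(1)}_n=S^{(2)}_n\}$ has inter-arrival distribution decaying like $n^{-3/2}$ (the marginal exponent $\alpha=1/2$ in the pinning terminology), so a simple IID tilt of $\omega$ is ineffective; one instead uses a change of measure whose covariance structure mirrors the two-replica Green function, paying only an $O(1)$ Radon-Nikodym cost in $L^{\gamma/(1-\gamma)}$ while making the expected partition function in each block strictly smaller than $\mathbb{E}[Z_{k,\omega}]$. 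A block-wise reduction analogous to the one in \cite{cf:GLT_marg} then yields
\begin{equation*}
 Q[Z_N^\gamma]\,\le\, C \exp\!\left(-c\gamma\beta^4 N\right),
\end{equation*}
which, through $p(\beta)\le \liminf_N (\gamma N)^{-1}\log Q[Z_N^\gamma]$, gives the claimed upper bound.

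\textbf{Lower bound $p(\beta)\ge -c^{-1}\beta^4(1+\log^2\beta)$.} The plan is a second moment argument restricted to a high-probability good event, coupled with a percolation/concatenation step. A direct computation gives
\begin{equation*}
 Q[W_N^2]\,=\, P^{\otimes 2}\!\left[\exp\!\left(\lambda_2(\beta)\,L_N\right)\right],\qquad L_N:=\sum_{n=1}^{N}\mathbf{1}_{\{S^{(1)}_n=S^{(2)}_n\}},
\end{equation*}
with $\lambda_2(\beta):=\lambda(2\beta)-2\lambda(\beta)\stackrel{\beta\searrow0}{\sim}\beta^2$. Since the difference of two independent simple random walks on $\mathbb{Z}$ is essentially a simple random walk, $L_N$ is comparable to the local time at $0$, hence of order $\sqrt{N}$. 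Therefore $Q[W_N^2]$ remains bounded so long as $\beta^2\sqrt{N}$ stays bounded, i.e. $N\lesssim 1/\beta^4$. For the non-Gaussian environment one must first intersect with a truncation event that discards the trajectories with anomalously large $L_N$ (this truncation is responsible for the logarithmic factor $\log^2\beta$, and in the Gaussian case it can be dispensed with, which accounts for the sharper statement mentioned after Theorem~\ref{PASGAUSSI}). Applying Paley-Zygmund on this good event yields $Q\{W_N\ge \delta\}\ge \eta>0$ for $N\asymp \beta^{-4}\log^{-2}(1/\beta)$ and some $\delta,\eta>0$. The final ingredient is a concatenation-by-percolation argument in the spirit of the hierarchical directed polymer bounds \cite{cf:LM}: viewing a system of length $Mk$ as $M$ independent replicas of the system of length $k=k(\beta)$, the "good" replicas ($W^{(i)}_k\ge\delta$) percolate in a directed fashion with positive density, so that at least one concatenation produces a trajectory of length $Mk$ with partition function at least $\delta^{M}$ times a controlled environment contribution, yielding $p(\beta)\ge -(1/k)\log(1/\delta) \asymp -\beta^4(1+\log^2\beta)$.

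\textbf{Main obstacle.} The hardest step is the correlated change of measure in the upper bound. At $d=1$ the intersection renewal is marginal, so neither the homogeneous tilt used in the irrelevant regime nor the argument of \cite{cf:DGLT} suffices: one must design a Gaussian (or, in general, $q$-body) coupling of the charges that reproduces the slow decay of the two-replica Green function while keeping the Radon-Nikodym cost of order $1$, and verify in the coarse-grained sum that the resulting block estimate decays uniformly in $\mathcal I$. This is precisely the ingredient that was developed in the marginal pinning setting in \cite{cf:GLT_marg, cf:kbodies} and has to be transported here to the intersection renewal of two simple random walks. The logarithmic gap in the lower bound is a by-product of the truncation used to make the second moment finite for general (non-Gaussian) $\omega$, and it can be removed in the Gaussian case by a slightly finer analysis.
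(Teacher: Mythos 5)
Your proposal has genuine gaps in both directions, rooted in a misidentification of where the ``marginal'' difficulty lies for directed polymers.

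For the upper bound, you motivate a correlated change of measure by asserting that $d=1$ is the marginal case for directed polymers and that ``a simple IID tilt of $\omega$ is ineffective.'' This is factually wrong, and it is precisely what the paper's proof shows: the actual change of measure used for $d=1$ is a \emph{shift} of the charges in a trajectory-following corridor $J_Y$ of width $\sim\sqrt{n}$, with a uniform shift amplitude $\delta_n\sim n^{-3/4}$ (for general environment one tilts instead, but no correlations are introduced). The reason a simple tilt works is that the overlap $\sum_n\ind_{\{S_n^{(1)}=S_n^{(2)}\}}$ in $d=1$ grows polynomially (like $\sqrt{N}$): this is the relevant-disorder regime. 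The truly marginal case for directed polymers is $d=2$, where the overlap grows only logarithmically, and it is there --- in Section~\ref{twodim} --- that the paper introduces negative correlations in the covariance matrix. Moreover, your coarse-graining ``by visited blocks $\mathcal I\subset\{1,\dots,N/k\}$'' is carried over from the pinning setting where the renewal can skip blocks, but for a directed polymer the walk passes through every time slice, so this decomposition is vacuous. The paper's coarse-graining is \emph{spatial}: it records, for each time $kn$, which interval $I_{y_k}$ of length $\sqrt{n}$ the walk occupies, and shapes the corridor $J_Y$ accordingly. The combination of spatial coarse-graining with a fixed fractional exponent $\theta=1/2$ (not $\gamma\to1$) is what lets one dispense with the logarithmic corrections that appeared in the warm-up Section~\ref{rough}.

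For the lower bound, the overall shape (second moment plus percolation concatenation) is right, but the two key technical ingredients are missing or misattributed. First, to make the percolation cells independent one must work not with $W_n$ itself but with the \emph{constrained} partition function $\bar W$ over paths forced from one corner of a diffusive box to the opposite corner: only this geometric constraint decouples adjacent cells and creates the directed bond percolation structure on $\{(i,y):i-y \text{ even}\}$ used in Proposition~\ref{th:prop}. Second, the logarithmic factor does \emph{not} come from a truncation that one could remove in the Gaussian case. It comes from the conditioning itself: $Q\bar W\asymp n^{-3/2}$ (Lemma~\ref{th:lem1}, the probability of the corner-to-corner crossing), so the concatenation yields $p(\beta)\ge n^{-1}[-C+\log Q\bar W]\asymp -\log n/n$; and the variance control (Lemma~\ref{th:lem2}) requires $n\lesssim 1/(\beta^4|\log\beta|)$ --- not $\log^{-2}$ as you write --- because the conditional second moment is inflated by a factor $n^3$ compared to the unconditional one. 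Substituting this $n$ gives $p(\beta)\ge -c\beta^4\log^2\beta$. The sharper Gaussian bound of Theorem~\ref{GAUSSI} is obtained in Section~\ref{dim11} by an entirely different technique (replica coupling via Gaussian integration by parts, reducing to a homogeneous pinning partition function for the intersection renewal), not by any refinement of the second-moment argument.
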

\medskip
We believe that the logarithmic factor in the lower bound is an artifact of the method. In fact, by using replica-coupling, we have been able to get rid of it in the Gaussian case.
\medskip
\begin{theorem}\label{GAUSSI}
When $d=1$ and the environment is Gaussian, there exists a constant $c$ such that for all $\gb \le 1$.
\begin{equation}
-\frac{1}{c}\gb^4 \le p(\gb)\le  -c \gb^4.
\end{equation}
\end{theorem}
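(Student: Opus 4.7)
The upper bound $p(\beta)\le -c\beta^4$ is already provided by Theorem~\ref{PASGAUSSI}, so the task reduces to removing the parasitic factor $(1+(\log\beta)^2)$ from the lower bound in the Gaussian case. My plan is to exploit Gaussian integration by parts twice: once at the level of the single-replica free energy to reduce the question to a uniform estimate on a two-replica overlap, and once at the level of a coupled two-replica system (the ``replica-coupling'' step) to obtain that estimate with the sharp $u^2$ scaling.

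\textbf{Step 1 (exact representation).} For $t\in[0,1]$, set
\[
\phi_N(t)\,:=\,\frac{1}{N}\,Q\log P\!\left[\exp\!\left(\sqrt{t}\,\beta H_N(S)+(1-t)\tfrac{\beta^2 N}{2}\right)\right].
\]
A direct Gaussian integration by parts (the term $(1-t)\beta^2 N/2$ is chosen precisely so as to cancel the diagonal contribution) yields
\[
\phi_N'(t)\,=\,-\frac{\beta^2}{2N}\,Q\!\left[\mu_{N,\sqrt{t}\beta}^{\otimes 2}(L_N)\right],\qquad L_N:=\sum_{n=1}^N\mathbf{1}_{\{S_n^{(1)}=S_n^{(2)}\}}.
\]
Since $\phi_N(0)=\beta^2/2$ and $\phi_N(1)=p_N(\beta)+\beta^2/2$, the substitution $u=\sqrt{t}\beta$ gives the exact identity
\[
-p_N(\beta)\,=\,\frac{1}{N}\int_0^{\beta} u\,Q\!\left[\mu_{N,u}^{\otimes 2}(L_N)\right]du.
\]
Thus it is enough to establish the uniform overlap bound
\[
Q\!\left[\mu_{N,u}^{\otimes 2}(L_N)\right]\,\le\,C\bigl(u^2 N+\sqrt{N}\bigr),\qquad u\in(0,1].\tag{$\star$}
\]

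\textbf{Step 2 (replica-coupling).} To prove ($\star$), introduce the two-replica partition function with repulsive coupling
\[
Y_N(u,\lambda)\,:=\,P^{\otimes 2}\!\left[\exp\!\left(u(H_N^{(1)}+H_N^{(2)})-\lambda L_N\right)\right],\qquad \lambda\ge 0,
\]
and its polymer measure $\tilde{\mu}_{u,\lambda}$ on pairs $(S^{(1)},S^{(2)})$. The Gaussian variance of $H_N^{(1)}+H_N^{(2)}$ for fixed paths equals $2N+2L_N$, so the quadratic term $-\lambda L_N$ cancels the overlap contribution exactly when $\lambda=u^2$: this is the crucial identity
\[
Q\bigl[Y_N(u,u^2)\bigr]\,=\,\exp(u^2 N).
\]
Jensen's inequality then gives $Q\log Y_N(u,u^2)\le u^2 N$, while the elementary derivative identity
\[
\frac{d}{d\lambda}Q\log Y_N(u,\lambda)\,=\,-Q\bigl[\tilde{\mu}_{u,\lambda}(L_N)\bigr]
\]
allows one to rewrite $Q\log Y_N(u,u^2)=2Np_N(u)+u^2 N-\int_0^{u^2}Q[\tilde{\mu}_{u,\lambda}(L_N)]\,d\lambda$. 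Combining with Jensen and with the monotonicity $\lambda\mapsto Q[\tilde{\mu}_{u,\lambda}(L_N)]$ (which is non-increasing, since its derivative is minus a variance) yields a uniform-in-$N$ control of $Q[\mu_{N,u}^{\otimes 2}(L_N)]$; the small-$u$ regime, where the bound $(\star)$ is dictated by the simple random walk intersection (of order $\sqrt{N}$ in $d=1$), is handled by a direct $L^2$ estimate on $Y_N(u,u^2)$, in which the essential input is once again the Gaussian cancellation at $\lambda=u^2$.

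\textbf{Step 3 (conclusion).} Inserting $(\star)$ into the representation of Step~1 gives
\[
-p_N(\beta)\,\le\,\frac{1}{N}\int_0^{\beta} u\cdot C\bigl(u^2 N+\sqrt{N}\bigr)\,du\,=\,\frac{C}{4}\beta^4+\frac{C}{2\sqrt{N}}\beta^2,
\]
and letting $N\to\infty$ yields $-p(\beta)\le C\beta^4/4$, as required.

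\textbf{Main obstacle.} The technical heart of the proof is Step~2, i.e.\ proving the overlap bound $(\star)$ with the sharp $u^2 N$ scaling. The general-environment lower bound of Theorem~\ref{PASGAUSSI} must go through tilted change-of-measure arguments and fractional-moment truncations, which bring the logarithmic factor. The Gaussian setting removes these because the \emph{exact} cancellation $Q[Y_N(u,u^2)]=e^{u^2 N}$ provides, at the critical coupling $\lambda=u^2$, a two-replica system whose free energy admits both a trivial Jensen upper bound and an exact integral representation in terms of the overlap: there is no need to truncate and no need to pay a $\log(1/\beta)$ for locating the right scale, which is precisely what the replica-coupling interpolation accomplishes.
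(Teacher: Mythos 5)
Your Step~1 (the Gaussian integration--by--parts representation of $-p_N(\beta)$ as an integral of the replica overlap) is correct and is the same starting point the paper uses for the lower bound. The conclusion in Step~3 would follow given the uniform overlap estimate $(\star)$. But Step~2 has a genuine gap: the combination you propose does not prove $(\star)$.

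Write out what your Jensen bound and the $\lambda$-derivative identity actually give. The identity yields
\[
Q\log Y_N(u,u^2)\,=\,2Np_N(u)+u^2N-\int_0^{u^2}Q\bigl[\tilde\mu_{u,\lambda}(L_N)\bigr]\,d\lambda,
\]
and Jensen says $Q\log Y_N(u,u^2)\le u^2N$. Plugging the equality into the inequality produces $\int_0^{u^2}Q[\tilde\mu_{u,\lambda}(L_N)]\,d\lambda\ge 2Np_N(u)$, which is a \emph{lower} bound --- and since $p_N(u)\le 0$ it is moreover vacuous. The monotonicity of $\lambda\mapsto Q[\tilde\mu_{u,\lambda}(L_N)]$ also points the wrong way: it tells you $Q[\tilde\mu_{u,0}(L_N)]\ge Q[\tilde\mu_{u,\lambda}(L_N)]$ for $\lambda>0$, i.e.\ it bounds the coupled overlaps in terms of $Q[\mu_{N,u}^{\otimes 2}(L_N)]$, not the other way around. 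Convexity of $\lambda\mapsto Q\log Y_N(u,\lambda)$ similarly gives only a lower bound on $Q[\mu_{N,u}^{\otimes 2}(L_N)]$. None of these steps yield the upper bound you need.

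What is missing is an interpolation in the \emph{disorder strength} for the two-replica system. The paper's $\Psi_N(t,\lambda,\beta)$ depends on two parameters, with the environment coupled at strength $\sqrt{t}\beta$ inside the two-replica partition function, and the key Gaussian computation is the first-order PDE inequality $\partial_t\Psi_N\le\partial_\lambda\Psi_N$, which by characteristics gives $\Psi_N(t,\lambda)\le\Psi_N(0,\lambda+t)$. This turns the exact identity $-\Phi_N'(t)=\partial_\lambda\Psi_N(t,\lambda,\beta)|_{\lambda=0}$ (together with convexity in $\lambda$) into a closed differential inequality $-\Phi_N'(t)\le\Psi_N(0,2,\beta)-\Phi_N(t)$, which Gronwall integrates to $p_N(\beta)\ge(1-e)\Psi_N(0,2,\beta)$. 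One then identifies $\Psi_N(0,2,\beta)$ as (half the normalized log of) the homogeneous pinning partition function at parameter $2\beta^2$, whose free energy is $\sim 2\beta^4$. The point is that the argument never proves an a priori uniform bound like $(\star)$; it is a self-consistent bootstrap in the interpolation parameter $t$ that never decouples the overlap estimate from $p_N$. Your plan omits the $t$-interpolation inside $\Psi_N$ (you only vary the coupling $\lambda$), which is precisely the ingredient that closes the argument.
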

\medskip
These estimates concerning the free energy give us some idea of the behavior of $\mu_N$ for small $\gb$. Indeed, Carmona and Hu in \cite[Section 7]{cf:CH_ptrf} proved a relation between $p(\gb)$ and the overlap (although their notation differs from ours).
This relation together with our estimates for $p(\gb)$ suggests that, for low $\gb$, the asymptotic contact fraction between independent polymers
\begin{equation}
 \lim_{N\to\infty} \frac{1}{N}\mu_N^{\otimes 2} \sum_{n=1}^N \ind_{\{S_n^{(1)}=S_n^{(2)}\}},
\end{equation}
behaves like $\gb^2$.
\medskip

The second result we present is that $\bar \gb_c(2)=0$. As for the $1+1$-dimensional case, our approach yields an explicit bound on $p(\gb)$ for $\gb$ close to zero.
\medskip

\begin{theorem}\label{1+2UPLD}
 When $d=2$, there exist constants $c$ and $\gb_0$ such that for all $\gb\le \gb_0$,
\begin{equation}
-\exp\left(-\frac{1}{c\gb^2}\right) \le p(\gb)\le -\exp\left(-\frac{c}{\gb^4}\right),
\end{equation}
so that
\begin{equation}
\bar \gb_c(2)=0,
\end{equation}
and $0$ is a point of non-analyticity for $p(\gb)$.
\end{theorem}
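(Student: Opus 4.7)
The plan is to establish the two sides of the inequality by two separate techniques. The lower bound $p(\gb)\ge -\exp(-1/(c\gb^2))$, which asserts that $|p(\gb)|$ is not too large, follows from a second moment computation combined with a Paley--Zygmund / concatenation argument. The upper bound $p(\gb)\le -\exp(-c/\gb^4)$, which establishes very strong disorder at arbitrary temperature (and thus $\bar\gb_c(2)=0$), requires the fractional-moment-plus-change-of-measure technique adapted from marginally disorder-relevant pinning models.

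For the lower bound I would start from the identity
\begin{equation*}
Q[W_N^2]\, =\, P^{\otimes 2}\!\left[\exp\!\left((\gl(2\gb)-2\gl(\gb))\,|\tau\cap[1,N]|\right)\right],\qquad \tau:=\{n\ge 0:S_n^{(1)}=S_n^{(2)}\},
\end{equation*}
where $S^{(1)},S^{(2)}$ are independent 2D simple random walks under $P^{\otimes 2}$. Since the difference of two independent 2D simple walks is a recurrent walk on $\Z^2$, $\tau$ is a recurrent renewal process and, by the local CLT, $P^{\otimes 2}(n\in\tau)\asymp 1/n$ along even times. The right-hand side is therefore the partition function of a homogeneous pinning model on $\tau$ in the marginal $\ga=0$ regime at pinning strength $h(\gb):=\gl(2\gb)-2\gl(\gb)\sim\gb^2$, whose annealed free energy is known to behave like $\exp(-C/h)$ for small $h$. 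Hence $Q[W_N^2]\le 4$ whenever $N\le N^\star(\gb):=\exp(c/\gb^2)$. The Paley--Zygmund inequality then yields $Q(W_{N^\star}\ge 1/2)\ge c_1>0$, and a concatenation/percolation argument obtained by gluing successive blocks of length $N^\star$ gives $p(\gb)\ge -c_2/N^\star(\gb)=-\exp(-c'/\gb^2)$.

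For the upper bound, fix $\theta\in(0,1)$, e.g.\ $\theta=1/2$. Jensen's inequality gives $p(\gb)\le \limsup_N(\theta N)^{-1}\log Q[W_N^\theta]$, so it is enough to show that at the single scale $N=N(\gb):=\lceil\exp(c_0/\gb^4)\rceil$ one has $Q[W_{mN}^\theta]\le C\rho^m$ for some fixed $\rho<1$ and every $m\ge 1$. This is achieved by coarse-graining: decompose $Z_{mN}$ according to which $N$-blocks are visited by the walk and where the entry/exit points are, apply the fractional inequality $(\sum a_i)^\theta\le \sum a_i^\theta$, and then block by block use H\"older against a well chosen auxiliary probability measure $\tilde Q$ on the block environment,
\begin{equation*}
Q[Z_{\text{block}}^\theta]\, \le\, \left(\tilde Q\!\left[\left(\frac{dQ}{d\tilde Q}\right)^{\!\theta/(1-\theta)}\right]\right)^{\!1-\theta}\left(\tilde Q[Z_{\text{block}}]\right)^{\!\theta}.
\end{equation*}
Summing over coarse-grained block configurations then produces the required geometric decay.

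The main obstacle is the design of $\tilde Q$, which must satisfy two competing requirements: its Radon--Nikodym cost must stay bounded while $\tilde Q[Z_{\text{block}}]$ must be significantly smaller than $Q[Z_{\text{block}}]=e^{N\gl(\gb)}$, with enough slack to absorb the combinatorial cost of the coarse graining. Following a marginal-pinning construction, $\tilde Q$ is taken to be a Gaussian (or suitably tilted) measure under which the environment $\{\go_{i,z}\}_{1\le i\le N,\,z\in\Z^2}$ acquires weak long-range negative correlations, whose covariance between two space-time sites is proportional to the probability that they are simultaneously on the trajectories of a pair of independent 2D walks. The logarithmic divergence $\sum_{n=1}^N P^{\otimes 2}(n\in\tau)\asymp \log N$, specific to dimension two, then plays in the variance-budget computation exactly the role that the $\sqrt{\log N}$ factor plays in the marginal pinning analysis, and this is what produces the $\gb^{-4}$ scaling of $N(\gb)$. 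The hard technical work lies in carrying out this change of measure and coarse graining rigorously for a genuinely two-dimensional environment: controlling the inhomogeneous spatial Green function of the walk, and checking that the fractional moments of the block partition functions can be aggregated into summable coarse-grained sums.
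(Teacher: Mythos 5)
Your proposal is correct in its key ideas and, for the upper bound, matches the paper's route essentially step for step: Jensen to pass to fractional moments; coarse-graining at scale $N(\gb)=\lceil\exp(c_0/\gb^4)\rceil$ over spatial corridors of width $O(\sqrt N)$; H\"older against a Gaussian $\tilde Q$ with covariance $I-V$ on each corridor, where $V_{(i,z),(j,z')}\propto \ind_{\{|z-z'|\le C\sqrt{|j-i|}\}}/(N\sqrt{\log N}\,|j-i|)$ is precisely a normalized surrogate for the two-replica overlap kernel you describe; then bounding the Radon--Nikodym cost via the Hilbert--Schmidt norm (and traceless-ness) of $V$ and extracting the $\exp(-c\gb^2\sqrt{\log N})$ gain from the $D(N)\asymp\sqrt{\log N}$ budget, which is exactly where $\gb^{-4}$ comes from. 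Your remark that a ``suitably tilted'' measure is needed off the Gaussian case is also on target: the paper replaces the Gaussian correlation change by a multiplicative penalty $g_Y(\go)=\exp(\sum_k f_K(U_k))$ with $f_K$ a negative indicator, estimating mean and variance of the bilinear functional $U_k$ under the size-biased measure.

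For the lower bound your ideas (second moment of $W_N$ reduces to a marginal pinning partition function over the intersection renewal $\tau$ of two 2D walks; $P^{\otimes 2}(n\in\tau)\asymp 1/n$ gives annealed free energy $\exp(-c/h)$ in $h=\gamma(\gb)\sim\gb^2$; hence $Q[W_N^2]$ stays bounded up to $N^\star\asymp\exp(c/\gb^2)$) are also correct and match Lemma~\ref{th:lemmvar}. However, the paper does \emph{not} use Paley--Zygmund plus concatenation in dimension two: it explicitly notes that the percolation-type gluing --- which it does carry out in $d=1$ where the sharper $\gb^4|\log\gb|^2$ bound is wanted --- is unnecessary here, and instead combines the variance control with the martingale concentration estimate $\var_Q(\log W_n)\le Cn$ from Comets--Shiga--Yoshida. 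This gives $|n^{-1}\log W_n - n^{-1}Q\log W_n|\le n^{-1/4}$ with high probability, which together with $Q\{W_{N^\star}<1/2\}\le 1/2$ yields $p(\gb)\ge -N^{\star\,-1/4}-\log 2/N^\star$ directly, no block-gluing required. Your route would also work, but a $(2+1)$-dimensional directed-percolation construction plus endpoint pinning in each block takes some care to set up (the paper only does it in $1+1$ using the reflection principle / Brownian excursion estimates of Lemma~\ref{th:lem1}), whereas the concentration route is essentially free once the second moment is controlled. So your plan is sound but takes the harder path on the lower bound where a shortcut is available.
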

\medskip

\begin{rem}\rm After the appearance of this paper as a preprint, the proof of the above result has been adapted by Bertin \cite{cf:Bertin} to prove the exponential decay of the partition function for {\sl Linear Stochastic Evolution} in dimension $2$, a model that is a slight generalisation of directed polymer in random environment.
\end{rem}
\medskip

\begin{rem}\rm
 Unlike in the one dimensional case, the two bounds on the free energy provided by our methods do not match. We believe that the second moment method, that gives the lower bound is quite sharp and gives the right order of magnitude for $\log p(\gb)$.
The method developped in \cite{cf:kbodies} to sharpen the estimate on the critical point shift for pinning models at marginality adapted to the context of directed polymer should be able to improve the result, getting $p(\gb)\le -\exp(-c_{\gep} \gb^{-(2+\gep)})$ for all $\gb\le 1$ for any $\gep$. 
\end{rem}

\subsection{Organization of the paper}
The various techniques we use have been inspired by ideas used successfully for another polymer model, namely the polymer pinning on a defect line (see \cite{cf:T_cmp,cf:GLT,cf:DGLT,cf:T_cg,cf:GLT_marg}). 

However the ideas we use to establish lower bounds differ sensibly from the ones leading to the upper bounds. For this reason, we present first the proofs of the upper bound results in Section \ref{rough}, \ref{onedim} and \ref{twodim}. The lower bound results are proven in Section \ref{lb11}, \ref{dim11} and \ref{dim12}.
\medskip

To prove the lower bound results, we use a technique that combines the so-called {\sl fractional moment method} and change of measure. This approach has been first used for pinning model in \cite{cf:DGLT} and it has been
refined since in \cite{cf:T_cg,cf:GLT_marg}. In Section \ref{rough}, we prove a non-optimal upper bound for the free energy in the case of Gaussian environment in dimension $1+1$ to introduce the reader to this method. In Section \ref{onedim} we prove the optimal upper bound for arbitrary environment in dimension $1+1$, and in Section \ref{twodim} we prove our upper bound for the free energy in dimension $1+2$ which implies that {\sl very strong disorder} holds for all $\gb$. These sections are placed in increasing order of technical complexity, and therefore, should be read in that order.
\medskip

Concerning the lower--bounds proofs: Section \ref{lb11} presents a proof of the lower bound of Theorem \ref{PASGAUSSI}. The proof combines the second moment method and a directed percolation argument. In Section \ref{dim11} the optimal bound is proven for Gaussian environment, with a specific Gaussian approach similar to what is done in \cite{cf:T_cmp}. In Section \ref{dim12} we prove the lower bound for arbitrary environment in dimension $1+2$.
These three parts are completely independent of each other.

\section{Some warm up computations}\label{rough}

\subsection{Fractional moment}

Before going into the core of the proof, we want to present here the starting step that will be used repeatedly thourough Sections \ref{rough}, \ref{onedim} and \ref{twodim}.
We want to find an upper--bound for the quantity
\begin{equation}
 p(\gb)=\lim_{N\to \infty}\frac{1}{N} Q \log W_N. 
\end{equation}
However, it is not easy to handle the expectation of a $\log$, for this reason we will use the following trick . Let $\theta\in(0,1)$, we have (by Jensen inequality)

\begin{equation}
Q \log W_N=\frac{1}{\theta}Q\log W_N^{\theta}\le \frac{1}{\theta}\log Q W_N^{\theta}.
\end{equation}
Hence

\begin{equation}\label{momentfrac}
 p(\gb)\le \liminf_{N\to\infty} \frac{1}{\theta N}\log Q W_N^{\theta}.
\end{equation}
We are left with showing that the fractional moment $Q W_N^{\theta}$ decays exponentially which is a problem that is easier to handle.

\subsection{A non optimal upper--bound in dimension $1+1$}

To introduce the reader to the general method used in this paper, combining fractional moment and change of measure, we start by proving a non--optimal result for the free--energy, using a finite volume criterion. As a more complete result is to be proved in the next section, we restrict to the Gaussian case here. The method used here is based on the one  of \cite{cf:CV}, marorizing the free energy of the directed polymer by the one of multiplicative cascades. Let us mention that is has bee shown recently by Liu and Watbled \cite{cf:LWat} that this majoration is in a sense optimal, they obtained this result by improving the concentration inequality for the free energy.
\medskip

The idea of combining fractional moment with change of measure and finite volume criterion has been used with success for the pinning model in \cite{cf:DGLT}.

 \begin{proposition}\label{lb}
 There exists a constant $c$ such that for all $\gb\le 1$ 

\begin{equation}
 p(\gb)\le -\frac{ c\gb^4}{(|\log \gb|+1)^2}
\end{equation}
 \end{proposition}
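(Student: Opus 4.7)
The strategy is to combine the fractional moment bound \eqref{momentfrac} with a change of measure on a finite block, then use a coarse-graining / iteration argument to propagate the smallness of a single-block fractional moment to all scales. This mimics the finite-volume criterion for pinning models in \cite{cf:DGLT}, adapted to the directed polymer setting.

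First I would fix $\theta \in (0,1)$ (say $\theta = 1/2$), so that by \eqref{momentfrac} it suffices to bound $Q W_N^\theta$. Next I would establish a finite-volume criterion: by decomposing the partition function $Z_N$ over the position of the walk at time $N_0$, writing $Z_N(z) = E[e^{\beta H_N(S)} \ind_{S_N = z}]$ and using $Z_N = \sum_y Z_{N_0}(y) \cdot \hat Z_{N-N_0, N}(y)$ (with $\hat Z$ involving only the environment after time $N_0$, hence independent of $Z_{N_0}(y)$), together with the inequality $(\sum a_i)^\theta \le \sum a_i^\theta$ and Hölder, one obtains that if $Q W_{N_0}^\theta \le 1/2$ for some $N_0 \in \N$, then $Q W_N^\theta$ decays exponentially with rate at least $(\log 2)/N_0$, so that $p(\beta) \le -(\log 2)/(\theta N_0)$.

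The heart of the proof is thus to exhibit $N_0 = N_0(\beta)$ of order $\beta^{-4}(|\log\beta|+1)^2$ for which $Q W_{N_0}^\theta \le 1/2$. To do this I would apply Hölder's inequality with an auxiliary measure $\tilde Q$:
\begin{equation}
Q W_{N_0}^\theta \, \le \, \bigl(\tilde Q W_{N_0}\bigr)^\theta \, \left[\tilde Q\!\left(\frac{dQ}{d\tilde Q}\right)^{\!\!1/(1-\theta)}\right]^{1-\theta}.
\end{equation}
For $\tilde Q$ I would take the Gaussian measure under which $\omega_{(i,z)}$ has mean $-\delta$ for every site $(i,z)$ in the space-time box $\cB := [1,N_0] \times [-L,L]$ (and mean $0$ outside), with $L = C\sqrt{N_0}$ for a suitable constant $C$. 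A direct Gaussian computation gives
\begin{equation}
\left[\tilde Q\!\left(\frac{dQ}{d\tilde Q}\right)^{\!\!1/(1-\theta)}\right]^{1-\theta} = \exp\!\left(\frac{\theta}{2(1-\theta)}\,\delta^2 |\cB|\right),
\end{equation}
while $\tilde Q W_{N_0}$ can be estimated by restricting the walk to stay in $\cB$ (the rest is negligible when $C$ is large enough by standard deviation estimates) and using that each visit to the box contributes an energy shift $-\beta\delta$, giving $\tilde Q W_{N_0} \lesssim \exp(-\beta \delta N_0) + o(1)$. The optimization with $|\cB| \sim N_0^{3/2}$ forces $\delta \sim N_0^{-3/4}$ to keep the cost bounded, and then $\beta \delta N_0 \sim \beta N_0^{1/4}$; requiring this to dominate $|\log \beta|$ yields $N_0 \sim \beta^{-4}(|\log\beta|+1)^2$, which plugged into $p(\beta) \le -(\log 2)/(\theta N_0)$ gives exactly the claimed bound.

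The main technical obstacle is the coarse-graining step: unlike the pinning model, where the renewal structure makes iteration automatic, directed polymers have no such built-in decomposition, and one has to handle carefully both the spatial spreading of the walk (which is why a box of size $\sqrt{N_0}$ in the transverse direction is the right choice) and the independence of environments in disjoint blocks. A secondary difficulty is to control the contribution of atypical trajectories that exit the transverse box, which is what ultimately forces the logarithmic correction to the optimal $\beta^{-4}$ scale and will be removed in Section \ref{dim11} via the sharper replica-coupling approach.
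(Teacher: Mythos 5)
There is a genuine gap at the heart of your proposal: with a fixed exponent $\theta=1/2$, the finite-volume criterion fails. After the decomposition $Z_N=\sum_y Z_{N_0}(y)\,\hat Z_{N-N_0,N}(y)$ and $(\sum a_i)^\theta\le\sum a_i^\theta$, the quantity that iterates is not $Q\,W_{N_0}^\theta$ but the \emph{spatial sum} $\rho:=\sum_y Q\bigl[W_{N_0}(y)^\theta\bigr]$, giving $Q\,W_{mN_0}^\theta\le\rho^m$. Since $Q\,W_{N_0}(y)\approx P(S_{N_0}=y)\sim N_0^{-1/2}$ for the $\sim\sqrt{N_0}$ relevant endpoints, Jensen alone gives $\rho\gtrsim N_0^{(1-\theta)/2}$, which for fixed $\theta=1/2$ diverges like $N_0^{1/4}$; no change of measure with bounded cost can kill this polynomial factor. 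The paper's proof handles this by choosing $\theta=\theta(n)=1-(\log n)^{-1}$, so that $n^{(1-\theta)/2}=e^{1/2}$ stays bounded, together with the trick of separating endpoints $x$ according to whether $\tilde Q\,W_n(x)\le n^{-3}$ to strip off the power $\theta$. You never address the spatial sum at all (you conflate the iterating quantity with $Q\,W_{N_0}^\theta$), and you keep $\theta$ fixed, so the argument as stated does not close.

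Your heuristic for where the logarithm comes from is also off. You write that $\gb\delta N_0\sim\gb N_0^{1/4}$ must dominate $|\log\gb|$, which would give $N_0\gtrsim|\log\gb|^4/\gb^4$, two log powers worse than the claim (and your own final line $N_0\sim\gb^{-4}(|\log\gb|+1)^2$ doesn't follow from $\gb N_0^{1/4}\gtrsim|\log\gb|$). In the paper, the log correction has a different origin: because $\theta/(2(1-\theta))\sim(\log n)/2$, the Hölder cost forces the extra factor $\sqrt{\log n}$ in $\delta_n=1/(n^{3/4}\sqrt{2C_2\log n})$, which reduces the suppression to $\gb\delta_n n\sim\gb n^{1/4}/\sqrt{\log n}$; one then only needs this to be a large \emph{constant}, and the fixed point of $n\gtrsim(\log n)^2/\gb^4$ is $n\sim|\log\gb|^2/\gb^4$. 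The paper also imports the iteration step as a black box from Comets--Vargas \cite{cf:CV} ($\log Q\,W_{nm}^\theta\le m\log Q\sum_x[W_n(x)]^\theta$) rather than re-deriving it, but that is a presentational difference; the substantive gaps in your proposal are the two points above.
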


\begin{proof}[Proof of Proposition \ref{lb} in the case of Gaussian environment]
For $\beta$ sufficiently small, we choose $n$ to be equal to $\left\lceil \frac{C_1 |\log \gb|^{2}}{\gb^4}\right\rceil$ for a fixed constant $C_1$ (here and thourough the paper for $x\in\R$, $\lceil x\rceil$, respectively $\lfloor x\rfloor$ will denote the upper, respectively the lower integer part of $x$) and define $\theta:=1-(\log n)^{-1}$.
For $x\in \Z$ we define
\begin{equation}
 W_n(x):= P  \exp\left(\sum_{i=1}^n [\gb\go_{(i,S_i)}-\gb^2/2]\right)\ind_{\{S_n=x\}}.
\end{equation}
Note that $\sum_{x\in \Z} W_n(x)=W_n$.
We use a statement which can be found in the proof of Theorem 3.3. in \cite{cf:CV}:

\begin{equation}
\log Q W_{nm}^{\theta}\le m \log Q \sum_{x\in \Z} [W_n(x)]^{\theta} \quad \forall m\in \N.
\end{equation}
This combined with \eqref{momentfrac} implies that

\begin{equation}
p(\gb)\le \frac{1}{\theta n}\log Q \sum_{x\in \Z} [W_n(x)]^{\theta}.
\end{equation}

Hence, to prove the result, it is sufficient to show that

\begin{equation}
 Q \sum_{x\in \Z} [W_n(x)]^{\theta}\le e^{-1}, \label{eq:aprouver}
\end{equation}
for our choice of $\theta$ and $n$.

In order to estimate $Q [W_n(x)]^{\theta}$ we use an auxiliary measure $\tilde Q$.
The region where the walk $(S_i)_{0\le i\le n}$ is likely to go is
$J_n=\left([1,n]\times[-C_2\sqrt{n},C_2\sqrt{n}]\right)\cap \N\times \Z$ where $C_2$ is a big constant.

We define $\tilde Q$ as the measure under which the $\go_{i,x}$ are still independent Gaussian variables with variance $1$, but
such that $\tilde Q \go_{i,x}=-\delta_n\ind_{(i,x)\in J_n}$ where $\gd_n=1/(n^{3/4}\sqrt{2C_2\log n})$. This measure is absolutely continuous with respect to $Q$ and

\begin{equation} \label{eq:shift}
 \frac{\dd \tilde Q}{\dd Q}=\exp\left(-\sum_{(i,x)\in J_n}\left[\delta_n\go_{i,x}+\frac{\gd_n^2}{2}\right]\right).
\end{equation}
Then we have for any $x\in \Z$, using the H\"older inequality we obtain,

\begin{equation}\label{boun}
 Q \left[W_n(x)^{\theta}\right]=\tilde Q \left[\frac{\dd Q}{\dd \tilde Q}\left(W_n(x)\right)^\theta\right]\le \left(\tilde Q \left[\left(\frac{\dd Q}{\dd \tilde Q}\right)^{\frac{1}{1-\theta}}\right]\right)^{1-\theta}\left( \tilde Q W_n(x)\right)^{\theta}.
\end{equation}
The first term on the right-hand side can be computed explicitly and is equal to

\begin{equation}\label{bbboun}
\left(Q \left(\frac{\dd Q}{\dd \tilde Q}\right)^{\frac{\theta}{1-\theta}}\right)^{1-\theta}= \exp\left(\frac{\theta\gd_n^2}{2(1-\theta)}\# J_n \right)\le e,
\end{equation}
where the last inequality is obtained by replacing $\gd_n$ and $\theta$ by their values (recall $\theta=1-(\log n)^{-1}$). Therefore combining \eqref{boun} and \eqref{bbboun} we get that

\begin{equation}
  Q \sum_{x\in \Z} \left(W_n(x)\right)^{\theta}\le e \sum_{|x|\le n} \left(\tilde Q W_n(x)\right)^{\theta}.
\end{equation}

To bound the right--hand side, we first get rid of the exponent $\theta$ in the following way:

\begin{multline}
 \sum_{|x|\le n} n^{-3 \theta}\left(\tilde Q W_n(x)\right)^{\theta}\le n^{-3\theta}\#\{x\in\Z,\ |x|\le n \text{ such that } \tilde Q W_n(x)\le n^{-3}\}\\
\quad +\sum_{|x|\le n}\ind_{\{\tilde Q W_n(x)> n^{-3}\}}  \tilde Q W_n(x) n^{3(1-\theta)}.
\end{multline}
If $n$ is sufficiently large ( i.e., $\gb$ sufficiently small) the first term on the right-hand side is smaller than $1/n$ so that

\begin{equation}
 \sum_{|x|\le n} \left(\tilde Q W_n(x)\right)^{\theta}\le \exp(3)\tilde Q W_n+\frac{1}{n}.
\end{equation}
We are left with showing that the expectation of $W_n$ with respect to the measure $\tilde Q$ is small.
It follows from the definition of $\tilde Q$ that

\begin{equation}
 \tilde Q W_n= P\exp\left(-\gb\delta_n\#\{i\ |\ (i,S_i)\in J_n\}\right),
\end{equation}
and therefore

\begin{equation}
 \tilde Q W_n\le P\{\text{the trajectory $S$ goes out of $J_n$}\}+\exp(-n\gb\gd_n).
\end{equation}

One can choose $C_2$ such that the first term is small, and the second term is equal to
$\exp(-\gb n^{1/4}/\sqrt{2C_2\log n})\le \exp(-C_1^{1/4}/4\sqrt{C_2})$ that can be arbitrarily small by
choosing $C_1$ large compared to $(C_2)^{1/2}$.
In that case \eqref{eq:aprouver} is satisfied and we have
\begin{equation}
 p(\gb)\le \frac{1}{\theta n}{\log e^{-1}}\le -\frac{\gb^4}{2 C_1|\log \gb|^2}
\end{equation}
for small enough $\gb$. 
\end{proof}

\section{Proof of the upper bound of Theorem \ref{PASGAUSSI} and \ref{GAUSSI}}\label{onedim}

The upper bound we found in the previous section is not optimal, and can be improved by replacing the finite volume criterion \eqref{eq:aprouver} by a more sophisticated coarse graining method. The technical advantage of the coarse graining we use, is that we will not have to choose the $\theta$ of the fractional moment close to $1$ as we did in the previous section and this is the way we get rid of the extra $\log$ factor we had.
The idea of using this type of coarse graining for the copolymer model appeared in \cite{cf:T_cg} and this has been a substantial source of inspiration for this proof.

We will prove the following result first in the case of Gaussian environment, and then adapt the proof to general environment.


\begin{proof}[Proof in the case of Gaussian environment]

Let $n$ be the smallest squared integer bigger than $C_3\gb^{-4}$ (if $\gb$ is small we are sure that $n\le 2C_3\gb^{-4}$).  The number $n$ will be used in the sequel of the proof as a scaling factor. Let $\theta<1$ be fixed (say $\theta=1/2$). We consider a system of size $N=n m$ (where $m$ is meant to tend to infinity).

Let $I_k$ denote the interval $I_k=[k\sqrt{n},(k+1)\sqrt n )$. In order to estimate $Q W_N^{\theta}$ we decompose $W_N$ according to the contribution of different families path:

\begin{equation}\label{eq:decompo}
W_{N}=\sum_{y_1, y_2,\dots, y_m\in \Z} \check{W}_{(y_1,y_2,\dots,y_m)}
\end{equation}
where 

\begin{equation}
\check{W}_{(y_1,y_2,\dots,y_m)}=P \exp \left[\sum_{i=1}^N \left(\gb\go_{i,S_i}-\frac{\gb^2}{2}\right)\ind_{\left\{S_{in}\in I_{y_i},\forall i= 1,\dots,m \right\}}\right].
\end{equation}

\begin{figure}[h]
\begin{center}
\leavevmode
\epsfysize =6.5 cm
\psfragscanon
\psfrag{O}[c]{\tiny{O}}
\psfrag{n}[c]{\tiny{$n$}}
\psfrag{2n}[c]{\tiny{$2n$}}
\psfrag{3n}[c]{\tiny{$3n$}}
\psfrag{4n}[c]{\tiny{$4n$}}
\psfrag{5n}[c]{\tiny{$5n$}}
\psfrag{6n}[c]{\tiny{$6n$}}
\psfrag{7n}[c]{\tiny{$7n$}}
\psfrag{8n}[c]{\tiny{$8n$}}
\psfrag{rn}[c]{\tiny{$+\sqrt{n}$}}
\psfrag{r2n}[c]{\tiny{$+2\sqrt{n}$}}
\psfrag{r3n}[c]{\tiny{$+3\sqrt{n}$}}
\psfrag{r4n}[c]{\tiny{$+4\sqrt{n}$}}
\psfrag{m1n}[c]{\tiny{$-\sqrt{n}$}}
\psfrag{m2n}[c]{\tiny{$-2\sqrt n$}}
\psfrag{m3n}[c]{\tiny{$-3\sqrt n$}}
\psfrag{1}[c]{\tiny{$1$}}
\psfrag{2}[c]{\tiny{$2$}}
\psfrag{3}[c]{\tiny{$3$}}
\psfrag{4}[c]{\tiny{$4$}}
\psfrag{5}[c]{\tiny{$5$}}
\psfrag{6}[c]{\tiny{$6$}}
\psfrag{7}[c]{\tiny{$7$}}
\psfrag{8}[c]{\tiny{$8$}}
\epsfbox{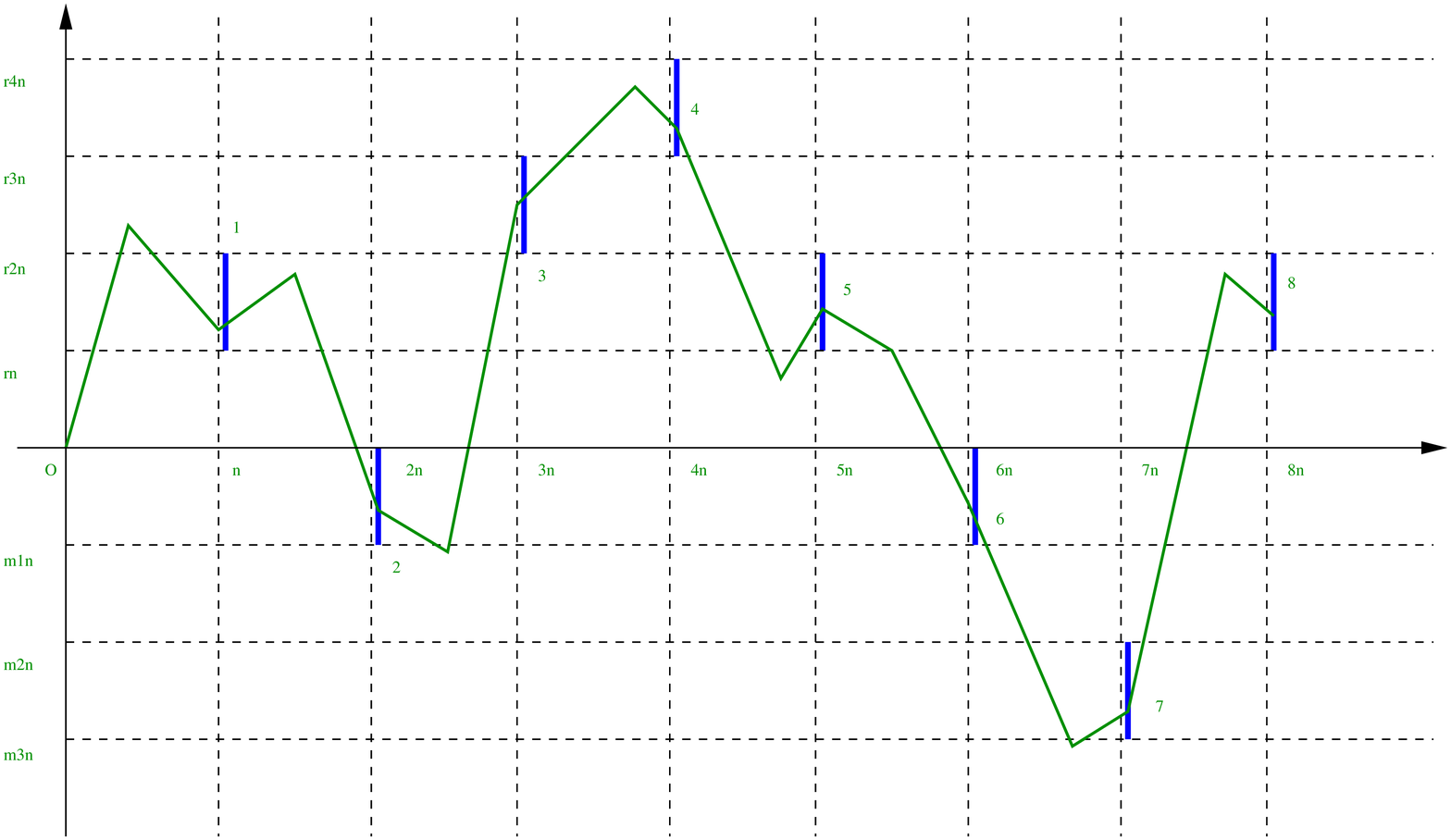}
\end{center}
\caption{\label{fig:wnn} The partition of $W_{nm}$ into $\check W^{(y_1,\dots,y_m)}$ is to be viewed as a coarse graining. For $m=8$, $(y_1,\dots,y_8)=(1,-1,2,3,1,-1,-3,1)$, $\check W_n^{(y_1,\dots,y_m)}$ corresponds to the contribution to $W_N$ of the path going through the thick barriers on the figure.}
\end{figure}

Then, we apply the inequality $\left(\sum a_i\right)^\theta\le \sum a_i^{\theta}$ (which holds for any finite or countable collection of positive real numbers) to this decomposition and average with respect to $Q$ to get,

\begin{equation}\label{eq:decompo2}
Q W_{nm}^{\theta}\le \sum_{y_1, y_2,\dots, y_m\in \Z} Q \check{W}_{(y_1,y_2,\dots,y_m)}^{\theta}.
\end{equation}
In order to estimate $Q\check{W}_{(y_1,y_2,\dots,y_m)}^{\theta}$, we use an auxiliary measure as in the previous section. The additional idea is to make the measure change depend on $y_1,\dots,y_m$.

For every $Y=(y_1,\dots,y_m)$ we define the set $J_Y$ as 
\begin{equation}
 J_Y:=\left\{ (km+i,y_k\sqrt{n}+z),\ k=0,\dots,m-1,\  i=1,\dots,n,\  |z|\le C_4\sqrt{n}\right\},
\end{equation}
where $y_0$ is equal to zero. Note that for big values of $n$ and $m$

\begin{equation} \label{cardj}
 \# J_Y\sim 2 C_4 m n^{3/2}
\end{equation}
We define the measure $\tilde Q_Y$ the measure under which the $\go_{(i,x)}$ are independent Gaussian variables with variance $1$ and mean $\tilde Q_Y \go_{(i,x)}=-\gd_n\ind_{\{(i,x)\in J_Y\}}$ where $\gd_n=n^{-3/4}C_4^{-1/2}$.
\begin{figure}[h]
\begin{center}
\leavevmode
\epsfysize =6.5 cm
\psfragscanon
\psfrag{O}[c]{\tiny{O}}
\psfrag{n}[c]{\tiny{$n$}}
\psfrag{2n}[c]{\tiny{$2n$}}
\psfrag{3n}[c]{\tiny{$3n$}}
\psfrag{4n}[c]{\tiny{$4n$}}
\psfrag{5n}[c]{\tiny{$5n$}}
\psfrag{6n}[c]{\tiny{$6n$}}
\psfrag{7n}[c]{\tiny{$7n$}}
\psfrag{8n}[c]{\tiny{$8n$}}
\psfrag{rn}[c]{\tiny{$+\sqrt{n}$}}
\psfrag{r2n}[c]{\tiny{$+2\sqrt{n}$}}
\psfrag{r3n}[c]{\tiny{$+3\sqrt{n}$}}
\psfrag{r4n}[c]{\tiny{$+4\sqrt{n}$}}
\psfrag{m1n}[c]{\tiny{$-\sqrt{n}$}}
\psfrag{m2n}[c]{\tiny{$-2\sqrt n$}}
\psfrag{m3n}[c]{\tiny{$-3\sqrt n$}}
\psfrag{1}[c]{\tiny{$1$}}
\psfrag{2}[c]{\tiny{$2$}}
\psfrag{3}[c]{\tiny{$3$}}
\psfrag{4}[c]{\tiny{$4$}}
\psfrag{5}[c]{\tiny{$5$}}
\psfrag{6}[c]{\tiny{$6$}}
\psfrag{7}[c]{\tiny{$7$}}
\psfrag{8}[c]{\tiny{$8$}}
\psfrag{ZMO}[c]{\small{Region where the environment is modified}}
\epsfbox{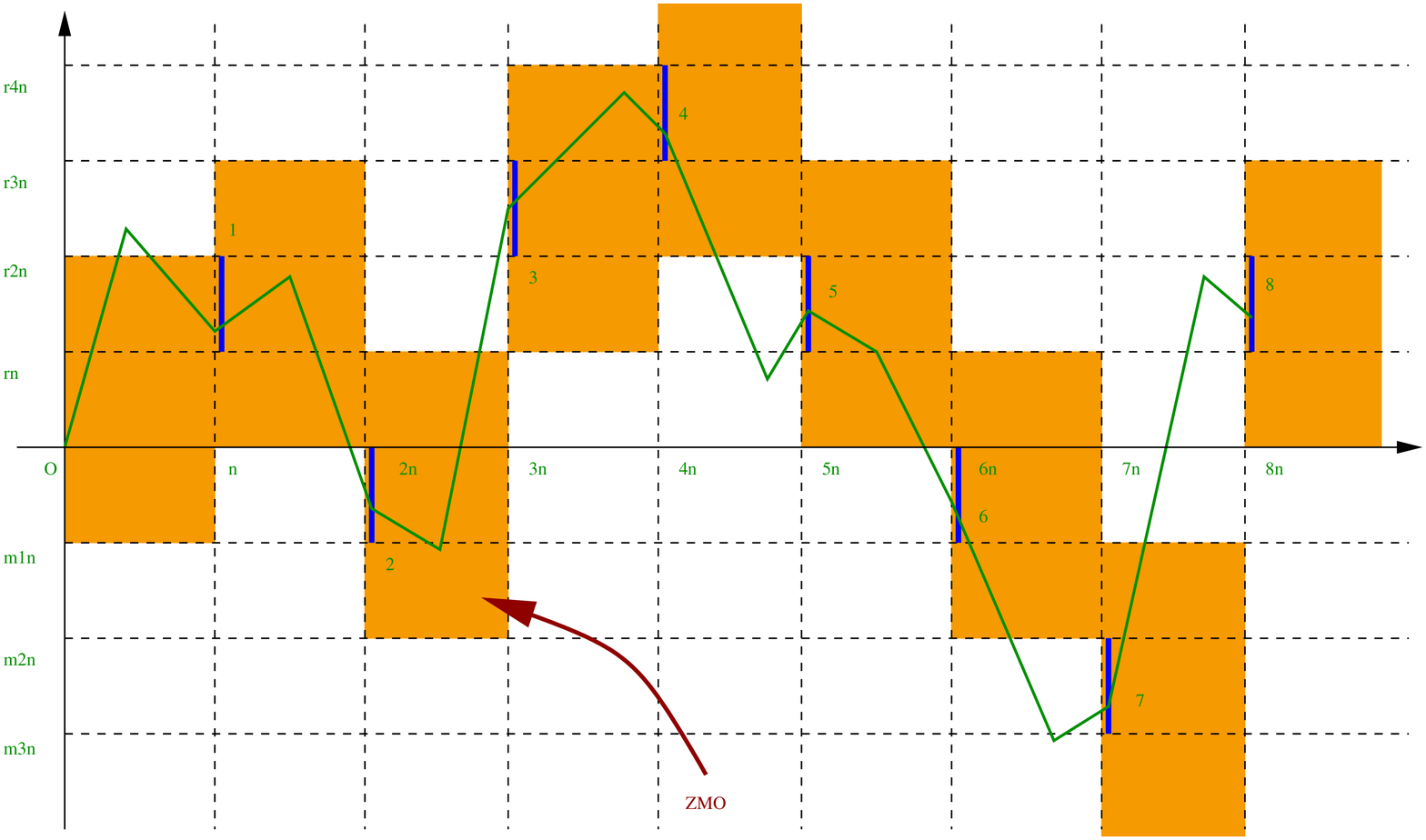}
\end{center}
\caption{\label{fig:wnn2} This figure represent in a rough way the change of measure $Q_Y$. The region where the mean of $\go_{(i,x)}$ is lowered (the shadow region on the figure) corresponds to the region where the simple random walk is likely to go, given that it goes through the thick barriers.}
\end{figure}
The law $\tilde Q_Y$ is absolutely continuous with respect to $Q$ and its density is equal to

\begin{equation}\label{qy1}
\frac{\dd\tilde Q_Y}{\dd Q}(\go)=\exp\left(-\sum_{(i,x)\in J_Y}\left[\gd_n\go_{(i,x)}+\gd_n^2/2\right]\right).
\end{equation}
Using H\"older inequality with this measure as we did in the previous section, we obtain

\begin{multline}
 Q \left[\check W_{(y_1,y_2,\dots,y_m)}^{\theta}\right]=\tilde Q_{Y} \left[\frac{\dd Q}{\dd \tilde Q_{Y}}\check W_{(y_1,y_2,\dots,y_m)}^{\theta}\right]\\
 \le \tilde Q_{Y}\left( \left[\left(\frac{\dd Q}{\dd \tilde Q_{Y}}\right)^{\frac{1}{1-\theta}}\right]\right)^{1-\theta}\left( \tilde Q_{Y} \check W_{(y_1,\dots,y_m)}\right)^{\theta} \label{eq:hhol}.
\end{multline}
The value of the first term can be computed explicitly

\begin{equation}
\left(Q \left[\left(\frac{\dd Q}{\dd \tilde Q_{Y}}\right)^{\frac{\theta}{1-\theta}}\right]\right)^{1-\theta}=\exp\left(\frac{\# J_Y\theta\gd_n^2}{2(1-\theta)}\right)\le \exp(3m), \label{eq:dens}
\end{equation}
where the upper bound is obtained by using the definition of $\gd_n$, \eqref{cardj} and the fact that $\theta=1/2$.

Now we compute the second term

\begin{equation}\label{ccek}
\tilde Q_{Y} \check W_{(y_1,\dots,y_m)}=P \exp\left(-\gb\gd_n\#\left\{i|(i,S_i)\in J_Y\right\}\right)\ind_{\{S_{kn}\in I_{y_k},\ \forall k \in [1,m]\}}.
\end{equation}
We define 
\begin{equation}\label{jayjay}\begin{split}
 J&:= \{(i,x),\ i=1,\dots,n,\ |x|\le C_4\sqrt{n}\}\\
 \bar J&:= \{(i,x),\ i=1,\dots,n,\ |x|\le (C_4-1)\sqrt{n}\}.
\end{split}
\end{equation}
Equation \eqref{ccek} implies that (recall that $P_x$ is the law of the simple random walk starting from $x$, and that we set $y_0=0)$
\begin{equation}\label{maxstart}
 \tilde Q_{Y} \check W_{(y_1,\dots,y_m)}\le \prod_{k=1}^m \max_{x\in I_0} P_x \exp\left(-\gb\gd_n\#\left\{i\ :\ (i,S_i)\in J\right\}\right)\ind_{\{S_{n}\in I_{y_k-y_{k-1}}\}}.
\end{equation}
Combining this with \eqref{eq:decompo}, \eqref{eq:hhol} and \eqref{eq:dens} we have

\begin{equation}
\log Q W_{N}^{\theta}\le m\left[3+\log \sum_{y\in\Z}\left(\max_{x\in I_0} P_x \exp\left(-\gb\gd_n\#\left\{i\ :\ (i,S_i)\in J\right\}\right)\ind_{\{S_{n}\in I_{y}\}}\right)^{\theta}\right].
\end{equation}
If the quantity in the square brackets is smaller than $-1$, by equation \eqref{momentfrac} we have  $p(\gb)\le -1/n$.
Therefore, to complete the proof it is sufficient to show that 
\begin{equation}
\sum_{y\in\Z}\left(\max_{x\in I_0} P_x \exp\left(-\gb\gd_n\#\left\{i\ :\ (i,S_i)\in J\right\}\right)\ind_{\{S_{n}\in I_{y}\}}\right)^{\theta} 
\end{equation}
is small. To reduce the problem to the study of a finite sum, we observe  (using some well known result on the asymptotic behavior of random walk) that given $\gep>0$ we can find $R$ such that

\begin{equation}\label{iop}
 \sum_{|y|\ge R}\left(\max_{x\in I_0} P_x \exp\left(-\gb\gd_n\#\left\{i\ :\ (i,S_i)\in J\right\}\right)\ind_{\{S_{n}\in I_{y}\}}\right)^{\theta}\le \sum_{|y|\ge R}\max_{x\in I_0} \left(P_x\{S_{n}\in I_{y}\}\right)^{\theta}\le \gep.
\end{equation}
To estimate the remainder of the sum we use the following trivial bound

\begin{multline}
 \sum_{|y|< R}\left(\max_{x\in I_0} P_x \exp\left(-\gb\gd_n\#\left\{i\ :\ (i,S_i)\in J\right\}\right)\ind_{\{S_{n}\in I_{y}\}}\right)^{\theta}\\
\le 2 R \left(\max_{x\in I_0} P_x \exp\left(-\gb\gd_n\#\left\{i\ :\ (i,S_i)\in J\right\}\right)\right)^{\theta}.
\end{multline}
Then we get rid of the $\max$ in the sum by observing that if a walk starting from $x$ makes a step in $J$, the walk with the same increments starting from $0$ will make the same step in $\bar J$ (recall \eqref{jayjay}).
\begin{equation} \label{fgh}
\max_{x\in I_0} P_x \exp\left(-\gb\gd_n\#\left\{i\ :\ (i,S_i)\in J\right\}\right)\le P \exp\left(-\gb\gd_n\#\left\{i|(i,S_i)\in \bar J\right\}\right).
\end{equation}
Now we are left with something similar to what we encountered in the previous section

\begin{equation} \label{jkl}
P \exp\left(-\gb\gd_n\#\left\{i\ :\ (i,S_i)\in \bar J\right\}\right)\le P\{\text{ the random walk goes out of $\bar J$ }\} + \exp(-n \gb \gd_n). 
\end{equation}
If $C_4$ is chosen large enough, the first term can be made arbitrarily small by choosing $C_4$ large, and the second is equal to $\exp(-C_3^{-1/4}/\sqrt{C_4})$ and can be made also arbitrarily small if $C_3$ is chosen large enough once $C_4$ is fixed.
An appropriate choice of constant and the use of \eqref{fgh} and \eqref{jkl} can  leads then to

\begin{equation} \label{klm}
 2 R \left(\max_{x\in I_0} P_x \exp\left(-\gb\gd_n\#\left\{i\ :\ (i,S_i)\in J\right\}\right)\right)^{\theta}\le \gep.
\end{equation}
This combined with \eqref{iop} completes the proof.
\end{proof}
\bigskip




\begin{proof}[Proof of the general case]
In the case of a general environment, some modifications have to be made in the proof above, but the general idea remains the same. In the change of measure one has to change the shift of the environment in $J_Y$ \eqref{qy1} by an exponential tilt of the measure as follow

\begin{equation}
\frac{\dd\tilde Q_{Y}}{\dd Q}(\gb)=\exp\left(-\sum_{(i,z)\in J_Y}\left[\gd_n\go_{(i,z)}+\gl(-\gd_n) \right]\right).
\end{equation}
The formula estimating the cost of the change of measure \eqref{eq:dens} becomes

\begin{equation}
\left(Q \left(\frac{\dd Q}{\dd \tilde Q_{Y}}\right)^{\frac{\theta}{1-\theta}}\right)^{1-\theta}=\exp\left(\# J_Y \left[(1-\theta)\gl\left(\frac{\theta\gd_n}{1-\theta}\right)+\theta\gl(-\gd_n)\right]\right)\le \exp(2 m),
\end{equation}
where the last inequality is true if $\gb_n$ is small enough if we consider that $\theta=1/2$ and use the fact
that $\gl(x)\stackrel{x\to 0}{\sim}x^2/2$ ($\go$ has $0$ mean and unit variance). The next thing we have to do is to compute the effect of this change of measure in this general case, i.e.\ find an equivalent for \eqref{ccek}.
When computing $\tilde Q_Y \check W_{(y_1,\dots,y_m)}$, the quantity 
\begin{equation}
\tilde Q_Y \exp(\gb\go_{1,0}-\gl(\gb))=\exp\left[\gl(\gb-\gd_n)-\gl(-\gd_n)-\gl(\gb)\right]
\end{equation}
appears instead of $\exp(-\gb\gd_n)$.
Using twice the mean value theorem, one gets that there exists $h$ and $h'$ in $(0,1)$ such that

\begin{equation}
 \gl(\gb-\gd_n)-\gl(-\gd_n)-\gl(\gb)=\gd_n\left[\gl'(-h\gd_n)-\gl'(\gb-h\gd_n)\right]=-\gb\gd_n\gl''(-h\gd_n+h'\gb).
\end{equation}
And as $\go$ has unit variance $\lim_{x\to 0} \gl''(x)=1$. Therefore if $\gb$ and $\gd_n$ are chosen small enough, the right-hand side of the above is less than $-\gb\gd_n/2$. So that \eqref{ccek} can be replaced by

\begin{equation}
\tilde Q_{Y} \check W_{(y_1,\dots,y_m)}\le P \exp\left(-\frac{\gb\gd_n}{2}\#\left\{i|(i,S_i)\in J_Y\right\}\right)\ind_{\{S_{kn}\in I_{y_k},\ \forall k \in [1,m]\}}.
\end{equation}

The remaining steps follow closely the argument exposed for the Gaussian case.
\end{proof}

\section{Proof of the upper bound in Theorem \ref{1+2UPLD}}\label{twodim}

In this section, we prove the main result of the paper: very strong disorder holds at all temperature in dimension $2$. 


\medskip

The proof is technically quite involved. It combines the tools of the two previous sections with a new idea for the change a measure: changing the covariance structure of the environment. We mention that this idea was introduced recently in \cite{cf:GLT_marg} to deal with the {\sl marginal disorder} case in pinning model. We choose to present first a proof for the Gaussian case, where the idea of the change a measure is easier to grasp.

\medskip

Before starting, we sketch the proof and how it should be decomposed in different steps:
\begin{itemize}
 \item [(a)] We reduce the problem by showing that it is sufficient to show that
 for some real number $\theta<1$, $Q W_N^{\theta}$ decays exponentially with $N$.
 \item [(b)] We use a coarse graining decomposition of the partition function by splitting it into different contributions that corresponds to trajectories that stays in a large corridor. This decomposition is similar to the one used in Section \ref{onedim}.
 \item [(c)] To estimate the fractional moment terms appearing in the decomposition, we change the law of the environment around the corridors corresponding to each contribution. More precisely, we introduce negative correlations into the Gaussian field of the environment. We do this change of measure in such a way that 
the new measure is not very different from the original one.
 \item [(d)] We use some basic properties of the random walk in $\Z^2$ to compute the expectation under the new measure.
\end{itemize}

\medskip

\begin{proof}[Proof for Gaussian environment]
 We fix $n$ to be the smallest squared integer bigger than $\exp(C_5/\gb^4)$ for some large constant $C_5$ to be defined later, for small $\gb$ we have $n\le \exp(2 C_5/\gb^4)$. The number $n$ will be used in the sequel of the proof as a scaling factor. 
For $y=(a,b)\in \Z^2$ we define 
$I_y=[a\sqrt{n},(a+1)\sqrt n-1]\times[b\sqrt{n},(b+1)\sqrt{n}-1]$ so that $I_y$ are disjoint and cover $\Z^2$.
For $N=nm$, we decompose the normalized partition function $W_N$ into different contributions, very similarly to what is done in dimension one (i.e.\ decomposition \eqref{eq:decompo2}), and we refer to the figure \ref{fig:wnn2} to illustrate how the decomposition looks like:

\begin{equation}
W_N=\sum_{y_1,\dots,y_m\in \Z^2}\check W_{(y_1,\dots,y_m)}
\end{equation}
where
\begin{equation}
\check{W}_{(y_1,\dots,y_m)}=P \exp \left(\sum_{i=1}^N \left[\gb\go_{i,S_i}-\gb^2/2\right] \right)\ind_{\left\{S_{in}\in I_{y_i},\forall i= 1,\dots,m \right\}}.
\end{equation}
We fix $\theta<1$ and apply the inequality $(\sum a_i)^{\theta}\le \sum a_i^{\theta}$ (which holds for any finite or countable collection of positive real numbers) to get

\begin{equation}\label{bobbob}
 Q W_N^{\theta}\le \sum_{y_1,\dots,y_m\in \Z^2}Q\check W_{(y_1,\dots,y_m)}^{\theta}.
\end{equation}

In order to estimate the different terms in the sum of the right--hand side in \eqref{bobbob}, we define some auxiliary measures $\tilde Q_Y$ on the the environment for every $Y=(y_0,y_1,\dots,y_m)\in \Z^{d+1}$ with $y_0=0$. We will choose the measures $Q_Y$ absolutely continuous with respect to $Q$. We use H\"older inequality to get the following upper bound:

\begin{equation}\label{hhold}
Q \check W_{(y_1,\dots,y_m)}^{\theta}\le  \left(Q \left(\frac{\dd Q}{\dd\tilde Q_{Y}}\right)^{\frac{\theta}{1-\theta}}\right)^{1-\theta} \left( \tilde Q_{Y} \check W_{(y_1,\dots,y_m)}\right)^{\theta}.
\end{equation}

Now, we describe the change of measure we will use. Recall that for the $1$-dimensional case we used a shift of the environment along the corridor corresponding to $Y$. The reader can check that this method would not give the exponential decay of $W_N$ in this case. Instead we change the covariance function of the environment along the corridor on which the walk is likely to go by introducing some negative correlation.

We introduce the change of measure that we use for this case. Given $Y=(y_0,y_1,\dots,y_m)$ we define $m$ blocks $(B_k)_{k\in[1,m]}$ and $J_Y$ their union (here and in the sequel, $|z|$ denotes the $l^{\infty}$ norm on $\Z^2$):

\begin{equation}\begin{split}\label{blocdef}
 B_k&:=\left\{(i,z)\in \N\times \Z^2 \, : \, \lceil i/n \rceil =k \text{ and } |z-\sqrt{n} y_{k-1}|\le C_6 \sqrt{n}\right\},\\
 J_Y&:=\bigcup_{k=1}^m B_k.
\end{split}
\end{equation}
We fix the covariance the field $\go$ under the law $\tilde Q_{Y}$ to be equal to

\begin{equation}\begin{split}\label{matvar}
&\tilde Q_Y \left(\go_{i,z}\go_{i,z'}\right)=\mathcal C^Y_{(i,z),(j,z')}\\
 &:=\begin{cases}
   \ind_{\{(i,z)=(j,z')\}}-V_{(i,z),(j,z')} & \text{ if } \exists\ k\in[1,m] \text{ such that } (i,z) \text{ and } (j,z')\in B_k
   \\
    \ind_{\{(i,z)=(j,z')\}} & \text{ otherwise,}
  \end{cases}\end{split}
\end{equation}
where

\begin{equation}\label{vdef}
 V_{(i,z),(j,z')}:=\begin{cases}
  0 & \text{ if } (i,z)=(j,z')
   \\
 \frac{\ind_{\{|z-z'|\le C_7 \sqrt{|j-i|}\}}}{100 C_6C_7n\sqrt{\log n}|j-i|} & \text{ otherwise.}
\end{cases}
\end{equation}
We define

\begin{equation}
\hat V:=(V_{(i,z),(j,z')})_{(i,z),(j,z')\in B_1}.
\end{equation}

One remarks that the so-defined covariance matrix  $\mathcal C^Y$ is block diagonal with $m$ identical blocks which are copies of $I-\hat V$ corresponding to the $B_k$, $k\in [1,m]$, and just ones on the diagonal elsewhere.
Therefore, the change of measure we describe here exists if and only if  $I-\hat V$ is definite positive. 

The largest eigenvalue for $\hat V$ is associated to a  positive vector and therefore is smaller than 
\begin{equation}\label{specray}
 \max_{(i,z)\in B_1}\sum_{(j,z')\in{B_1}} \left|V_{(i,z),(j,z')}\right|\le \frac{C_7}{C_6 \sqrt{\log n}}.
\end{equation}
For the sequel we choose $n$ such that the spectral radius of $\hat V$ is less than $(1-\theta)/2$ so 
that $I-\hat V$ is positive definite. 
With this setup, $\tilde Q_Y$ is well defined.

The density of the modified measure $\tilde Q_{Y}$ with respect to $Q$ is
given by
\begin{equation}
 \frac{\dd \tilde Q_Y}{\dd Q}(\go)=\frac{1}{\sqrt{\det \mathcal C^Y}}\exp\left(-\frac{1}{2}^t\go((\mathcal C^Y)^{-1}-I)\go\right),
\end{equation}
where 
\begin{equation}
^t\go M \go= \sum_{(i,z),(j,z')\in \N\times \Z^2}\go_{(i,z)}M_{(i,z),(j,z')}\go_{(j,z')},
\end{equation}
for any matrix $M$ of $(\N\times \Z^2)^2$ with finite support.

Then we can compute explicitly the value of the second term in the right-hand side of \eqref{hhold}
\begin{equation}\label{detei}
\left(Q\left(\frac{\dd Q}{\dd\tilde Q_Y}\right)^{\frac{\theta}{1-\theta}}\right)^{1-\theta}=\sqrt{\frac{\det \mathcal C^Y}{\det \left(\frac{\mathcal C^Y}{1-\theta}-\frac{\theta I}{1-\theta}\right)^{1-\theta}}}.
\end{equation}
Note that the above computation is right if and only if $\mathcal C^{Y} -\theta I$ is a definite positive matrix.
Since its eigenvalues are the same of those of $(1-\theta)I-\hat{V}$, this holds for large $n$ thanks to \eqref{specray}. 
Using again the fact that $\mathcal C^Y$ is composed of $m$ blocks identical to $I-\hat V$, we get from \eqref{detei}
\begin{equation}\label{detbl}
\left(Q\left(\frac{\dd Q}{\dd\tilde Q}\right)^{\frac{\theta}{1-\theta}}\right)^{1-\theta}=\left(\frac{\det(I
-\hat V)}{\det(I-\hat V/(1-\theta))^{1-\theta}}\right)^{m/2}.
\end{equation}
In order to estimate the determinant in the denominator, we compute the Hilbert-Schmidt norm of $\hat V$. One can check that for all $n$

\begin{equation} \label{hsnorm}
 \| \hat V \|^2= \sum_{(i,z),(j,z')\in B_1} V_{(i,z),(j,z')}^2\le 1.
\end{equation}
We use the inequality $\log( 1+x )\ge x-x^2$ for all $x\ge -1/2$ and the fact that the spectral radius of $\hat V/(1-\theta)$ is bounded by $1/2$ (cf. \eqref{specray}) to get that

\begin{equation}\label{truv}\begin{split}
 \det\left[I-\frac {\hat V}{1-\theta}\right]&=\exp\left(\tr\left(\log\left(I-\frac{\hat V}{1-\theta}\right)\right)\right)\ge \exp\left(-\frac{\| \hat V\|^2}{(1-\theta)^2}\right)\\
&\ge \exp\left(-\frac{1}{(1-\theta)^2}\right).
\end{split}\end{equation}
For the numerator, $\tr\ \hat V=0$ implies that that $\det(I-\hat V)\le 1$.
Combining this with \eqref{detbl} and \eqref{truv} we get

\begin{equation}\label{densityy}
\left(Q\left(\frac{\dd Q}{\dd\tilde Q_Y}\right)^{\frac{\theta}{1-\theta}}\right)^{1-\theta}\le \exp\left(\frac{m}{2(1-\theta)}\right).
\end{equation}
Now that we have computed the term corresponding to the change of measure, we estimate $\check W_{(y_1,\dots,y_m)}$ under the modified measure (just by computing the variance of the Gaussian variables in the exponential, using \eqref{matvar})\ :

\begin{multline}\label{modest}
 \tilde Q_{Y} \check W_{(y_1,\dots,y_m)}=P\, \tilde Q_Y \exp\left( \sum_{i=1}^N \left(\gb \go_{i,S_i}-\frac{\gb^2}{2}\right)\right)\ind_{\left\{S_{kn}\in I_{y_k},\forall k= 1,\dots,m \right\}}\\
=P \exp\left(\frac{\gb^2}{2}\sumtwo{1\le i,\ j\le N}{z,z'\in \Z^2} \left(\mathcal C^Y_{(i,z),(j,z')}-\ind_{\{(i,z)=(j,z')\}} \right)\ind_{\{S_i=z,S_j=z'\}}\right)\ind_{\left\{S_{kn}\in I_{y_k},\forall k= 1,\dots,m \right\}}.
\end{multline}
Replacing $\mathcal C^Y$ by its value we get that

\begin{multline}
 \tilde Q_{Y} \check W_{(y_1,\dots,y_m)}=P \exp  \left(-\frac{\gb^2}{2}\sumtwo{1\le i\neq j\le N}{1\le k\le m}\frac{\ind_{\{\left((i,S_i),(j,S_j)\right)\in B_k^2,\ |S_i-S_j|\le C_7\sqrt{|i-j|}\}}}{100C_6C_7 n \sqrt{\log n}|j-i|}\right)
\\
\ind_{\left\{S_{kn}\in I_{y_k},\forall k= 1,\dots,m \right\}}.
\end{multline}
Now we do something similar to $\eqref{maxstart}$: for each ``slice'' of the trajectory $(S_i)_{i\in[(m-1)k,mk]}$, we bound the contribution of the above expectation by maximizing over the starting point (recall that $P_x$ denotes the probability distribution of a random walk starting at $x$). Thanks to the conditioning, the starting point has to be in $I_{y_k}$. Using the translation invariance of the random walk, this gives us the following ($\vee$ stands for maximum):

\begin{multline} \label{rrrr} 
 \tilde Q_{Y} \check W^{(y_1,\dots,y_m)}\le \prod_{i=k}^m \max_{x\in I_0}\ P_x\bigg[\\
\exp\left(-\frac{\gb^2}{2}\sum_{1\le i\neq j\le n}\frac{\ind_{\{|S_i|\vee|S_j|\le C_6\sqrt{n}, \ |S_i-S_j|\le C_7\sqrt{|i-j|}\}}}{100 C_6 C_7 n \sqrt{\log n} |j-i|}\right)
\ind_{\left\{S_{n}\in I_{y_k-y_{k-1}}\right\}}\bigg].
\end{multline}
For trajectories $S$ of a directed random-walk of $n$ steps, we define the quantity 

\begin{equation}
G(S):=\sum_{1\le i\neq j\le n}\frac{\ind_{\{|S_i|\vee |S_j|\le C_6\sqrt{n}, \ |S_i-S_j|\le C_7\sqrt{|i-j|}\}}}{100 C_6 C_7 n \sqrt{\log n} |j-i|}.
\end{equation}
Combining \eqref{rrrr} with \eqref{densityy}, \eqref{hhold} and \eqref{bobbob}, we finally get

\begin{equation}\label{aaaaargh}
Q W_N^{\theta}\le \exp\left(\frac{m}{2(1-\theta)}\right)\left[\sum_{y\in \Z}\max_{x\in I_0} \left(P_x \exp\left(-\frac{\gb^2}{2}G(S)\right)\ind_{\{S_n\in I_{y}\}}\right)^{\theta}\right]^m.
\end{equation}
 The exponential decay of $Q W_N^{\theta}$ (with rate $n$) is guaranteed if we can prove that

\begin{equation}
 \sum_{y\in \Z}\max_{x\in I_0}\left(P_x \exp\left(-\frac{\gb^2}{2}G(S)\right)\ind_{\{S_n\in I_{y}\}}\right)^{\theta}
\end{equation}
is small. The rest of the proof is devoted to that aim.

We fix some $\gep>0$. Asymptotic properties of the simple random walk, guarantees that we can find $R=R_{\gep}$ such that
\begin{equation}\label{tr0}
  \sum_{|y|\ge R}\max_{x\in I_0}\left(P_x \exp\left(-\frac{\gb^2}{2}G(S)\right)\ind_{\{S_n\in I_{y}\}}\right)^{\theta}
\le \sum_{|y|\ge R} \max_{x\in I_0}\ \left(P_x\{S_n\in I_y\}\right)^{\theta}\le \gep.
\end{equation}
To estimate the rest of the sum, we use the following trivial and rough bound

\begin{equation}\label{tr10}
 \sum_{|y|< R}\max_{x\in I_0}\left[ P_x \exp\left(-\frac{\gb^2}{2}G(S)\right)\ind_{\{S_n\in I_{y}\}}\right]^{\theta}
\le R^2 \left[\max_{x\in I_0} P_x \exp\left(-\frac{\gb^2}{2}G(S)\right)\right]^{\theta}
\end{equation}
Then we use the definition of $G(S)$ to get rid of the $\max$ by reducing the width of the zone where we have negative correlation:

\begin{equation}
 \max_{x\in I_0} P_x \exp\left(-\frac{\gb^2}{2}G(S)\right)\le P \exp\left(-\frac{\gb^2}{2}\sum_{1\le i\neq j\le n}\frac{\ind_{\{|S_i|\vee |S_j|\le (C_6-1)\sqrt{n}, \ |S_i-S_j|\le C_7\sqrt{|i-j|}\}}}{100 C_6 C_7 n \sqrt{\log n} |j-i|}\right).
\end{equation}
We define $\bar B:=\{(i,z)\in \N\times \Z^2\ : \ i\le m, |z|\le (C_6-1)\sqrt{n}\}$. We get from the above that

\begin{multline}\label{tr11}
  \max_{x\in I_0} P_x \exp\left(-\frac{\gb^2}{2}G(S)\right)\le  P\{\text{the RW goes out of } \bar B \}\\
+
P \exp\left(-\frac{\gb^2}{2}\sum_{1\le i\neq j\le n}\frac{\ind_{\{ |S_i-S_j|\le C_7\sqrt{|i-j|}\}}}{100 C_6 C_7 n \sqrt{\log n} |j-i|}\right)
\end{multline}
One can make the first term of the right-hand side arbitrarily small by choosing $C_6$ large, in particular on can choose $C_6$ such that

 \begin{equation}\label{tr1}
 P\left\{ \max_{i\in[0,n]} |S_n|\ge (C_6-1)\sqrt{n}\right\}\le (\gep/R^2)^{\frac{1}{\theta}}.
\end{equation}
To bound the other term, we introduce the quantity

\begin{equation}
D(n):=\sum_{1\le i\neq j\le n}\frac{1}{n \sqrt{\log n} |j-i|},
\end{equation}
and the random variable $X$,

\begin{equation}\label{xdef}
 X:=\sum_{1\le i\neq j\le n}\frac{\ind_{\{|S_i-S_j|\le C_7\sqrt{|i-j|}\}}}{n \sqrt{\log n} |j-i|}.
\end{equation}
For any $\delta>0$, we can find $C_7$ such that $P(X)\ge( 1-\gd )D(n)$.
We fix $C_7$ such that this holds for some good $\gd$ (to be chosen soon), and by remarking that $0\le X \le D(n)$ almost surely, we obtain
(using Markov inequality)

\begin{equation}\label{xest}
 P\{ X>D(n)/2 \}\ge 1-2\delta.
\end{equation}
Moreover we can estimate $D(n)$ getting that for $n$ large enough

\begin{equation}\label{dest}
 D(n)\ge \sqrt{\log n}.
\end{equation}
Using \eqref{xest} and \eqref{dest} we get

\begin{equation}\begin{split}
 P \exp\left(-\frac{\gb^2}{2}\sum_{1\le i\neq j\le n}\frac{\ind_{\{ |S_i-S_j|\le C_7\sqrt{|i-j|}\}}}{100C_6 C_7 n \sqrt{\log n} |j-i|}\right)&=P \exp\left(-\frac{\gb^2}{200C_6C_7}X\right)\\
&\le 2\gd + \exp\left(-\frac{\gb^2\sqrt{\log n}}{200C_6C_7}\right).
\end{split}\end{equation}
Due to the choice of $n$ we have made (recall $n\ge \exp(C_5/\gb^4)$), the second term is less than
$\exp\left(-\gb^2 C_5^{1/2}/(200 C_6C_7)\right)$. We can choose $\gd$, $C_7$ and $C_5$ such that, the right-hand side is less that $(\gep/R^2)^{\frac{1}{\theta}}$.
This combined with \eqref{tr1}, \eqref{tr11}, \eqref{tr10} and \eqref{tr0} allow us to conclude that

\begin{equation}
 \sum_{y\in \Z}\max_{x\in I_0}\left(P_x \exp\left(-\frac{\gb^2}{2}G(S)\right)\ind_{\{S_n\in I_{y}\}}\right)^{\theta}\le 3\gep
\end{equation}
So that with a right choice for $\gep$, \eqref{aaaaargh} implies

\begin{equation}
Q W_N^{\theta}\le \exp(-m).
\end{equation}
Then \eqref{momentfrac} allows us to conclude that $p(\gb)\le -1/n$.

\end{proof}

\bigskip

\begin{proof}[Proof for general environment]
The case of general environment does not differ very much from the Gaussian case, but one has to a different approach for the change of measure in \eqref{hhold}. In this proof, we will largely refer to what has been done in the Gaussian case, whose proof should be read first.
\medskip

Let $K$ be a large constant.
One defines the function $f_K$ on $\R$ as to be
\begin{equation*}
 f_K(x)=-K\ind_{\{x>\exp(K^2)\}}.
\end{equation*}
Recall the definitions \eqref{blocdef} and \eqref{vdef}, and define $g_Y$ function of the environment as

\begin{equation*}
 g_Y(\go)=\exp\left(\sum_{k=1}^m f_K\left(\sum_{(i,z),(j,z')\in B_k} V_{(i,z),(j,z')}\go_{i,z}\go_{j,z'}\right)\right).
\end{equation*}
Multiplying by $g_Y$ penalizes by a factor $\exp(-K)$ the environment for which there is to much correlation in one block. This is a way of producing negative correlation in the environment.
For the rest of the proof we use the notation
\begin{equation}
 U_k:=\sum_{(i,z),(j,z')\in B_k} V_{(i,z),(j,z')}\go_{i,z}\go_{j,z'}
\end{equation}

We do a computation similar to \eqref{hhold} to get

\begin{equation}\label{hhold2}
 Q\left[\check W_{(y_1,\dots,y_m)}^{\theta}\right]\le \left(Q\left[ g_Y(\go)^{-\frac{\theta}{1-\theta}}\right]\right)^{1-\theta}\left( Q\left[ g_Y(\go)\check W_{(y_1,\dots,y_n)}\right]\right)^{\theta}.
\end{equation}
The block structure of $g_Y$ allows to express the first term as a power of $m$.

\begin{equation}
Q\left[ g_Y(\go)^{-\frac{\theta}{1-\theta}}\right]=\left(Q\left[\exp\left(-\frac{\theta}{1-\theta}f_K\left(U_1\right)\right)\right]\right)^m.
\end{equation}
Equation \eqref{hsnorm} says that
\begin{equation}
 \var_Q\left(U_1\right)\le 1.
\end{equation}
So that 
\begin{equation}
 P\left\{U_1\ge \exp(K^2)\right\}\le \exp(- 2 K^2),
\end{equation}
and hence
\begin{multline}\label{fsterm}
Q\left[\exp\left(-\frac{\theta}{1-\theta}f_K\left(U_1\right)\right)\right]\\
\le 1+\exp\left(-2K^2+\frac{\theta}{1-\theta}K\right)\le 2,
\end{multline}
if $K$ is large enough.
We are left with estimating the second term
\begin{equation}\label{secterm}
  Q\left[g_Y(\go)\check W_{(y_1,\dots,y_n)}\right]=P Q g_Y(\go)\exp\left(\sum_{i=1}^{nm}[\gb \go_{i,S_i}-\gl(\gb)] \right)\ind_{\{S_{kn}\in I_{y_k},\forall k=1\dots m\}}.
\end{equation}
For a fixed trajectory of the random walk $S$, we consider $\bar Q_S$ the modified measure on the environment with density

\begin{equation}
\frac{\dd \bar Q_S}{\dd Q}:=\exp\left(\sum_{i=1}^{nm}[\gb \go_{i,S_i}-\gl(\gb)] \right).
\end{equation}
Under this measure

\begin{equation}
 \bar Q_S \go_{i,z}=\begin{cases}
  0 & \text{ if } z\neq S_i
   \\
 Q \go e^{\gb\go_{0,1}-\gl(\gb)}:=m(\gb) & \text{ if } z=S_i.
\end{cases}
\end{equation}
As $\go_{1,0}$ has zero-mean and unit variance under $Q$, \eqref{expm} implies $m(\gb)=\gb+o(\gb)$ around zero and that $\var_{\bar Q _S} \go_{i,z}\le 2$ for all $(i,z)$ if $\gb$ is small enough. Moreover $\bar Q_S$ is a product measure, i.e.\ the $\go_{i,z}$ are independent variables under $Q_S$.
With this notation \eqref{secterm} becomes

\begin{equation}
 P \bar Q_S \left[g_Y(\go)\right]\ind_{\{S_{kn}\in I_{y_k},\forall k=1,\dots,m \}}.
\end{equation}
As in the Gaussian case, one wants to bound this by a product using the block structure. Similarly to \eqref{rrrr}, we use translation invariance to get the following upper bound

\begin{equation}
 \prod_{k=1}^m \max_{x\in I_0} P_x \bar Q_S \exp\left(f_K\left(U_1\right)\right)\ind_{\{S_n\in I_{y_k-y_{k-1}}\}}.
\end{equation}
Using this in \eqref{hhold2} with the bound \eqref{fsterm} we get the inequality

\begin{equation}
Q W_N^{\theta}\le 2^{m(1-\theta)}\!\! \left(\sum_{y\in \Z^2}\!\!  \left[\max_{x\in I_0} P_x \bar Q_S \exp\left(f_K\left(U_1\right)\right)\ind_{\{S_n\in I_{y}\}}\right]^{\theta}\right)^m\!\!\!\!.
\end{equation}
Therefore to prove exponential decay of $Q W_N^{\theta}$, it is sufficient to show that 

\begin{equation}
\sum_{y\in \Z^2} \left[\max_{x\in I_0} P_x \bar Q_S \exp\left(f_K\left(U_1\right)\right)\ind_{\{S_n\in I_{y}\}}\right]^{\theta}
\end{equation}
is small. As seen in the Gaussian case ( cf.\ \eqref{tr0},\eqref{tr10}), the contribution of $y$ far from zero can be controlled and therefore it is sufficient for our purpose to check
\begin{equation}
 \max_{x\in I_0} P_x \bar Q_S \exp\left(f_K\left(U_1\right)\right)\le \gd,
\end{equation}
for some small $\gd$. Similarly to \eqref{tr11}, we force the walk to stay in the zone where the environment is modified by writing
\begin{multline}\label{foor}
\max_{i\in I_0} P_x \bar Q_S \exp\left(f_K\left(U_1\right)\right)\le 
P\{ \max_{i\in[0,n]}|S_i|\ge (C_6-1)\sqrt{n}\}\\
+\max_{x\in I_0} P_x \bar Q_S \exp\left(f_K\left(U_1\right)\right)\ind_{\{|S_n-S_0|\le (C_6-1)\sqrt{n}\}}.
\end{multline}
The first term is smaller than $\gd/6$ if $C_6$ is large enough. To control the second term, we will find an upper bound for

\begin{equation}
 P_x \bar Q_S \exp\left(f_K\left(U_1\right)\right)\ind_{\{\max_{i\in[0,n]}|S_i-S_0|\le (C_6-1)\sqrt{n}\}},
\end{equation}
which is uniform in $x\in I_0$.

What we do is the following: we show that for most trajectories $S$ the term in $f_K$ has a large mean and a small variance with respect to $Q_S$ so that $f_K(\ \dots \ )=-K$ with large $\bar Q_S$ probability. The rest will be easy to control as the term in the expectation is at most one.

The expectation of $U_1$ under $\bar Q_S$ is equal to
\begin{equation}
 m(\gb)^2\sum_{1\le i,j\le n} V_{(i,S_i),(j,S_j)}.
\end{equation}
When the walk stays in the block $B_1$ we have (using definition \eqref{xdef})
\begin{equation}\label{xvij}
 \sum_{1\le i,j\le n} V_{(i,S_i),(j,S_j)}=\frac{1}{100C_6C_7}X.
\end{equation}
The distribution of $X$ under $P_x$ is the same for all $x \in I_0$. It has been shown earlier (cf.\ \eqref{xest} and \eqref{dest}), that if $C_7$ is chosen large enough, 
\begin{equation}
 P\left\{ \frac{m(\gb)^2}{100C_6C_7} X\le \frac{\sqrt{\log n}}{200 C_6C_7}\right\}\le \frac{\gd}{6}.
\end{equation}
As $m(\gb)\ge \gb/2$ if $\gb$ is small, if $C_5$ is large enough (recall $n\ge \exp(C_5/\gb^4)$), this together with \eqref{xvij} gives.
\begin{equation}\label{del2}
 P_x\bigg\{ m(\gb)^2 \bar Q_S\left(U_1\right)\le 2\exp(K^2);
 \max_{i\in[0,n]}|S_i-S_0|\le (C_6-1)\sqrt{n}\bigg\}\le \frac{\gd}{6}.
\end{equation}
To bound the variance of $U_1$ under $\bar Q_S$, we decompose the sum

\begin{multline}
 U_1=\sum_{(i,z),(j,z')\in B_1} V_{(i,z),(j,z')} \go_{i,z}\go_{j,z}=m(\gb)^2\sum_{1\le i,j\le n} V_{(i,S_i),(j,S_j)}\\
+
2m(\gb)\sumtwo{1\le i\le n}{(j,z')\in B_1} V_{(i,S_i),(j,z')}(\go_{j,z'}-m(\gb)\ind_{\{z'=S_j\}})\\
+\sum_{(i,z),(j,z')\in B_1} V_{(i,z),(j,z')}(\go_{i,z}-m(\gb)\ind_{\{z=S_i\}})(\go_{j,z}-m(\gb)\ind_{\{z'=S_j\}}).
\end{multline}
And hence (using the fact that $(x+y)^2\le 2x^2+2y^2$). 
\begin{equation}\label{varcontrol}
 \var_{\bar Q_S} U_1\le 16 m(\gb)^2\sum_{(j,z')\in B_1}\left(\sum_{1\le i\le n}V_{(i,S_i),(j,z')}\right)^2+8\!\!\!\! \sum_{((i,z),(j,z')\in B_1}\!\!\!\! V_{(i,z),(j,z')}^2,
\end{equation}
where we used that $\var_{\bar Q_S} \go_{i,z}\le 2$ (which is true for $\gb$ small enough). The last term is less than $8$ thanks to \eqref{hsnorm}, so that we just have to control the first one.
Independently of the choice of $(j,z')$ we have the bound

\begin{equation}\label{trivbo}
\sum_{1\le i\le n}V_{(i,S_i),(j,z')}\le \frac{\sqrt{\log n}}{C_6 C_7 n}.
\end{equation}
Moreover it is also easy to check that

\begin{equation}
\sum_{(j,z')\in B_1}\sum_{1\le i\le n}V_{(i,S_i),(j,z')}\le \frac{C_7n}{C_6\sqrt{\log n}},
\end{equation}
(these two bounds follow from the definition of $V_{(i,z),(j,z')}$:\ \eqref{vdef}).
Therefore
\begin{equation}
 \sum_{(j,z')\in B_1}\left(\sum_{1\le i\le n}V_{(i,S_i),(j,z')}\right)^2\le \left[\sum_{(j,z')\in B_1}\sum_{1\le i\le n}V_{(i,S_i),(j,z')}\right]\max_{(j,z)\in B_1}\sum_{1\le i\le n}V_{(i,S_i),(j,z')}\le 1.
\end{equation}
Injecting this into \eqref{varcontrol} guaranties that for $\gb$ small enough

\begin{equation}
 \var_{\bar Q_S} U_1 \le 10.
\end{equation}
With Chebyshev inequality, if $K$ has been chosen large enough and 
\begin{equation}
 \bar Q_S U_1\ge 2\exp(K^2),
\end{equation}
we have

\begin{equation}\label{del3}
 \bar Q_S  \left\{ U_1 \le \exp (K^2)\right\}\le \gd/6.
\end{equation}
Hence combining  \eqref{del3} with \eqref{del2} gives
\begin{equation}
 P_x\bar Q_S  \left\{ U_1 \le \exp (K^2);\ \max_{i\in[0,n]}|S_i-S_0|\le (C_6-1)\sqrt{n}\right\}\le \gd/3.
\end{equation}
We use this in \eqref{foor} to get 

\begin{equation}
  \max_{x\in I_0} P_x \bar Q_S \exp\left(f_K\left(U_1\right)\right)\le \frac{\gd}{2}+e^{-K}.
\end{equation}
So that our result is proved provided that $K$ has been chosen large enough.




\end{proof}

\section{Proof of the lower bound in Theorem \ref{PASGAUSSI}}\label{lb11}

In this section we prove the lower bound for the free-energy in dimension $1$ in arbitrary environment.
To do so we apply the second moment method to some quantity related to the partition function, and combine it
with a percolation argument. The idea of the proof was inspired by a study of a polymer model on hierarchical lattice \cite{cf:LM} where this type of coarse-graining appears naturally.
\medskip
\begin{proposition}\label{th:prop}
There exists a constant $C$ such that for all $\gb\le 1$ we have
\begin{equation}
p(\gb)\ge -C\gb^4 ((\log \gb)^2+1).
\end{equation}
\end{proposition}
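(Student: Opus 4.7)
The plan is to combine a restricted second moment computation on an intermediate scale $n = n(\gb)$ with a directed percolation argument on a coarse-grained space-time lattice. Throughout, one should think of the problem as looking for a deterministic polynomial decay rate for the probability that the normalized partition function $W_n$ is macroscopic on a mesoscopic scale, and then stitching such mesoscopic successes together by percolation.

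First, I would fix the scale $n := \lceil C_1\gb^{-4}(1+|\log \gb|^2)\rceil$ and partition the space–time lattice into blocks $B_{k,j} := \{kn,\dots,(k+1)n\}\times[j\sqrt n,(j+1)\sqrt n)\cap(\N\times\Z)$. On each such block, I would introduce a restricted partition function $W_n^{(k,j)}$ summing only over trajectories that start in $B_{k,j}$ at time $kn$, end in some block $B_{k+1,j'}$ with $|j'-j|\le C_2$ at time $(k+1)n$, and stay inside an enlarged $\sqrt n$-window throughout, so that different blocks only depend on essentially disjoint portions of the environment (up to harmless boundary corrections addressed in the next paragraph). A direct second moment computation then yields
\[
Q\bigl[(W_n^{(k,j)})^2\bigr] \,\le\, P^{\otimes 2}\exp\left(\gamma(\gb)\sum_{i=1}^n \ind_{\{S_i^{(1)}=S_i^{(2)}\}}\right),
\]
with $\gamma(\gb)=\lambda(2\gb)-2\lambda(\gb)\sim\gb^2$. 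Using $P^{\otimes 2}(S_i^{(1)}=S_i^{(2)})\asymp i^{-1/2}$ in dimension one, the exponent is controlled by $\gamma(\gb)\sqrt n\lesssim 1$ for our choice of $n$ (the log factor giving the necessary margin), so that the second moment is bounded by a universal constant $M$.

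The next step is a Paley–Zygmund argument: since $Q W_n^{(k,j)}$ and $Q(W_n^{(k,j)})^2$ are bounded away from $0$ and $\infty$ respectively, there exists $\kappa>0$ and $p_0$ arbitrarily close to $1$ (by increasing $C_1$) such that $Q(W_n^{(k,j)}\ge \kappa)\ge p_0$. I would then declare the block $B_{k,j}$ to be \emph{good} on this event. By construction the events $\{B_{k,j}\text{ is good}\}$ depend on finitely-range-dependent pieces of the environment, so standard stochastic domination arguments (in the spirit of Liggett–Schonmann–Stacey) let one compare the field of good blocks to supercritical directed site percolation on $\N\times\Z$ with arrows pointing from $(k,j)$ to $(k+1,j')$ for $|j'-j|\le C_2$. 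Choosing $p_0$ above the percolation threshold, with positive probability there is an infinite directed path of good blocks. Restricting $Z_{mn}$ to trajectories following such a path gives $W_{mn}\ge \kappa^m \exp(\gb \sum \go - m n \gb^2/2 + o(mn))$ along a suitable choice of crossing path, and a CLT/concentration argument for the sum of the environmental charges along one such fixed path shows that the stochastic correction is $o(mn)$ a.s., so that $p(\gb)\ge (\log\kappa)/n\ge -C/n\ge -C'\gb^4(1+(\log\gb)^2)$.

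The main obstacle is making the block events rigorously independent (or at worst finitely-range-dependent) so that the comparison with supercritical Bernoulli percolation is clean: the natural restricted partition functions do share the charges along block boundaries, and the walk's endpoint in block $B_{k,j}$ must match the starting point admitted by block $B_{k+1,j'}$. I would handle this by enforcing the walk to enter each block through a fixed sub-window of size $\sqrt n$ centered in the block and by doubling the transverse block width so that any block only uses environment in its interior; this costs only a bounded multiplicative factor in the lower bound on $W_n^{(k,j)}$ and in the second moment, while making the goodness events genuinely independent across disjoint blocks. Once this independence is in place, the percolation step is routine and yields the claimed bound.
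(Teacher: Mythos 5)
Your overall strategy --- a restricted second-moment estimate on a mesoscopic scale $n=n(\gb)$ coupled to a directed-percolation argument on the coarse-grained lattice of cells --- is precisely the one used in the paper (the paper restricts to corner-to-corner paths $S_n=\sqrt n$, $0<S_i<\sqrt n$ to obtain genuinely independent cell variables, which is cleaner than invoking Liggett--Schonmann--Stacey, but that is a cosmetic difference). However, there is a concrete error in the choice of scale that makes the key second-moment step fail.

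You set $n\asymp\gb^{-4}(1+|\log\gb|^2)$ and then assert $\gamma(\gb)\sqrt n\lesssim 1$ ``with the log factor giving the necessary margin.'' The inequality goes the wrong way: $\gamma(\gb)\sim\gb^2$ and $\sqrt n\sim\gb^{-2}|\log\gb|$, so $\gamma(\gb)\sqrt n\sim|\log\gb|\to\infty$, and the exponential moment $P^{\otimes2}\exp\bigl(\gamma(\gb)\sum_{i\le n}\ind_{\{S_i^{(1)}=S_i^{(2)}\}}\bigr)$ diverges. In fact the situation is even tighter than you suggest: because of the polynomial factor coming from conditioning the two walks to cross the box, the relevant tail of the overlap is Gaussian only for $k\gtrsim\sqrt{n\log n}$, so the boundedness condition is really $\gb^2\sqrt{n\log n}\lesssim 1$, i.e.\ $n\lesssim\gb^{-4}/|\log\gb|$. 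The log must sit in the denominator of $n$, not the numerator, and with a single power. The paper takes exactly $n\asymp\gb^{-4}/|\log\gb|$ for this reason.

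A second, related omission: the threshold $\kappa$ in your Paley--Zygmund step cannot be taken independent of $n$, because $Q\bigl[W_n^{(k,j)}\bigr]$ is not bounded away from zero --- by the local CLT it is of order $n^{-3/2}$. Declaring a cell good therefore means $W_n^{(k,j)}\ge\frac12\,Q[W_n^{(k,j)}]\asymp n^{-3/2}$, so $\log\kappa\asymp-\log n$, and the final bound is $p(\gb)\ge-c\,(\log n)/n$ rather than $-c/n$. It is precisely this $\log n$ factor, combined with the correct $n\asymp\gb^{-4}/|\log\gb|$, that produces the stated $-c\gb^4|\log\gb|^2$; with your $n$ the arithmetic does not close. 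Fixing both the scale and the $n$-dependent threshold repairs the argument and brings it in line with the paper's proof.
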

\medskip
We use two technical lemmas to prove the result. The first is just a statement about scaling of the random walk, the second is more specific to our problem.

\medskip
\begin{lemma}\label{th:lem1}
There exists an a constant $c_{RW}$ such that for large even squared integers $n$,
\begin{equation}
P\{S_n=\sqrt{n},0<S_i<\sqrt{n} \text{ for } 0<i<n \}= c_{RW} n^{-3/2}+o(n^{-3/2}).
\end{equation}

\end{lemma}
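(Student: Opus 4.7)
The strategy is to reduce the event to a killed random walk on the strip $\{0,\ldots,L\}$ with $L := \sqrt{n}$ and to exploit the explicit diagonalization of the killed transition kernel. First, I would observe that since $S_0 = 0$ and $S_n = \sqrt{n}$ lie on the boundary of $[0,\sqrt{n}]$, any trajectory realizing the event must satisfy $S_1 = 1$ and $S_{n-1} = \sqrt{n}-1$. Extracting the factor $1/4$ coming from these two forced steps, the target probability equals
\begin{equation*}
\tfrac{1}{4}\,p_{n-2}(1, L-1), \qquad p_m(x,y) \;:=\; P\bigl(T_m = y,\; T_i\in\{1,\ldots,L-1\}\ \forall\, 0\le i\le m \,\big|\, T_0 = x\bigr),
\end{equation*}
where $T$ denotes the simple random walk on $\{0,\ldots,L\}$ killed upon hitting $\{0,L\}$.

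The key input is the explicit spectral decomposition of the killed transition matrix on $\{1,\ldots,L-1\}$: it has eigenvalues $\mu_k = \cos(k\pi/L)$ and orthonormal eigenvectors $\phi_k(x) = \sqrt{2/L}\,\sin(k\pi x/L)$ for $k=1,\ldots,L-1$. Combined with the elementary identity $\sin(k\pi(L-1)/L) = (-1)^{k+1}\sin(k\pi/L)$, this yields
\begin{equation*}
p_{n-2}(1,L-1) \;=\; \frac{2}{L}\sum_{k=1}^{L-1}(-1)^{k+1}(\cos(k\pi/L))^{n-2}\,\sin^2(k\pi/L).
\end{equation*}

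The main work is the asymptotic analysis with $L = \sqrt{n} \to \infty$. For each fixed $k$, the standard expansions $(\cos(k\pi/L))^{n-2} \to e^{-k^2\pi^2/2}$ and $\sin^2(k\pi/L) \sim k^2\pi^2/n$ show that the $k$-th summand behaves like $(-1)^{k+1}(2\pi^2 k^2/n^{3/2})\,e^{-k^2\pi^2/2}$. Because $n$ is an even squared integer, both $L$ and $n-2$ are even, and the involution $k\mapsto L-k$ leaves each summand invariant (one uses $\cos((L-k)\pi/L) = -\cos(k\pi/L)$, $\sin((L-k)\pi/L) = \sin(k\pi/L)$, together with the parities of $L$ and $n-2$). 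Hence one may restrict to $1\le k\le L/2 - 1$ at the price of a factor $2$, and the middle term $k=L/2$ vanishes since $\cos(\pi/2)=0$. In that reduced range the uniform bound $(\cos(k\pi/L))^{n-2}\le e^{-2k^2}$ (which follows from $\cos(\theta)\le 1 - 4\theta^2/\pi^2$ on $[0,\pi/2]$) dominates the tail and justifies interchanging limit and summation, yielding
\begin{equation*}
p_{n-2}(1,L-1) \;=\; \frac{4\pi^2}{n^{3/2}}\sum_{k=1}^{\infty}(-1)^{k+1}k^2 e^{-k^2\pi^2/2} \,+\, o(n^{-3/2}).
\end{equation*}
Multiplying by $1/4$ proves the lemma with $c_{RW} := \pi^2\sum_{k\ge 1}(-1)^{k+1}k^2 e^{-k^2\pi^2/2}$, a positive constant since this alternating series is overwhelmingly dominated by its leading term $\pi^2 e^{-\pi^2/2}$.

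The only slightly delicate point is ensuring that the interchange of limit and summation is valid for the intermediate modes (those with $k$ neither fixed nor comparable to $L$). This is however routine thanks to the exponential-in-$k^2$ decay of the eigenvalue factor and the symmetry $k\leftrightarrow L-k$ that deals with the high modes; no essential difficulty arises.
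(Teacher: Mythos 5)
Your proof is correct, but it takes a genuinely different route from the one in the paper. The paper's argument is probabilistic: after applying the strong Markov property at the first hitting time of $\sqrt{n}/2$ and the reflection principle, it rewrites the target probability as $P\{\max_{k\le n} S_k \in [\sqrt{n}/2,\sqrt{n})\,|\,T_0=n\}\,P\{T_0=n\}$, then invokes the conditioned invariance principle (the walk conditioned on $T_0=n$, rescaled, converges in distribution to the normalized Brownian excursion) together with the classical local limit $P(T_0=n)=\sqrt{2/\pi}\,n^{-3/2}+o(n^{-3/2})$, producing the constant $c_{RW}=\sqrt{2/\pi}\,\bbP[\max_t \mathbf{e}_t\in(1/2,1)]$. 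You instead diagonalize the Dirichlet-killed walk on the strip $\{1,\dots,L-1\}$ explicitly, obtaining $c_{RW}=\pi^2\sum_{k\ge1}(-1)^{k+1}k^2 e^{-k^2\pi^2/2}$; the equality of the two expressions is the classical theta-function identity for the law of the maximum of the normalized excursion, so they are indeed the same constant. Your route is self-contained and elementary (explicit eigenfunction expansion, elementary trigonometric bounds, dominated convergence — no conditioned-walk Donsker theorem needed), whereas the paper's is shorter and more conceptual but imports a non-trivial functional limit theorem. I verified the details of your computation — the reduction to $(1/4)p_{n-2}(1,L-1)$, the sine-product identity giving the $(-1)^{k+1}$, the $k\leftrightarrow L-k$ symmetry (valid because $L$ and $n-2$ are both even under the hypothesis), the uniform bound $\cos\theta\le 1-4\theta^2/\pi^2$ on $[0,\pi/2]$ giving the $e^{-2k^2}$ domination for $n$ large — and they all check out.
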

\medskip

\begin{lemma}\label{th:lem2}
For any $\gep>0$ we can find a constant $c_{\gep}$ and $\gb_0$ such that for all $\gb\le \gb_{0}$,
for every even squared integer $n\le c_{\gep}/(\gb^4|\log \gb|)$ we have
\begin{equation}
 \var_Q\left[P \left(\exp\left ( \sum_{i=1}^{n-1}\left(\gb\go_{i,S_i}-\gl(\gb)\right)\right)\ \bigg| \ S_n=\sqrt{n}, 0<S_i<\sqrt{n} \text{ for } 0<i <n \right) \right]<\gep.
\end{equation}
\end{lemma}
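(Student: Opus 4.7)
I would argue by a second-moment computation, reducing everything to estimating the intersection local time of two random walks conditioned to the tube $(0,\sqrt{n})$. Set $E := \{S_n = \sqrt{n},\; 0 < S_i < \sqrt{n} \text{ for } 0<i<n\}$ and write $P^*(\,\cdot\,) := P(\,\cdot\mid E)$. Denote by $Z^*$ the conditioned partition function appearing in the statement of the lemma, i.e.\
$$Z^* := P^*\!\left[\exp\!\left(\sum_{i=1}^{n-1}\!\left(\gb\go_{i,S_i}-\gl(\gb)\right)\right)\right].$$
Because $Q\exp(\gb\go_{i,z}-\gl(\gb))=1$ and, for every fixed path $S$, the variables $\{\go_{i,S_i}\}_{i=1}^{n-1}$ are independent (the first coordinate $i$ is distinct at each step), Fubini yields $Q\,Z^* = 1$, so $\var_Q[Z^*] = Q[(Z^*)^{2}] - 1$. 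A standard replica computation, based once more on the independence of the environment at distinct sites, then gives
$$Q\!\left[(Z^*)^{2}\right] \;=\; P^{*,\otimes 2}\!\left[\exp\!\left(\gga(\gb)\,L_n\right)\right], \qquad L_n := \sum_{i=1}^{n-1}\ind_{\{S^{(1)}_i=S^{(2)}_i\}},$$
with $\gga(\gb) := \gl(2\gb)-2\gl(\gb)$ and $S^{(1)}, S^{(2)}$ two independent walks under $P^*$. Because $\go$ has unit variance, $\gga(\gb)\le 2\gb^2$ for $\gb$ small, so the task is reduced to controlling an exponential moment of $L_n$.

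The key step is the factorial-moment bound
\begin{equation*}
P^{*,\otimes 2}\!\left[L_n(L_n-1)\cdots(L_n-k+1)\right]\;\le\;\left(C\sqrt{n}\right)^{k}, \qquad k\ge 1, \tag{$\star$}
\end{equation*}
for some constant $C$ independent of $n$ and $k$. Granting $(\star)$, one gets $P^{*,\otimes 2}[L_n^k]\le B_k(C\sqrt{n})^k$ with $B_k$ the $k$-th Bell number, and Taylor expansion gives
$$Q\!\left[(Z^*)^{2}\right]\;\le\;\sum_{k\ge 0}\frac{\gga(\gb)^{k}}{k!}\,B_k\,(C\sqrt{n})^{k},$$
a convergent series whose value tends to $1$ as $\gb^2\sqrt{n}\to 0$. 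Under the hypothesis $n\le c_\gep/(\gb^4|\log\gb|)$ this makes $\var_Q[Z^*]<\gep$ provided $c_\gep$ is chosen small enough; in fact the weaker requirement $n\le c_\gep/\gb^4$ would already suffice, so the $|\log\gb|$ factor in the statement gives us ample room.

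To prove $(\star)$ I would first establish the one-point estimate
$$P^*(S_i=y)\;\le\;\frac{C_0}{\sqrt{i\wedge(n-i)}},\qquad 1\le i\le n-1,\ 0<y<\sqrt{n},$$
for some absolute constant $C_0$. Writing, via the Markov property,
$$P(S_i=y,E) \;=\; P\bigl(\text{walk goes from }0\text{ to }y\text{ in }i\text{ steps within }(0,\sqrt{n})\bigr)\,\cdot\,P\bigl(\text{walk goes from }y\text{ to }\sqrt{n}\text{ in }n-i\text{ steps within }(0,\sqrt{n})\bigr),$$
each factor is handled by reflection-principle / local-limit arguments in the spirit of Lemma~\ref{th:lem1}, while the denominator satisfies $P(E)\sim c_{RW}n^{-3/2}$. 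The one-point estimate immediately gives $P^{*,\otimes 2}(S^{(1)}_i=S^{(2)}_i)=\sum_y P^*(S_i=y)^2 \le C_0/\sqrt{i\wedge(n-i)}$, whence $P^{*,\otimes 2}[L_n]\le C\sqrt{n}$ by summation over $i$. The general bound $(\star)$ follows by induction on $k$: by the strong Markov property, conditionally on the first meeting of the two walks at time $i$ and common height $y$, their restrictions to $[i,n]$ are two independent copies of a walk conditioned to travel from $y$ to $\sqrt{n}$ in $n-i$ steps inside $(0,\sqrt{n})$, to which the very same one-point estimate applies uniformly in $(i,y)$; the expected number of subsequent meetings is then at most $C\sqrt{n-i}\le C\sqrt{n}$, and iterating gives $(\star)$.

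The main technical obstacle is precisely the one-point estimate together with its uniformity over the starting height $y$ and the time-remaining parameter $n-i$: the two-sided conditioning by the strip $(0,\sqrt{n})$ does not follow directly from Lemma~\ref{th:lem1} and requires a reflection argument together with a careful local CLT away from the endpoints. Once this ingredient is secured, the passage from the first moment to all factorial moments of $L_n$, and then to the variance of $Z^*$, is entirely routine.
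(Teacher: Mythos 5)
Your reduction to the second moment --- $\var_Q[Z^*] = P^{*,\otimes 2}[\exp(\gga(\gb)L_n)]-1$ with $\gga(\gb) = \gl(2\gb)-2\gl(\gb)$ and $L_n$ the intersection local time of two conditioned replicas --- is exactly the paper's starting point, but from there you take a genuinely different route. The paper does \emph{not} work with the conditioned law: it introduces the renewal $\tau=\{i:S^{(1)}_i=S^{(2)}_i\}$, obtains the Gaussian tail $P^{\otimes 2}(|\tau\cap[1,n-1]|\ge k)\le e^{-k^2/(32n)}$ by a Laplace-transform computation on the \emph{unconditioned} return time, and then removes the conditioning by the crude bound $P^{\otimes 2}(\,\cdot\mid A_n)\le P^{\otimes 2}(\,\cdot\,)/P^{\otimes 2}(A_n)$, paying a factor $\sim n^3$. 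Absorbing that $n^3$ into the Gaussian tail shifts the effective threshold to $k_1\sim\sqrt{n\log n}$, and this $\sqrt{\log n}$ is precisely the source of the $|\log\gb|$ in the statement. Your alternative --- control the intersection moments under the conditioned law directly via a uniform one-point density estimate for bridges in the strip, then iterate through the Markov property of the Doob $h$-transform --- sidesteps the crude division altogether and would, if completed, yield the cleaner range $n\le c_\gep/\gb^4$, as you observe.

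That said, the argument as you present it has a genuine gap, which you yourself flag: the uniform one-point estimate for the bridge laws is not established. What you need is a bound of the form $C/\sqrt{(j-i)\wedge(n-j)}$ on the density at time $j$ of a walk started at height $y$ at time $i$, conditioned to reach $\sqrt n$ at time $n$ while staying strictly inside $(0,\sqrt n)$, \emph{uniformly} over the starting height $y\in(0,\sqrt n)$ and remaining length $n-i\in\{1,\dots,n\}$. Lemma~\ref{th:lem1} only identifies the $n^{-3/2}$ asymptotics of the total excursion mass; upgrading it to a pointwise bound uniform over this two-parameter family of bridges is a separate two-sided reflection plus local-CLT argument, and it is not a direct consequence of Lemma~\ref{th:lem1}. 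Until that lemma is proved, $(\star)$ and hence the whole proof remain incomplete. There is also a small bookkeeping slip: your induction controls the sum of joint intersection probabilities over \emph{increasing} $k$-tuples of times, call it $M_k$, whereas the factorial moment in $(\star)$ counts \emph{ordered} $k$-tuples and therefore equals $k!\,M_k$, not $M_k$. This does not break the conclusion --- expanding $\exp(\gga(\gb)L_n)=\prod_i\bigl(1+(e^{\gga(\gb)}-1)\ind_{\{S^{(1)}_i=S^{(2)}_i\}}\bigr)$ gives $\sum_{k\ge0}(e^{\gga(\gb)}-1)^kM_k$, which converges geometrically under your hypothesis --- but as written $(\star)$ is off by a factor $k!$.
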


\begin{proof}[Proof of Proposition \ref{th:prop} from Lemma \ref{th:lem1} and \ref{th:lem2}]
 
Let $n$ be some fixed integer and define

\begin{equation}
\bar W:= P \exp\left (\sum_{i=1}^{n-1}\left(\gb\go_{i,S_i}-\gl(\gb)\right)\right)\ind_{\{S_n=\sqrt{n}, 0<S_i<\sqrt{n} \text{ for } 0<i <n \}},
\end{equation}
which corresponds to the contribution to the partition function $W_n$ of paths with fixed end point $\sqrt{n}$ staying within a cell of width $\sqrt{n}$, with the specification the environment on the last site is not taken in to account. $\bar W$ depends only of the value of the environment $\go$ in this cell (see figure \ref{fig:restr}).

\begin{figure}[h]
\begin{center}
\leavevmode
\epsfysize =4.5 cm
\psfragscanon
\psfrag{O}[c]{O}
\psfrag{n1}[c]{$n$}
\psfrag{n2}[c]{$\sqrt{n}$}
\epsfbox{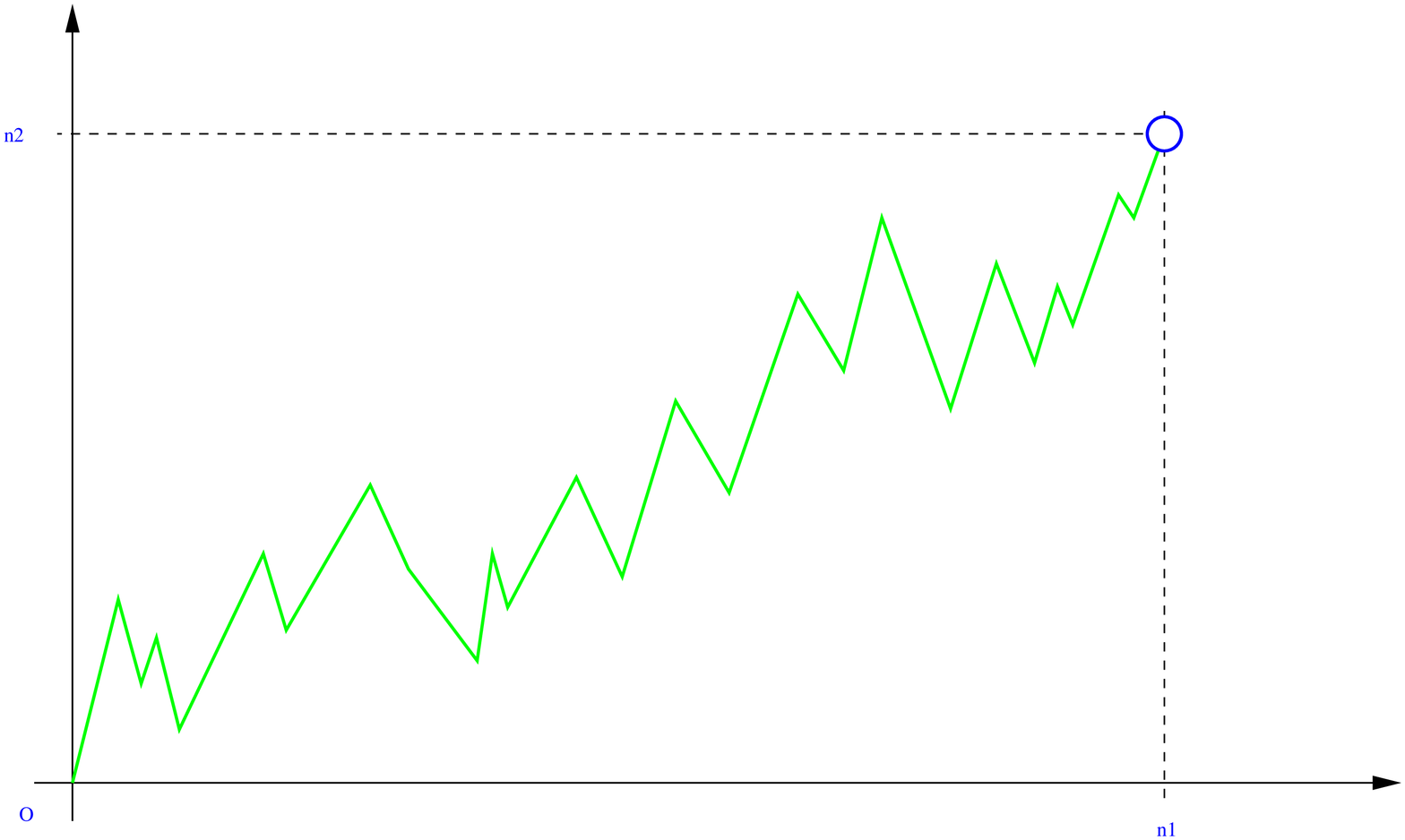}
\caption{\label{fig:restr} We consider a resticted partition function $\bar W$ by considering only paths going from one to the other corner of the cell, without going out. This restriction will give us the independence of random variable corresponding to different cells which will be crucial to make the proof works.}
\end{center}
\end{figure}
One also defines the following quantities for $(i,y)\in \N\times \Z$:

\begin{equation}\begin{split}
 \bar W_i^{(y,y+1)}&:=P_{\sqrt{n}y}\left[ e^{\sum_{k=1}^{n-1}\left[\gb \go_{in+k,S_k}-\gl(\gb)\right]}\ind_{\{S_n=\sqrt{(y+1)n}, 0<S_i-y \sqrt n<\sqrt{n} \text{ for } 0<i <n\}}\right],\\
 \bar W_i^{(y,y-1)}&:=P_{\sqrt{n}y}\left[\ e^{\sum_{k=1}^{n-1}\left[\gb \go_{in+k,S_k}-\gl(\gb)\right]}\ind_{\{S_n=(y-1)\sqrt{n}, -\sqrt{n}<S_i-y\sqrt{n}<0 \text{ for } 0<i <n\}}\right].
\end{split}\end{equation}
which are random variables that have the same law as $\bar W$. Moreover because of independence of the environment in different cells, one can see that
 \begin{equation*}
 \left(\bar W_i^{(y,y\pm 1)};\ (i,y)\in \N\times \Z \text{ such that } i-y \text { is even} \right),
\end{equation*}
is a family of independent variables.

Let $N=nm$ be a large integer. We define $\Omega=\Omega_N$ as the set of path
\begin{equation}
\Omega:=\{ S \ : \ \forall i\in [1,m],\ |S_{in}-S_{(i-1)n}|=\sqrt{n},\ \forall j\in [1,n-1],\  S_{(i-1)n+j}\in \left( S_{(i-1)n},S_{in}\right)\},
\end{equation}
where the interval $\left( S_{i(n-1)},S_{in}\right)$ is to be seen as $\left( S_{in},S_{i(n-1)}\right)$ if $S_{in}<S_{i(n-1)}$, and 
\begin{equation}
 \mathcal S:=\left\{s=(s_0,s_1,\dots,s_m)\in \Z^{m+1}\ :\  s_0=0 \text{ and } |s_i-s_{i-1}|=1, \ \forall i \in[1,m] \right\} .
\end{equation}
We use the trivial bound

\begin{equation}
W_N\ge P \left[\exp\big(\sum_{i=1}^{nm}(\gb\go_{i,S_i}-\gl(\gb))\big)\ind_{\{S\in\Omega\}}\right],
\end{equation}
to get that 

\begin{equation}\label{trtrtr}
 W_N\ge \sum_{s\in \mathcal S} \prod_{i=0}^{m-1} \bar W_{i}^{(s_{i},s_{i+1})}\exp\left(\gb\go_{(i+1)n,s_{i+1}\sqrt{n}}-\gl(\gb)\right).
\end{equation}
(the exponential term is due to the fact the $\bar W$ does not take into account to site in the top corner of each cell).

The idea is of the proof is to find a value of $n$ (depending on $\gb$) such that we are sure that for any value of $m$ we can find a path $s$ such that along the path the values of $(\bar W_{i}^{(s_{i},s_{i+1})})$ are not to low (i.e. close to the expectation of $\bar W$) and to do so, it seems natural to seek for a percolation argument.

Let $p_c$ be the critical exponent for directed percolation in dimension $1+1$ (for an account on directed percolation see \cite[Section 12.8]{cf:perc} and references therein). From Lemma \ref{th:lem2} and Chebyshev inequality, one can find a constant $C_8$ and $\gb_0$ such that for all $n\le\frac{C_8}{\gb^4|\log\gb|}$ and $\gb\le \gb_0$.
\begin{equation}
 Q \{\bar W \ge  Q \bar W /2\}\ge \frac{p_c+1}{2}. \label{eq:perco}
\end{equation}
We choose $n$ to be the biggest squared even integer that is less than $\frac{C_8}{\gb^4|\log\gb|}$.
(in particular have $n\ge \frac{C_8}{2\gb^4|\log\gb|}$ if $\gb$ small enough).

As shown in figure \ref{fig:wnnn}, we associate to our system the following directed percolation picture. For all $(i,y)\in \N\times \Z \text{ such that } i-y$ is even:
\begin{itemize}
 \item If $\bar W_i^{(y,y\pm 1)}\ge (1/2)Q \bar W$, we say that the edge linking the opposite corners of the corresponding cell is open.
 \item If $\bar W_i^{(y,y\pm 1)}< (1/2)Q \bar W$, we say that the same edge is closed.
\end{itemize}
Equation \eqref{eq:perco} and the fact the considered random variables are independent assures that with positive probability there exists an infinite directed path starting from zero.

\begin{figure}[h]
\begin{center}
\leavevmode
\epsfysize =6.5 cm
\psfragscanon
\psfrag{O}[c]{\tiny{O}}
\psfrag{n}[c]{\tiny{$n$}}
\psfrag{2n}[c]{\tiny{$2n$}}
\psfrag{3n}[c]{\tiny{$3n$}}
\psfrag{4n}[c]{\tiny{$4n$}}
\psfrag{5n}[c]{\tiny{$5n$}}
\psfrag{6n}[c]{\tiny{$6n$}}
\psfrag{7n}[c]{\tiny{$7n$}}
\psfrag{8n}[c]{\tiny{$8n$}}
\psfrag{rn}[c]{\tiny{$+\sqrt{n}$}}
\psfrag{r2n}[c]{\tiny{$+2\sqrt{n}$}}
\psfrag{r3n}[c]{\tiny{$+3\sqrt{n}$}}
\psfrag{r4n}[c]{\tiny{$+4\sqrt{n}$}}
\psfrag{m1n}[c]{\tiny{$-\sqrt{n}$}}
\psfrag{m2n}[c]{\tiny{$-2\sqrt n$}}
\psfrag{m3n}[c]{\tiny{$-3\sqrt n$}}
\psfrag{1}[c]{\tiny{$1$}}
\psfrag{2}[c]{\tiny{$2$}}
\psfrag{3}[c]{\tiny{$3$}}
\psfrag{4}[c]{\tiny{$4$}}
\psfrag{5}[c]{\tiny{$5$}}
\psfrag{6}[c]{\tiny{$6$}}
\psfrag{7}[c]{\tiny{$7$}}
\psfrag{8}[c]{\tiny{$8$}}
\epsfbox{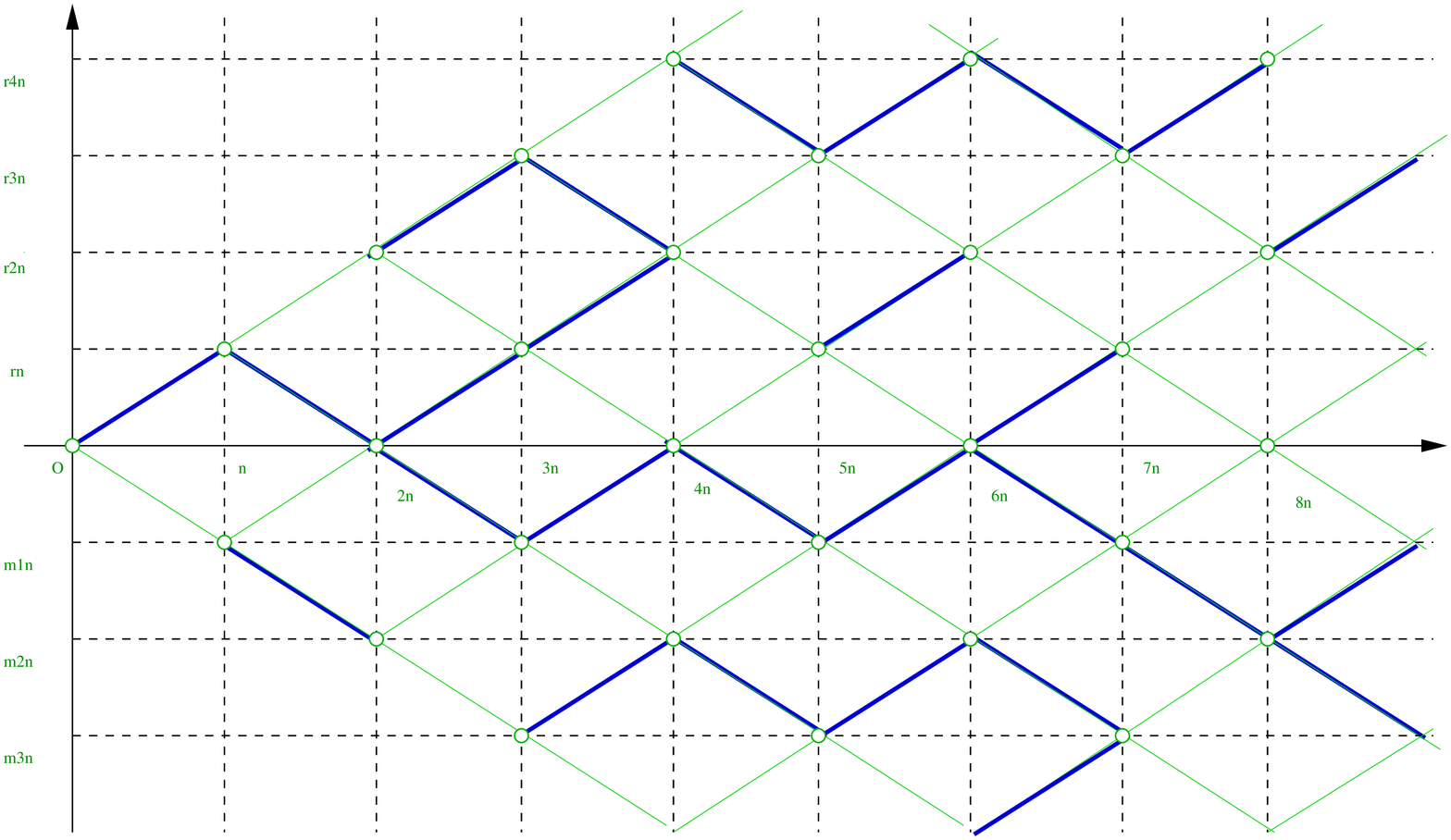}
\end{center}
\caption{\label{fig:wnnn} This figure illustrates the percolation argument used in the proof. To each cell is naturally associated a random variable $\bar W_i^{y,y\pm 1}$, and these  random variables are i.i.d. When $\bar W_i^{y,y\pm 1}\ge 1/2 Q \bar W$ we open the edge in the corresponding cell (thick edges on the picture). As this happens with a probability strictly superior to $p_c$, we have a positive probability to have an infinite path linking $0$ to infinity.}
\end{figure}

When there exists an infinite open path is linking zero to infinity exists, we can define the highest open path in an obvious way. Let $(s_i)_{i=1}^m$ denotes this highest path. If $m$ is large enough, by law of large numbers we have that with a probability close to one,
\begin{equation}
\sum_{i=1}^m \left[\gb\go_{ni,\sqrt{n}s_i}-\gl(\gb)\right] \ge -2m\gl(\gb). 
\end{equation}

Using this and and the percolation argument with \eqref{trtrtr} we finally get that with a positive probability which does not depend on $m$ we have

\begin{equation}
 W_{nm}\ge \left[(1/2)e^{-2\gl(\gb)}Q\bar W\right]^m.
\end{equation}
Taking the $\log$ and making $m$ tend to infinity this implies that

\begin{equation}
 p(\gb)\ge \frac{1}{n}\left[-2 \gl(\gb)-\log 2+ \log Q \bar W\right]\ge -\frac{c}{n}\log n.
\end{equation}
For some constant $c$, if $n$ is large enough (we used Lemma \ref{th:lem1} to get the last inequality.
The result follows by replacing $n$ by its value.




\end{proof}
\medskip

\begin{proof}[Proof of Lemma \ref{th:lem1}]

Let $n$ be square and even. $T_k$, $k\in \Z$ denote the first hitting time of $k$ by the random walk $S$ (when $k=0$ it denotes the return time to zero). 
We have
\begin{multline}
P\{S_n=\sqrt{n},0<S_i<\sqrt{n}, \text{ for all } 1<i<n\}\\
=\sum_{k=1}^{n-1} P\{T_{\sqrt{n}/2}=k,\ S_j>0 \text{ for all } j<n \text{ and } T_{\sqrt{n}}=n\}\\
= P\{T_{\sqrt{n}/2}<n,\ S_j<\sqrt{n} \text{ for all } j<n \text{ and } T_{0}=n)\},
\end{multline}
where the second equality is obtained with the strong Markov property used for $T=T_{\sqrt{n}/2}$, and the reflexion principle for the random walk.
The last line is equal to
\begin{equation}\label{eq:hop}
P\{\max_{k\in [0,n]} S_k\in [\sqrt{n}/2,\sqrt{n})|T_0=n\}P\{T_0=n\}.
\end{equation}
We use here a variant of Donsker's Theorem, for a proof see \cite[Theorem 2.6]{cf:K}.
\medskip

\begin{lemma}
The process
\begin{equation}
t\mapsto \left\{\frac {S_{\lceil nt \rceil }}{\sqrt{n}}\ \bigg| \ T_0=n\right\}, \quad t\in[0,1]
\end{equation}
converges in distribution to the normalized Brownian excursion in the space $D([0,1],\R)$.
\end{lemma}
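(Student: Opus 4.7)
The plan is to establish this invariance principle by combining a combinatorial transformation (a discrete analogue of Vervaat's construction) with the classical Donsker invariance principle for random walk bridges, plus a tightness estimate to upgrade finite-dimensional convergence to convergence in $D([0,1],\R)$. Since $S$ has $\pm 1$ increments and $T_0 = n$ is a first-return event (rather than a bridge event $S_n = 0$), one first needs to relate the two. Up to a shift of parity and the obvious $\pm$ symmetry, conditioning on $\{T_0 = n\}$ is equivalent to conditioning on $\{S_n = 0,\; S_j > 0 \text{ for all } 1 \le j \le n-1\}$, i.e.\ to the law of a positive bridge (a \emph{discrete excursion}).

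The finite-dimensional convergence will be handled as follows. First, for the unconditioned bridge $\{S_k/\sqrt{n} : S_n = 0\}$, Donsker's invariance principle gives convergence in $D([0,1],\R)$ to the standard Brownian bridge; this follows from local CLT estimates of the form $P(S_n = 0) \sim \sqrt{2/(\pi n)}$ together with tightness of the unconditioned walk. Second, I would use a discrete Vervaat-type transformation: given a bridge trajectory $(S_0,\dots,S_n)$, cyclically rotate it so that the new path starts at its minimum; the rotated path is then a nonnegative bridge. A counting argument (the cycle lemma) shows that this map pushes the uniform measure on bridges forward to (a multiple of) the uniform measure on excursions, exactly paralleling the continuous Vervaat theorem which says that the Brownian bridge cyclically shifted to its minimum is a normalized Brownian excursion. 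Combining the discrete Vervaat map with the bridge invariance principle yields finite-dimensional convergence of the conditioned process to the normalized Brownian excursion.

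For tightness in $D([0,1],\R)$, the standard criterion (Billingsley) requires a modulus-of-continuity estimate: there exist constants $c,\gga>0$ such that
\begin{equation*}
P\left( |S_{\lceil nt \rceil} - S_{\lceil ns \rceil}| \ge \lambda \sqrt{n} \;\big|\; T_0 = n \right) \le c \,\lambda^{-4} (t-s)^{2},
\end{equation*}
uniformly in $0\le s \le t \le 1$ and $\lambda >0$. To obtain this, I would remove the conditioning at the cost of a factor $P(T_0 = n)^{-1} = O(n^{3/2})$ by \cite[Ch.~III]{cf:Feller2}-type local limit estimates, then use fourth-moment bounds on random walk increments on each subinterval, and finally re-patch via the Markov property and the reflection principle to restore positivity. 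This is routine but somewhat delicate because the conditioning is singular.

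The main obstacle, and the reason one normally simply cites Kaigh's theorem rather than reproving it, is this tightness step under the excursion conditioning: one must carefully handle the boundary behavior near $t = 0$ and $t = 1$, where the conditioned process is pinned at $0$ and where the local limit theorems are least sharp. A clean way around this is to prove tightness only on $[\epsilon, 1-\epsilon]$ using the Markov property at times $\epsilon n$ and $(1-\epsilon)n$ (where the conditioned law is absolutely continuous with bounded density with respect to the unconditioned walk on scale $\sqrt{n}$), and then control the contributions near the endpoints by a separate ballot-type estimate showing that $\max_{k \le \epsilon n} S_k /\sqrt{n}$ and $\max_{k \ge (1-\epsilon)n} S_k/\sqrt{n}$ are small with high probability under the conditioning. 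This two-scale argument, together with the Vervaat-based finite-dimensional convergence, completes the proof.
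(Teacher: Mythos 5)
The paper does not actually prove this lemma: it simply invokes it with a reference to Kaigh, \cite[Theorem~2.6]{cf:K}, where the invariance principle for a walk conditioned on a late first return to zero is established directly via local limit theorem estimates and Markov-property decompositions. So there is no proof in the paper to compare yours against line by line; you are proposing to rebuild a cited black box.

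Your sketch (discrete Vervaat transform applied to a bridge invariance principle, plus a two-scale tightness argument) is a reasonable alternative route, but as written it has a genuine gap at the Vervaat step. For a $\pm 1$ bridge the minimum is typically attained \emph{several} times (the number of visits to the minimum is $O(1)$ but has a nondegenerate limiting distribution), so the cyclic rotation to the minimum is \emph{not} a bijection from bridges to excursions: the pushforward of the uniform bridge measure onto nonnegative paths is size-biased by the number of minima, and you must argue that this bias washes out in the scaling limit because it only depends on the microscopic behavior of the path near its bottom. There is also a secondary mismatch you need to resolve explicitly: the cyclic rotation produces a Dyck path (a nonnegative bridge, which may return to $0$ repeatedly), whereas conditioning on $\{T_0=n\}$ is a strict excursion; these differ by an affine shift of the interior and converge to the same limit, but that should be stated. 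Both points are fixable, and the two-scale tightness argument you outline is the right shape, but without them the argument as given does not close. Given that these are precisely the delicate points, the paper's choice to cite Kaigh is the more economical one; if you want a self-contained proof, the local-CLT/absolute-continuity approach of Kaigh avoids the Vervaat subtleties entirely.
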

\medskip
We also know that (see for example \cite[Proposition A.10]{cf:Book}) for $n$ even $P(T_0=n)=\sqrt{2/\pi}n^{-3/2}+o(n^{-3/2})$.
Therefore, from  \eqref{eq:hop} we have
\begin{multline}
P\{S_n=\sqrt{n},0<S_i<\sqrt{n}, \text{ for all } 1<i<n\}\\
= \sqrt{2/\pi}n^{-3/2}\bbP\left[\max_{t\in[0,1]} {\bf e}_t\in (1/2,1)\right]+o(n^{-3/2}).
\end{multline}
Where $\bf e$ denotes the  normalized Brownian excursion, and $\bbP$ its law.
\end{proof}
\medskip

\begin{proof}[Proof of Lemma \ref{th:lem2}]

Let $\gb$ be fixed and  small enough, and $n$ be some squared even integer which is less than $c_\gep/(\gb^4|\log \gb|)$. We will fix the value $c_{\gep}$ independently of $\gb$ later in the proof, and always consider that $\gb$ is sufficiently small.
By a direct computation the variance of

\begin{equation}
P\left[\exp\left( \sum_{i=1}^{n-1}[\gb\go_{i,S_i}-\gl(\gb)]\right)\bigg| \ S_n=\sqrt{n}, 0<S_i<\sqrt{n} \text{ for } 0<i <n \right] 
\end{equation}
is equal to

\begin{equation}\label{eq:abcd}
 P^{\otimes 2}\left[\exp \left(\sum_{i=1}^{n-1}\gga(\gb)\ind_{\{S^{(1)}_i=S^{(2)}_i\}}\right)\ \bigg| \ A_n\right]-1.
\end{equation}
where

\begin{equation}
 A_n=\left\{S_n^{(j)}=\sqrt{n}, 0<S^{(j)}_i<\sqrt{n} \text{ for } 0<i<n,\ j=1,2\right\},
\end{equation}
and $\gga(\gb)=\gl(2\gb)-2\gl(\gb)$ (recall that $\gl(\gb)=\log Q \exp(\gb\go_{(1,0)})$), and $S_n^{(j)}$, $j=1,2$ denotes two independent random walk with law denoted by $P^{\otimes 2}$.
From this it follows that if $n$ is small the result is quite straight--forward. We will therefore only be interested in the case of large $n$ (i.e.\ bounded away from zero by a large fixed constant).

We define $\tau=\left(\tau_k\right)_{k\ge 0}=\{S^{(1)}_i=S^{(2)}_i,i\ge 0\}$  the set where the walks meet (it can be written as an increasing sequence of integers). By the Markov property, the random variables $\tau_{k+1}-\tau_k$ are i.i.d.\ , we say that $\tau$ is a {\sl renewal sequence}.
\medskip

We want to bound the probability that the renewal sequence $\tau$ has too many returns before times $n-1$, in order to estimate \eqref{eq:abcd}. To do so, we make the usual computations with Laplace transform.

From \cite[p. 577]{cf:Feller2} , we know that
\begin{equation}\label{Lapl}
1- P^{\otimes 2} \exp(- x \tau_1)= \frac{1}{\sum_{n\in \N} \exp(-xn)P\{S_n^{(1)}=S_n^{(2)}\}}.
\end{equation}
Thanks to the the local central limit theorem for the simple random walk, we know that for large $n$
\begin{equation}
P\{S_n^{(1)}=S_n^{(2)}\}= \frac{1}{\sqrt{\pi n}}+o(n^{-1/2}). 
\end{equation}
So that we can get from \eqref{Lapl} that when $x$ is close to zero
\begin{equation}
 \log P^{\otimes 2} \exp(- x \tau_1)=  -\sqrt{x} +o(\sqrt{x}).
\end{equation}
We fix $x_0$ such that $\log P \exp(- x \tau_1)\le \sqrt x /2$ for all $x\le x_0$.
For any $k\le n$ we have

\begin{equation} \begin{split}
 P^{\otimes 2}\{|\tau\cap [1,n-1]|\ge k\}=P^{\otimes 2}\{\tau_k\le n-1\}&\le \exp((n-1)x) P^{\otimes 2} \exp (-\tau_k x)\\
&\le \exp\left[nx+k \log P^{\otimes 2}\exp(-x\tau_1)\right]. \end{split}
\end{equation}
For any $k\le \left\lfloor 4 n\sqrt{x_0}\right\rfloor=k_0$ one can choose $x=(k/4 n)^2\le x_0$ in the above and use the definition of $x_0$ to get that
 
\begin{equation}
  P^{\otimes 2}\{|\tau\cap [1,n-1]|\ge k\}\le \exp\left(-k^2/(32  n)\right).
\end{equation}
In the case where $k> k_0$ we simply bound the quantity by

\begin{equation}
   P^{\otimes 2}\{|\tau\cap [1,n-1]|\ge k\}\le \exp\left(k_0^2/(32 n)\right)\le \exp\left(-n x_0/4\right).
\end{equation}
By Lemma \eqref{th:lem1}, if $n$ is large enough
\begin{equation}
 P^{\otimes}A_n\ge 1/2 c_{RW}^2n^{-3}.
\end{equation}
A trivial bound on the conditioning gives us

\begin{equation}\label{bounee}\begin{split}
P^{\otimes 2}\left(|\tau\cap [1,n-1]|\ge k \ \big| \ A_n \right)&\le\min\left(1,2c_{RW}^{-2} n^3\exp\left(-k^2/(32 n)\right)\right) \text{ if } k\le k_0,\\
 P^{\otimes 2}\left(|\tau\cap [1,n-1]|\ge k \ \big| \ A_n \right)&\le 2c_{RW}^{-2}n^{3} \exp\left(-n x_0/4\right)
\text{ otherwise}.
\end{split}\end{equation}
We define $k_1:=\lceil 16\pi\sqrt{n\log(2c_{RW}^{-2} n^3)}\rceil$. 
The above implies that for $n$ large enough we have

\begin{equation}\begin{split}
P^{\otimes 2}\left(|\tau\cap [1,n-1]|\ge k \ \big| \ A_n \right)&\le 1 \text{ if } k\le k_1, \\
P^{\otimes 2}\left(|\tau\cap [1,n-1]|\ge k \ \big| \ A_n \right)&\le \exp\left(-k^2/(64 n)\right) \text{ if } k_1\le k\le k_0,\\
 P^{\otimes 2}\left(|\tau\cap [1,n-1]|\ge k \ \big| \ A_n \right)&\le\exp\left(-n x_0/8\right)
\text{ otherwise}.
\end{split}\end{equation}
Now we are ready to bound \eqref{eq:abcd}.
Integration by part gives,

\begin{equation}\begin{split}
 &P^{\otimes 2}\left[\exp\left(\gamma \gb |\tau\cap [1,n-1]|\right)\ \big| \ A_n\right]-1\\
&\quad\quad\quad\quad\quad\quad\quad\quad=\gga(\gb)\int_{0}^{\infty}\exp(\gga(\gb) x)P^{\otimes 2}\left(|\tau\cap [1,n-1]|\ge x \ \big| \ A_n\right)\dd x.
\end{split}\end{equation}
We split the right-hand side in three part corresponding to the three different bounds we have in \eqref{bounee}:
$x\in[0,k_1]$, $x\in[k_1,k_0]$ and $x\in[k_0,n]$. It suffices to show that each part is less than $\gep/3$ to finish the proof. The first part is

\begin{equation}
\gga(\gb)\int_{0}^{k_1}\exp(\gga(\gb) x)P^{\otimes 2}\left(|\tau\cap [1,n-1]|\ge x\ \big| \ A_n\right)\dd x \le \gga(\gb)k_1\exp(\gga(\gb)k_1).
\end{equation}
One uses that $n\le \frac{c_{\gep}}{\gb^4|\log \gb|}$ and $\gga(\gb)=\gb^2+o(\gb^2)$ to get that for $\gb$ small enough and $n$ large enough if $c_\gep$ is well chosen we have
\begin{equation}
 k_1\gga(\gb)\le 100 \gb^2 \sqrt{n\log n}\le \gep/4,
\end{equation}
so that $\gga(\gb)k_1\exp(\gga(\gb)k_1)\le \gep/3$.\\
We use our bound for the second part of the integral to get

\begin{equation}\begin{split}
&\gga(\gb)\int_{k_1}^{k_0}\exp(\gga(\gb) x)P^{\otimes 2}\left(|\tau\cap [1,n-1]|\ge x\ \big| \ A_n\right)\dd x\\
&\quad\le \gga(\gb)\int_{0}^{\infty}\exp\left(\gga(\gb)x-x^2/(64 n)\right)\dd x =\int_{0}^{\infty}\exp\left(x-\frac{x^2}{64 n \gga(\gb)^2}\right)\dd x.
\end{split}\end{equation}
Replacing $n$ by its value, we see that the term that goes with $x^2$ in the exponential can be made arbitrarily large, provided that $c_\gep$ is small enough. In particular we can make the left-hand side less than $\gep/3$.\\
Finally, we estimate the last part

\begin{equation}\begin{split}
&\gga(\gb)\int_{k_0}^{n}\exp(\gga(\gb)^2 x)P^{\otimes 2}\left(|\tau\cap [1,n-1]|\ge x \ \big| \ A_n\right)\dd x\\
&\quad\quad\quad\quad\quad\quad\quad\quad \le \gga(\gb)\int_{0}^{n}\exp(\gga(\gb)x-n x_0/8)\dd x=n\exp(-[\gga(\gb)-x_0/8]n).
\end{split}\end{equation}
This is clearly less than $\gep/3$ if $n$ is large and $\gb$ is small.

\end{proof}

\section{Proof of the lower bound of Theorem \ref{GAUSSI}}\label{dim11}

In this section we use the method of replica-coupling that is used for the disordered pinning model in \cite{cf:T_cmp} to derive a lower bound on the free energy. The proof here is an adaptation of the argument used there to prove disorder irrelevance.

The main idea is the following: Let $W_N(\gb)$ denotes the renormalized partition function for inverse temperature $\gb$. A simple Gaussian computation gives

\begin{equation}
 \frac{\dd Q \log W_N(\sqrt{t})}{\dd t}\bigg|_{t=0}=-\frac{1}{2}P^{\otimes 2}\sum_{i=1}^N \ind_{\{S_i^{(1)}=S_i^{(2)}\}}.
\end{equation}
Where $S^{(1)}$ and $S^{(2)}$ are two independent random walk under the law $P^{\otimes 2}$.
This implies that for small values of $\gb$ (by the equality of derivative at $t=0$),

\begin{equation}\label{eqqq}
Q \log W_N(\gb) \approx - \log P^{\otimes 2} \exp\left(\gb^2/2\sum_{i=1}^N \ind_{\{S_N^{(1)}=S_N^{(2)}\}}\right).
\end{equation}
This tends to make us believe that

\begin{equation}
 p(\gb)=-\lim_{N\rightarrow \infty} \log P^{\otimes 2} \exp\left(\gb^2/2\sum_{i=1}^N \ind_{\{S_N^{(1)}=S_N^{(2)}\}}\right).
\end{equation}
However, things are not that simple because \eqref{eqqq} is only valid for fixed $N$, and one needs some more work to get something valid when $N$ tends to infinity.
The proofs aims to use convexity argument and simple inequalities to be able to get the inequality

\begin{equation}
 p(\gb)\ge-\lim_{N\rightarrow \infty} \log P^{\otimes 2} \exp\left(2 \gb^2\sum_{i=1}^N \ind_{\{S_N^{(1)}=S_N^{(2)}\}}\right).
\end{equation}
The fact that convexity is used in a crucial way make it quite hopeless to get the other inequality using this method.

\begin{proof}
Let use define for $\gb$ fixed and $t\in[0,1]$

\begin{equation}
\Phi_N(t,\gb):=\frac{1}{N}Q\log P \exp\left(\sum_{i=1}^N \left[\sqrt{t}\gb \go_{i,S_i}-\frac{t\gb^2}{2}\right]\right),
 \end{equation}
and for $\gl\ge 0$
\begin{equation}
\Psi_N(t,\gl,\gb):=\frac{1}{2N}Q\log P^{\otimes 2} \exp\left(\sum_{i=1}^N\left[\sqrt{t}\gb(\go_{i,S_i^{(1)}}+\go_{i,S_i^{(2)}})-t\gb^2+\gl\gb^2\ind_{\{S_i^{(1)}=S_i^{(2)}\}}\right]\right).
\end{equation}
One can notice that $\Phi_N(0,\gb)=0$ and $\Phi_N(1,\gb)=p_N(\gb)$ (recall the definition of $p_N$ \eqref{Pn}), so that  $\Phi_N$ is an interpolation function.
Via the Gaussian integration by par formula

\begin{equation}
 Q \go f(\go)=Q f'(\go),
\end{equation}
valid (if $\go$ is a centered standard Gaussian variable) for every differentiable functions such that $\lim_{|x|\to\infty}\exp(-x^2/2)f(x)=0$, one finds

\begin{equation}\begin{split}\label{fifi}
 \frac{\dd }{\dd t}\Phi_N(t,\gb)&=-\frac{\gb^2}{2N}\sum_{j=1}^{N}\sum_{z\in \Z} Q \left(\frac{ P \exp\left(\sum_{i=1}^N \left[\sqrt{t}\gb\go_{i,S_i}-\frac{t\gb^2}{2}\right]\right)\ind_{\left\{S_j=z\right\}}}{P\exp\left(\sum_{i=1}^N \left[\sqrt{t}\gb\go_{i,S_i}-\frac{t\gb^2}{2}\right]\right)}\right)^2 \\
&=-\frac{\gb^2}{2N}Q \left(\mu^{(\sqrt{t}\gb)}_n\right)^{\otimes 2} \left[\sum_{i=1}^N \ind_{\{S_i^{(1)}=S_i^{(2)}\}}\right].
\end{split}
\end{equation}
This is (up to the negative multiplicative constant $-\gb^2/2$) the expected overlap fraction of two independent replicas of the random--walk under the polymer measure for the inverse temperature $\sqrt{t}\gb$. This result has been using It\^o formula in \cite[Section 7]{cf:CH_ptrf}.\\
For notational convenience, we define

\begin{equation}
H_N(t,\gl,S^{(1)},S^{(2)})=\sum_{i=1}^N\left[\sqrt{t}\gb(\go_{i,S_i^{(1)}}+\go_{i,S_i^{(2)}})-t\gb^2+\gl\gb^2\ind_     {\left\{S_i^{(1)}=S_i^{(2)}\right\}}\right].
\end{equation}
We use Gaussian integration by part again, for $\Psi_N$: 

\begin{multline}\label{psipsi}
\frac{\dd}{\dd t}\Psi_{N}(t,\gl,\gb)= \frac{\gb^2}{2N}\sum_{j=1}^N Q \frac{ P^{\otimes 2} \exp\left(H_N(t,\gl,S^{(1)},S^{(2)})\right)\ind_{\{S_j^{(1)}=S_j^{(2)}\}}}{P^{\otimes 2} \exp\left(H_N(t,\gl,S^{(1)},S^{(2)})\right)}\\
- \frac{\gb^2}{4N}\sum_{j=1}^N\sum_{z\in \Z} Q \left(\frac{ P^{\otimes 2} \left(\ind_{\{S_j^{(1)}=z\}}+\ind_{\{S_j^{(2)}=z\}}\right)\exp\left(H_N(t,\gl,S^{(1)},S^{(2)})\right)}{ P^{\otimes 2} \exp\left(H_N(t,\gl,S^{(1)},S^{(2)})\right)}\right)^2 \\
\le \frac{\gb^2}{2N}\sum_{j=1}^N Q \frac{ P^{\otimes 2} \exp\left(H_N(t,\gl,S^{(1)},S^{(2)})\right)\ind_{\{S_j^{(1)}=S_j^{(2)}\}}}{P^{\otimes 2} \exp\left(H_N(t,\gl,S^{(1)},S^{(2)})\right)}=\frac{\dd}{\dd\gl} \Psi_N(t,\gl,\gb).
\end{multline}
The above implies that for every $t\in[0,1]$ and $\gl\ge 0$

\begin{equation}\label{hmhm}
 \Psi_N(t,\gl,\gb)\le \Psi_N(0,\gl+t,\gb).
\end{equation}
Comparing \eqref{fifi} and \eqref{psipsi}, and using convexity and monotonicity of $\Psi_N(t,\gl,\gb)$ with respect to $\gl$, and the fact that $\Psi_N(t,0,\gb)=\Phi_N(t,\gb)$ one gets

\begin{multline}
                  -\frac{\dd}{\dd t}\phi_N(t,\gb)=\frac{\dd}{\dd \gl}\Psi_N(t,\gl,\gb)\bigg|_{\gl=0}\\
\le \frac{\Psi_N(t,2-t,\gb)-\Phi_N(t,\gb)}{2-t}\le \Psi_N(0,2,\gb)-\Phi_N(t,\gb),
                \end{multline}
where in the last inequality we used $(2-t)\ge 1$ and \eqref{hmhm}.
Integrating this inequality between $0$ and $1$ and recalling $\Phi_N(1,\gb)=p_N(\gb)$ we get

\begin{equation}
 p_N(\gb)\ge (1-e)\Psi_N(0,2,\gb).
\end{equation}

On the right-hand side of the above we recognize something related to pinning models. More precisely

\begin{equation}\label{alm}
 \Psi_N(0,2,\gb)=\frac{1}{2N}\log Y_N,
\end{equation}
where 
\begin{equation}
 Y_N=P^{\otimes 2} \exp\left(2\gb^2 \sum_{i=1}^N \ind_{\left\{S_N^{(1)}=S_N^{(2)}\right\}}\right)
\end{equation}
is the partition function of a homogeneous pinning system of size $N$ and parameter $2\gb^2$ with underlying renewal process the sets of zero of the random walk $S^{(2)}-S^{(1)}$. This is a well known result in the study of pinning model ( we refer to \cite[Section 1.2]{cf:Book} for an overview and the results we cite here) that 

\begin{equation}
 \lim_{N\rightarrow\infty} \frac{1}{N}\log Y_N=\tf(2\gb^2),
\end{equation}
where $\tf$ denotes the free energy of the pinning model. Moreover, it is also stated 
\begin{equation}
 \tf(h)\stackrel{h\to 0+}{\sim} h^2/2.
\end{equation}
Then passing to the limit in \eqref{alm} ends the proof of the result for any constant strictly bigger that $4$. 
\end{proof}

\section{Proof the lower bound in Theorem \ref{1+2UPLD}}\label{dim12}

The technique used in the two previous sections could be adapted here to prove the results but in fact it is not necessary.
Because of the nature of the bound we want to prove in dimension $2$ (we do not really track the best possible constant in the exponential), it will be sufficient here to control the variance of $W_n$ up to some value, and then the concentration properties of $\log W_n$ to get the result. The reader can check than using the same method in dimension $1$ does not give the right power of $\gb$.

First we prove a technical result to control the variance of $W_n$ which is the analog of \eqref{th:lem2} in dimension $1$. Recall that $\gga(\gb):=\gl(2\gb)-2\gl(\gb)$ with $\gl(\gb):=\log Q \exp (\gb\go_{(1,0)})$.

\begin{lemma}\label{th:lemmvar}
For any $\gep<0$, one can find a constant $c_\gep>0$ and $\gb_0>0$ such that for any $\gb\le \gb_0$, for any 
$n\le \exp\left(c_{\gep}/\gb^2\right)$ we have
\begin{equation}
\var_Q W_n\le \gep.
\end{equation}
\end{lemma}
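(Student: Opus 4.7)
The starting point is a standard second moment computation. By the replica trick, conditioning the environment on $\go$ and using independence of the $\go_{i,z}$,
\begin{equation*}
QW_n^2 \, =\, P^{\otimes 2}\exp\left(\gamma(\gb)\sum_{i=1}^n \ind_{\{S_i^{(1)}=S_i^{(2)}\}}\right)\, =\, P^{\otimes 2}\exp\left(\gamma(\gb)\,|\tau\cap[1,n]|\right),
\end{equation*}
where $\tau:=\{i\ge 0:\, S_i^{(1)}=S_i^{(2)}\}$ is a (recurrent) renewal process in dimension two, and $\gamma(\gb)=\lambda(2\gb)-2\lambda(\gb)\sim \gb^2$ as $\gb\searrow0$. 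Thus $\var_Q W_n = QW_n^2-1$ is the excess above $1$ of the partition function of a homogeneous pinning system of parameter $\gamma(\gb)$ based on the renewal $\tau$, and the whole question reduces to showing that this partition function is close to $1$ when $n\le \exp(c_\gep/\gb^2)$.

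The plan is to control this exponential moment by means of factorial moments. Writing $N_n:=|\tau\cap[1,n]|$ and using the renewal property,
\begin{equation*}
E[(N_n)_k]\, =\, k!\sum_{1\le j_1<\cdots<j_k\le n} U(j_1)U(j_2-j_1)\cdots U(j_k-j_{k-1})\, \le\, k!\, M_n^{k},
\end{equation*}
with $U(j):=P^{\otimes 2}(j\in\tau)$ and $M_n:=\sum_{j=1}^n U(j)$. Combining this with the identity $\exp(\gamma N_n)=\sum_{k\ge0}(e^\gamma-1)^k\binom{N_n}{k}$ gives the very clean bound
\begin{equation*}
P^{\otimes 2}\exp(\gamma N_n)\, \le\, \sum_{k\ge0}\bigl((e^\gamma-1)M_n\bigr)^k\, =\, \frac{1}{1-(e^\gamma-1)M_n},
\end{equation*}
valid as soon as $(e^\gamma-1)M_n<1$.

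To close the argument one needs an upper bound on $M_n$. In dimension two the local central limit theorem for the simple random walk yields $P(S_j^{(1)}=S_j^{(2)})=\sum_{x\in\Z^2} P(S_j=x)^2 \le C/j$ for every $j\ge 1$, hence $M_n\le C\log n + C'$. Therefore, for every $\gep\in(0,1)$, choosing $c_\gep>0$ sufficiently small so that $(e^{\gamma(\gb)}-1)\,(C\log n + C')\le \gep/2$ for all $\gb$ small and all $n\le \exp(c_\gep/\gb^{2})$, the above display gives $\var_Q W_n\le \gep$, as desired.

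I do not expect any serious obstacle: the only two inputs are the factorial moment identity (a direct consequence of the renewal property) and the two-dimensional local CLT estimate $U(j)\le C/j$. The single point that deserves attention is the calibration of $c_\gep$ in terms of the constant $C$ appearing in the bound on $U(j)$ and of the asymptotic $\gamma(\gb)/\gb^2\to 1$, so that the condition $(e^\gamma-1)M_n<\gep/2$ holds uniformly for all $\gb\le \gb_0$ and all $n\le \exp(c_\gep/\gb^{2})$.
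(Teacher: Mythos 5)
Your proof is correct, and it takes a genuinely different route than the paper's. Both proofs open identically with the replica computation $\var_Q W_n = P^{\otimes 2}\exp(\gamma(\gb)\,|\tau\cap[1,n]|)-1$ and both ultimately rest on the two-dimensional local CLT bound $U(j)\le C/j$. Where they diverge is in how they control the exponential moment of the local time $N_n=|\tau\cap[1,n]|$. The paper derives tail bounds $P^{\otimes 2}(N_n\ge k)$ from the Laplace transform of the inter-arrival time $\tau_1$ (using $1-P^{\otimes 2}e^{-x\tau_1}=1/\sum_j e^{-xj}U(j)$ and the asymptotic $\log P^{\otimes 2}e^{-x\tau_1}\sim-\pi/|\log x|$), then integrates by parts and splits the resulting integral into three ranges, with separate estimates on each. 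You instead expand $e^{\gamma N_n}=\sum_{k}(e^\gamma-1)^k\binom{N_n}{k}$, use the renewal property to get the factorial-moment bound $E[(N_n)_k]\le k!\,M_n^k$ with $M_n=\sum_{j\le n}U(j)$, and resum to a closed geometric form $1/(1-(e^\gamma-1)M_n)$. Your argument is shorter and more transparent: it reduces everything to the single inequality $(e^{\gamma(\gb)}-1)M_n<1$, with $M_n\le C\log n+C'$, whereas the paper's route requires more delicate calibration (the three-way split, the threshold $k_0=\delta n$, etc.). The trade-off is that the paper's Laplace transform machinery is the standard tool for the sharper one-dimensional analogue (Lemma \ref{th:lem2}), so it is reused there; your factorial-moment expansion, while cleaner in the $d=2$ case, would need $M_n$ to stay bounded or grow only logarithmically, which is precisely the feature special to $d=2$. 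The one small point you flagged — uniform calibration of $c_\gep$ and $\gb_0$ — is indeed the only thing to tidy up, and it works exactly as you sketch: choose $\gb_0$ so that $e^{\gamma(\gb)}-1\le 2\gb^2$ and $2C'\gb_0^2\le\gep/4$, then $c_\gep$ so that $2Cc_\gep\le\gep/4$.
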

\begin{proof}
A straight--forward computation shows that the the variance of $W_n$ is given by

\begin{equation}\label{var2d}
\var_Q W_n= P^{\otimes 2} \exp\left(\gga(\gb)\sum_{i=1}^n \ind_{\{S_i^{(1)}=S_i^{(2)}\}}\right)-1.
\end{equation}
where $S^{(i)}$, $i=1,2$ are two independent $2$--dimensional random walks.

As the above quantity is increasing in $n$, it will be enough to prove the result for $n$ large. For technical convenience we choose to prove the result for $n= \rfloor \exp(-c_{\gep}/\gga(\gb))\rfloor$ (recall $\gga(\gb)=\gl(2\gb)-2\gl(\gb)$) which does not change the result since $\gga(\gb)=\gb^2+o(\gb^2)$.

The result we want to prove seems natural since we know that $(\sum_{i=1}^n \ind_{\{S_i^{(1)}=S_i^{(2)}\}})/\log n$ converges to an exponential variable (see e.g.\ \cite{cf:GZ}), and $\gga(\gb)\sim c_{\gep}\log n$. However, convergence of the right--hand side of \eqref{var2d} requires the use of the dominated convergence Theorem, and the proof of the domination hypothesis is not straightforward. It could be extracted from the proof of the large deviation result in \cite{cf:GZ}, however
we include a full proof of convergence here for the sake of completeness.

We define $\tau=\left(\tau_k\right)_{k\ge 0}=\{S^{(1)}_i=S^{(2)}_i,i\ge 0\}$  the set where the walks meet (it can be written as an increasing sequence). By the Markov property, the random variables $\tau_{k+1}-\tau_k$ are i.i.d.\ . 

\medskip

To prove the result, we compute bounds on the probability of having to may point before $n$ in the renewal $\tau$. As in the $1$ dimensional case, we use Laplace transform to do so.
From \cite[p. 577]{cf:Feller2} , we know that

\begin{equation}\label{Lapl1}
1- P^{\otimes 2} \exp(- x \tau_1)= \frac{1}{\sum_{n\in \N} \exp(-xn)P\{S_n^{(1)}=S_n^{(2)}\}}.
\end{equation}
The local central limit theorem says that for large $n$

\begin{equation}
  P^{\otimes 2}\{S_n^{(1)}=S_n^{(2)}\}\sim \frac{1}{\pi n}.
\end{equation}
Using this into \eqref{Lapl1} we get that when $x$ is close to zero

\begin{equation}
\log P^{\otimes 2} \exp(- x \tau_1)\sim -\frac{\pi}{|\log x|}.
\end{equation}
We use the following estimate 

\begin{equation} \label{coucou}\begin{split}
 P^{\otimes 2}\{|\tau\cap [1,n]|\ge k\}=P^{\otimes 2}\{\tau_k\le n\}&\le \exp(nx) P^{\otimes 2} \exp (-\tau_k x)\\
&= \exp\left[nx+k\log  P^{\otimes 2} \exp(-x\tau_1)\right]. \end{split}
\end{equation}
Let $x_0$ be such that for any $x\le x_0$, $\log P^{\otimes 2} \exp(- x \tau_1)\ge -3/ |\log x|$.
For $k$ such that $k/(n\log(n/k))\le x_0$, we replace $x$ by $k/(n\log(n/k))$ in \eqref{coucou}
to get 
\begin{equation}
  P^{\otimes 2}\{|\tau\cap [1,n]|\ge k\}\le \exp \left(\frac{k}{\log(n/k)}-\frac{3 k}{\log\left[k/(n\log n/k)\right]}\right)\le \exp\left(-\frac{k}{\log(n/k)}\right),
\end{equation}
where the last inequality holds if $k/n$ is small enough. We fix $k_0=\gd n$ for some small $\gd$.
We get that
\begin{equation}\label{2db}\begin{split}
   P^{\otimes 2}\{|\tau\cap [1,n]|\ge k\}&\le \exp\left(-\frac{k}{\log(n/k)}\right) \quad \text{if } k\le k_0\\
   P^{\otimes 2}\{|\tau\cap [1,n]|\ge k\}&\le \exp\left(-\frac{k_0}{\log(n/k_0)}\right)=\exp\left(-\frac{\gd n}{\log (1/\gd)}\right) \quad \text{if } k\ge k_0.
\end{split}
\end{equation}
We are ready to bound \eqref{var2d}. We remark that using integration by part we obtain

\begin{equation}
P \exp\left(\gga(\gb) |\tau\cap [1,n]|\right)-1= \int_{0}^n \gga(\gb)\exp(\gga(\gb)x)P^{\otimes 2}(\tau\cap [1,n]|\ge x)\dd x.
\end{equation}
To bound the right--hand side, we use the bounds we have concerning $\tau$: \eqref{2db}. We have to split the integral in three parts. 
\medskip

The integral between $0$ and $1$ can easily be made less than $\gep/3$ by choosing $\gb$ small.

\medskip
Using $n\le \exp(c_{\gep}/\gga(\gb))$, we get that

\begin{multline}
 \int_{1}^{\gd n} \gga(\gb)\exp(\gga(\gb)x)P^{\otimes 2}(\tau\cap [1,n]|\ge x)\dd x\le \int_{1}^{\gd n}\gga(\gb)\exp\left(\gga(\gb)x-\frac{x}{\log(n/x)}\right)\dd x\\
\le \int_{1}^{\gd n}\gga(\gb)\exp\left(\gga(\gb)x-\frac{\gga(\gb)\gb x}{c_{\gep}}\right)\le \frac{c_{\gep}}{1-c_{\gep}}.
\end{multline}
This is less that $\gep/3$ if $c_{\gep}$ is chosen appropriately.
The last part to bound is
\begin{equation}
 \int_{\gd n}^n \gga(\gb)\exp(\gga(\gb)x)P^{\otimes 2}(\tau\cap [1,n]|\ge x)\le n\gga(\gb)\exp\left(\gga(\gb)n-\frac{\gd n}{\log 1/\gd}\right)\le \gep/3,
\end{equation}
where the last inequality holds if $n$ is large enough, and $\gb$ is small enough.
 
\end{proof}

\begin{proof}[Proof of the lower bound in Theorem \ref{1+2UPLD}]

By a martingale method that one can find a constant $c_9$ such that
\begin{equation}
\var_Q{\log W_n}\le C_9 n, \quad \quad \forall n\ge 0, \forall \gb\le 1.
\end{equation}
(See \cite[Proposition 2.5]{cf:CSY} and its proof for more details).\\
 Therefore Chebyshev inequality gives
\begin{equation}\label{eq:ineq}
Q\left\{\left|\frac 1 n \log W_n - \frac 1 n Q\log W_n\right|\ge n^{-1/4}\right\}\le C_9 n^{-1/2}.
\end{equation}
Using  Lemma \ref{th:lemmvar} and Chebyshev inequality again, we can find a constant $C_{10}$ such that for small $\gb$ and $n=\lceil\exp(C_{10}/\gb^2)\rceil$ we have
\begin{equation}
Q\left\{ W_n<1/2\right\}\le 1/2.
\end{equation}
This combined with \eqref{eq:ineq} implies that

\begin{equation}
\frac{-\log 2}{n}\le  n^{-1/4} + Q \frac 1 n \log W_n\le n^{-1/4}+p(\gb).
\end{equation}
Replacing $n$ by its value we get

\begin{equation}
 p(\gb)\ge  -n^{-1/4}-\frac{\log 2}{n}\ge -\exp (-C_{10}/5\gb^2).
\end{equation}

\end{proof}

\chapter[Brownian polymer models]{Pinning, superdiffusivity, and strong disorder for Brownian polymer models} \label{polBB}

\section{Introduction}
A lot of progress has been made lately in the understanding of localization/delocalization phenomena for random polymer models, and especially for two class of model: random pinning models, and directed polymer in a random environment (see the recent monographs \cite{cf:Book} and \cite{cf:denH} and references therein). These model where first introduced in a discrete setup, where the polymer is modeled by the graph of a random-walk in $\Z^d$ and the polymer-measure is a modification of the law of a simple random walk on $\Z^d$; but recently,  a lot of interest have been brought to 
the corresponding continuous models, involving Brownian Motion rather than simple random walk (see \cite{cf:CY_cmp, cf:RT, cf:BTV, cf:CKMV}). In this paper, we study two models which are a priori unrelated:
\begin{itemize}
 \item A Brownian homogeneous pinning model in a polynomial potential.
 \item A model of Brownian directed polymer in Brownian environment with spatial power-law correlations.
\end{itemize}
However, as it will appear in our development, homogeneous pinning models happen to be a natural tool to study directed polymer in random environment as soon as second moment methods are used (as in \cite{cf:Lac}).
\medskip

Let $P$ be the Wiener measure, and $(B_{t})_{t\in\R}$ the associated $d$-dimensional Brownian Motion starting from zero.
For a fixed $t$, to each trajectory we associate an energy defined by the Hamiltonian $H_t(B)$ which is a function of $(B_s)_{s\in[0,t]}$ and for an inverse temperature $\gb$ we define the polymer measure $\mu_t$ by its Radon-Nicodym derivative
\begin{equation}
\frac{\dd \mu_t}{\dd P}(B):=\frac{1}{Z_t}\exp(\gb H_t(B)).
\end{equation}
where 
\begin{equation}
 Z_t:=P\left[\exp\left(\gb H_t(B)\right)\right],
\end{equation}
is a renormalization factor called partition function of the system ($P[f(B)]$ denotes the expectation with respect to $P$, we will keep that notation for all the probability law we introduce in the paper). For fixed $(B_s)_{s\in[0,t]}$, $H_t(B)$ can be deterministic or random (depending on a random environment). Our aim is to describe asymptotic properties of the measure $\mu_t$ as $t\to\infty$, and to discuss the existence of a phase transition in the parameter $\gb$.
\medskip

We present the results obtained for the two models separately. To avoid confusion we will keep the above notation for the random directed polymer, and we will introduce a different one for the pinning model.

\subsection{Brownian homogeneous pinning in a polynomial tailed potential}

We consider the following homogeneous pinning model, that was studied in \cite{cf:CKMV}. We consider two cases:
$V$ is either a bounded continuous non--negative function of $\R^d$ such that $V(x)$ tends to zero when $x$ goes to infinity or  $V(x)=\ind_{\{\|x\|\le 1\}}$.

We define the energy of a continuous trajectory up to time $t$, $(B_{s})_{s\in\bbR}$ to be the Hamiltonian
\begin{equation}
 G_t(B):= \int_{0}^t V(B_s)\dd s.
\end{equation}
We define $\nu_t^{(h)}$ the Gibbs measure associated to that Hamiltonian and pinning parameter (or inverse temperature) $h\in \R$ to be
\begin{equation}
\dd \nu_t^{(h)}(B):=\frac{\exp(h G_t(B))}{Y^{(h)}_t}\dd P( B),
\end{equation}
where $Y_t$ denotes the partition function
\begin{equation}
 Y^{(h)}_t:=P \left[\exp(h G_t(B))\right].
\end{equation}

The essential question for this model is to know whether or not the pinning force $h$ is sufficient to keep the trajectory of the polymer near the origin, where $V$ takes larger values. It is intuitively clear that for large $h$, the potential localizes the polymer near the origin (the distance remains $O(1)$ as $t$ grows), and that $h\le 0$ has no chance to localize the polymer. Therefore, one has to find if the polymer is in the localized phase for all positive $h>0$ or if the phase transition occurs for some critical value of $h_c>0$. For This question was studied in \cite{cf:CKMV} in the case of compactly supported smooth $V$, where it was shown that localization holds for all $h>0$ if and only if $d\le 2$ (in fact this issue is strongly related to reccurence/transcience of the Brownian Motion). Our aim is to describe how this result is modified if the potential is not compactly supported and has polynomial decay at infinity.
Answering to this question relies on studying the free energy.
\medskip
\begin{proposition}\label{th:freeee}
 The limit
\begin{equation}
 \tf(h):=\lim_{t\to\infty} \frac{1}{t}\log Y^{(h)}_t.
\end{equation}
exists is and it is non-negative. $h\mapsto \tf(h)$ is a non-decreasing, convex function.
\end{proposition}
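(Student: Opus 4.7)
The plan is to prove existence of $\tf(h)$ by a super-/sub-additivity argument for $\log Y_t^{(h)}$ based on the Markov property of Brownian motion, and then to deduce monotonicity, convexity and non-negativity by soft arguments. The uniform bound $0\le G_t(B)\le\|V\|_\infty t$ gives $|(1/t)\log Y_t^{(h)}|\le |h|\|V\|_\infty$, and for each fixed $t$ the map $h\mapsto \log Y_t^{(h)}$ is non-decreasing (since $G_t\ge 0$) and convex (it is the log-Laplace transform of the non-negative random variable $G_t(B)$); both properties are preserved in any pointwise limit, so the task reduces to proving existence of the limit.

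The central step is a quasi-additive estimate of the form $\log Y_{t+s}^{(h)} = \log Y_t^{(h)} + \log Y_s^{(h)} + O(1)$, where the $O(1)$ is uniform in $s,t$. By the Markov property at time $t$,
\begin{equation*}
Y_{t+s}^{(h)} \;=\; P\!\left[\exp(hG_t(B))\, \psi_s(B_t)\right], \qquad \psi_s(x) := P_x\!\left[\exp(hG_s(B))\right],
\end{equation*}
which reduces matters to comparing $\psi_s(x)$ to $\psi_s(0)=Y_s^{(h)}$, up to multiplicative constants uniform in $s$, for $x$ in a fixed compact $K$ chosen to contain the (essential) support of $V$. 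I would get this via a short-time Brownian coupling (for instance the mirror coupling of two Brownian motions started from $x$ and from $0$): they meet with positive probability in a time bounded uniformly in $x\in K$, and over this short initial period the two Feynman-Kac weights differ by at most $e^{|h|\|V\|_\infty T_0}$ for a deterministic $T_0$. A separate check that restricting the outer expectation to $\{B_t\in K\}$ only costs a multiplicative factor whose logarithm is $o(t)$ then delivers the claimed quasi-additivity.

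Fekete's lemma applied to $n\mapsto\log Y_n^{(h)}$ (appropriately shifted by a constant) gives the existence of the limit of $(1/n)\log Y_n^{(h)}$, and the bound $|\log Y_{t+1}^{(h)}-\log Y_t^{(h)}|\le |h|\|V\|_\infty$ extends convergence to continuous $t$. Non-negativity is immediate for $h\ge 0$ since $Y_t^{(h)}\ge 1$. For $h<0$, given $\epsilon>0$, choose $R$ with $V(x)\le\epsilon$ for $\|x\|\ge R$ (possible under both hypotheses on $V$) and consider $A_t:=\{\|B_u\|\ge R\ \forall u\in[1,t]\}$. On $A_t$ one has $G_t(B)\le\|V\|_\infty+\epsilon(t-1)$, hence, since $h<0$,
\begin{equation*}
Y_t^{(h)} \;\ge\; P(A_t)\, \exp\!\bigl(h\|V\|_\infty + h\epsilon(t-1)\bigr).
\end{equation*}
Since $(1/t)\log P(A_t)\to 0$ in every dimension (Brownian motion avoiding a ball after time $1$ decays at most polynomially or logarithmically in $t$), one gets $\tf(h)\ge h\epsilon$ for every $\epsilon>0$, whence $\tf(h)\ge 0$.

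The main obstacle I foresee is the uniform comparison $\psi_s(x)\asymp\psi_s(0)$ and the compatibility of the restriction $\{B_t\in K\}$ with the exponential growth rate of $Y_t^{(h)}$, particularly in the rougher case $V=\ind_{\{\|x\|\le 1\}}$, where smoothness-based devices (Cameron-Martin shifts, Girsanov with smooth densities) are unavailable. The coupling strategy sketched above is flexible enough to handle this, since it only uses that $V$ is bounded, but care is needed to make both the success probability and the coupling time uniform in $x\in K$; both follow from standard Brownian hitting-time estimates, but the bookkeeping would form the bulk of a full proof.
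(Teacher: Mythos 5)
Your argument is correct in outline, but it is genuinely different from the paper's.

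The paper dispatches existence in a few lines by approximation and quotation: choose a compactly supported $C^\infty$ function $\check V$ with $\check V\le V\le\check V+\gep$, invoke the result of Cranston--Koralov--Molchanov--Vainberg (which the paper cites as established for smooth compactly supported potentials) to get that $\frac1t\log\check Y_t$ converges, and sandwich to conclude $\limsup-\liminf\le h\gep$. The indicator case $V=\ind_{\{\|x\|\le1\}}$ cannot be sandwiched this way (no continuous $\check V$ with $\check V\le\ind_{B_1}\le\check V+\gep$ for small $\gep$), so the paper handles it by a separate adaptation whose details it omits. Your route instead builds a self-contained sub-/super-additivity estimate via the Markov decomposition $Y_{t+s}=P[\exp(hG_t)\psi_s(B_t)]$ and a coupling comparison of $\psi_s(x)$ with $\psi_s(0)$. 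This buys two things the paper's proof does not: it avoids the external reference, and it treats both the continuous case and the indicator case in a single argument, since you use nothing beyond boundedness of $V$. What it costs is a substantial amount of bookkeeping which the paper does not have to do.

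One point to be careful about: the additivity error is not literally $O(1)$ uniformly in $s,t$. The coupling argument gives $\psi_s(x)\asymp\psi_s(0)$ uniformly in $s$ only for $x$ in a fixed compact $K$; for $x\notin K$ you are forced, via the strong Markov property at the hitting time of $K$, into an extra factor of the form $e^{|h|\gep s}$ coming from the tail $\sup_{\|y\|\ge R}V(y)\le\gep$. This destroys genuine subadditivity, but you recover by letting $\gep\to 0$ after the Fekete-type iteration (so you prove $\limsup-\liminf\le|h|\gep$ for every $\gep>0$ rather than equality in one shot -- which, incidentally, is exactly the shape of the conclusion the paper's sandwich argument also reaches). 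Similarly, the restriction to $\{B_t\in K\}$ in the lower bound costs a factor whose logarithm is $O(\log t)$ rather than $O(1)$; this is compatible with the dyadic iteration you would need, but the precise accounting is not a pure Fekete application. Your treatment of monotonicity, convexity, and non-negativity (via avoidance of $B_R$ on $[1,t]$ when $h<0$) is correct and in fact supplies parts of the statement the paper leaves implicit.
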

We call $\tf(h)$ the {\sl free energy} of the model. We define

\begin{equation}
 h_c=h_c(V):=\inf\{h: \tf(h)>0\}\ge 0.
\end{equation}

\medskip
The existence of the limit is not straight forward. Cranston et al.\ proved it in \cite[Section 7]{cf:CKMV} in the case of $C^{\infty}$ compactly supported potential, we adapt their proof to our case.
To understand why $\tf(h)>0$ corresponds to the localized phase, we remark the following:
convexity allows us to interchange limit and derivative, therefore at points where $\tf$ has a derivative
\begin{equation}
 \frac{\dd \tf}{\dd h}(h)=\lim_{t\to\infty} \frac{1}{t}\nu_t\left[\int_0^t V(B_s)\dd s\right].
\end{equation}
We want to be able to settle what are the condition on $V$ so that $h_c=0$ and to describe the behavior of $\tf$. Cranston et al.\ answered the question quite completely in \cite[Theorem 6.1]{cf:CKMV} for compactly supported potential using a functional analysis approach.
We present a part of their results, adapted for the ``step'' potential $V(x)=\ind_{\{\|x\|\le 1\}}$, that we will use as a tool in our paper.
Adapting the result from \cite[Theorem 6.1]{cf:CKMV} is straight  forward, using monotonicity in $V$ of the free energy.

\begin{theorem}\label{comp}
Let $V: \R^d\rightarrow \R_+$ defined as
\begin{equation}
 V(x)=\ind_{\{\|x\|\le 1\}}.
\end{equation}
Then for $d=1$, $2$ we have $h_c=0$ and as $h$ tends to zero,
\begin{equation}\label{freeen}\begin{split}
 \tf(h)&=2h^2(1+o(1)) \text{ for } d=1,\\
 \tf(h)&= \exp\left(-h^{-1}(1+o(1))\right) \text{ for } d=2.
\end{split}
\end{equation}
For $d\ge3$, we have $h_c>0$.
\end{theorem}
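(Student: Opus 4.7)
The plan is to deduce Theorem \ref{comp} from \cite[Theorem 6.1]{cf:CKMV}, which is stated for $C^\infty$ compactly supported potentials, by a sandwiching argument exploiting the fact that $V(x)=\ind_{\{\|x\|\le 1\}}$ differs from a smooth potential only on a set of arbitrarily small measure. The first ingredient I need is a monotonicity lemma: if $V_1\le V_2$ pointwise and $h\ge 0$, then along every Brownian trajectory $G_t(B;V_1)\le G_t(B;V_2)$, so $Y^{(h)}_t(V_1)\le Y^{(h)}_t(V_2)$ and, passing to the limit guaranteed by Proposition \ref{th:freeee}, $\tf(h;V_1)\le \tf(h;V_2)$. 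This is immediate from the definition.

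Next, for each $\epsilon\in(0,1/2)$ I would construct smooth non-negative compactly supported potentials $V_-^\epsilon$ and $V_+^\epsilon$ with $V_-^\epsilon\le V\le V_+^\epsilon$, where $V_-^\epsilon$ equals $1$ on $\{\|x\|\le 1-\epsilon\}$ and is supported in $\{\|x\|\le 1\}$, while $V_+^\epsilon$ equals $1$ on $\{\|x\|\le 1\}$ and is supported in $\{\|x\|\le 1+\epsilon\}$; standard mollification achieves this. For $d\ge 3$, \cite[Theorem 6.1]{cf:CKMV} applied to $V_+^\epsilon$ gives $h_c(V_+^\epsilon)>0$, and by the monotonicity lemma $\tf(h;V)\le \tf(h;V_+^\epsilon)=0$ for $h\le h_c(V_+^\epsilon)$, yielding $h_c(V)\ge h_c(V_+^\epsilon)>0$. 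For $d=1,2$, the same reference applied to $V_-^\epsilon$ gives $h_c(V_-^\epsilon)=0$ together with the asymptotic of the required form as $h\searrow 0$; monotonicity then forces $\tf(h;V)>0$ for all $h>0$, i.e.\ $h_c(V)=0$, and sandwiching gives
\begin{equation*}
\tf(h;V_-^\epsilon)\ \le\ \tf(h;V)\ \le\ \tf(h;V_+^\epsilon).
\end{equation*}

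To extract the precise asymptotic constants in \eqref{freeen}, I would use that the asymptotic constants produced by \cite{cf:CKMV} for smooth potentials depend continuously on $V$ (through the $L^1$ norm in dimension $1$ and through the principal eigenvalue/Green-function capacity in dimension $2$). Since $V_\pm^\epsilon\to V$ in $L^1$ as $\epsilon\to 0$, the corresponding constants from \cite{cf:CKMV} both converge to the values claimed for the step potential, namely $2$ in dimension $1$ and $1$ in the exponential rate in dimension $2$. Dividing the sandwich by $h^2$ (resp.\ taking $-h\log\tf(h)$) and sending first $h\to 0$ then $\epsilon\to 0$ concludes the proof.

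The main obstacle I anticipate is this last step in dimension $d=2$: the asymptotic is an exponential whose exponent is $-h^{-1}(1+o(1))$, so one needs the sharp constant in the exponent for the approximating smooth potentials and has to verify that this sharp constant is stable under $L^1$-small perturbations of $V$. In dimension $1$ the matter is much simpler because the constant $2$ arises from an explicit shallow-well computation depending only on $\int V$, which is insensitive to the mollification. Granting the $d=2$ stability statement from \cite{cf:CKMV}, the entire argument reduces to the soft monotonicity/sandwiching scheme described above.
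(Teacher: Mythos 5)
Your proposal is correct and takes essentially the same route as the paper: the paper does not write out a proof, remarking only that the result follows from \cite[Theorem 6.1]{cf:CKMV} ``using monotonicity in $V$ of the free energy,'' which is precisely the smooth-sandwiching scheme $V_-^\epsilon\le V\le V_+^\epsilon$ you describe. Your additional discussion of the $d=2$ stability of the exponential rate under $L^1$-small mollifications of $V$ is exactly the technical point the paper leaves implicit under the word ``straightforward,'' and it is indeed what one must check against the statement of \cite[Theorem 6.1]{cf:CKMV} before concluding.
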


\begin{rem} \rm This model (especially in the compactly supported case) is the continuous equivalent of the discrete model for homogeneous pinning on a defect line (see \cite[Chapter 2]{cf:Book} for an account on the subject), and Theorem \ref{comp} is completely analogous to results obtained in the discrete. The discrete model has the particularity of being exactly soluble (see \cite{cf:Fisher}). The techniques used for the discrete model, based on renewal processes, do not seem to be adaptable in our setup.
\end{rem}

In the present study, we pay a particular attention to potentials $V$ with power law decay at infinity
i.e.\ such that there exists $\theta>0$ with 
\begin{equation}\label{poldecay}
 V(x)\asymp \|x\|^{-\theta} \text{ as } x\to \infty,
\end{equation}
where $f(x)\asymp g(x)$ as $x\to \infty$ means that there exist constants $R$ and $c$ such that
\begin{equation}
 c^{-1} f(x)\le g(x) \le c f(x) \quad \forall x,\  \|x\|\ge R.
\end{equation}
In this particular setup, the phase transition is different from the one observed 
for the case of compactly supported case.
We prove that in dimension $d \ge 3$, whether $h_c$ is equal to zero depends only on the exponent $\theta$. Furthermore, when the values of $\theta$ varies, so does the critical exponent which can take any value in $(1,\infty)$.

\begin{theorem}\label{pinning}
 For $d\ge 3$ we have
\begin{equation}\label{asymp0}
\theta> 2\Rightarrow h_c(V)>0.
\end{equation}
And when $h>0$ is small enough
\begin{equation}
 \lim_{t\to\infty} Y_t<\infty.
\end{equation}

Moreover when $\theta<2$
\begin{equation}\label{asymp1}
\tf(h)\asymp h^{\frac{2}{2-\theta}}. 
\end{equation}
\end{theorem}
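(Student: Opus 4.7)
The plan is to treat the two parts of Theorem~\ref{pinning} with complementary probabilistic tools: Khasminskii's lemma for part~(1), and a scaling identity combined with an energy/entropy competition for part~(2). In both cases the crucial input is the transience of Brownian motion in $d\ge 3$, encoded through the Green function $G(x,y)=c_d|x-y|^{2-d}$.

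For part~(1), set $F(x):=P\!\left[\int_0^\infty V(B_s+x)\,\dd s\right]=c_d\int V(y)|x-y|^{2-d}\,\dd y$. Since $V$ is bounded and $V(y)\le C(1+\|y\|)^{-\theta}$, a direct Riesz-type convolution estimate shows that $\sup_x F(x)<\infty$: local integrability of the Green kernel holds for $d\ge 3$, while integrability at infinity requires $\theta+d-2>d$, that is $\theta>2$. Khasminskii's lemma then yields $\sup_x P\!\left[\exp\!\left(h\int_0^\infty V(B_s+x)\,\dd s\right)\right]<\infty$ for all $h\in(0,h_0)$ with some $h_0>0$, so $Y^{(h)}_t$ is uniformly bounded in $t$, giving $\tf(h)=0$ and $h_c(V)>0$.

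For the lower bound in part~(2) I would restrict the partition function to trajectories staying in $B(0,R)$ up to time $t$, with $R:=h^{-1/(2-\theta)}$: a standard Dirichlet-eigenvalue estimate gives $P(\|B_s\|\le R\text{ for all }s\le t)\ge \exp(-c_1 t/R^2)$, while continuity and positivity of $V$ on compacts together with $V(x)\ge c\|x\|^{-\theta}$ at infinity ensure $V\ge c_2 R^{-\theta}$ on $B(0,R)$ for $R$ large, yielding
\[
Y^{(h)}_t \,\ge\, \exp\!\left(t\bigl(c_2 h R^{-\theta}-c_1/R^2\bigr)\right);
\]
the choice of $R$ makes both terms of order $h^{2/(2-\theta)}$ and produces $\tf(h)\ge c\,h^{2/(2-\theta)}$. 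For the matching upper bound the strategy is a scaling identity: with $\tilde B_u := \lambda^{-1}B_{\lambda^2 u}$ (again a Brownian motion) and $\lambda:=h^{-1/(2-\theta)}$, one checks that $h\int_0^t V(B_s)\,\dd s = \int_0^{t/\lambda^2}\tilde V(\tilde B_u)\,\dd u$ with $\tilde V(x):=h\lambda^2 V(\lambda x)$, so $\tf(h)=\lambda^{-2}\,\tilde\tf(1)$, the free energy of the rescaled model at $h=1$. The rescaled potential satisfies $\tilde V(x)\le c\|x\|^{-\theta}$ for $\|x\|\ge R_0/\lambda$ but forms a spike of height $\sim\lambda^\theta$ inside $B(0,R_0/\lambda)$. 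Applying Khasminskii's lemma to $\tilde V$ and using the heat kernel to estimate $P[\tilde V(B_s)]$ directly gives
\[
\sup_x P\!\left[\int_0^{t_0}\tilde V(B_s+x)\,\dd s\right]\,\le\, C_1\,\lambda^{\theta-2} + C_2\,t_0^{1-\theta/2},
\]
where the first term comes from the spike and vanishes as $\lambda\to\infty$ since $\theta<2$, while the second can be made smaller than $1/2$ by choosing $t_0$ small but $\lambda$-independent. This gives $\tilde\tf(1)\le (\log 2)/t_0$ uniformly in $\lambda$, hence $\tf(h)\le C\,h^{2/(2-\theta)}$.

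The main obstacle, and the only delicate point in the scheme, is the control of the rescaled spike. The bound $P\bigl[\int_0^{t_0}\tilde V(B_s)\,\ind_{\{\|B_s\|\le R_0/\lambda\}}\,\dd s\bigr]=O(\lambda^{\theta-2})$ rests on the fact that Brownian motion in $d\ge 3$ started at the origin spends an expected time of order $1/\lambda^2$ in $B(0,1/\lambda)$, which exactly cancels the spike's height $\lambda^\theta$ and leaves the vanishing factor $\lambda^{\theta-2}$. This transience is the common thread with part~(1); both bounds would break down in dimensions $d=1,2$, consistently with the qualitatively different picture described in Theorem~\ref{comp}.
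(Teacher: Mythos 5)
Your proof is correct, but it takes a genuinely different route from the paper's. For the delocalization bound \eqref{asymp0}, the paper decomposes $V$ into a geometric sum of indicator potentials $\ind_{\{\|x\|\le 2^n\}}2^{-n\theta}$, applies the Jensen-type product inequality (Lemma~\ref{lemrelou}) to split $Y_t$ across scales, rescales each term to a unit ball, and then invokes Theorem~\ref{comp} (Cranston et al.) for the step potential; you instead compute the Green-function integral $F(x)=c_d\int V(y)|x-y|^{2-d}\,\dd y$, identify $\theta>2$ as exactly the condition making $\sup_x F(x)<\infty$, and close with Khasminskii's lemma. For the upper bound in \eqref{asymp1}, the paper again uses the dyadic decomposition plus Lemma~\ref{lemrelou} and the quantitative free-energy asymptotics from Theorem~\ref{comp}; you replace this with a single Brownian scaling $\tilde B_u=\lambda^{-1}B_{\lambda^2u}$, $\lambda=h^{-1/(2-\theta)}$, which turns $\tf(h)$ into $\lambda^{-2}\tilde\tf(1)$ for a rescaled potential $\tilde V$, and then Khasminskii again controls $\tilde\tf(1)$ uniformly via $\int_0^{t_0}\min(\lambda^\theta,c\,s^{-\theta/2})\,\dd s\lesssim\lambda^{\theta-2}+t_0^{1-\theta/2}$. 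Your lower bound via the Dirichlet eigenvalue in a ball of radius $\sim h^{-1/(2-\theta)}$ is close in spirit to the paper's (which also restricts to a ball and rescales), though the paper compares to Theorem~\ref{comp} rather than invoking $\lambda_1(B_R)=\lambda_1/R^2$ directly. The trade-off: your Khasminskii/scaling route is more self-contained and does not need the sharp asymptotics of Theorem~\ref{comp}, only soft heat-kernel estimates; the paper's route is more modular (it leans on the Cranston--Koralov--Molchanov--Vainberg result already available) and the same decomposition machinery is reused for the $d=1,2$ cases. Two small points to tidy up: (i) the pointwise lower bound $V\ge c_2R^{-\theta}$ on $B(0,R)$ uses an implicit strict positivity of $V$ on compacts (the paper's inequality $c_1\bar V\le V$ makes the same tacit assumption); and (ii) in the lower bound you should take $R=Kh^{-1/(2-\theta)}$ with $K$ large enough so that $c_2K^{-\theta}>c_1K^{-2}$, rather than $K=1$, to be sure the exponent in $\exp(t(c_2hR^{-\theta}-c_1/R^2))$ is actually positive.
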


For the lower dimensional cases, it follows from \cite[Theorem 6.1]{cf:CKMV} and monotony of $\tf$ in $V$ that $h_c=0$ for all $\theta$.
However, some of the features of Theorem \ref{pinning} still hold.
\begin{theorem}
 For $d=2$, $\theta<2$ and $d=1$, $\theta<1$, we have
\begin{equation}\label{asymp2}
 \tf(h)\asymp h^{\frac{2}{2-\theta}}.
\end{equation}
\end{theorem}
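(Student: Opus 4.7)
The heuristic is the same as in the high-dimensional case of Theorem~\ref{pinning}: the Brownian trajectory should concentrate in a ball of radius $R=h^{-1/(2-\theta)}$, obtained by balancing the energetic gain $h R^{-\theta}$ from the polynomial tail of $V$ against the entropic cost $R^{-2}$ of confining Brownian motion to such a ball. The new hypothesis $\theta<d$ (i.e.\ $\theta<1$ for $d=1$ and $\theta<2$ for $d=2$) is exactly the sharp condition that ensures local integrability of $\|x\|^{-\theta}$ and finiteness of the Hardy--Sobolev constant used below. I would prove the two bounds separately.

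\textbf{Lower bound.} This step carries over unchanged from the proof of Theorem~\ref{pinning}(\ref{asymp1}) and uses nothing about the dimension. Using $V(x)\ge c\|x\|^{-\theta}$ for $\|x\|$ large,
\begin{equation}
 Y_t^{(h)}\,\ge\,\exp\!\left(c\,h\,t\,R^{-\theta}\right)\,P\!\left(\sup_{s\le t}\|B_s\|\le R\right),
\end{equation}
and the classical small-ball estimate $P(\sup_{s\le t}\|B_s\|\le R)\ge \exp(-c' t/R^2)$ valid for $t\gtrsim R^2$ yields, after dividing by $t$ and sending $t\to\infty$,
\begin{equation}
 \tf(h)\,\ge\,c\,h R^{-\theta}-c'/R^2.
\end{equation}
Optimising at $R=h^{-1/(2-\theta)}$ gives $\tf(h)\ge c''\,h^{2/(2-\theta)}$.

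\textbf{Upper bound.} Decompose $V\le V_1+V_2$ with $V_1(x)=\|V\|_\infty\,\ind_{\{\|x\|\le 1\}}$ compactly supported and $V_2(x)\le C\|x\|^{-\theta}$ bounded and vanishing at infinity. Cauchy--Schwarz applied to the partition function gives
\begin{equation}
 \tf(h)\,\le\,\tfrac12\,\tf_{V_1}(2h)\,+\,\tfrac12\,\tf_{V_2}(2h).
\end{equation}
By Theorem~\ref{comp} (and trivial rescaling in $h$) one has $\tf_{V_1}(2h)=O(h^2)$ for $d=1$ and $O(\exp(-c/h))$ for $d=2$; both are strictly smaller than $h^{2/(2-\theta)}$ in the stated regime, since $2/(2-\theta)<2$ when $\theta<1$. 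For the second term I would use the variational representation
\begin{equation}
 \tf_{V_2}(h)\,=\,\sup_{\psi\in H^1(\R^d),\,\|\psi\|_2=1}\left(h\int V_2\,\psi^2\,\dd x\,-\,\tfrac12\int|\nabla\psi|^2\,\dd x\right),
\end{equation}
which is standard for $V_2$ bounded and vanishing at infinity, together with the scale-invariant Hardy--Sobolev interpolation inequality
\begin{equation}
 \int \|x\|^{-\theta}\psi^2\,\dd x\,\le\,C\,\|\psi\|_2^{2-\theta}\,\|\nabla\psi\|_2^{\theta},
\end{equation}
valid precisely when $\theta<\min(2,d)$. This reduces the upper bound to maximising $Ch\,u^{\theta/2}-u/2$ in $u=\|\nabla\psi\|_2^2\ge 0$, whose maximum is $C'\,h^{2/(2-\theta)}$.

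\textbf{Main obstacle.} The lower bound is routine. The delicate part is the upper bound in low dimensions: because $h_c=0$, the operator $\Delta/2+hV$ has no spectral gap uniformly in $h$, and one cannot reduce to a compact spatial domain without losing the correct rate. The Hardy--Sobolev route is clean but relies crucially on the sharp condition $\theta<d$: the Hardy--Sobolev constant blows up exactly at the borderlines $\theta=1$ for $d=1$ and $\theta=2$ for $d=2$, corresponding respectively to the non-integrability of $|x|^{-1}$ at the origin on the line and to the logarithmic divergence of the two-dimensional Brownian Green function. This structural phenomenon is the reason the theorem is restricted to $\theta<d$ in low dimensions, rather than the weaker $\theta<2$ available in $d\ge 3$.
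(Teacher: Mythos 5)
Your proof is correct, and it takes a genuinely different route from the paper's. You handle both bounds through the spectral/variational side of the problem: the lower bound by confining the Brownian motion to a ball of the relevant scale $R=h^{-1/(2-\theta)}$ and using the small-ball probability, and the upper bound by Cauchy--Schwarz splitting $V\le V_1+V_2$, applying Theorem~\ref{comp} to the compactly-supported part, and invoking the Rayleigh--Ritz variational formula together with a Caffarelli--Kohn--Nirenberg/Hardy--Sobolev interpolation inequality for the tail. The paper instead stays entirely on the probabilistic side: for $d=1$ it truncates $V$ at the same scale $h^{-1/(2-\theta)}$ and uses the explicit $\bbL_1$ estimate for Brownian local time (the bound \eqref{boundL1} and Proposition~\ref{L1case}); for $d=2$ it decomposes $V$ dyadically as a weighted sum of indicator potentials, uses the weighted-Jensen inequality of Lemma~\ref{lemrelou} to distribute the exponential over the dyadic levels, and after Brownian rescaling reads off the contribution of each level from Theorem~\ref{comp}. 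The paper's lower bound is also a single-scale confinement argument, so there you essentially coincide. What your approach buys is a uniform treatment of $d=1$ and $d=2$, a cleaner conceptual reason for the exponent $2/(2-\theta)$ (it comes out of a one-line optimisation in $\|\nabla\psi\|_2^2$), and a transparent explanation of why $\theta<\min(2,d)$ is the right hypothesis, via the sharp constant in the interpolation inequality. What the paper's route buys is self-containedness within elementary Brownian estimates: it avoids appealing to functional-analytic inequalities whose validity at the borderline is itself nontrivial, and it reuses the already-established step-potential result (Theorem~\ref{comp}) as the unique external input. One small imprecision in your lower bound: at the optimal scale $R=h^{-1/(2-\theta)}$ the energy gain and entropic cost are of the same order, so to get a positive constant you should take $R=Ah^{-1/(2-\theta)}$ and then use $2-\theta>0$ to choose $A$ large enough that $cA^{-\theta}>c'A^{-2}$; this is cosmetic.
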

For the sake of completeness, we also present  the result for the case $d=1$, $\theta>1$. The following is the generalization of a result proved for compactly supported smooth function in \cite[Theorem 6.1]{cf:CKMV}.
\begin{proposition}\label{L1case}
For $d=1$, and $V\in \bbL_1(\bbR)$, $V$ continuous non--negative we have
\begin{equation}\label{L11}
 \tf(h)\sim \frac{\|V\|_{\bbL_1(\R)}^2}{2}h^2.
\end{equation}
\end{proposition}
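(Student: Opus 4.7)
The plan is to combine a Brownian scaling identity with the Feynman--Kac variational representation of the free energy. Using $B_s \stackrel{\cL}{=} h^{-1}\tilde B_{h^2 s}$, the change of variable $u=h^2 s$ gives
\begin{equation*}
Y^{(h)}_t = P\left[\exp\left(\int_0^{h^2 t} V^{(h)}(\tilde B_u)\,\dd u\right)\right],\qquad V^{(h)}(x):=h^{-1}V(x/h),
\end{equation*}
with $\|V^{(h)}\|_{\bbL_1(\R)}=\|V\|_{\bbL_1(\R)}$, and hence $\tf(h) = h^2\,\tilde{\tf}(V^{(h)})$, where $\tilde{\tf}(W) := \lim_{\tau\to\infty}\tau^{-1}\log P[\exp(\int_0^\tau W(B_u)\,\dd u)]$ denotes the free energy at unit pinning strength (well defined by Proposition~\ref{th:freeee}, since $V^{(h)}$ inherits the assumed regularity from $V$). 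It therefore suffices to show that $\tilde{\tf}(V^{(h)})\to \tfrac{1}{2}\|V\|_{\bbL_1(\R)}^2$ as $h\to 0$.

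By Feynman--Kac and standard spectral theory for one-dimensional Schr\"odinger operators with non-negative continuous $L^1$ potentials vanishing at infinity, $\tilde{\tf}(W)$ coincides with $\sup\sigma(\tfrac{1}{2}\Delta + W)$, which in $d=1$ is strictly positive and attained by a bound state whenever $W\not\equiv 0$. Equivalently,
\begin{equation*}
\tilde{\tf}(W) = \sup\left\{\int W\psi^2 - \tfrac{1}{2}\int(\psi')^2 : \psi\in H^1(\R),\ \|\psi\|_2=1\right\}.
\end{equation*}
For the upper bound I would invoke the sharp one-dimensional Gagliardo--Nirenberg inequality $\|\psi\|_\infty^2 \le \|\psi\|_2 \|\psi'\|_2$, combined with $\int W\psi^2 \le \|W\|_{\bbL_1(\R)}\,\|\psi\|_\infty^2$, so that with $x := \|\psi'\|_2 \ge 0$
\begin{equation*}
\tilde{\tf}(W) \le \sup_{x\ge 0}\left(\|W\|_{\bbL_1(\R)}\, x - \tfrac{1}{2}x^2\right) = \tfrac{1}{2}\|W\|_{\bbL_1(\R)}^2.
\end{equation*}
Applied to $W=V^{(h)}$, this yields $\tf(h) \le \tfrac{1}{2}h^2\,\|V\|_{\bbL_1(\R)}^2$ uniformly in $h>0$.

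For the matching lower bound I would insert the Laplace trial function $\psi_\alpha(x)=\sqrt{\alpha/2}\,e^{-\alpha|x|/2}$, for which $\|\psi_\alpha\|_2=1$ and $\tfrac{1}{2}\int(\psi_\alpha')^2 = \alpha^2/8$. The change of variable $y=x/h$ gives
\begin{equation*}
\int V^{(h)}\psi_\alpha^2\,\dd x = \tfrac{\alpha}{2}\int V(y)\,e^{-\alpha h|y|}\,\dd y \longrightarrow \tfrac{\alpha}{2}\|V\|_{\bbL_1(\R)} \quad (h \to 0),
\end{equation*}
by dominated convergence. Optimizing at $\alpha=2\|V\|_{\bbL_1(\R)}$ yields $\liminf_{h\to 0}\tilde{\tf}(V^{(h)}) \ge \tfrac{1}{2}\|V\|_{\bbL_1(\R)}^2$, matching the upper bound. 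The main point requiring care is the identification of $\tilde{\tf}(W)$ with the top of the Schr\"odinger spectrum, together with the associated Rayleigh--Ritz formula. This relies on the existence of an $L^2$ ground state for $\tfrac{1}{2}\Delta + W$ (automatic in $d=1$ for non-trivial non-negative $L^1$ potentials) and on standard semigroup growth arguments, both of which are already contained in the functional-analytic framework of \cite[Theorem~6.1]{cf:CKMV}, of which the present proposition is the natural $L^1$ extension.
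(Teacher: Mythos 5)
Your proof is correct, but it takes a genuinely different route from the paper. The paper gets the upper bound with a short, fully probabilistic argument: the occupation--times formula $\int_0^t V(B_s)\dd s = \int L_t^x V(x)\dd x$, Jensen's inequality against the probability measure $V(x)\dd x/\|V\|_{\bbL_1(\R)}$, the stochastic domination $L_t^x \preceq L_t^0$, and the exact law $L_t^0 \stackrel{\cL}{=}\sqrt{t}\,|\cN(0,1)|$, which gives $Y_t \le 2\exp(t\,h^2\|V\|^2_{\bbL_1(\R)}/2)$ directly. For the lower bound the paper does not re-derive anything: it sandwiches $V$ from below by a smooth compactly supported $\check V$ with nearly the same $\bbL_1$ norm, and quotes \cite[Theorem~6.1]{cf:CKMV}. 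You instead use Brownian scaling to turn $\tf(h)$ into $h^2\,\tilde\tf(V^{(h)})$ with $V^{(h)}=h^{-1}V(\cdot/h)$, then run a spectral argument both ways: the sharp one-dimensional inequality $\|\psi\|_\infty^2 \le \|\psi\|_2\|\psi'\|_2$ plus the Rayleigh--Ritz formula give the uniform upper bound $\tilde\tf(V^{(h)}) \le \tfrac12\|V\|^2_{\bbL_1(\R)}$, and the Laplace trial function $\psi_\alpha$ (whose equality in Gagliardo--Nirenberg makes the choice natural) gives the matching lower bound via dominated convergence. Your computations are all correct: $\|\psi_\alpha\|_2=1$, $\tfrac12\int(\psi_\alpha')^2=\alpha^2/8$, and the optimum at $\alpha=2\|V\|_{\bbL_1(\R)}$ reproduces $\tfrac12\|V\|_{\bbL_1(\R)}^2$. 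What the paper's approach buys is that the upper bound is elementary and requires no spectral machinery at all, and the lower bound is delegated cleanly to the compactly-supported case. What your approach buys is a self-contained derivation of the lower bound (you do not need the closed-form result of \cite{cf:CKMV} Theorem~6.1), and a more transparent explanation of where the constant $\tfrac12\|V\|_{\bbL_1}^2$ comes from, namely the sharp Gagliardo--Nirenberg constant. The one place you should be careful is the identification of $\tilde\tf(W)$ with $\sup\sigma(\tfrac12\Delta+W)$: the relevant Feynman--Kac semigroup acts on the constant function $1\notin L^2$, so relating the free energy to the $L^2$ spectral top requires a localization or ground-state-transform argument; as you note this is established in \cite{cf:CKMV} for the class of potentials considered there, and $V^{(h)}$ (continuous, bounded for fixed $h$, non-negative, vanishing at infinity) falls within the class where that identification is standard, so there is no circularity.
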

 In the case $d=2$, $\theta\ge 2$, it can be checked that monotonicity of the free energy  and \eqref{freeen} implies that $h_c>0$ and that $\tf(h)$ decays faster that any polynomial around $h=0$. In the case $d=1$, $\theta=1$, monotonicity again implies that $\tf(h)$ behaves like $h^{2+o(1)}$.

\begin{rem} \rm
 In \cite{cf:CKMV}, the critical behavior of the free--energy is computed (in a sharp way) for every dimension, even in the case where $d\ge 3$ as it can be done in the discrete case. As it will be seen in the proof, the method for getting critical exponents in the present paper fails to give any result when $h_c>0$. However, it would be natural to think that for some value of $\theta>2$, the critical exponent for the free energy is independent of $d$ when $d$ is large enough.
\end{rem}

\subsection{Brownian polymers in brownian environment}

\subsubsection{Weak and strong disorder}
We describe now the model of Brownian polymer introduced by Rovira and Tindel in \cite{cf:RT}, and we introduce the results we found for that model.
Let $\left(\go(t,x)\right)_{(t\in \R_+,x\in \R^d)}$ be a real Gaussian-Field with covariance function
\begin{equation}
\bP \left[\go(t,x)\go(s,y)\right]=:(t\wedge s) Q(x-y). 
\end{equation}
Where $Q$ is a continuous non-negative function, going to zero at infinity.
Informally, the field can be seen as an addition in time of independent, infinitesimal translation invariant fields $\go(\dd t,x)$ with covariance function $Q(x-y)\dd t$. To avoid normalization, we assume $Q(0)=1$. We define the random Hamiltonian formally as
\begin{equation}
 H_{\go,t}(B)=H_t(B):=\int_0^{t}\go(\dd s,B_s).
\end{equation}
For a straight definition of $H_t$ we refer to \cite[Section 2]{cf:BTV}, where a rigorous meaning of the above formula is given. However, to have a meaningful and cheap definition it is sufficient to consider that $\left(H_t(B)\right)$ is a centered Gaussian vector indexed by the continuous function $B\in C [0,t]$, with covariance matrix
\begin{equation}
\bP\left[H_t(B^{(1)})H_t(B^{(2)})\right]:=\int_{0}^{t}Q(B^{(1)}_s-B^{(2)}_s)\dd s.
\end{equation}

One defines the (random) polymer measure for inverse temperature $\gb$ as a transformation of the Wiener measure $P$ as follows
\begin{equation}
 \dd \mu^{\gb,\go}_t(B):=\frac{1}{Z^{\gb,\go}_t}\exp(\gb H_t(B))\dd P(B),
\end{equation}
where $Z^{\gb,\go}_t$ is the partition function of the model
\begin{equation}
 Z^{\gb,\go}_t:=P \left[\exp(\gb H_t)\right].
\end{equation}
It is  more convenient to work with the renormalized partition function
\begin{equation}
 W^{\gb,\go}_t=W_t:=P\left[\exp\left(\int_0^{t} \gb\go(\dd s,B_s)-\gb^2/2\dd s\right)\right].
\end{equation}
It can be checked without much effort that $W_t$ is a positive martingale with respect to 
\begin{equation}
 \mathcal F_t:=\sigma\left\{ \go_s, s\le t\right\}.
\end{equation}
Therefore it converges to 
a limit $W_{\infty}$. An argument used by Bolthausen in a discrete setup \cite{cf:B}, can be adapted in our setup to show that
\begin{equation}
\bP\left\{ W_{\infty}:=0\right\}\in\left\{0,1\right\}.
\end{equation}
To know wether or not $W_\infty$ is degenerate is a crucial issue for the asymptotic behavior of $\mu_{\gb,\go}$. 
It is known (in the discrete setup see for example \cite{cf:CY}, the argument can  probably be adapted for the present model) that when $W_{\infty}$ is non--trivial, the behavior of $B$ under $\mu_t$ when $t$ is large is essentially diffusive, this situation is referred to as {\sl weak disorder}. The other situation is referred to as {\sl strong disorder}. In the Gaussian setup, a partial annealing argument can show that increasing $\gb$ increases the influence of disorder, so that there exists a critical value $\gb_c$ separating the two phases, i.e., that there exists $\gb_c\in[0,\infty)$ such that
\begin{equation} \begin{split}
\gb\in(0,\gb_c)\ &\Rightarrow \text{ weak disorder holds}, \\
\gb >\gb_c\ &\Rightarrow \text{ strong disorder holds}.
                 \end{split}
\end{equation}

\medskip

From physicists point of view, it is however more natural to have a definition of strong disorder in free energy term.
The quantity of interest is the difference between quenched and annealed free energy. 
\medskip

\begin{proposition}
 The a.s. limit
\begin{equation}
 p(\gb):=\lim_{t\rightarrow \infty}\frac{1}{t}\log W_t=\lim_{t\rightarrow \infty}\frac{1}{t}\bP\left[\log W_t\right]=:\lim_{t\rightarrow \infty} p_t(\gb).
\end{equation}
exists and is almost surely constant. The function $\gb\mapsto(\gb)$ is non positive and non increasing.
\end{proposition}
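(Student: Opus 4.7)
The plan is to prove this in three main stages: (i) existence of the limit of the expected free energy $t^{-1}\bP[\log W_t]$ via superadditivity, (ii) upgrade to almost-sure convergence via Gaussian concentration, and (iii) monotonicity and non-positivity.

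First I would establish the superadditivity $\bP[\log W_{t+s}] \geq \bP[\log W_t] + \bP[\log W_s]$. Using the Markov property of $B$ and the translation invariance in time of $\go$, I can decompose
\begin{equation*}
W_{t+s}\, =\, P\left[ \exp\!\left(\int_0^t \gb\go(\dd u,B_u)-\tfrac{\gb^2}{2}\dd u\right) W^{(B_t),s}_{\theta_t\go}\right],
\end{equation*}
where $W^{(x),s}_{\tilde\go}:=P_x[\exp(\int_0^s \gb\tilde\go(\dd u,B_u)-\tfrac{\gb^2}{2}\dd u)]$ and $\theta_t\go(\cdot,\cdot):=\go(t+\cdot,\cdot)$. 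Dividing and multiplying by $W_t$ and applying Jensen's inequality (concavity of $\log$) to the measure $\mu^{\gb,\go}_t$ gives
\begin{equation*}
\log W_{t+s}\,\ge\,\log W_t+\mu^{\gb,\go}_t\!\left[\log W^{(B_t),s}_{\theta_t\go}\right].
\end{equation*}
Taking $\bP$-expectation, the crucial observation is that $\theta_t\go$ is independent of $\mathcal F_t$ (and thus of $\mu^{\gb,\go}_t$) by independent increments of the field in time; using translation invariance in space, $\bP[\log W^{(x),s}_{\theta_t\go}]=\bP[\log W_s]$ for every $x$. Therefore the second term averages to $\bP[\log W_s]$ and superadditivity follows. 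Fekete's lemma then yields $p(\gb)=\lim_t t^{-1}\bP[\log W_t]=\sup_t t^{-1}\bP[\log W_t]$, and the annealed bound $\bP[\log W_t]\le\log\bP[W_t]=0$ (since $W_t$ is a mean-one martingale) gives $p(\gb)\le 0$.

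For the almost-sure convergence, I would invoke Gaussian concentration. Viewing $\log W_t$ as a functional of the Gaussian field $\go$ restricted to $[0,t]$, one checks that its Cameron–Martin gradient has $L^2$-norm bounded (uniformly in $\omega$) by something of order $\gb\sqrt{t}$; the Borell–Tsirelson–Ibragimov–Sudakov inequality then gives
\begin{equation*}
\bP\!\left(|\log W_t-\bP[\log W_t]|\ge u\right)\,\le\, 2\exp\!\left(-\frac{c\,u^2}{\gb^2 t}\right).
\end{equation*}
Taking $u=\gep t$ and applying Borel–Cantelli along the sequence $t=n\in\N$ yields $\bP$-a.s.\ convergence of $n^{-1}\log W_n$ to $p(\gb)$. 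To extend to continuous $t$, I would use the fact that between consecutive integers the fluctuation of $\log W_t$ is controlled by a similar Gaussian concentration argument applied to the increment on $[n,n+1]$, so that $\sup_{t\in[n,n+1]}|\log W_t-\log W_n|/n\to 0$ a.s.

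Finally, non-positivity is already covered, and monotonicity in $\gb$ I would obtain by a Gaussian integration-by-parts computation: writing $p(\gb)$ as $\lim t^{-1}\bP[\log W_t]$, the derivative with respect to $\gb$ equals $\lim t^{-1}\bP[\mu_t\otimes\mu_t[\int_0^t Q(B^{(1)}_u-B^{(2)}_u)\dd u]]-\gb$, and an argument analogous to the one in Proposition~\ref{th:monotonicity} (or by a direct convexity computation using that $\gb\mapsto \log\bP[\exp(\sqrt{t}\,\gb H_t-\tfrac12\gb^2 t)]$ has certain monotonicity properties after a Girsanov shift) shows this is non-positive. The main obstacle is the a.s.\ passage from discrete to continuous times, where one must verify that the Gaussian Lipschitz estimate is uniform enough to control the oscillations on each unit interval.
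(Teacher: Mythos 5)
The paper does not actually supply a proof of this proposition: existence of the limit and the identification of the a.s.\ and $\bbL^1$ limits are outsourced to \cite[Lemma~2.4, Proposition~2.6]{cf:RT}, non-positivity is the one-line Jensen bound $\bP[\log W_t]\le\log\bP[W_t]=0$, and the monotonicity in $\gb$ is not argued at all in this chapter (it is the Brownian analogue of Proposition~\ref{th:monotonicity}, proved there by FKG following \cite{cf:CY}). So you are filling in a gap the paper refers elsewhere for, and your outline --- superadditivity of $\bP[\log W_t]$ via the Markov property of $B$, the independent time-increments of $\go$, and Jensen under $\mu_t$; then Borell--TIS concentration, Borel--Cantelli along integers, and oscillation control on unit intervals --- is the standard route and is essentially what Rovira and Tindel do. That part is sound.

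The one concrete error is in the monotonicity computation. With $p_t(\gb)=t^{-1}\bP[\log W_t]$ one has $\partial_\gb p_t(\gb)=t^{-1}\bP\mu_t[H_t]-\gb$, and Gaussian integration by parts gives $\bP\mu_t[H_t]=\gb t-\gb\,\bP\bigl[\mu_t^{\otimes 2}\bigl[\int_0^t Q(B_s^{(1)}-B_s^{(2)})\dd s\bigr]\bigr]$, hence
\begin{equation*}
\partial_\gb p_t(\gb)\,=\,-\frac{\gb}{t}\,\bP\left[\mu_t^{\otimes 2}\left[\int_0^t Q\left(B_s^{(1)}-B_s^{(2)}\right)\dd s\right]\right]\,\le\,0,
\end{equation*}
since $Q\ge 0$. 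Your displayed formula, $\lim_t t^{-1}\bP[\mu_t^{\otimes 2}[\int_0^t Q\,\dd u]]-\gb$, is missing the factor $\gb$ multiplying the overlap and, as written, is not manifestly $\le 0$: the normalized overlap lies in $[0,1]$ (since $Q\le Q(0)=1$) while $\gb$ can be small. The parenthetical ``Girsanov shift / convexity'' alternative is too vague to serve as a substitute. With the corrected identity --- or with the FKG argument from Proposition~\ref{th:monotonicity}, which adapts verbatim by conditioning on the path $B$ --- each $p_t$ is non-increasing in $\gb$, hence so is the pointwise limit $p$, and the claim follows.
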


\medskip

We can define 
\begin{equation}
\bar{\gb}_c:=\sup\left\{ \gb>0 \text{ such that } p(\gb)=0\right\}. 
\end{equation}
It is obvious from the definitions that $\gb_c\le \bar\gb_c$.
\medskip

For a proof of existence of the limits above and the equality, we refer to \cite[Lemma 2.4, Proposition 2.6]{cf:RT}.
The non-positivity follows from Jensen inequality
\begin{equation}
 \bP\left[\log W_t\right]\le \log \bP\left[W_t\right]=0.
\end{equation}

It can be shown (for result in the discrete setup see \cite{cf:CH_ptrf, cf:CSY}) that an exponential decay of $W_t$ corresponds to a significant localization property of the trajectories. More precisely, under this condition, it can be shown that two paths chosen independently with law $\mu^{\gb,\go}_{t}$ tend to spend a time proportional to $t$ in the same neighborhood.
For example whenever the right hand side exists (i.e. everywhere except maybe for countably many $\gb$ as $p(\gb)+\gb^2/2$ is convex) we have
\begin{equation}
 \frac{\dd p}{\dd \gb}(\gb):=\lim_{t\to\infty}\frac{1}{t\gb}\bP\left[\mu_{\gb,t}^{\otimes 2}\left(\int_{0}^t\left(Q(B^{(1)}_s,Q(B^{(2)}_s\right)\right)\right].
\end{equation}
It has become customary to refer to this situation as {\sl very strong disorder}.

It is widely expected that the two notion of {\sl strong disorder} coincide outside of the critical point and that we have $\gb_c=\bar \gb_c$. However it remains an unproved conjecture. In \cite{cf:CV} and \cite{cf:Lac} is has been show that for model in $\Z^d$ with i.i.d.\ site disorder, very strong disorder holds at all temperature, and it has been known well before that there is a non trivial phase transition as soon as $d\ge 3$. The same is expected to hold for in continuous time, if the correlation function $Q$ decays sufficiently fast at infinity. In this paper we place ourselves in the case where $Q$ has power law decay and investigate the behavior of the free energy for small $\gb$. Unless otherwises stated we will consider that there exists $\theta>0$ such that
\begin{equation}\label{Qpold}
 Q(x)\asymp \|x\|^{-\theta}.
\end{equation}
We have obtained the following conditions for strong/weak disorder to hold.
\begin{theorem}\label{ddd333}
For $d\ge3$ we have:\begin{itemize}
                   \item $\gb_c$ and $\bar \gb_c>0$ if $\theta>2$.
                   \item $\bar \gb_c=0$ if $\theta<2$.
                  \end{itemize}
For $d=2$, $\gb_c=\bar\gb_c=0$ if $\theta<2$. For $d=1$, $\gb_c=\bar \gb_c=0$ for any $\theta$.
\end{theorem}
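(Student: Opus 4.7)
The proof hinges on the second-moment identity
\[
\bP[W_t^2] \, = \, P^{\otimes 2}\!\left[\exp\!\left(\gb^2 \int_0^t Q(B^{(1)}_s-B^{(2)}_s)\,\dd s\right)\right],
\]
obtained by applying the Gaussian exponential-moment formula to the field $\go$ integrated along two independent Brownian paths. After the change of variable $(B^{(1)}-B^{(2)})/\sqrt 2$ (a standard Brownian motion), this is exactly the partition function $Y^{(\gb^2)}_t$ of the pinning model with potential $\tilde V(x):=Q(\sqrt 2\,x)$, which has the same polynomial tail $\asymp\|x\|^{-\theta}$ as $Q$. All the pinning results already proved in the paper therefore transfer directly to this auxiliary system and drive both the weak-disorder and the strong-disorder parts of the theorem.

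\noindent\textbf{Weak disorder for $d\ge 3$ and $\theta>2$.} Theorem \ref{pinning} yields $h_c(\tilde V)>0$ and $\lim_t Y^{(h)}_t<\infty$ whenever $h<h_c(\tilde V)$. Hence $\sup_t\bP[W_t^2]<\infty$ for every $\gb<\sqrt{h_c(\tilde V)}$, so the positive martingale $W_t$ is $L^2$-bounded, converges to a limit $W_\infty$ with $\bP[W_\infty]=1$, and the zero-one law forces $W_\infty>0$ almost surely, i.e.\ $\gb<\gb_c$. Since $W_\infty$ is non-degenerate we have $t^{-1}\log W_t\to 0$ a.s., so $p(\gb)=0$ in this range, giving $\bar\gb_c>0$ as well.

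\noindent\textbf{Very strong disorder (all remaining cases).} It suffices to prove $p(\gb)<0$ for every $\gb>0$, which yields both $\bar\gb_c=0$ and a fortiori $\gb_c=0$. I would combine the fractional-moment bound
\[
p(\gb) \, \le \, \liminf_{T\to\infty}\frac{1}{\theta_0 T}\log \bP[W_T^{\theta_0}],\qquad \theta_0\in(0,1),
\]
with H\"older's inequality against an auxiliary Gaussian law $\tilde\bP$ obtained by translating $\go(\dd s,x)$ by $-\gl(s,x)$ on a space--time corridor of length $T$ and spatial width $R$:
\[
\bP[W_T^{\theta_0}] \, \le \, \left(\tilde\bP\left[\left(\tfrac{\dd \bP}{\dd \tilde \bP}\right)^{\theta_0/(1-\theta_0)}\right]\right)^{1-\theta_0}\bigl(\tilde\bP[W_T]\bigr)^{\theta_0}.
\]
The first factor is a Gaussian density, computable as the exponential of a quadratic form in $\gl$ built from the covariance $Q$; the second is a pure Wiener expectation of a path tilted by $-\gb\int_0^T \gl(s,B_s)\,\dd s$, which counts the time the polymer spends inside the corridor. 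Taking $T$ of the order of the correlation length of the associated pinning model, $R\asymp\sqrt T$, and $\gl$ a small deterministic shift calibrated in terms of $\gb$, $d$ and $\theta$, both factors can be driven below $1$ simultaneously, giving $\bP[W_T^{\theta_0}]<1$ and hence $p(\gb)<0$. This is the continuous analogue of the strategy of \cite{cf:Lac,cf:DGLT}.

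\noindent\textbf{Main obstacle.} The calibration of $T$, $R$, and $\gl$ in the very-strong-disorder step is the hard part. The corridor has to be large enough for the Brownian motion to remain inside with overwhelming probability, yet small enough for the Gaussian Radon--Nikodym cost (which depends non-locally on $Q$) to stay bounded; matching these two constraints is what produces the $\gb^{-4/(2-\theta)}$ scaling inherited from the pinning model. For $d=1$ with $\theta\ge 1$ one enters the ``$L^1$-regime'' of Proposition \ref{L1case}, where the pinning free energy scales as $\gb^{4}$ rather than $\gb^{4/(2-\theta)}$, and the argument has to be adjusted accordingly. Finally, to turn a one-shot fractional-moment bound into $p(\gb)<0$ rather than merely a bound on a single moment, I would invoke the Gaussian concentration of $\log W_t$ available in the spirit of \cite{cf:CH_ptrf,cf:CKMV}.
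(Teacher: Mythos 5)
Your second-moment identity, its reading as a pinning partition function in the potential $Q(\sqrt 2\,\cdot)$, and the weak-disorder conclusion for $d\ge3$, $\theta>2$ are all correct and match the paper exactly. The gap is in the very-strong-disorder step, where the conclusion ``$\bP[W_T^{\theta_0}]<1$ and hence $p(\gb)<0$'' does not follow. The fractional-moment bound reads $p(\gb)\le\liminf_{t\to\infty}\frac1{\theta_0 t}\log\bP[W_t^{\theta_0}]$; a negative value of $\frac1T\log\bP[W_T^{\theta_0}]$ at a single scale $T$ does not control this limit and in particular does not rule out $p(\gb)=0$. Concentration of $\log W_t$ does not repair this: concentration controls fluctuations of $\log W_t$ around its mean, not the asymptotic behavior of the mean itself.

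The ingredient you are missing is the coarse-graining that turns a one-block estimate into exponential decay of $\bP[W_t^{\gga}]$ as $t\to\infty$. With block length $T\asymp\gb^{-4/(2-\theta)}$ (your scaling is right) and total time $t=Tn$, the paper decomposes $W_t$ over coarse-grained trajectories $Y=(y_1,\dots,y_n)\in(\Z^d)^n$ recording the cell $I_{y_k}$ of side $\sqrt T$ visited at time $kT$, estimates each term $W_{(y_1,\dots,y_n)}$ via H\"older against a $Y$-\emph{dependent} tilted Gaussian measure $\tilde\bP_Y$ lowering $\go$ on the corridor determined by $Y$, and then uses the product structure of the tilt over blocks to obtain
\begin{equation*}
\bP\bigl[W_{Tn}^{\gga}\bigr]\;\le\; e^{n/2}\,\biggl[\,\sum_{y\in\Z^d}\Bigl(\max_{x\in I_0}\,P_x\Bigl[\exp\Bigl(\gb\,\tilde\bP_1\Bigl[\int_0^T\go(\dd s,B_s)\Bigr]\Bigr)\ind_{\{B_T\in I_y\}}\Bigr]\Bigr)^{\gga}\,\biggr]^{n}.
\end{equation*}
Exponential decay in $n$, and hence $p(\gb)\le-1/T<0$, follows once the bracketed sum is driven below $e^{-1/2}$. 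Two features absent from your sketch are essential here: the sum over the \emph{endpoint cell} $y$ rather than a fixed endpoint, and the iteration over $n$ blocks that makes the exponent grow linearly in $t$. Your one-shot bound $\bP[W_T^{\theta_0}]<1$ discards both.
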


\begin{rem} \rm In the cases we have left unanswered, namely $d=2,\ \theta\ge 2$ or $d=3$, $\theta=2$, the technique used for the two dimensional discrete case (see \cite{cf:Lac}) can be adapted to prove that $\bar \gb_c=0$. Since the method is relatively heavy, and very similar to what is done in the discrete case, we do not develop it here.
 \end{rem}

In the cases where $\bar \gb_c=0$, we obtained a sharp bound on the free energy
\begin{theorem}\label{dddd4444}
For $d\ge 2$, $\theta<2$, or $d=1$, $\theta<1$ we have
\begin{equation}
 p(\gb)\asymp -\gb^{\frac{2}{2-\theta}}.
\end{equation}
For $d=1$, $Q\in \bbL_1(\R)$ (with no other assumption on the decay), we have
\begin{equation}
p(\gb)\asymp -\gb^{4}. 
\end{equation}
\end{theorem}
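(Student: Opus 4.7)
}

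The plan is to prove the lower bound on $p(\gb)$ (i.e.\ the upper bound on $|p(\gb)|$) via a second moment argument combined with a percolation/coarse-graining step, and the matching upper bound (i.e.\ the very-strong-disorder estimate from above) via the fractional moment / change-of-measure method developed in \cite{cf:Lac, cf:GLT, cf:GLT_marg}. The crucial bridge between the two parts of the paper is the identity
\begin{equation}
 \bP[W_t^2] \, =\, P^{\otimes 2}\!\left[\exp\!\left(\gb^2\int_0^t Q(B^{(1)}_s-B^{(2)}_s)\dd s\right)\right],
\end{equation}
since $B^{(1)}-B^{(2)}$ is, up to a factor $\sqrt 2$, a $d$-dimensional Brownian motion. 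After this change of variable, the right-hand side is (the exponential of $t$ times) the free energy of the homogeneous Brownian pinning model associated with the potential $V=Q$, at pinning parameter $h=\gb^2$. Theorem \ref{pinning} and Proposition \ref{L1case} therefore furnish precisely the growth rate of the second moment in each of the cases under consideration: it is $\asymp \gb^{4/(2-\theta)}$ when $\theta<2$ and $d\ge 2$, or $\theta<1$ and $d=1$, and $\asymp\gb^4$ in the $L^1$ case.

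For the lower bound on $p(\gb)$: fix $T=T(\gb)$ of order $\gb^{-2/(2-\theta)}$ (respectively $\gb^{-4}$ in the $L^1$ case), small enough so that $\bP[W_T^2]$ is bounded by a constant $C_0$ independent of $\gb$. By Paley-Zygmund this entails $\bP(W_T\ge 1/2)\ge c_0>0$ uniformly in $\gb$. I would then run a directed percolation / coarse-graining argument in the spirit of Section \ref{lb11}: tile $[0,\infty)\times\R^d$ by slabs of time length $T$ and spatial width $\sqrt{T}$, and declare a slab ``good'' if the restricted renormalized partition function associated to crossing the slab between two fixed corner regions is at least $\tfrac12$ of its mean. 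For $c_0$ larger than the threshold of the appropriate directed lattice percolation, an infinite open cluster starting from the origin exists with positive probability, and restricting $W_t$ to trajectories that follow such a cluster gives $(1/t)\log W_t\ge -c\,T^{-1}$ in the limit. Substituting the value of $T(\gb)$ produces the announced lower bound on $p(\gb)$.

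For the matching upper bound I follow the fractional moment strategy. Fix $\gamma\in(0,1)$ and write $p(\gb)\le (\gamma t)^{-1}\log\bP[W_t^\gamma]$. Decompose $W_t$ according to which coarse-grained tube of width $\sqrt T$ the Brownian path lies in on successive time intervals of length $T$ (the continuous analogue of the decomposition in Section \ref{onedim} and \cite{cf:Lac}). Applying subadditivity of $x\mapsto x^\gamma$ one is reduced to estimating the fractional moment of a single-slab partition function. On each slab I tilt the law of $\go$ by introducing a local perturbation of the Gaussian field supported in the relevant tube: by H\"older the change-of-measure cost and the tilted first moment must be traded off. For $\theta<2$ (resp.\ $\theta<1$ in $d=1$) the spatial decay of $Q$ makes it possible to choose a uniform shift of the covariance structure of $\go$ within the tube (analogous to the shift of \S~\ref{onedim}) whose $L^2$-cost is $O(1)$ per slab and whose effect is to bias the pinning-type Hamiltonian by a quantity of order $\gb\,T\cdot T^{-\theta/2}\cdot \gd_T$, where $\gd_T$ is the size of the tilt; optimising in $\gd_T$ and in $T$ yields $T(\gb)\asymp\gb^{-2/(2-\theta)}$ and $p(\gb)\le -c\,T(\gb)^{-1}$. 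The $L^1$ case $d=1$ is handled identically with $\theta$ replaced by $+\infty$ in the scaling, leading to $T(\gb)\asymp\gb^{-4}$.

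The main obstacle is the upper bound step, more precisely the verification that, after the change of measure, the tilted expectation of the single-slab partition function is genuinely small uniformly in the starting point of the tube. In the discrete setting of \cite{cf:Lac} this relies on fine local-limit estimates for the simple random walk and on summable tail bounds for transverse fluctuations; in the continuous Brownian setting one needs the corresponding Gaussian estimates and a careful treatment of the (non-compact) correlation kernel $Q$ and of the Gaussian nature of $\go$, using the Cameron-Martin formula to compute the Radon-Nikodym derivative of the tilt. Once these continuous-time analogues are set up, the combinatorial structure of the argument carries over essentially unchanged.
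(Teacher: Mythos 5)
Your upper‐bound strategy (fractional moments, coarse‐graining along spatial tubes of width $\sqrt T$, Gaussian tilt on each slab, H\"older) is precisely the route taken in the paper's section on upper bounds, so that half of the plan is on target. The lower bound is where you diverge genuinely from the paper. You propose to use the second‐moment criterion together with a directed‐percolation coarse graining, exactly as in the discrete $d=1$ argument of Section~\ref{lb11}. The paper instead uses the replica‐coupling method of Section~\ref{dim11}: one introduces interpolation functionals $\Phi_t(r,\gb)$ and $\Psi_t(r,\gl,\gb)$, performs Gaussian integration by parts, and deduces by convexity the inequality $p_t(\gb)\ge (1-e)\Psi_t(0,2,\gb)$, which equals $(1-e)\frac{1}{2t}\log Y_t^{(2\gb^2)}$ for the Brownian pinning partition function. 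The replica‐coupling route exploits the Gaussian structure of the environment and directly delivers a bound matching the announced $\asymp$ without any logarithmic loss, whereas the second‐moment + percolation route --- compare Theorem~\ref{PASGAUSSI} (percolation, with $|\log\gb|^2$ correction) against Theorem~\ref{GAUSSI} (replica coupling, no correction) in the discrete chapter --- delivers only the order of magnitude up to polylogarithmic factors, because pushing the block length $T$ to its natural threshold while keeping the restricted second moment bounded costs a $|\log\gb|$ in the choice of $T$ (see the proof of Lemma~\ref{th:lem2} for the discrete analogue). So your plan would prove the exponent but not the sharp $\asymp$ stated.

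There is also a scaling inconsistency you should watch: you set $T(\gb)\asymp\gb^{-2/(2-\theta)}$ in both halves of the argument. The paper uses $T\asymp\gb^{-4/(2-\theta)}$. Tracing the change-of-measure computation, one does not shift the (correlated) field pointwise by some $\gd_T$ and balance ``$\gb T^{1-\theta/2}\gd_T$'' against a raw volume cost; because $Q$ is not a delta function, the right object is the normalized tube average $\gO:=\int_{A_1}\go(\dd t,x)\dd x\big/\big(T\int_{\bar A_1^2}Q(x-y)\dd x\dd y\big)^{1/2}$, and the tilt $\frac{\dd\tilde\bP}{\dd\bP}=e^{-\gO-1/2}$ has $O(1)$ cost by construction (Lemma~\ref{gaussest}). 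Its effect on the Hamiltonian, for paths confined to the tube, is of order $T^{(2-\theta)/4}$, not $T^{1-\theta/2}\gd_T$; requiring $\gb T^{(2-\theta)/4}\gtrsim 1$ gives $T\asymp\gb^{-4/(2-\theta)}$ and hence $p(\gb)\lesssim -\gb^{4/(2-\theta)}$. This is also the exponent that the second moment calculation dictates on the lower-bound side, since $\bP[W_T^2]=Y_T^{(\gb^2)}$ and $\tf(\gb^2)\asymp\gb^{4/(2-\theta)}$, so the natural block scale at which the second moment saturates is $T\asymp 1/\tf(\gb^2)\asymp\gb^{-4/(2-\theta)}$. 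Your $\gb^{-2/(2-\theta)}$ sits well inside the $L^2$ regime and so produces a needlessly weak bound. (The discrepancy appears to have been inherited from a typo in the statement of Theorem~\ref{dddd4444}; the French introduction and the upper-bound proof both give the exponent $4/(2-\theta)$.)

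\end{document}
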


\subsubsection{Superdiffusivity}

Another much studied issue on which a lot of study where lead for directed polymer is the superdiffusivity phenomenon \cite{cf:Piza,cf:P,cf:J,cf:M, cf:BTV}.
As mentioned earlier, in the weak disorder phase, the trajectory of the polymer conserves all the essential features of the non disordered model (i.e. standard Brownian motion).
Therefore, in one looks at a trajectory up to time $t$, the end position of the chain, the maximal distance to the origin, the typical distance of a point in the chain to the origin are of order $t^{1/2}$, this phenomenon is one of the features of  a {\sl diffusive} behavior.
It is believed that in the {\sl strong disorder} phase, this property is changed, and that the quantities mentioned earlier are greater than $t^{1/2}$. 
From the physicists point of view, it is in fact widely expected that their exist a positive real number $\xi>1/2$ such that under $\mu_t$
\begin{equation}
\max_{s\in[0,t]}\|B_s\|\asymp t^{\xi}.
\end{equation}
We refer to $\xi$ as the volume exponent. It is usually believed that in the strong disorder phase, $\xi$ does not depend on the temperature, and it is the equal to the exponent of the associated oriented last passage percolation model which corresponds to zero temperature (see \cite{cf:Piza}).
However, rigorously, the very existence of these exponents cannot be proved in general, and their is no consensus in the physics literature for their value when the transversal dimension is more than $2$.
In the one dimensional case, for the discrete model on $\Z^d$ and for the one presented here,  if the correlations are not too important ($Q\in\bbL_1(\bbR)$ seems to be the reasonable condition), the volume exponent is conjectured to be equal to $2/3$. This identity remains however a very challenging issue. Johansson proved \cite{cf:J} that it hold for a special case of last-passage oriented percolation, but the method he employed cannot be generalized to the case of finite temperature. To our knowledge this is the only case where the existence of $\xi$ is proved in a non-diffusive case. In the sequel we talk about bounds on $\xi$ to mention bound on the volume fluctuation.
\\
For undirected last-passage percolation Newman and Piza proved that $\xi\le 3/4$, and, in collaboration with Licea, that $\xi\ge 3/5$, using geometric arguments. 
In the case of directed polymer, some results have been obtained in dimension $1$ by Petermann \cite{cf:P} and Mejane \cite{cf:M} for a model of directed polymer in a semi-discrete setup. Petermann proved that with a high probability, a polymer chain of size $n$ would go out of a box of size $n^\alpha$ if $\alpha<3/5$, whereas Mejane proved a result going in the other direction saying that the chain stays in a box of size $n^\alpha$ if $\alpha>3/4$. The result of Mejane extends to every dimension.
Informally this says that
\begin{equation}
 3/5 \le \xi \le 3/4.
\end{equation}
The result of Petermann has recently been adapted for Brownian polymer in Brownian environment by Bezerra, Tindel and Viens \cite{cf:BTV}.
\medskip

In this paper, we present some new results in the case of polynomial decay of $Q$ (condition \eqref{Qpold}). In particular, we prove that under some assumption, superdiffusivity holds in every dimension and at every temperature

\begin{theorem}\label{supdiffff}
 When $d\ge 2$ and $\theta<2$ or $d= 1$ and $\theta<1$, we have
\begin{equation}
\lim_{\gep\to 0}\liminf_{t\to\infty} \bP \mu_t^{\gb,\go} \left\{\sup_{0\le s\le t} |B_s|\ge \gep t^{\frac{3}{4+\theta}}\right\}=1.
 \end{equation}
For $d=1$, $Q\in \bbL_1(\bbR)$ we have
\begin{equation}
 \lim_{\gep\to 0}\liminf_{t\to\infty} \bP \mu_t^{\gb,\go} \left\{\sup_{0\le s\le t} |B_s|\ge \gep t^{\frac{3}{5}}\right\}=1.
\end{equation}
\end{theorem}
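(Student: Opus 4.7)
The plan is to show that $\bP[\mu_t(\{\sup_{0\le s\le t}|B_s|<\gep t^\alpha\})]\to 0$ as $t\to\infty$ for $\gep>0$ small, where $\alpha=3/(4+\theta)$ in the first case and $\alpha=3/5$ in the second; taking $\gep\to 0$ then yields the statement. The main idea is a Peierls-type comparison. Denoting by $Z_t(E)$ the partition function restricted to an event $E$ of trajectories, one observes that $\{\sup_{s\le t}|B_s|<\gep t^\alpha\}\subset E_0:=\{|B_s|\le\eta t^{\alpha}\text{ for all }s\in[t/2,t]\}$ for any $\eta<\gep$, and that $\mu_t(E_0)\le Z_t(E_0)/Z_t(E_{y_0})$ for any distant event $E_{y_0}:=\{|B_s-y_0|\le\eta t^{\alpha}\text{ for all }s\in[t/2,t]\}$ with $|y_0|\asymp\gep t^\alpha$. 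Everything then reduces to proving that $Z_t(E_{y_0})/Z_t(E_0)\to\infty$ in $\bP$-probability for a suitably chosen $y_0$.

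The comparison between these two partition functions is carried out by a change-of-measure argument on $\go$: introduce a law $\tilde\bP$ under which $\go$ remains a Gaussian field with the same covariance structure but with mean shifted by a deterministic function $\gd g(s,x)$ supported on $[t/2,t]\times(y_0+\{|x|\le\eta t^{\alpha}\})$. A Girsanov-type computation shows that, under $\tilde\bP$, $Z_t(E_{y_0})$ receives an exponential energy boost proportional to $\exp(\gb\gd c t)$, where $c$ depends on the spatial average of $g$, while the cost of transferring the estimate back to $\bP$ via H\"older's inequality is controlled by the Cameron-Martin-type norm $\|\gd g\|^2=\gd^2\int\int g(s,x)Q(x-y)g(s,y)\,dx\,dy\,ds$. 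Optimizing $\gd$ and $g$, the effective fluctuation scale of $\log Z_t(E_{y_0})-\log Z_t(E_0)$ comes out to be of order $\gb t^{(1-\alpha\theta)/2}$ in the first case and of order $\gb t^{(1-\alpha)/2}$ in the $Q\in\bbL_1(\bbR)$ case. This energy gain is compared to the entropic cost $\gep^2 t^{2\alpha-1}$ for the Brownian motion to reach and remain near $y_0$: at $\alpha=3/(4+\theta)$ and $\alpha=3/5$ respectively, both scales in $t$ coincide, and the contest is won by choosing $\gep$ small relative to $\gb$.

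The main obstacle I expect lies in this change-of-measure computation, and in particular in optimizing $g$ against the Cameron-Martin norm. The dichotomy between $\theta<d$, where $\int_{|x|,|y|\le r}Q(x-y)\,dx\,dy\asymp r^{2d-\theta}$, and the case $Q\in\bbL_1(\bbR)$ in $d=1$, where the same integral is $\asymp r^{2d-1}\|Q\|_{\bbL_1}$, is precisely what produces the two different superdiffusive exponents. One also needs to transfer the bounds in expectation under $\tilde\bP$ into $\bP$-probability bounds; this should follow from Gaussian concentration for $\go\mapsto\log Z_t(E)$ viewed as a Lipschitz functional of the Gaussian field $\go$, whose Lipschitz constant with respect to the Cameron-Martin norm of $\go$ can be controlled by a similar $\bbL_2$-type estimate.
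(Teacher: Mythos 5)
Your heuristic for the exponent is correct: comparing the entropic cost $\asymp t^{2\alpha-1}$ of reaching distance $t^\alpha$ against an energy gain of order $t^{1/2}\cdot (t^{\alpha})^{-\theta/2}$ obtained by tilting the field on a box of radius $t^\alpha$, and balancing at $\alpha=3/(4+\theta)$, is exactly the back-of-the-envelope calculation behind the theorem. However, the reduction to proving $Z_t(E_{y_0})/Z_t(E_0)\to\infty$ in $\bP$-probability for a \emph{single} distant site $y_0$ is a genuine gap, and this is where the proposal breaks. The difference $\log Z_t(E_{y_0})-\log Z_t(E_0)$ has a deterministic drift (the entropic cost, unfavorable) plus random fluctuations of roughly the same order; with a single $y_0$ these fluctuations win with probability bounded away from $1$, not with probability $\to 1$. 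Your appeal to ``Gaussian concentration'' of $\log Z_t(E)$ cannot repair this: if $\log Z_t(E)$ concentrated around its $\bP$-mean at a scale smaller than the entropic cost, there would be no superdiffusivity at all, and in any case you need a lower bound on the fluctuations, not an upper bound.

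The paper circumvents this by comparing $Z_t^{(0)}$ not to a single $Z_t(E_{y_0})$ but to the \emph{sum} $\sum_{k\in\{-N,\dots,N\}\setminus\{0\}} Z_t^{(k)}$ over $\sim N$ translated blocks at distance $\asymp t^\alpha$, and proving (their Lemma~\ref{th:Q}) that with probability $\ge 1-1/N$ this sum is at least $\exp(-8t^{2\alpha-1}/N^2)\,Z_t^{(0)}$. The key point is a pure symmetry argument: after Girsanov-reparametrizing each block back to the origin, the resulting $\bar Z_t^{(k)}$ are identically distributed, so the probability that the central one dominates all others is at most $1/N$. This converts an argument about means into an argument that holds with probability close to $1$, which is precisely what your single-$y_0$ Peierls bound lacks. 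The change of measure you sketch is then applied \emph{on top} of this comparison: the paper tilts the environment by a normalized Gaussian functional $\Omega$ of the field over the central box, estimates the ratio deterministically in the tilted world, and transfers back via a Cauchy--Schwarz bound $\bP(A)\le\sqrt{e\,\bP_0(A)}$ (their Lemma~\ref{petitlem}), not via Lipschitz concentration. With both pieces in place, the two effects (energy $\asymp N^{3\theta}t^{(2-\theta)/(4+\theta)}$, entropy $\asymp N^{-2}t^{(2-\theta)/(4+\theta)}$) have the same $t$-exponent but opposite $N$-dependence, so choosing $N$ large closes the argument. You should add the many-blocks/symmetry lemma to your outline; without it, the ``Peierls comparison'' step cannot produce a probability tending to one.
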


 We will not go into the details for the proof of the case $Q\in \bbL_1(\bbR)$ as it is very similar to what we do in the other cases.

\begin{rem}\rm
 In terms of exponent, the above theorem, states that $\xi\ge 3/(4+\theta)$. We strongly believe that the argument of Mejane in \cite{cf:M} can be adapted to this case (maybe with another definition for $\xi$, using the position of the end point rather than the maximal distance), so that we can say at an informal level that
\begin{equation}
 \frac{3}{4+\theta}\le \xi\le \frac{3}{4}.
\end{equation}
The two bounds can get arbitrarily close to eachother when $\theta$ get close to zero, showing that when $\theta$ is small, our result is close to being optimal.
\end{rem}

\begin{rem}\rm
 There is an oriented-last-passage percolation model associated to this model. It corresponds to the case $\gb=\infty$. The result we prove here for finite temperature extends to this case that we describe here.
Given a realization of the environment $\go$ we consider the Hamiltonian acting on piecewise $C_1$ path of length $t$ in $\R^d$:
\begin{equation}
 H^{\infty}_t(f):=\int_0^t \left\| \nabla f (s) \right\|^2+\int_0^t \go(\dd s, f(s)).
\end{equation}
The oriented-last passage percolation problem consists in finding a path $f$ such that the Hamiltonian is maximized. If one admits the existence of such a path $f_{\max}$, it is not very difficult to adapt the proof of the directed polymer problem to prove that with a big $\bP$ probability
\begin{equation}
 \max_{s\in[0,t]}\| f_{\max} (s)\|\ge t^{\frac{3}{4+\theta}}.
\end{equation}

\end{rem}

\begin{rem}\rm
 The argument we use for our proof uses change of measure argument instead of purely Gaussian computation, in our views, this makes the computation much clearer. Besides, our proof is shorter and goes substancially farther.
\end{rem}

\begin{rem}\rm
 Another model of Brownian polymer in random environment in a Poissonian environment has been introduced and studied by Comets and Yoshida \cite{cf:CY_cmp}. We would like to stress that our proofs do not rely on the Gaussian nature of the environment, and that superdiffusivity with exponent $3/5$, as well as very strong disorder in dimension $1$ and $2$ (in dimension $2$ one needs to adapt the method used in \cite{cf:Lac}) could be proved for this model as well by using methods developed in the present paper. We focused on Brownian polymer mainly because it was the natural model to study the effect of long spatial memory.
\end{rem}

\section{Brownian pinning in polynomial potential}

\subsection{Basic results} In this subsection, we give the proof of minor results that are easy consequences of results in \cite{cf:CKMV}.

\begin{proof}[Proof of Proposition \ref{th:freeee}]

Given $V$, and $\gep$ and one can find a compactly supported $C^{\infty}$ function $\check V$ such that
\begin{equation}
   \check V \le V \le \check V+ \gep.
\end{equation}

We write $\check Y_t=\check Y_t^{(h)}$ for the partition function corresponding to $\check V$. 
Trivially we have for every $t$
\begin{equation}
  \frac{1}{t}\log \check Y_t\le  \frac{1}{t}\log Y_t \le \frac{1}{t}\log \check Y_t+h\gep.
\end{equation}
As proved in  \cite[Section 7]{cf:CKMV}, $\log \check Y_t$ converges as $t$ goes to infinity so that,
\begin{equation}
 \limsup \frac{1}{t}\log Y_t- \liminf \frac{1}{t}\log Y_t\le h\gep.
\end{equation}
The proof can also be adapted to prove the existence of the free--energy for the potential 
\begin{equation}
 V(x):=\ind_{\{\|x\|\le 1\}}.
\end{equation}
We omit the details here.
\end{proof}

\medskip

\begin{proof}[Proof of proposition \ref{L1case}]

First we prove the upper bound. By the occupation times formula (see e.g.\ \cite[pp. 224]{cf:RY} ) we have 
\begin{equation}
\int_0^t V(B_s)\dd s= \int_{\R} L_t^xV(x)\dd x,
\end{equation}
where $L_t^x$ is the local time of the Brownian motion in $x$ at time $t$.
By Jensen inequality we have then
\begin{equation}
 \exp\left(h \int_{\R} L_t(x)V(x)\dd x\right)\le \int_{\R}\frac{V(x)\dd x}{\|V\|_{\bbL_1(\R)}}\exp\left(h \|V\|_{\bbL_1(\R)}L_t^x\right).
\end{equation}
Moreover, under the Wiener measure with initial condition zero, $L_t^x$ is stochastically bounded (from above) by $L_t^0$ for all $x$, so that
\begin{multline}\label{boundL1}
 Y_t\le \int_{\R}\frac{V(x)\dd x}{\|V\|_{\bbL_1(\R)}} P\left[\exp\left(h \|V\|_{\bbL_1(\R)}L_t^x\right)\right]\\
\le P\left[\exp\left(h \|V\|_{\bbL_1(\R)}L^0_t\right)\right]\le 2 \exp\left(\frac{t h^2 \|V\|^2_{\bbL_1(\R)}}{2}\right).
\end{multline}
In the last inequality we have used the fact that $L_t^0\stackrel{\mathcal{L}}{=}\sqrt{t}|\mathcal N(0,1)|$.
Taking the limit as $t$ tends to infinity gives the upper bound.
For the lower bound, the assumption we have on $V$ guaranties that give $\gep>0$, we can find $\check V$ smooth and compactly supported such that 
\begin{equation}\begin{split}
\check V &\le V,\\ 
\|\check V\|_{\bbL_1(\R)}& \ge \| V \|_{\bbL_1(\R)}-\gep.
\end{split}\end{equation}
Let $\check \tf$ be the free energy associated to $\check V$.
For \cite[Theorem 6.1]{cf:CKMV} and monotonicity we have that for $h$ small enough (how small depending on $\gep$)
\begin{equation}
 \tf(h)\ge \check\tf(h)\ge \left(\frac{\|\check V\|_{\bbL_1(\R)}^2}{2}-\gep\right)h^2.
\end{equation}
As $\gep$ is chosen arbitrarily this gives the lower bound.
\end{proof}

\subsection{Proof of upper bound results on the free--energy}

In this subsection we prove the upper bounds corresponding to \eqref{asymp0}, \eqref{asymp1} and \eqref{asymp2}.
Thourough this section we will use the following tool.

\begin{lemma}\label{lemrelou}
 Let $(a_n)_{n\in\N}$, be a sequence of positive integers, and $(p_n)_{n\in\N}$, a sequence of strictly positive real numbers satisfying $\sum_{n\in \N} p_n^{-1}=1$.
Then we have
\begin{equation}\label{prood}
\prod_{n\ge 1} a_n\le \sum_{n\ge 1} p_n^{-1}a_n^{p_n},
\end{equation}
as soon as the left-hand side is defined. This formula is also valid for a  product with finitely many terms.
\end{lemma}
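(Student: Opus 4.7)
The plan is to recognize this inequality as the weighted arithmetic--geometric mean inequality in disguise. The substitution that makes this transparent is $\lambda_n := 1/p_n$ and $x_n := a_n^{p_n}$, so that the hypothesis $\sum_n p_n^{-1}=1$ becomes $\sum_n \lambda_n = 1$, the left-hand side $\prod_n a_n$ equals $\prod_n x_n^{\lambda_n}$, and the right-hand side equals $\sum_n \lambda_n x_n$. The inequality to prove is therefore exactly
\begin{equation*}
\prod_n x_n^{\lambda_n} \,\le\, \sum_n \lambda_n x_n,
\end{equation*}
which, for a finite collection of indices, is the standard weighted AM--GM inequality. I would first give the finite version: taking the logarithm and using concavity of $\log$ (Jensen's inequality applied to the probability weights $(\lambda_n)$), one has $\sum_n \lambda_n \log x_n \le \log (\sum_n \lambda_n x_n)$, and exponentiating yields the bound.

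The only subtlety is to pass from finite to infinite $n$, since truncating the sum $\sum_n 1/p_n = 1$ breaks the normalization that makes the weighted AM--GM directly applicable. To handle this, fix $N$ and set $S_N := \sum_{n=1}^N 1/p_n \le 1$; introduce an auxiliary index with weight $1-S_N$ and value $1$, so that the augmented weights form a probability distribution. Applying the finite weighted AM--GM to this augmented collection gives
\begin{equation*}
\prod_{n=1}^N a_n \,\le\, (1-S_N) + \sum_{n=1}^N p_n^{-1} a_n^{p_n}.
\end{equation*}
At this point I would invoke the positive-integer hypothesis $a_n \ge 1$, which implies $a_n^{p_n}\ge 1$ and hence $\sum_{n>N} p_n^{-1} a_n^{p_n} \ge \sum_{n>N} p_n^{-1} = 1-S_N$. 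Combining these two inequalities absorbs the unwanted term $1-S_N$ into the tail of the series, yielding
\begin{equation*}
\prod_{n=1}^N a_n \,\le\, \sum_{n=1}^{\infty} p_n^{-1} a_n^{p_n},
\end{equation*}
and letting $N\to\infty$ (using monotone convergence of the partial products, valid since $a_n\ge 1$) gives the claimed bound.

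No step here presents a serious obstacle; the main point to be careful about is precisely the passage to the limit, which is where the assumption $a_n\in\mathbb{N}$ (equivalently $a_n\ge 1$) is used to compensate for the deficit $1-S_N$ in the truncated normalization. If the left-hand side is infinite, the right-hand side is automatically infinite as well and the statement is trivial; if the left-hand side is finite, the argument above gives the inequality, which is all that is needed to justify the subsequent applications of the lemma in the paper (in particular to split products involving contributions from different spatial or temporal blocks via a H\"older-type splitting).
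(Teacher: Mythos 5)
Your proof is correct and is essentially the same as the paper's, which constructs a random variable taking value $p_n\log a_n$ with weight $1/p_n$ and applies Jensen's inequality $\exp(P[X])\le P[\exp(X)]$ — precisely the weighted AM--GM you identify. Your handling of the infinite case via truncation and the bound $a_n\ge 1$ is spelled out a bit more carefully than in the paper, but the core argument is the same.
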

\begin{proof}
Let $X$ be the random variable whose distribution $P$ is defined
\begin{equation}
 P\{X=x\}=\sum_{\{n\,:\,p_n\log a_n=x\}}\frac{1}{p_n}.
\end{equation}
The formula considered is just Jensen inequality:
\begin{equation}
 \exp(P[X])\le P \exp(X).
\end{equation}
\end{proof}

Now we go the proof of delocalization at high temperature for $\theta>2$ and $d\ge 3$

\begin{proof}[Proof of \eqref{asymp0}]
Let $\theta>2$, and $\gep>0$ we define 
\begin{equation}
 \bar{V}(x):=\sum_{i=0}^{\infty} \ind_{\{\|x\| \le 2^n\}}2^{-n\theta}.
\end{equation}
It follows from assumption \eqref{poldecay} that there exists constants $c_1$ and $C_1$ such that
\begin{equation}
     c_1 \bar V(x) \le V(x)\le C_1  \bar V (x).
\end{equation}

Hence, for any $p\in(0,1)$ and $h>0$, we have
\begin{equation}\begin{split}
Y_t&\le \exp\left(hC_1 \int_{0}^t \bar{V}(B_s)\dd s\right)\\
&\le (1-p) \sum_{n=0}^{\infty}p^{n} P\left[ \exp\left((1-p)^{-1}p^{-n}hC_1\int_0^t\ind_{\{\|B_s\| 
\le 2^n\}}2^{-n\theta}\}\dd s \right)\right]\\
&= (1-p)\sum_{n=0}^\infty p^n P\left[\exp\left((1-p)^{-1}p^{-n}2^{n(2-\theta)}h C_1 \int_0^{2^{-2n}t}\ind_{\{\|B_s\| \le 1\}}\dd s\right)\right],
\end{split}\end{equation}
where the second inequality uses Lemma \ref{lemrelou} with $p_n:= (1-p)p^n$, and  the last equality is just a rescaling of the Brownian Motion.
We choose $p$ such that $p2^{\theta-2}=1$. We get
\begin{equation}
Y_t\le \exp\left((1-p)^{-1}h C_1\int_0^{t}\ind_{\{\|B_s\| \le 1\}}2^{-n\theta}\dd s\right).
\end{equation}
For $h$ small enough, Theorem \ref{comp} for $d\ge 3$ allow us to conclude. Moreover (see \cite[Proposition 4.2.1]{cf:CY_cmp}) in this case, we have 
\begin{equation}
 \lim_{t\rightarrow \infty} Y_t:=Y_{\infty}<\infty.
\end{equation}

\end{proof}

\begin{proof}[Proof of the upper bounds of \eqref{asymp1} and \eqref{asymp2}]
From assumption \eqref{poldecay}, there exist a constant $C_2$ such that for any $h\le 1$,
\begin{equation}\label{boundL12}\begin{split}
 V(x)&\le C_2 h^{\frac{\theta}{2-\theta}} \quad \forall x, \ |x|\ge h^{-\frac{1}{2-\theta}},\\
\int_{|x|\le  h^{-\frac{1}{2-\theta}}}V(x)\dd x &\le C_2 h^{-\frac{1-\theta}{2-\theta}}.
\end{split}\end{equation}
We write $\check V(x):= V(x)\ind_{\left\{|x|\le h^{-\frac{1}{2-\theta}}\right\}}$.
We have
\begin{equation}
 V(B_s)\le \check V (B_s)+C_2 h^{\frac{\theta}{2-\theta}},
\end{equation}
so that
\begin{equation}
 \log Y_t \le t C_2 h^{\frac{2}{2-\theta}}+\log P\left[\exp \left(h\int_0^t\check V(B_s)\dd s\right)\right]. 
\end{equation}
We know from \eqref{boundL1} and \eqref{boundL12}  that the second term is smaller than
\begin{equation}
 \log 2+ t\frac{\|\check V\|^2_{\bbL_1(\R)}}{2}\le \log 2+ t C_2^2 h^{\frac{2}{2-\theta}}.
\end{equation}
Which is the desired result.
\medskip
Now we consider the case $d\ge2$, $\theta<2$.
Define $\bar n=\bar n_{h}:=\lceil |\log h|/(2-\theta)\log 2 \rceil- K$ for some large integer $K$.
We have
\begin{equation}
 \sum_{n>\bar n} 2^{-n\theta} \ind_{\{\|B_s\|\le 2^{-n}\}}\le  \sum_{n>\bar n} 2^{-n\theta}=\frac{2^{-(\bar n+1)\theta}}{1-2^{-\theta}}.
\end{equation}
Therefore we can find a constant $C_3$ (depending on $K$ and $C_1$) such that
\begin{equation}\label{stepep}
Y_t\le P\left[\exp \left(C_1 h\int_{0}^{t} \bar V(B_s)\dd s\right)\right]\le e^{C_3 h^{\frac{2}{2-\theta}}}P\left[ \exp\left(C_1h\int_{0}^{t} \sum_{n=0}^{\bar n} 2^{-n\theta} \ind_{\{\|B_s\|\le 2^{-n}\}}\dd s\right)\right].
\end{equation}
We study the behavior of the second term on the right--hand side. For any $p$ (We choose $p=2^{\theta/2-1}$), as a result of Lemma \ref{lemrelou}\ , we have
\begin{equation}\begin{split}
 &P \left[\exp\left(C_1h\int_{0}^{t} \sum_{n=0}^{\bar n} 2^{-n\theta} \ind{\{\|B_s\|\le 2^{-n}\}}\dd s\right)\right]\\
&\quad \le
\sum_{n=0}^{\bar n}\frac{(1-p)p^n}{1-p^{\bar n+1}} P \left[\exp\left(C_1h \int_{0}^{t}\frac{p^{-n}(1-p^{\bar n+1})}{1-p} 2^{(n-\bar n) \theta} \ind_{\{\|B_s\|\le 2^{-(\bar n-n})\}}\dd s \right)\right]\\
& \quad =\sum_{n=0}^{\bar n}\frac{1-p}{1-p^{\bar n+1}}p^n P\left[ \exp\left(C_1h \int_{0}^{t2^{-2(\bar n-n)}}\frac{p^{-n}(1-p^{\bar n+1})}{1-p} 2^{(\bar n-n)(2- \theta)} \ind_{\{\|B_s\|\le 1\}}\dd s \right)\right],
\end{split}\end{equation}
where the last line is simply obtained by rescaling the Brownian motion in the expectation.
Now observe that for any $\gep>0$, one can find a value of $K$ such that
\begin{equation}
 C_1 h\le (1-p)\gep 2^{-\bar n(2-\theta)},
\end{equation}
so that
\begin{equation}
 C_1h\frac{p^{-n}(1-p^{\bar n+1})}{1-p} 2^{(\bar n-n)(2- \theta)}\le \gep 2^{n(\theta/2-1)}.
\end{equation}
Therefore we have
\begin{multline}
  P\left[ \exp\left(C_1h\int_{0}^{t} \sum_{n=0}^{\bar n} 2^{-n\theta} \ind{\{\|B_s\|\le 2^{-n}\}}\dd s\right)\right]\\
\le \max_{n\in\{0,\dots, \bar n\}} P\left[\exp \left(\int_{0}^{t2^{-2(\bar n-n)}}\gep 2^{n(\theta/2-1)}\ind_{\{\|B_s\|\le 1\}}\dd s \right)\right].
\end{multline}
For $d\ge 3$ the right--hand side is less than
\begin{equation}
P\left[\exp \left(\int_{0}^{t}\gep \ind_{\{\|B_s\|\le 1\}}\dd s \right)\right], 
\end{equation}
which stays bounded as $t$ goes to infinity.
For $d=2$, if $t$ is sufficiently large, and $\gep$ small enough Theorem \ref{comp} allows us to write
\begin{equation}
\log  P\left[\exp \left(\int_{0}^{t2^{-2(\bar n-n)}}\gep 2^{n(\theta/2-1)}\ind_{\{\|B_s\|\le 1\}}\dd s \right)\right]\le t 2^{-2\bar n} 2^{2n} \exp\left(-2\gep^{-1} 2^{n(1-\theta/2)}\right).
\end{equation}
The maximum over $n$ of the right--hand side is attained for $n=0$. Therefore
\begin{equation}
  \log  P \left[\exp\left(C_1h\int_{0}^{t} \sum_{n=0}^{\bar n} 2^{-n\theta} \ind{\{\|B_s\|\le 2^{-n}\}}\dd s\right)\right]\le 2^{-2\bar n}.
\end{equation}
Injecting this in \eqref{stepep} ends the proof.
\end{proof}

\subsection{Proof of lower bounds on the free-energy}

In this section we the lower bounds for the  \eqref{asymp1} and \eqref{asymp2}.
We start with the case $d=1$, $\theta<1$.

For any $n\in \N$,
\begin{equation}
 Y_t\ge P\left[ \exp\left( c_1 h \int_0^t \bar V(B_s)\dd s\right)\right]\ge P\left[\exp\left(c_1 h 2^{-n\theta}\int_{0}^t \ind_{\{ \|B_s\|\le 2^{-n}\}}\dd s\right)\right].
\end{equation}
Rescaling the Brownian motion we get
\begin{equation}
Y_t\ge P\left[\exp\left(c_1 h 2^{n(2-\theta)}\int_{0}^{t2^{-2n}} \ind_{\{ \|B_s\|\le 1\}}\dd s\right)\right].
\end{equation}
We can choose $n=n_{h}=\lceil |\log h|/(2-\theta)\log 2 \rceil+ K$, for some integer $K$. Let $C_2>0$ be such that
\begin{equation}
 \lim_{t\rightarrow \infty} \frac{1}{t}\log P \left[\exp\left(\int_{0}^tC_2 \ind_{\{\|B_s\|\le 1\}}\dd s\right)\right] \ge 1.
\end{equation}
By choosing $K$ large enough, we can get 
\begin{equation}
 \log Y_t\ge  \log P \left[\exp\left(C_2 \int_{0}^{t2^{-2n_{h}}} \ind_{\{ \|B_s\|\le 1\}}\dd s\right)\right] \ge  t 2^{-2n_{h}}.
\end{equation}
From this
we get that $h_c=0$, and that

\begin{equation}
 \tf(h)\ge 2^{-2(K+1)} h^{\frac{2}{2-\theta}}.
\end{equation}
\qed

\section{The Brownian Polymer}

\subsection{Lower bounds on the free energy, the second moment method}
In this section, will make use of result obtained for homogeneous pinning model to get some lowerbound on the free energy and prove the corresponding half of Theorem \ref{ddd333} and \ref{dddd4444}. A simple way to relate the two models is to remark that the second moment of $W_n$ is the partition function of an homogeneous pinning model. 
We start by a short proof of the fact that weak disorder holds for small $\gb$ if $d\ge 3$, $\theta>2$,

\begin{proof}
It is sufficient to show that $W_t$ converges in $L_2$ for $\gb$ sufficiently small.
We have
\begin{equation}\begin{split}
\bP\left[ W_t^2\right]&=\bP\left[ P^{\otimes 2} \exp\left(\int_0^{t}\left[ \gb\go(\dd s,B^{(1)}_s)+ \gb\go(\dd s,B^{(2)}_s)\right]-\gb^2\dd s\right)\right]\\
          &=P^{\otimes 2} \left[\exp\left(\gb^2\int_{0}^t Q(B_s^{(1)}-B_s^{(2)})\dd s\right)\right].
\end{split}
\end{equation}
The left--hand side is the partition function of the homogeneous pinning model described in the first section. Therefore, the result is a simple consequence of Proposition \ref{pinning}.
\end{proof}
\medskip

We prove now the lower--bound on the free energy corresponding to Theorem \ref{dddd4444}. We use a quite involved method called replica-coupling. The idea to use such a method for directed polymers came in \cite{cf:Lac} and was inspired by a work on pinning model of Toninelli \cite{cf:T_cmp}.
\medskip

\begin{proof}
Define for $\gb>0$, $r\in[0,1]$
\begin{equation}
 \Phi_t\left(r,\gb\right):=\frac{1}{t}\bP \left[\log P \exp\left(\int_0^{t}\sqrt{r}\gb \go(\dd s,B_s)-r\gb^2/2\dd s\right)\right].
\end{equation}
and for $\gb>0$, $r\in[0,1]$, $\gl>0$
\begin{multline}
 \Psi_t\left(r,\gl,\gb\right):=\\
\frac{1}{2t}\bP\left[ \log P^{\otimes 2} \exp\left(\int_0^{t}\sqrt{r}\gb\left[\go( \dd s,B^{(1)}_s)+ \go(\dd s,B^{(2)}_s)\right]-\gb^2\left[r+\gl Q(B_s^{(1)}-B_s^{(2)})\right]\dd s\right)\right]\\
=:\frac{1}{2t}\bP\left[ \log P^{\otimes 2} \exp\left(\hat H_t(B^{(1)},B^{(2)},r,\gl)\right)\right].
\end{multline}
The function $r\mapsto \Phi_t\left(r,\gb\right)$, satisfies
\begin{equation}
 \Phi_t(0,\gb)=0 \quad \text{and} \quad \Phi_t(\gb)=p_t(\gb).
\end{equation}
In the sequel we use the following version of the Gaussian integration by part formula. The proof is straight--forward.
\begin{lemma}
Let $(\go_1,\go_2)$ be a two--dimensional Gaussian vector. We have
\begin{equation}
 \bP \left[\go_1f(\go_2)\right]:=\bP[\go_1\go_2]\bP\left[f'(\go_2)\right].
\end{equation}
\end{lemma}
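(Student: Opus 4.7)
The identity is a standard Gaussian integration by parts, but it is worth setting out cleanly. The plan is to reduce the bivariate statement to a univariate one using the Gaussian conditional structure, and then to perform an elementary integration by parts against the Gaussian density.

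First I would treat the degenerate case $\bP[\go_2^2]=0$ separately: there $\go_2$ is a.s.\ constant (equal to $\bP[\go_2]$) and, since $(\go_1,\go_2)$ is Gaussian, $\bP[\go_1\go_2]=\bP[\go_1]\bP[\go_2]-\bP[\go_1]\bP[\go_2]=0$, so both sides of the identity vanish. I assume from now on that $\sigma^2:=\text{Var}_\bP(\go_2)>0$ and, as is the convention throughout the paper, that the vector is centered. The regularity hypothesis needed on $f$ is that $f$ be $C^1$ and that $f(x)\exp(-x^2/(2\sigma^2))\to 0$ as $|x|\to\infty$ together with an integrability condition on $f'$ making both sides finite; this is the same hypothesis already used in \cite{cf:Lac} in the corresponding Gaussian IBP statement in the excerpt.

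The next step is a linear decomposition: set $a:=\bP[\go_1\go_2]/\sigma^2$ and write $\go_1=a\go_2+\go_3$ with $\go_3:=\go_1-a\go_2$. Because $(\go_1,\go_2)$ is jointly Gaussian, $(\go_3,\go_2)$ is Gaussian too, and a direct computation shows $\bP[\go_3\go_2]=\bP[\go_1\go_2]-a\sigma^2=0$, hence $\go_3$ and $\go_2$ are independent. Since $\bP[\go_3]=0$ as well, we obtain
\begin{equation}
\bP[\go_1 f(\go_2)]\,=\,a\,\bP[\go_2 f(\go_2)]\,+\,\bP[\go_3]\,\bP[f(\go_2)]\,=\,a\,\bP[\go_2 f(\go_2)].
\end{equation}

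It then remains to prove the univariate identity $\bP[\go_2 f(\go_2)]=\sigma^2\bP[f'(\go_2)]$. Writing the expectation against the $\cN(0,\sigma^2)$ density and using that $x\exp(-x^2/(2\sigma^2))=-\sigma^2\tfrac{\dd}{\dd x}\exp(-x^2/(2\sigma^2))$, a one-line integration by parts yields
\begin{equation}
\bP[\go_2 f(\go_2)]\,=\,\int_\R \frac{x}{\sqrt{2\pi}\,\sigma} f(x) e^{-x^2/(2\sigma^2)}\dd x\,=\,\sigma^2\int_\R \frac{f'(x)}{\sqrt{2\pi}\,\sigma}e^{-x^2/(2\sigma^2)}\dd x\,=\,\sigma^2\bP[f'(\go_2)],
\end{equation}
the boundary terms vanishing by the growth assumption on $f$. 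Combining the two steps gives $\bP[\go_1 f(\go_2)]=a\sigma^2\bP[f'(\go_2)]=\bP[\go_1\go_2]\bP[f'(\go_2)]$, as desired. The only subtle point is the integrability and boundary control in the integration by parts; everything else is a pure algebraic manipulation of the Gaussian covariance structure, which is why the excerpt declares the proof ``straight-forward''.
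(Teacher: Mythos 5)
Your proof is correct and is exactly the standard argument one would have in mind: decompose $\go_1 = a\go_2 + \go_3$ with $a = \bP[\go_1\go_2]/\mathrm{Var}(\go_2)$ and $\go_3$ independent of $\go_2$, then apply one-dimensional Gaussian integration by parts to the $\go_2$ term. The paper supplies no proof at all (it only remarks that the proof is ``straight-forward''), so there is nothing to compare against, but your write-up is complete and also handles the degenerate case the paper's wording glosses over.
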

Using this formula we get that
\begin{equation}\label{phhhi}
 \frac{\dd}{\dd r}\Phi_t(r,\gb)=-\frac{\gb^2}{2t}\bP \left[\left(\mu^{(\sqrt{r}\gb)}_t\right)^{\otimes 2}\left[\int_0^t Q(B_s^{(1)}-B_s^{(2)})\dd s\right]\right].
\end{equation}
Doing the same for $\Psi_t$ we get
\begin{multline}\label{psssi}
 \frac{\dd }{\dd r} \Psi_t(r,\gl,\gb)=\frac{\gb^2}{2t}\bP\left[\frac{P^{\otimes 2}\exp\left(\hat H_t(B^{(1)},B^{(2)},r,\gl)\right) \int_0^t Q(B_s^{(1)}-B_s^{(2)})\dd s}{P^{\otimes 2}\exp\left(\hat H_t(B^{(1)},B^{(2)},r,\gl)\right)}\right]\\
-\frac{\gb^2}{t}\bP\left[\frac{P^{\otimes 4}\exp\left(\hat H_t(B^{(1)},B^{(2)},r,\gl)+\hat H_t(B^{(3)}, B^{(4)},r,\gl)\right)\int_{0}^{t}Q(B_s^{(1)}-B_s^{(3)})\dd s}{P^{\otimes 4}\exp\left(\hat H_t(B^{(1)},B^{(2)},r,\gl)+\hat H_t(B^{(3)}, B^{(4)},r,\gl)\right)}\right]\\
\le \frac{\gb^2}{2t}\bP\left[\frac{P^{\otimes 2}\exp\left(\hat H_t(B^{(1)},B^{(2)},r,\gl)\right)\int_0^t Q(B_s^{(1)}-B_s^{(2)})\dd s}{P^{\otimes 2}\exp\left(\hat H_t(B^{(1)},B^{(2)},r,\gl)\right)}\right]=\frac{\dd }{\dd \gl} \Psi_t(r,\gl,\gb).
\end{multline} 
This implies that for every $r\in[0,1]$ we have
\begin{equation}
 \Psi_t(r,\gl,\gb)\le \Psi(0,\gl+t,\gb).
\end{equation}
In view of \eqref{phhhi} and \eqref{psssi}, using convexity and monotonicity of $\Psi_t$ with respect to $\gl$ and $\Psi_t(r,0,\gb)=\Phi_t(r,\gb)$ we have
\begin{multline}
 -\frac{\dd }{\dd r}\Phi_t(r,\gb)=\left.\frac{\dd}{\dd \gl}\Psi_t(r,\gl,\gb)\right|_{\gl=0}\\ \le
\frac{\Psi_t(r,2-r,\gb)-\Phi_t(r,\gb)}{2-r}\le \Psi_t(0,2,\gb)-\Psi_t(r,\gb),
\end{multline}
where the last inequalities uses that $r\le 1$.
Integrating this inequality between zero and one, we get
\begin{equation}
 p_t(\gb)\ge (1-e)\Psi_t(0,2,\gb).
\end{equation}
The right hand side can be interpreted as
\begin{equation}
 \Psi_t(0,2,\gb)=\frac{1}{2t}\log Y_t,
\end{equation}
where $Y_t$ is the partition function of an homogeneous pinning model in polynomial tailed potential with pinning parameter $2\gb^2$.
Therefore we know from Theorem \ref{pinning} and Proposition \ref{L1case} that
\begin{equation}
 \lim_{t\rightarrow\infty}\frac{1}{t}\log Y_t\asymp \gb^{\frac{4}{2-\theta}} \text{ or } \lim_{t\rightarrow\infty}\frac{1}{t}\log Y_t\asymp \gb^4,
\end{equation}
(where the case to be considered depend on the assumption we have on $Q$).
This ends the proof.
\end{proof}

\subsection{Proof of upper bounds on the free energy (Theorem \ref{dddd4444}}

The technique of the proof is mainly based on change of measure argument. This method developed and used first for pinning models \cite{cf:GLT} and and adapted for directed polymer in \cite{cf:Lac}. Here, we have to adapt it to the continuous case, and to make benefit of the existence of space memory. We briefly sketch the proof:
\begin{itemize}
 \item First we use Jensen inequality to reduce the proof to estimating a fractional moment (a non-integer moment) of $W_t$.
 \item We decompose $W_t$ into different contribution corresponding to path along a corridor.
 \item For each corridor, we change slightly the measure via a tilting procedure which lowers the value of $\go$ in the corridor.
 \item We use the change of measure to estimate the fractional moment of each contributions.
\end{itemize}
\medskip
We start by stating a trivial Lemma, which will be of good use for our proof, and for the next section
\medskip

\begin{lemma}\label{gaussest}
 Let $(\go_x)_{x\in X}$ be a Gaussian vector indexed by $X$ defined on the probability space
$(\Omega,\bbP,\mathcal F)$ closed by linear combination. Define the measure $\tilde \bbP$ as
\begin{equation}
 \frac{\dd \tilde \bbP}{\dd \bbP}=\exp(\go_{x_0}-\var \go_{x_0}/2).
\end{equation}
Then under $\bbP$,  $(\go_x)_{x\in X}$ are still Gaussian variable, their covariance remain unchanged and their expectation is equal to
\begin{equation}
 \tilde \bbP [\go_x]= \bbP[\go_x\go_{x_0}].
\end{equation}
\end{lemma}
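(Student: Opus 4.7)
The plan is to identify the law of $(\go_x)_{x\in X}$ under $\tilde\bbP$ by computing its joint Laplace transform and recognizing it as a Gaussian one. Since $(\go_x)_{x \in X}$ is closed under linear combinations, any finite linear combination is an element of the family, so it suffices to compute $\tilde\bbE[\exp(t \go_y)]$ for an arbitrary $y \in X$ and arbitrary $t \in \R$, and check that the answer is the moment generating function of a Gaussian with the claimed mean and variance.

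Concretely, I would write
\begin{equation*}
\tilde\bbE\bigl[\exp(t\go_y)\bigr] \, =\, \bbE\bigl[\exp(t\go_y + \go_{x_0}-\var(\go_{x_0})/2)\bigr],
\end{equation*}
note that $t\go_y+\go_{x_0}$ is a centered Gaussian variable (as a linear combination of elements of the family), and apply the formula $\bbE[\exp(Z)] = \exp(\var(Z)/2)$ for centered Gaussian $Z$. Expanding $\var(t\go_y+\go_{x_0}) = t^2\var(\go_y) + 2t\,\bbE[\go_y\go_{x_0}] + \var(\go_{x_0})$ makes the factor $\exp(-\var(\go_{x_0})/2)$ cancel, leaving
\begin{equation*}
\tilde\bbE\bigl[\exp(t\go_y)\bigr]\, =\, \exp\!\Bigl(t\,\bbE[\go_y\go_{x_0}] + \tfrac{t^2}{2}\var(\go_y)\Bigr),
\end{equation*}
which is exactly the moment generating function of a Gaussian random variable with mean $\bbE[\go_y\go_{x_0}]$ and variance $\var(\go_y)$. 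This already proves both claims on the one-dimensional marginals. The same computation carried out for arbitrary finite linear combinations $\sum_i t_i \go_{x_i}$ yields
\begin{equation*}
\tilde\bbE\Bigl[\exp\Bigl(\sum_i t_i \go_{x_i}\Bigr)\Bigr]\, =\, \exp\Bigl(\sum_i t_i\,\bbE[\go_{x_i}\go_{x_0}] + \tfrac{1}{2}\sum_{i,j}t_i t_j \,\bbE[\go_{x_i}\go_{x_j}]\Bigr),
\end{equation*}
which identifies the joint law as Gaussian with the shifted means and unchanged covariance, hence giving the lemma.

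There is essentially no obstacle here; the statement is a finite-dimensional Cameron–Martin/Girsanov tilt and the only thing to take care of is that the argument uses the hypothesis that the family is closed under linear combinations, which guarantees that $t\go_y+\go_{x_0}$ (and more generally $\sum_i t_i \go_{x_i} + \go_{x_0}$) is again a centered Gaussian whose variance is computable from the covariance structure of $(\go_x)_{x\in X}$. The whole proof is a two-line Gaussian computation.
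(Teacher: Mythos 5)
Your proof is correct and is the standard one-line Cameron--Martin / exponential-tilting computation: the paper labels this lemma ``trivial'' and gives no proof, so your moment-generating-function argument simply fills in the gap. The computation is clean, and you correctly use the fact that $t\go_y+\go_{x_0}$ is centered Gaussian to apply $\bbE[e^Z]=e^{\var(Z)/2}$; one small remark is that this only requires the family to be \emph{jointly} Gaussian (which is what ``Gaussian vector'' already gives), so the ``closed by linear combination'' hypothesis is not strictly needed for the step you invoke it for. You also implicitly corrected a typo in the statement (``under $\bbP$'' should read ``under $\tilde\bbP$''), which is the right reading.
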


We go now for the proof. Let $\gamma<1$ be fixed. We remark that
\begin{equation}
 \bP [\log W_t]= \frac{1}{\gga}\bP [\log W_t^{\gga}]\le \frac{1}{\gga}\log \bP  [W_t^{\gga}].
\end{equation}
For this reason we have
\begin{equation}\label{momentfrac2222}
 p(\gb)\le \frac{1}{\gga}\liminf_{t\to \infty} \log \bP W_t^{\gga}.
\end{equation}
Therefore, our aim is to prove that $\bP[ W_t^{\gga}]$ decays exponentially.
Fix $T:=C_1 \gb^\frac{-4}{2-\theta}$, and $\gamma<1$ (say $\gga:=1/2$).
Choose $t:=Tn$.
For $y=(y^1,\dots,y^d)\in \Z^d$, define $I_y:=\prod_{i=1}^d [y^i\sqrt{T},(y^i+1)\sqrt{T})$.

We decompose the partition function $W_t$ into different contributions corresponding to different families of path. We have
\begin{equation}
 W_t:=\sum_{y_1,\dots,y_n\in \Z} W_{(y_1,\dots,y_n)},
\end{equation}
where
\begin{equation}
 W_{(y_1,\dots,y_n)}:= P \left[\exp\left(\int_0^{t} \gb\go(\dd s,B_s)-\gb^2/2\dd s\right) \ind_{\left\{ B_{kT}\in I_{y_k}, \forall k=1\dots n\right\}}\right].
\end{equation}

We use the inequality $(\sum a_i)^{\gga}\le \sum a_i^{\gga}$ which holds for an arbitrary collection of positive numbers to get
\begin{equation}\label{dec}
 \bP [W_t^{\gga}]\le \sum_{y_1,\dots,y_n\in \Z} \bP [W^{\gamma}_{(y_1,\dots,y_n)}].
\end{equation}
Given $Y=(y_1,\dots,y_n)$, and $\tilde \bP_Y$ a probability measure on $\go$ we have

\begin{equation}\label{hollder}
 \bP W^{\gga}_{(y_1,\dots,y_n)}=\tilde \bP_Y \left[\frac{\dd \bP}{\dd \tilde\bP_Y}  W^{\gga}_{(y_1,\dots,y_n)}\right]\le \left(\bP\left[\left(\frac{\dd \bP}{\dd \tilde\bP_Y}\right)^{\frac{\gga}{1-\gga}}\right]\right)^{1-\gga}\left(\tilde \bP_Y[ W_{(y_1,\dots,y_n)}]\right)^{\gga}.
\end{equation}
Let $C_2$ a fixed constant. Define the blocks $A_k$ by

\begin{equation}\begin{split}
A_k&:=[(k-1)T,T]\times \prod_{i=1}^d[y^{i}_{k-1}-C_2\sqrt{T},y^{i}_{k-1}+C_2\sqrt{T}]=:[kT,(k+1)T]\times \bar A_k,\\
J_Y&:=\bigcup_{k=1}^n B_k,
\end{split}
\end{equation}
with the convention $y_0=0$.
Moreover we define the random variable
\begin{equation}\begin{split}
\gO_k&:=\frac{\int_{A_k}    \go(\dd t,x)\dd x}{\sqrt{T\int_{\bar A^2_k}Q(x-y)\dd x\dd y}},\\
\Omega_Y&:=\sum_{k=1}^n \gO_k.
\end{split}\end{equation}
Note that with this definition $(\Omega_k)_{k\in\{1,\dots,n\}}$ are standard centered independent Gaussian variables.
Define $\tilde \bP_Y$ by
\begin{equation}
 \frac{\dd \tilde \bP_Y}{\dd \bP}:=\exp\left(-\Omega_Y-n/2\right).
\end{equation}
From this definition and using the fact that $\gga=1/2$, we have
\begin{equation}\label{dete}
 \left(\bP\left[\left(\frac{\dd \bP}{\dd \tilde\bP_Y}\right)^{\frac{\gga}{1-\gga}}\right]\right)^{1-\gga}=\exp(n/2).
\end{equation}
We define also the measure $\tilde \bP_1$ by
\begin{equation}
 \frac{\dd \tilde \bP_1}{\dd \bP}:=\exp(-\gO_1-1/2).
\end{equation}
We consider now the expectation of $W_{(y_1,\dots,y_n)}$ with respect to $\tilde \bP_Y$. As the covariance of the Gaussian field remains the same after the measure change we have
\begin{equation}\begin{split}
 \tilde \bP_Y[ W_{(y_1,\dots,y_n)}]&= P\exp\left(\gb \tilde \bP_Y\left[\int_0^t \go(\dd s,B_s)\right]\right)\ind_{\left\{ B_{kT}\in I_{y_k}, \forall k=1\dots n\right\}}\\
&\le \prod_{k=0}^{n-1}\max_{x\in I_O} P_x \left[\exp\left(\gb  \tilde \bP_1\left[\int_0^T  \go(\dd s,B_s)\right]\right)\ind_{\left\{ B_{T}\in I_{y_k-y_{k-1}}, \forall k=1\dots n\right\}}\right],
\end{split}
\end{equation}
Where $P_x$ denotes the Wiener measure with initial condition $x$ (the law of a standard Brownian Motion starting at $x$).
 The second inequality is obtained by cutting the Brownian trajectory into $n$ slices and using the fact that the environments in the slices are independent under $\tilde \bP_Y$ and taking the maximum over the possible intermediate point.
Coming back to \eqref{dec} and using \eqref{hollder} and \eqref{dete} we get 
\begin{equation}\label{machin}
 \bP W_t^{\gga}\le \exp(n/2)\left[\sum_{y\in\Z^d} \left(\max_{x\in I_O} P_x\left[\exp\left(\gb  \tilde \bP_1\left[\int_0^T  \go(\dd s,B_s)\right]\right)\ind_{\left\{ B_{T}\in I_y\right\}}\right)^{\gga}\right]\right]^n.
\end{equation}
We are able to prove that the right--hand side decays exponentially with rate $T$ if we are able to show that
\begin{equation}\label{troidiz}
 \sum_{y\in\Z^d} \left(\max_{x\in I_O} P_x\left[\exp\left(-\gb  \tilde \bP_1\left[\int_0^T \dd \go(s,B_s)\right]\right)\ind_{\left\{ B_{T}\in I_y\right\}}\right]\right)^{\gga},
\end{equation}
is small. To do so, we have to estimate the expectation of the Hamiltonian under $\tilde \bP_1$. We use Lemma \ref{gaussest} and get
\begin{equation}\label{modifexp}
 -\tilde \bP_1\left[\int_0^T  \go(\dd s,B_s)\right]=\bP\left[\gO_1\int_0^T \go( \dd s,B_s)\right]=\frac{\int_{A_1} Q(x-B_s)\dd x \dd t}{\sqrt{T\int_{\bar A_1^2} Q(x-y)\dd x \dd y}}.
\end{equation}
The above quantity is always positive. However, it depends on the trajectory $B$. One can check that the assumption of polynomial decay for $Q$ implies that
\begin{equation}
\int_{\bar A_1^2} Q(x-y)\dd x \dd y\asymp T^{d-\theta/2}.
\end{equation}
To control the numerator of in the right-hand side of \eqref{modifexp} we need an assumption on the trajectory. What we do is we control the value for trajectories that stays within $A_1$. 
Uniformly over any trajectory $(s,B_s)_{s\in[0,T]}$ that stays in $A_1$ we have
\begin{equation}
 \int_O^T\int_{[-C_2T,C_2T]^d} Q(x-B_s)\dd x \dd s\asymp T^{(d+1-\theta)/2}.
\end{equation}
Alltogether we get that there exists a constant $C_3$ such that uniformly on trajectories staying in $A_1$.
\begin{equation}\label{esta1}
 \tilde \bP_1\left[\int_0^T  \go(\dd s,B_s)\right]\le -C_3T^{\frac{2-\theta}{4}}.
\end{equation}
The distribution of the Brownian Motion allow us, given $\gep>0$, to find $R=R_{\gep}$ such that
\begin{multline}
 \sum_{\|y\|\ge R} \left(\max_{x\in I_O} P_x\left[\exp\left(\gb  \tilde \bP_1\left[\int_0^T  \go(\dd s,B_s)\right]\right)\ind_{\left\{ B_{T}\in I_y\right\}}\right]\right)^{\gga}\\
\le \sum_{\|y\|\ge R}\left(\max_{x\in I_O} P_x\left\{B_T\in I_y\right\}\right)^{\gga} \le \gep.
\end{multline}
Where the first inequality simply uses the fact that $\bP_1(\dots)$ is negative.
For the terms corresponding to $\|y\|< R$ we use the rough bound

\begin{multline}
 \sum_{\|y\|<R} \left(\max_{x\in I_O} P_x\left[\exp\left(\gb  \tilde \bP_1\left[\int_0^T \go(\dd s,B_s)\right]\right)\ind_{\left\{ B_{T}\in I_y\right\}}\right]\right)^{\gga}\\
\le (2R)^d \left(\max_{x\in I_O} P_x\left[\exp\left(\gb  \tilde \bP_1\left[\int_0^T  \go(\dd s,B_s)\right]\right)\right)^{\gga}\right].
\end{multline}
Fix $\delta:=(\gep/(2R)^d)^{\frac{1}{\gga}}$. Our remaining task is to show 
\begin{equation}
 \max_{x\in I_O} P_x\exp\left(\gb  \tilde \bP_1\left[\int_0^T  \go(\dd s,B_s)\right]\right)\le \delta.
\end{equation}
To get the above inequality, we separate the right--hand side into two contribution: trajectories that stays within $A_1$ and trajectories that go out of $A_1$. Bounding these contribution gives
\begin{multline}
 \max_{x\in I_O} P_x\left[\exp\left(\gb  \tilde \bP_1\left[\int_0^T \go(\dd s,B_s)\right]\right)\right] \le P\left\{\max_{s\in[0,T]} |B_s|
\le |C_2-1|\sqrt{T}\right\}\\ 
+ \max_{x\in I_O} P_x\left[\exp\left(\gb  \tilde \bP_1\left[\int_0^T \go(\dd s,B_s)\right]\right)\ind_{\{(s,B_s):s\in[0,T]\subset A_1\}}\right].
\end{multline}
We can fix $C_2$ so that the first term is less that $\gd/2$. Equation \eqref{esta1} guaranties that the second term is less than
\begin{equation}
\exp\left(-\gb C_3  T^{\frac{2-\theta}{4}}\right)=\exp\left(-\gb C_3C_1^{\frac{2-\theta}{4}}\right)\le \gd/2.
\end{equation}
where the last inequality is obtained by choosing $C_1$ sufficiently large.
We have shown alltogether that \eqref{troidiz} is less than $2\gep$. Combining this result with \eqref{machin} and \eqref{momentfrac2222} implies (for $\gep$ small enough)
\begin{equation}
 p(\gb)\le -\frac{1}{T},
\end{equation}
which is the result we want.
\qed

\subsection{Proof of Theorem \ref{supdiffff}}

The proof can be decomposed in two steps. The first step is inspired by the work of Peterman \cite{cf:P}, is the proof of a lemma which bounds from above the entropic cost of going in a region which is of order $t^{\alpha}$ away from the origin.
The second step bounds from below the typical energy fluctuations that we may encounter in the same region.
After this analysis all that remains is to fit together all the element of the proof to get the result.
\medskip
Let $N$ be some  even integer, $\alpha:=\frac{3}{4+\theta}$.
For $k\in \N$ define
\begin{equation}
 A_k:=[t/2,t]\times\left[\frac{(2k-1)t^{\alpha}}{N^2},\frac{(2k+1)t^{\alpha}}{N^2}\right]
\times\left[-\frac{t^{\alpha}}{N^2},\frac{t^{\alpha}}{N^2}\right]^{d-1}.
\end{equation}
Define
\begin{equation}
 Z_t^{(k)}:= P\left[\exp\left(\gb\int_0^t  \go(\dd s,B_s)\right)\ind_{\{(s,B_s)\in A_k, \forall s\in[t/2,t]\}}\right].
\end{equation}

\begin{lemma}\label{th:Q}
 With probability greater than $1-1/N$, we have
\begin{equation}
\sum_{k\in\{-N,\dots,N\}\setminus\{0\}} Z_t^{(k)}\ge \exp\left(-\frac{8}{N^2}t^{2\alpha-1}\right)Z_t^{(0)}.
\end{equation}
\end{lemma}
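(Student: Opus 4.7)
The plan is to show that, for each $|k|\le N$ with $k\ne 0$, the partition function $Z_t^{(k)}$ is bounded below by a deterministic entropic factor of order $\exp(-|v_k|^2/T)$ times a copy-in-distribution of $Z_t^{(0)}$; here $v_k:=(2kt^\alpha/N^2,0,\ldots,0)$ denotes the spatial center of $A_k$, and $T:=t/2$. Since $|v_k|^2/T=8k^2 t^{2\alpha-1}/N^4\le 8 t^{2\alpha-1}/N^2$ whenever $|k|\le N$, summing the resulting lower bound over the $2N$ allowed values of $k$ and converting the distributional identity into an almost sure statement via Markov's inequality would then produce the claim with the required probability $1-1/N$.

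To set up the comparison, I would first invoke the Markov property of Brownian motion at time $T$, together with the independence of $\go|_{[0,T]}$ and $\go|_{[T,t]}$, to split
\begin{equation*}
Z_t^{(k)}\,=\,P\!\left[e^{\beta\int_0^T\go(\dd s,B_s)}\,Y^{(k)}(B_T)\right],\quad Y^{(k)}(y)\,:=\,P_y\!\left[e^{\beta\int_T^t\go(\dd s,B_s)}\,\ind_{\{(s,B_s)\in A_k,\ \forall s\in[T,t]\}}\right].
\end{equation*}
For the second half, the spatial stationarity of the field gives the pathwise identity $Y^{(k)}(y;\go)=Y^{(0)}(y-v_k;\go^{(k)})$ where $\go^{(k)}(s,x):=\go(s,x+v_k)$ has, restricted to $[T,t]$, the same law as $\go|_{[T,t]}$. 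For the first half I would apply the Cameron-Martin/Girsanov shift of the Brownian path by the linear drift $\phi_k(s):=(s/T)v_k$ on $[0,T]$. The Gaussian endpoint density ratio $p_T(0,y+v_k)/p_T(0,y)$ and the Radon-Nikodym derivative of the path shift combine, after averaging over the endpoint $B_T$ constrained to lie in the spatial slab of $A_0$, into a deterministic multiplicative factor $\exp(-|v_k|^2/T)$, up to a linear cross term $v_k\cdot y/T=O(t^{2\alpha-1}/N^3)$ that is negligible compared with $|v_k|^2/T$ on the constrained set.

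The hardest step is converting the resulting distributional lower bound into an almost sure statement with probability $1-1/N$. The difficulty is that the shifted environments $\go^{(k)}$ for different $k$ are strongly correlated: because $Q(x)\asymp\|x\|^{-\theta}$ has only polynomial decay, one cannot appeal to independence across the $2N$ copies, and a naive union-bound over $k$ fails. What rescues the argument is that only a first-moment estimate on the ratio $Z_t^{(0)}/\sum_{k\ne 0}Z_t^{(k)}$ is needed; the deterministic entropic prefactor $\exp(-|v_k|^2/T)\le \exp(-8t^{2\alpha-1}/N^2)$ stemming from the Cameron-Martin and Gaussian-density comparison dominates the fluctuation of the random environmental contribution, so that a straightforward Markov inequality closes the argument.
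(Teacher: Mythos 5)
Your Cameron--Martin/Girsanov computation (splitting at $t/2$, shifting by the linear drift, reading off the entropic factor $\exp(-|v_k|^2/T)$, and noting the cross term $v_k\cdot y/T$ is negligible on the constrained slab) is essentially the same as the paper's Girsanov step, only organized via the Markov property. That part is fine.

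The gap is the final step, which you yourself flag as the hardest. You assert that ``only a first-moment estimate on the ratio $Z_t^{(0)}/\sum_{k\neq 0}Z_t^{(k)}$ is needed'' and that ``a straightforward Markov inequality closes the argument,'' but you never produce a bound on that first moment, and there is no reason to expect one. After Girsanov you must show something like $P\{\bar Z_t^{(0)}>\max_{k\neq 0}\bar Z_t^{(k)}\}\le 1/N$ for a family of strongly correlated, heavy-tailed (log-normal type) random variables. Markov's inequality would require $\bbE[\bar Z_t^{(0)}/\sum_{k\neq 0}\bar Z_t^{(k)}]$ to be small, but the numerator and denominator are positively correlated and there is no a priori control on this expectation---it is not even clear it is $O(1)$, let alone $O(1/N)$. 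The moment route does not close.

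The paper avoids moments entirely via a pigeonhole-plus-stationarity argument. Define the path deformation $h_k(s,x)=x+(2s/t\wedge 1)\frac{2k}{N^2}t^{\alpha}$ (first coordinate) and set
\begin{equation*}
\bar Z_t^{(k)}:=P\left[\exp\left(\gb\int_0^t\go(\dd s,h_k(s,B_s))\right)\ind_{\{(s,B_s)\in A_0,\ \forall s\in[t/2,t]\}}\right].
\end{equation*}
Because the covariance of the field increments is diagonal in time (increments over disjoint intervals are independent, with instantaneous spatial covariance $Q$), a \emph{time-dependent} spatial translation of $\go$ leaves the joint law of the Hamiltonians $\int\go(\dd s,h_k(s,B_s))$ invariant; hence the family $(\bar Z_t^{(k)})_k$ is not merely identically distributed but shift-invariant in $k$. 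Among the $N+1$ a.s.\ disjoint events $\{\bar Z_t^{(k_0)}$ is the maximum over $|k|\le N/2\}$, at least one has probability $<1/N$; by the shift-invariance one translates to put that index at $0$, yielding $P\{\bar Z_t^{(0)}\ge\max_{k}\bar Z_t^{(k)}\}<1/N$ with the max over a window inside $\{-N,\dots,N\}\setminus\{0\}$. Combining with the Girsanov bound $Z_t^{(k)}\ge\exp(-\tfrac{4(k^2+|k|)}{N^4}t^{2\alpha-1})\bar Z_t^{(k)}\ge\exp(-\tfrac{8}{N^2}t^{2\alpha-1})\bar Z_t^{(k)}$ and $\bar Z_t^{(0)}=Z_t^{(0)}$ gives the lemma. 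You correctly diagnosed that a union bound fails because of the correlations, but the remedy is the pigeonhole-translation trick, not Markov's inequality.
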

\begin{proof}
This Lemma and its proof proof is strongly inspired from \cite[Lemma 2]{cf:P}.
We use the following transformation $h_k$ which transform a path contributing to $W_t^{(k)}$ into a path contributing to 
$Z_t^{(0)}$.

\begin{equation}
h_k:\quad (s,x)\mapsto x+(2s/t \wedge 1)\frac{2k}{N^2}t^{\alpha}, 
\end{equation}
and define 

\begin{equation}
 \bar Z_t^{(k)}:= P\exp\left(\gb\int_0^t \go(\dd s, h_k(s,B_s))\right)\ind_{\{(s,B_s)\in A_0, \forall s\in[t/2,t]\}}.
\end{equation}
On can check that $\left( \bar Z_t^{(k)} \right)_{k\in\{-N/2,\dots,N/2\}}$ is a family of identically distributed random variable.
Moreover, elementary reasoning gives us that there exists an integer $k_0$, with $|k_0|\le N/2$  such that
\begin{equation}
 Q\left\{\bar Z_t^{(k_0)}\ge \max_{k\in\{-N/2,k_0-1\}\cup \{k_0+1,\dots,N/2\}} \bar Z_t^{(k)}\right\}<\frac{1}{N}.
\end{equation}
From this it is easy to deduce by translation invariance that
\begin{equation}
 Q\left\{\bar Z_t^{(0)}\ge  \max_{k\in\{-N/2-k_0,-1\}\cup \{1,\dots,N/2+k_0\}}\bar Z_t^{(k)}\right\}<\frac{1}{N}.
\end{equation}
Using Girsanov transform, one can check that

\begin{equation}\begin{split}
  Z_t^{(k)}&=P\exp\left(-\frac{4k}{N^2}t^{\alpha-1}B^{1}_{t/2}-\frac{4k^2}{N^4}t^{2\alpha-1}+\gb\int_0^t \go(\dd s ,h_k(s,B_s))\right)\ind_{\{(s,B_s)\in A_0, \forall s\in[t/2,t]\}}\\
&\ge \exp\left(-\frac{4(k^2+|k|)}{N^4}t^{2\alpha-1}\right)\bar Z_t^{(k)}\ge \exp\left(-\frac{8}{N^2}t^{2\alpha-1}\right)\bar Z_t^{(k)},
\end{split}
\end{equation}
where $B^1$ is the first coordinate of the Brownian motion.
\end{proof}
For the rest of the proof, the idea which is used differs substantially from the one used in \cite{cf:P} and adapted then in \cite{cf:BTV}.
Instead of using purely Gaussian tool and working with covariance matrix, we use changes of measure that are similar to the one of the previous section. This shortens the proof considerably, and to makes it less technical and more intuitive. Moreover, it enlightens the fact that the proof could be adapted to a non Gaussian context.
We set $T:=t^{\alpha}N^{-3}$ and define
\begin{equation}
  \Omega:=\frac{\int_{[-T,T]^d}\int_{t/2}^{t}\go(\dd  s,x)\dd x}{\sqrt{t/2 \int_{[-T,T]^d\times[-T,T]^d}Q(x-y)\dd x\dd y}},
\end{equation}
which is standard centered Gaussian variable.
We define the probability measure $\bP_0$ by
\begin{equation}
 \frac{\dd \bP_0}{\dd \bP}(\go):=\exp\left(-\gO-1/2\right).
\end{equation}

\begin{lemma}\label{petitlem}
Let $A$ be any event, we have
\begin{equation}
 \bP(A)\le \sqrt{e\bP_0(A)}.
\end{equation}
 \end{lemma}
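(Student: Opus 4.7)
The plan is to obtain this bound by a straightforward Cauchy--Schwarz argument applied to the Radon--Nikodym derivative. The point is that $\bP_0$ has been constructed so that $\dd\bP_0/\dd\bP = \exp(-\Omega - 1/2)$ is a genuine probability density (since $\Omega \sim \mathcal{N}(0,1)$ under $\bP$, one checks $\bP[\exp(-\Omega-1/2)] = 1$), and hence $\bP$ is absolutely continuous with respect to $\bP_0$ with inverse density $\dd\bP/\dd\bP_0 = \exp(\Omega + 1/2)$.

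First, I would write
\begin{equation*}
 \bP(A)\, =\, \bP_0\!\left[\ind_A \,\frac{\dd\bP}{\dd\bP_0}\right]\, =\, \bP_0\!\left[\ind_A \exp(\Omega+1/2)\right].
\end{equation*}
Then I would apply Cauchy--Schwarz under $\bP_0$ to obtain
\begin{equation*}
 \bP(A)\, \le\, \sqrt{\bP_0(A)\,\bP_0\!\left[\exp(2\Omega+1)\right]}.
\end{equation*}

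The remaining step is to evaluate the second factor. Switching back to $\bP$ via the density, one has
\begin{equation*}
\bP_0\!\left[\exp(2\Omega+1)\right]\, =\, \bP\!\left[\exp(2\Omega+1)\exp(-\Omega-1/2)\right]\, =\, \bP\!\left[\exp(\Omega+1/2)\right]\, =\, e,
\end{equation*}
where in the final equality we used that $\Omega$ is a standard Gaussian under $\bP$, so $\bP[\exp(\Omega)] = e^{1/2}$. Combining the two displays yields the claim. There is no real obstacle here; the content of the lemma is simply that the change of measure $\bP \to \bP_0$ distorts probabilities of events by at most a square-root factor with a universal multiplicative constant $\sqrt{e}$, which will be exactly what is needed in the next step of the superdiffusivity proof to transfer lower bounds on energy fluctuations obtained under the convenient tilted measure $\bP_0$ back to statements about the original law $\bP$.
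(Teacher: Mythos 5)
Your proof is correct and is essentially identical to the one in the paper: both apply Cauchy--Schwarz to $\bP(A)=\bP_0[\ind_A\,\dd\bP/\dd\bP_0]$ and reduce the second factor to $\bP[\dd\bP/\dd\bP_0]=\bP[\exp(\Omega+1/2)]=e$. You spell out the Gaussian moment computation a bit more explicitly than the paper does, but the argument is the same.
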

\begin{proof}
 This is a simple application of H\"older inequality.
\begin{equation}
 \bP(A)=\bP_0 \left[\frac{\dd \bP}{\dd \bP_0} \ind_{A}\right]\le \sqrt{\bP \left[\frac{\dd \bP}{\dd \bP_0}\right]}\sqrt{\bP_0 (A)}.
\end{equation}
\end{proof}
Now, our aim is to show that under $\bP_0$, the probability that the walk stays in $[-t^{\alpha}/N^3,t^{\alpha}/N^3]^d$ is small and to use the above lemma to conclude.
We use Lemma \ref{gaussest} to do a measure coupling.
Indeed if $\go$ has distribution $\bP$ then $\hat \go$ defined by
\begin{equation}\begin{split}
\hat\go(0,x)&:=0, \quad \quad \forall x\in \R^d,\\
\hat\go( \dd  s,x)&:= \go(\dd s,x)-\bP \left[\gO\dd \go(s,x)\right]\\
&= \go(\dd s,x)-\frac{\int_{[-T,T]^d}Q(x-y)\dd y}{\sqrt{t/2 \int_{[-T,T]^d\times[-T,T]^d}Q(x-y)\dd x\dd y}}\dd t.
                \end{split}
\end{equation}
has distribution $\bP_0$. 
We define
\begin{equation}\begin{split}
 X_t:=P\left[ \exp\left(\gb\int_0^t \go(\dd s,B_s)\right)\ind_{\{B_s\in [-T,T]^d,\ \forall s\in[t/2,t]\}}\right]\\
\hat X_t:= P \left[\exp\left(\gb\int_{0}^{t} \hat \go(\dd s,B_s)\right)\ind_{\{B_s\in [-T,T]^d,\ \forall s\in[t/2,t]\}}\right],\\
\hat Z_t^{(k)}:= P\left[\exp\left(\gb\int_0^t  \hat \go(\dd s,B_s)\right)\ind_{\{(s,B_s)\in A_k,\ \forall s\in[t/2,t]\}}\right]. 
\end{split}
\end{equation}
From this definition
\begin{equation}\label{truc}
 \mu_t\left(\max(B_s\le t^{\alpha}/N^3\right\}\le \frac {X_t}{\sum_{k\in\{-N,\dots,N\}\setminus\{0\}} Z_t^{(k)}},
\end{equation}
And 
\begin{equation}\label{der22}
 \bP_0\left[  \mu_t\left\{\max_{0\le s\le t} |B_s|\le t^{\alpha}/N^3)\right)\le x \right]\le \bP\left[\frac {\hat X_t}{\sum_{k\in\{-N,\dots,N\}\setminus\{0\}} \hat Z_t^{(k)}}\le x \right],
\end{equation}
for any $x$.
\begin{lemma}
For $N$ large enough, and $t$ large enough 
\begin{equation}
 \frac {\hat X_t}{\sum_{k\in\{-N,\dots,N\}\setminus\{0\}} \hat Z_t^{(k)}}\le \frac {X_t}{\sum_{k\in\{-N,\dots,N\}\setminus\{0\}} Z_t^{(k)}}\exp(-C_4 t^{1/2}T^{-\theta/2}).
\end{equation}
\end{lemma}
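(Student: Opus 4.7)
The plan is to pull the deterministic shift introduced by the change of measure out of the stochastic integrals in $\hat X_t$ and $\hat Z_t^{(k)}$, and then compare, for each path, the magnitude of the energy decrease in the two relevant regimes. Explicitly, using the definition of $\hat \go$ we may write, for any continuous path $B$,
\begin{equation*}
\int_0^t \hat \go(\dd s, B_s) \, =\, \int_0^t \go(\dd s, B_s) \, -\, \int_{t/2}^{t} V_T(B_s)\,\dd s, \qquad V_T(x)\, :=\, \frac{\int_{[-T,T]^d} Q(x-y)\,\dd y}{\sqrt{(t/2)\int_{[-T,T]^{2d}} Q(u-v)\,\dd u\,\dd v}}.
\end{equation*}
Since the denominator in $V_T$ is deterministic, the proof reduces to comparing $\int_{t/2}^t V_T(B_s)\,\dd s$ for paths supported in $[-T,T]^d$ (which appear in $X_t, \hat X_t$) against paths supported in the slab $A_k$ with $k\neq 0$ (which appear in $Z_t^{(k)}, \hat Z_t^{(k)}$).

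First I would obtain the asymptotic rate $V_T(x) \asymp T^{-\theta/2}/\sqrt t$ for $x \in [-T,T]^d$. This uses the assumption $\theta<d$ implicit in the hypotheses of Theorem~\ref{supdiffff}, which gives $\int_{[-T,T]^d}Q(x-y)\,\dd y \asymp T^{d-\theta}$ uniformly in $x\in[-T,T]^d$, and similarly $\int_{[-T,T]^{2d}}Q(u-v)\,\dd u\,\dd v\asymp T^{2d-\theta}$. Integrating over $s\in[t/2,t]$ shows that for every path in the box
\begin{equation*}
\int_{t/2}^t V_T(B_s)\,\dd s \, \ge \, c_1 \sqrt t \, T^{-\theta/2},
\end{equation*}
so that $\hat X_t \le X_t \exp(-c_1\gb \sqrt t\, T^{-\theta/2})$. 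For a path contained in $A_k$ with $k\neq 0$, the first coordinate of $B_s$ is separated from $[-T,T]^d$ by a distance $\ge t^\alpha/N^2 - T \ge t^\alpha/(2N^2)$ (for $N$ large, since $T=t^\alpha/N^3$). Using $Q(z)\asymp \|z\|^{-\theta}$ this yields the upper bound $\int_{[-T,T]^d}Q(B_s-y)\,\dd y \le C\, T^d (t^\alpha/N^2)^{-\theta}$, and substituting $T = t^\alpha/N^3$ gives
\begin{equation*}
\int_{t/2}^t V_T(B_s)\,\dd s \, \le \, C_2 \sqrt t\, t^{-\alpha\theta/2} N^{\theta/2} \, =\, C_2\, N^{-\theta}\,\sqrt t\,T^{-\theta/2},
\end{equation*}
hence $\hat Z_t^{(k)} \ge Z_t^{(k)}\exp(-C_2\gb N^{-\theta}\sqrt t\, T^{-\theta/2})$.

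Dividing $\hat X_t$ by $\sum_{k\ne 0}\hat Z_t^{(k)}$ and pulling out the common factor then gives
\begin{equation*}
\frac{\hat X_t}{\sum_{k\neq 0}\hat Z_t^{(k)}} \, \le\, \frac{X_t}{\sum_{k\neq 0}Z_t^{(k)}}\exp\!\Bigl(-\gb\sqrt t\, T^{-\theta/2}\bigl(c_1-C_2 N^{-\theta}\bigr)\Bigr).
\end{equation*}
Choosing $N$ large enough that $c_1 - C_2 N^{-\theta} \ge c_1/2$ (and $t$ large enough for the asymptotic integral bounds on $Q$ to apply) yields the claim with $C_4 := \gb c_1/2$.

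The main technical obstacle is the uniform estimate on $V_T(x)$ in each regime, and in particular the fact that the ratio between the ``box'' rate and the ``slab'' rate is $N^{-\theta}$, which is what makes it possible to trade one exponential against the other once $N$ is chosen large. All the rest is routine manipulation of the Girsanov-type shift identity.
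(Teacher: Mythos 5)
Your proposal matches the paper's argument step for step: you pull the deterministic Girsanov shift out of the stochastic integrals (the paper writes this as $\bP[\gO \int_{t/2}^t \go(\dd s, B_s)]$, you call it $\int_{t/2}^t V_T(B_s)\,\dd s$), then bound the shift from below by $c_1\sqrt t\, T^{-\theta/2}$ for paths constrained to the box and from above by $C_2 N^{-\theta}\sqrt t\, T^{-\theta/2}$ for paths in the slabs $A_k$, $k\neq 0$, using $\int_{[-T,T]^d}Q(x-y)\dd y\asymp T^{d-\theta}$ in the box versus $\lesssim T^{d-\theta}N^{-\theta}$ at distance $\gtrsim NT$. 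This is exactly the paper's proof; the only cosmetic difference is that you isolate the factor $V_T$ explicitly, which makes the comparison slightly cleaner.
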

\begin{proof}
 It follows from definitions that
\begin{multline}
\log \frac{\hat X_t\sum_{k\in\{-N,\dots,N\}\setminus\{0\}} Z_t^{(k)}}{X_t \sum_{k\in\{-N,\dots,N\}\setminus\{0\}} \hat Z_t^{(k)}} \le \\ 
-\!\!\!\!\!\!\min_{\{B:\|B_s\| \le T,\ \forall s \in [t/2,t]\}}\!\!\!\!\!\!\bP\left[\int_{t/2}^t \go(\dd s,B_s)\gO\right]+\!\!\!\!\!\! \max_{\{B\ : \|B_s\|\ge t^{\alpha}/N^2,\  \forall s \in [t/2,t]\}}\!\!\!\!\!\!\bP \left[\int_{t/2}^t \go(\dd s,B_s)\gO\right]
\end{multline}
Now, we bound the right-hand side of the above. We have

\begin{equation}
 \bP\left[\int_{t/2}^t\dd \go(s,B_s)\gO\right]=\frac{\int_{t/2}{t}\int_{[-T,T]^d}Q(B_s-x)\dd x}{\sqrt{t/2\int_{[-T,T]^d\times [-T,T]^dQ(x-y)\dd x \dd y}}}.
\end{equation}
First remark that
\begin{equation}
 \int_{[-T,T]^d\times [-T,T]^d} Q(x-y)\dd x \dd y\asymp T^{2d-\theta}.
\end{equation}
If the trajectory $B$ is constrained to stay in $[-T,T]^d$ when $s\in[t/2,t]$, then there exist a constant $C_5$ only depending on $Q$ such that
\begin{equation}
 \int_{[-T,T]^d}Q(B_s-x)\dd x\ge C_5 T^{d-\theta} \forall  s \in [t/2,t].
\end{equation}
If the trajectory $B$ is such that $d(B_s,[-T,T]^d)\ge NT/2$ for all $s\in [t/2,t]$, then 
\begin{equation}
 \int_{[-T,T]^d}Q(B_s-x)\dd x\le C_6 \int_{[-T,T]^d} (NT)^{-\theta} \dd x= C_6 T^{d-\theta}N^{-\theta}, \quad \forall s\in[t/2,t].
\end{equation}
This is sufficient to prove the result for $N$ large enough.
\end{proof}
Now the preceding result together with  Lemma \ref{th:Q} assures that with propability larger than $1-1/N$ we have
\begin{multline}\label{der11}
 \frac {\hat X_t}{\sum_{k\in\{-N,\dots,N\}\setminus\{0\}} \hat Z_t^{(k)}}\le \frac {Z_t^{(0)}}{\sum_{k\in\{-N,\dots,N\}\setminus\{0\}} Z_t^{(k)}}\exp(-t^{1/2}T^{-\theta/2})\\
\le \exp\left(\frac{8}{N^2}t^{2\alpha-1}-C_4t^{1/2}T^{-\theta/2}\right).
\end{multline}
 Now we estimate what's in the exponential of the right--hand side 

\begin{equation}\label{der12}
 \frac{8}{N^2}t^{2\alpha-1}-C_4t^{1/2}T^{-\theta/2}=\left[\frac{8}{N^2}-C_4 N^{3\theta}\right]t^{\frac{2-\theta}{4+\theta}}\le -t^{\frac{2-\theta}{4+\theta}}.
\end{equation}
Now we put together all the bricks of our reasoning. Equations \eqref{der11}, \eqref{der12} combined with \eqref{der22} gives us
\begin{equation}
 \bP_0\left\{\mu_t\{\max_{0\le s\le t} \|B_s\| \le t^{\alpha}/N^3\}\ge \exp\left(-t^{\frac{2-\theta}{4+\theta}}\right)\right\}\le \frac{1}{N}.
\end{equation}
Lemma \ref{petitlem} allows us get from this
\begin{equation}
 \bP_0\left\{\mu_t\{\max_{0\le s\le t} \|B_s\| \le t^{\alpha}/N^3\}\ge \exp\left(-t^{\frac{2-\theta}{4+\theta}}\right)\right\}\le \frac{1}{\sqrt{N}}
\end{equation}
So that
\begin{equation}
 \bP \mu_t \{\max \|B_s\| \le t^{\alpha}/N^3\}\le \exp\left(-t^{\frac{2-\theta}{4+\theta}}\right)+\frac{1}{\sqrt{N}}.
\end{equation}
We get our result by choosing $N$ large enough.\qed

\newpage

\addtolength{\topmargin}{-0.02 \textheight}

\addtolength{\textheight}{30mm}

\thispagestyle{empty}

{\advance\baselineskip -2.2pt

\vspace*{\fill}

\begin{small}
\noindent {\bf R\sc{\textbf{ésumé}}}: Cette thèse est consacrée à l'étude de différents modèles aléatoires de polymères.
On s'intéresse en particulier à l'influence du désordre sur la localisation des trajectoires pour les modèles d'accrochage et pour les polymères dirigés en milieu aléatoire. En plus des modèles classiques dans $\Z^d$, nous abordons l'étude de modèles dit hiérarchiques, construits sur une suite de réseaux auto-similaires, très présent dans la littérature physique.  Les résultats que nous avons obtenus concernent principalement l'énergie libre et le phénomène de surdiffusivité. En particulier, nous prouvons: (1) la pertinence du désordre à toute température  dans pour le modèle d'accrochage désordonné en dimension $1+1$, (2) l'occurence d'un désordre très fort à toute température en dimension $1+2$ pour les polymères dirigés en milieu aléatoire.

\vspace{2mm}

\noindent {\bf M\sc{\textbf{ots-cl\'es}}} Polymère dirigé, Modèle d'accrochage, Très fort désordre, Surdiffusivité, Modèles Hiérarchiques, Pertinence du  désordre.

\vspace{1cm}

\noindent {\bf A\sc{\textbf{bstract}}}: This thesis studies models of random directed polymers. 
We focus on the influence of disorder on localization of the trajectories for pinning model and directed polymers in random environment. In addition to the classical $\Z^d$ models, we pay a particular attention to so-called hierachical models, buildt on a sequence of self-similar lattices, that are frequently studied in the physics literature. The results we obtain concern mainly free energy and superdiffusivity properties. In particular we present the proof that: (1) disorder is relevant at arbitrary high temperature for pinning models in dimension $1+1$, (2) very strong disorder holds at all temperature in dimension $1+2$ for directed polymers in random environment.

\vspace{3mm}

\noindent {\bf K\sc{\textbf{ey words}}}: Directed polymer, Pinning models, Very Strong Disorder, Superdiffusivity, Hierarchical Models, Disorder relevance.
\vspace{1cm}

\begin{center}
{\bf Université Paris Diderot et Laboratoire de Probabilité et Modèles Aleatoires (CNRS
 U.M.R. 7599),\\ U.F.R. Mathématiques, Case 7012 (Site Chevaleret),\\
 75205 Paris cedex 13, France}
\end{center}

\end{small}
 \end{document}